\newcommand{\cA}{\mathcal{A}}
\newcommand{\fA}{\mathfrak{A}}
\newcommand{\bC}{\mathbf{C}}
\newcommand{\cC}{\mathcal{C}}
\newcommand{\bD}{\mathbf{D}}
\newcommand{\cD}{\mathcal{D}}
\newcommand{\cE}{\mathcal{E}}
\newcommand{\sE}{\mathscr{E}}
\newcommand{\bF}{\mathbf{F}}
\newcommand{\cF}{\mathcal{F}}
\newcommand{\sF}{\mathscr{F}}
\newcommand{\cI}{\mathcal{I}}
\newcommand{\sL}{\mathscr{L}}
\newcommand{\bN}{\mathbf{N}}
\newcommand{\rN}{\mathrm{N}}
\newcommand{\cO}{\mathcal{O}}
\newcommand{\bP}{\mathbf{P}}
\newcommand{\cP}{\mathcal{P}}
\newcommand{\bQ}{\mathbf{Q}}
\newcommand{\bR}{\mathbf{R}}
\newcommand{\cR}{\mathcal{R}}
\newcommand{\bS}{\mathbf{S}}
\newcommand{\cS}{\mathcal{S}}
\newcommand{\fS}{\mathfrak{S}}
\newcommand{\bT}{\mathbf{T}}
\newcommand{\cT}{\mathcal{T}}
\newcommand{\bV}{\mathbf{V}}
\newcommand{\sY}{\mathscr{Y}}
\newcommand{\bZ}{\mathbf{Z}}
\newcommand{\fa}{\mathfrak{a}}
\newcommand{\fb}{\mathfrak{b}}
\newcommand{\fc}{\mathfrak{c}}
\newcommand{\fd}{\mathfrak{d}}
\newcommand{\rf}{\mathrm{f}}
\newcommand{\fp}{\mathfrak{p}}
\newcommand{\bs}{\mathbf{s}}
\let\ol\overline
\let\ul\underline
\let\bs\backslash
\let\lbb\llbracket
\let\rbb\rrbracket
\renewcommand{\phi}{\varphi}
\renewcommand{\emptyset}{\varnothing}
\DeclareMathOperator{\Mat}{Mat}
\DeclareMathOperator{\im}{im}
\DeclareMathOperator{\coker}{coker}
\DeclareMathOperator{\avg}{avg}
\DeclareMathOperator{\tr}{tr}
\DeclareMathOperator{\End}{End}
\DeclareMathOperator{\Aut}{Aut}
\DeclareMathOperator{\Spec}{Spec}
\DeclareMathOperator{\res}{res}
\DeclareMathOperator{\sgn}{sgn}
\DeclareMathOperator{\Mod}{Mod}
\DeclareMathOperator{\Hom}{Hom}
\DeclareMathOperator{\Rep}{Rep}
\DeclareMathOperator{\Fun}{Fun}
\newcommand{\id}{\mathrm{id}}
\newcommand{\op}{\mathrm{op}}
\newcommand{\pt}{\mathrm{pt}}
\renewcommand{\Vec}{\mathrm{Vec}}
\DeclareMathOperator{\Frac}{Frac}
\DeclareMathOperator{\uHom}{\underline{Hom}}
\DeclareRobustCommand{\qbinom}{\genfrac{\lbrack}{\rbrack}{0pt}{}}
\newcommand{\GL}{\mathbf{GL}}
\newcommand{\SL}{\mathbf{SL}}
\newcommand{\Gr}{\mathbf{Gr}}
\numberwithin{equation}{section}
\newtheorem{theorem}[equation]{Theorem}
\newtheorem{proposition}[equation]{Proposition}
\newtheorem{lemma}[equation]{Lemma}
\newtheorem{corollary}[equation]{Corollary}
\theoremstyle{definition}
\newtheorem{rmk}[equation]{Remark}
\newenvironment{remark}[1][]{\begin{rmk}[#1] \pushQED{\qed}}{\popQED \end{rmk}}
\newtheorem{eg}[equation]{Example}
\newenvironment{example}[1][]{\begin{eg}[#1] \pushQED{\qed}}{\popQED \end{eg}}
\newtheorem{defnaux}[equation]{Definition}
\newenvironment{definition}[1][]{\begin{defnaux}[#1]\pushQED{\qed}}{\popQED \end{defnaux}}
\newtheorem{constraux}[equation]{Construction}
\newtheorem{convaux}[equation]{Convention}
\newenvironment{convention}[1][]{\begin{convaux}[#1]\pushQED{\qed}}{\popQED \end{convaux}}
\newcommand{\myuline}[1]{%
  \uline{\phantom{#1}}%
  \llap{\contour{white}{#1}}%
}
\newcommand{\dref}[2]{%
  (\hyperref[#1]{\protect\NoHyper\ref{#1}\protect\endNoHyper#2})%
}
\newcommand{\defn}[1]{\textit{#1}}
\newcommand{\arxiv}[1]{\href{http://arxiv.org/abs/#1}{{\tiny\tt arXiv:#1}}}
\newcommand{\DOI}[1]{\href{http://doi.org/#1}{\color{purple}{\tiny\tt DOI:#1}}}
\DeclareMathOperator{\age}{age}
\newcommand{\ev}{\mathrm{ev}}
\newcommand{\cv}{\mathrm{cv}}
\newcommand{\utr}{\ul{\tr}}
\newcommand{\bone}{\mathbf{1}}
\newcommand{\bbone}{\mathds{1}}
\newcommand{\uotimes}{\mathbin{\ul{\otimes}}}
\DeclareMathOperator{\vol}{vol}
\DeclareMathOperator{\uRep}{\text{\myuline{\rm Rep}}}
\DeclareMathOperator{\uPerm}{\ul{Perm}}
\title{Oligomorphic groups and tensor categories}
\author{Nate Harman}
\author{Andrew Snowden}
\thanks{AS was supported by NSF grants DMS-1453893 and DMS-2301871.}
\date{April 1, 2024}
\begin{document}

\begin{abstract}
Given an oligomorphic group $G$ and a measure $\mu$ for $G$ (in a sense that we introduce), we define a rigid tensor category $\uPerm(G; \mu)$ of ``permutation modules,'' and, in certain cases, an abelian envelope $\uRep(G; \mu)$ of this category. When $G$ is the infinite symmetric group, this recovers Deligne's interpolation category. Other choices for $G$ lead to fundamentally new tensor categories. For example, we construct the first known semi-simple pre-Tannakian categories in positive characteristic with super-exponential growth. One interesting aspect of our construction is that, unlike previous work in this direction, our categories are concrete: the objects are modules over a ring, and the tensor product receives a universal bi-linear map. Central to our constructions is a novel theory of integration on oligomorphic groups, which could be of more general interest. Classifying the measures on an oligomorphic group appears to be a difficult problem, which we solve in only a few cases.
\end{abstract}

\maketitle

\vspace{.5in}
\begin{figure}[!h]
\begin{tikzpicture}
\begin{scope}
\clip(-1,-1) rectangle (6.4,6.4);
\draw[orange!70,line width=14pt,line cap=round]  (2,2)--(7,7);
\filldraw[blue!50,line width=18pt,rounded corners=5pt]  (2,1)--(7,1)--(7,6)--cycle;
\filldraw[blue!50,line width=18pt,rounded corners=5pt]  (2,3)--(6,7)--(2,7)--cycle;
\filldraw[color={rgb:gray,1;green,4;white,4},line width=18pt,rounded corners=5pt]  (0,3)--(1,2)--(1,7)--(0,7)--cycle;
\draw[teal!50,line width=14pt,line cap=round]  (3,0)--(7,0);
\filldraw[red!50,line width=18pt,rounded corners=5pt]  (0,0)--(2,0)--(0,2)--cycle;
\end{scope}
\foreach \i in {0,...,6} {
\foreach \j in {0,...,6} {
\fill (\i,\j) circle (2pt);
}
}
\node at (0,-.75) {\tiny 1};
\node at (1,-.75) {\tiny 2};
\node at (2,-.75) {\tiny 3};
\node at (3,-.75) {\tiny 4};
\node at (4,-.75) {\tiny 5};
\node at (5,-.75) {\tiny 6};
\node at (6,-.75) {\tiny 7};
\node at (-.75,0) {\tiny 1};
\node at (-.75,1) {\tiny 2};
\node at (-.75,2) {\tiny 3};
\node at (-.75,3) {\tiny 4};
\node at (-.75,4) {\tiny 5};
\node at (-.75,5) {\tiny 6};
\node at (-.75,6) {\tiny 7};
\end{tikzpicture}
\caption{A decomposition of $\Omega^2$ into five disjoint $\hat{\fS}$-subsets. The two large blue regions form a single subset.} \label{fig:Shat-sets}
\end{figure}
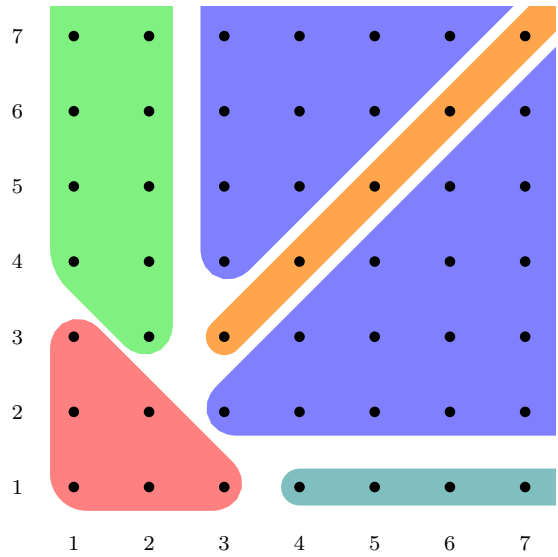

\newpage
\vspace*{0in plus 1fill}
\tableofcontents
\vspace*{0in plus 1fill}

\newpage
\section{Introduction} \label{s:intro}

\subsection{Overview}

The collection of all finite dimensional representations of an algebraic group is an abelian category that comes equipped with a symmetric tensor product. By abstracting the essential properties of this example, one arrives at the notion of a \defn{pre-Tannakian category} (see Definition~\ref{defn:pre-tan}). These categories have received much attention in the literature, yet, despite this, the general landscape of pre-Tannakian categories is still essentially unknown. In this paper, we take a step towards understanding these objects better.

We begin by recalling some relevant previous work. Deligne \cite{Deligne3}, building on earlier work \cite{Deligne1, DeligneMilne}, observed that one can construct new examples pre-Tannakian categories by ``interpolating'' known families. By interpolating the categories $\Rep(\fS_n)$ for $n \in \bN$, he constructed a new pre-Tannakian category $\uRep^{\rm ab}(\fS_t)$ for $t \in \bC$. Deligne's work has generated intense interest, and there has been a slew of ensuing papers, such as \cite{ComesOstrik, ComesOstrik1, Etingof1, Etingof2, EntovaAizenbudHeidersdorf, EAHS, Harman, Harman2, Meir}. In particular, Knop \cite{Knop,Knop2} put Deligne's method into a general framework, and used his theory to construct other examples, such as interpolation categories for finite linear groups.

Currently, the biggest roadblock to understanding general pre-Tannakian categories is the paucity of specimens: the only known examples are the classical representation categories of algebraic (super)groups, and categories obtained from them by interpolation (at least in characteristic zero\footnote{In positive characteristic, there are some very different examples, such as the Verlinde category. These examples run orthogonal to this paper, so we do not discuss them further.}). It is hard to extrapolate from these examples what a general pre-Tannakian category might look like.

The purpose of this paper is develop a new kind of representation theory for a class of groups, called oligomorphic groups, which leads to many new examples of pre-Tannakian tensor categories. To be a bit more precise, we introduce a notion of measure for an oligomorphic group. Given a measure $\mu$ for $G$, we define a non-abelian tensor category $\uPerm(G; \mu)$ of ``permutation modules.'' In certain cases, we also construct a (locally) pre-Tannakian category $\uRep(G; \mu)$.

Before describing the details of this construction, we wish to highlight what we see as the main accomplishments of this paper.
\begin{enumerate}
\item \textit{New examples.} We construct pre-Tannakian categories that are fundamentally different than previously known examples. For example, we construct a pre-Tannakian category associated to the oligomorphic group $\Aut(\bR,<)$. This category is semi-simple in all characteristics, and is the first example of a semi-simple category in positive characteristic of super-exponential growth. We prove that this category cannot be obtained by interpolating known categories. Further work on this category (discussed in \S \ref{sss:delannoy}) has revealed many other interesting properties.
\item \textit{Concrete nature.} In contrast to all previous work in this area, our categories are concrete. In other words, we do not simply formally construct a pre-Tannakian category, but we define a new class of representations that happens to form such a category. These representations have their own internal structure, which can be very useful when studying them; for example, we describe the tensor product of representations via a universal property. This concrete interpretation is new even for the previously known examples, such as Deligne's category.
\item \textit{Abelian envelopes.} To construct $\uRep^{\rm ab}(\fS_t)$, Deligne first constructed a non-abelian category $\Rep(\fS_t)$ by diagrammatics, and then produced an abelian envelope. This is typical of the subject: it is often easier to construct a preliminary non-abelian category, and then one is left with the problem of finding an abelian envelope. In the work of Deligne and Knop, the construction of the abelian envelope ultimately relies on the fact that certain endomorphism rings are semi-simple, which is proven by explicitly identifying them with group algebras.

We give a new method for attacking this problem that does not require explicitly identifying the rings. Our method recovers results of Comes--Ostrik \cite{ComesOstrik} on Deligne's category, and applies to Knop's category in cases where previous methods do not. Our method seems necessary for handling the new examples we construct. See \S \ref{sss:intro-env} for more details.
\item \textit{Theory of integration.} We develop a new theory of measures and integration for oligomorphic groups. This could very well be of interest beyond our applications to tensor categories.
\end{enumerate}
There is one additional important remark to make here. The pre-Tannakian categories we produce using oligomorphic groups are not simply a curious class of examples: in follow-up work \cite{discrete}, we show that they account for all ``discrete'' pre-Tannakian categories (see \S \ref{sss:intro-discrete}). We thus see our theory as a step towards classification of pre-Tannakian categories.

In the remainder of the introduction, we explain our constructions and results in more detail, and attempt to motivate them. We also discuss connections to previous work, and some important problems coming out of our work.

\subsection{Motivation} \label{ss:motiv}

The symmetric group $\fS_n$ acts on the set $\Omega_n=\{1,\ldots,n\}$. Linearizing this action, one obtains the permutation representation $X_n$ of $\fS_n$. In the Deligne category $\uRep(\fS_t)$, there is an analogous object $X$. The defining feature of Deligne's category is that small tensor powers of $X$ behave like small tensor powers of $X_n$ when $n$ is large. The central ideas of this paper came from trying to understand how to make sense of this property in an elegant manner from the point of view of representation theory. There are really two problems to tackle.

The first problem is to understand mapping spaces. We have
\begin{displaymath}
\Hom(X^{\otimes r}, X^{\otimes s}) = \lim_{n \to \infty} \Hom(X_n^{\otimes r}, X_n^{\otimes s})
\end{displaymath}
in the sense that the left side is the stable value of the right side. We would like to have a description of the stable value in terms of some concrete representation theoretic object. To this end, let $\fS$ be the infinite symmetric group, i.e., the group of all permutations of $\Omega=\{1,2,\ldots\}$. Then $\Hom(X_n^{\otimes r}, X_n^{\otimes s})$ is the linearization of the set of $\fS_n$-orbits on $\Omega_n^{r+s}$, and we have
\begin{displaymath}
\fS \backslash \Omega^{r+s} = \lim_{n \to \infty} \fS_n \backslash \Omega_n^{r+s}
\end{displaymath}
in the same sense as the previous equation. Thus the mapping spaces in Deligne's category can be described directly in terms of $\fS$ and its action on $\Omega$.

The second problem is to understand the origin of the parameter $t$ in terms of the group $\fS$. This parameter records the categorical dimension of the basic object $X$. The analogous object $X_n$ has dimension $n$. This reflects the fact that $\Omega_n$ has cardinality $n$, or, equivalently, that the stabilizer $\fS_n(1)$ of~1 has index $n$ in $\fS_n$. It is thus sensible to say that the parameter $t$ is recording a formal notion of index for the subgroup $\fS(1)$ of $\fS$. This led us to the notion of a \defn{generalized index} (Definition~\ref{defn:index}): such is a rule assigning a number to each containment of open subgroups of $\fS$, for which certain natural axioms hold. We prove that a complex-valued generalized index on $\fS$ is completely determined by the index of $\fS(1)$, which can take any value. This provides a clear explanation, directly in terms of $\fS$, for why Deligne's category has a single parameter.

There is a reformulation of the notion of generalized index that will actually be more convenient for us. Instead of saying that $V \subset U$ has some index $a$, we say that the quotient set $U/V$ has measure $a$. For example, $\Omega$ itself has measure $t$. We axiomatize this notion of measure in what follows, and show that measures are equivalent to generalized indices.

The above perspective on Deligne's category lends itself very well to generalization, wherein the infinite symmetric group is replaced by a permutation group $(G, \Omega)$. For this to work, two key properties must hold. First, $G$ must have finitely many orbits on $\Omega^n$ for each $n$, to ensure that mapping spaces are finite dimensional; this is the \defn{oligomorphic} property. And second, we must have a measure for $G$. This paper carries out this procedure in detail, and studies the resulting tensor categories.

\subsection{The constructions}

The main definitions and constructions in this paper come by simply following the preceding discussion to its logical ends. We now provide a summary.

\subsubsection{Oligomorphic groups}

We begin by introducing the first of two key definitions:

\begin{definition}[Cameron]
An \defn{oligomorphic group}\footnote{Cameron describes the origin of the name in a blog post \cite{Cameron10}.} is a group $G$ with a given faithful action on a set $\Omega$ such that $G$ has finitely many orbits on $\Omega^n$ for all $n \ge 0$.
\end{definition}

The most basic example of an oligomorphic group is the infinite symmetric group $\fS$ acting on the set $\Omega=\{1,2,\ldots\}$. Some more examples are given in \S \ref{ss:intro-ex}. We refer to Cameron's excellent book \cite{Cameron} for general background on the subject.

Fix an oligomorphic group $G$. The group $G$ carries a natural topology, where the basic open subgroups are stabilizer groups on powers of $\Omega$ (see \S \ref{ss:admiss} for details). We say that an action of $G$ on a set $X$ is \defn{smooth} if every stabilizer is open. Since we essentially only use smooth actions, we adopt the convention that ``$G$-set'' means ``set with a smooth action of $G$.'' We say that a $G$-set is \defn{finitary} if it has finitely many orbits. Oligomorphic groups have the important property that a product of two finitary $G$-sets is finitary.

Suppose that $X$ is a $G$-set. We often need to work with subsets of $X$ that are stable under some open subgroup, which we would prefer not to name. We therefore introduce the following concept: a \defn{$\hat{G}$-subset} of $X$ is one stable under some open subgroup. We also define a notion of $\hat{G}$-set without an ambient $G$-set; see \S \ref{ss:Ghat}. A typical example is the fiber (over some point) of the map $\Omega^{[2]} \to \Omega$ encountered in \S \ref{ss:motiv}. Figure~\ref{fig:Shat-sets} depicts some other examples of $\hat{\fS}$-subsets. Intuitively, we think of $\hat{G}$ as an infinitesimal neighborhood of the identity in $G$. The fact that every $G$-set comes equipped with a canonical algebra of subsets (the $\hat{G}$-subsets) is an important feature of oligomorphic groups.

\subsubsection{Measures and integration} \label{sss:intro-measure}

We now introduce the second key definition of this paper. Fix a commutative ring $k$.

\begin{definition}
A \defn{measure} for $G$ with values in $k$ is a rule $\mu$ that associates to each finitary $\hat{G}$-set $X$ a value $\mu(X) \in k$ such that the following conditions hold:
\begin{enumerate}
\item Isomorphism invariance: if $X$ and $Y$ are isomorphic then $\mu(X)=\mu(Y)$.
\item Normalization: $\mu(\bone)=1$, where $\bone$ is the one-point set.
\item Additivity: $\mu(X \amalg Y)=\mu(X)+\mu(Y)$.
\item Conjugation invariance: $\mu(X^g)=\mu(X)$, where $X^g$ denotes the conjugate of $X$ by $g$.
\item Multiplicativity in fibrations: given an open subgroup $U$ and a map $X \to Y$ of transitive $U$-sets with fiber $F$, we have $\mu(X)=\mu(F) \cdot \mu(Y)$. \qedhere
\end{enumerate}
\end{definition}

For additional details and comments related to this definition, see \S \ref{ss:measure}.

\begin{example} \label{ex:intro-sym}
Given $t \in \bC$, there is a unique $\bC$-valued measure $\mu_t$ for the infinite symmetric group $\fS$ satisfying $\mu_t(\Omega)=t$. This measure also satisfies $\mu_t(\Omega^{[2]})=t(t-1)$ and $\mu_t(F)=t-1$, where $F$ is any fiber of the map $\Omega^{[2]} \to \Omega$.
\end{example}

Constructing and classifying measures for oligomorphic groups seems to be a difficult problem. To this end, we introduce a ring $\Theta(G)$ as follows: for each finitary $\hat{G}$-set $X$, there is a class $[X]$, and these classes satisfy analogs of (a)--(e) above. The significance of this ring is that it receives a universal measure $\mu_{\rm univ}$, defined by $\mu_{\rm univ}(X)=[X]$. Thus giving a $k$-valued measure on $G$ is equivalent to giving a ring homomorphism $\Theta(G) \to k$. We set $\Theta(G)$ aside for the time being, but will return to it later.

Suppose now that we happen to have a $k$-valued measure $\mu$. We then obtain a theory of integration, as follows. Let $X$ be a $\hat{G}$-set. A \defn{Schwartz function} on $X$ is a $k$-valued function that is smooth (i.e., invariant under an open subgroup) and of finitary support. We let $\cC(X)$ be the set of all such functions, which we term \defn{Schwartz space}. Suppose $\phi \in \cC(X)$. Then $\phi$ assumes finitely many non-zero values $a_1, \ldots, a_n$. Let $A_i=\phi^{-1}(a_i)$, which is a $\hat{G}$-subset of $X$. We define the \defn{integral} of $\phi$ by
\begin{displaymath}
\int_X \phi(X) dx = \sum_{i=1}^n a_i \mu(A_i).
\end{displaymath}
Given a map $f \colon X \to Y$ of $\hat{G}$-sets, we define a push-forward map $f_* \colon \cC(X) \to \cC(Y)$ by integrating over the fibers of $f$. We show that this has all the usual properties of a push-forward map (see \S \ref{ss:push}).

\subsubsection{Permutation representations} \label{sss:intro-perm}

Suppose $\Gamma$ is a finite group. Given a finite $\Gamma$-set $X$, there is an associated permutation representation $k[X]$. If $Y$ is a second finite $\Gamma$-set, then the $k[\Gamma]$-maps $k[X] \to k[Y]$ are given by $Y \times X$ matrices that are $\Gamma$-invariant.

We wish to mimic the above picture to define a category $\uPerm(G;\mu)$ of ``permutation representations.'' For this, we need a theory of matrices. Let $X$ and $Y$ be finitary $G$-sets. We define a \defn{$Y \times X$ matrix} $A$ to be a Schwartz function $A \colon Y \times X \to k$. Given a $Y \times X$ matrix $A$ and a $Z \times Y$ matrix $B$, we define $BA$ to be the $Z \times X$ matrix given by
\begin{displaymath}
(BA)(z,x) = \int_Y B(z,y) A(y,x) dy.
\end{displaymath}
We show that this has the expected properties of matrix multiplication. We go on to study the trace of matrices in some detail. It is possible to extend many other concepts in linear algebra to this setting, such as Jordan decomposition, determinant, and characteristic polynomial; see \cite[\S 7]{arxiv}.

With this theory in hand, we can now define the category $\uPerm(G;\mu)$. For each finitary $G$-set $X$ there is an object $\Vec_X$, which one can think of as the space of column vectors indexed by $X$; a morphism $\Vec_X \to \Vec_Y$ is a $G$-invariant $Y \times X$ matrix; and composition is given by matrix multiplication. This category is additive and carries a tensor product, which we denote by $\uotimes$. On objects, these operations are given by
\begin{displaymath}
\Vec_X \oplus \Vec_Y = \Vec_{X \amalg Y}, \qquad
\Vec_X \uotimes \Vec_Y = \Vec_{X \times Y}.
\end{displaymath}
On morphisms, these operations are given in the usual manner, namely block matrices and Kronecker products. We show that $\uPerm(G;\mu)$ is always a rigid tensor category; see \S \ref{s:perm}.

\begin{example}
Let $\alpha,\beta \in \bC$, and consider the $n \times n$ matrix
\begin{displaymath}
\begin{pmatrix}
\alpha & \beta & \beta & \cdots & \beta \\
\beta & \alpha & \beta & \cdots & \beta \\
\beta & \beta & \alpha & \cdots & \beta \\
\vdots & \vdots & \vdots & \ddots & \vdots \\
\beta & \beta & \beta & \cdots & \alpha
\end{pmatrix}
\end{displaymath}
Such matrices give all endomorphisms of the permutation representation $\bC^n$ of $\fS_n$. Taking $n$ to $\infty$, we obtain an $\Omega \times \Omega$ matrix $A$ for $\fS$. Normally, one would not be able to form $\tr(A)$ or $A^2$ due to the presence of infinite sums. However, if we fix a measure $\mu_t$ for $\fS$, then these constructs are defined. Essentially, one uses the usual rules of algebra augmented by the identity $\sum_{x \in \Omega} 1 = t$. For example, $\tr(A)=t \alpha$. See Example~\ref{ex:matrix} for more details.
\end{example}

\subsubsection{General representations}

The category $\uPerm(G; \mu)$ is essentially never abelian. We would therefore like to construct an abelian envelope of it.

To do this, we introduce the \defn{completed group algebra} $A=A_k(G;\mu)$ of $G$. As a $k$-module, $A$ is simply the inverse limit of Schwartz spaces $\cC(G/U)$ over the open subgroups $U$. The multiplication on $A$ is defined using convolution of functions (which uses integration, hence the dependence on $\mu$). We then define $\uRep(G;\mu)$ to be the category of ``smooth'' $A$-modules; this condition simply means that the action of $A$ is continuous when the module is given the discrete topology. The Schwartz space $\cC(X)$ carries the structure of a smooth $A$-module, and these are the most important modules. The category $\uRep(G;\mu)$ is always a Grothendieck abelian category, and there is a canonical faithful functor
\begin{displaymath}
\Phi \colon \uPerm(G;\mu) \to \uRep(G;\mu)
\end{displaymath}
which takes $\Vec_X$ to $\cC(X)$. Assuming that $\mu$ is a \defn{normal} measure (Definition~\ref{defn:normal}), we show that $\Phi$ is also full.

We next define a tensor structure $\uotimes$ on $\uRep(G;\mu)$ (assuming $\mu$ is normal). There is a ``brute force'' approach to this: namely, declare $\cC(X) \uotimes \cC(Y)=\cC(X \times Y)$ and then extend to general modules by choosing presentations. We prefer to take a more organic approach. Let $M$, $N$, and $E$ be smooth $A$-modules. A \defn{strongly bilinear map} is a function
\begin{displaymath}
q \colon M \times N \to E
\end{displaymath}
that is $k$-bilinear, $G$-equivariant, and satisfies the following additional condition: given $G$-sets $X$ and $Y$ and Schwartz functions $\phi \colon X \to M$ and $\psi \colon Y \to N$, we have
\begin{displaymath}
q \left( \int_X \phi(x) dx, \int_Y \psi(y) dy \right) = \int_{X \times Y} q(\phi(x), \psi(y)) d(x,y).
\end{displaymath}
Here we are integrating module-valued functions (see \S \ref{ss:modint}). The above condition essentially means that $q$ commutes with certain kinds of infinite sums. We define a \defn{tensor product} to be an object equipped with a universal strongly bilinear map. We prove (Theorem~\ref{thm:tensor}) that a tensor product always exists, and this endows $\uRep(G; \mu)$ with the structure of a tensor category.

Assume now that $k$ is a field. One of our main results (Theorem~\ref{thm:regss}) states that if $\mu$ is \defn{quasi-regular} and Propery~(P) holds then $\uRep(G;\mu)$ is locally pre-Tannakian, and if $\mu$ is \defn{regular} then $\uRep(G;\mu)$ is additionally semi-simple. See Definition~\ref{defn:P} for Property~(P), Definition~\ref{defn:pre-tan} for locally pre-Tannakian, and \S \ref{ss:regular} for (quasi-)regular. We also show (Theorem~\ref{thm:abenv}) that, in this setting, $\uRep(G; \mu)$ is the abelian envelope of $\uPerm(G; \mu)$ in the precise sense of \cite{CEAH}. See \S \ref{sss:intro-env} for some discussion of the proofs.

\subsection{Some examples} \label{ss:intro-ex}

We now discuss some examples of oligomorphic groups, and what we can do with them. The first two examples are closely related to the categories of Deligne and Knop, while the next two are new.

\subsubsection{The symmetric group}

Recall that $\fS$ is the group of all permutations of $\Omega=\{1,2,\ldots\}$. We show that $\Theta(\fS)$ is the ring $\bZ\langle x \rangle$ of integer-valued polynomials (Theorem~\ref{thm:Theta-sym}). The $\bZ\langle x \rangle$-linear category $\uPerm(\fS; \mu_{\rm univ})$ associated to the universal measure is very closely related to the category constructed in \cite{Harman}, and seems to be the ``correct'' integral version of Deligne's interpolation category. The construction in \cite{Harman} worked with explicit bases (the Carter--Lusztig bases), and one of our initial sources of motivation was to find a basis-free construction of this category.

From the computation of $\Theta(\fS)$, we see that the measures $\mu_t$ from Example~\ref{ex:intro-sym} account for all $\bC$-valued measures for $\fS$. We show that $\uRep(\fS; \mu_t)$ is a locally pre-Tannakian category for all $t \in \bC$, and semi-simple for $t \not\in \bN$ (Theorem~\ref{thm:S-cat}). We also show that it is the abelian envelope of $\uPerm(\fS; \mu_t)$. Results of Comes--Ostirk \cite{ComesOstrik} imply that our category $\uRep(\fS; \mu_t)$ agrees with Deligne's abelian interpolation category $\uRep^{\rm ab}(\fS_t)$ constructed in \cite[Proposition~8.19]{Deligne3}.

The computation of $\Theta(\fS)$ also shows that measures valued in a field $k$ of characteristic $p$ are parametrized by $\bZ_p$. This was one of the main points of \cite{Harman}, and is consistent with the idea of ``$p$-adic dimension'' of tensor categories, as studied in \cite{EtingofHarmanOstrik}. These measures are not normal, and our $\uRep(\fS; \mu)$ categories are not well-behaved in this setting (see \S \ref{ss:sym-char-p}).

\subsubsection{Linear groups}

Let $\bF$ be a finite field with $q$ elements and let $G=\bigcup_{n \ge 1} \GL_n(\bF)$. The group $G$ acts oligomorphically on $\bV=\bigcup_{n \ge 1} \bF^n$. We show that $\Theta(G)$ is a $q$-analog of the ring of integer-valued polynomials (Theorem~\ref{thm:GL-Theta}), which is closely related to the rings studied in \cite{HarmanHopkins}. In particular, $\Theta(G) \otimes \bQ=\bQ[x]$, and so for each $t \in \bC$ there is an associated complex-valued measure $\mu_t$.

We show that $\uRep(G; \mu_t)$ is a locally pre-Tannakian category for all $t \in \bC$, and semi-simple for $t \not\in \bN_q$ (Theorem~\ref{thm:GLcat}); here $\bN_q$ denotes the set of non-negative $q$-integers. The case $t \not\in \bN_q$ had previously been studied by Knop \cite{Knop,Knop2}, and we rely on his results in this case (though this dependence could probably be excised). The case $t \in \bN_q$ appears to be new, though we have been informed that Entova-Aizenbud and Heidersdorf have results in this direction related to their forthcoming work \cite{EntovaAizenbudHeidersdorf}.

The group $G$ also acts oligomorphically on $\bV \oplus \bV_*$, where $\bV_*$ is the restricted dual of $\bV$, and this leads to a second topology on $G$. We write $G^p$ (resp.\ $G^{\ell}$) for the topological group associated to the action on $\bV$ (resp.\ $\bV \oplus \bV_*$). We show that $\Theta(G^{\ell})=\Theta(G^p)$. We suspect that $\uRep(G^{\ell}; \mu_t)$ and $\uRep(G^p; \mu_t)$ are equivalent for generic values of $t$, and differ at a countable set of values. See \S \ref{ss:levi} for more details.

\subsubsection{Homeomorphisms of the line and circle} \label{sss:intro-line}

Let $G=\Aut(\bR,<)$ be the group of order-preserving bijections of the real line, which is oligomorphic with respect to its action on $\bR$. The $\hat{G}$-subsets of $\bR$ are finite unions of points and open intervals. More generally, a finitary $\hat{G}$-set $X$ is a finite disjoint union of finite products of open intervals; in particular, $X$ is naturally a smooth manifold. We show that one obtains a $\bZ$-valued measure $\mu$ by taking $\mu(X)$ to be the compactly supported Euler characteristic of $X$. Our theory of integration in this case amounts to the Euler calculus introduced by Schapira and Viro (see, e.g., \cite{Viro}).

For a field $k$ (of any characteristic), we show that the category $\uRep_k(G; \mu)$ is locally pre-Tannakian and semi-simple (Theorem~\ref{thm:ord-rigid}). The standard permutation module in this category has dimension $-1$. This is the first example of a semi-simple rigid tensor category in positive characteristic that is not of moderate growth. We prove that $\uRep_k(G; \mu)$ cannot be obtained by interpolating representation categories of finite groups (Theorem~\ref{thm:not-interp}).

In fact, there are three other $\bZ$-valued measures for $G$, though they are not as well-behaved as $\mu$ (e.g., we do not construct pre-Tannakian categories associated to them). In all, these four measures define a ring isomorphism $\Theta(G)=\bZ^4$ (Theorem~\ref{thm:A-Theta}). Conceptually, the four measures correspond to the four partial compactifications of $\bR$.

There is a similar story associated to the group of orientation-preserving self-homeo\-morph\-isms of the unit circle $\bS$ (see \S \ref{ss:circle}). In this case, we again obtain a pre-Tannakian category, but it is no longer semi-simple.

\subsubsection{Boron trees} \label{sss:intro-boron}

\begin{figure}
\begin{displaymath}
\begin{tikzpicture}
\tikzset{leaf/.style={circle,fill=black,draw,minimum size=1mm,inner sep=0pt}}
\tikzset{boron/.style={circle,fill=white,draw,minimum size=1.3mm,inner sep=0pt}}
\node[boron] (A) at (-2,0) {};
\node[boron] (B) at (2,0) {};
\node[boron] (G) at (0,0) {};
\node[boron] (I) at (-1,0) {};
\node[boron] (J) at (1,0) {};
\node[boron] (H) at (0,1) {};
\node[leaf] (C) at (-2.866, .5) {};
\node[leaf] (D) at (-2.866, -.5) {};
\node[leaf] (E) at (2.866, .5) {};
\node[leaf] (F) at (2.866, -.5) {};
\node[leaf] (K) at (-.866, 1.5) {};
\node[leaf] (L) at (.866, 1.5) {};
\node[leaf] (M) at (-1, -1) {};
\node[leaf] (N) at (1, -1) {};
\path[draw] (A)--(I);
\path[draw] (I)--(G);
\path[draw] (G)--(J);
\path[draw] (J)--(B);
\path[draw] (A)--(C);
\path[draw] (A)--(D);
\path[draw] (B)--(E);
\path[draw] (B)--(F);
\path[draw] (G)--(H);
\path[draw] (H)--(K);
\path[draw] (H)--(L);
\path[draw] (I)--(M);
\path[draw] (J)--(N);
\end{tikzpicture}
\end{displaymath}
\caption{A boron tree with eight hydrogen atoms (black nodes) and six boron atoms (white nodes).}
\label{fig:boron8}
\end{figure}
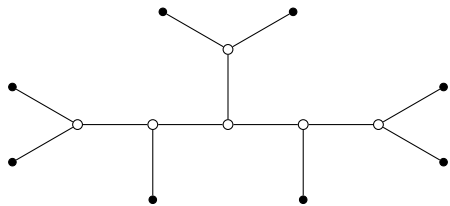

A \defn{boron tree} is a finite tree in which internal vertices have valence three. An example is depicted in Figure~\ref{fig:boron8}; see also \S \ref{s:boron}. The internal vertices are called ``boron atoms'' and the leaves are called ``hydrogen atoms.'' Given a boron tree $T$, one can define a quaternary relation $R$ on the set $T_H$ of hydrogen atoms by declaring $R(w,x; y,z)$ to be true if the geodesic joining $w$ and $x$ meets the one joining $y$ and $z$. It is really this relation that is of interest, though it is equivalent to the tree structure.

Let $\bT(\bQ_2)$ be the Bruhat--Tits tree for $\SL_2(\bQ_2)$; see \cite[Chapter~2]{Serre} for a picture of this object and more discussion. This is essentially an infinite boron tree: the boron atoms are the unimodular $\bZ_2$-lattices in $\bQ_2^2$ and the hydrogen atoms are the lines in $\bQ_2^2$ (these really constitute the boundary of the tree). The quaternary relation in this case can be described as follows: $R(w,x;y,z)$ holds if the cross-ration of the four points $w,x,y,z \in \bP^1(\bQ_2)$ reduces modulo~2 to $\infty \in \bP^1(\bF_2)$.

Let $F$ be a dense subfield of $\bigcup_{n \ge 1} \bQ_2(2^{1/n})$ of countable cardinality, and consider the analogous tree $\bT(F)$. This is like the Bruhat--Tits tree, except it branches in a dense fashion and has countable boundary. The tree $\bT(F)$ is the \defn{Fra\"iss\'e limit} of the class of finite boron trees (see \S \ref{sss:fraisse}). Let $G$ be the group of all permutations of the hydrogen atoms in $\bT(F)$ preserving the quaternary relation $R$. This is an oligomorphic group. This construction---producing an oligomorphic group as the automorphism group of a Fra\"iss\'e limit---is how most oligomorphic groups arise.

We show (Theorem~\ref{thm:boron-theta}) that a ``relative'' version of $\Theta(G)$ is the ring $\bZ[x]/(2x^2+x)$, which essentially means that there are two measures $\mu$ and $\nu$. We do not compute $\Theta(G)$ completely, but one should be able to by building on our work. For any field $k$ of characteristic $\ne 2,3$, we construct a semi-simple pre-Tannakian category over $k$ associated to $G$ and $\mu$ (Theorem~\ref{thm:boron}). The standard permutation module in this category has dimension $\tfrac{3}{2}$.

\subsection{Additional aspects}

We have now discussed the main constructions and examples of this paper. However, there are a number of additional pieces of material that merit comment. We now discuss them.

\subsubsection{Model theory}

Oligomorphic groups are closely connected to model theory. Given some class $\fA$ of finite structures, such as totally ordered sets, graphs, or boron trees, one can sometimes form the \defn{Fra\"iss\'e limit}, which is a suitably universal countable structure $\Omega$ containing each member of $\fA$. The automorphism group of the Fra\"iss\'e limit is often oligomorphic, and this is the most important construction of oligomorphic groups.

Suppose we have an oligomorphic group $G$ obtained as above, i.e., $G=\Aut(\Omega)$ where $\Omega$ is the Fra\"iss\'e limit of some class $\fA$. Since $G$ is completely determined from $\fA$, one should be able to ``see'' measures for $G$ on the $\fA$ side. We explain how this works in \S \ref{s:model}. Precisely, we define a notion of measure for the class $\fA$, and show that these measures essentially correspond to measures for $G$ (Theorem~\ref{thm:compare}); in fact, there is a slight subtlety here in that one actually gets measures for $G$ relative to a stabilizer class $\sE$ (see \S \ref{ss:relative} below).

One of the advantages of this perspective is that it can be much easier to think about measures on the $\fA$ side. For example, suppose $\fA$ is the class of boron trees. A (regular) measure for $\fA$ is a rule assigning to each boron tree a number, such that some concrete identities hold. This is much easier to work with than the corresponding notion of measure for $G$, since the open subgroups of $G$ are not obvious. We therefore take the $\fA$ approach when we analyzing measures in \S \ref{s:boron}.

Another advantage of this perspective is that one can apply general ideas from model theory to the study of measures. For example, we show (Theorem~\ref{thm:R-approx}) that if the class $\fA$ is smoothly approximable then it admits a measure. We note that the smoothly approximable condition has received significant attention in the literature \cite{CherlinHrushovski, KantorLiebeckMacpherson}. The smoothly approximable case seems to roughly coincide with previously known interpolation categories; e.g., it includes the categories of Deligne and Knop.

\subsubsection{Abelian envelopes, nilpotent matrices, and Property~(P)} \label{sss:intro-env}

We have already mentioned one of our main results, Theorem~\ref{thm:regss}, which states that if $\mu$ is quasi-regular and satisfies Property~(P) then $\uRep(G; \mu)$ is the abelian envelope of $\uPerm(G; \mu)$. We now say a little bit more about what goes into the proof of this, and what role Property~(P) has.

We can pass to an open subgroup of $G$ to assume that the measure is regular. The problem is then to show that endomorphism algebras in $\uPerm(G; \mu)$ are semi-simple. Deligne and Knop accomplish this by connecting these algebras to group algebras, but this approach does not seem viable in general. Regularity implies that the trace pairings on endomorphism algebras are non-degenerate (Proposition~\ref{prop:trace-disc}). So to obtain semi-simplicity, it thus suffices to show that nilpotent matrices have trace zero. This is really the key problem.

We show that nilpotent matrices always have vanishing trace in positive characteristic (Corollary~\ref{cor:tr-p}). This does not require any assumption at all on $\mu$, but relies on an important lifting property for measures (see \S \ref{sss:intro-binom}). From this, we deduce the same is true over general coefficient rings, provided $\mu$ satisfies Property~(P) (Theorem~\ref{thm:tr-nilp}). Property~(P) essentially means that $\mu$ can be reduced modulo enough primes, and this allows us to transport the result back from positive characteristic. We note that quasi-regularity is not needed here.

In the case of the symmetric group, we show that all measures satisfy~(P); this actually requires a non-trivial number theoretic argument (see \S \ref{ss:symP}). We thereby recover results of Comes--Ostrik \cite{ComesOstrik} that Deligne's category admits an abelian envelope. In the case of Knop's interpolation category $\uRep(\GL_t(\bF_q))$, the situation is curious. At the singular parameter values, it is easy to see that~(P) holds, and our theory produces an abelian envelope; this was not previously known. At the regular parameter values, Property~(P) is quite subtle and seems to be closely related to the Artin primitive root conjecture (see Remark~\ref{rmk:GL-P}). We expect~(P) holds, but we cannot prove it. However, Knop had already shown that the Karoubi envelope gives the abelian envelope in this case. In the case of $\Aut(\bQ,<)$ and boron trees, it is quite straightforward to see that~(P) holds.

We make two additional remarks in this direction. First, in \cite{Snowden3}, the second author constructs two regular $\bQ$-valued measures $\mu$ and $\nu$ on a particular oligomorphic group, neither of which satisfy~(P). For $\mu$ nilpotent matrices have trace zero, but for $\nu$ this is not the case. This shows that~(P) is not necessary for nilpotents to have trace zero, but also that the condition is not entirely artificial. And second, if $\mu$ is not normal then the problem of constructing an abelian envelope of $\uPerm(G; \mu)$ seems to be quite difficult; in this case, we do not even have a natural candidate.

\subsubsection{Linearizations} \label{sss:intro-linear}

Fix an oligomorphic group $G$ and let $\cS(G)$ be the category of finitary $G$-sets. We define a \defn{linearization} of $\cS(G)$ to be a $k$-linear tensor category $\cT$ having the same objects as $\cS(G)$, and where disjoint union and cartesian product in $\cS(G)$ become direct sum and tensor product in $\cT$. The precise definition (Definition~\ref{defn:linear}) is a bit more involved, but it is important to note that the definition of linearization does not refer to the concept of measure. The basic example of a linearization is the category $\uPerm(G; \mu)$.

A natural question is whether the categories $\uPerm(G; \mu)$ account for all linearizations as $\mu$ varies. Theorem~\ref{thm:linear} answers this in the affirmative. As a corollary, we obtain a moduli-theoretic (and equation-free) interpretation of $\Theta(G)$: its spectrum is the space of linearizations. This result plays an important role in \cite{discrete}.

\subsubsection{$\Theta(G)$ is a binomial ring} \label{sss:intro-binom}

Recall that a \defn{binomial ring} is a commutative ring that is torsion-free as an abelian group and closed under the operation $x \mapsto \binom{x}{n}$ for all $n \ge 0$. The deepest result we prove about $\Theta(G)$ for a general group $G$ is Theorem~\ref{thm:binom}, which states that $\Theta(G)$ is a binomial ring. This theorem has strong consequences for measures. For instance, it implies that any measure valued in a field of positive characteristic $p$ in fact takes values in the prime subfield $\bF_p$, and lifts uniquely to $\bZ_p$. To prove Theorem~\ref{thm:binom}, we show that $\Theta(G)$ admits a $\lambda$-ring structure with trivial Adam's operations, which implies the result by a theorem of Elliott \cite{Elliott}.

\subsection{Relation to previous work}

Our work connects to much previous work:
\begin{itemize}
\item The most important connection is to the theory of oligomorphic groups and homogeneous structures. Our framework enables one to import ideas from this area into the theory of tensor categories. We refer to \cite{Macpherson} for a detailed overview of homogeneous structures. We note that certain kinds of measures (or Euler characteristics) have appeared in the model theory literature, e.g., \cite{AMSW,KS,Wolf}. While these are similar to ours in some ways, they are not exactly the same.
\item Our work obviously owes a debt to Deligne's paper \cite{Deligne3}, but the most direct antecedent is Knop's paper \cite{Knop2}. Knop defines a notion of degree function on a category, and uses this to construct tensor categories. This is at least superficially similar to how we define a notion of measure on the category $\cS(G)$ of finitary $G$-sets, and use this to construct tensor categories. However, there are a number of important differences, e.g., Knop's theory does not accommodate groups like $\Aut(\bR,<)$. The two theories are compared in detail in \cite{Snowden4}, and it is shown that all of Knop's tensor categories can be constructed using the framework of this paper.
\item An important topic in interpolation categories is the construction of abelian envelopes. In this direction, we recover the results of \cite{ComesOstrik,Deligne3} for $\fS$, and establish complementary results to \cite{Knop2} for $\GL_{\infty}(\bF_q)$. Moreover, we describe the abelian envelopes in these cases (and others) in a concrete manner, as modules over the completed group algebra. We make use of \cite{CEAH} for a precise definition and characterization of abelian envelopes.
\item The paper \cite{EtingofHarmanOstrik} shows that in a nice enough tensor category over a field of characteristic $p$, objects can be assigned a $p$-adic dimension. This principle is echoed in our work in a few places, such as the lifting result for measures discussed in \S \ref{sss:intro-binom}.
\item The smooth representation theory of oligomorphic groups is studied in \cite{Nekrasov1}. Some cases are also treated in \cite{DLLX}.
\end{itemize}

\subsection{Subsequent work}

Since the first version of this paper was posted, a number of subsequent works have appeared building on it. We discuss some of these here.

\subsubsection{Discrete pre-Tannakian categories} \label{sss:intro-discrete}

Define a pre-Tannakian category $\cC$ to be \defn{discrete} if every object is a quotient of an \'etale algebra in $\cC$. Note that if $G$ is an algebraic group then $\Rep(G)$ is discrete if and only if $G$ is a finite group. In \cite{discrete}, we show that every discrete pre-Tannakian category has the form $\uRep(G;\mu)$ for some (pro-)oligomorphic group $G$ and measure $\mu$. This shows that oligomorphic groups are intrinsically connected to pre-Tannakian categories. In particular, the categories constructed in this paper account for a very natural piece of the pre-Tannakian landscape.

We say a few words about the proof of the theorem from \cite{discrete}, as it connects to some ideas in this paper. Many of the constructions in this paper really depend only on the category $\cS(G)$ of finitary $G$-sets, and not on $G$ itself. It is therefore natural to characterize this class of categories. We do this in \cite{bcat}. We define a class of categories called \defn{B-categories}, and a more restrictive class called \defn{pre-Galois categories}. We show that pre-Galois categories are exactly those of the form $\cS(G)$ for some pro-oligomorphic group $G$. More general B-categories are also of interest, as the constructions in this paper apply to them as well; for instance, they encompass the ``relative'' situation discussed in \S \ref{ss:relative}.

Suppose now that $\cC$ is any pre-Tannakian category. In \cite{discrete}, we show that the opposite category of \'etale algebras in $\cC$ is pre-Galois, and thus of the form $\cS(G)$ for some pro-oligomorphic group $G$. We define $G$ to be the \defn{oligomorphic fundamental group} of $\cC$; this should be the component group of a more comprehensive fundamental group. We show that $G$ comes with a canonical measure $\mu$; this makes crucial use of the results of \S \ref{s:linear} in this paper. If $\cC$ is discrete, we then show that it is equivalent to $\uRep(G; \mu)$.

\subsubsection{Algebraic representations}

Deligne and Milne \cite{DeligneMilne} defined a category $\uRep(\GL_t)$ that interpolates the representation categories of the algebraic groups $\GL_n$. This category is not associated to an oligomorphic group; it is not discrete. From the perspective of this paper, this category should arise from some kind of measure on the infinite dimensional algebraic group $\GL_{\infty}$. More generally, one might hope for a theory of measures and tensor categories associated to ``algebraic--oligomorphic'' groups.

With this idea in mind, we defined a notion of algebraic--oligomorphic group in \cite{homoten}. We also constructed many non-trivial examples, using a variant of Fra\"iss\'e limits. For example, for $d \ge 1$, define a $d$-space to be a vector space equipped with a symmetric $d$-multilinear form. We show that there is a universal homogeneous $d$-space (over $\bC$, say). Its automorphism group $G_d$ is algebraic-oligomorphic. The group $G_2$ is the infinite orthogonal group, while $G_3$ is a kind of cubic analog of it. We have not yet determined the right notion of measure on these groups. We see this as a very important open problem.

\subsubsection{The Delannoy category} \label{sss:delannoy}

Let $G$ be the group $\Aut(\bR,<)$. As discussed in \S \ref{sss:intro-line}, we construct a semi-simple pre-Tannakian $\cC$ associated to $G$ in this paper. In a follow-up paper \cite{line}, we study this category in great detail. We name it the \defn{Delannoy category}, since it can be described combinatorially in terms of Delannoy paths. We find that the Delannoy category has many special properties: for example, all the simple objects have categorical dimension $\pm 1$, and the Adams operations act trivially on its Grothendieck group. In another paper \cite{circle}, we study the circular analog of the Delannoy category. This case is somewhat more complicated since it is not semi-simple.

\subsubsection{Other examples}

The theory developed in this paper has been applied to a few more oligomorphic groups:
\begin{enumerate}
\item In \cite{arboreal}, we extend our work on boron trees (\S \ref{s:boron}) to trees with valence $\le n$, or trees of unbounded valence. In the bounded valence case, we find two measures, one of which leads to a semi-simple pre-Tannakian category $\cD(n)$. In the unbounded case, we find two one-parameter families of measures. One of these leads to a one-parameter family $\cC(t)$ of semi-simple pre-Tannakian categories (away from a countable set of singular parameters). The category $\cC(t)$ can be viewed as the Deligne interpolation of the categories $\cD(n)$. This is the first non-trivial example of interpolation where the initial categories are of superexponential growth.
\item In \cite{Snowden3}, the homeomorphism group of the Cantor set is studied. There are two measures $\mu$ and $\nu$. The measure $\mu$ leads to a semi-simple pre-Tannakian category of doubly exponential growth, which was the first such example. The measure $\nu$ is regular, but the category $\uPerm(G; \nu)$ has nilpotent endomorphisms with non-zero trace; thus it does not embed into a pre-Tannakian category. This was the first such example, and shows that something like our Property~(P) is necessary.
\item S.~Kriz \cite{Kriz} considers the semi-direct product of $\Aut(\bQ,<)$ with the Borel subgroup in an appropriate version of $\GL_{\infty}(\bF_q)$. She constructs a semi-simple pre-Tannakian category associated to this group, aways from finitely many bad characteristics. This is the fastest-growing known pre-Tannakian category in positive characteristic, and (in combination with \cite{Snowden2}) leads to the fastest-growing known pre-Tannakian category in characteristic~0.
\item Fix a $d$-coloring on the unit circle $\bS$ in which every color is dense, and let $G$ be the (oligomorphic) group of permutations of $\bS$ preserving the cyclic order and coloring. In \cite{Snowden1}, the second author shows that measures for $G$ correspond bijectively to trees with $n$ edges. There is also a version for the line. This generalizes results from \S \ref{s:order}.
\item In \cite{Snowden5}, the second author studies measures for permutations (finite sets equipped with two total orders), and finds that there are exactly~37 measures.
\item Deligne \cite{DeligneLetter2} studied the infinite orthogonal group over a finite field, and showed that there are two one-parameter families of measures (see \S \ref{ss:otherlinear}).
\end{enumerate}

\subsection{The first version of this paper}

The first version of this paper \cite{arxiv} is available on the arxiv. It is somewhat longer than this version: it contains some proofs we have omitted here, many additional tangential comments, and some material that we have chosen to exclude from this version (such as the discussion of Farahat--Higman algebras). We sometimes cite this first version to point to additional details. We note that what we now call ``pro-oligomorphic groups'' were called ``admissible groups'' in the first version.

\subsection{Where to go from here}

Our work presents a multitude of problems to study. However, there are two main directions we wish to highlight as particularly important.

The first is to understand the ring $\Theta(G)$ better. We have only succeeded in computing $\Theta(G)$ in a handful of cases, and each one is rather laborious. And we have been unable to even determine if $\Theta(G)$ is non-zero in some cases, such as when $G$ is the automorphism group of the Rado graph. We would therefore like some tools for working with these rings.

The second direction concerns abelian envelopes. Our results on $\uPerm(G; \mu)$ are consistent with it having an abelian envelope (see Remark~\ref{rmk:abenv}) in many cases. When the measure $\mu$ is quasi-regular and~(P) holds, we show that $\uRep(G; \mu)$ is the abelian envelope. However, we say nothing outside of this case. Can abelian envelopes be constructed more generally?

\subsection{Notation and conventions} \label{ss:notation}

There are a few important definitions and conventions we mention here:
\begin{itemize}
\item For a pro-oligomorphic group $G$, we use the term ``$G$-set'' to mean ``a set equipped with a \emph{smooth} action of $G$.'' See \S \ref{ss:smooth}.
\item A \defn{tensor category} is a $k$-linear additive symmetric monoidal category such that the monoidal product is $k$-bilinear. We denote the unit object by $\bbone$. A \defn{tensor functor} is a $k$-linear symmetric monoidal functor.
\item A \defn{rigid tensor category} is one in which all objects are rigid, i.e., possess a dual; see \S \ref{ss:perm-dual}. The notion of \defn{(locally) pre-Tannakian category} is given in Definition~\ref{defn:pre-tan}.
\end{itemize}
We also list some of the important notation:
\begin{description}[align=right,labelwidth=2.5cm,leftmargin=!]
\item[ $k$ ] the coefficient ring
\item[ $G$ ] a pro-oligomorphic group (Definition~\ref{defn:ad})
\item[ $\mu$ ] a $k$-valued measure for $G$ (Definition~\ref{defn:measure})
\item[ $\Omega$ ] the domain of an oligomorphic group (see Definition~\ref{defn:oligo})
\item[ $\uPerm(G)$ ] the category of ``permutation modules'' (Definition~\ref{defn:permcat})
\item[ $\uRep(G)$ ] the category of smooth $A(G)$-modules (Definition~\ref{defn:repcat})
\item[ $X^{[n]}$ ] the subset of $X^n$ consisting of tuples with distinct coordinates
\item[ $X^{(n)}$ ] the set of $n$-element subsets of $X$
\item[ $G(A)$ ] the subgroup of $G$ stabilizing each element of the subset $A$
\item[ $\Omega(G)$ ] the Burnside ring of $G$ (see \S \ref{ss:burnside})
\item[ $\Theta(G)$ ] the ring associated to $G$ in Definition~\ref{defn:Theta}
\item[ $\cS(G)$ ] the category of finitary $G$-sets (\S \ref{ss:smooth})
\item[ $\cC(X)$ ] the Schwartz space of $X$ (see \S \ref{ss:schwartz})
\item[ $A(G)$ ] the complete group algebra of $G$ (Definition~\ref{defn:complete-alg})
\item[ $\bZ\langle x \rangle$ ] the ring of integer-valued polynomials (see \S \ref{ss:intpoly})
\item[ $\sE$ ] A stabilizer class (see \S \ref{ss:relative})
\item[ $\delta_{X,x}$ ] the function on $X$ that is~1 at $x$ and~0 elsewhere
\item[ $1_A$ ] the indicator function of a subset $A \subset X$
\item[ $\bbone$ ] unit object of a tensor category
\item[ $\bone$ ] the one-point set
\item[ $\cA^{\rf}$ ] the finite length objects in an abelian category $\cA$
\item[ $\fS$ ] the infinite symmetric group
\end{description}

\subsection*{Acknowledgments}

We thank Bhargav Bhatt, Pierre Deligne, Pavel Etingof, Johannes Flake, Dugald Macpherson, Davesh Maulik, Ilia Nekrasov, Iian Smythe, Noah Snyder, Brian Street, David Treumann, and Jacob Tsimerman for helpful conversations.

\part{Integration on oligomorphic groups} \label{part:oligo}

\section{Oligomorphic groups} \label{s:oligo}

\subsection{Oligomorphic groups}

We begin by restating the following fundamental definition:

\begin{definition} \label{defn:oligo}
An \defn{oligomorphic group} is a group $G$ with a given faithful action on a set $\Omega$ such that $G$ has finitely many orbits on $\Omega^n$ for all $n \ge 0$.
\end{definition}

We refer to Cameron's book \cite{Cameron} for general background on oligomorphic groups. The ``finitely many orbits'' condition appears so ubiquitously in this paper that it will be convenient to give it a name: we say that an action of a group $G$ on a set $\Omega$ is \defn{finitary} if $G$ has finitely many orbits on $\Omega$, or equivalently, the quotient set $G \backslash \Omega$ is finite. Thus in the oligomorphic condition, we require that $\Omega^n$ is finitary for all $n \ge 0$.

For a set $\Omega$, let $\Omega^{[n]}$ be the subset of $\Omega^n$ consisting of points with distinct coordinates, and let $\Omega^{(n)}$ be the set of $n$-element subsets of $\Omega$. If $G$ acts on $\Omega$ then it is easy to see that the finitary conditions for $\Omega^n$, $\Omega^{[n]}$, and $\Omega^{(n)}$ are equivalent. Thus oligomorphic could equivalently be defined using $\Omega^{[n]}$ or $\Omega^{(n)}$ in place of $\Omega^n$.

Model theory provides a wealth of examples of oligomorphic groups, and, in a sense, explains the oligomorphic condition. We discuss this in \S \ref{s:model}. For now, we content ourselves with a few examples.

\begin{example} \label{ex:oligo}
We give a number of examples of oligomorphic groups.
\begin{enumerate}
\item The infinite symmetric group $\fS$ is oligomorphic (with respect to its defining action). For our purposes, it does not really matter which version of $\fS$ one uses, but to be definite we take $\fS$ to be the group of all permutations of the positive integers.
\item Let $\bF$ be a finite field and let $\bV=\bigoplus_{i \ge 1} \bF e_i$ be a vector space over $\bF$ of countable infinite dimension. Then the group $G=\bigcup_{n \ge 1} \GL_n(\bF)$ is oligomorphic, with respect to the action on $\bV$. Let $\bV_*=\bigoplus_{i \ge 1} \bF e_i^*$ be the \defn{restricted dual} of $\bV$. Then $G$ is also oligomorphic with respect to its action on $\bV \oplus \bV_*$, and this leads to a different theory (see Example~\ref{ex:gl-top}). The infinite orthogonal, symplectic, and unitary groups over $\bF$ are also oligomorphic.
\item The group $\Aut(\bR,<)$ of orientation-preserving homeomorphisms of the real line is oligomorphic (acting on $\bR$). A similar example is $\Aut(\bQ,<)$, the group of order-preserving permutations of the rational numbers. These two examples are essentially equivalent; we prefer working with the former, though the latter appears more often in the oligomorphic group literature. There are some similar examples: e.g., one can consider homeomorphisms of $\bR$ that may not preserve the orientation, or replace the real line with the circle.
\item Let $G$ be the group of self-homeomorphisms of the Cantor set. This group acts oligomorphically on the set of clopen subsets of the Cantor set. The theory developed in this paper is studied for this group in \cite{Snowden3}.
\item Let $R$ be the Rado graph. There are many ways to think about this graph. It is the Fra\"iss\'e limit of the class of finite graphs. It is also the random countable graph in the sense of Erd\H{o}s--R\'enyi \cite{Erdos}. The automorphism group $\Aut(R)$ of $R$ is oligomorphic, with respect to its action on the vertex set $\Omega$ of $R$. The orbits of $\Aut(R)$ on $\Omega^{(n)}$ are in bijection with isomorphism classes of graphs on $n$ vertices. There are several similar examples, e.g., using bipartite graphs or $K_n$-free graphs.
\item There are a number of variants of the above constructions. One can consider products (such as $\fS \times \fS$ acting on $\Omega \amalg \Omega$) and wreath products (such as $\fS \wr \fS$ acting on $\Omega \times \Omega$, or $\fS \wr \Gamma$ acting on $\Gamma^{\Omega}$, with $\Gamma$ a finite group). One can add colorings, e.g., 2-color $\bQ$ and consider permutations which preserve the order and the color. There is also a way to combine certain examples, e.g., one can combine $\Aut(\bQ,<)$ and the automorphism group of the Rado graph; see \cite{BPP}. \qedhere
\end{enumerate}
\end{example}

\subsection{Pro-oligomorphic groups} \label{ss:admiss}

Oligomorphic groups admit a natural topology which will be very useful for us. To speak about it, we introduce some terminology. Recall that a topological group $G$ is \defn{non-archimedean} if its open subgroups form a neighborhood basis of the identity, and \defn{Roelcke precompact} if whenever $U$ and $V$ are non-empty open subsets of $G$ there exists a finite subset $S$ of $G$ such that $G=USV$. We note that if $G$ is non-archimedean then $G$ is Hausdorff if and only if the intersection of all open subgroups is trivial, and $G$ is Roelcke precompact if and only if for all open subgroups $U$ and $V$ the double coset space $U \backslash G/V$ is finite.

\begin{definition} \label{defn:ad}
A \defn{pro-oligomorphic group}\footnote{In the first version of this paper, we used the term ``admissible group,'' but we now prefer the more descriptive term ``pro-oligomorphic.'' If $G$ is pro-oligomorphic then, for any open subgroup $U$, the image of $G$ in the permutation group of $G/U$ is oligomorphic, and this realizes $G$ as a dense subgroup of an inverse limit of oligomorphic groups.} is a topological group that is Hausdorff, non-archimedean, and Roelcke precompact.
\end{definition}

One easily sees that any open subgroup of a pro-oligomorphic group is pro-oligomorphic, when given the subspace topology. This will be used constantly in what follows.

Now suppose that a group $G$ acts on a set $\Omega$. For a subset $A$ of $\Omega$, let $G(A)$ be the subgroup of $G$ fixing each element of $A$. Then the $G(A)$'s, with $A$ finite, form a neighborhood basis of the identity for a topology on $G$.

\begin{proposition} \label{prop:admiss}
Let $(G, \Omega)$ be an oligomorphic group, and endow $G$ with the above topology. Then $G$ is pro-oligomorphic.
\end{proposition}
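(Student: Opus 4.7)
The plan is to verify the three defining properties of admissibility in turn, after a preliminary check that the $G(A)$'s actually give rise to a group topology. For the latter, I would note that $G(A) \cap G(B) = G(A \cup B)$, so the basic subgroups form a filtered system, and $g G(A) g^{-1} = G(gA)$ is again a basic subgroup, so the usual Bourbaki-style criterion produces a translation-invariant topology on $G$ for which multiplication and inversion are continuous and the $G(A)$'s form a neighborhood basis of the identity. Non-archimedean is then immediate, since this very basis consists of open subgroups.

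For Hausdorff, I would use the non-archimedean reformulation: it is enough to show that $\bigcap_A G(A) = \{e\}$ as $A$ ranges over finite subsets of $\Omega$. But an element $g$ in this intersection fixes every point of $\Omega$, and the action of $G$ on $\Omega$ is faithful by the definition of oligomorphic, so $g = e$.

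The main content is Roelcke precompactness. Since any open subgroup contains some $G(A)$, it is enough to show that $G(A) \backslash G / G(B)$ is finite whenever $A, B \subset \Omega$ are finite; larger subgroups only give quotients of this double coset space. I would observe that, as a $G$-set, $G/G(A)$ is canonically isomorphic to the $G$-orbit of any tuple $\underline{a} \in \Omega^{|A|}$ enumerating $A$, and similarly for $B$. Hence there is an injection of $G$-sets
\[
G/G(A) \times G/G(B) \hookrightarrow \Omega^{|A|} \times \Omega^{|B|} = \Omega^{|A|+|B|}.
\]
Since the double coset space $G(A) \backslash G / G(B)$ is in bijection with the set of $G$-orbits on $G/G(A) \times G/G(B)$ (via the map sending $G(A) g G(B)$ to the orbit of $(G(A), g^{-1} G(B))$, or equivalently to the orbit of $(\underline{a}, g^{-1}\underline{b})$ in $\Omega^{|A|+|B|}$), the oligomorphic hypothesis gives finitely many orbits on $\Omega^{|A|+|B|}$, and hence finitely many double cosets.

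None of the three steps looks like a serious obstacle; the only mild subtlety is being careful enough about the bijection between $U \backslash G / V$ and $G$-orbits on $G/U \times G/V$ to see that the embedding into a finite power of $\Omega$ is a map of $G$-sets, after which oligomorphy closes the argument at once.
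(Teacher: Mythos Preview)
Your proof is correct and follows essentially the same route as the paper's: both reduce Roelcke precompactness to the finiteness of $G(A)\backslash G/G(B)$ for finite $A,B$, identify $G/G(A)$ with a $G$-orbit in $\Omega^{|A|}$, use the standard bijection $G(A)\backslash G/G(B)\cong G\backslash(G/G(A)\times G/G(B))$, and embed into $G\backslash\Omega^{|A|+|B|}$ to invoke oligomorphy. Your preliminary verification that the $G(A)$'s generate a group topology is a nice addition that the paper leaves implicit.
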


\begin{proof}
This topology is non-archimedean by definition. Since $G$ acts faithfully on $\Omega$, we have $\bigcap G(A)=1$, where the intersection is over all finite subsets $A$ of $\Omega$, and so the topology is Hausdorff. Finally, let $U$ and $V$ be open subgroups of $G$. By definition, we can find subsets $A=\{x_1, \ldots, x_r\}$ and $B=\{y_1,\ldots,y_s\}$ of $\Omega$ such that $G(A) \subset U$ and $G(B) \subset V$. Clearly, $G/G(A)$ is isomorphic the orbit of $(x_1, \ldots, x_r) \in \Omega^r$ and $G/G(B)$ is isomorphic to the orbit of $(y_1, \ldots, y_s) \in \Omega^s$. We thus find
\begin{displaymath}
G(A) \backslash G/G(B) \cong G \backslash (G/G(A) \times G/G(B)) \subset G \backslash \Omega^{r+s}.
\end{displaymath}
Since the right side is finite by assumption, so is the left side. As $U \backslash G/V$ is a quotient of $G(A) \backslash G/G(B)$, it too is finite. Thus $G$ is Roelcke precompact.
\end{proof}

\begin{example} \label{ex:sym-top}
Let $\fS$ be the infinite symmetric group acting on $\Omega=\{1,2,\ldots\}$. Define $\fS(n)$ to be the subgroup of $\fS$ consisting of permutations that fix each of the numbers $1, \ldots, n$. Then a subgroup of $\fS$ is open if and only if it contains $\fS(n)$ for some $n$. One can identify $\fS/\fS(n)$ with the set $\Omega^{[n]}$ of all injections $[n] \to \Omega$. Roelcke precompactness thus amounts to the statement that $\fS(m)$ has finitely many orbits on $\Omega^{[n]}$, for any $n$ and $m$, which is easy to see directly.
\end{example}

\begin{example} \label{ex:gl-top}
Consider the infinite general linear group $G$ acting on $\bV$ (see Example~\ref{ex:oligo}). We define the \defn{parabolic topology} on $G$ to be the one induced from its action on $\bV$. The fundamental open subgroups for this topology have the form
\begin{displaymath}
\begin{bmatrix} 1_n & \ast \\ 0 & \ast \end{bmatrix}.
\end{displaymath}
We define the \defn{Levi topology} on $G$ to be the one induced from its action on $\bV \oplus \bV_*$. The fundamental open subgroups for this topology have the form
\begin{displaymath}
\begin{bmatrix} 1_n & 0 \\ 0 & \ast \end{bmatrix}.
\end{displaymath}
We thus see that $G$ carries two distinct pro-oligomorphic topologies. These examples are discussed further in \S \ref{s:finlin}.
\end{example}

The groups of primary interest to us in this paper occur as oligomorphic groups. However, the theory we develop only depends on the topology, and not the defining permutation action. For this reason, we work with pro-oligomorphic groups throughout. We note that there are pro-oligomorphic groups which are not oligomorphic, such as infinite profinite groups.

\subsection{Smooth actions} \label{ss:smooth}

Fix a pro-oligomorphic group $G$. Let $X$ be a set on which $G$ acts. We say that an element $x$ of $X$ is \defn{smooth} if the stabilizer of $x$ is an open subgroup of $G$. We say that $X$ itself is \defn{smooth} if every element is. Nearly every action in this paper will be smooth, and so we adopt the following terminological convention:

\begin{convention} \label{conv:Gset}
The phrase ``$G$-set'' means ``set equipped with a \emph{smooth} action of $G$.''
\end{convention}

The following proposition gives some important properties of $G$-sets.

\begin{proposition} \label{prop:smooth}
Let $X$ and $Y$ be $G$-sets and let $U$ be an open subgroup of $G$.
\begin{enumerate}
\item The $G$-action on $X \times Y$ is smooth; thus $X \times Y$ is a $G$-set.
\item If $X$ and $Y$ are finitary so is $X \times Y$.
\item $X$ is finitary as a $G$-set if and only if it is finitary as a $U$-set.
\item If $X$ and $Y$ are finitary then there are only finitely many $G$-maps $X \to Y$.
\end{enumerate}
\end{proposition}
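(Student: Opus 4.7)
The plan is to prove the four parts in order, since each is a direct consequence of the admissibility axioms (openness, Hausdorffness, and Roelcke precompactness) combined with smoothness of the actions.

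For (a), the stabilizer of $(x,y) \in X \times Y$ is $G(x) \cap G(y)$, where $G(x)$ and $G(y)$ are the stabilizers of $x$ and $y$ respectively. Both are open by smoothness, so their intersection is open, proving smoothness of $X \times Y$.

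For (c), I would argue both directions using the Roelcke precompactness of $G$. Since a $G$-orbit is a disjoint union of $U$-orbits, the number of $G$-orbits on $X$ is bounded above by the number of $U$-orbits, giving one direction immediately. For the converse, suppose $X$ has finitely many $G$-orbits. It suffices to show that each $G$-orbit $G \cdot x \cong G/G(x)$ decomposes into finitely many $U$-orbits; but the $U$-orbits on $G/G(x)$ are indexed by the double coset space $U \backslash G / G(x)$, which is finite since both $U$ and $G(x)$ are open and $G$ is Roelcke precompact.

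For (b), I would reduce to the transitive case: writing $X = \coprod G \cdot x_i$ and $Y = \coprod G \cdot y_j$, it suffices to show each $G \cdot x_i \times G \cdot y_j$ has finitely many $G$-orbits. Equivalently, by applying (c) with $U = G(x_i)$, it suffices to show $G \cdot y_j$ has finitely many $G(x_i)$-orbits; but $G(x_i)$ is open, so this follows from the finite orbit count just used (i.e., $G(x_i) \backslash G / G(y_j)$ is finite by Roelcke precompactness).

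For (d), a $G$-map $f \colon X \to Y$ is determined by choosing, for each $G$-orbit representative $x_i \in X$, an image $f(x_i) \in Y^{G(x_i)}$ (the $G(x_i)$-fixed points of $Y$). Since $X$ has finitely many orbits, it suffices to show that $Y^{G(x_i)}$ is finite for each $i$. Writing $Y$ as a finite disjoint union of transitive $G$-sets $G/H_j$ (with $H_j$ open), the $G(x_i)$-fixed points of $G/H_j$ correspond to cosets $gH_j$ with $g^{-1} G(x_i) g \subseteq H_j$, whose number is bounded by $|G(x_i) \backslash G / H_j|$, finite again by Roelcke precompactness. The main subtlety, though routine, is in (d): one must make sure to count fixed points rather than merely orbits, which is why I use the containment $g^{-1}G(x_i)g \subseteq H_j$ to bound fixed points by double cosets.
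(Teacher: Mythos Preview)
Your proof is correct and covers all four parts; the underlying idea (reduce everything to finiteness of double coset spaces $U \backslash G / V$ via Roelcke precompactness) matches the paper's. There are two organizational differences worth noting.

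First, you prove (c) directly and then invoke it to get (b), while the paper does the reverse: it proves (b) by writing $G \backslash (G/U \times G/V) = U \backslash G / V$ directly, and then derives (c) from (b) via the bijection $U \backslash X \cong G \backslash (G/U \times X)$. The content is the same either way; both routes land on the same double coset.

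Second, for (d) the paper gives a slicker argument: a $G$-map $f \colon X \to Y$ is determined by its graph, a $G$-stable subset of $X \times Y$, and since $X \times Y$ is finitary by (b) it has only finitely many $G$-stable subsets. Your fixed-point count is perfectly valid (and your care in bounding fixed points by orbits is well placed), but the graph argument avoids the bookkeeping entirely.
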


\begin{proof}
(a) If $(x,y)$ is a point of $X \times Y$ then its stabilizer is $G_x \cap G_y$. Since $X$ and $Y$ are smooth, $G_x$ and $G_y$ are open, and so $G_x \cap G_y$ is open as well. Thus $X \times Y$ is smooth.

(b) It suffices to treat the case where $X=G/U$ and $Y=G/V$ for open subgroups $U$ and $V$. Then $G \backslash (X \times Y)=U \backslash G/V$ is finite by Roelcke precompactness, and so the result follows.

(c) If $X$ is finitary as a $U$-set then it is clearly so as a $G$-set. For the converse, note that $U \backslash X \cong G \backslash (G/U \times X)$, and so $X$ is $U$-finitary since $G/U \times X$ is $G$-finitary by~(b).

(d) A $G$-map $X \to Y$ is determined by its graph, which is a $G$-stable subset of $X \times Y$. Since $X \times Y$ is finitary by~(b), it has only finitely many $G$-stable subsets.
\end{proof}

We let $\cS(G)$ denote the category of finitary smooth $G$-sets. The above proposition shows that it is closed under finite products; since it is also closed under passing to $G$-subsets, it is also closed under fiber products.  In \cite{bcat} we fully describe the additional properties the categories $\cS(G)$ enjoy, leading the notion of a pre-Galois category.

\subsection{Induction} \label{ss:induced-Gset}

Let $U$ be an open subgroup of a pro-oligomorphic group $G$ and let $X$ be a $U$-set. We define the \defn{induction} of $U$, denoted $I_U^G(X)$, to be the set $G \times^U X$; this is the quotient of the product $G \times X$ by the relations $(gu, x)=(g,ux)$ for $g \in G$, $u \in U$, and $x \in X$. The group $G$ acts on $I_U^G(X)$ through its action on the first factor: $g \cdot (g',x)=(gg',x)$. One readily verifies that this action is smooth, and so $I_U^G(X)$ is a $G$-set. We have the following additional properties, whose verification we leave to the reader:
\begin{itemize}
\item There is a well-defined map of $G$-sets $I_U^G(X) \to G/U$ given by $(g,x) \mapsto g$, the fiber of which over $1 \in G/U$ is naturally identified with $X$.
\item We have a natural bijection $G \backslash I_U^G(X) \cong U \backslash X$.
\item If $X$ is a finitary $U$-set then $I_U^G(X)$ is a finitary $G$-set.
\item Induction is functorial: if $f \colon X \to Y$ is a map of $U$-sets then there is an induced map of $G$-sets $I_U^G(f) \colon I_U^G(X) \to I_U^G(Y)$.
\end{itemize}

\subsection{Infinitesimal actions} \label{ss:Ghat}

Fix a pro-oligomorphic group $G$. It will often be convenient to work with $U$-sets, for $U$ an open subgroup of $G$, without having to specify $U$ explicitly. We now introduce a device that allows us to do so.

A \defn{$\hat{G}$-action} on a set $X$ is an action of some open subgroup $U$ of $G$. The subgroup $U$ is called a \defn{group of definition} for the action. Shrinking the group of definition does not change the $\hat{G}$-action. Formally, a $\hat{G}$-action is an element of the set $\varinjlim_{U \subset G} A(U, X)$, where the direct limit is taken over the open subgroups $U$ of $G$, and $A(U, X)$ denotes the set of all actions of $U$ on $X$. It is important to note that $\hat{G}$ is simply a piece of notation and not a mathematical object. However, at an intuitive level, we think of $\hat{G}$ as an infinitesimal neighborhood of the identity in $G$.

Consider a set $X$ equipped with a $\hat{G}$-action. We say that the action is \defn{smooth} if the action of $U$ is smooth for some group of definition $U$; one easily sees that this is independent of the choice of $U$. We only consider smooth $\hat{G}$-actions, and so, as in Convention~\ref{conv:Gset}, we define a \defn{$\hat{G}$-set} to be a set equipped with a smooth $\hat{G}$-action.

Let $X$ be a $\hat{G}$-set. We say that $X$ is \defn{finitary} if it is finitary as a $U$-set for some group of definition $U$; this is well-defined (i.e., independent of the choice of $U$) by Proposition~\ref{prop:smooth}(c). A subset $Y$ of $X$ is a $\hat{G}$-subset if there is a group of definition $U$ for $X$ that leaves $Y$ stable. Every finite subset $Y$ of $X$ is a $\hat{G}$-subset: indeed, $Y$ is stable under the open subgroup $G(Y)$ that fixes each element of $Y$. It is clear that the collection of $\hat{G}$-subsets of $X$ is closed under finite unions and intersections. Infinite unions or intersections can be problematic, however, since the groups of definition could shrink too much. For $g \in G$, we define the conjugate $X^g$ to be the $\hat{G}$-set with the same underlying set, and with action $\bullet$ defined by $h \bullet x = (g^{-1}hg) \cdot x$.

Let $X$ and $Y$ be smooth $\hat{G}$-sets. We say that a function $f \colon X \to Y$ is \defn{$\hat{G}$-equivariant}, or a \defn{map of $\hat{G}$-sets}, or \defn{smooth}, if $f$ is $U$-equivariant for some open subgroup $U$ that is a group of definition for both $X$ and $Y$. Suppose that $f$ is $\hat{G}$-equivariant. If $Y'$ is a $\hat{G}$-subset of $Y$ then $f^{-1}(Y')$ is a $\hat{G}$-subset of $X$. In particular, we see that the fibers of $f$ are $\hat{G}$-subsets of $X$; this is one of the very pleasant features of $\hat{G}$-sets.

\begin{example}
Consider the infinite symmetric group $\fS$ acting on $\Omega=\{1,2,\ldots\}$. Then the $\hat{\fS}$-subsets of $\Omega$ are exactly the finite or co-finite subsets of $\Omega$. More generally, a $\hat{\fS}$-subset of $\Omega^n$ is one that can be defined by a first-order formula using $=$ and constants. See Figure~\ref{fig:Shat-sets} for some examples.
\end{example}

\begin{example}
For $G=\Aut(\bR,<)$, the $\hat{G}$-subsets of $\bR$ are exactly the finite unions of points and open intervals. More generally, a $\hat{G}$-subset of $\bR^n$ is one that can be defined by a first-order formula using $<$, $=$, and constants.
\end{example}

\subsection{The relative setting} \label{ss:relative}

Let $G$ be a pro-oligomorphic group. A \defn{stabilizer class} in $G$ is a collection $\sE$ of open subgroups satisfying the following conditions:
\begin{enumerate}
\item $\sE$ contains $G$.
\item $\sE$ is stable under conjugation.
\item $\sE$ is stable under finite intersections.
\item $\sE$ is a neighborhood basis for the identity.
\end{enumerate}
If $G$ is oligomorphic then the collection of all stabilizer groups on $\Omega^n$, as $n$ varies, forms a stabilizer class; we call this the stabilizer class \defn{generated} by $\Omega$.

Fix a stabilizer class $\sE$. We say that a $G$-set is \defn{$\sE$-smooth} if the stabilizer of every element of $X$ belongs to $\sE$. More generally, if $U$ is an open subgroup of $G$ (not necessarily in $\sE$), we say that a $U$-set is \defn{$\sE$-smooth} if the stabilizer of every element has the form $U \cap V$ for some $V \in \sE$. This leads to a notion of $\sE$-smoothness for $\hat{G}$-sets. The class of $\sE$-smooth sets (for $G$, $U$, or $\hat{G}$) is stable under finite products and co-product, fiber products, and passing to stable subsets. Thus the category of $\sE$-smooth $G$-sets formally resembles the category of $G$-sets in many ways.

Most of our work with pro-oligomorphic groups extends to the setting of pro-oligomorphic groups equipped with a stabilizer class; we refer to this as the ``relative setting.'' To keep exposition simple, we do our main work in the absolute setting, and then sometimes comment on generalizations to the relative setting.

\begin{example} \label{ex:stab-class}
Consider the infinite symmetric group $\fS$ acting on $\Omega=\{1,2,\ldots\}$. Let $\sE$ be the stabilizer class generated by $\Omega$. Explicitly, $\sE$ consists of all conjugates of the groups $\fS(n)$ introduced in Example~\ref{ex:sym-top}. A transitive $\fS$-set is $\sE$-smooth if and only if it is isomorphic to $\Omega^{[n]}$ for some $n$. In particular, the set $\Omega^{(2)}$ of 2-element subsets of $\Omega$ is \emph{not} $\sE$-smooth.
\end{example}

\begin{example} \label{ex:stab-young}
Recall that a \defn{Young subgroup} of $\fS_n$ is a subgroup of the form $\fS_{m_1} \times \cdots \times \fS_{m_r}$, where $m_1+\cdots+m_r=n$. We define a \defn{Young subgroup} of $\fS$ to a subgroup conjugate to one of the form $H \times \fS(n)$, where $H$ is a Young subgroup of $\fS_n$. The collection of all Young subgroups forms a stabilizer class for $\fS$.
\end{example}

\section{Measures} \label{s:meas}

\subsection{Overview}

Fix a pro-oligomorphic group $G$ and a commutative ring $k$ for the duration of \S \ref{s:meas}. In this section, we introduce a notion of measure for $G$. This definition is one of the key ideas in this paper. As we explain in \S \ref{s:int}, a measure allows one to develop a theory of integration, which in turn allows us to define tensor categories; thus everything rests on measures. We also introduce the ring $\Theta(G)$ which carries the universal measure for $G$. The results in this section give various ways of describing $\Theta(G)$, each of which will be used subsequently.

\subsection{Measures} \label{ss:measure}

The following is the key definition:

\begin{definition} \label{defn:measure}
A \defn{measure} for $G$ with values in $k$ is a rule $\mu$ that assigns to every finitary $\hat{G}$-set $X$ a quantity $\mu(X)$ in $k$ such that the following conditions hold (in which $X$ and $Y$ are finitary $\hat{G}$-sets):
\begin{enumerate}
\item If $X$ and $Y$ are isomorphic then $\mu(X)=\mu(Y)$.
\item We have $\mu(\bone)=1$, where $\bone$ is the one-point set.
\item We have $\mu(X \amalg Y)=\mu(X)+\mu(Y)$.
\item For $g \in G$, we have $\mu(X)=\mu(X^g)$, where $X^g$ is the conjugate of $X$ by $g$.
\item If $X$ and $Y$ are transitive $U$-sets, for some open subgroup $U$, and $X \to Y$ is a map of $U$-sets with fiber $F$ (over some point), then $\mu(X)=\mu(F) \cdot \mu(Y)$. \qedhere
\end{enumerate} 
\end{definition}

\begin{remark}
We make a number of remarks concerning this definition:
\begin{enumerate}
\item We elaborate on condition \dref{defn:measure}{e}. Let $f \colon X \to Y$ be the map, and suppose that $F=f^{-1}(y)$ for $y \in Y$. First note that $F$ is a $\hat{G}$-subset of $X$: indeed, it is stable under the stabilizer of $y$ in $U$. Second, suppose that $F'=f^{-1}(y')$ for some other point $y' \in Y$. Since $Y$ is transitive, we have $y=gy'$ for some $g \in U$, and so $F=gF'$. We thus see that $F'$ is isomorphic to the conjugate set $F^g$, and so $\mu(F)=\mu(F')$ by \dref{defn:measure}{a} and \dref{defn:measure}{d}. Thus the condition in \dref{defn:measure}{e} is independent of the choice of fiber. The slogan for \dref{defn:measure}{e} is ``$\mu$ is multiplicative in fibrations.''
\item If $X$ and $Y$ are finitary $\hat{G}$-sets then $\mu(X \times Y)=\mu(X) \cdot \mu(Y)$. Indeed, by \dref{defn:measure}{c}, it suffices to treat the case where $X$ and $Y$ are transitive $U$-sets, for some open subgroup $U$, and then the identity follows from \dref{defn:measure}{e} by considering the projection $X \times Y \to Y$.
\item Let $U$ be an open subgroup of $G$. Then a $\hat{U}$-set is the same thing as a $\hat{G}$-set. One easily sees that a measure for $G$ is also a measure for $U$; in other words, we can restrict measures from $G$ to $U$. A measure for $U$ may not be a measure for $G$, however, as conditions (d) and (e) are potentially stronger for $G$ than for $U$.
\item There is a relative notion of measure: given a stabilizer class $\sE$, a measure for $(G, \sE)$ assigns a value $\mu(X)$ to each finitary $\sE$-smooth $\hat{G}$-set $X$ such that (a)--(e) hold when they make sense.
\item Let $\mu$ be a $k$-valued measure for $G$, let $f \colon k \to k'$ be a ring homomorphism, and define $\mu'$ by $\mu'(X)=f(\mu(X))$. Then $\mu'$ is easily seen to be a measure. We refer to this construction as \defn{extension of scalars} for measures. \qedhere
\end{enumerate}
\end{remark}

\begin{example} \label{ex:finite-measure}
Suppose $G$ is finite. Then any finitary $\hat{G}$-set is finite, and we obtain a measure $\mu$ by defining $\mu(X)=\# X$. In fact, this is the only measure: indeed, if $X$ is a finitary $\hat{G}$-set then it is a finite disjoint union of singletons, and so its measure must by $\# X$ by \dref{defn:measure}{b} and \dref{defn:measure}{c}.
\end{example}

\begin{example} \label{ex:sym-measure}
Let $\fS$ be the infinite symmetric group, i.e., the group of all permutations of $\Omega=\{1,2,\ldots\}$. Given a complex number $t$, we define a $\bC$-valued measure $\mu_t$ on $\fS$ as follows. Let $X$ be a finitary $\hat{\fS}$-set. There is a polynomial $p_X \in \bQ[x]$ such that $p_X(n)$ is the number of fixed points of $\fS(n)$ on $X$ for all sufficiently large integers $n$; recall that $\fS(n)$ denotes the subgroup of $\fS$ fixing each of $1, \ldots, n$. We define $\mu_t(X)=p_X(t)$. Some examples:
\begin{align*}
\mu_t(\Omega) &=t & 
\mu_t(\Omega^{[n]}) &= t(t-1)\cdots(t-n+1) \\
\mu_t(\Omega \setminus \{1,\ldots,n\}) &= t-n &
\mu_t(\Omega^{(n)}) &= \binom{t}{n}
\end{align*}
In fact, the $\mu_t$ account for all of the complex-valued measures on $\fS$. The above assertions are proven in \S \ref{s:sym}. From our point of view, the measure $\mu_t$ is responsible for the existence of Deligne's interpolation category $\uRep(\fS_t)$.
\end{example}

\begin{example} \label{ex:AutR-measure}
Let $G=\Aut(\bR,<)$. Any finitary $\hat{G}$-set admits a smooth manifold structure. We obtain a $\bZ$-valued measure $\mu$ for $G$ by taking $\mu(X)$ to be the compactly supported Euler characteristic of $X$. For example:
\begin{displaymath}
\mu(\bR^n) = (-1)^n, \qquad
\mu(\bR^{(n)}) = (-1)^n, \qquad
\mu(\bR^{[n]}) = (-1)^n n!.
\end{displaymath}
These assertions are proven in \S \ref{s:order}.
\end{example}

\subsection{The ring \texorpdfstring{$\Theta(G)$}{\textTheta(G)}}

Understanding the measures for $G$ is not a simple matter. To aid us in this task, we introduce the following ring:

\begin{definition} \label{defn:Theta}
We define a commutative ring $\Theta(G)$ as follows. For each finitary\ $\hat{G}$-set $X$, there is an element $[X]$ of $\Theta(G)$. These elements are subject to the following relations (in which $X$ and $Y$ denote finitary $\hat{G}$-sets):
\begin{enumerate}
\item If $X$ and $Y$ are isomorphic then $[X]=[Y]$.
\item If $X$ is a singleton set then $[X]=1$.
\item We have $[X \amalg Y]=[X]+[Y]$.
\item For $g \in G$ we have $[X]=[X^g]$, where $X^g$ is the conjugate of $X$ by $g$.
\item If $X$ and $Y$ are transitive $U$-sets, for some open subgroup $U$, and $X \to Y$ is a map of $U$-sets with fiber $F$ (over some point of $Y$), then $[X]=[F] \cdot [Y]$.
\end{enumerate}
More formally, $\Theta(G)$ can be defined by taking the polynomial ring in variables indexed by isomorphism classes of finitary $\hat{G}$-sets and forming the quotient by the ideal generated by relations corresponding to (b)--(e).
\end{definition}

If $\mu$ is a $k$-valued measure for $G$ then we obtain a homomorphism $\Theta(G) \to k$ by $[X] \mapsto \mu(X)$. Conversely, any homomorphism $\Theta(G) \to k$ yields a measure for $G$. We thus see that there is a universal measure $\mu_{\rm univ}$ for $G$ with values in $\Theta(G)$ given by $\mu_{\rm univ}(X)=[X]$. In this way, understanding measures for $G$ is equivalent to understanding the ring $\Theta(G)$.

\begin{remark}
If $X$ and $Y$ are finitary $\hat{G}$-sets then $[X \times Y]=[X] [Y]$ holds in $\Theta(G)$. This follows since $X \mapsto [X]$ is a measure.
\end{remark}

\begin{remark}
There is a relative version of this construction: given a stabilizer class $\sE$ we let $\Theta(G; \sE)$ be the ring generated by classes $[X]$, with $X$ a $\sE$-smooth finitary $\hat{G}$-set, subject to the same relations.
\end{remark}

\subsection{Basic properties of \texorpdfstring{$\Theta(G)$}{\textTheta(G)}} \label{ss:Theta-prop}

We now discuss some basic properties of $\Theta$.

(a) \textit{Functorial behavior.} If $U$ is an open subgroup of $G$ then there is a surjective ring homomorphism $\Theta(U) \to \Theta(G)$ given by $[X] \mapsto [X]$, for $X$ a finitary $\hat{U}$-set. If $f \colon G \to H$ is a continuous surjection of pro-oligomorphic groups then there is a ring homomorphism $\Theta(H) \to \Theta(G)$ given by $[X] \mapsto [f^*(X)]$, where $X$ is a $\hat{H}$-set and $f^*(X)$ denotes the $\hat{G}$-set obtained by letting $\hat{G}$ act through $f$. If $\sE \subset \sF$ are stabilizer classes, then there is a ring homomorphism $\Theta(G; \sE) \to \Theta(G; \sF)$ given by $[X] \mapsto [X]$. These statements all follow easily from the definitions.

(b) \textit{Finite sets.} If $X$ is a $\hat{G}$-set with $n<\infty$ elements then $X$ is isomorphic, as a $\hat{G}$-set, to a disjoint union of $n$ one-point sets, and so $[X]=n$ in $\Theta(G)$. As a consequence, we see that $\Theta(G)=\bZ$ if $G$ is a finite (or profinite) group. Indeed, the above discussion shows that the canonical map $\bZ \to \Theta(G)$ is surjective, while we also have a homomorphism $\Theta(G) \to \bZ$ given by $[X] \mapsto \# X$ (see Example~\ref{ex:finite-measure}).

(c) \textit{Products.} If $G$ and $H$ are pro-oligomorphic groups then there is a natural ring homomorphism
\begin{displaymath}
\Theta(G) \otimes \Theta(H) \to \Theta(G \times H), \qquad [X] \otimes [Y] \mapsto [X \times Y].
\end{displaymath}
Indeed, this is just the product of the two pull-back maps from~(a). This homomorphism may not be an isomorphism, though we know no counterexample. Let $\sE$ be the stabilizer class for $G \times H$ consisting of all subgroups of the form $U \times V$, where $U$ is an open subgroup of $G$ and $V$ is an open subgroup of $H$. Then the above homomorphism factors as
\begin{displaymath}
\Theta(G) \otimes \Theta(H) \to \Theta(G \times H; \sE) \to \Theta(G \times H),
\end{displaymath}
and one can show that the first homomorphism above is an isomorphism. This provides a simple illustration of how stabilizer classes can be useful.

(d) \textit{Large stabilizer classes.} Let $\sE$ be a stabilizer class that is ``large'' in the following sense: given any open subgroup $U$ of $G$, there exists $V \in \sE$ such that $U$ contains $V$ with finite index. Then the natural map
\begin{displaymath}
\phi \colon \Theta(G; \sE) \otimes \bQ \to \Theta(G) \otimes \bQ
\end{displaymath}
coming from (a) is an isomorphism. To show this, we construct an inverse $\psi$. Let $V \subset U$ be open subgroups of $G$ and let $W \in \sE$ be such that $V$ contains $W$ with finite index. Then $\psi$ is defined by
\begin{displaymath}
\psi([U/V]) = [V:W]^{-1} \cdot [U/W]
\end{displaymath}
One must show that this is independent of the choice of $V$ and respects the defining relations of $\Theta(G)$. We leave the details to the reader.

If $G$ is oligomorphic and satisfies the so-called ``strong small index property'' \cite[\S 4.8]{Cameron} then the stabilizer class $\sE$ generated by $\Omega$ is large. This property is known for $\fS$ and linear groups over finite fields \cite[(4.22)]{Cameron}, the automorphism group of the Rado graph \cite{Cameron5}, and several other examples \cite[Corollary~3]{PaoliniShelah}.

(e) \textit{Normal subgroups.} Let $U$ be an open normal subgroup of $G$. Then the finite group $G/U$ acts on $\Theta(U)$ by $g^{-1} \cdot [X]=[X^g]$, and the natural map $\Theta(U) \to \Theta(G)$ induces an isomorphism $\Theta(U)_{G/U} \to \Theta(G)$, where $(-)_{G/U}$ denotes the co-invariant ring.

(f) \textit{Cardinality.} It is not difficult to see that if $G$ is first-countable then it has countably many open subgroups; similarly, if $G$ is oligomorphic then it has countably many conjugacy classes of open subgroups. In either case, we see that $\Theta(G)$ has countable cardinality.

\subsection{The Burnside ring} \label{ss:burnside}

The ring $\Theta(G)$ is extremely subtle. It is closely related to a much more straightforward ring, the Burnside ring of $\hat{G}$, which we now introduce. It is often convenient to use the Burnside ring as a starting point when studying $\Theta(G)$.

The \defn{Burnside ring} of $G$, denoted $\Omega(G)$, is the free abelian group on the set of isomorphism classes of transitive $G$-sets. Given a transitive $G$-set $X$, we write $\lbb X \rbb$ for the corresponding basis vector of $\Omega(G)$. More generally, for a finitary $G$-set $X$ we define $\lbb X \rbb=\sum_{i=1}^n \lbb X_i \rbb$ where $X_1, \ldots, X_n$ are the $G$-orbits on $X$. As the name suggests, the Burnside ring admits the structure of a commutative ring by $\lbb X \rbb \cdot \lbb Y \rbb = \lbb X \times Y \rbb$. The unit element is the class $\lbb \bone \rbb$ of the one-point set $\bone$.

Suppose $f \colon H \to G$ is a continuous homomorphism of pro-oligomorphic groups such that $H$ acts with finitely many orbits on any finitary $G$-set (e.g., $f$ could be surjective). Then there is an induced ring homomorphism $f^* \colon \Omega(G) \to \Omega(H)$ defined by $f^*(\lbb X \rbb)=\lbb f^*(X) \rbb$, where $f^*(X)$ denotes the set $X$ equipped with its action of $H$.

In particular, if $V \subset U$ are opens subgroup of $G$ then there is a restriction map $\Omega(U) \to \Omega(V)$. We thus have a directed system of rings $\{\Omega(U)\}$ indexed by the open subgroups of $G$. We define $\Omega(\hat{G})$ to be the direct limit, and refer to it as the \defn{Burnside ring} of $\hat{G}$; this can also be thought of as the \defn{infinitesimal Burnside ring} of $G$. If $X$ is a finitary $\hat{G}$-set, we have an associated class $\lbb X \rbb$ in $\Omega(\hat{G})$, by first taking the class of $X$ in $\Omega(U)$ for some group of definition $U$, and then mapping this to $\Omega(\hat{G})$.

The next definition and proposition show how $\Theta(G)$ can be described using $\Omega(\hat{G})$.

\begin{definition}
We introduce the following ideals of the Burnside ring $\Omega(\hat{G})$:
\begin{itemize}
\item We let $\fa_1(G)$ be the ideal generated by the elements $\lbb X \rbb-\lbb X^g \rbb$, where $X$ is a finitary $\hat{G}$-set and $g \in G$.
\item We let $\fa_2(G)$ be the ideal generated by the elements $\lbb X \rbb-\lbb F \rbb \lbb Y \rbb$, where $X \to Y$ is a surjection of finitary $U$-sets for some open subgroup $U$, $Y$ is transitive, and $F$ is the fiber.
\item We let $\fa(G)=\fa_1(G)+\fa_2(G)$. \qedhere
\end{itemize}
\end{definition}

\begin{proposition} \label{prop:Burnside-Theta}
We have a ring isomorphism $\Omega(\hat{G})/\fa(G) \to \Theta(G)$ via $\lbb X \rbb \mapsto [X]$.
\end{proposition}

\begin{proof}
For a finitary $\hat{G}$-set $X$, let $[X]'$ be the image of $\lbb X \rbb$ in $R=\Omega(\hat{G})/\fa(G)$. We have a natural ring homomorphism $\Omega(\hat{G}) \to \Theta(G)$ defined by $\lbb X \rbb \mapsto [X]$. It is clear that this map kills $\fa(G)$, and thus induces a ring homomorphism $R \to \Theta(G)$ satisfying $[X]' \mapsto [X]$. On the other hand, the association $X \mapsto [X']$ clearly respects the defining relations of $\Theta(G)$: indeed, relations (a), (b), and (c) already hold in $\Omega(\hat{G})$, while (d) and (e) hold by fiat since we have killed $\fa(G)$. We thus have a ring homomorphism $\Theta(G) \to R$ given by $[X] \mapsto [X]'$. The result follows.
\end{proof}

\begin{remark}
In the oligomorphic case, the Burnside ring $\Omega(G)$ is closely related to the orbit algebra of $G$; see \cite[\S 3.8]{Cameron} for more on this, and \cite{FalqueThiery} for some recent results.
\end{remark}

\subsection{Another description of \texorpdfstring{$\Theta(G)$}{\textTheta(G)}} \label{ss:other-theta}

We now give a different description of the ring $\Theta(G)$. This description has some advantages described in Remark~\ref{rmk:Theta-prime} below, and is also used in a few key places below, specifically the proofs of Proposition~\ref{prop:Theta-star}, Theorems~\ref{thm:compare}, and Theorem~\ref{thm:linear}. To this end, we introduce another definition:

\begin{definition} \label{defn:Theta-prime}
We define a commutative ring $\Theta'(G)$ as follows. For each map $f \colon X \to Y$ of $G$-sets with $X$ finitary and $Y$ transitive, there is an element $\langle f \rangle$ of $\Theta'(G)$. These elements are subject to the following relations:
\begin{enumerate}
\item If $f$ is an isomorphism of transitive $G$-sets then $\langle f \rangle=1$.
\item Suppose $X=X_1 \sqcup X_2$ and let $f_i$ be the restriction of $f$ to $X_i$. Then $\langle f \rangle = \langle f_1 \rangle + \langle f_2 \rangle$.
\item For maps $f \colon X \to Y$ and $g \colon Y \to Z$ of transitive $G$-sets, we have $\langle gf \rangle=\langle g \rangle \langle f \rangle$.
\item Let $f \colon X \to Y$ and $g \colon Y' \to Y$ be maps of $G$-sets, with $Y$ and $Y'$ transitive, and let $f' \colon X' \to Y'$ be the base change of $f$. Then $\langle f \rangle = \langle f' \rangle$. \qedhere
\end{enumerate}
\end{definition}

Suppose that $f \colon X \to Y$ is a map of $G$-sets with $X$ finitary and $Y$ transitive. As we have previously seen, if $y,y' \in Y$ then the fibers $f^{-1}(y)$ and $f^{-1}(y')$ are $G$-conjugate, and thus have the same class in $\Theta(G)$. We define $[f]=[f^{-1}(y)]$ for any choice of $y$. The following proposition is our main result on $\Theta'(G)$.

\begin{proposition} \label{prop:Theta-prime}
We have an isomorphism $\Theta'(G) \to \Theta(G)$ given by $\langle f \rangle \mapsto [f]$.
\end{proposition}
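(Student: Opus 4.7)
The plan is to construct mutually inverse ring homomorphisms $\Phi \colon \Theta'(G) \to \Theta(G)$ and $\Psi \colon \Theta(G) \to \Theta'(G)$.

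For $\Phi$, I set $\Phi(\langle f \rangle) = [f]$ on generators. This is well-defined on classes since any map $f \colon X \to Y$ of $G$-sets with $Y$ transitive is $\Theta$-constant, as noted at the start of \S \ref{ss:Theta-const}. To verify that $\Phi$ respects the relations of Definition~\ref{defn:Theta-prime}, relations (a)--(c) are immediate (using relation (c) of $\Theta(G)$ to handle decomposition of the domain), while relation (d) is precisely Proposition~\ref{prop:Theta-const-fiber}(b) and relation (e) is Proposition~\ref{prop:Theta-const-fiber}(c).

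For $\Psi$, given a finitary $\hat{G}$-set $X$ I choose a group of definition $U$ and set $\Psi([X]) = \langle I_U^G(X) \to G/U \rangle$, where the map is the canonical projection $[g,x] \mapsto gU$. I first verify independence of $U$: if $V \subset U$ is also a group of definition, the natural square with corners $I_V^G(X), I_U^G(X), G/V, G/U$ is cartesian, so relation (e) of $\Theta'(G)$ forces the two candidate classes to agree. Next I check the defining relations of $\Theta(G)$. Relations (a) and (b) are immediate (for (b), take $U = G$), and (c) follows from $I_U^G(X \amalg Y) = I_U^G(X) \amalg I_U^G(Y)$ combined with relation (c) of $\Theta'(G)$. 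For conjugation invariance (d), if $U$ is a group of definition of $X$ then $gUg^{-1}$ is one for $X^g$, and the assignments $[h,x] \mapsto [hg^{-1}, x]$ and $hU \mapsto hUg^{-1}$ define a $G$-equivariant isomorphism between the morphisms $I_U^G(X) \to G/U$ and $I_{gUg^{-1}}^G(X^g) \to G/(gUg^{-1})$.

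The key step is relation (e) of $\Theta(G)$: given a map of transitive $U$-sets $X \to Y$ with fiber $F$, write $Y = U/U'$ and $X = U/U''$ with $U'' \subset U' \subset U$, so that $F = U'/U''$. Then $I_U^G(X) = G/U''$, $I_U^G(Y) = G/U'$, and the projection $I_U^G(X) \to G/U$ factors as the composition of transitive $G$-sets $G/U'' \to G/U' \to G/U$. Relation (d) of $\Theta'(G)$ then yields $\langle G/U'' \to G/U \rangle = \langle G/U'' \to G/U' \rangle \cdot \langle G/U' \to G/U \rangle$. The first factor equals $\Psi([F])$ since $G/U'' = I_{U'}^G(U'/U'') = I_{U'}^G(F)$, and the second factor is $\Psi([Y])$; this delivers $\Psi([X]) = \Psi([F]) \cdot \Psi([Y])$.

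Finally, to see that $\Phi$ and $\Psi$ are mutually inverse: $\Phi\Psi([X]) = [I_U^G(X) \to G/U] = [X]$ because the fiber over the identity coset is $X$ itself. Conversely, given a generator $\langle f \colon X \to Y \rangle$ of $\Theta'(G)$ with $Y$ transitive, pick a base point $y_0 \in Y$ with stabilizer $U$ and set $F = f^{-1}(y_0)$; the canonical map $I_U^G(F) \to X$, $[g,x] \mapsto gx$, is an isomorphism of $G$-sets over $G/U \cong Y$, so $\langle f \rangle = \langle I_U^G(F) \to G/U \rangle = \Psi([F]) = \Psi\Phi(\langle f \rangle)$. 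The main obstacle is verifying relation (e) for $\Psi$, where one has to recognize a fibration of $U$-sets inside the composition $G/U'' \to G/U' \to G/U$ of transitive $G$-sets; once this dictionary is clear, everything else is formal bookkeeping.
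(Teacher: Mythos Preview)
Your proof is correct and follows essentially the same approach as the paper: construct $\Phi$ via $\langle f\rangle \mapsto [f]$ using Proposition~\ref{prop:Theta-const-fiber}, construct the inverse $\Psi$ via induction $[X]\mapsto \langle I_U^G(X)\to G/U\rangle$, and verify the relations on each side. The only notable difference is in handling relation~(e) of $\Theta(G)$: you write $X=U/U''$, $Y=U/U'$, $F=U'/U''$ explicitly and read off the factorization $G/U''\to G/U'\to G/U$ directly, whereas the paper keeps things functorial and argues via a pair of cartesian squares to identify $\langle I_U^G(f)\rangle$ with $\tilde\psi(F)$; your coset argument is shorter and equally valid.
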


We break the proof up into a few lemmas.

\begin{lemma}
There exists a well-defined ring homomorphism $\phi \colon \Theta'(G) \to \Theta(G)$ satisfying $\phi(\langle f \rangle)=[f]$.
\end{lemma}

\begin{proof}
Let $f \colon X \to Y$ be a map of $G$-sets with $X$ finitary and $Y$ transitive. Put $\tilde{\phi}(f)=[f]$. We show that $\tilde{\phi}$ respects the defining relations of $\Theta'(G)$. This is clear for (a) and (b).

We now verify (c). Let $f \colon X \to Y$ and $g \colon Y \to Z$ be given. We may as well suppose $X=G/W$, $Y=G/V$, and $Z=G/U$ with $W \subset V \subset U$. The fiber of $f$ is $V/W$, the fiber of $g$ is $U/V$, and the fiber of $gf$ is $U/W$. We thus have
\begin{displaymath}
\tilde{\phi}(gf)=[U/W], \qquad \tilde{\phi}(g) \cdot \tilde{\phi}(f) = [U/V] \cdot [V/W].
\end{displaymath}
These classes are equal in $\Theta(G)$. This follows from applying axiom \dref{defn:measure}{e} to the map $U/W \to U/V$, which has fiber $V/W$.

We finally handle (d). Let $f \colon X \to Y$ and $g \colon Y' \to Y$ be maps of $G$-sets, with $Y$ and $Y'$ transitive, and let $f' \colon X' \to Y'$ be the base change of $f$. Let $F$ be the fiber of $f$ above some point; this is also the fiber of $f'$ above some point. We thus see
\begin{displaymath}
\tilde{\phi}(f)=[F]=\tilde{\phi}(f'),
\end{displaymath}
which verifies the condition. It follows that $\tilde{\phi}$ induces the requisite $\phi$.
\end{proof}

\begin{lemma}
There exists a well-defined ring homomorphism $\psi \colon \Theta(G) \to \Theta'(G)$ satisfying the following property. Suppose $X$ is a finitary $\hat{G}$-set $X$ with group of definition $U$, and let $f \colon I_U^G(X) \to G/U$ be the natural map (see \S \ref{ss:induced-Gset}). Then $\psi([X])=\langle f \rangle$.
\end{lemma}

\begin{proof}
For $X$, $U$, and $f$ as in the statement of the lemma, we put $\tilde{\psi}(X)=\langle f \rangle$. We first claim that this is well-defined, i.e., independent of the choice of $U$. Thus suppose $V$ is a second group of definition; it suffices to treat the case $V \subset U$, so we assume this. Let $f' \colon I_V^G(X) \to G/V$ be the natural map. One then sees that $f'$ is the base change of $f$ along the natural map $G/V \to G/U$. Thus $\langle f' \rangle=\langle f \rangle$ in $\Theta'(G)$, and so $\tilde{\psi}$ is well-defined.

We now verify that $\tilde{\psi}$ respects the defining relations of $\Theta(G)$. In what follows, $X$ and $Y$ denote finitary $\hat{G}$-sets.
\begin{enumerate}
\item It is clear that if $X$ and $Y$ are isomorphic then $\tilde{\psi}(X)=\tilde{\psi}(Y)$.
\item Suppose $X$ is a singleton set, and let $U$ be a group of definition. Then $f \colon I^G_U(X) \to G/U$ is isomorphic to the identity map of $G/U$, and so $\tilde{\psi}(X)=\langle f \rangle=1$.
\item Let $U$ be a common group of definition for $X$ and $Y$. Let $f \colon I_U^G(X) \to G/U$ and $g \colon I_U^G(Y) \to G/U$ be the natural maps. The natural map $I_U^G(X \amalg Y) \to G/U$ is isomorphic to $f \amalg g$. We thus see that $\tilde{\psi}(X \amalg Y)=\langle f \amalg g \rangle=\langle f \rangle+\langle g \rangle=\tilde{\psi}(X)+\tilde{\psi}(Y)$.
\item Let $g \in G$. The maps $I_U^G(X) \to G/U$ and $I_U^G(X^g) \to G/U$ are easily seen to be isomorphic. Thus $\tilde{\psi}(X^g)=\tilde{\psi}(X)$.
\item Finally suppose that $X$ and $Y$ are transitive $U$-sets, and let $f \colon X \to Y$ be a $U$-equivariant map with fiber $F$. We then get a commutative triangle
\begin{displaymath}
\xymatrix{
I_U^G(X) \ar[rr]^{f'} \ar[rd]_h && I_U^G(Y) \ar[ld]^g \\ & G/U }
\end{displaymath}
where $f'=I_U^G(f)$. The $G$-sets $I_U^G(X)$ and $I_U^G(Y)$ are transitive, and so $\langle f' \rangle$ is an element of $\Theta'(G)$, and we have $\langle h \rangle=\langle g \rangle \langle f' \rangle$. By definition, we have $\langle h \rangle=\tilde{\psi}(X)$ and $\langle h \rangle=\tilde{\psi}(Y)$. We claim that $\langle f' \rangle=\tilde{\psi}(F)$. For this, suppose $F=f^{-1}(y)$, and let $V \subset U$ stabilize $y$. We then get a commutative diagram
\begin{displaymath}
\xymatrix{
I_V^G(F) \ar[r] \ar[d]_{f''} & I_V^G(Y) \ar[r] \ar[d] & I_U^G(Y) \ar[d]^{f'} \\
I_V^G(\bone) \ar[r] & I_V^G(X) \ar[r] & I_U^G(X) }
\end{displaymath}
Both squares are cartesian. Indeed, the left square comes from applying $I_V^G$ to a cartesian square, and $I_V^G$ commutes with fiber products. In general, $I_V^G$ is the base change of $I_U^G$ along $G/V \to G/U$, and so the right square is cartesian. We thus see that $f''$ is a base change of $f'$, and so they have the same class in $\Theta'(G)$. Since $\tilde{\psi}(F)=\langle f'' \rangle$, the claim follows. (Note: $I_V^G(\bone)=G/V$.) We have thus shown that $\tilde{\psi}(X)=\tilde{\psi}(F) \tilde{\psi}(Y)$.
\end{enumerate}
We thus see that $\tilde{\psi}$ satisfies the defining relations of $\Theta(G)$, and so $\tilde{\psi}$ induces $\psi$.
\end{proof}

\begin{lemma}
The maps $\phi$ and $\psi$ are mutually inverse.
\end{lemma}

\begin{proof}
Let $f \colon X \to Y$ be a map of $G$-sets, with $Y$ transitive, and let $F=f^{-1}(y)$. Let $U$ be the stabilizer of $y$, so that $F$ is a $U$-set. We have $\psi(\phi(\langle f \rangle))=\psi([F])=[g]$, where $g \colon I_U^G(F) \to G/U$ is the natural map. One easily sees that $g$ is isomorphic to $f$, and so $\langle f \rangle=\langle g \rangle$ in $\Theta'(G)$. Thus $\psi \circ \phi = \id$.

Now suppose that $X$ is a finitary $\hat{G}$-set, and let $U$ be the group of definition. Let $f \colon I_U^G(X) \to G/U$ be the natural map. The fiber of $f$ over $1 \in G/U$ is $X$. We thus see that $\phi(\psi([X)])=\phi(\langle f \rangle)=[X]$, and so $\phi \circ \psi=\id$. This completes the proof.
\end{proof}

\begin{remark} \label{rmk:Theta-prime}
A few remarks concerning $\Theta'$ and the above result:
\begin{enumerate}
\item It is clear that $\Theta'(G)$ only depends on the category $\cS(G)$ of finitary $G$-sets up to equivalence. Thanks to the proposition, the same is true for $\Theta(G)$.
\item There is a natural non-commutative version of $\Theta'(G)$: take the non-commutative polynomial ring in symbols $\langle f \rangle$ and form the quotient by the 2-sided ideal generated by the relations in Definition~\ref{defn:Theta-prime}. The point is that multiplication only appears in relation (c), and there is a preferred order for the multiplication there (the one we have written). Deligne pointed out to us that this ring is in fact automatically commutative \cite{DeligneLetter1}.
\item The definition of $\Theta'(G)$ is nearly identical to that of the ring $K(\cA)$ introduced in \cite[\S 8]{Knop2}. However, we apply the definition in a different context than Knop, and this leads to different results. \qedhere
\end{enumerate}
\end{remark}

\subsection{Regular measures} \label{ss:regular}

We now isolate two classes of measures that will play an important role, especially in Part~\ref{part:genrep}:

\begin{definition} \label{defn:regular}
Let $\mu$ be a $k$-valued measure for $G$. We say that $\mu$ is \defn{regular} if $\mu(X)$ is a unit of $k$ for every transitive $G$-set $X$. We say that $\mu$ is \defn{quasi-regular} if there is an open subgroup $U$ of $G$ such that $\mu \vert_U$ is regular.
\end{definition}

\begin{example} \label{ex:ordinary}
Suppose that $G$ is a finite group and $k$ is a field, and let $\mu$ be the unique measure for $G$ with values in $k$ (see Example~\ref{ex:finite-measure}). Then $\mu$ is regular if and only if $\# G$ is non-zero in $k$. Thus, in the terminology of representation theory, ``regular'' coincides with the ordinary case, and ``non-regular'' with the modular case. For a general pro-oligomorphic group, it is reasonable to also consider regular as corresponding to the ``ordinary case.'' For instance, we prove an analog of Maschke's theorem in this setting (Theorem~\ref{thm:regss}). The measure $\mu$ is always quasi-regular: its restriction to the trivial subgroup is regular.
\end{example}

It is clear that regular implies quasi-regular, and that both classes of measures are stable under extension of scalars. They are also stable under restricting to open subgroups:

\begin{proposition} \label{prop:regular-sub}
Let $\mu$ be a (quasi-)regular measure for $G$. Then the restriction of $\mu$ to any open subgroup is again (quasi-)regular.
\end{proposition}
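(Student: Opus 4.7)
The plan is to handle the regular case directly via axiom \dref{defn:measure}{e} applied to an induced $G$-set, and then derive the quasi-regular case as a formal consequence.

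First I would tackle the regular case. Let $V$ be an open subgroup and let $X$ be a transitive $V$-set; I need $\mu(X)$ to be a unit in $k$. Choose a point of $X$ and let $W\subset V$ be its stabilizer, so $X\cong V/W$; note that $W$ is open in $V$ and hence open in $G$. Then the natural map
\[
G/W \longrightarrow G/V
\]
is a map of transitive $G$-sets whose fiber over the identity coset is $V/W$. By \dref{defn:measure}{e},
\[
\mu(G/W) \;=\; \mu(V/W)\cdot \mu(G/V).
\]
Since $\mu$ is regular for $G$, both $\mu(G/W)$ and $\mu(G/V)$ are units of $k$. Hence $\mu(V/W)=\mu(G/W)\cdot\mu(G/V)^{-1}$ is a unit, as desired.

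For the quasi-regular case, suppose $\mu\vert_U$ is regular for some open subgroup $U\subset G$, and let $V\subset G$ be any open subgroup. Then $U\cap V$ is open in $U$, so the regular case just proved (with $U$ in place of $G$) shows that $(\mu\vert_U)\vert_{U\cap V}=\mu\vert_{U\cap V}$ is regular. Since $U\cap V$ is also open in $V$, this exhibits $\mu\vert_V$ as quasi-regular.

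I do not expect a real obstacle here: the only subtle point is checking that the fiber of $G/W\to G/V$ is genuinely $V/W$ as a $\hat{G}$-set so that axiom \dref{defn:measure}{e} applies in the sense intended, and that the two occurrences of $\mu$ (as a measure for $G$ and as its restriction to $V$) agree on the $V$-set $V/W$ viewed as a $\hat{G}$-set, which is automatic from how restriction of measures is defined.
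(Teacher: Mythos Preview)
Your proposal is correct and follows essentially the same argument as the paper: use axiom \dref{defn:measure}{e} on the map $G/W\to G/V$ to express $\mu(V/W)$ as a ratio of units, and then handle the quasi-regular case by intersecting with the subgroup witnessing quasi-regularity. Your variable names differ from the paper's, but the logic is identical.
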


\begin{proof}
Suppose $\mu$ is regular and let $U$ be an open subgroup of $G$. We show that $\mu \vert_U$ is regular. Thus let $X$ be a transitive $U$-set; we must show that $\mu(X)$ is a unit of $k$. Write $X=U/V$ for some open subgroup $V$ of $U$. We have a fibration $G/U \to G/V$ with fiber $U/V$, and so by \dref{defn:measure}{e}, $\mu(G/U)=\mu(G/V) \mu(U/V)$. Since $\mu(G/U)$ and $\mu(G/V)$ are units of $k$, it follows that $\mu(U/V)$ is as well.

Now suppose that $\mu$ is quasi-regular and let $U$ be an open subgroup of $G$. We show that $\mu \vert_U$ is quasi-regular. By definition, there is an open subgroup $V$ of $G$ such that $\mu \vert_V$ is regular. By the previous paragraph, $\mu \vert_{V \cap U}$ is regular. This shows that $\mu \vert_U$ is quasi-regular, as required.
\end{proof}

Define $\Theta^*(G)$ to be the localization of $\Theta(G)$ obtained by inverting the classes $[X]$ for all transitive $G$-sets $X$. This ring classifies regular measures: a $k$-valued regular measure is exactly a homomorphism $\Theta^*(G) \to k$. If $U$ is an open subgroup of $G$ then a regular measure for $G$ restricts to a regular measure for $U$ (Proposition~\ref{prop:regular-sub}), and so the natural map $\Theta(U) \to \Theta(G)$ induces a map $\Theta^*(U) \to \Theta^*(G)$.

It turns out that $\Theta^*(G)$ has a substantially simpler presentation than $\Theta(G)$. To describe it, let $\Omega^*(G)$ be the localizations of $\Omega(G)$ obtained by inverting the classes $\lbb X \rbb$ for all transitive $G$-sets $X$, and let $\fb(G) \subset \Omega^*(G)$ be the ideal generated by all elements of the form
\begin{displaymath}
\lbb X \times_Z Y \rbb - \frac{\lbb X \rbb \cdot \lbb Y \rbb}{\lbb Z \rbb}
\end{displaymath}
where $X \to Z$ and $Y \to Z$ are surjections of finitary $G$-sets, with $Z$ transitive. Note that here we are using the Burnside ring of $G$ and not $\hat{G}$. We then have:

\begin{proposition} \label{prop:Theta-star}
The natural map $\Omega^*(G) \to \Theta^*(G)$ is surjective with kernel $\fb(G)$.
\end{proposition}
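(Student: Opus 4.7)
The plan is to construct an explicit two-sided inverse to the map $\bar\varphi \colon \Omega^*(G)/\fb(G) \to \Theta^*(G)$ induced by the natural surjection $\varphi \colon \Omega^*(G) \to \Theta^*(G)$ (which is well-defined on localizations since transitive $G$-set classes map to units of $\Theta^*(G)$).

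First, I dispatch the two easy inclusions. For surjectivity of $\varphi$: given a finitary $\hat G$-set $X$ with group of definition $U$, the map $I_U^G(X) \to G/U$ is $\Theta$-constant with value $[X]$ by Proposition~\ref{prop:Theta-const-fiber}, so $[X] = [I_U^G(X)] \cdot [G/U]^{-1}$ in $\Theta^*(G)$, and this lies in the image. For $\fb(G) \subseteq \ker\varphi$: given surjections $X \to Z$ and $Y \to Z$ with $Z = G/U$ transitive, I decompose $X$ into $G$-orbits $X_i$, each of which must surject onto $Z$; axiom \dref{defn:measure}{e} applied to each orbit then summed gives $[X] = [F_X][Z]$, where $F_X \subseteq X$ is the fiber of $X \to Z$ over the basepoint, and similarly $[Y] = [F_Y][Z]$. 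Since the fiber of $X \times_Z Y \to Z$ is the $U$-set $F_X \times F_Y$, the same reasoning yields $[X \times_Z Y] = [F_X][F_Y][Z] = [X][Y]/[Z]$ in $\Theta^*(G)$.

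The main work is constructing the inverse map $\tilde\psi \colon \Theta^*(G) \to \Omega^*(G)/\fb(G)$, defined on generators by
\begin{displaymath}
\tilde\psi([X]) = \lbb I_U^G(X) \rbb \cdot \lbb G/U \rbb^{-1},
\end{displaymath}
where $U$ is any group of definition for the finitary $\hat G$-set $X$. The crucial point is well-definedness: given two groups of definition $V \subset U$, I verify by direct inspection of the defining equivalence relation that the diagram with edges $I_V^G(X) \to I_U^G(X)$ and $G/V \to G/U$ is cartesian (explicitly, an element $([g,x]_U, g'V)$ of the pullback with $[g]_U = [g']_U$ can be uniquely represented as $[g', x']_V$). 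Since $G/U$ is a transitive $G$-set, the defining relation of $\fb(G)$ gives $\lbb I_V^G(X) \rbb = \lbb I_U^G(X) \rbb \lbb G/V \rbb/\lbb G/U \rbb$, from which independence of the group of definition follows. This is the essential use of $\fb(G)$.

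Finally I verify that $\tilde\psi$ respects the defining relations of $\Theta(G)$; (a)--(d) are straightforward from the functoriality of $I_U^G$, while (e) for transitive $U$-sets $Y = U/V$, $X = U/W$, $F = V/W$ reduces, via the identity $I_U^G(U/W) \cong G/W$ (sending $[g, W]_U$ to $gW$), to the obvious equality $\lbb G/W \rbb/\lbb G/U \rbb = (\lbb G/W \rbb/\lbb G/V \rbb)(\lbb G/V \rbb/\lbb G/U \rbb)$. The map $\tilde\psi$ extends from $\Theta(G)$ to $\Theta^*(G)$ because for any transitive $G$-set $X$ we may take $U = G$ to obtain $\tilde\psi([X]) = \lbb X \rbb$, a unit in $\Omega^*(G)$. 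Mutual invertibility is then immediate: $\bar\varphi \circ \tilde\psi$ sends $[X]$ to $[I_U^G(X)]/[G/U] = [X]$, and $\tilde\psi \circ \bar\varphi$ sends $\lbb X \rbb$ (for $X$ a $G$-set) back to $\lbb X \rbb$ by taking $U = G$. The main obstacle is pushing through the cartesian-square identification of $I_V^G(X)$ as a pullback, which is routine but requires careful bookkeeping of the equivalence relations defining induction.
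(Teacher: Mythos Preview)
Your proof is correct and follows essentially the same strategy as the paper's: both construct the inverse map by sending $[X]$ to $\lbb I_U^G(X)\rbb \cdot \lbb G/U\rbb^{-1}$, with well-definedness established via the cartesian square identifying $I_V^G(X)$ as the pullback of $I_U^G(X)$ along $G/V\to G/U$. The only difference is that the paper routes the construction through the auxiliary presentation $\Theta'(G)$ (Definition~\ref{defn:Theta-prime} and Proposition~\ref{prop:Theta-prime}), defining $\psi_1(f)=\langle X\rangle\langle Y\rangle^{-1}$ on generators $f\colon X\to Y$ and checking the $\Theta'$-relations, whereas you work directly with the $\Theta$-relations; your route is slightly more self-contained, while the paper's leverages the already-proved isomorphism $\Theta\cong\Theta'$ to make the relation checks marginally cleaner (in particular, their base-change relation~(e) matches the defining relation of $\fb(G)$ on the nose).
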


\begin{proof}
Put $R=\Omega^*(G)/\fb(G)$. For a finitary $G$-set $X$, we write $\langle X \rangle$ for its class in $R$. We claim that the natural map $\Omega^*(G) \to \Theta^*(G)$ kills $\fb(G)$. To see this, let $X \to Z$ and $Y \to Z$ be as in the definition of $\fb(G)$. The maps $X \to Z$ and $X \times_Z Y \to Y$ have isomorphic fibers $F$. Thus $[X][Z]^{-1}=[X \times_Z Y][Y]^{-1}=[F]$ in $\Theta^*(G)$, which proves the claim. It follows that we have a ring homomorphism $\phi \colon R \to \Theta^*(G)$ satisfying $\phi(\langle X \rangle)=[X]$ for any finitary $G$-set $X$. To show that this map is an isomorphism, we construct an inverse.

Let $f \colon X \to Y$ be a map of finitary $G$-sets, with $Y$ transitive. Define $\psi_1(f)=\langle X \rangle \langle Y \rangle^{-1}$. We now verify that $\psi_1$ respects the defining relations of $\Theta'(G)$:
\begin{enumerate}
\item If $f$ and $g$ are isomorphic morphisms then it is clear that $\psi_1(f)=\psi_1(g)$.
\item If $f$ is the identity then $\psi_1(f)=\langle X \rangle \langle X \rangle^{-1}=1$.
\item Suppose that $f \colon X \to Y$ and $g \colon Y \to Z$ are maps of transitive $G$-sets. Then
\begin{displaymath}
\psi_1(gf)=\langle X \rangle \langle Z \rangle^{-1} = \langle X \rangle \langle Y \rangle^{-1} \langle Y \rangle \langle Z \rangle^{-1} = \psi_1(g) \psi_1(f).
\end{displaymath}
\item Let $f \colon X \to Y$ and $g \colon Y' \to Y$ be maps of transitive $G$-sets, let $X'=X \times_Y Y'$ be the fiber product, and let $f' \colon X' \to Y'$ be the natural map. Then
\begin{displaymath}
\psi_1(f')=\langle X' \rangle \langle Y' \rangle^{-1} = (\langle X \rangle \langle Y' \rangle \langle Y \rangle^{-1}) \cdot \langle Y' \rangle^{-1} = \psi_1(f),
\end{displaymath}
where in the second step we used the defining relation of $\fb(G)$.
\end{enumerate}
We thus see that $\psi_1$ respects the defining relations of $\Theta'(G)$. It follows that there is a ring homomorphism $\psi_2 \colon \Theta'(G) \to R$ satisfying $\psi_2(\langle f \rangle)=\psi_1(f)$.

Let $\psi_3 \colon \Theta(G) \to R$ be the transfer of $\psi_2$ along the isomorphism $\Theta(G)=\Theta'(G)$. Recall that if $X$ is a finitary $U$-set, for some open subgroup $U$ of $G$, then $[X] \in \Theta(G)$ corresponds to $\langle f \rangle \in \Theta'(G)$, where $f \colon I_U^G(X) \to G/U$ is the natural map. We thus see that $\psi_3([X]) = \langle I_U^G(X) \rangle \langle G/U \rangle^{-1}$. If $X$ is a transitive $G$-set then $\psi_3([X])=\langle X \rangle$ is a unit of $R$. Thus $\psi_3$ extends to a homomorphism $\psi \colon \Theta^*(G) \to R$.

We finally claim that $\phi$ and $\psi$ are inverse to each other. It is clear that $\psi \circ \phi$ is the identity. We now show that $\phi(\psi([X]))=[X]$ for any finitary $\hat{G}$-set $X$. If $X$ is a $G$-set then this holds by the noted properties of the maps. The general case now follows from the observation that $\Theta^*(G)$ is generated, as a ring, by the elements $[X]$ and $[X]^{-1}$ as $X$ varies over transitive $G$-sets. Indeed, it suffices to show that we can generate any class of the form $[X]$ where $X$ is a transitive $U$-set. Write $X \cong U/V$. As $U/V$ is the fiber of the map $G/V \to G/U$, we have $[X]=[G/V][G/U]^{-1}$. This completes the proof.
\end{proof}

The above presentation for $\Theta^*(G)$ translates to a nice description of regular measures. To state it, we introduce the following notion:

\begin{definition}
An \defn{R-measure} for $G$ is a rule $\mu$ assigning to each finitary $G$-set $X$ a quantity $\mu(X) \in k$ such that the following conditions hold:
\begin{enumerate}
\item If $X$ and $Y$ are isomorphic then $\mu(X)=\mu(Y)$.
\item We have $\mu(\bone)=1$.
\item We have $\mu(X \amalg Y)=\mu(X) + \mu(Y)$.
\item If $Z$ is a transitive $G$-set then $\mu(Z)$ is a unit of $k$.
\item Given maps of $G$-sets $X \to Z$ and $Y \to Z$, with $Z$ transitive, we have
\begin{displaymath}
\mu(X \times_Z Y) = \frac{\mu(X) \cdot \mu(Y)}{\mu(Z)}. \qedhere
\end{displaymath}
\end{enumerate}
\end{definition}

With this language, we have:

\begin{proposition}
Regular measures correspond to R-measures. Precisely, any regular measure defines an R-measure, and any R-measure uniquely extends to a regular measure
\end{proposition}

\subsection{Generalized index}

To close \S \ref{s:meas}, we give  a more group-theoretic interpretation of measures. This perspective is not used in the remainder of the paper, but we have found it to be helpful to our own understanding. The following is the key concept:

\begin{definition} \label{defn:index}
A \emph{generalized index} on $G$ with values in a commutative ring $k$ is a rule that assigns to every containment of open subgroups $V \subset U$ an element $\lbb U : V \rbb$ of $k$ such that the following conditions hold:
\begin{enumerate}
\item We have $\lbb U:U \rbb=1$ for all $U$.
\item We have $\lbb U:V \rbb = \lbb gUg^{-1}:gVg^{-1} \rbb$ for all $g \in G$.
\item Given open subgroups $W \subset V \subset U$, we have $\lbb U:W \rbb=\lbb U:V \rbb \lbb V:W \rbb$.
\item Let $V,W \subset U$ be open subgroups of $G$. Write $U=\bigsqcup_{i=1}^n Vx_iW$. Then
\begin{displaymath}
\lbb U:W \rbb = \sum_{i=1}^n \lbb V: V \cap x_iWx_i^{-1} \rbb.
\end{displaymath}
See Example~\ref{ex:finite-index} for an explanation of this identity. \qedhere
\end{enumerate}
\end{definition}

The following proposition relates generalized indices to measures. We omit the proof since it is not used in the remainder of the paper; see \cite[\S 3.6]{arxiv} for details.

\begin{proposition}
There is a bijection between generalized indices on $G$ with values in $k$ and measures on $G$ with values in $k$, where the generalized index $\lbb-:-\rbb$ corresponds to the measure $\mu$ if $\lbb U:V \rbb = \mu(U/V)$ for all open subgroups $V \subset U$.
\end{proposition}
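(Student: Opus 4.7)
The plan is to exhibit explicit inverse constructions in each direction and verify the axioms match up term by term, with the Mackey-type axiom \dref{defn:index}{d} doing the heavy lifting on the generalized-index side.

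\textbf{Measures give generalized indices.} Suppose $\mu$ is a $k$-valued measure for $G$ and set $\lbb U:V \rbb = \mu(U/V)$ for $V\subset U$ open subgroups. Axiom \dref{defn:index}{a} is normalization, since $U/U=\bone$. For \dref{defn:index}{b}, observe that $gUg^{-1}/gVg^{-1}$ as a $gUg^{-1}$-set is isomorphic to $(U/V)^g$ (with $U/V$ regarded as a $U$-set), so conjugation invariance \dref{defn:measure}{d} applies. Axiom \dref{defn:index}{c} is the multiplicativity \dref{defn:measure}{e} applied to the fibration $U/W\to U/V$ with fiber $V/W$. For axiom \dref{defn:index}{d}, the double coset decomposition $U=\bigsqcup V x_i W$ produces a decomposition $U/W=\bigsqcup V x_i W/W$ of $U/W$ into $V$-orbits, each of which is isomorphic to $V/(V\cap x_i W x_i^{-1})$ (the stabilizer in $V$ of $x_iW\in U/W$). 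The claim is then the additivity \dref{defn:measure}{c} applied to $\mu\vert_V$.

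\textbf{Generalized indices give measures.} Conversely, given $\lbb-:-\rbb$, the rule we want is forced: for a transitive $\hat{G}$-set $X=U/V$ set $\mu(X)=\lbb U:V \rbb$, and extend to finitary $\hat{G}$-sets by additivity over the $U$-orbit decomposition for a common group of definition. The crux is well-definedness. Independence of the base point in a fixed orbit follows from \dref{defn:index}{b}. Independence of the group of definition is the only nontrivial point: if $U'\subset U$ is a smaller group of definition and $U/V=\bigsqcup U' x_i V/V\cong \bigsqcup U'/(U'\cap x_iVx_i^{-1})$ is the $U'$-orbit decomposition indexed by $U'\backslash U/V$, one must show
\[
\lbb U:V \rbb \;=\; \sum_i \lbb U' : U'\cap x_i V x_i^{-1} \rbb.
\]
This is exactly axiom \dref{defn:index}{d} (with the roles $V\leftrightarrow U'$ and $W\leftrightarrow V$). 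Once $\mu$ is well-defined, isomorphism invariance and additivity hold by construction, normalization is \dref{defn:index}{a}, conjugation invariance is \dref{defn:index}{b} (with the same computation of stabilizers as above), and multiplicativity in fibrations is \dref{defn:index}{c}.

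\textbf{Mutually inverse.} Both constructions are uniquely pinned down by the formula $\lbb U:V \rbb = \mu(U/V)$: starting from $\mu$, the reconstructed measure agrees with $\mu$ on transitive sets $U/V$ and hence (by additivity) everywhere; starting from $\lbb-:-\rbb$, evaluating the associated $\mu$ on $U/V$ returns $\lbb U:V \rbb$ by definition.

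The only step I expect to require any care is the well-definedness of $\mu$ under change of the group of definition, which is exactly what axiom \dref{defn:index}{d} is designed to encode; everything else is either a direct translation or a standard identification of orbits with coset spaces.
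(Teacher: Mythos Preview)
Your proof is correct and follows essentially the same approach as the paper. The paper presents the index-to-measure direction in detail (defining $\mu_U(X)$ via orbit representatives, using axiom \dref{defn:index}{d} to prove independence of the group of definition, then verifying the measure axioms) and leaves the measure-to-index direction to the reader; you simply reverse the order of exposition and fill in the direction the paper omits, but the substance---in particular the identification of axiom \dref{defn:index}{d} with well-definedness of $\mu$ under shrinking the group of definition via the orbit decomposition $U/W\cong\coprod_i V/(V\cap x_iWx_i^{-1})$---is identical.
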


\begin{example} \label{ex:finite-index}
Suppose $G$ is finite. For subgroups $V \subset U$, define $\lbb U:V \rbb$ to be the usual index $[U:V]$. We claim that this is a generalized index. The first three conditions are clear, while the fourth follows from the decomposition
\begin{displaymath}
U/W \cong \coprod_{i=1}^n V/(V \cap x_iWx_i^{-1}).
\end{displaymath}
Of course, this generalized index corresponds to the measure from Example~\ref{ex:finite-measure}.
\end{example}

\begin{example}
Recall that for $t \in \bC$ there is an associated $\bC$-valued measure $\mu_t$ for the infinite symmetric group $\fS$ (Example~\ref{ex:sym-measure}). By the above proposition, we can translate $\mu_t$ into a $\bC$-valued generalized index $\lbb -:- \rbb_t$ on $\fS$. This generalized index satisfies
\begin{displaymath}
\lbb \fS:\fS(n) \rbb_t = t(t-1) \cdots (t-n+1), \qquad
\lbb \fS:\fS_n \times \fS(n) \rbb_t = \binom{t}{n}.
\end{displaymath}
Here $\fS(n)$ is the subgroup of $\fS$ fixing each of $1, \ldots, n$. Since the $\mu_t$ account for all $\bC$-valued measures on $\fS$, the proposition implies that generalized indices $\lbb-:-\rbb_t$ account for all $\bC$-valued generalized indices on $\fS$.
\end{example}

\section{Integration} \label{s:int}

\subsection{Overview}

Fix a pro-oligomorphic group $G$, a ring $k$, and a $k$-valued measure $\mu$ for the duration of \S \ref{s:int}. The purpose of \S \ref{s:int} is to introduce a theory of integration for oligomorphic groups. Precisely, for a $\hat{G}$-set $X$, we introduce the class of Schwartz functions on $X$, and define an integral for them. We also define a push-forward operation on Schwartz functions with respect to a map $f \colon X \to Y$ of $\hat{G}$-sets, and show that it has the expected properties. This theory of integration underlies the tensor category constructions later in the paper.

\subsection{Schwartz functions} \label{ss:schwartz}

Let $X$ be a $\hat{G}$-set and let $\phi \colon X \to k$ be a function. We say that $\phi$ is \defn{smooth} if there is some group of definition $U$ for $X$ such that $\phi$ is left $U$-invariant, meaning $\phi(gx)=\phi(x)$ for all $g \in U$ and $x \in X$. We define the \defn{support} of $\phi$ to be the subset of $X$ where $\phi$ is non-zero. If $\phi$ is smooth then the support of $\phi$ is a $\hat{G}$-subset. In this case, we say that $\phi$ has \defn{finitary support} if its support is a finitary $\hat{G}$-set. We say that $\phi$ is a \defn{Schwartz function} if it is smooth and has finitary support. We let $\cC(X)$ be the set of Schwartz functions, which is naturally a $k$-module. It is an analog of Schwartz space.

Let $A$ be a finitary $\hat{G}$-subset of $X$, and let $1_A$ be its indicator function. One easily sees that $1_A$ is a Schwartz function. We refer to a function of this form as \defn{elementary}.

\begin{proposition} \label{prop:Schwartz-gen}
The elementary functions generate $\cC(X)$ as a $k$-module.
\end{proposition}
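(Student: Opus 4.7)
The plan is straightforward: given a Schwartz function $\phi \in \cC(X)$, decompose it as a finite $k$-linear combination of indicator functions of finitary $\hat{G}$-subsets by partitioning $X$ according to the level sets of $\phi$.

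First I would fix a group of definition $U$ for the $\hat{G}$-set $X$ such that $\phi$ is left $U$-invariant. Let $S \subset X$ be the support of $\phi$, which is a finitary $\hat{G}$-subset by assumption; after possibly shrinking $U$, we may assume $U$ is a group of definition for $S$ as well, so $S$ has only finitely many $U$-orbits. Since $\phi$ is $U$-invariant, it is constant on each $U$-orbit of $X$, and in particular $\phi(S)$ is a finite subset of $k \setminus \{0\}$. Enumerate the distinct nonzero values of $\phi$ as $a_1, \ldots, a_n$ and set $A_i = \phi^{-1}(a_i)$.

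Each $A_i$ is a $U$-stable subset of $X$ (by $U$-invariance of $\phi$), so it is a $\hat{G}$-subset. Moreover $A_i \subset S$, and since $A_i$ is a union of some of the finitely many $U$-orbits making up $S$, it is a finitary $\hat{G}$-subset. Hence $1_{A_i}$ is an elementary function. By construction, $X$ is the disjoint union of $A_1, \ldots, A_n$ and $X \setminus S$, so the identity
\begin{displaymath}
\phi = \sum_{i=1}^{n} a_i \cdot 1_{A_i}
\end{displaymath}
holds pointwise, exhibiting $\phi$ as a $k$-linear combination of elementary functions.

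There is really no obstacle here; the only point that requires a moment's thought is that $\phi$ assumes only finitely many values, which follows from smoothness of $\phi$ together with the finitary support hypothesis, via the fact (Proposition~\ref{prop:smooth}(c) in the relevant form) that a group of definition can be chosen to trivialize both $\phi$ and the support simultaneously.
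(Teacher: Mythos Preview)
Your proof is correct and takes essentially the same approach as the paper's: pick a group of definition $U$ making $\phi$ invariant, observe that $\phi$ is constant on $U$-orbits and hence takes finitely many values on its finitary support, and write $\phi$ as a finite linear combination of indicator functions. The only cosmetic difference is that the paper decomposes the support directly into its $U$-orbits, whereas you group by level sets (unions of orbits); also, note that your ``possibly shrinking $U$'' step is unnecessary, since $U$-invariance of $\phi$ already forces the support to be $U$-stable.
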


\begin{proof}
Let $\phi \in \cC(X)$ be given. Let $A$ be the support of $\phi$ and let $U$ be a group of definition for $X$ such that $\phi$ is $U$-invariant; thus $A$ is $U$-stable. Since $A$ is finitary, $U$ has finitely many orbits on $Y$, say $A_1, \ldots, A_n$. Since $\phi$ is $U$-invariant, it is constant on each orbit; let $c_i$ be the common value of $\phi$ on $A_i$. Then $\phi=\sum_{i=1}^n c_i \cdot 1_{A_i}$, which completes the proof.
\end{proof}

\begin{remark} \label{rmk:Schwartz-gen}
In fact, a more precise statement is true. Given any group of definition $U$ of $X$, the functions $1_A$ with $A$ an orbit of an open subgroup of $U$ generate $\cC(X)$. This can be deduced easily from the proposition.
\end{remark}

\begin{remark}
The terminology ``Schwartz function'' is motivated by Bruhat's generalization of the concept of Schwartz function to general locally compact abelian groups \cite{Bruhat}: in the case of $\bQ_p$, a (Bruhat-)Schwartz function is one that is smooth (locally constant) and compactly supported.
\end{remark}

\subsection{Integration} \label{ss:int}

Let $X$ be a $\hat{G}$-set and let $\phi \colon X \to k$ be a Schwartz function. Let $A$ be the support of $\phi$, write $A=A_1 \sqcup \cdots \sqcup A_n$ where the $A_i$ are $\hat{G}$-stable and $\phi$ is constant on $A_i$, and let $c_i$ be the common value of $f$ on $A_i$. We define the \defn{integral} of $\phi$ by
\begin{displaymath}
\int_X \phi(x) dx = \sum_{i=1}^n c_i \mu(A_i).
\end{displaymath}
It is a standard fact that this is well-defined: one can integrate finite sums of step functions against a finitely additive measure. A detailed proof is given in \cite[\S 3.3]{arxiv}.

It is clear that integration is $k$-linear in the integrand $\phi$. Moreover, it is $G$-equivariant, that is, we have
\begin{displaymath}
\int_{X^g} \phi^g(x) dx = \int_X \phi(x) dx,
\end{displaymath}
where $g \in G$ and $X^g$ and $\phi^g$ denote the conjugates by $g$. Concretely, this allows us to make familiar changes of variables, e.g., if $X$ is a $G$-set then we have
\begin{displaymath}
\int_X \phi(x) dx = \int_X \phi(gx) dx
\end{displaymath}
for any $g \in G$.

\begin{example}
Let $G$ be a finite group and let $X$ be a finitary $\hat{G}$-set. Then $X$ is finite, and any function on $X$ is a Schwartz function. For $\phi \in \cC(X)$ we have
\begin{displaymath}
\int_X \phi(x) dx = \sum_{x \in X} \phi(x).
\end{displaymath}
Here we are integrating with respect to the unique measure for $G$; see Example~\ref{ex:finite-measure}.
\end{example}

\begin{example}
Let $\mu$ be the measure $\mu_t$ for the symmetric group $\fS$ described in Example~\ref{ex:sym-measure}. Suppose that $\phi \in \cC(\Omega)$. The smoothness condition exactly means that there is some integer $n \ge 0$ such that $\phi(m)$ is constant for $m>n$; let $c$ be this common value. Then
\begin{displaymath}
\int_{\Omega} \phi(x) dx = c \cdot (t-n) + \sum_{m=1}^n \phi(m).
\end{displaymath}
Note that the right side is indeed independent of the choice of $n$.

Now suppose $\phi \in \cC(\Omega^2)$. We describe the basic procedure for computing its integral. First, $\Omega^2$ decomposes as the disjoint union of $\Omega^{[2]}$ and the diagonal, and the integral similarly decomposes. The diagonal is isomorphic to $\Omega$ and thus handled like the previous case, so we concentrate on the $\Omega^{[2]}$ integral. The smoothness condition implies that there is some integer $n \ge 0$ such that $\phi(i,j)$ is constant, say equal to $c$, for all distinct $i, j >n$. The integral over the $i,j>n$ region is equal to $2c \cdot \binom{t-n}{2}$. The remaining region decomposes into finitely many pieces which are either isomorphic to a cofinite subset of $\Omega$ (and can be handled as above) or singletons. Figure~\ref{fig:Shat-sets} depicts an example of what the level sets of $\phi$ could look like.
\end{example}

\begin{example}
Let $G=\Aut(\bR,<)$ and let $X$ be a finitary $\hat{G}$-set. As we said in Example~\ref{ex:AutR-measure}, $X$ is canonically a smooth manifold. A Schwartz functions on $X$ is constructible, i.e., its level sets are constructible sets. Our notion of integration (with respect to the measure from Example~\ref{ex:AutR-measure}) is exactly integration with respect to Euler characteristic, as defined by Schapira and Viro (see, e.g., \cite{Viro}).
\end{example}

\subsection{Push-forwards} \label{ss:push}

We now show that there is a good theory of push-forward for Schwartz functions. We note that so far we have not made use of axiom \dref{defn:measure}{e}, but it will play a critical role in establishing basic properties of push-forward.

Let $f \colon X \to Y$ be a map of $\hat{G}$-sets and let $\phi \in \cC(X)$. We define the \defn{push-forward} $f_*(\phi)$ to be the function on $Y$ given by
\begin{displaymath}
(f_* \phi)(y)=\int_{f^{-1}(y)} \phi(x) dx.
\end{displaymath}
Note that $f^{-1}(y)$ is a $\hat{G}$-subset of $X$ and the restriction of $\phi$ to this set is a Schwartz function, and so the integral is defined. We now establish the basic properties of this construction.

\begin{proposition} \label{prop:push-schwartz}
Push-forward preserves Schwartz space.
\end{proposition}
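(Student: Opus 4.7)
The plan is to pick a common group of definition $U$ and verify the two defining properties of a Schwartz function for $f_*\phi$.

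Let $U$ be an open subgroup of $G$ serving simultaneously as group of definition for $X$, $Y$, $f$, and $\phi$; such a $U$ exists by intersecting individual groups of definition. Write $A = \mathrm{supp}(\phi)$, a $U$-stable finitary subset of $X$, and let $U_y$ denote the stabilizer of $y$ in $U$. I would first confirm that $(f_*\phi)(y)$ is a well-defined element of $k$ for each $y \in Y$: decomposing $A$ into its finitely many $U$-orbits $U/H_i$ (with $H_i$ open), the map $f$ sends each orbit into a single $U$-orbit $U \cdot z_i \cong U/U_{z_i}$ in $Y$. If $y \notin U \cdot z_i$ then $U/H_i$ is disjoint from $f^{-1}(y)$; otherwise the intersection is identified with $u_0 U_{z_i}/H_i$ for some $u_0 \in U$, which is a single $U_y$-orbit. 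Hence $A \cap f^{-1}(y)$ is a finitary $U_y$-set and $\phi|_{f^{-1}(y)}$ is a Schwartz function on $f^{-1}(y)$.

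For smoothness, I would show $f_*\phi$ is $U$-invariant. For $u \in U$ and $y \in Y$, the $U$-equivariance of $f$ gives $f^{-1}(uy) = u \cdot f^{-1}(y)$. For any finitary $\hat G$-subset $B \subseteq X$, left translation by $u$ induces an isomorphism $uB \cong B^u$ of $\hat G$-sets, so conjugation invariance \dref{defn:measure}{d} gives $\mu(uB) = \mu(B)$; writing $\phi$ as a $k$-linear combination of indicators of $U$-stable subsets and applying this orbit-by-orbit yields the change-of-variables identity $\int_{u\cdot f^{-1}(y)} \phi(x)\,dx = \int_{f^{-1}(y)} \phi(ux)\,dx$. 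Combined with $U$-invariance of $\phi$, this gives
\[
(f_*\phi)(uy) = \int_{u\cdot f^{-1}(y)} \phi(x)\, dx = \int_{f^{-1}(y)} \phi(ux)\, dx = \int_{f^{-1}(y)} \phi(x)\, dx = (f_*\phi)(y).
\]

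For finitary support, $f(A)$ is $U$-stable, and $U \backslash f(A)$ is a quotient of the finite set $U \backslash A$, so $f(A)$ is a finitary $\hat{G}$-subset of $Y$. Any $y \notin f(A)$ satisfies $f^{-1}(y) \cap A = \emptyset$, forcing $(f_*\phi)(y) = 0$; therefore $\mathrm{supp}(f_*\phi) \subseteq f(A)$, and being a $U$-stable subset of a finitary $U$-set, it is itself finitary. This together with the smoothness established above shows $f_*\phi \in \cC(Y)$. The only delicate step is the preliminary fiber-finiteness check in the first paragraph; the rest is formal bookkeeping built on conjugation invariance of $\mu$. Notably, axiom \dref{defn:measure}{e} plays no role here, but it will become essential when proving more refined properties of $f_*$ (such as integration-along-fibers and functoriality under composition) in \S\ref{ss:push}.
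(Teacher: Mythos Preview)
Your proof is correct and follows essentially the same approach as the paper's: choose a common group of definition $U$, verify $U$-invariance of $f_*\phi$ via the change-of-variables identity coming from conjugation invariance of $\mu$, and bound the support inside $f(A)$. The paper's version is terser---it simply cites the change-of-variables formula already established in \S\ref{ss:int} and declares the support containment ``clear''---whereas you add a preliminary check that $\phi|_{f^{-1}(y)}$ is genuinely Schwartz on the fiber and spell out why $f(A)$ is finitary. Your closing remark that axiom~\dref{defn:measure}{e} is not used here matches the paper's own observation at the start of \S\ref{ss:push}.
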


\begin{proof}
Let $f$ and $\phi$ be as above. Let $U$ be an open subgroup of $G$ such that (i) $U$ is a group of definition of $X$ and $Y$; (ii) $f$ is $U$-equivariant; and (iii) $\phi$ is left $U$-invariant. Then for $g \in U$ and $y \in Y$, we have
\begin{displaymath}
(f_* \phi)(gy) = \int_{f^{-1}(gy)} \phi(x) dx = \int_{f^{-1}(y)} \phi(g^{-1} x) dx = (f_* \phi)(x).
\end{displaymath}
where in the second step we used $f^{-1}(gy)=gf^{-1}(y)$ and made the change of variables $x \to gx$, and in the third step we used the left $U$-invariance of $\phi$. We thus see that $f_* \phi$ is left $U$-invariant, and thus smooth. It is clear that the support of $f_* \phi$ is contained in the image of the support of $\phi$, and is therefore finitary. Thus $f_* \phi$ is a Schwartz function.
\end{proof}

We thus see that push-forward defines a $k$-linear map $f_* \colon \cC(X) \to \cC(Y)$. Note that if $Y=\bone$ is a single point then we can identify $\cC(Y)$ with $k$, and $f_*$ just becomes integration over $X$.

\begin{proposition}[Transitivity of push-forward] \label{prop:push-trans}
Let $f \colon X \to Y$ and $g \colon Y \to Z$ be maps of $\hat{G}$-sets. Then $(gf)_*=g_* f_*$.
\end{proposition}

\begin{proof}
Let $U$ be a group of definition for $X$, $Y$, and $Z$ such that $f$ and $g$ are $U$-equivariant. Let $A$ be a $U$-orbit $X$, let $B=f(A)$, and let $C=g(B)$. Since $B$ is $U$-transitive, all fibers of $f \colon A \to B$ are $U$-conjugate, and thus have the same measure; call it $a$. Similarly define $b$ for $g \colon B \to C$ and $c$ for $gf \colon A \to C$. Computing directly, we find
\begin{displaymath}
(gf)_*(1_A)=c \cdot 1_C, \qquad g_*(f_*(1_A))=g_*(a \cdot 1_B) = ab \cdot 1_C.
\end{displaymath}
We claim that $c=ab$. Let $x \in A$, let $V_1$ be the stabilizer of $x$, $V_2$ the stabilizer of $f(x)$, and $V_3$ the stabilizer of $g(f(x))$. Thus $A\cong U/V_1$, $B \cong U/V_2$, and $C \cong U/V_3$. The fiber of $f \colon A \to B$ is $V_2/V_1$, and so $a=\mu(V_2/V_1)$. The fiber of $g \colon B \to C$ is $V_3/V_2$, and so $b=\mu(V_3/V_2)$. The fiber of $gf \colon A \to C$ is $V_3/V_1$, and so $c=\mu(V_3/V_1)$. We must therefore show $\mu(V_3/V_1)=\mu(V_2/V_1) \mu(V_3/V_2)$. This follows from \dref{defn:measure}{e}, as the natural map $V_3/V_1 \to V_3/V_2$ has fiber $V_2/V_1$. This establishes the claim. We thus have $(gf)_*(1_A)=g_*(f_*(1_A))$. Since the functions $1_A$ span $\cC(X)$ by Remark~\ref{rmk:Schwartz-gen}, the result follows.
\end{proof}

\begin{corollary}[Fubini's theorem] \label{cor:fubini}
Let $X$ and $Y$ be $\hat{G}$-sets and let $\phi \in \cC(X \times Y)$. Then
\begin{displaymath}
\int_{X \times Y} \phi(x,y) d(x,y) = \int_X \int_Y \phi(x,y) dy dx.
\end{displaymath}
\end{corollary}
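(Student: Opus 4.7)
The plan is to deduce Fubini immediately from the transitivity of push-forward (Proposition~\ref{prop:push-trans}), by factoring the total integral through the projection onto $X$.

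First I would introduce two structural maps of $\hat{G}$-sets: the projection $\pi \colon X \times Y \to X$, $(x,y) \mapsto x$, and the unique map $p \colon X \to \bone$ to the one-point set. Both are clearly $\hat{G}$-equivariant, so their push-forwards are defined on Schwartz spaces by Proposition~\ref{prop:push-schwartz}. The key observation is that push-forward to the one-point set is exactly integration: under the identification $\cC(\bone) = k$, we have $p_*(f) = \int_X f(x)\,dx$ for every $f \in \cC(X)$, and likewise $(p\pi)_*(\phi) = \int_{X\times Y} \phi(x,y)\,d(x,y)$. This is immediate from the definition of push-forward, since the (unique) fiber of the map to $\bone$ is the whole source.

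Next I would unpack $\pi_*\phi$. For each $x \in X$, the fiber $\pi^{-1}(x) = \{x\} \times Y$ is canonically isomorphic, as a $\hat{G}$-set under the stabilizer of $x$, to $Y$; so by the definition of push-forward,
\begin{displaymath}
(\pi_* \phi)(x) = \int_{\pi^{-1}(x)} \phi(x,y)\, dy = \int_Y \phi(x,y)\, dy.
\end{displaymath}
Applying $p_*$ to this Schwartz function on $X$ gives $\int_X \int_Y \phi(x,y)\,dy\,dx$.

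Finally, I would invoke Proposition~\ref{prop:push-trans} applied to the composition $p \circ \pi \colon X \times Y \to \bone$ to obtain $(p\pi)_* = p_* \pi_*$; evaluating both sides on $\phi$ yields exactly the claimed identity. There is no real obstacle here: all the work has already been absorbed into the proof of transitivity, which is where axiom~\dref{defn:measure}{e} is genuinely used. The only thing worth double-checking is the canonical identification $\cC(\bone) = k$ and that $\pi_*\phi \in \cC(X)$, but the latter is covered by Proposition~\ref{prop:push-schwartz}.
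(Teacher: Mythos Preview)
Your proof is correct and is essentially identical to the paper's own argument: factor the map $X\times Y\to\bone$ through the projection to one factor and apply transitivity of push-forward (Proposition~\ref{prop:push-trans}). The paper's proof is terser (and in fact contains a small typo, projecting onto $Y$ rather than $X$), but the idea is the same.
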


\begin{proof}
Let $p \colon X \times Y \to Y$ be the projection map and let $f \colon X \times Y \to \bone$ and $g \colon Y \to \bone$ be the unique maps. Then $f=g \circ p$, and so $f_*=g_* \circ p_*$ by Proposition~\ref{prop:push-trans}. This exactly gives the stated formula.
\end{proof}

\begin{corollary} \label{cor:Theta-d-to-1}
Let $f \colon X \to Y$ be a map of finitary $\hat{G}$-sets such that each fiber is finite of cardinality $n$. Then $\mu(X)=n \cdot \mu(Y)$, and $[X]=n \cdot [Y]$ in $\Theta(G)$.
\end{corollary}

\begin{proof}
We have $f_*(1_X)=n \cdot 1_Y$. Applying push-forward along $Y \to \bone$ yields the first formula. The second follows by considering the universal measure valued in $\Theta(G)$.
\end{proof}

Given a map $f \colon X \to Y$ of $\hat{G}$-sets and a function $\psi \colon Y \to k$ on $Y$, we let $f^*(\psi)$ be the function on $X$ given by $(f^* \psi)(x)=\psi(f(x))$. If $\psi$ is smooth then so is $f^*(\psi)$. However, $f^*$ does not preserve Schwartz space in general. It does if $X$ is finitary, as then any smooth function is a Schwartz function.

\begin{proposition}[Projection formula] \label{prop:projection}
Let $f \colon X \to Y$ be a map of $\hat{G}$-sets, let $\phi \in \cC(X)$, and let $\psi \in \cC(Y)$. Then $f^*(\psi) \phi$ is a Schwartz function on $X$ and $f_*(f^*(\psi) \phi)=\psi f_*(\phi)$.
\end{proposition}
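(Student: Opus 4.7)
The plan is to verify the Schwartz property using common groups of definition, and then to prove the integration identity by a pointwise computation exploiting the fact that $f^*(\psi)$ is constant along fibers of $f$.

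First I would address that $f^*(\psi)\phi$ is a Schwartz function. Choose an open subgroup $U$ of $G$ that is a group of definition for both $X$ and $Y$, is such that $f$ is $U$-equivariant, and is such that both $\phi$ and $\psi$ are left $U$-invariant. Then $f^*(\psi)$ is left $U$-invariant on $X$, since $f^*(\psi)(gx) = \psi(f(gx)) = \psi(gf(x)) = \psi(f(x))$ for $g \in U$; hence the pointwise product $f^*(\psi)\phi$ is also left $U$-invariant, and in particular smooth. Its support is contained in the support of $\phi$, which is finitary by hypothesis, so $f^*(\psi)\phi \in \cC(X)$.

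Next I would prove the identity $f_*(f^*(\psi)\phi) = \psi \cdot f_*(\phi)$ by evaluating both sides at an arbitrary point $y \in Y$. On the fiber $f^{-1}(y)$, the function $f^*(\psi)$ assumes the constant value $\psi(y)$. By the $k$-linearity of the integral on $\cC(f^{-1}(y))$ established in Proposition~\ref{prop:integral}, we have
\begin{displaymath}
(f_*(f^*(\psi)\phi))(y) = \int_{f^{-1}(y)} \psi(f(x))\, \phi(x) \, dx = \psi(y) \int_{f^{-1}(y)} \phi(x) \, dx = \psi(y) \cdot (f_*\phi)(y),
\end{displaymath}
which is exactly $(\psi \cdot f_*(\phi))(y)$.

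There is no serious obstacle: the identity reduces to pulling a constant out of an integral. The only subtlety is the bookkeeping with groups of definition in the Schwartz-property step, which is handled by intersecting finitely many open subgroups. Note that no appeal to axiom \dref{defn:measure}{e} is needed here (unlike in the proof of Proposition~\ref{prop:push-trans}); only the linearity and normalization axioms of the integral enter.
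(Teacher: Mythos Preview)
Your proof is correct and follows essentially the same approach as the paper's: both verify the Schwartz property via support containment and smoothness, then establish the identity pointwise by pulling the constant value $\psi(y)$ out of the integral over $f^{-1}(y)$. You simply give a bit more detail on the choice of a common group of definition, which the paper leaves implicit.
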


\begin{proof}
The function $f^*(\psi) \phi$ is smooth, and its support is contained in the support of $\phi$, and thus finitary; hence it is Schwartz. We have
\begin{displaymath}
f_*(f^*(\psi) \phi)(y) = \int_{f^{-1}(y)} (f^*\psi)(x) \phi(x) dx = \psi(y) \int_{f^{-1}(y)} \phi(x) dx = \psi(y) (f_* \phi)(y),
\end{displaymath}
where in the second step we used that $(f^* \psi)(x)=\psi(y)$ is constant and thus pulls out of the integral.
\end{proof}

\begin{proposition}[Base change] \label{prop:push-bc}
Consider a cartesian square of finitary $\hat{G}$-sets
\begin{displaymath}
\xymatrix{
X' \ar[r]^{g'} \ar[d]_{f'} & X \ar[d]^f \\
Y' \ar[r]^g & Y }
\end{displaymath}
Then for $\phi \in \cC(X)$ we have
\begin{displaymath}
g^*(f_* \phi) = f'_*((g')^* \phi).
\end{displaymath}
\end{proposition}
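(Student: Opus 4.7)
I plan to verify the identity pointwise on $Y'$. Both sides are Schwartz functions on $Y'$, so it suffices to evaluate them at an arbitrary $y' \in Y'$. Setting $y = g(y')$, the definition of push-forward gives
$$g^*(f_*\phi)(y') \;=\; (f_*\phi)(y) \;=\; \int_{f^{-1}(y)} \phi(x)\, dx,$$
whereas
$$f'_*((g')^*\phi)(y') \;=\; \int_{(f')^{-1}(y')} \phi(g'(x'))\, dx'.$$
So the goal is to identify these two integrals.

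Next I would exploit the cartesian property. Unwinding $X' = X \times_Y Y'$, an element of $(f')^{-1}(y')$ corresponds to a pair $(x, y')$ with $f(x) = g(y') = y$, and $g'$ sends it to $x$. This produces a bijection $g'\colon (f')^{-1}(y') \to f^{-1}(y)$ between finitary $\hat{G}$-subsets. Picking an open subgroup $U$ of $G$ serving as a common group of definition for $X, X', Y, Y'$ and for the four maps, and letting $V = U \cap G_{y'}$ denote the $U$-stabilizer of $y'$ (which, since $g$ is $U$-equivariant, also fixes $y$), this bijection is visibly $V$-equivariant, hence an isomorphism of $\hat{G}$-sets.

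The last ingredient is a change-of-variables principle: if $h \colon A \to B$ is an isomorphism of finitary $\hat{G}$-sets and $\psi \in \cC(B)$, then $\int_A h^*\psi\, dx = \int_B \psi\, dy$. By linearity and Proposition~\ref{prop:integral}, it suffices to check this for $\psi = 1_C$ with $C$ a finitary $\hat{G}$-subset of $B$, in which case $h^*\psi = 1_{h^{-1}(C)}$ and $h$ restricts to an isomorphism $h^{-1}(C) \cong C$, so $\mu(h^{-1}(C)) = \mu(C)$ by axiom (a) of Definition~\ref{defn:measure}. Applied to the fiber isomorphism of the previous paragraph with $\psi = \phi|_{f^{-1}(y)}$, this equates the two integrals and proves the proposition. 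I do not anticipate a real obstacle here: the content is simply that fiber products compute fibers, packaged through isomorphism invariance of $\mu$; the only step requiring explicit comment is the change-of-variables principle, which is immediate.
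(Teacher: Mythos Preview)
Your proof is correct and follows essentially the same route as the paper: evaluate both sides at a point $y' \in Y'$, observe that $g'$ restricts to an isomorphism of $\hat{G}$-sets $(f')^{-1}(y') \to f^{-1}(g(y'))$, and conclude via isomorphism invariance of the integral. You have simply spelled out more of the details (the choice of a common group of definition, and the change-of-variables principle) that the paper leaves implicit.
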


\begin{proof}
Put $\psi_1=g^*(f_* \phi)$ and $\psi_2=f'_*((g')^* \phi)$. Let $y' \in Y'$. Then
\begin{align*}
\psi_1(y') &= \int_{f^{-1}(g(y'))} \phi(x) dx \\
\psi_2(y') &= \int_{(f')^{-1}(y')} \phi(g'(x')) dx'
\end{align*}
The map $g' \colon (f')^{-1}(y') \to f^{-1}(g(y'))$ is an isomorphism of $\hat{G}$-sets. Thus the two integrals above are equal, and so $\psi_1=\psi_2$.
\end{proof}

\section{The \texorpdfstring{$\Theta$}{\textTheta} ring is binomial} \label{s:binom}

\subsection{The main theorem}

Fix a pro-oligomorphic group $G$ for the duration of \S \ref{s:binom}. Recall that a commutative ring $R$ is called a \defn{binomial ring} if it is torsion free as a $\bZ$-module and for every $x \in R$ and $n \in \bN$, the element
\begin{displaymath}
\binom{x}{n} = \frac{x(x-1) \cdots (x-n+1)}{n!}
\end{displaymath}
of $R \otimes \bQ$ belongs to $R$. The purpose of \S \ref{s:binom} is to prove the following theorem:

\begin{theorem} \label{thm:binom}
The ring $\Theta(G)$ is a binomial ring.
\end{theorem}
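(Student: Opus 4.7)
The plan is to follow the strategy indicated in \S\ref{sss:intro-binom}: equip $\Theta(G)$ with a $\lambda$-ring structure whose Adams operations are all the identity, and then invoke Elliott's theorem \cite{Elliott}, which identifies such $\lambda$-rings with binomial rings.

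The natural candidate for the $\lambda$-operations is the set-theoretic construction $X^{(n)}$ of $n$-element subsets: if $X$ is a finitary $\hat{G}$-set then so is $X^{(n)}$, being a quotient of a $\hat{G}$-subset of $X^n$. I would first verify that the assignment $\lbb X \rbb \mapsto \lbb X^{(n)} \rbb$ gives a $\lambda$-ring structure on the Burnside ring $\Omega(\hat{G})$; extension from classes of $\hat G$-sets to all of $\Omega(\hat G)$ is governed by the identity $(X\amalg Y)^{(n)} = \coprod_{i+j=n} X^{(i)}\times Y^{(j)}$, and verification of the $\lambda$-ring axioms involving $\lambda^n$ of a product and a composite is formal, reducing to the corresponding classical statements for finite $G$-sets.

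The heart of the argument is to show this structure descends to $\Theta(G) = \Omega(\hat{G})/\fa(G)$; equivalently, that $\fa(G)$ is a $\lambda$-ideal. The conjugation relation \dref{defn:Theta}{d} is immediate since $(X^g)^{(n)} \cong (X^{(n)})^g$. The fibration relation \dref{defn:Theta}{e} is the main obstacle: given a surjection $f\colon X\to Y$ of transitive $U$-sets with fiber $F$, one must check that
\[ \lambda^n([X]) \equiv P_n\bigl([F^{(1)}],\ldots,[F^{(n)}],[Y^{(1)}],\ldots,[Y^{(n)}]\bigr) \pmod{\fa(G)}, \]
where $P_n$ is the universal $\lambda$-polynomial for a product. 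I would stratify $X^{(n)}$ according to the fiber profile of its elements over $Y$: an $n$-subset $S \subset X$ has image $T = f(S)$ of some size $k\le n$ with fiber-size partition $(m_1,\ldots,m_k)$ summing to $n$, and each stratum is a $U$-set fibered over a subset of $Y^{(k)}$ with fibers naturally built from products of $F^{(m_i)}$'s. Iterated applications of \dref{defn:Theta}{e} should then match the expansion of $P_n$ term by term. This combinatorial matching is the step I expect to be most delicate.

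Once the $\lambda$-structure on $\Theta(G)$ is in place, triviality of Adams operations is essentially automatic: since $\psi^n$ is a ring endomorphism and the classes $[X]$ generate, it suffices to check $\psi^n([X])=[X]$. Arguing by induction via Newton's identity
\[ \psi^n(x) = \sum_{k=1}^{n-1}(-1)^{k-1}\lambda^k(x)\psi^{n-k}(x) + (-1)^{n-1} n\,\lambda^n(x), \]
the induction step reduces to the identity $[X]\cdot[X^{(n-1)}] = n[X^{(n)}] + (n-1)[X^{(n-1)}]$, which already holds in $\Omega(\hat{G})$ via the $\hat{G}$-equivariant map $(x,S)\mapsto S\cup\{x\}$, split according to whether $x\in S$. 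Finally, torsion-freeness of $\Theta(G)$ (the remaining clause in the definition of binomial ring) follows from Elliott's general framework once trivial Adams operations are in hand; alternatively it can be checked directly by confronting the presentation with a sufficiently rich family of $\bQ$-valued measures. Assembling these pieces, Elliott's theorem yields the conclusion.
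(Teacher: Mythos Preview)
Your overall strategy matches the paper's, but there is a genuine error in the middle: the Burnside ring $\Omega(\hat{G})$ is \emph{not} a special $\lambda$-ring. The axioms for $\lambda^n$ of a product and of $\lambda^m$ (the ones involving $P_n$ and $P_{n,m}$) already fail for Burnside rings of finite groups; the paper notes this explicitly in Example~\ref{ex:lambda-burnside}, citing Siebeneicher. So your claim that ``verification of the $\lambda$-ring axioms involving $\lambda^n$ of a product and a composite is formal, reducing to the corresponding classical statements for finite $G$-sets'' is false.

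This error propagates: your proposed check for the fibration relation, namely $[X^{(n)}]\equiv P_n([F^{(\bullet)}],[Y^{(\bullet)}])$, is not what is actually needed, because $\lambda^n(\lbb F\rbb\lbb Y\rbb)=\lbb(F\times Y)^{(n)}\rbb$ is not given by $P_n$ in $\Omega(\hat{G})$. The correct check is simply $[X^{(n)}]=[(F\times Y)^{(n)}]$ in $\Theta(G)$. Here your stratification idea \emph{does} work, but not in the way you frame it: stratifying $X^{(n)}$ by fiber profile yields $[X^{(n)}]=Q_n([F^{(\bullet)}],[Y^{(\bullet)}])$ for some explicit polynomial $Q_n$ (the paper's Lemma~\ref{lem:lambda-5}), and crucially the \emph{same} $Q_n$ applies to the projection $F\times Y\to Y$. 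This gives descent (Lemma~\ref{lem:lambda-6}). The polynomials $Q_n$ and $P_n$ genuinely differ---see the Remark after Lemma~\ref{lem:lambda-8}---so your expectation of a ``term by term'' match with $P_n$ would not pan out.

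The paper then separately establishes that the resulting $\lambda$-structure on $\Theta(G)$ is special, by constructing ring homomorphisms $\bZ\langle x,y\rangle\to\Theta(G)$ (Proposition~\ref{prop:R-Theta}, Corollary~\ref{cor:R-Theta}) and transferring the identities $Q_n=P_n$ and $Q_{n,m}=P_{n,m}$ from the known special $\lambda$-ring $\bZ\langle x,y\rangle$. Your Adams-operation argument via Newton's identity and the relation $[X][X^{(n-1)}]=n[X^{(n)}]+(n-1)[X^{(n-1)}]$ is correct and pleasantly direct---arguably cleaner than the paper's route through Lemma~\ref{lem:R-Theta-2}---but note that if Elliott's theorem requires specialness (and the paper does prove specialness), you still need some version of the $\bZ\langle x\rangle$ bridge. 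Finally, your alternative suggestion to verify torsion-freeness ``by confronting the presentation with a sufficiently rich family of $\bQ$-valued measures'' is not viable in general: one does not know \emph{a priori} that $G$ admits any measures at all.
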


The theorem has some strong ramifications concerning measures:

\begin{corollary} \label{cor:meas-lift}
Let $\mu$ be a measure for $G$ with values in a field $k$ of characteristic $p>0$. Then $\mu$ takes values in the prime subfield $\bF_p \subset k$, and lifts uniquely to a $\bZ_p$-valued measure.
\end{corollary}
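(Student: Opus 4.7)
The plan is to translate the statement into one about the ring homomorphism $\phi\colon\Theta(G)\to k$ corresponding to $\mu$, and then exploit two properties of binomial rings furnished by Theorem~\ref{thm:binom}.

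For the claim that $\mu$ takes values in $\bF_p$, I would first establish an analogue of Fermat's little theorem for binomial rings: if $R$ is any binomial ring then $x^p\equiv x\pmod{pR}$ for every $x\in R$ and every prime $p$. This is quick: Fermat's little theorem in $\bF_p[X]$ gives $X^p-X\equiv X(X-1)\cdots(X-p+1)\pmod p$ in $\bZ[X]$, so we can write
\[
X^p-X \;=\; p!\binom{X}{p}+p\cdot g(X)
\]
for some $g\in\bZ[X]$. Specializing to any $x\in R$ (with $\binom{x}{p}\in R$ by hypothesis) yields $x^p-x\in pR$. Applying $\phi$ and using $p=0$ in $k$ gives $\phi(x)^p=\phi(x)$, so $\phi(x)$ lies in $\{y\in k:y^p=y\}=\bF_p$. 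Thus $\phi$ factors as $\iota\circ\phi_0$ with $\phi_0\colon\Theta(G)\to\bF_p$ and $\iota\colon\bF_p\hookrightarrow k$.

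For the unique lift to $\bZ_p$, I would invoke the universal property of the ring $\bZ\langle x\rangle$: it is the free binomial ring on one generator, and via evaluation at $x$ the classical Mahler--Kummer theory identifies both $\Hom_{\mathrm{Ring}}(\bZ\langle x\rangle,\bZ_p)$ and $\Hom_{\mathrm{Ring}}(\bZ\langle x\rangle,\bF_p)$ with $\bZ_p$ as a set, making the reduction map $\bZ_p\to\bF_p$ induce a bijection between these Hom-sets. Given $r\in\Theta(G)$, freeness supplies a unique ring map $\iota_r\colon\bZ\langle x\rangle\to\Theta(G)$ sending $x\mapsto r$; composing with $\phi_0$ and inverting this bijection selects a unique $\tilde\mu(r)\in\bZ_p$ reducing to $\phi_0(r)$. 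Additivity and multiplicativity of $r\mapsto\tilde\mu(r)$ follow by running the same recipe on the free binomial ring $\bZ\langle x,y\rangle$ on two generators at each pair $(r,s)\in\Theta(G)^2$, using that the analogous bijection $\Hom(\bZ\langle x,y\rangle,\bZ_p)\cong\Hom(\bZ\langle x,y\rangle,\bF_p)\cong\bZ_p^2$ is compatible with the substitutions $x\mapsto x+y$ and $x\mapsto xy$. Uniqueness of the whole lift $\tilde\mu$ is built into the construction: any other lift, composed with $\iota_r$, would produce a competing element of $\Hom(\bZ\langle x\rangle,\bZ_p)$ mapping to $\phi_0\circ\iota_r$, contradicting injectivity of the bijection.

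The main obstacle is really the classical bijection between $\Hom(\bZ\langle x\rangle,\bZ_p)$ and $\Hom(\bZ\langle x\rangle,\bF_p)$: injectivity follows from the fact that $(\binom{\alpha}{n}\bmod p)_{n\ge 0}$ determines $\alpha\in\bZ_p$ (e.g.\ via Kummer's theorem on the $p$-adic valuation of binomial coefficients), and surjectivity is the matching existence statement. With this classical input granted, all remaining verifications are routine bookkeeping with the freeness of $\bZ\langle x\rangle$ and $\bZ\langle x,y\rangle$.
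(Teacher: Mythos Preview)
Your proposal is correct and follows essentially the same route as the paper: both parts rest on Theorem~\ref{thm:binom} (that $\Theta(G)$ is binomial), the Fermat-type identity $x^p\equiv x\pmod{pR}$ in binomial rings, and the Mahler--Kummer description of $\Hom(\bZ\langle x\rangle,\bF_p)$ in terms of $\bZ_p$. The paper simply cites \cite[Theorem~4.1]{Elliott} for the first part and \cite[\S 9]{Elliott} for the lift, whereas you unpack these citations explicitly via the free binomial ring and its universal property (which is \cite[Proposition~2.1]{Elliott}, used elsewhere in the paper); your elementwise construction of $\tilde\mu$ and the verification of additivity/multiplicativity through $\bZ\langle x,y\rangle$ are exactly the sort of ``routine bookkeeping'' that the Elliott reference encapsulates.
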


\begin{proof}
Regard $\mu$ as a ring homomorphism $\Theta(G) \to k$. For any binomial ring $R$, one has $x^p=x$ for $x \in R/pR$ \cite[Theorem~4.1]{Elliott}, and so any homomorphism $R \to k$ lands in $\bF_p$. The lifting statement follows from the discussion in \cite[\S 9]{Elliott}.
\end{proof}

\begin{corollary}
If an oligomorphic group admits any measure then it admits a $\bC$-valued measure.
\end{corollary}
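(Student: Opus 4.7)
The plan is to reformulate the hypothesis and conclusion in terms of the ring $\Theta(G)$ and then produce the desired homomorphism by essentially formal commutative algebra, relying crucially on the binomial-ring theorem.

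First, I would note that by the universal property of $\Theta(G)$, a $k$-valued measure is exactly a (unital) ring homomorphism $\Theta(G) \to k$. The hypothesis that $G$ admits some measure means that $\Theta(G)$ maps to a nonzero ring, which forces $\Theta(G)$ to be nonzero. So the corollary reduces to constructing \emph{some} ring homomorphism $\Theta(G) \to \bC$.

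Second, I would use Theorem~\ref{thm:binom}: since $\Theta(G)$ is a binomial ring, it is $\bZ$-torsion-free, so $\Theta(G) \otimes \bQ$ is a nonzero $\bQ$-algebra. Moreover, by \S \ref{ss:Theta-prop}(f), an oligomorphic group has countably many conjugacy classes of open subgroups, so $\Theta(G)$, and hence $\Theta(G) \otimes \bQ$, is countable. Picking any maximal ideal $\fm$ of $\Theta(G) \otimes \bQ$, the quotient $F = (\Theta(G) \otimes \bQ)/\fm$ is a countable field of characteristic~0.

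Third, I would invoke the standard fact that any countable field $F$ of characteristic~0 embeds into $\bC$: choose a transcendence basis $T$ of $F$ over $\bQ$ (which is at most countable), inject $T$ into any transcendence basis of $\bC$ over $\bQ$ (which has cardinality $2^{\aleph_0}$), extend to the purely transcendental subfield $\bQ(T) \subset \bC$, and then extend further to $F$ using that $\bC$ is algebraically closed and $F$ is algebraic over $\bQ(T)$. The composite
\[
\Theta(G) \;\longrightarrow\; \Theta(G) \otimes \bQ \;\longrightarrow\; F \;\hookrightarrow\; \bC
\]
is then a ring homomorphism, i.e., a $\bC$-valued measure for $G$.

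The only nontrivial input is Theorem~\ref{thm:binom}, which provides $\bZ$-torsion-freeness; without it, $\Theta(G)$ might be a nonzero ring killed by tensoring with $\bQ$ (e.g., a ring of positive characteristic), and one would have no route to $\bC$. The other steps—countability from Roelcke precompactness, existence of maximal ideals, and the cardinality argument to embed into $\bC$—are standard and essentially automatic.
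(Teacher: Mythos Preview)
Your proof is correct and follows essentially the same approach as the paper: reduce to $\Theta(G)\ne 0$, use torsion-freeness from Theorem~\ref{thm:binom} to get $\Theta(G)\otimes\bQ\ne 0$, use countability from \S\ref{ss:Theta-prop}(f), and then invoke the standard fact that any nonzero countable $\bQ$-algebra admits a homomorphism to $\bC$. The paper states this last step in one line, while you spell out the maximal-ideal and field-embedding argument explicitly.
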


\begin{proof}
Let $G$ be oligomorphic, and suppose that $G$ admits some measure. Then $\Theta(G)$ is non-zero. Since $\Theta(G)$ is torsion-free (Theorem~\ref{thm:binom}) it follows that $\Theta(G) \otimes \bQ$ is non-zero. Since $\Theta(G)$ is countable (\S \ref{ss:Theta-prop}(f)) the result follows from the fact that any non-zero countable $\bQ$-algebra admits a homomorphism to $\bC$.
\end{proof}

\begin{corollary}
If $\Theta(G)$ is finitely generated as a ring then there is a ring isomorphism
\begin{displaymath}
\Theta(G) \cong \bZ[1/m_1] \times \cdots \times \bZ[1/m_r]
\end{displaymath}
for some $r \ge 0$ and integers $m_1, \ldots, m_r$.
\end{corollary}

\begin{proof}
This is a general result for finitely generated binomial rings \cite[Theorem~9]{Xantcha}.
\end{proof}

\begin{remark}
There are many examples where $\Theta(G) \otimes \bQ$ is finitely generated as a $\bQ$-algebra and of positive Krull dimension. In these cases, the above corollary implies that $\Theta(G)$ cannot be finitely generated. For instance, this happens when $G$ is the infinite symmetric group: here, $\Theta(G)$ is the ring of integer-valued polynomials, which is not finitely generated, but $\Theta(G) \otimes \bQ \cong \bQ[t]$. The situation in the corollary occurs for the group $\Aut(\bR,<)$ and the group attached to boron trees.
\end{remark}

To prove the theorem, we show that $\Theta(G)$ carries the structure of a $\lambda$-ring, with $\lambda^n([X])=[X^{(n)}]$. We then show that the Adam's operations are trivial. A theorem of Elliott states that any such $\lambda$-ring is binomial, and so this yields the theorem.

\begin{remark}
Let $X$ be a finitary $\hat{G}$-set and let $x=[X]$ be its class in $\Theta(G)$. It is easy to see that $[X^{[n]}]=x(x-1)\cdots (x-n+1)$. Moreover, the natural map $X^{[n]} \to X^{(n)}$ is $n!$-to-1, and so $[X^{[n]}]=n! \cdot [X^{(n)}]$ by Corollary~\ref{cor:Theta-d-to-1}. Thus if we knew that $\Theta(G)$ were torsion-free then we would have $\binom{x}{n}=[X^{(n)}]$, and so $\Theta(G)$ would be a binomial ring. Thus, in a sense, the real content of Theorem~\ref{thm:binom} is the torsion-freeness. However, we do not know how to prove this directly. Instead, as stated above, we construct operations like $x \mapsto \binom{x}{n}$ on $\Theta(G)$, and then deduce the theorem from properties of these operations.
\end{remark}

\begin{remark} \label{rmk:rel-binom}
There is a relative version of Theorem~\ref{thm:binom}. Let $\sE$ be a stabilizer class satisfying the following condition:
\begin{itemize}
\item[($\ast$)] If $X$ is an $\sE$-smooth $G$-set then so is $X^{(n)}$, for any $n \ge 0$.
\end{itemize}
Then $\Theta(G; \sE)$ is a binomial ring. This follows from the proof of Theorem~\ref{thm:binom}. If $\sE$ does not satisfy ($\ast$) then $\Theta(G; \sE)$ need not be a binomial ring: for example, when $G$ is the infinite symmetric group and $\sE$ is the stabilizer class from Example~\ref{ex:stab-class}, we find that $\Theta(G; \sE)=\bZ[x]$ (Proposition~\ref{prop:sym-theta-rel}), which is not a binomial ring. We do not know if $\Theta(G; \sE)$ is always torsion-free.
\end{remark}

\subsection{\texorpdfstring{$\lambda$}{\textlambda}-rings}

We now recall the basic definitions surrounding $\lambda$-rings. We refer to \cite[\S 3]{Dieck} or \cite[\S I]{Knutson} for additional details. We note that some sources (such as \cite{Knutson}) use the terminology ``pre-$\lambda$-ring'' and ``$\lambda$-ring'' in place of our ``$\lambda$-ring'' and ``special $\lambda$-ring.''

\begin{definition}
A \defn{$\lambda$-ring} is a commutative ring $R$ equipped with (set) maps $\lambda^n \colon R \to R$ for all non-negative integers $n$ such that the following conditions hold (for $x,y \in R$):
\begin{enumerate}
\item $\lambda^0(x)=1$.
\item $\lambda^1(x)=x$.
\item $\lambda^n(x+y)=\sum_{i+j=n} \lambda^i(x) \lambda^j(y)$ for all $n \ge 0$.
\end{enumerate}
A $\lambda$-ring is called \defn{special} if the following additional conditions hold:
\begin{enumerate}[resume]
\item $\lambda^n(1)=0$ for $n \ge 2$.
\item $\lambda^n(xy)=P_n(\lambda^1(x), \ldots, \lambda^n(x), \lambda^1(y), \ldots, \lambda^n(y))$ for all $n \ge 0$.
\item $\lambda^n(\lambda^m(x))=P_{n,m}(\lambda^1(x), \ldots, \lambda^{nm}(x))$ for all $n,m \ge 0$.
\end{enumerate}
Here $P_n$ and $P_{n,m}$ are certain universal polynomials; see \cite[\S 3.1]{Dieck} or \cite[\S I.1]{Knutson} for the precise definitions.
\end{definition}

\begin{definition}
Let $R$ and $S$ be $\lambda$-rings. A \defn{homomorphism of $\lambda$-rings} is a ring homomorphism $f \colon R \to S$ such that $f(\lambda^i(x))=\lambda^i(f(x))$ for all $x \in R$ and all $i \ge 0$.
\end{definition}

\begin{example}
The ring $\bZ$ of integers is a special $\lambda$-ring via $\lambda^n(x) = \binom{x}{n}$. See \cite[\S I.2]{Knutson}.
\end{example}

\begin{example} \label{ex:lambda-burnside}
The Burnside ring $\Omega(G)$ carries a $\lambda$-ring structure given by $\lambda^n(\lbb X \rbb)=\lbb X^{(n)} \rbb$. Conditions (a) and (b) are clear, while (c) follows from the natural isomorphism
\begin{displaymath}
(X \amalg Y)^{(n)} = \coprod_{i+j=n} \big( X^{(i)} \times Y^{(j)} \big).
\end{displaymath}
This $\lambda$-structure is typically not special, even when $G$ is finite; see \cite{Siebeneicher}.
\end{example}

Let $R$ be a commutative ring. Let $\Lambda(R)$ be the set $1+tR\lbb t \rbb$ of all formal power series with coefficients in $R$ having constant term~1. Let $a,b \in \Lambda(R)$, and write $a=\sum_{i \ge 0} a_i t^i$ and $b=\sum_{i \ge 0} b_i t^i$ with $a_i,b_i \in R$ and $a_0=b_0=1$. We make the following definitions:
\begin{itemize}
\item $a \oplus b$ is the ordinary power series product $ab$.
\item $a \otimes b = \sum_{n \ge 0} P_n(a_1, \ldots, a_n, b_1, \ldots, b_n) t^n$.
\item $\lambda^i(a) = \sum_{n \ge 0} P_{n,i}(a_1, \ldots, a_{ni}) t^n$.
\end{itemize}
Grothendieck proved that $\oplus$ and $\otimes$ give $\Lambda(R)$ the structure of a commutative, associative, unital ring, and that the $\lambda^i$ define on it the structure of a special $\lambda$-ring; see \cite[\S I.2]{Knutson} or \cite[Proposition~3.1.8]{Dieck}. (The multiplicative identity in $\Lambda(R)$ is $1+t$.) This ring is sometimes called the ring of ``big Witt vectors.'' If $\phi \colon R \to S$ is a ring homomorphism, then there is an induced homomorphism of $\lambda$-rings $\phi \colon \Lambda(R) \to \Lambda(S)$, by $\phi(\sum_{i \ge 0} a_i t^i)=\sum_{i \ge 0} \phi(a_i) t^i$.

Suppose now that we have functions $\lambda^n \colon R \to R$ for all $n \ge 0$, with $\lambda^0(x)=1$ and $\lambda^1(x)=x$ for all $x$. Define a function $\lambda_t \colon R \to \Lambda(R)$ by $\lambda_t(x)=\sum_{n \ge 0} \lambda^n(x) t^n$. Then the $\lambda^n$ define a $\lambda$-ring structure on $R$ if and only if $\lambda_t$ is an additive group homomorphism. Assuming this, the $\lambda^n$ define a special $\lambda$-ring structure on $R$ if and only if $\lambda_t$ is a homomorphism of $\lambda$-rings.

Suppose that $R$ is a $\lambda$-ring, and let $\lambda_t \colon R \to \Lambda(R)$ be as above. For $x \in R$ and $n \ge 1$, define $\psi^n(x) \in R$ by the identity
\begin{displaymath}
\frac{d}{dt} \log{\lambda_t(x)} = \sum_{n=0}^{\infty} (-1)^n \psi^{n+1}(x) t^n.
\end{displaymath}
Then $\psi^n \colon R \to R$ is called the $n$th \defn{Adam's operation} associated to the $\lambda$-ring structure. These operations satisfy a number of nice properties, especially when the $\lambda$-ring structure is special; see \cite[\S I.4]{Knutson} or \cite[\S 3.4]{Dieck}. We say that the Adam's operations are \defn{trivial} if each $\psi^n$ is the identity. The relevance of these operations to us is the following result:

\begin{theorem} \label{thm:elliott}
If $R$ is a $\lambda$-ring with trivial Adam's operations then $R$ is a binomial ring. Conversely, every binomial ring admits a unique $\lambda$-ring structure with trivial Adam's operations, given by $\lambda^n(x)=\binom{x}{n}$, and this $\lambda$-ring structure is special. 
\end{theorem}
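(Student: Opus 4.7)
The theorem has two directions: first, any $\lambda$-ring with trivial Adam's operations is binomial; and second, every binomial ring $R$ admits a unique $\lambda$-structure with trivial Adam's operations, given by $\lambda^n(x) = \binom{x}{n}$, and this structure is moreover special.

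For the second direction I would directly verify the axioms. In a binomial ring $R$ the elements $\binom{x}{n}$ exist by definition, and the identities $\lambda^0(x) = 1$, $\lambda^1(x) = x$ are immediate. Additivity $\lambda^n(x+y) = \sum_{i+j=n}\binom{x}{i}\binom{y}{j}$ is the Vandermonde identity, which holds as a polynomial identity in $\bQ[x,y]$ (checkable on integer values) and therefore in any binomial ring. The generating series is $\lambda_t(x) = (1+t)^x$, so $\frac{d}{dt}\log \lambda_t(x) = \frac{x}{1+t}$, giving $\psi^n(x) = x$ for all $n \ge 1$. Specialness amounts to three further polynomial identities ($\lambda^n(1)=0$ for $n\ge 2$, the product formula, and the composition formula), each of which I would verify by a universal-polynomial principle: both sides are integer-valued polynomials in finitely many variables whose agreement can be checked at integer specialisations, where the identities hold because $\bZ$ carries a special $\lambda$-ring structure with $\lambda^n(x) = \binom{x}{n}$ (coming, e.g., from the representation ring of a torus). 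Uniqueness follows since any $\lambda$-structure with trivial Adam's operations must satisfy $\frac{d}{dt}\log \lambda_t(x) = x/(1+t)$, forcing $\lambda_t(x) = (1+t)^x$ in $R\otimes\bQ\lbb t\rbb$, and torsion-freeness of $R$ pins the coefficients down integrally.

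For the first direction, the same integration argument yields $\lambda_t(x) = (1+t)^x$ formally in $R\otimes\bQ\lbb t\rbb$, so $\lambda^n(x) = \binom{x}{n}$ in $R\otimes\bQ$. The Newton identity, valid in any $\lambda$-ring, expresses $n\lambda^n(x)$ as an explicit integer-coefficient polynomial in $x$, $\lambda^1(x),\ldots,\lambda^{n-1}(x)$, and $\psi^1(x),\ldots,\psi^n(x)$; substituting $\psi^k(x)=x$ and inducting on $n$ shows that $n!\cdot \lambda^n(x) = x(x-1)\cdots (x-n+1)$ as an identity in $R$. Granted that $R$ is torsion-free, dividing by $n!$ gives $\lambda^n(x) = \binom{x}{n} \in R$, completing the proof that $R$ is binomial.

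The main obstacle is showing $R$ is $\bZ$-torsion-free. I would attempt this by the following direct argument: if $ry = 0$ for some positive integer $r$ and $y \in R$, then $\lambda_t(y)^r = \lambda_t(ry) = 1$ in the group $\Lambda(R) = 1 + tR\lbb t\rbb$; extracting the coefficients of $\lambda_t(y)^r - 1$ and combining them with the integral identities $n!\lambda^n(y) = y(y-1)\cdots(y-n+1)$ yields a system of polynomial equations in $y$ which should force $y = 0$ by an inductive degree argument. An alternative strategy is to construct a $\lambda$-ring homomorphism from the free binomial ring $\bZ\langle y\rangle$ on one generator into the sub-$\lambda$-ring of $R$ generated by $y$, and exploit the well-understood structure of $\bZ\langle y\rangle$ as the ring of integer-valued polynomials, transferring torsion-freeness back to $R$. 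This passage from a rational identity to its integral refinement is the technical heart of Elliott's theorem, and the step I expect to be most delicate.
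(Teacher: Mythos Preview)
The paper does not prove this theorem at all: it simply cites Elliott \cite[Proposition~8.3]{Elliott} for the first statement and Wilkerson \cite{Wilkerson} for the second. Your proposal is therefore much more ambitious than what the paper does, and your treatment of the second direction (the Wilkerson direction) is essentially complete: the Vandermonde identity, the computation $\lambda_t(x)=(1+t)^x$, and the universal-polynomial verification of specialness are all standard and correct.

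For the first direction, your reduction via Newton's identities to the integral identity $n!\,\lambda^n(x)=x(x-1)\cdots(x-n+1)$ is correct, and you rightly identify torsion-freeness as the crux. However, neither of your proposed approaches to torsion-freeness is complete as stated. The second approach is circular: to build a $\lambda$-ring map $\bZ\langle y\rangle \to R$ you need to know where $\binom{y}{n}$ goes, which requires already knowing that $\lambda^n(y)$ satisfies the relations of $\binom{y}{n}$ in $R$, i.e., that $R$ is binomial. The first approach (from $\lambda_t(y)^r=1$) is the more promising one, but extracting $y=0$ from this in a ring with torsion is genuinely delicate, and your ``inductive degree argument'' is not spelled out; the coefficient-by-coefficient analysis quickly runs into the problem that the group $1+tR\lbb t\rbb$ can have $p$-torsion when $R$ does. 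Elliott's actual argument proceeds differently (via Fermat-type congruences $x^p\equiv x\pmod p$ forced by triviality of $\psi^p$, combined with structural results on $\lambda$-rings), and is not a routine manipulation. Since the paper is content to cite Elliott here, you would be going well beyond what is required; if you want a self-contained account, you should consult Elliott's paper directly rather than attempt to improvise the torsion-freeness step.
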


\begin{proof}
The first statement is due to Elliott \cite[Proposition~8.3]{Elliott}. The second is an older result of Wilkerson \cite{Wilkerson}.
\end{proof}

\subsection{The \texorpdfstring{$\lambda$}{\textlambda}-ring structure on \texorpdfstring{$\Theta(G)$}{\textTheta(G)}}

The remainder of \S \ref{s:binom} is devoted to the proof of the following theorem:

\begin{theorem} \label{thm:lambda}
The ring $\Theta(G)$ admits a $\lambda$-ring structure characterized uniquely by the following condition: if $X$ is a finitary $\hat{G}$-set then $\lambda^n([X])=[X^{(n)}]$. The Adam's operations for this structure are trivial.
\end{theorem}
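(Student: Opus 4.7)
The plan is to construct the $\lambda$-ring structure by packaging the operations $\lambda^n$ into a single generating-function map $\lambda_t \colon \Theta(G) \to \Lambda(\Theta(G))$, verify its compatibility with the defining relations of $\Theta(G)$, and then show that the associated Adams operations are trivial. Elliott's theorem (Theorem~\ref{thm:elliott}) will then convert trivial Adams operations into the binomial property, yielding Theorem~\ref{thm:binom}.

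For the construction, I would define $\lambda_t([X]) = \sum_{n \ge 0} [X^{(n)}] \, t^n$ on the generators $[X]$ of $\Theta(G)$, and extend uniquely so that $\lambda_t$ becomes a group homomorphism from $(\Theta(G),+)$ to $(\Lambda(\Theta(G)),\oplus)$. Well-definedness requires checking the five defining relations of $\Theta(G)$. Relation (a) is immediate; relation (d) follows from $(X^g)^{(n)} = (X^{(n)})^g$; relation (b) from $\bone^{(n)} = \varnothing$ for $n \ge 2$; and relation (c) from the bijection $(X \amalg Y)^{(n)} \cong \coprod_{i+j=n} X^{(i)} \times Y^{(j)}$, just as in Example~\ref{ex:lambda-burnside}. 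The main obstacle is the fibration relation (e): given $\pi \colon X \to Y$, a map of transitive $U$-sets with fiber $F$, one must show $[X^{(n)}] = [(F \times Y)^{(n)}]$ in $\Theta(G)$ for every $n$ (and one may write the right side this way since $[F][Y] = [F \times Y]$ already follows from (e) applied to the second projection).

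To prove this fibration identity, I would stratify $X^{(n)}$ by fiber-multiplicity type. For a partition $\lambda$ of $n$ with $m_j$ parts equal to $j$, let $X^{(n)}_\lambda$ be the $\hat{G}$-subset of those $S$ for which the multiset $\{\,|S \cap \pi^{-1}(y)| : y \in \pi(S)\,\}$ equals $\lambda$. Sending $S$ to the tuple $(T_j)_j$ with $T_j = \{y : |S \cap \pi^{-1}(y)| = j\}$ defines a $U$-equivariant surjection $X^{(n)}_\lambda \to \cP_\lambda(Y)$, where $\cP_\lambda(Y) \subseteq \prod_j Y^{(m_j)}$ parameterizes pairwise disjoint tuples with $T_j \in Y^{(m_j)}$. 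On each $U$-orbit of $\cP_\lambda(Y)$, the fibers of this map are mutually conjugate and identify with $\prod_j \prod_{y \in T_j} F_y^{(j)}$, of class $\prod_j [F^{(j)}]^{m_j}$ in $\Theta(G)$. Applying axiom (e) orbit by orbit thus yields $[X^{(n)}_\lambda] = [\cP_\lambda(Y)] \cdot \prod_j [F^{(j)}]^{m_j}$. The crucial point is that this expression depends only on the classes $[F^{(j)}]$ and $[\cP_\lambda(Y)]$, and the identical stratification applied to $F \times Y \to Y$ produces exactly the same formula; summing over $\lambda$ gives $[X^{(n)}] = [(F \times Y)^{(n)}]$, so $\lambda_t$ is well-defined. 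The axioms $\lambda^0=1$, $\lambda^1=\id$, and the binomial addition formula then follow formally.

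It remains to verify that the Adams operations $\psi^n$ are trivial. In any $\lambda$-ring each $\psi^n$ is $\bZ$-linear, since $\lambda_t$ is a group homomorphism and $\psi^n$ is extracted from its logarithmic derivative, so it suffices to show $\psi^n([X]) = [X]$ for every finitary $\hat{G}$-set $X$ and every $n \ge 1$. Comparing $t^k$-coefficients in the differential equation $(1+t)\,\lambda_t'([X]) = [X] \cdot \lambda_t([X])$—which is equivalent to triviality of Adams on $[X]$—reduces the question to the family of identities $[X] \cdot [X^{(k)}] = k\,[X^{(k)}] + (k+1)\,[X^{(k+1)}]$ for every $k \ge 0$. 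These in turn reflect the decomposition of the $\hat{G}$-set $X \times X^{(k)}$ into the incidence locus $\{(x,S) : x \in S\}$, a $k$-to-$1$ cover of $X^{(k)}$ of class $k\,[X^{(k)}]$ by Corollary~\ref{cor:Theta-d-to-1}, and its complement $\{(x,S) : x \notin S\}$, which is isomorphic via $(x,S) \mapsto (x, S \cup \{x\})$ to the incidence locus in $X \times X^{(k+1)}$, a $(k+1)$-to-$1$ cover of $X^{(k+1)}$ of class $(k+1)\,[X^{(k+1)}]$. Combined with Elliott's theorem, this completes the proof of both Theorem~\ref{thm:lambda} and Theorem~\ref{thm:binom}.
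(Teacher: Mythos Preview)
Your fibration stratification for relation~(e) is essentially the paper's Lemma~\ref{lem:lambda-6} (via Lemma~\ref{lem:lambda-5}), but the well-definedness step has a genuine gap. You assert that checking the five defining relations of $\Theta(G)$ suffices to extend $\lambda_t$ as a group homomorphism; it does not. Those relations present $\Theta(G)$ as a quotient of $\Omega(\hat{G})$ by an \emph{ideal} $\fa(G)$ (Proposition~\ref{prop:Burnside-Theta}), and an additive map $q\colon\Omega(\hat{G}) \to \Lambda(\Theta(G))$ descends only if it annihilates all of $\fa(G)$, not just the ideal generators. Concretely, having verified $[X^{(n)}] = [(F \times Y)^{(n)}]$ for a fibration $X \to Y$, you still owe $[(Z \times X)^{(n)}] = [(Z \times F \times Y)^{(n)}]$ for every $Z$, and the analogous statement for relation~(d). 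The paper closes this gap by first proving that $q$ is a \emph{ring} homomorphism (Lemma~\ref{lem:lambda-8}: $[(A \times B)^{(n)}]$ equals the universal polynomial $P_n$, established by comparison with the special $\lambda$-ring $\bZ\langle x,y\rangle$); once $q$ is multiplicative, killing the ideal generators suffices. Your stratification formula can in fact be leveraged to handle these products directly---it applies to any map whose fibers are all $G$-conjugate, in particular to $Z \times X \to Z \times Y$ and to the projection $Z \times X \to X$ with constant fiber $Z$---but this extension needs to be stated and used explicitly.

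Your Adams argument, by contrast, is genuinely different from the paper's and more elementary. The paper deduces $\psi^n([X]) = [X]$ by transporting triviality of Adams operations from $\bZ\langle x\rangle$ along the $\lambda$-ring homomorphism of Lemma~\ref{lem:R-Theta-2}, which in turn requires knowing the $\lambda$-structure on $\Theta(G)$ is special. Your direct combinatorial proof of $[X]\cdot[X^{(k)}] = k\,[X^{(k)}] + (k{+}1)\,[X^{(k+1)}]$ from the incidence decomposition of $X \times X^{(k)}$ avoids all of this machinery and is a nice simplification.
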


This theorem implies Theorem~\ref{thm:binom} (by appealing to Theorem~\ref{thm:elliott}). The proof of Theorem~\ref{thm:lambda} proceeds as follows. We see from Example~\ref{ex:lambda-burnside} that $\Omega(\hat{G})$ admits a $\lambda$-ring structure (which is typically not special). We show that the composition
\begin{displaymath}
\xymatrix@C=4em{
\Omega(\hat{G}) \ar[r]^-{\lambda_t} & \Lambda(\Omega(\hat{G})) \ar[r] & \Lambda(\Theta(G)) }
\end{displaymath}
is a homomorphism of $\lambda$-rings. This involves establishing some identities in $\Theta(G)$ for classes of the form $[X^{(n)}]$, which we do in the next few subsections. We then show that the above map kills the ideal $\fa(G)$, and thus factors through $\Theta(G)$; this shows that $\Theta(G)$ carries a $\lambda$-ring structure as in the theorem. Finally, we show that the Adam's operations are trivial by relating them to the Adam's operations on the ring of integer-valued polynomials.

\begin{remark} \label{rmk:lambda-not-obvious}
Theorem~\ref{thm:lambda} implies that if $X$ and $Y$ are finitary $\hat{G}$-sets such that $[X]=[Y]$ then $[X^{(n)}]=[Y^{(n)}]$ for all $n$. This does not seem obvious even for $n=2$. (It is easy to see that $[X^{(2)}]-[Y^{(2)}]$ is 2-torsion though, since $2[X^{(2)}]=[X]^2-[X]$.)
\end{remark}

\subsection{Integer-valued polynomials} \label{ss:intpoly}

Recall that an \defn{integer-valued polynomial} is a polynomial $p \in \bQ[x]$ such that $p(m)$ is an integer for all integers $m$. The collection of all integer-valued polynomials forms a subring of $\bQ[x]$ that we denote by $\bZ\langle x \rangle$. This ring is a binomial ring. Indeed, if $p \in \bZ\langle x \rangle$ then $p(m)$ is an integer for all integers $m$, and so $\binom{p(m)}{n}$ is also an integer for all $m$; thus $\binom{p(x)}{n}$ still belongs to $\bZ\langle x \rangle$. For notational ease, we write $\lambda^n(p)$ for the polynomial $\binom{p(x)}{n}$; this defines a special $\lambda$-ring structure on $\bZ\langle x \rangle$ by Theorem~\ref{thm:elliott}. It is well-known that the elements $\lambda_n(x)=\binom{x}{n}$ form a basis for $\bZ\langle x \rangle$ as a $\bZ$-module. This, and other basic facts about $\bZ\langle x \rangle$, can be found in the survey article \cite{CahenChabert}. 

\begin{proposition} \label{prop:R-Theta}
Let $X$ be a finitary $\hat{G}$-set. Then there exists a unique ring homomorphism $\phi \colon \bZ\langle x \rangle\to \Theta(G)$ satisfying $\phi(\lambda_n(x))=[X^{(n)}]$ for all $n \ge 0$.
\end{proposition}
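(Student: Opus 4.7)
The plan is to define $\phi$ on the $\bZ$-basis $\{\lambda_n(x)\}_{n \ge 0}$ of $\bZ\langle x\rangle$ by $\phi(\lambda_n(x))=[X^{(n)}]$, and then verify this $\bZ$-linear map respects multiplication. Uniqueness of $\phi$ as a ring homomorphism is immediate from the fact that the $\lambda_n(x)$ form a $\bZ$-basis, so any two $\bZ$-linear maps agreeing on them coincide. The normalization $\phi(1)=1$ is also immediate: $\lambda_0(x)=1$ and $X^{(0)}=\bone$, whose class in $\Theta(G)$ is $1$ by \dref{defn:Theta}{b}.

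The content is multiplicativity. Since the $\lambda_n(x)$ span $\bZ\langle x\rangle$, it suffices to check
\[
\phi(\lambda_m(x)\lambda_n(x))=[X^{(m)}]\cdot[X^{(n)}]
\]
for all $m,n\ge 0$. For this, I will first establish the Vandermonde-type identity in $\bZ\langle x\rangle$
\[
\lambda_m(x)\lambda_n(x)=\sum_{k=0}^{\min(m,n)}\frac{(m+n-k)!}{k!\,(m-k)!\,(n-k)!}\,\lambda_{m+n-k}(x),
\]
which is the polynomial identity obtained by counting pairs $(A,B)$ of subsets of an $x$-element set with $|A|=m$, $|B|=n$, and grouping by $|A\cap B|=k$ (so $|A\cup B|=m+n-k$). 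This identity can be verified by evaluating at all non-negative integers $x$, which suffices since both sides lie in $\bQ[x]$.

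The task then reduces to producing the matching identity in $\Theta(G)$:
\[
[X^{(m)}]\cdot[X^{(n)}]=\sum_{k=0}^{\min(m,n)}\frac{(m+n-k)!}{k!\,(m-k)!\,(n-k)!}\,[X^{(m+n-k)}].
\]
To prove this, I will decompose the $\hat G$-set $X^{(m)}\times X^{(n)}$ as
\[
X^{(m)}\times X^{(n)}=\bigsqcup_{k=0}^{\min(m,n)} Y_k,\qquad Y_k=\{(A,B):|A|=m,\ |B|=n,\ |A\cap B|=k\},
\]
which is a decomposition into $\hat G$-subsets (each $Y_k$ is defined by a condition preserved by any group of definition for $X$). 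Using \dref{defn:Theta}{c} and multiplicativity $[X^{(m)}\times X^{(n)}]=[X^{(m)}][X^{(n)}]$ (which follows since $[\cdot]$ is a measure), it remains to compute $[Y_k]$. The map $Y_k\to X^{(m+n-k)}$ sending $(A,B)\mapsto A\cup B$ is $\hat G$-equivariant, and each fiber is a finite set of cardinality $\binom{m+n-k}{k,\,m-k,\,n-k}=\frac{(m+n-k)!}{k!(m-k)!(n-k)!}$ (the number of ways to partition a set of size $m+n-k$ into an intersection part of size $k$, an $A$-only part of size $m-k$, and a $B$-only part of size $n-k$). By Corollary~\ref{cor:Theta-d-to-1}, $[Y_k]=\frac{(m+n-k)!}{k!(m-k)!(n-k)!}\cdot[X^{(m+n-k)}]$, and summing over $k$ yields the desired identity.

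The main obstacle is just bookkeeping around the combinatorial decomposition: one must check that $Y_k$ really is a $\hat G$-subset and that the map to $X^{(m+n-k)}$ has constant finite fibers of the claimed size; both points are straightforward once the group of definition and the fiber count are made explicit. All remaining ingredients—the basis property of the $\lambda_n(x)$, Corollary~\ref{cor:Theta-d-to-1}, and the additivity/multiplicativity of $[\cdot]$—have been established earlier in the paper.
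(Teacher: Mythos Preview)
Your proof is correct and follows essentially the same approach as the paper's: both define $\phi$ on the basis $\{\lambda_n(x)\}$, establish the Vandermonde-type product formula for the $\lambda_n$, decompose $X^{(m)}\times X^{(n)}$ according to the size of the union (equivalently, the intersection), and apply Corollary~\ref{cor:Theta-d-to-1} to the finite-fibered map $(A,B)\mapsto A\cup B$. The only cosmetic difference is that the paper indexes by $|A\cup B|$ while you index by $|A\cap B|$, which is an equivalent reparametrization.
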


\begin{proof}
Put $a_n=\lambda_n(x)$. Since the $a_n$'s form a $\bZ$-basis of $\bZ\langle x \rangle$, there is a unique $\bZ$-linear map $\phi$ given by the stated formula. We must show that it is a ring homomorphism. It is clear that $\phi(1)=1$. We have
\begin{displaymath}
a_na_m = \sum_{k=\max(n,m)}^{n+m} N_{n,m}^k \cdot a_k, \qquad
N_{n,m}^k = \binom{k}{n} \binom{n}{n+m-k}.
\end{displaymath}
(See, e.g., \cite[Theorem~3.1]{HarmanHopkins} for a proof.) Write
\begin{displaymath}
X^{(n)} \times X^{(m)} = \coprod_{k=\max(n,m)}^{n+m} Y_k,
\end{displaymath}
where $Y_k$ is the set of pairs $(A,B) \in X^{(n)} \times X^{(m)}$ such that $\#(A \cup B)=k$. We have a map $h_k \colon Y_k \to X^{(k)}$ of $\hat{G}$-sets given by $(A,B) \mapsto A \cup B$. Now, the fiber of $h_k$ over $C \in X^{(k)}$ consists of all ways of writing $C$ as a union $A \cup B$ where $A$ and $B$ are subsets of $C$ of cardinality $n$ and $m$. One easily sees that there are $N_{n,m}^k$ such choices for $(A,B)$, and so the fibers of $h_k$ all have cardinality $N_{n,m}^k$. Thus, appealing to Corollary~\ref{cor:Theta-d-to-1}, we find $[Y_k]=N_{n,m}^k [X^{(k)}]$ in $\Theta(G)$. We therefore find
\begin{displaymath}
[X^{(n)}] [X^{(m)}] = \sum_{k=\max(n,m)}^{n+m} N_{n,m}^k \cdot [X^{(k)}].
\end{displaymath}
We thus see that $\phi(a_na_m)=\phi(a_n) \phi(a_m)$, from which it follows that $\phi$ is a ring homomorphism.
\end{proof}

\begin{remark}
Given $X$ as above, there is a unique $\bZ$-linear map $\tilde{\phi} \colon \bZ\langle x \rangle \to \Omega(\hat{G})$ satisfying $\tilde{\phi}(\lambda_n(x))=\lbb X^{(n)} \rbb$. This lifts $\phi$, but is typically not a ring homomorphism.
\end{remark}

We will also need multi-variate integer-valued polynomials. Let $I$ be an index set. We define $\bZ\langle x_i \rangle_{i \in I}$ to be the subring of $\bQ[x_i]_{i \in I}$ consisting of those polynomials that take integer values on $\bZ^I$. As with the univariate version, this is a binomial ring. If $i_1, \ldots, i_r \in I$ are distinct elements and $n_1, \ldots, n_r \in \bZ$ then the polynomial
\begin{displaymath}
\binom{x_{i_1}}{n_1} \cdots \binom{x_{i_r}}{n_r}
\end{displaymath}
belongs to $\bZ\langle x_i \rangle$. These polynomials form a $\bZ$-basis \cite[Lemma~2.2]{Elliott}. It follows that $\bZ\langle x_i \rangle_{i \in I}$ is naturally isomorphic to the tensor product $\bigotimes_{i \in I} \bZ\langle x_i \rangle$. We thus obtain the following corollary to the above proposition:

\begin{corollary} \label{cor:R-Theta}
Let $\{X_i\}_{i \in I}$ be finitary $\hat{G}$-sets. Then there exists a unique ring homomorphism $\phi \colon \bZ\langle x_i \rangle_{i \in I} \to \Theta(G)$ satisfying $\phi(\lambda_n(x_i))=[X_i^{(n)}]$ for all $i \in I$ and $n \in \bN$.
\end{corollary}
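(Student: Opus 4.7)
The strategy is to reduce the multi-variate statement to the univariate Proposition~\ref{prop:R-Theta} via the tensor product decomposition of $\bZ\langle x_i\rangle_{i\in I}$ noted immediately before the corollary.

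First, for each $i \in I$, Proposition~\ref{prop:R-Theta} applied to the finitary $\hat{G}$-set $X_i$ produces a ring homomorphism
\[
\phi_i \colon \bZ\langle x_i\rangle \to \Theta(G), \qquad \phi_i(\lambda_n(x_i)) = [X_i^{(n)}].
\]
Since $\Theta(G)$ is commutative, the images of the $\phi_i$ pairwise commute, so by the universal property of the tensor product in the category of commutative rings (i.e.\ coproduct), the collection $\{\phi_i\}_{i\in I}$ assembles into a unique ring homomorphism
\[
\bigotimes_{i\in I} \bZ\langle x_i\rangle \;\longrightarrow\; \Theta(G)
\]
whose restriction to the $i$-th tensor factor is $\phi_i$. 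Transporting this map along the isomorphism $\bigotimes_{i\in I}\bZ\langle x_i\rangle \cong \bZ\langle x_i\rangle_{i\in I}$ (recalled in the paragraph just before the corollary) yields the desired ring homomorphism $\phi$. By construction, $\phi(\lambda_n(x_i)) = \phi_i(\lambda_n(x_i)) = [X_i^{(n)}]$.

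Uniqueness is immediate: the elements $\prod_{j=1}^r \binom{x_{i_j}}{n_j}$, for distinct $i_j\in I$ and $n_j\in \bN$, form a $\bZ$-basis of $\bZ\langle x_i\rangle_{i\in I}$, and any ring homomorphism $\phi' \colon \bZ\langle x_i\rangle_{i\in I} \to \Theta(G)$ with $\phi'(\lambda_n(x_i))=[X_i^{(n)}]$ is forced to agree with $\phi$ on each such basis element (as ring homomorphisms respect products), hence on all of $\bZ\langle x_i\rangle_{i\in I}$ by $\bZ$-linearity.

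There is essentially no obstacle here: all the work was done in Proposition~\ref{prop:R-Theta}, and this corollary is a purely formal consequence via the universal property of the tensor product together with the cited basis fact for $\bZ\langle x_i\rangle_{i\in I}$.
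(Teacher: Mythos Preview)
Your proof is correct and follows exactly the approach the paper intends: the paper states the corollary immediately after noting the tensor product decomposition $\bZ\langle x_i\rangle_{i\in I}\cong\bigotimes_{i\in I}\bZ\langle x_i\rangle$ and the $\bZ$-basis of monomials $\prod\binom{x_{i_j}}{n_j}$, leaving the details implicit. You have spelled out precisely those details---applying Proposition~\ref{prop:R-Theta} componentwise and assembling via the coproduct property of the tensor product---so there is nothing to add.
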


\subsection{Subsets of products}

We now examine how $X^{(n)}$ behaves when $X$ is a product, or, more generally, fibers over some base set.

\begin{lemma} \label{lem:lambda-5}
There exists an absolute polynomial $Q_n \in \bZ[X_1, \ldots, X_n, Y_1, \ldots, Y_n]$ with the following property. Let $f \colon X \to Y$ be a map of finitary $\hat{G}$-sets, and let $F_y=f^{-1}(y)$ be the fiber over $y$. Assume that $[F_{y_1}^{(n)}]=[F_{y_2}^{(n)}]$ for all $y_1,y_2 \in Y$ and $n \ge 0$. Then
\begin{displaymath}
[X^{(n)}] = Q_n([F^{(1)}], \ldots, [F^{(n)}], [Y^{(1)}], \ldots, [Y^{(n)}]).
\end{displaymath}
where $F=F_y$ for some $y$.
\end{lemma}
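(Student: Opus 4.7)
\smallskip

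\noindent\textbf{Proof plan.} The natural approach is to stratify $X^{(n)}$ according to the cardinality of the image in $Y$. For $1 \le k \le n$, let
\[
X^{(n)}_k = \{ S \in X^{(n)} : |f(S)| = k \}.
\]
Since $f$ is $\hat{G}$-equivariant, each $X^{(n)}_k$ is a $\hat{G}$-subset of $X^{(n)}$, and $X^{(n)} = \coprod_{k=1}^n X^{(n)}_k$, so $[X^{(n)}] = \sum_{k=1}^n [X^{(n)}_k]$ in $\Theta(G)$. I would then reduce the lemma to computing each $[X^{(n)}_k]$.

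Next, consider the $\hat{G}$-equivariant map $\pi_k \colon X^{(n)}_k \to Y^{(k)}$ defined by $\pi_k(S) = f(S)$. The key claim is that $\pi_k$ is $\Theta$-constant (in the sense of \S\ref{ss:Theta-const}). Fix $T = \{y_1,\ldots,y_k\} \in Y^{(k)}$. After passing to the open subgroup of $G$ stabilizing the ordered tuple $(y_1,\ldots,y_k)$, the fiber $\pi_k^{-1}(T)$ decomposes as a $\hat{G}$-set via $S \mapsto (S \cap F_{y_1},\ldots, S \cap F_{y_k})$:
\[
\pi_k^{-1}(T) \;\cong\; \coprod_{\lambda} F_{y_1}^{(\lambda_1)} \times \cdots \times F_{y_k}^{(\lambda_k)},
\]
where $\lambda = (\lambda_1,\ldots,\lambda_k)$ ranges over compositions of $n$ with $k$ positive parts. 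Applying the product formula $[A \times B] = [A] \cdot [B]$ in $\Theta(G)$ and the hypothesis $[F_y^{(m)}] = [F^{(m)}]$ for every $y$ and $m$, the fiber class equals $c_{n,k}([F^{(1)}],\ldots,[F^{(n)}])$, where
\[
c_{n,k}(X_1,\ldots,X_n) \;=\; [t^n]\bigl(X_1 t + X_2 t^2 + \cdots + X_n t^n\bigr)^k \;\in\; \bZ[X_1,\ldots,X_n].
\]
Crucially, this value depends only on $k$ and the $[F^{(i)}]$, not on $T$, so $\pi_k$ is indeed $\Theta$-constant with $[\pi_k] = c_{n,k}([F^{(1)}],\ldots,[F^{(n)}])$.

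Applying Proposition~\ref{prop:Theta-const-fiber}(a) gives $[X^{(n)}_k] = c_{n,k}([F^{(1)}],\ldots,[F^{(n)}]) \cdot [Y^{(k)}]$. Summing over $k$ yields the lemma with
\[
Q_n(X_1,\ldots,X_n,Y_1,\ldots,Y_n) \;=\; \sum_{k=1}^n c_{n,k}(X_1,\ldots,X_n)\, Y_k.
\]
The only subtle point is the decomposition of $\pi_k^{-1}(T)$, which requires choosing an ordering of $T$ and hence working with an open subgroup smaller than the stabilizer of $T$; this is harmless because the class of a finitary $\hat{G}$-set in $\Theta(G)$ is independent of the group of definition, and the final value $c_{n,k}([F^{(1)}],\ldots,[F^{(n)}])$ is manifestly independent of the chosen ordering.
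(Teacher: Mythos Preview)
Your proof is correct and is essentially identical to the paper's argument: the paper also stratifies $X^{(n)}$ by the cardinality $r$ of the image in $Y$, maps each stratum to $Y^{(r)}$, decomposes the fiber over a fixed $r$-element subset via compositions of $n$ into $r$ positive parts, and applies Proposition~\ref{prop:Theta-const-fiber}(a). Your generating-function description $c_{n,k}=[t^n](X_1t+\cdots+X_nt^n)^k$ is just a compact repackaging of the paper's sum $\sum_\lambda X_{\lambda_1}\cdots X_{\lambda_k}$ over compositions.
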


\begin{proof}
If $A \in X^{(n)}$ is an $n$-element subset of $X$ then $f(A)$ is a subset of $Y$ of cardinality at most $n$. We can therefore write $X^{(n)}=W^1 \sqcup \cdots \sqcup W^n$, where $W^r$ is the subset of $X^{(n)}$ where the image has size $r$. We have a natural map of $\hat{G}$-sets $f^r \colon W^r \to Y^{(r)}$ given by $A \mapsto f(A)$.

Let $B=\{y_1,\ldots,y_r\}$ be an $r$-element subset of $Y$, and consider the fiber $W^r_B$ of $f^r$ over $B$. An element of $W^r_B$ is an $n$-element subset $\{x_1,\ldots,x_n\}$ of $X$ such that $f(x_i) \in B$ for each $i$, and every element of $B$ occurs in this form. We can decompose this set by counting how many times each element of $B$ occurs. Specifically, let $n=\lambda_1+\cdots+\lambda_r$ be a composition of $n$ (into non-zero parts), and let $W^r_{B,\lambda}$ be subset of $W^r_B$ consisting of subsets $\{x_1,\ldots,x_n\}$ such that for each $1 \le j \le r$ we have $f(x_i)=y_j$ for exactly $\lambda_j$ values of $i$. Clearly, $W^r_B$ is the disjoint union of the $W^r_{B,\lambda}$ over all choices of $\lambda$. Furthermore, we have an isomorphism of $\hat{G}$-sets
\begin{displaymath}
W^r_{B,\lambda} \to F_{y_1}^{(\lambda_1)} \times \cdots \times F_{y_r}^{(\lambda_r)}.
\end{displaymath}
Indeed, suppose $\{x_1,\ldots,x_n\}$ is an element of $W^r_{B,\lambda}$, and index so that $x_1, \ldots, x_{\lambda_1}$ are the elements above $y_1$. Then each of these $x$'s belongs to $F_{y_1}$, and so $\{x_1,\ldots,x_{\lambda_1}\}$ is an element of $F_{y_1}^{(\lambda_1)}$. Treating the other $y$'s in the same way, we obtain a map as above, which is easily seen to be bijective.

By the previous paragraph, we see that
\begin{displaymath}
[W^r_B] = \sum_{\lambda} [W^r_{B,\lambda}] = \sum_{\lambda} [F^{(\lambda_1)}] \cdots [F^{(\lambda_r)}],
\end{displaymath}
where the sum is over all compositions of $n$ into $r$ parts. Note that the above is independent of the choice of $B$. We thus see that each fiber of $f^r \colon W^r \to Y^{(r)}$ defines the same class in $\Theta(G)$, and so $[W^r]=[W^r_B][Y^{(r)}]$ for any $B \in Y^{(r)}$ by Proposition~\ref{prop:push-trans} (using an argument similar to that in Corollary~\ref{cor:Theta-d-to-1}). Finally, we have
\begin{displaymath}
[X^{(n)}]=\sum_{r=1}^n [W^r] = \sum_{r=1}^n \sum_{\lambda} [F^{(\lambda_1)}] \cdots [F^{(\lambda_r)}] [Y^{(r)}].
\end{displaymath}
We can thus take
\begin{displaymath}
Q_n = \sum_{r=1}^n \sum_{\lambda} X_{\lambda_1} \cdots X_{\lambda_r} Y_r.
\end{displaymath}
This completes the proof.
\end{proof}

\begin{lemma} \label{lem:lambda-6}
Let $X$ and $Y$ be transitive $U$-sets, for some open subgroup $U$ of $G$, and let $f \colon X \to Y$ be a map of $U$-sets with fiber $F$. Put $\ol{X}=Y \times F$. Then $[X^{(n)}]=[\ol{X}{}^{(n)}]$ in $\Theta(G)$ for all $n \ge 0$.
\end{lemma}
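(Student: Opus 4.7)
The strategy is to apply the previous Lemma~\ref{lem:lambda-5} to both maps $f \colon X \to Y$ and the projection $\pi \colon \ol{X} = Y \times F \to Y$, and observe that both computations yield the same polynomial expression in the classes $[F^{(i)}]$ and $[Y^{(i)}]$.

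First I would verify the hypothesis of Lemma~\ref{lem:lambda-5} for $f \colon X \to Y$. Since $Y$ is a transitive $U$-set, any two fibers $F_{y_1} = f^{-1}(y_1)$ and $F_{y_2} = f^{-1}(y_2)$ are related by the action of some $g \in U$, that is, $F_{y_2} = g \cdot F_{y_1}$ as subsets of $X$. Consequently $F_{y_2}^{(n)} = g \cdot F_{y_1}^{(n)}$ inside $X^{(n)}$, so $F_{y_2}^{(n)}$ is isomorphic to the conjugate $\hat{G}$-set $(F_{y_1}^{(n)})^g$. By axioms \dref{defn:Theta}{a} and \dref{defn:Theta}{d} of $\Theta(G)$, this gives $[F_{y_1}^{(n)}] = [F_{y_2}^{(n)}]$ for every $n$. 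Lemma~\ref{lem:lambda-5} therefore applies and yields
\begin{displaymath}
[X^{(n)}] = Q_n([F^{(1)}], \ldots, [F^{(n)}], [Y^{(1)}], \ldots, [Y^{(n)}]).
\end{displaymath}

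Next I would apply the same lemma to the projection $\pi \colon Y \times F \to Y$. Here every fiber is literally equal to $F$ as a $\hat{G}$-set, so the hypothesis holds trivially, and Lemma~\ref{lem:lambda-5} gives
\begin{displaymath}
[\ol{X}{}^{(n)}] = Q_n([F^{(1)}], \ldots, [F^{(n)}], [Y^{(1)}], \ldots, [Y^{(n)}]).
\end{displaymath}

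Comparing the two expressions yields $[X^{(n)}] = [\ol{X}{}^{(n)}]$, as required. There is essentially no obstacle here once Lemma~\ref{lem:lambda-5} is in hand; the only nontrivial point is that the hypothesis needs to hold for \emph{all} $n$ simultaneously, but this is immediate from $U$-conjugacy of the fibers together with the conjugation invariance axiom. The real work has already been absorbed into the previous lemma.
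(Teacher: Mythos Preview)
Your proof is correct and is essentially identical to the paper's own argument: both verify the hypothesis of Lemma~\ref{lem:lambda-5} for $f$ via $U$-conjugacy of fibers (invoking isomorphism- and conjugation-invariance in $\Theta(G)$), observe it holds trivially for the projection $\ol{X}\to Y$, and conclude by comparing the two resulting $Q_n$-expressions.
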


\begin{proof}
Let $F_y=f^{-1}(y)$ be the fiber of $f$ over $y \in Y$. Since $U$ acts transitively on $Y$, for each $y_1,y_2 \in U$, we have $F_{y_1} \cong F_{y_2}^g$ for some $g \in G$. It follows that $F_{y_1}^{(n)} \cong (F_{y_2}^{(n)})^g$ and so $[F_{y_1}^{(n)}]=[F_{y_2}^{(n)}]$. Thus $f$ satisfies the condition of Lemma~\ref{lem:lambda-5}. The projection map $\ol{X} \to Y$ clearly also satisfies this condition, since all of its fibers are $F$. The formulas for $[X^{(n)}]$ and $[\ol{X}{}^{(n)}]$ provided by the lemma are the same, and so these classes are equal.
\end{proof}

\begin{lemma} \label{lem:lambda-7}
The identity
\begin{displaymath}
\lambda^n(xy) = Q_n(\lambda^1(x), \ldots, \lambda^n(x), \lambda^1(y), \ldots, \lambda^m(y))
\end{displaymath}
holds in $\bZ\langle x,y \rangle$.
\end{lemma}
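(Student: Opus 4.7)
The plan is to exploit the classical fact that $\bZ\langle x,y\rangle$ sits inside $\bQ[x,y]$, so an identity between two of its elements can be verified by checking it on any Zariski-dense subset of $\bQ^2$; in particular, on $\bN^2$. Thus it suffices to prove, for every pair of non-negative integers $a,b$, the purely numerical identity
\begin{displaymath}
\binom{ab}{n} = Q_n\!\left(\tbinom{a}{1},\ldots,\tbinom{a}{n},\tbinom{b}{1},\ldots,\tbinom{b}{n}\right).
\end{displaymath}

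To obtain this numerical identity, I would specialize to the trivial group $G=1$. Then $\Theta(G)=\bZ$ by \S\ref{ss:Theta-prop}(b), every set is a $\hat{G}$-set, the universal measure is cardinality, and the constant-fiber hypothesis of Lemma~\ref{lem:lambda-5} reduces to the familiar condition that fibers have equal cardinality. Given $a,b \in \bN$, take $Y$ to be a set of size $b$, take $F$ to be a set of size $a$, and apply Lemma~\ref{lem:lambda-5} to the projection $f\colon F\times Y \to Y$. Every fiber of $f$ is isomorphic to $F$, so the hypothesis of the lemma is trivially satisfied, and the conclusion reads
\begin{displaymath}
\bigl|(F\times Y)^{(n)}\bigr| = Q_n\!\left(|F^{(1)}|,\ldots,|F^{(n)}|,|Y^{(1)}|,\ldots,|Y^{(n)}|\right),
\end{displaymath}
which is exactly the desired identity after identifying $|S^{(k)}|$ with $\binom{|S|}{k}$.

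Since the resulting equality holds on the Zariski-dense set $\bN^2 \subset \bQ^2$, the two sides agree as polynomials in $\bQ[x,y]$, and therefore also as elements of $\bZ\langle x,y\rangle$. I expect no real obstacle here: once one has Lemma~\ref{lem:lambda-5} (the substantive combinatorial work), this lemma is a formal consequence combining (i) specialization to finite sets with trivial group action and (ii) the density of integer points. The only thing to watch is that $Q_n$ was constructed once and for all as an absolute polynomial in Lemma~\ref{lem:lambda-5}, so the same $Q_n$ can be used on both sides of the comparison.
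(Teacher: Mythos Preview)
Your proposal is correct and follows essentially the same approach as the paper: specialize Lemma~\ref{lem:lambda-5} to the trivial group acting on a product of finite sets to obtain the numerical identity $\binom{ab}{n}=Q_n(\binom{a}{\bullet},\binom{b}{\bullet})$, then invoke density of $\bN^2$ in $\bQ^2$ to conclude the polynomial identity in $\bZ\langle x,y\rangle$. Your write-up is in fact slightly more careful about the labeling of fiber versus base than the paper's own proof.
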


\begin{proof}
Apply Lemma~\ref{lem:lambda-5} with $G$ being the trivial group and $X=Y \times F$ where $Y$ and $F$ finite sets of cardinalities $a$ and $b$. Using the identification $\Theta(G)=\bZ$ (see \S \ref{ss:Theta-prop}(b)), we find
\begin{displaymath}
\binom{ab}{n} = Q_n(\binom{a}{1}, \ldots, \binom{a}{n}, \binom{b}{1}, \ldots, \binom{b}{n}).
\end{displaymath}
Since this holds for all non-negative integers $a$ and $b$, it holds as an identity in the ring $\bZ\langle x, y \rangle$.
\end{proof}

\begin{lemma} \label{lem:lambda-8}
Let $X$ and $Y$ be finitary $\hat{G}$-sets. Then
\begin{displaymath}
[(X \times Y)^{(n)}] = P_n([X^{(1)}], \ldots, [X^{(n)}], [Y^{(1)}], \ldots, [Y^{(n)}])
\end{displaymath}
holds in $\Theta(G)$.
\end{lemma}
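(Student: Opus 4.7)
The plan is to use Lemma~\ref{lem:lambda-5} to reduce to an identity that can be proved in the universal binomial ring $\bZ\langle x,y\rangle$, and then transport it to $\Theta(G)$ via the homomorphism of Corollary~\ref{cor:R-Theta}.

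First, I would apply Lemma~\ref{lem:lambda-5} to the projection $\pi\colon X\times Y\to Y$. Every fiber of $\pi$ is canonically isomorphic to $X$, so the hypothesis $[F_{y_1}^{(n)}]=[F_{y_2}^{(n)}]$ is trivially satisfied and $F=X$ is a valid choice. The lemma therefore gives
\begin{displaymath}
[(X\times Y)^{(n)}]=Q_n([X^{(1)}],\ldots,[X^{(n)}],[Y^{(1)}],\ldots,[Y^{(n)}]),
\end{displaymath}
so it remains to show that $P_n$ and $Q_n$ agree when evaluated on the inputs $([X^{(1)}],\ldots,[X^{(n)}],[Y^{(1)}],\ldots,[Y^{(n)}])$ in $\Theta(G)$.

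Next, I would establish this agreement first inside $\bZ\langle x,y\rangle$. By \S\ref{ss:intpoly}, $\bZ\langle x,y\rangle$ is a binomial ring, so by Theorem~\ref{thm:elliott} it is a special $\lambda$-ring with $\lambda^n(z)=\binom{z}{n}$. Specializing the universal identity $\lambda^n(ab)=P_n(\lambda^1(a),\ldots,\lambda^n(a),\lambda^1(b),\ldots,\lambda^n(b))$ to $a=x$, $b=y$ yields
\begin{displaymath}
\binom{xy}{n}=P_n\!\left(\tbinom{x}{1},\ldots,\tbinom{x}{n},\tbinom{y}{1},\ldots,\tbinom{y}{n}\right).
\end{displaymath}
On the other hand, Lemma~\ref{lem:lambda-7} asserts exactly
\begin{displaymath}
\binom{xy}{n}=Q_n\!\left(\tbinom{x}{1},\ldots,\tbinom{x}{n},\tbinom{y}{1},\ldots,\tbinom{y}{n}\right).
\end{displaymath}
Subtracting, $P_n-Q_n$ vanishes on the tuple $(\binom{x}{1},\ldots,\binom{y}{n})$ in $\bZ\langle x,y\rangle$.

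Finally, I would push this identity into $\Theta(G)$. Corollary~\ref{cor:R-Theta}, applied to the family $\{X,Y\}$ indexed by $\{1,2\}$, supplies a ring homomorphism $\phi\colon\bZ\langle x,y\rangle\to\Theta(G)$ sending $\binom{x}{n}\mapsto[X^{(n)}]$ and $\binom{y}{n}\mapsto[Y^{(n)}]$. Applying $\phi$ to the preceding identity in $\bZ\langle x,y\rangle$ yields $P_n(\ldots)=Q_n(\ldots)$ with the required arguments in $\Theta(G)$, and combining with the first step completes the proof.

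The only subtlety I anticipate is a conceptual one: $P_n$ and $Q_n$ are a priori different polynomials, and one should not expect them to agree as elements of $\bZ[X_1,\ldots,Y_n]$. The entire point is that they agree after being evaluated on the specific tuple of binomial coefficients, and the mechanism that ports this coincidence from $\bZ\langle x,y\rangle$ to $\Theta(G)$ is the multiplicativity of $\phi$ on products of basis elements $\binom{x}{i}\binom{y}{j}$ furnished by Corollary~\ref{cor:R-Theta}; no further combinatorial identity is needed.
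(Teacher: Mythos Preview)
Your proof is correct and follows essentially the same approach as the paper's: both use Lemma~\ref{lem:lambda-5} on the projection $X\times Y\to Y$ to express $[(X\times Y)^{(n)}]$ via $Q_n$, invoke Lemma~\ref{lem:lambda-7} and the special $\lambda$-ring structure on $\bZ\langle x,y\rangle$ (Theorem~\ref{thm:elliott}) to identify $Q_n$ with $P_n$ on the binomial-coefficient inputs, and transport this to $\Theta(G)$ via the homomorphism of Corollary~\ref{cor:R-Theta}. The only difference is the order of exposition.
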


\begin{proof}
Let $\phi \colon \bZ\langle x, y \rangle \to \Theta(G)$ be the ring homomorphism afforded by Corollary~\ref{cor:R-Theta} satisfying $\phi(\lambda^n(x)) = [X^{(n)}]$ and $\phi(\lambda^n(y)) = [Y^{(n)}]$. We have
\begin{displaymath}
Q_n(\lambda^1(x), \ldots, \lambda^n(x), \lambda^1(y), \ldots, \lambda^n(y))
=\lambda^n(xy)
=P_n(\lambda^1(x), \ldots, \lambda^n(x), \lambda^1(y), \ldots, \lambda^n(y)),
\end{displaymath}
where the first equality comes from Lemma~\ref{lem:lambda-7}, and the second from the fact that $\bZ\langle x,y \rangle$ is a special $\lambda$-ring (Theorem~\ref{thm:elliott}). Applying $\phi$, we obtain
\begin{displaymath}
Q_n([X^{(1)}], \ldots, [X^{(n)}], [Y^{(1)}], \ldots, [Y^{(n)}])
=P_n([X^{(1)}], \ldots, [X^{(n)}], [Y^{(1)}], \ldots, [Y^{(n)}]).
\end{displaymath}
Since the left side is equal to $[(X \times Y)^{(n)}]$ by Lemma~\ref{lem:lambda-5}, the result follows.
\end{proof}

\begin{remark}
We have
\begin{displaymath}
P_2(X_1,X_2,Y_1,Y_2) = X_1^2 Y_2 + Y_1^2 X_2 - 2 X_2 Y_2.
\end{displaymath}
On the other hand, the polynomial $Q_2$ found in the proof of Lemma~\ref{lem:lambda-5} is
\begin{displaymath}
Q_2(X_1,X_2,Y_1,Y_2) = X_2Y_1+X_1^2 Y_2.
\end{displaymath}
The two identities for $\lambda^2(xy)$ in the proof of Lemma~\ref{lem:lambda-8} are
\begin{displaymath}
\binom{xy}{2} = \binom{x}{2} y + x^2 \binom{y}{2} = x^2 \binom{y}{2} + y^2 \binom{x}{2} - 2\binom{x}{2} \binom{y}{2}.
\end{displaymath}
One can check directly that these hold.
\end{remark}

\subsection{Subsets of subsets}

We now examine the construction $(X^{(m)})^{(n)}$.

\begin{lemma} \label{lem:lambda-9}
For $n,m \ge 0$ there exists an absolute polynomial $Q_{n,m} \in \bZ[X_1,\ldots,X_{nm}]$ with the following property: given an finitary $\hat{G}$-set $X$, we have
\begin{displaymath}
[(X^{(m)})^{(n)}]=Q_{n,m}([X^{(1)}], \ldots, [X^{(nm)}])
\end{displaymath}
in the ring $\Theta(G)$.
\end{lemma}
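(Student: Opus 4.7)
The plan is to mimic the stratification argument used in the proof of Lemma~\ref{lem:lambda-5}, but now with the stratifying map given by ``take the union'' instead of ``take the image.'' More precisely, an element of $(X^{(m)})^{(n)}$ is an $n$-element collection $\{A_1,\dots,A_n\}$ of $m$-element subsets of $X$, and its union $A_1\cup\cdots\cup A_n$ is a subset of $X$ of cardinality between $m$ and $nm$. For each $k$ with $m \le k \le nm$, I will let
\begin{displaymath}
W^k = \big\{ \{A_1,\ldots,A_n\} \in (X^{(m)})^{(n)} : |A_1 \cup \cdots \cup A_n| = k \big\},
\end{displaymath}
giving a decomposition $(X^{(m)})^{(n)} = \coprod_{k=m}^{nm} W^k$ into $\hat{G}$-subsets, and consider the map of $\hat{G}$-sets $u^k \colon W^k \to X^{(k)}$ sending $\{A_1,\ldots,A_n\}$ to $A_1\cup\cdots\cup A_n$.

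The key observation is that the fiber of $u^k$ over $C \in X^{(k)}$ is the set of $n$-element collections of $m$-subsets of $C$ whose union is $C$, and its cardinality is a finite combinatorial constant $c_{n,m,k}$ depending only on $n,m,k$ (by functoriality of the ``subsets of size $m$'' construction in $C$). In particular, each $u^k$ has all fibers finite of the same cardinality $c_{n,m,k}$, so Corollary~\ref{cor:Theta-d-to-1} yields $[W^k] = c_{n,m,k}\cdot [X^{(k)}]$ in $\Theta(G)$. Summing over $k$ gives
\begin{displaymath}
[(X^{(m)})^{(n)}] = \sum_{k=m}^{nm} c_{n,m,k} \cdot [X^{(k)}],
\end{displaymath}
so we may take $Q_{n,m}(X_1,\ldots,X_{nm}) = \sum_{k=m}^{nm} c_{n,m,k} X_k$, which is manifestly an absolute polynomial in $\bZ[X_1,\ldots,X_{nm}]$ independent of $X$ and $G$.

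There is no real obstacle here; the argument is almost entirely formal once one sets up the stratification. The only point requiring a bit of care is checking that the $W^k$ are genuinely $\hat{G}$-subsets (which holds because any group of definition for $X$ preserves the cardinality of $A_1\cup\cdots\cup A_n$) and that $u^k$ is a map of $\hat{G}$-sets (the union operation is equivariant for any group acting on $X$). With these verifications in place, the combinatorial constants $c_{n,m,k}$ come out of the construction and the use of Corollary~\ref{cor:Theta-d-to-1} is exactly parallel to how Lemma~\ref{lem:lambda-5} invokes Proposition~\ref{prop:Theta-const-fiber}.
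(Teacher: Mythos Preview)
Your proposal is correct and is essentially identical to the paper's own proof: both stratify $(X^{(m)})^{(n)}$ by the cardinality $k$ of the union $A_1\cup\cdots\cup A_n$, observe that the ``take the union'' map $W^k\to X^{(k)}$ has fibers of constant finite size $c_{n,m,k}$ depending only on $n,m,k$, apply Corollary~\ref{cor:Theta-d-to-1}, and sum to obtain $Q_{n,m}=\sum_k c_{n,m,k}X_k$. The only cosmetic difference is that the paper indexes the sum from $0$ to $nm$ while you start at $m$, but the extra terms vanish anyway.
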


\begin{proof}
Put $W=(X^{(m)})^{(n)}$. An element of $W$ is a set $\{S_1, \ldots, S_n\}$ the $S_i$'s are distinct $m$-element subsets of $X$. The union $S_1 \cup \cdots \cup S_n$ is a subset of $X$ of size at most $nm$. For $0 \le r \le nm$, let $W_r$ be the subset of $W$ where this set has cardinality $r$. Let $f_r \colon W_r \to X^{(r)}$ be the map given by $f_r(\{S_1,\ldots,S_n\})=S_1 \cup \cdots \cup S_r$. The fiber of $f_r$ over some $T \in X^{(r)}$ consists of all sets $\{S_1, \ldots, S_n\}$ where the $S_i$'s are distinct $m$-element subsets of $T$ that union to $T$. This fiber is finite, and its cardinality depends only on $n$, $m$, and $r$; denote it by $c_{n,m}^r$. We thus see that $[W_r]=c_{n,m}^r [X^{(r)}]$ in $\Theta(G)$ (Corollary~\ref{cor:Theta-d-to-1}), and so
\begin{displaymath}
[W] = \sum_{r=1}^{nm} [W_r] = \sum_{r=0}^{nm} c_{n,m}^r [X^{(r)}].
\end{displaymath}
We can therefore take
\begin{displaymath}
Q_{n,m} = \sum_{r=0}^{nm} c_{n,m}^r X_r,
\end{displaymath}
which completes the proof.
\end{proof}

\begin{lemma} \label{lem:lambda-10}
We have
\begin{displaymath}
\lambda^n(\lambda^m(x)) = Q_{n,m}(\lambda^1(x), \ldots, \lambda^{nm}(x))
\end{displaymath}
in the ring $\bZ\langle x \rangle$.
\end{lemma}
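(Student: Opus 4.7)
The plan is to deduce this from Lemma~\ref{lem:lambda-9} by specializing to the trivial group, exactly as Lemma~\ref{lem:lambda-8} was deduced from Lemma~\ref{lem:lambda-5} via Lemma~\ref{lem:lambda-7}.

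First, I would take $G$ to be the trivial group (or any finite group acting trivially), so that $\Theta(G) = \bZ$ and $[Y] = \#Y$ for any finite set $Y$, as noted in \S\ref{ss:Theta-prop}(b). For each positive integer $a$, apply Lemma~\ref{lem:lambda-9} to a finite set $X$ of cardinality $a$. Then $[X^{(k)}] = \binom{a}{k}$ for all $k$, and $[(X^{(m)})^{(n)}] = \binom{\binom{a}{m}}{n}$. The conclusion of Lemma~\ref{lem:lambda-9} becomes
\begin{displaymath}
\binom{\binom{a}{m}}{n} = Q_{n,m}\!\left(\binom{a}{1}, \ldots, \binom{a}{nm}\right),
\end{displaymath}
and this holds for every non-negative integer $a$.

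Next, using that $\bZ\langle x \rangle$ is a special $\lambda$-ring with $\lambda^k(x) = \binom{x}{k}$ (Theorem~\ref{thm:elliott}), and that $\lambda^n(\lambda^m(x)) = \binom{\binom{x}{m}}{n}$, the above identity says precisely that the polynomial
\begin{displaymath}
\lambda^n(\lambda^m(x)) - Q_{n,m}(\lambda^1(x), \ldots, \lambda^{nm}(x)) \in \bQ[x]
\end{displaymath}
vanishes at every non-negative integer $a$. Since a polynomial in one variable with infinitely many zeros is zero, the identity holds in $\bQ[x]$, and in particular in the subring $\bZ\langle x \rangle$.

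There is really no obstacle here: once Lemma~\ref{lem:lambda-9} is in hand, the argument is a direct specialization, the same trick used to obtain Lemma~\ref{lem:lambda-7} from Lemma~\ref{lem:lambda-5}. The only thing to verify is that the specialization produces enough values to pin down the polynomial identity, which is immediate from the fact that we get one equation for each $a \in \bN$.
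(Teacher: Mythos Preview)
Your proposal is correct and is essentially identical to the paper's proof: specialize Lemma~\ref{lem:lambda-9} to the trivial group so that $\Theta(G)=\bZ$ and classes become cardinalities, obtain $\binom{\binom{a}{m}}{n} = Q_{n,m}\bigl(\binom{a}{1},\ldots,\binom{a}{nm}\bigr)$ for all $a\in\bN$, and conclude the polynomial identity in $\bZ\langle x\rangle$. The paper even phrases the final step the same way (``Since this holds for all non-negative integers $a$, it holds as an identity in $\bZ\langle x\rangle$''), and your observation that this mirrors the passage from Lemma~\ref{lem:lambda-5} to Lemma~\ref{lem:lambda-7} is exactly right.
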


\begin{proof}
Apply Lemma~\ref{lem:lambda-9} with $G$ being the trivial group and $X$ a finite set of cardinality $a$. Using the identification $\Theta(G)=\bZ$ (see \S \ref{ss:Theta-prop}(b)), we find
\begin{displaymath}
\binom{\binom{a}{m}}{n} = Q_{n,m}(\binom{a}{1}, \ldots, \binom{a}{nm}).
\end{displaymath}
Since this holds for all non-negative integers $a$, it holds as an identity in $\bZ\langle x \rangle$.
\end{proof}

\begin{lemma} \label{lem:lambda-11}
Let $X$ be a finitary $\hat{G}$-set. Then we have
\begin{displaymath}
[(X^{(m)})^{(n)}]=P_{n,m}([X^{(1)}], \ldots, X^{(nm)}).
\end{displaymath}
in the ring $\Theta(G)$.
\end{lemma}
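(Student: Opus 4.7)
The plan is to mirror the argument used for Lemma~\ref{lem:lambda-8}, using the combinatorial formula from Lemma~\ref{lem:lambda-9} together with the corresponding identity in the universal binomial ring $\bZ\langle x\rangle$.

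First, I would invoke Proposition~\ref{prop:R-Theta} to obtain the ring homomorphism $\phi \colon \bZ\langle x\rangle \to \Theta(G)$ determined by $\phi(\lambda^k(x))=[X^{(k)}]$ for all $k\ge 0$. Since $\bZ\langle x\rangle$ is a binomial ring, Theorem~\ref{thm:elliott} equips it with a special $\lambda$-ring structure with $\lambda^k(x)=\binom{x}{k}$, and in particular the defining identity for $P_{n,m}$ gives
\begin{displaymath}
\lambda^n(\lambda^m(x)) = P_{n,m}(\lambda^1(x),\ldots,\lambda^{nm}(x))
\end{displaymath}
in $\bZ\langle x\rangle$. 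On the other hand, Lemma~\ref{lem:lambda-10} gives the parallel identity
\begin{displaymath}
\lambda^n(\lambda^m(x)) = Q_{n,m}(\lambda^1(x),\ldots,\lambda^{nm}(x))
\end{displaymath}
in $\bZ\langle x\rangle$. Combining the two and applying $\phi$ yields
\begin{displaymath}
Q_{n,m}([X^{(1)}],\ldots,[X^{(nm)}]) = P_{n,m}([X^{(1)}],\ldots,[X^{(nm)}])
\end{displaymath}
in $\Theta(G)$. Finally, Lemma~\ref{lem:lambda-9} identifies the left side with $[(X^{(m)})^{(n)}]$, giving the desired formula.

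There is essentially no obstacle here; all of the work is already packaged in Lemmas~\ref{lem:lambda-9} and \ref{lem:lambda-10} and Proposition~\ref{prop:R-Theta}. The only substantive point, which was settled in the proof of Lemma~\ref{lem:lambda-10}, is that the combinatorial polynomial $Q_{n,m}$ (produced by counting fibres of $(X^{(m)})^{(n)}\to X^{(r)}$) agrees with the universal polynomial $P_{n,m}$ when evaluated at binomial coefficients. Once this is in hand, the passage from $\bZ\langle x\rangle$ to $\Theta(G)$ via the single-variable $\phi$ is automatic.
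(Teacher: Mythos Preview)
Your proof is correct and follows essentially the same approach as the paper: invoke the ring homomorphism $\phi$ from Proposition~\ref{prop:R-Theta}, use that $\bZ\langle x\rangle$ is a special $\lambda$-ring (Theorem~\ref{thm:elliott}) together with Lemma~\ref{lem:lambda-10} to identify $Q_{n,m}$ with $P_{n,m}$ on the $\lambda^k(x)$, apply $\phi$, and then use Lemma~\ref{lem:lambda-9} to recognize the left side as $[(X^{(m)})^{(n)}]$.
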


\begin{proof}
Let $\phi \colon \bZ\langle x \rangle \to \Theta(G)$ be the ring homomorphism provided by Proposition~\ref{prop:R-Theta} satisfying $\phi(\lambda^n(x))=[X^{(n)}]$. We have
\begin{displaymath}
Q_{n,m}(\lambda^1(x), \ldots, \lambda^{nm}(x))
= \lambda^n(\lambda^m(x))
= P_{n,m}(\lambda^1(x), \ldots, \lambda^{nm}(x)),
\end{displaymath}
where the first equality comes from Lemma~\ref{lem:lambda-10}, and the second from the fact that $\bZ\langle x \rangle$ is a special $\lambda$-ring (Theorem~\ref{thm:elliott}). Applying $\phi$, we obtain
\begin{displaymath}
Q_{n,m}([X^{(1)}], \ldots, [X^{(nm)}])=P_{n,m}([X^{(1)}], \ldots, [X^{(nm)}]).
\end{displaymath}
Since the left side is equal to $[(X^{(m)})^{(n)}]$ (Lemma~\ref{lem:lambda-9}), the result follows.
\end{proof}

\begin{remark}
Just as in the previous subsection, the $Q_{n,m}$ polynomials found in the proof of Lemma~\ref{lem:lambda-9} are not the same as the $P_{n,m}$ polynomials; indeed, our $Q_{n,m}$ are linear forms, which is not true for the $P_{n,m}$. Nonetheless, the two identities for $\lambda^n(\lambda^m(x))$ in the proof of Lemma~\ref{lem:lambda-11} do hold.
\end{remark}

\subsection{Proof of Theorem~\ref{thm:lambda}}

We now prove the theorem, in a few steps. Recall from Example~\ref{ex:lambda-burnside} that $\Omega(G)$ admits the structure of a $\lambda$-ring via $\lambda(\lbb X \rbb)=\lbb X^{(n)} \rbb$. Of course, this applies to open subgroup of $G$ as well. It is clear that if $V \subset U$ are open subgroups then the restriction map $\Omega(U) \to \Omega(V)$ is a homomorphism of $\lambda$-rings. It follows that the direct limit $\Omega(\hat{G})$ carries a $\lambda$-ring structure as well. We thus have an \emph{additive} map $\lambda_t \colon \Omega(\hat{G}) \to \Lambda(\Omega(\hat{G}))$. Let $\pi \colon \Omega(\hat{G}) \to \Theta(G)$ be the natural map, which is a ring homomorphism, and write $\pi' \colon \Lambda(\Omega(\hat{G})) \to \Lambda(\Theta(G))$ for the induced map, which is a $\lambda$-ring homomorphism.

\begin{proposition}
Consider the diagram
\begin{displaymath}
\xymatrix@C=4em{
\Omega(\hat{G}) \ar[d]_{\pi} \ar[r]^-{\lambda_t} \ar[rd]^q & \Lambda(\Omega(\hat{G})) \ar[d]^{\pi'} \\
\Theta(G) \ar@{..>}[r]^-{\lambda_t} & \Lambda(\Theta(G)) }
\end{displaymath}
where $q$ is defined to be the composition $\pi' \circ \lambda_t$. Then:
\begin{enumerate}
\item $q$ is a $\lambda$-ring homomorphism.
\item $q$ annihilates the ideal $\fa(G) \subset \Omega(\hat{G})$.
\item $q$ induces the bottom $\lambda_t$ map, which gives $\Theta(G)$ the structure of a special $\lambda$-ring.
\end{enumerate}
\end{proposition}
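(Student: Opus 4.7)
I would verify (a), (b), (c) in order, leaning on the three preceding lemmas. The proposition is essentially a packaging step: Lemmas~\ref{lem:lambda-6}, \ref{lem:lambda-8}, and~\ref{lem:lambda-11} are designed exactly to supply the identities required for $q$ to respect all of the relevant structure.

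\emph{Part (a).} Additivity of $q$ is automatic: $\lambda_t \colon \Omega(\hat{G}) \to \Lambda(\Omega(\hat{G}))$ takes $+$ to $\oplus$ as part of the $\lambda$-ring structure on the Burnside ring, and $\pi'$ is a ring homomorphism. For multiplicativity, by additivity in each argument it suffices to check $q(\lbb X \rbb \cdot \lbb Y \rbb) = q(\lbb X \rbb) \otimes q(\lbb Y \rbb)$ on generators $\lbb X \rbb, \lbb Y \rbb$. The $n$-th coefficient on the left is $[(X \times Y)^{(n)}]$, while the $n$-th coefficient on the right is $P_n([X^{(1)}], \ldots, [X^{(n)}], [Y^{(1)}], \ldots, [Y^{(n)}])$ by the definition of $\otimes$; equality is exactly Lemma~\ref{lem:lambda-8}. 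For compatibility with $\lambda$-operations, reduce similarly to $q(\lambda^n(\lbb X \rbb)) = \lambda^n(q(\lbb X \rbb))$ on a single generator; here the $m$-th coefficients are $[(X^{(n)})^{(m)}]$ and $P_{m,n}([X^{(1)}], \ldots, [X^{(nm)}])$ respectively, whose equality is Lemma~\ref{lem:lambda-11}.

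\emph{Part (b).} The ideal $\fa(G) = \fa_1(G) + \fa_2(G)$, so I will verify that $q$ vanishes on the respective generators. On generators of $\fa_1(G)$: the $n$-th coefficient of $q(\lbb X \rbb - \lbb X^g \rbb)$ is $[X^{(n)}] - [(X^g)^{(n)}]$, and since $(X^g)^{(n)} \cong (X^{(n)})^g$ as $\hat G$-sets, this vanishes by the conjugation-invariance axiom (d) in the definition of $\Theta(G)$. On generators of $\fa_2(G)$: given a surjection $X \to Y$ of transitive $U$-sets with fiber $F$, part~(a) gives $q(\lbb F \rbb \lbb Y \rbb) = q(\lbb F \times Y \rbb)$, so the $n$-th coefficient of $q(\lbb X \rbb - \lbb F \rbb \lbb Y \rbb)$ is $[X^{(n)}] - [(F \times Y)^{(n)}]$, which vanishes by Lemma~\ref{lem:lambda-6}.

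\emph{Part (c).} Proposition~\ref{prop:Burnside-Theta} identifies $\Theta(G) \cong \Omega(\hat{G})/\fa(G)$, so combined with (b) the map $q$ factors uniquely through a ring homomorphism $\tilde{\lambda}_t \colon \Theta(G) \to \Lambda(\Theta(G))$. Since the quotient map is surjective and $q$ is a $\lambda$-ring homomorphism, $\tilde{\lambda}_t$ is as well. Extracting the $n$-th coefficient defines $\lambda^n \colon \Theta(G) \to \Theta(G)$ with $\lambda^n([X]) = [X^{(n)}]$, and by the standard reformulation recalled before the proposition, the fact that $\tilde{\lambda}_t$ is a $\lambda$-ring homomorphism into the special $\lambda$-ring $\Lambda(\Theta(G))$ is precisely the statement that these operations make $\Theta(G)$ a special $\lambda$-ring.

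The proposition itself contains no real obstacle; the substantive content lives entirely in the three preceding lemmas. The conceptual point to emphasize is that $\Omega(\hat{G})$ already carries a $\lambda$-structure that is \emph{not} special, and imposing the relations of $\fa(G)$ forces exactly the identities (matching the universal polynomials $P_n$ and $P_{n,m}$) that promote the quotient structure to a special one.
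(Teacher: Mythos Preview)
Your proposal is correct and follows essentially the same approach as the paper: verify multiplicativity and $\lambda$-compatibility of $q$ on effective classes via Lemmas~\ref{lem:lambda-8} and~\ref{lem:lambda-11}, handle $\fa_1$ via $(X^g)^{(n)} \cong (X^{(n)})^g$ and $\fa_2$ via Lemma~\ref{lem:lambda-6}, then factor through the quotient. One small expository point: in (c) you assert that $\tilde\lambda_t$ is a $\lambda$-ring homomorphism before defining the $\lambda$-structure on $\Theta(G)$, and the deduction uses not only surjectivity of $\pi$ but also that $\pi$ itself is a $\lambda$-ring homomorphism (for the newly defined structure), which is immediate from $\lambda^n([X]) = [X^{(n)}]$ but worth making explicit.
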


\begin{proof}
(a) Let $X$ and $Y$ be finitary $\hat{G}$-sets. We have
\begin{align*}
q(\lbb X \rbb \cdot \lbb Y \rbb)
&= q(\lbb X \times Y \rbb)
= \sum_{n \ge 0} [(X \times Y)^{(n)}] t^n \\
&= \sum_{n \ge 0} P_n([X^{(1)}], \ldots, [X^{(n)}], [Y^{(1)}], \ldots, [Y^{(n)}]) t^n
= q(\lbb X \rbb) \otimes q(\lbb Y \rbb),
\end{align*}
where we have used Lemma~\ref{lem:lambda-8} in the second line, and the definition of $\otimes$ in the third. Thus $q$ is compatible with multiplication on ``effective'' elements, i.e., those of the form $\lbb X \rbb$. Since $q$ is additive and the effective elements generate $\Omega(\hat{G})$ under addition, it follows that $q$ is compatible with multiplication. Since $\lambda_t(1)=1$ and $\pi'$ is a ring homomorphism, we also have $q(1)=1$, and so $q$ is a ring homomorphism.

We also have
\begin{align*}
q(\lambda_m(\lbb X \rbb))
&= q(\lbb X^{(m)} \rbb)
= \sum_{n \ge 0} [(X^{(m)})^{(n)}] t^n \\
&= \sum_{n \ge 0} P_{n,m}([X^{(1)}], \ldots, [X^{(nm)}]) t^n
= \lambda^m(q(\lbb X \rbb))
\end{align*}
where we have used Lemma~\ref{lem:lambda-11} in the second line and the definition of $\lambda^m$ on $\Lambda(\Theta(G))$ in the third. Thus $q$ is compatible with the $\lambda$-operations on effective elements. Again, this implies compatibility on all elements, and so $q$ is a $\lambda$-ring homomorphism.

(b) We must show that $q$ kills the ideals $\fa_1(G)$ and $\fa_2(G)$ of $\Omega(\hat{G})$. For $\fa_1(G)$, this amounts to showing $q(\lbb X^g \rbb) = q(\lbb X \rbb)$ when $X$ is a finitary $\hat{G}$-set and $g \in G$. This follows from the isomorphism $(X^g)^{(n)} \cong (X^{(n)})^g$. We now treat $\fa_2(G)$. Let $f \colon X \to Y$ be a map of transitive $U$-sets with fiber $F$. We must show $q(\lbb X \rbb)=q(\lbb Y \rbb \lbb F \rbb)$. This is exactly Lemma~\ref{lem:lambda-6}.

(c) Since $q$ kills $\fa(G)$, it factors as $\lambda_t \circ \pi$ for some ring homomorphism $\lambda_t \colon \Theta(G) \to \Lambda(\Theta(G))$. It follows from the definitions that $\lambda^n([X])=[X^{(n)}]$. Since $\lambda_t$ is additive, the $\lambda^n$ induce a $\lambda$-ring structure on $\Theta(G)$. Since $q$ and $\pi$ are $\lambda$-ring homomorphisms and $\pi$ is surjective, it follows that $\lambda_t$ is also a $\lambda$-ring homomorphism, and so the $\lambda$-ring structure on $\Theta(G)$ is special. (This also follows directly from Lemmas~\ref{lem:lambda-8} and~\ref{lem:lambda-11}.)
\end{proof}

To complete the proof of Theorem~\ref{thm:lambda}, we must show that the Adam's operations on $\Theta(G)$ are trivial. We need a bit of preparation for this.

\begin{lemma} \label{lem:lambda-14}
Let $R$ and $S$ be special $\lambda$-rings, let $\phi \colon R \to S$ be a ring homomorphism, and let $R'$ be the set of elements $x \in R$ such that $\phi(\lambda^n(x))=\lambda^n(\phi(x))$ for all $n$. Then $R'$ is a subring of $R$ and is stable by the $\lambda^n$-operations.
\end{lemma}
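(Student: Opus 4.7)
The plan is to reformulate the condition defining $R'$ in terms of the power series $\lambda_t(x) = \sum_{n \ge 0} \lambda^n(x) t^n$. The ring homomorphism $\phi \colon R \to S$ induces a $\lambda$-ring homomorphism $\Phi \colon \Lambda(R) \to \Lambda(S)$ by applying $\phi$ coefficient-wise, and then one checks directly that $x$ lies in $R'$ if and only if $\Phi(\lambda_t(x)) = \lambda_t(\phi(x))$. Thus $R'$ is the equalizer of the two maps
\begin{displaymath}
\Phi \circ \lambda_t, \quad \lambda_t \circ \phi \colon R \to \Lambda(S).
\end{displaymath}

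To see that $R'$ is a subring, I would observe that both of these are ring homomorphisms: $\Phi$ is a $\lambda$-ring homomorphism (hence a ring homomorphism); $\lambda_t \colon R \to \Lambda(R)$ is a $\lambda$-ring homomorphism because $R$ is special; and similarly for $\lambda_t \colon S \to \Lambda(S)$. Hence both compositions are ring homomorphisms $R \to \Lambda(S)$, and the equalizer of two ring homomorphisms is a subring. This already gives closure under addition and multiplication, as well as $0, 1 \in R'$.

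To show $R'$ is stable under $\lambda^m$, I would use axiom (f) for special $\lambda$-rings. Given $x \in R'$ and $m, n \ge 0$, expand
\begin{displaymath}
\lambda^n(\lambda^m(x)) = P_{n,m}(\lambda^1(x), \ldots, \lambda^{nm}(x))
\end{displaymath}
using that $R$ is special, apply $\phi$ (a ring homomorphism) to move $\phi$ inside the polynomial, use the hypothesis $x \in R'$ to replace each $\phi(\lambda^k(x))$ with $\lambda^k(\phi(x))$, and then re-apply axiom (f) for the special $\lambda$-ring $S$ to reassemble $\lambda^n(\lambda^m(\phi(x)))$. Finally, $x \in R'$ gives $\lambda^m(\phi(x)) = \phi(\lambda^m(x))$, so this equals $\lambda^n(\phi(\lambda^m(x)))$, as desired.

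There is no real obstacle here: the whole point is that the defining relations of a special $\lambda$-ring are universal polynomial identities, which are automatically preserved by any ring homomorphism. Care is only needed on one point: although $\Phi \circ \lambda_t$ is a $\lambda$-ring homomorphism, $\lambda_t \circ \phi$ need not be (since $\phi$ is not assumed to respect $\lambda$-operations), so the equalizer argument alone does not yield $\lambda$-stability, which is why the direct computation via $P_{n,m}$ is needed for the last step.
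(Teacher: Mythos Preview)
Your proof is correct and takes essentially the same approach as the paper: both reformulate membership in $R'$ via $\lambda_t$ and use that $\lambda_t$ is a ring (indeed $\lambda$-ring) homomorphism for special $\lambda$-rings, with the induced map $\Phi$ on $\Lambda$ preserving all the relevant structure. Your equalizer packaging for the subring part and your direct use of the $P_{n,m}$ polynomials for $\lambda$-stability are minor stylistic variations on the paper's ``similar computation'' argument, not a genuinely different route.
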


\begin{proof}
We have $x \in R'$ if and only if $\phi(\lambda_t(x))=\lambda_t(\phi(x))$. Suppose $x,y \in R'$. Then
\begin{align*}
\phi(\lambda_t(x+y))
&=\phi(\lambda_t(x) \oplus \lambda_t(y))=\phi(\lambda_t(x)) \oplus \phi(\lambda_t(y)) \\
&=\lambda_t(\phi(x)) \oplus \lambda_t(\phi(y))=\lambda_t(\phi(x)+\phi(y))=\lambda_t(\phi(x+y)).
\end{align*}
Thus $x+y \in R'$. A similar computation shows that $x-y$, $xy$, and $\lambda^n(x)$ (for any $n \ge 0$) belong to $R'$. This completes the proof.
\end{proof}

The following result gives an improvement of Proposition~\ref{prop:R-Theta}. (We note that this result is actually implied by Theorem~\ref{thm:lambda} and the mapping property of $\bZ\langle x \rangle$ in the category of binomial rings; see \cite[Proposition~2.1]{Elliott}.)

\begin{lemma} \label{lem:R-Theta-2}
Let $X$ be a finitary $\hat{G}$-set. Then there exists a unique $\lambda$-ring homomorphism $\phi \colon \bZ\langle x \rangle \to \Theta(G)$ satisfying $\phi(x)=[X]$.
\end{lemma}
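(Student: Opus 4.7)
The plan is to bootstrap from Proposition~\ref{prop:R-Theta}, which already provides a \emph{ring} homomorphism $\phi\colon \bZ\langle x\rangle\to\Theta(G)$ with $\phi(\lambda_n(x))=[X^{(n)}]$. In particular, taking $n=1$ gives $\phi(x)=[X]$, which is the value required by the lemma. What remains is to upgrade the ring homomorphism $\phi$ to a $\lambda$-ring homomorphism, and to establish uniqueness.

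For the upgrade, I would invoke Lemma~\ref{lem:lambda-14} applied to $\phi\colon \bZ\langle x\rangle\to\Theta(G)$. Both rings are special $\lambda$-rings: $\bZ\langle x\rangle$ by the discussion in \S\ref{ss:intpoly} (where $\lambda^n(p)=\binom{p}{n}$), and $\Theta(G)$ by Theorem~\ref{thm:lambda}, which we just established. Consider the subring
\begin{displaymath}
R'=\{\,y\in \bZ\langle x\rangle : \phi(\lambda^n(y))=\lambda^n(\phi(y)) \text{ for all } n\ge 0\,\},
\end{displaymath}
which by Lemma~\ref{lem:lambda-14} is closed under the $\lambda$-operations. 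I would then verify that $x\in R'$: indeed, $\lambda^n$ on $\bZ\langle x\rangle$ sends $x$ to $\binom{x}{n}=\lambda_n(x)$, and $\lambda^n$ on $\Theta(G)$ sends $[X]$ to $[X^{(n)}]$, so
\begin{displaymath}
\phi(\lambda^n(x))=\phi(\lambda_n(x))=[X^{(n)}]=\lambda^n([X])=\lambda^n(\phi(x)).
\end{displaymath}

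Next I would show that $R'=\bZ\langle x\rangle$. Since $R'$ is a subring containing $x$ and closed under $\lambda$-operations, it contains every $\lambda_n(x)=\lambda^n(x)$. But the elements $\{\lambda_n(x)\}_{n\ge 0}$ form a $\bZ$-basis of $\bZ\langle x\rangle$ (see \S\ref{ss:intpoly}), so $R'=\bZ\langle x\rangle$. This proves $\phi$ is a $\lambda$-ring homomorphism. Uniqueness is immediate by the same generation argument: any $\lambda$-ring homomorphism out of $\bZ\langle x\rangle$ sending $x$ to $[X]$ must send $\lambda_n(x)=\lambda^n(x)$ to $\lambda^n([X])=[X^{(n)}]$, and is thereby determined on the $\bZ$-basis $\{\lambda_n(x)\}$.

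I do not expect any real obstacle here; all the work was done in establishing Theorem~\ref{thm:lambda} and Lemma~\ref{lem:lambda-14}. The only subtle point is making sure one uses the fact that $\bZ\langle x\rangle$ is generated as a $\lambda$-ring (not merely as a ring) by the single element $x$, which comes from the basis description of integer-valued polynomials.
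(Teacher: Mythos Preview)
Your proposal is correct and follows essentially the same route as the paper: start from the ring homomorphism of Proposition~\ref{prop:R-Theta}, verify that $x$ lies in the $\lambda$-compatible subring $R'$ of Lemma~\ref{lem:lambda-14}, and conclude $R'=\bZ\langle x\rangle$ because $x$ generates $\bZ\langle x\rangle$ as a $\lambda$-ring. One small point of care: when you cite Theorem~\ref{thm:lambda} for the special $\lambda$-ring structure on $\Theta(G)$, note that Lemma~\ref{lem:R-Theta-2} is itself a step in the proof of that theorem (it is used to show the Adams operations are trivial), so to avoid apparent circularity you should cite the preceding Proposition, which already establishes that $\Theta(G)$ is a special $\lambda$-ring before Lemma~\ref{lem:R-Theta-2} is invoked.
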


\begin{proof}
Let $\phi$ be the ring homomorphism provided by Proposition~\ref{prop:R-Theta} satisfying $\phi(\lambda^n(x))=[X^{(n)}]$. By definition, we have $\phi(\lambda^n(x))=\lambda^n(\phi(x))$ for all $n \ge 0$. Since $x$ generates $\bZ\langle x \rangle$ as a $\lambda$-ring and both $\bZ\langle x \rangle$ and $\Theta(G)$ are special $\lambda$-rings, it follows from Lemma~\ref{lem:lambda-14} that $\phi$ is a map of $\lambda$-rings. Thus we have found a $\lambda$-ring homomorphism satisfying the required condition. It is unique since the $\lambda^n(x)$ are a $\bZ$-basis for $\bZ\langle x \rangle$.
\end{proof}

We now complete the proof of Theorem~\ref{thm:lambda}, and thus of Theorem~\ref{thm:binom}:

\begin{proposition}
The Adam's operations on $\Theta(G)$ are trivial.
\end{proposition}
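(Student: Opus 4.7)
The plan is to leverage Lemma~\ref{lem:R-Theta-2}: for each finitary $\hat{G}$-set $X$, there is a $\lambda$-ring homomorphism $\phi_X \colon \bZ\langle x \rangle \to \Theta(G)$ with $\phi_X(x) = [X]$. Since Adam's operations are defined purely from the $\lambda$-operations, any $\lambda$-ring homomorphism intertwines them. It therefore suffices to verify that the Adam's operations on $\bZ\langle x \rangle$ are trivial, and then to propagate this to $\Theta(G)$.

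First I would observe that $\bZ\langle x \rangle$ is a binomial ring (as noted in \S\ref{ss:intpoly}), and its $\lambda$-structure is the one given by $\lambda^n(x) = \binom{x}{n}$. By Theorem~\ref{thm:elliott}, this is the unique $\lambda$-structure on a binomial ring with trivial Adam's operations. Hence $\psi^n = \id$ on $\bZ\langle x \rangle$ for all $n \geq 1$. Applying $\phi_X$ and using that $\phi_X$ commutes with $\psi^n$, we obtain
\[
\psi^n([X]) = \psi^n(\phi_X(x)) = \phi_X(\psi^n(x)) = \phi_X(x) = [X]
\]
for every finitary $\hat{G}$-set $X$.

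To conclude, I would invoke the standard fact that the Adam's operations on a special $\lambda$-ring are ring endomorphisms (see, e.g., \cite[\S I.4]{Knutson} or \cite[\S 3.4]{Dieck}); the $\lambda$-structure on $\Theta(G)$ is special by the preceding proposition, so this applies. Since the classes $[X]$ generate $\Theta(G)$ as a ring, and $\psi^n$ fixes each such generator, we conclude $\psi^n = \id$ on all of $\Theta(G)$.

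I do not expect a serious obstacle here: the real work has already been done in establishing Lemma~\ref{lem:R-Theta-2} (which upgrades the ring homomorphism of Proposition~\ref{prop:R-Theta} to a $\lambda$-ring homomorphism) and in showing that $\Theta(G)$ is a special $\lambda$-ring. The only mild point to be careful about is ensuring the Adam's operation identity is applied to generators and then extended multiplicatively, which is why specialness (via the ring-endomorphism property of $\psi^n$) is needed in the final step.
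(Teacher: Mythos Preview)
Your proposal is correct and follows essentially the same approach as the paper's proof: both use Lemma~\ref{lem:R-Theta-2} to transport the triviality of Adam's operations from $\bZ\langle x \rangle$ to the generators $[X]$, then extend to all of $\Theta(G)$ using that $\psi^n$ is a ring homomorphism on a special $\lambda$-ring. The paper additionally remarks that the classes $[X]$ generate $\Theta(G)$ even as an abelian group, but this is not needed for the argument and your ring-generation version suffices.
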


\begin{proof}
Let $X$ be a finitary $\hat{G}$-set and let $\phi \colon \bZ\langle x \rangle \to \Theta(G)$ be the $\lambda$-ring homomorphism provided by Lemma~\ref{lem:R-Theta-2} satisfying $\phi(x)=[X]$. The Adam's operations on $\bZ\langle x \rangle$ are trivial (Theorem~\ref{thm:elliott}, or direct computation). Since $\phi$ is a $\lambda$-ring homomorphism, it is compatible with the Adam's operations. We thus find
\begin{displaymath}
\psi^n([X])=\psi^n(\phi(x))=\phi(\psi^n(x))=\phi(x)=[X].
\end{displaymath}
We have thus shown that the Adam's operations are trivial on the ``effective'' classes of $\Theta(G)$, i.e., those of the form $[X]$. Since the Adam's operations are ring homomorphisms (\cite[\S I.4]{Knutson} or \cite[\S 3.4]{Dieck}) and the effective classes generate $\Theta(G)$ (even as an abelian group), it follows that the Adam's operations are trivial.
\end{proof}

\section{The model-theoretic perspective} \label{s:model}

\subsection{Overview}

There is an intimate connection between oligomorphic groups and model theory. We review this in some detail in \S \ref{ss:fraisse}, but provide an abridged rendition here. Suppose $\fA$ is some class of finite structures, e.g., total orders, partial orders, graphs, etc. Under certain conditions, one can form the \defn{Fra\"iss\'e limit} of the class $\fA$; this is a countable structure $\Omega$ that contains a copy of every member of $\fA$ and satisfies an important homogeneity property. In many cases, the automorphism group $G$ of $\Omega$ acts oligomorphically on $\Omega$. In this way, Fra\"iss\'e limits provide a powerful way of constructing oligomorphic groups; in fact, most examples of oligomorphic groups are constructed via this procedure.

Suppose we have $\fA$, $\Omega$, and $G$ as above, and $G$ acts oligomorphically. Since $G$ is completely determined by $\fA$, one can translate anything about $G$ to the model-theoretic side, at least in principle. The purpose of \S \ref{s:model} is to perform this translation for the concept of measure. In Definition~\ref{defn:model-meas}, we introduce the notion of measure for $\fA$. The main result of this section, Theorem~\ref{thm:compare}, establishes an equivalence between measures for $\fA$ and measures for $G$.

In some cases it is extremely difficult to understand anything about $G$ directly, e.g., for the automorphism group of the universal boron tree or Rago graph. In these cases, the only way to effectively work with measures is through the dictionary established here. Even in cases where one can understand something about $G$, the model-theoretic perspective can still provide substantial insight. We will see some examples of this as we go; for a more substantive example, see \S \ref{s:boron}.

There is one other significant result in this section we wish to highlight: Theorem~\ref{thm:R-approx} gives a model-theoretic criterion for $\Theta$ to be non-zero. While we believe this is a significant step in understanding $\Theta$, it does not apply very broadly. Expanding the scope of this theorem is an important problem.

\subsection{Fra\"iss\'e's theorem} \label{ss:fraisse}

We now provide some background on model theory and Fra\"iss\'e's theorem. We refer to the book \cite{Cameron} and the survery article \cite{Macpherson} for more details.

\subsubsection{Structures}

A \defn{signature} (of a first order relational language) is a collection $\Sigma=\{(R_i,n_i)\}_{i \in I}$ where $R_i$ is a symbol and $n_i$ is a positive integer, called the arity of $R_i$. Fix a signature $\Sigma$ for the remainder of this section. A \defn{structure} (for $\Sigma$) is a set $X$ equipped with, for each $i \in I$, an $n_i$-arity relation $R_i$ on $X$ (i.e., a subset of $X^{n_i}$). There is an obvious notion of isomorphism of structures. If $X$ is a structure and $Y$ is a subset of $X$, there is an induced structure on $Y$: simply restrict each relation from $X$ to $Y$. The structures obtained in this way are the \defn{substructures} of $X$. An \defn{embedding} $i \colon Y \to X$ of structures is an injective function that induces an isomorphism between $Y$ and the substructure $i(Y)$.

\subsubsection{Amalgamations}

Let $i \colon Y \to X$ and $j \colon Y \to Y'$ be embeddings of structures. An \defn{amalgamation} of $(i,j)$ is a triple $(X',i',j')$ consisting of a structure $X'$ and embeddings $i' \colon Y' \to X'$ and $j' \colon X \to X'$ such that two conditions hold:
\begin{enumerate}
\item $i'$ and $j'$ agree on $Y$, that is, $i' \circ j = i \circ j'$
\item $i'$ and $j'$ are jointly surjective, that is, $X'=\im(i') \cup \im(j')$.
\end{enumerate}
Thus, intuitively, an amalgamation is obtained by glueing $X$ and $Y'$ together along the common copy of $Y$. Note that amalgamations can identify more than required, i.e., the intersections of (the images of) $X$ and $Y'$ in $X'$ can be larger than (the image of) $Y$.

If $(X'',i'',j'')$ is a second amalgamation, an \defn{isomorphism of amalgamations} is an isomorphism $k \colon X' \to X''$ of structures such that $i''=k \circ i'$ and $j''=k \circ j'$. Amalgamations have no non-trivial automorphisms: in effect, $X'$ is labeled by $i'$ and $j'$, and an automorphism must preserve the labeling. We often list ``the'' amalgamations of $(i,j)$; such lists are always intended to contain exactly one member from each isomorphism class.

\subsubsection{Homogeneous structures} \label{sss:homo}

A structure $\Omega$ is \defn{homogeneous} if the following condition holds: given finite substructures $X$ and $Y$ of $\Omega$ and an isomorphism $i \colon Y \to X$, there exists an automorphism $\sigma$ of $\Omega$ extending $i$ (i.e., $\sigma(y)=i(y)$ for all $y \in Y$). Suppose $\Omega$ is homogeneous and let $G$ be its automorphism group. Then two finite subsets of $\Omega$, equipped with the induced substructures, are abstractly isomorphic if and only if they belong to the same $G$-orbit. We thus see that the $G$-orbits on $\Omega^{(n)}$ are in bijection with the isomorphism classes of $n$-element substructures of $\Omega$. In particular, if for each $n$ there are only finitely many isomorphism types of $n$-element substructures of $\Omega$ then $G$ acts oligomorphically on $\Omega$ (and conversely). In this way, homogeneous structures provide a bridge between model theory and oligomorphic groups.

\subsubsection{Ages} \label{sss:age}

The \defn{age} of a structure $\Omega$, denoted $\age(\Omega)$, is the class of all finite structures that embed into $\Omega$. Suppose $\Omega$ is a countable homogeneous structure, and put $\fA=\age(\Omega)$. Then $\fA$ has the following properties:
\begin{enumerate}
\item If $X \in \fA$ and $Y$ is isomorphic to $X$ then $Y \in \fA$.
\item If $X \in \fA$ and $Y$ is a substructure of $X$ then $Y \in \fA$.
\item Up to isomorphism, $\fA$ has only countably many members.
\item Given embeddings $i \colon Y \to X$ and $j \colon Y \to Y'$, with $X,Y,Y' \in \fA$, there is an amalgamation $(X',i',j')$ of $(i,j)$ with $X' \in \fA$.
\end{enumerate}
Properties (a), (b), and (c) are clear, and do not rely on homogeneity. We explain (d). We may as well suppose that $X$ and $Y'$ are substructures of $\Omega$. Then $i$ and $j$ can be regarded as two embeddings of $Y$ into $\Omega$. By homogeneity, there is an automorphism $\sigma$ of $\Omega$ such that $j=\sigma \circ i$. Thus, replacing $X$ with $\sigma(X)$, we can assume that $i$ and $j$ are the same embedding of $Y$ into $\Omega$. It thus follows that $X \cup Y$, with the induced substructure, is naturally an amalgamation of $X$ and $Y$, and is clearly in $\fA$.

We name one additional condition on $\fA$:
\begin{enumerate}[resume]
\item For each $n$, there are only finitely many $n$-element structures in $\fA$ up to isomorphism.
\end{enumerate}
Of course, (e) implies (c). Condition (e) does not have to hold for a general countable homogeneous structure. However, it does automatically hold if the signature is finite. From the discussion in \S \ref{sss:homo}, we see that $\Aut(\Omega)$ acts oligomorphically on $\Omega$ if and only if (e) holds.

\subsubsection{Fra\"iss\'e's theorem} \label{sss:fraisse}

This extremely influential theorem was proven by Roland Fra\"iss\'e in the early 1950's \cite{Fraisse}, and provides a converse to the discussion in \S \ref{sss:age}:

\begin{theorem}
Let $\fA$ be a class of finite structures satisfying (a)--(d) from \S \ref{sss:age}. Then there is a countable homogeneous structure $\Omega$ such that $\fA=\age(\Omega)$. Moreover, $\Omega$ is unique up to isomorphism.
\end{theorem}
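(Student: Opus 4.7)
The plan is to construct $\Omega$ as the increasing union of a countable chain of structures in $\fA$, built so as to satisfy the following \emph{extension property}: whenever $Y \subset X$ are finite substructures of $X \in \fA$ and $j \colon Y \hookrightarrow \Omega$ is an embedding, there is an embedding $\tilde{\jmath} \colon X \hookrightarrow \Omega$ extending $j$. Homogeneity and uniqueness will then follow from back-and-forth arguments, using only this property.

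\emph{Step 1 (Existence).} Using (c), enumerate a set of representatives $\{Z_0, Z_1, \ldots\}$ for the isomorphism classes in $\fA$. Build a chain $X_0 \subset X_1 \subset \cdots$ in $\fA$ as follows: at stage $n$, consider the pair $(i_n \colon Y_n \hookrightarrow X_{n},\; Y_n \hookrightarrow Y'_n)$ enumerated by some surjective pairing function onto all such finite pairs with $Y'_n \in \fA$ (there are only countably many by (c), since each $Y_n$ is a substructure of some already-built $X_m$, which is finite, and each candidate $Y'_n$ is enumerated from the $Z_k$'s). Apply (d) to produce an amalgam $X_{n+1} \in \fA$ with embeddings $X_n \hookrightarrow X_{n+1}$ and $Y'_n \hookrightarrow X_{n+1}$ agreeing on $Y_n$. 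By interleaving the enumeration appropriately, every such pair is eventually handled, so $\Omega = \bigcup_n X_n$ has the extension property. The containment $\age(\Omega) \supset \fA$ is clear since each $Z_k$ eventually embeds in some $X_n$ (apply (d) with $Y=\emptyset$ or the joint-embedding consequence of~(d)); the reverse containment uses (a) and (b), as any finite substructure of $\Omega$ lies in some $X_n \in \fA$.

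\emph{Step 2 (Homogeneity).} Let $i \colon Y \to Y^*$ be an isomorphism of finite substructures of $\Omega$. Fix an enumeration $\Omega = \{\omega_0, \omega_1, \ldots\}$ and construct a chain of finite partial isomorphisms $i = i_0 \subset i_1 \subset \cdots$ of $\Omega$ by back-and-forth. At odd stages, ensure $\omega_n$ lies in $\dom(i_k)$: let $Y_k = \dom(i_{k-1})$ and consider the substructure $X_k = Y_k \cup \{\omega_n\}$ of $\Omega$; by (b), $X_k \in \fA$. Transport $X_k$ across $i_{k-1}$ to obtain an abstract extension of $i_{k-1}(Y_k)$ inside $\fA$, then apply the extension property to realize this extension inside $\Omega$, producing $i_k$ defined at $\omega_n$. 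At even stages, do the symmetric thing to force $\omega_n \in \mathrm{range}(i_k)$, using that $\Omega$ also has the ``reverse'' extension property (which is the same property since it is self-dual). The union $\bigcup_k i_k$ is an automorphism of $\Omega$ extending $i$.

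\emph{Step 3 (Uniqueness).} Let $\Omega'$ be another countable homogeneous structure with $\age(\Omega') = \fA$. First observe that $\Omega'$ also satisfies the extension property: given $j \colon Y \hookrightarrow \Omega'$ and $Y \subset X$ in $\fA$, pick any embedding $j^* \colon X \hookrightarrow \Omega'$ (which exists as $X \in \age(\Omega')$), then apply homogeneity of $\Omega'$ to an automorphism sending $j^*(Y)$ to $j(Y)$ and compose. Now run a back-and-forth between enumerations $\Omega = \{\omega_n\}$ and $\Omega' = \{\omega'_n\}$, building partial isomorphisms $f_k$ between finite substructures and using the extension property on each side to accommodate $\omega_n$ or $\omega'_n$ at stage $k$. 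The limit is an isomorphism $\Omega \cong \Omega'$.

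\emph{Main obstacles.} The conceptual work is entirely concentrated in the Step 1 enumeration: one must correctly bookkeep a countable set of ``extension tasks'' and verify that every finite one-step extension of every finite substructure gets processed, which depends essentially on (c). Step 2 is then almost formal, but has one subtle point — at each extension step we need the abstract extension $Y_k \subset X_k$ to live in $\fA$ before invoking the extension property, which is where (b) is used. The arguments in Steps 2 and 3 are the standard back-and-forth technique, and present no real difficulty once the extension property from Step 1 is in hand.
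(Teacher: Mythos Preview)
The paper does not actually prove this theorem; it simply states it and refers the reader to \cite[\S 2.6]{Cameron} for a proof. Your argument is correct and is essentially the standard proof one finds in such references: build $\Omega$ as a union of a chain in $\fA$ arranged to have the extension property, then deduce homogeneity and uniqueness by back-and-forth. The only place to be slightly more careful is the bookkeeping in Step~1 (the enumeration of extension tasks must be organized so that tasks arising from later $X_n$'s are themselves eventually processed), but you flag this and it is routine.
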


A proof of Fra\"iss\'e's theorem can be found in \cite[\S 2.6]{Cameron}. A class $\fA$ satisfying (a)--(d) of \S \ref{sss:age} is called a \defn{Fra\"iss\'e class}, and the structure $\Omega$ in the theorem is called the \defn{Fra\"iss\'e limit} of $\fA$. Fra\"iss\'e limits provide a powerful and systematic way of producing homogeneous structures and oligomorphic groups.

\subsubsection{Example: total orders}

Let $\fA$ be the class of all finite totally ordered sets. (The signature in this case consists of a single binary relation.) This is easily seen to be a Fra\"iss\'e class. Thus $\fA$ is the age of a countable homogeneous structure $\Omega$.

In fact, we can take $\Omega$ to be the set $\bQ$ of rational numbers equipped with its standard order. It is clear that $\fA=\age(\bQ)$. To see that $\bQ$ is homogeneous, suppose that $X$ and $Y$ are finite subsets of $\bQ$ and $i \colon Y \to X$ is an order-preserving bijection. Since $X$ and $Y$ have the same cardinality, we can thus find an order-preserving bijection $\sigma \colon \bQ \to \bQ$ such that $\sigma(Y)=X$; in fact, we can take $\sigma$ to be a piecewise linear function. The restriction of $\sigma$ to $Y$ necessarily induces $i$, as there is only one order-preserving bijection $Y \to X$. The uniqueness aspect of the Fra\"iss\'e limit, in this case, amounts to the fact that any two countable dense total orders are isomorphic, a statement first proved by Cantor.

Since \S \ref{sss:age}(e) holds, we see that $G=\Aut(\bQ,<)$ acts oligomorphically on $\bQ$. Of course, this can be seen directly in this case too (e.g., using piecewise linear maps).

\subsection{Measures} \label{ss:model-meas}

Let $\fA$ be a class of finite structures (for our fixed signature). We impose the following mild finiteness condition on $\fA$:
\begin{itemize}
\item[(C)] Given embeddings $i \colon Y \to X$ and $j \colon Y \to Y'$ in $\fA$, there are only finitely many amalgamations of $(i,j)$ in $\fA$ (up to isomorphism).
\end{itemize}
We do not require amalgamations to exist. We note that (C) is a consequence of \S \ref{sss:age}(e). The following is the main concept studied in \S \ref{s:model}:

\begin{definition} \label{defn:model-meas}
A \defn{measure} for $\fA$ with values in $k$ is a rule that assigns to each embedding $i \colon Y \to X$ of structures in $\fA$ a quantity $\mu(i) \in k$ such that the following conditions hold:
\begin{enumerate}
\item We have $\mu(i)=1$ if $i$ is an isomorphism.
\item Given embeddings $i \colon Y \to X$ and $j \colon Z \to Y$, we have $\mu(i \circ j)=\mu(i) \cdot \mu(j)$.
\item Let $i \colon Y \to X$ and $j \colon Y \to Y'$ be embeddings, and let $(X'_{\alpha}, i'_{\alpha}, j'_{\alpha})$, for $1 \le \alpha \le r$, be the amalgamations of $(i,j)$ in $\fA$. Then $\mu(i)=\sum_{\alpha=1}^r \mu(i'_{\alpha})$. \qedhere
\end{enumerate}
\end{definition}

The main result of \S \ref{s:model}, Theorem~\ref{thm:compare} below, connects the above definition to our previous definition of measure. As before, there is an important class of regular measures:

\begin{definition}
A measure $\mu$ for $\fA$ is \defn{regular} if $\mu(i)$ is a unit of $k$ for all embeddings $i$.
\end{definition}

General measures can be a bit cumbersome since they are defined for embeddings of structures. For regular measures, the situation simplifies considerably. The following definition and proposition make this precise. We assume in what follows that $\fA$ contains $\emptyset$.

\begin{definition} \label{defn:R-meas}
An \defn{R-measure} for $\fA$ is a rule $\nu$ that assigns to each structure $X$ of $\fA$ a unit $\nu(X)$ of the ring $k$, such that the following conditions hold:
\begin{enumerate}
\item We have $\nu(X)=\nu(Y)$ if $X$ and $Y$ are isomorphic.
\item We have $\nu(\emptyset)=1$.
\item Suppose that $i \colon Y \to X$ and $j \colon Y \to Y'$ are embeddings in $\fA$, and let $(X'_{\alpha}, i'_{\alpha}, j'_{\alpha})$, for $1 \le \alpha \le r$, be the amalgamations of $(i,j)$ in $\fA$. Then
\begin{displaymath}
\nu(X) \cdot \nu(Y') = \nu(Y) \cdot \sum_{\alpha=1}^r \nu(X'_{\alpha}). \qedhere
\end{displaymath}
\end{enumerate}
\end{definition}

\begin{proposition} \label{prop:R-meas}
There is a bijection between regular measures and R-measures, with $\mu$ corresponding to $\nu$ if $\nu(X)=\mu(\emptyset \subset X)$ for all $X \in \fA$.
\end{proposition}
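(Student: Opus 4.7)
The plan is to define explicit maps in both directions and check that they land in the correct targets and are mutually inverse. Given a regular measure $\mu$ for $\fA$, set $\nu(X) := \mu(\emptyset \hookrightarrow X)$, which is a unit of $k$ by regularity. Conversely, given an R-measure $\nu$, define
\begin{displaymath}
\mu(i \colon Y \hookrightarrow X) := \nu(X)/\nu(Y),
\end{displaymath}
where the invertibility of $\nu(Y)$ (guaranteed by the R-measure definition) is what makes this well-defined. Regularity is used precisely here, which is the \emph{raison d'\^etre} of the R-measure formalism.

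First I would verify that the two constructions are mutually inverse. Starting from $\mu$ and returning, one gets $i \mapsto \mu(\emptyset \subset X)/\mu(\emptyset \subset Y)$, which equals $\mu(i)$ by applying axiom \dref{defn:model-meas}{c} to the composite $\emptyset \subset Y \xrightarrow{i} X$. Starting from $\nu$ and returning, one gets $X \mapsto \nu(X)/\nu(\emptyset) = \nu(X)$ since $\nu(\emptyset)=1$ by \dref{defn:R-meas}{b}.

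Next I would check the axioms on each side. For the direction $\nu \mapsto \mu$: isomorphism invariance, $\mu(\id_X)=1$, and the multiplicativity $\mu(i\circ j)=\mu(i)\mu(j)$ are immediate telescoping identities from $\mu(i)=\nu(X)/\nu(Y)$; the additive amalgamation axiom \dref{defn:model-meas}{d} is exactly \dref{defn:R-meas}{c} after dividing through by $\nu(Y)\nu(Y')$, both of which are units; and each $\mu(i)$ is a unit, so $\mu$ is regular. For the direction $\mu \mapsto \nu$: $\nu(X)$ is a unit by regularity, $\nu(\emptyset)=\mu(\id_\emptyset)=1$, and isomorphism invariance is inherited. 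For the multiplicative amalgamation identity \dref{defn:R-meas}{c}, I would expand using \dref{defn:model-meas}{c}:
\begin{displaymath}
\mu(\emptyset \subset X) = \mu(i)\mu(\emptyset \subset Y), \qquad \mu(\emptyset \subset X'_\alpha) = \mu(i'_\alpha)\mu(\emptyset \subset Y'),
\end{displaymath}
and then apply \dref{defn:model-meas}{d} to replace $\mu(i)$ by $\sum_\alpha \mu(i'_\alpha)$, which yields $\nu(X)\nu(Y') = \nu(Y)\sum_\alpha \nu(X'_\alpha)$ after factoring.

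No genuine obstacle arises: the proposition is simply the translation between an additive presentation (in $\mu$, over embeddings) and a multiplicative presentation (in $\nu$, over objects) of the same underlying data, once one has the ability to invert values to form the ratio $\nu(X)/\nu(Y)$.
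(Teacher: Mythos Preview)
Your proposal is correct and follows exactly the same approach as the paper: define $\nu(X)=\mu(\emptyset\subset X)$ in one direction and $\mu(Y\subset X)=\nu(X)/\nu(Y)$ in the other, use \dref{defn:model-meas}{c} to see these are inverse, and translate \dref{defn:model-meas}{d} into \dref{defn:R-meas}{c} via these formulas. If anything, you have written out more detail than the paper, which compresses the reverse direction into ``similar reasoning.''
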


\begin{proof}
Let $\mu$ be a regular measure. Define $\nu(X)=\mu(\emptyset \subset X)$, which is a unit of $k$ by assumption. We note that if $Y \to X$ is any embedding then by \dref{defn:model-meas}{b} we have
\begin{displaymath}
\mu(\emptyset \subset X) = \mu(Y \subset X) \mu(\emptyset \subset Y),
\end{displaymath}
and so $\mu(Y \subset X)=\nu(X)/\nu(Y)$. We now see that $\nu$ is an R-measure: indeed, \dref{defn:R-meas}{a} and \dref{defn:R-meas}{b} are clear, while \dref{defn:R-meas}{c} follows from \dref{defn:model-meas}{c} and the expression for $\mu$ in terms of $\nu$. Conversely, if we start with an R-measure $\nu$ then similar reasoning shows that we obtain a measure $\mu$ by defining $\mu(Y \subset X)=\nu(X)/\nu(Y)$. As the two constructions are clearly inverse, the result follows.
\end{proof}

We close \S \ref{ss:model-meas} with two examples of measures.

\begin{example} \label{ex:set-meas}
Let $\fA$ be the class of all finite sets and let $t \in \bC \setminus \bN$. For a finite set $X$ of cardinality $n$, put
\begin{displaymath}
\nu_t(X)=(t)_n=t(t-1)\cdots(t-n+1).
\end{displaymath}
Note that this is non-zero since $t \not\in \bN$. We claim that $\nu_t$ is an R-measure for $\fA$. Properties \dref{defn:R-meas}{a} and \dref{defn:R-meas}{b} are clear. We now verify \dref{defn:R-meas}{c}. It suffices to treat the case where $i \colon [\ell] \to [\ell] \amalg [m]$ and $j \colon [\ell] \to [\ell] \amalg [n]$ are the standard inclusions and $m \le n$. The amalgamations are the sets of the form $[\ell] \amalg ([n+m]/\sim)$ where $\sim$ is an equivalence relation on $[n+m]$ that induces the discrete relation on $\{1,\ldots,n\}$ and $\{n+1,\ldots,m\}$. To give such a relation, we must give subsets of $[n]$ and $[m]$ and a bijection between them. It follows that the number of amalgamations of size $\ell+n+m-s$, for $0 \le s \le m$, is $\binom{n}{s} \binom{m}{s} s!$. Thus \dref{defn:R-meas}{c} becomes
\begin{displaymath}
(t)_{\ell+m} \cdot (t)_{\ell+n} = (t)_{\ell} \cdot \sum_{s=0}^m \left[ \binom{n}{s} \binom{m}{s} s! \cdot (t)_{\ell+n+m-s} \right].
\end{displaymath}
This is a standard identity for the Pochhammer symbol. We briefly explain a proof. Regard the equation as an identity in the polynomial ring $\bC[t]$. Divide by $(t)_{\ell}$ and make the change of variables $t \to t+\ell$ to reduce to the case $\ell=0$. The identity then becomes equivalent to the one used in the proof of Proposition~\ref{prop:R-Theta}; see \cite[Theorem~3.1]{HarmanHopkins}. We thus see that $\nu_t$ is an R-measure, as claimed.

The measure $\mu_t$ corresponding to $\nu_t$ is given as follows: if $i \colon Y \to X$ is an injection of sets of cardinalities $m \le n$ then
\begin{displaymath}
\mu_t(i)=(t-m)(t-m-1) \cdots (t-n+1).
\end{displaymath}
As we have seen, this defines a regular measure for $\fA$ when $t \in \bC \setminus \bN$. In fact, it defines a measure for all $t \in \bC$. This can be shown directly, but also follows from Theorem~\ref{thm:compare} and the results of \S \ref{s:sym}. Via Corollary~\ref{cor:compare}, the measure $\mu_t$ defined here corresponds to the measure $\mu_t$ for $\fS$ defined in Example~\ref{ex:sym-measure}.
\end{example}

\begin{example}
Let $\fA$ be the class of finite totally ordered sets. This admits an R-measure $\nu$ defined by $\nu(X)=(-1)^{\# X}$. That this is indeed an R-measure follows from Theorem~\ref{thm:compare} and the results of \S \ref{s:order}. It is possible to give a direct self-contained proof of this, but as it is a bit involved we simply give an example of \dref{defn:R-meas}{c}. Let
\begin{displaymath}
Y=\{1\}, \qquad X=\{1<2\}, \qquad Y'=\{1<3\}
\end{displaymath}
and let $i \colon Y \to X$ and $j \colon Y \to Y'$ be the inclusions. An amalgamation of $(i,j)$ is a totally ordered set where the elements are labeled by 1, 2, and 3 (with 2 and 3 possibly labeling the same element), such that $1<2$ and $1<3$. There are three possibilities:
\begin{displaymath}
\{1 < (2=3)\}, \qquad \{1<2<3\}, \qquad \{1<3<2\}.
\end{displaymath}
Thus \dref{defn:R-meas}{c} becomes the following identity:
\begin{displaymath}
\nu([2]) \cdot \nu([2]) = \nu([1]) \cdot (\nu([2])+2\nu([3])),
\end{displaymath}
where here we have identified a totally ordered set of cardinality $n$ with the standard such set $[n]$. As $\nu([n])=(-1)^n$, the above identity becomes $1=(-1)(1-2)$, which does indeed hold. Let $\mu$ be the regular measure corresponding to $\nu$. Via Theorem~\ref{thm:compare}, $\mu$ corresponds to the measure for $\Aut(\bR,<)$ defined in Example~\ref{ex:AutR-measure}.
\end{example}

\subsection{The \texorpdfstring{$\Theta$}{\textTheta} ring}

Let $\fA$ be as in \S \ref{ss:model-meas}. As in the group setting, it is useful to introduce a ring that governs measures:

\begin{definition} \label{defn:model-theta}
We define a commutative ring $\Theta(\fA)$ as follows. For each embedding $i \colon Y \to X$ in $\fA$, there is a class $[i]$ in $\Theta(\fA)$. These classes satisfy identities parallel to the conditions in Definition~\ref{defn:model-meas}.
\end{definition}

We thus see that giving a measure for $\fA$ with values in $k$ is equivalent to giving a ring homomorphism $\Theta(\fA) \to k$. Therefore, understanding measures for $\fA$ amounts to computing the ring $\Theta(\fA)$. We let $\Theta^*(\fA)$ be the localization of $\Theta(\fA)$ where the classes $[i]$ are inverted, for all embeddings $i$. Thus giving a regular measure for $\fA$ is equivalent to giving a homomorphism $\Theta^*(\fA) \to k$. Proposition~\ref{prop:R-meas} allows one to give a presentation for $\Theta^*(\fA)$ where the generators are classes $[X]$, with $X \in \fA$, and the relations correspond to the conditions in Definition~\ref{defn:R-meas}.

\subsection{Comparison} \label{ss:compare}

Let $\Omega$ be a homogeneous structure such that $\fA=\age(\Omega)$ satisfies the condition \S \ref{sss:age}(e). Then $\fA$ satisfies condition~(C) from \S \ref{ss:model-meas}, and $G=\Aut(\Omega)$ acts oligomorphically on $\Omega$. Let $\sE$ be the stabilizer class generated by $\Omega$ (see \S \ref{ss:relative}). The following is our main theorem connecting the model-theoretic and group-theoretic versions of $\Theta$:

\begin{theorem} \label{thm:compare}
We have a natural ring isomorphism $\Theta(\fA) = \Theta(G; \sE)$.
\end{theorem}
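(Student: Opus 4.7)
The plan is to construct mutually inverse ring homomorphisms $\phi \colon \Theta(\fA) \to \Theta(G;\sE)$ and $\psi \colon \Theta(G;\sE) \to \Theta(\fA)$, via the following dictionary enforced by homogeneity: every transitive $\sE$-smooth $G$-set is of the form $G/G(A)$ for some finite $A \subset \Omega$, and for such $A$, the $G(A)$-orbits on $\sE$-smooth $G(A)$-sets are parametrized by isomorphism classes of embeddings $A \hookrightarrow C$ in $\fA$ (with isomorphisms required to fix $A$ pointwise).

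For $\phi$, given an embedding $i \colon Y \hookrightarrow X$ in $\fA$, I fix an embedding $X \hookrightarrow \Omega$ (which restricts through $i$ to one of $Y$) and set $\phi([i]) = [G(Y)/G(X)]$. Independence from the chosen embedding of $X$ follows from conjugation invariance of $\Theta$, and independence from the embedding of $Y$ is conjugation invariance at the $G$-level. The defining relations (a), (b) of $\Theta(\fA)$ are immediate; (c) is multiplicativity in the nested fibration $G(Y)/G(X) \to G(Z)/G(X) \to G(Z)/G(Y)$, which is axiom~(e) of Definition~\ref{defn:measure}; and (d), the amalgamation identity, corresponds to decomposing $G(Y)/G(X)$ into its $G(Y')$-orbits, which by homogeneity biject with the amalgamations of $(i,j)$ in $\fA$ (and the finiteness on the two sides matches condition~(C) against Proposition~\ref{prop:smooth}(c)).

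For $\psi$, I work with the presentation $\Theta(G;\sE)=\Theta'(G;\sE)$ from Proposition~\ref{prop:Theta-prime} (in its relative form). A generator $\langle f\rangle$ is the class of a map $f \colon W \to Y$ with $Y$ transitive $\sE$-smooth; I translate so that $Y=G/G(A)$ with basepoint the identity coset, and decompose the fiber $F=f^{-1}(\text{basepoint})$, a finitary $\sE$-smooth $G(A)$-set, as $F = \coprod_\alpha G(A)/G(C_\alpha)$. Each orbit corresponds under the dictionary to an embedding $i_\alpha \colon A \hookrightarrow C_\alpha$ in $\fA$, and I set $\psi(\langle f\rangle) = \sum_\alpha [i_\alpha]$. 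Relations (a)--(d) of Definition~\ref{defn:Theta-prime} are routine (composition of maps of transitive sets matching composition of embeddings, etc.); the base change relation (e) is the key. After translation and harmless enlargement of $A'$ inside its definable closure we may take $A \subset A'$; then the $G(A')$-orbits on each $G(A)/G(C_\alpha)$ biject, by homogeneity, with the amalgamations of $(A \hookrightarrow A',\ A \hookrightarrow C_\alpha)$ in $\fA$. The resulting equality of sums of classes is exactly axiom~(d) of Definition~\ref{defn:model-meas}.

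Mutual inverseness is straightforward: $\psi \circ \phi = \id$ because the fiber of $G/G(X) \to G/G(Y)$ at the basepoint is the single $G(Y)$-orbit $G(Y)/G(X)$, corresponding back to $[Y \hookrightarrow X]$; and $\phi \circ \psi = \id$ because $\phi$ sends each $[i_\alpha]$ to $[G(A)/G(C_\alpha)]$, reconstructing $[F]=\langle f \rangle$ inside $\Theta'$. The main obstacle is the base-change/amalgamation step: making precise the bijection between $G(A')$-orbits on $G(A)/G(C_\alpha)$ and isomorphism classes of amalgamations in $\fA$, together with the definable-closure bookkeeping needed to pass between $G(A')\subset G(A)$ and $A \subset A'$. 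This is where Fra\"iss\'e's homogeneity hypothesis, together with the finiteness condition~(C), does all the real work.
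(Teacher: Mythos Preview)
Your approach is essentially the same as the paper's, modulo language: you work with stabilizer subgroups $G(A)$ of finite subsets $A\subset\Omega$, while the paper works with the embedding spaces $\Omega^{[X]}$ for abstract $X\in\fA$; the map $\omega_i\colon\Omega^{[X]}\to\Omega^{[Y]}$ induced by an embedding $i\colon Y\to X$ is exactly your $G/G(X)\to G/G(Y)$ after choosing an embedding of $X$ into $\Omega$. Your construction of $\phi$ matches Lemma~\ref{lem:compare2}, and your amalgamation/fiber-product bijection is Lemma~\ref{lem:compare1}.

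The one point you should not call ``harmless'' is the definable-closure bookkeeping. Concretely, the issue is that there exist embeddings $i\colon Y\hookrightarrow X$ in $\fA$ with $G(Y)=G(X)$ (equivalently, $\omega_i$ is an isomorphism of $G$-sets) but $i$ not an isomorphism; the paper calls these \emph{pre-isomorphisms} and gives an explicit example (Example~\ref{ex:pre-isom}). When you ``translate so that $Y=G/G(A)$'' or ``enlarge $A'$ inside its definable closure,'' you are implicitly replacing one structure by another related to it by a pre-isomorphism, and you need to know that this does not change classes in $\Theta(\fA)$. The paper proves this in two steps: Lemma~\ref{lem:compare4-1} shows $[i]=1$ for any pre-isomorphism $i$ (using the amalgamation axiom applied to the pair $(i,i)$), and Lemma~\ref{lem:compare4} bootstraps this to show $\omega_i\cong\omega_{i'}\Rightarrow[i]=[i']$ in general. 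Without this, your map $\psi$ is not well-defined: the same $\langle f\rangle$ can be presented with different choices of $A$ (or of the $C_\alpha$), yielding a priori different sums $\sum_\alpha[i_\alpha]$ in $\Theta(\fA)$. Once you supply this lemma, your argument goes through and coincides with the paper's.
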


It follows easily from the proof that this isomorphism induces as isomorphism $\Theta^*(\fA) = \Theta^*(G; \sE)$. In particular, we see that (regular) measures for $\fA$ correspond bijectively to (regular) measures for $(G; \sE)$. In some cases, the theorem allows us to get at the absolute ring $\Theta(G)$:

\begin{corollary} \label{cor:compare}
Suppose that $\sE$ is large in the sense of \S \ref{ss:Theta-prop}(d). Then there is a natural ring isomorphism $\Theta(\fA) \otimes \bQ \cong \Theta(G) \otimes \bQ$.
\end{corollary}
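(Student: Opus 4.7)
The plan is to deduce the corollary by combining Theorem~\ref{thm:compare} with the remark on large stabilizer classes in \S \ref{ss:Theta-prop}(d). First, applying Theorem~\ref{thm:compare}, we have a natural ring isomorphism $\Theta(\fA) \cong \Theta(G; \sE)$, and tensoring with $\bQ$ yields
\begin{displaymath}
\Theta(\fA) \otimes \bQ \;\cong\; \Theta(G; \sE) \otimes \bQ.
\end{displaymath}
Second, by the hypothesis that $\sE$ is large and the discussion in \S \ref{ss:Theta-prop}(d), the natural pull-back map
\begin{displaymath}
\phi \colon \Theta(G; \sE) \otimes \bQ \longrightarrow \Theta(G) \otimes \bQ
\end{displaymath}
coming from the inclusion of stabilizer classes $\sE \subset (\text{all open subgroups})$ is an isomorphism, with inverse given on a transitive $G$-set $U/V$ by $\psi([U/V]) = [V:W]^{-1} \cdot [U/W]$ for any $W \in \sE$ of finite index in $V$. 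Composing the two isomorphisms yields the desired natural isomorphism $\Theta(\fA) \otimes \bQ \cong \Theta(G) \otimes \bQ$.

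Since both Theorem~\ref{thm:compare} and the assertion in \S \ref{ss:Theta-prop}(d) are established (or assumed) prior to this statement, the corollary is essentially a formal consequence, and there is no serious obstacle. The only point worth checking is that the composite is indeed the ``natural'' map one would write down, namely the map sending the class $[i \colon Y \hookrightarrow X]$ on the $\fA$ side to the class $[f] = [f^{-1}(\mathrm{pt})]$ of an induced $G$-map $f \colon I_{G(X)}^{G}(\mathrm{Emb}(X,\Omega)) \to G/G(Y)$ (up to the conventions fixed in the proof of Theorem~\ref{thm:compare}); this is immediate from inspecting how the two isomorphisms are defined on generators, so no additional argument is required.
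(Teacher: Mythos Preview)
Your proof is correct and takes essentially the same approach as the paper: the corollary is stated without proof there, since it follows immediately by composing Theorem~\ref{thm:compare} with the isomorphism $\Theta(G;\sE)\otimes\bQ\cong\Theta(G)\otimes\bQ$ from \S\ref{ss:Theta-prop}(d). Your additional remark about naturality is harmless but not required.
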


We now begin the proof of Theorem~\ref{thm:compare}, which will take the remainder of \S \ref{ss:compare}. We first set-up some notation. We use the symbols $A$, $B$, and $C$ to denote $G$-sets, and the symbols $X$, $Y$, and $Z$ to denote structures in $\fA$. Given a structure $X$ in $\fA$, we let $\Omega^{[X]}$ denote the set of all embeddings $X \to \Omega$. Since $\Omega$ is homogeneous, the group $G$ acts transitively on $\Omega^{[X]}$. It is clear that $\Omega^{[n]}$ decomposes as a disjoint union of the sets $\Omega^{[X]}$, as $X$ varies over all structures in $\fA$ on the set $[n]$. It follows that the $\Omega^{[X]}$ account for all transitive $\sE$-smooth $G$-sets, as any such set is an orbit on some $\Omega^n$.

For an embedding $i \colon Y \to X$ of structures in $\fA$, we let $\omega_i \colon \Omega^{[X]} \to \Omega^{[Y]}$ be the map given by $\omega_i(k)=k \circ i$. This is a map of $G$-sets, and thus surjective since the sets are transitive. We note that $\omega$ is faithful in the sense that if $j \colon Y \to X$ is an embedding with $\omega_i=\omega_j$ then $i=j$; indeed, if $k \colon X \to \Omega$ is any embedding then we have $k \circ i = k \circ j$, and so $i=j$ since $k$ is injective. We say that the embedding $i$ is a \defn{pre-isomorphism} if $\omega_i$ is an isomorphism. Non-trivial pre-isomorphisms can exist: see Example~\ref{ex:pre-isom}.

The following lemma is really the key computation in the proof of the theorem, as it relates amalgamations on the model-theoretic side to fiber products on the group-theoretic side.

\begin{lemma} \label{lem:compare1}
Let $i \colon Y \to X$ and $j \colon Y \to Y'$ be embeddings in $\fA$, and let $(X'_{\alpha}, i'_{\alpha}, j'_{\alpha})$, for $1 \le \alpha \le r$, be the amalgamations of $(i,j)$. Then we have a natural isomorphism of $G$-sets
\begin{displaymath}
\Omega^{[X]} \times_{\Omega^{[Y]}} \Omega^{[Y']} = \coprod_{\alpha=1}^r \Omega^{[X'_{\alpha}]},
\end{displaymath}
where the fiber product is taken with respect to $\omega_i$ and $\omega_j$.
\end{lemma}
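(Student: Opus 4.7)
The plan is to construct mutually inverse $G$-equivariant maps between the two sides. An element of $\Omega^{[X]} \times_{\Omega^{[Y]}} \Omega^{[Y']}$ is a pair $(k,k')$ of embeddings $k \colon X \to \Omega$ and $k' \colon Y' \to \Omega$ satisfying $k \circ i = k' \circ j$ as maps $Y \to \Omega$. Given such a pair, I would form the substructure $Z = k(X) \cup k'(Y') \subseteq \Omega$, which belongs to $\fA$ since it is finite. The maps $k$ and $k'$ corestrict to jointly surjective embeddings $\tilde{k} \colon X \to Z$ and $\tilde{k}' \colon Y' \to Z$ that agree on $Y$, so $(Z, \tilde{k}', \tilde{k})$ is an amalgamation of $(i,j)$. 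By the convention that the list $\{(X'_{\alpha}, i'_{\alpha}, j'_{\alpha})\}$ contains exactly one representative of each isomorphism class, there is a unique $\alpha$ and a unique isomorphism of amalgamations $f \colon X'_{\alpha} \to Z$; uniqueness of $f$ uses the rigidity of amalgamations (amalgamations have no nontrivial automorphisms, as noted in \S \ref{ss:fraisse}). Composing $f$ with the inclusion $Z \hookrightarrow \Omega$ gives an element of $\Omega^{[X'_{\alpha}]}$.

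In the reverse direction, given an embedding $m \colon X'_{\alpha} \to \Omega$, I would set $k = m \circ j'_{\alpha}$ and $k' = m \circ i'_{\alpha}$. Both are embeddings, and the amalgamation relation $j'_{\alpha} \circ i = i'_{\alpha} \circ j$ yields $k \circ i = k' \circ j$, so $(k,k')$ lies in the fiber product. Checking that the two constructions are mutually inverse is routine: starting from $(k,k')$, the produced $m$ satisfies $m \circ j'_{\alpha} = k$ and $m \circ i'_{\alpha} = k'$ by construction; conversely, starting from $m$, the substructure $Z$ equals $m(X'_{\alpha})$ and the isomorphism $f$ is forced by rigidity to reproduce $m$.

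For $G$-equivariance, $G$ acts by post-composition on each $\Omega^{[T]}$. Under $g \in G$, the pair $(k,k')$ becomes $(g \circ k, g \circ k')$, whose associated substructure is $gZ$, and $g$ restricts to an isomorphism of substructures $Z \to gZ$ that intertwines $\tilde{k}, \tilde{k}'$ with their translates. Hence the forward map sends $g \cdot (k,k')$ to $g$ times the original embedding of $X'_{\alpha}$.

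The main obstacle I anticipate is the well-definedness of $\alpha$ and $f$: one must verify that the class of $(Z, \tilde{k}', \tilde{k})$ picks out a unique member of the amalgamation list and that the induced isomorphism is unique. The first follows because the $X'_{\alpha}$ are pairwise non-isomorphic as amalgamations, and the second is the rigidity statement for amalgamations. Both are built into the definitions, so this is really a matter of careful bookkeeping rather than genuine difficulty.
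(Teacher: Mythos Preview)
Your proposal is correct and follows essentially the same approach as the paper: both construct mutually inverse maps by, in one direction, taking the union of the images to obtain an amalgamation and identifying it uniquely with some $X'_{\alpha}$, and in the other direction, precomposing with $i'_{\alpha}$ and $j'_{\alpha}$. You are slightly more explicit than the paper about the rigidity of amalgamations and about $G$-equivariance, but the argument is the same.
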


\begin{proof}
An element of the fiber product is a pair $(a,b)$ where $a \colon X \to \Omega$ and $b \colon Y' \to \Omega$ are embeddings that agree on $Y$, i.e., $a \circ i = b \circ j$. We thus have a natural map
\begin{displaymath}
\phi \colon \coprod_{\alpha=1}^r \Omega^{[X'_{\alpha}]} \to \Omega^{[X]} \times_{\Omega^{[Y]}} \Omega^{[Y']}
\end{displaymath}
defined as follows: for an embedding $c \colon X'_{\alpha} \to \Omega$, we let $\phi(c)=(a,b)$ where $a=c \circ i'_{\alpha}$ and $b=c \circ j'_{\alpha}$. It is clear that $\phi$ is $G$-equivariant. We also have a map
\begin{displaymath}
\psi \colon \Omega^{[X]} \times_{\Omega^{[Y]}} \Omega^{[Y']} \to  \coprod_{\alpha=1}^r \Omega^{[X'_{\alpha}]}
\end{displaymath}
defined as follows. Let $(a,b)$ in the domain be given. Then $\im(a) \cup \im(b)$, with the induced substructure from $\Omega$, is an amalgamation of $(i,j)$, and therefore (uniquely) isomorphic to $(X'_{\alpha}, i'_{\alpha}, j'_{\alpha})$ for a unique $\alpha$. This means that there is an embedding $c \colon X'_{\alpha} \to \Omega$ (necessarily unique) such that $c \circ j'_{\alpha}=a$ and $c \circ i'_{\alpha}=b$. We define $\psi(a,b)=c$. One readily verifies that $\phi$ and $\psi$ are inverse, which completes the proof.
\end{proof}

Recall the relative ring $\Theta'(G;\sE)$ from Remark~\ref{rmk:Theta-prime}(a).

\begin{lemma} \label{lem:compare2}
There is a ring homomorphism $\phi \colon \Theta(\fA) \to \Theta'(G; \cE)$ satisfying $\phi([i])=\langle \omega_i \rangle$ for any embedding $i \colon Y \to X$ in $\fA$.
\end{lemma}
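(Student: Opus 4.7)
The plan is to define $\phi$ on the level of generators by $\phi([i]) = \langle \omega_i \rangle$ and verify that this assignment respects each of the four defining relations of $\Theta(\fA)$ from Definition~\ref{defn:model-theta} (the translations of (a)--(d) in Definition~\ref{defn:model-meas}). Since each $\omega_i$ is a $G$-equivariant surjection between transitive $\sE$-smooth $G$-sets, the class $\langle \omega_i \rangle$ genuinely lives in $\Theta'(G;\sE)$, so the assignment makes sense at the level of the free polynomial algebra on the generators, and only the defining relations require checking.

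Relations (a) and (b) are almost immediate: if $i$ and $i'$ are isomorphic embeddings then the induced maps $\omega_i$ and $\omega_{i'}$ are isomorphic as maps of $G$-sets, so (a) of Definition~\ref{defn:Theta-prime} gives $\langle \omega_i \rangle = \langle \omega_{i'} \rangle$; and $\omega_{\id_X}$ is the identity of the transitive $G$-set $\Omega^{[X]}$, so $\langle \omega_{\id_X} \rangle = 1$ by (b) of Definition~\ref{defn:Theta-prime}. For (c), given $j \colon Z \to Y$ and $i \colon Y \to X$, the identity $\omega_{i \circ j}(k) = k \circ i \circ j = \omega_j(\omega_i(k))$ shows $\omega_{i \circ j} = \omega_j \circ \omega_i$, so by the composition axiom (d) of Definition~\ref{defn:Theta-prime} together with the commutativity of $\Theta'(G;\sE)$,
\[
\phi([i \circ j]) \;=\; \langle \omega_j \circ \omega_i \rangle \;=\; \langle \omega_j \rangle \langle \omega_i \rangle \;=\; \phi([i]) \cdot \phi([j]).
\]

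The only step with real content is the amalgamation relation (d), and here the geometric work has already been done. Given $i \colon Y \to X$ and $j \colon Y \to Y'$ with amalgamations $(X'_\alpha, i'_\alpha, j'_\alpha)$ for $1 \le \alpha \le r$, Lemma~\ref{lem:compare1} identifies the fiber product $\Omega^{[X]} \times_{\Omega^{[Y]}} \Omega^{[Y']}$ with $\coprod_\alpha \Omega^{[X'_\alpha]}$, and under this identification the projection to $\Omega^{[Y']}$ becomes $\coprod_\alpha \omega_{i'_\alpha}$. Applying base change (relation (e) of Definition~\ref{defn:Theta-prime}) to $\omega_i$ pulled back along $\omega_j$, followed by additivity (relation (c)), yields
\[
\langle \omega_i \rangle \;=\; \Bigl\langle \coprod_{\alpha=1}^r \omega_{i'_\alpha} \Bigr\rangle \;=\; \sum_{\alpha=1}^r \langle \omega_{i'_\alpha} \rangle,
\]
which is precisely the $\phi$-image of the amalgamation relation $[i] = \sum_\alpha [i'_\alpha]$. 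Since all four relations hold, $\phi$ descends to the required ring homomorphism. I do not anticipate any serious obstacle: Lemma~\ref{lem:compare1} is the key input, and the rest is a formal translation between the two presentations.
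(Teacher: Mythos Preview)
Your proposal is correct and follows essentially the same approach as the paper: define $\phi$ on generators by $[i]\mapsto\langle\omega_i\rangle$ and verify the four relations of Definition~\ref{defn:model-meas}, with the amalgamation relation reducing via Lemma~\ref{lem:compare1} to base change plus additivity in $\Theta'(G;\sE)$. The paper's proof is organized identically, and your remark that commutativity of $\Theta'(G;\sE)$ is used in relation~(c) is a fair gloss on what is happening there.
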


\begin{proof}
For an embedding $i$, put $\tilde{\phi}([i])=\langle \omega_i \rangle$. We verify that $\tilde{\phi}$ respects the defining relations of $\Theta(\fA)$, which are labeled as in Definition~\ref{defn:model-meas}.
\begin{enumerate}
\item If $i \colon Y \to X$ is an isomorphism of structures then $\omega_i$ is an isomorphism of transitive $G$-sets, and so $\langle \omega_i \rangle=1$ by \dref{defn:Theta-prime}{a}. Thus $\tilde{\phi}(i)=1$.
\item Let $i \colon Y \to X$ and $j \colon Z \to Y$ be embeddings. Then $\omega_{i \circ j}=\omega_j \circ \omega_i$, and so $\langle \omega_{i \circ j} \rangle = \langle \omega_i \rangle \cdot \langle \omega_j \rangle$ \dref{defn:Theta-prime}{c}. Thus $\tilde{\phi}(i \circ j)=\tilde{\phi}(i) \cdot \tilde{\phi}(j)$.
\item Let $i \colon Y \to X$ and $j \colon Y \to Y'$ be embeddings, and let $(X'_{\alpha}, i'_{\alpha}, j'_{\alpha})$, for $1 \le \alpha \le r$, be the amalgamations of $(i,j)$. By Lemma~\ref{lem:compare1}, we have a cartesian square of $G$-sets
\begin{displaymath}
\xymatrix@C=4em{
\coprod \Omega^{[X'_{\alpha}]} \ar[r]^{\coprod \omega_{j'_{\alpha}}} \ar[d]_{\coprod \omega_{i'_{\alpha}}}  & \Omega^{[X]} \ar[d]^{\omega_i} \\
\Omega^{[Y']} \ar[r]^{\omega_j} & \Omega^{[Y]} }
\end{displaymath}
We therefore have $\langle \omega_i \rangle = \sum_{\alpha=1}^r \langle \omega_{i'_{\alpha}} \rangle$ by \dref{defn:Theta-prime}{b} and \dref{defn:Theta-prime}{d}, and so $\tilde{\phi}(i)=\sum_{i=1}^r \tilde{\phi}(i'_{\alpha})$.
\end{enumerate}
Thus $\tilde{\phi}$ respects the defining relations, and so induces a ring homomorphism $\phi$ as in the statement of the lemma.
\end{proof}

We now wish to define a ring homomorphism in the opposite direction. This is a bit harder since we cannot associate a structure to a $G$-set in a canonical manner. The next few lemmas address this issue.

\begin{lemma} \label{lem:compare3}
Let $X$ and $Y$ be structures in $\fA$ and let $f \colon \Omega^{[X]} \to \Omega^{[Y]}$ be a map of $G$-sets. Then there exists $X' \in \fA$ and embeddings $i \colon X \to X'$ and $j \colon Y \to X'$, with $i$ a pre-isomorphism, such that $f=\omega_j \circ \omega_i^{-1}$.
\end{lemma}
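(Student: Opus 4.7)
The plan is to build $X'$ as the union of the images of a chosen point of $\Omega^{[X]}$ and its image under $f$ inside $\Omega$. Concretely, fix any embedding $a \colon X \to \Omega$ (so $a$ is a point of the transitive $G$-set $\Omega^{[X]}$), and let $b = f(a) \in \Omega^{[Y]}$. Let $X'$ be the substructure of $\Omega$ on the finite subset $\im(a) \cup \im(b)$; since $\fA = \age(\Omega)$, we have $X' \in \fA$. The inclusion $k \colon X' \hookrightarrow \Omega$ then factors the embeddings $a$ and $b$: there exist unique embeddings $i \colon X \to X'$ and $j \colon Y \to X'$ with $a = k \circ i$ and $b = k \circ j$. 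This takes care of the existence of the data.

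Next I would check that $i$ is a pre-isomorphism, which is the one step that actually uses the $G$-equivariance of $f$ and is the main content of the lemma. Both $\omega_i \colon \Omega^{[X']} \to \Omega^{[X]}$ and the identity are surjective maps of transitive $G$-sets, so $\omega_i$ is an isomorphism iff the stabilizer of $k \in \Omega^{[X']}$ equals the stabilizer of $a \in \Omega^{[X]}$. These stabilizers are $G(\im(a) \cup \im(b))$ and $G(\im(a))$ respectively, so the question is whether $G(\im(a)) \subseteq G(\im(b))$. But if $g \in G$ fixes $a$ pointwise, then by $G$-equivariance $g \cdot b = g \cdot f(a) = f(g \cdot a) = f(a) = b$, so $g$ also fixes $b$ pointwise. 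Hence the stabilizers agree and $\omega_i$ is an isomorphism of $G$-sets.

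Finally I would verify $f = \omega_j \circ \omega_i^{-1}$. Both sides are $G$-equivariant maps $\Omega^{[X]} \to \Omega^{[Y]}$, and since $\Omega^{[X]}$ is transitive it suffices to check agreement at the single point $a$. By construction, $\omega_i^{-1}(a) = k$ and $\omega_j(k) = k \circ j = b = f(a)$, so the two maps agree at $a$ and hence everywhere.

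The only non-routine step is the pre-isomorphism check, which is essentially a one-line stabilizer computation powered by $G$-equivariance; no amalgamation machinery or appeal to Lemma~\ref{lem:compare1} is needed here, since $X'$ is served to us directly by the ambient structure $\Omega$. The main conceptual point worth emphasizing in the write-up is that homogeneity of $\Omega$ is what lets us realize the abstract model-theoretic data ($X'$ together with the two embeddings) as a concrete subset of $\Omega$, and $G$-equivariance of $f$ is exactly what forces the embedding $i$ to be a pre-isomorphism.
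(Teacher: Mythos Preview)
Your proof is correct and takes essentially the same approach as the paper. The paper phrases the construction via the graph $\Gamma_f \subset \Omega^{n+m}$ and then quotients $[n+m]$ by the equivalence relation identifying repeated coordinates, whereas you pick a single point $a$ and take $X' = \im(a) \cup \im(b)$ directly; these are the same object, and your stabilizer computation is exactly the paper's observation that the projection $\Gamma_f \to \Omega^{[X]}$ is an isomorphism.
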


\begin{proof}
For simplicity, identify the sets underlying $X$ and $Y$ with $[n]$ and $[m]$, so that $\Omega^{[X]} \subset \Omega^{[n]}$ and $\Omega^{[Y]} \subset \Omega^{[m]}$. Let $\Gamma_f \subset \Omega^{n+m}$ be the graph of $f$. The projection map $\Gamma_f \to \Omega^{[X]}$ is an isomorphism, and so $\Gamma_f$ is a transitive $G$-set. We obtain an equivalence relation $\sim$ on $[n+m]$ by $i \sim j$ if $x_i=x_j$ for all $x \in \Gamma_f$. Indentifying $[n+m]/\sim$ with $[\ell]$, for some $\ell$, we see that $\Gamma_f$ is a $G$-orbit on $\Omega^{[\ell]}$, and thus of the form $\Omega^{[X']}$ for some structure $X'$ on the set $[\ell]$. The natural map $[n] \to ([n+m]/\sim) \cong [\ell]$ gives an embedding $i \colon X \to X'$; we similarly get an embedding $j \colon Y \to X'$. The maps $\omega_i$ and $\omega_j$ are identified with the projections $\Gamma_f \to \Omega^{[X]}$ and $\Gamma_f \to \Omega^{[Y]}$. It follows that $\omega_i$ is an isomorphism (i.e., $i$ is a pre-isomorphism) and $f=\omega_j \circ \omega_i^{-1}$, which completes the proof.
\end{proof}

In the setting of the above lemma, $f$ is isomorphic to $\omega_j$ as morphisms of $G$-sets, and so $\langle f \rangle=\langle \omega_j \rangle$ in $\Theta'(G; \sE)$. Since every transitive $\sE$-smooth $G$-set is isomorphic to some $\Omega^{[X]}$, it follows that the elements $\langle \omega_i \rangle$, with $i$ an embedding in $\fA$, generate the ring $\Theta'(G;\sE)$.

\begin{lemma} \label{lem:compare4-1}
Let $i \colon Y \to X$ be a pre-isomorphism. Then $[i]=1$ in $\Theta(\fA)$.
\end{lemma}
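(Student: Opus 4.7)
The plan is to apply the amalgamation relation in $\Theta(\fA)$ to the pair $(i,i) \colon Y \to X$ (taking the two embeddings to both be $i$), and to show via Lemma~\ref{lem:compare1} that the pre-isomorphism hypothesis forces a single, trivial amalgamation. Concretely, from the analog of \dref{defn:model-meas}{d} in $\Theta(\fA)$ applied to $(i,i)$ we get
\begin{displaymath}
[i] = \sum_{\alpha=1}^r [i'_\alpha],
\end{displaymath}
where $(X'_\alpha, i'_\alpha, j'_\alpha)$ runs over the amalgamations of $(i,i)$, with $i'_\alpha \colon X \to X'_\alpha$. The target is to show that there is exactly one such amalgamation, and that it is the trivial one $(X, \id_X, \id_X)$, so that the sum collapses to $[\id_X] = 1$.

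On the group side, Lemma~\ref{lem:compare1} identifies those amalgamations bijectively with the $G$-orbits occurring in
\begin{displaymath}
\Omega^{[X]} \times_{\Omega^{[Y]}} \Omega^{[X]} = \coprod_{\alpha=1}^r \Omega^{[X'_\alpha]},
\end{displaymath}
where both legs of the fiber product are $\omega_i$. Since $i$ is a pre-isomorphism, $\omega_i$ is an isomorphism of $G$-sets, so this fiber product collapses onto its diagonal and is $G$-equivariantly isomorphic to $\Omega^{[X]}$, a single transitive $G$-set. Hence $r = 1$ and $\Omega^{[X'_1]} \cong \Omega^{[X]}$.

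It remains to identify the unique amalgamation $(X'_1, i'_1, j'_1)$. Tracing through the explicit inverse $\psi$ built in the proof of Lemma~\ref{lem:compare1}, a diagonal element $(k,k) \in \Omega^{[X]} \times_{\Omega^{[Y]}} \Omega^{[X]}$ is sent to the amalgamation whose underlying structure is $\im(k) \cup \im(k) = \im(k) \cong X$, with both structure maps equal to the identity on $X$; thus $(X'_1, i'_1, j'_1) = (X, \id_X, \id_X)$. Substituting back, $[i] = [\id_X] = 1$ by the identity relation in $\Theta(\fA)$. The only step that requires any care is this final identification of the unique amalgamation, and it follows directly from the formula for $\psi$; no nontrivial computation is needed.
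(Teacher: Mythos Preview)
Your proof is correct and follows essentially the same approach as the paper's: apply the amalgamation relation \dref{defn:model-meas}{d} to the pair $(i,i)$, use Lemma~\ref{lem:compare1} together with the fact that $\omega_i$ is an isomorphism to see there is a unique amalgamation, and identify it as $(X,\id_X,\id_X)$. The only cosmetic difference is that the paper simply exhibits $(X,\id_X,\id_X)$ as an amalgamation (and uniqueness does the rest), whereas you trace through the inverse map $\psi$ from the proof of Lemma~\ref{lem:compare1}; both arrive at the same place.
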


\begin{proof}
Since $\omega_i$ is an isomorphism, it follows that the natural map $\Omega^{[X]} \times_{\Omega^{[Y]}} \Omega^{[X]} \to \Omega^{[X]}$ is an isomorphism, and so the fiber product is a transitive $G$-set. Thus by Lemma~\ref{lem:compare1}, there is a unique amalgamation of $(i,i)$. But we know an amalgamation: namely, $(X,i',j')$ where $i'$ and $j'$ are the identity maps of $X$. By \dref{defn:model-meas}{c}, we have $[i]=[i']$, and by \dref{defn:model-meas}{a} we have $[i']=1$; thus $[i]=1$ as required.
\end{proof}

\begin{lemma} \label{lem:compare4}
Let $i \colon Y \to X$ and $i' \colon Y' \to X'$ be embeddings in $\fA$ such that $\omega_i$ and $\omega_{i'}$ are isomorphic as morphisms of $G$-sets. Then $[i]=[i']$ in $\Theta(\fA)$.
\end{lemma}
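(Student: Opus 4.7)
The plan is to turn the abstract isomorphism between the morphisms $\omega_i$ and $\omega_{i'}$ into a concrete equality of embeddings in $\fA$, and then read off the desired identity of classes using multiplicativity together with Lemma~\ref{lem:compare4-1}.

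First I would unpack the hypothesis: there exist isomorphisms $\alpha \colon \Omega^{[X]} \to \Omega^{[X']}$ and $\beta \colon \Omega^{[Y]} \to \Omega^{[Y']}$ of $G$-sets with $\beta \circ \omega_i = \omega_{i'} \circ \alpha$. Apply Lemma~\ref{lem:compare3} to the isomorphism $\alpha$ to obtain $V \in \fA$ and embeddings $p \colon X \to V$, $p' \colon X' \to V$ with $\alpha = \omega_{p'} \circ \omega_p^{-1}$; the guaranteed pre-isomorphism $p$ forces $\omega_{p'} = \alpha \circ \omega_p$ to be an isomorphism, so $p'$ is \emph{also} a pre-isomorphism. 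The same recipe applied to $\beta$ yields $U \in \fA$ and pre-isomorphism embeddings $q \colon Y \to U$, $q' \colon Y' \to U$ with $\beta = \omega_{q'} \circ \omega_q^{-1}$.

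Next I would consider the composed embeddings $p \circ i \colon Y \to V$ and $p' \circ i' \colon Y' \to V$. Using the contravariant functoriality $\omega_{a \circ b} = \omega_b \circ \omega_a$ together with the intertwining relation and the representations of $\alpha$ and $\beta$, one computes
\begin{displaymath}
\omega_q^{-1} \circ \omega_{p \circ i} = \omega_{q'}^{-1} \circ \omega_{p' \circ i'},
\end{displaymath}
call this common $G$-map $f \colon \Omega^{[V]} \to \Omega^{[U]}$. Applying Lemma~\ref{lem:compare3} a third time, now to $f$, yields $W \in \fA$, a pre-isomorphism $r \colon V \to W$ and an embedding $s \colon U \to W$ with $f = \omega_s \circ \omega_r^{-1}$. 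Rearranging the displayed equations gives
\begin{displaymath}
\omega_{r \circ p \circ i} = \omega_{s \circ q}, \qquad \omega_{r \circ p' \circ i'} = \omega_{s \circ q'}.
\end{displaymath}
Invoking the faithfulness of $\omega$ on embeddings (noted in the excerpt) upgrades these to honest equalities of embeddings in $\fA$: $r \circ p \circ i = s \circ q$ and $r \circ p' \circ i' = s \circ q'$. This is the decisive step.

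Finally, I would compute classes using \dref{defn:model-meas}{c} and Lemma~\ref{lem:compare4-1}: since $p,p',q,q',r$ are all pre-isomorphisms they each have class $1$, so
\begin{displaymath}
[i] = [r \circ p \circ i] = [s \circ q] = [s] = [s \circ q'] = [r \circ p' \circ i'] = [i'],
\end{displaymath}
completing the proof. The main obstacle is locating the trick in the third paragraph: recognizing that, although $\omega_i$ and $\omega_{i'}$ need only be isomorphic, three well-chosen applications of Lemma~\ref{lem:compare3} together with the faithfulness of $\omega$ allow us to produce embeddings into a common structure $W$ that are literally equal, not merely isomorphic. After that observation, everything else is routine bookkeeping.
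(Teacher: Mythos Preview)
Your proof is correct and takes a genuinely different, more streamlined route than the paper's. The paper proceeds in three steps: first it treats the case where the base isomorphism $\psi$ is the identity (using Lemma~\ref{lem:compare3} and faithfulness, much as you do), then the case $\psi=\omega_j$ (using Lemma~\ref{lem:compare1} to identify the cartesian square with a unique amalgamation, and then invoking \dref{defn:model-meas}{d} directly), and finally reduces the general case to these two by one more application of Lemma~\ref{lem:compare3}. Your argument instead applies Lemma~\ref{lem:compare3} three times in parallel---once each to $\alpha$, $\beta$, and the induced map $f$---and then uses faithfulness of $\omega$ to force literal equalities of embeddings into a single common $W$. This bypasses the amalgamation/fiber-product analysis entirely: you never invoke Lemma~\ref{lem:compare1}, and the only use of \dref{defn:model-meas}{d} is the one already packaged inside Lemma~\ref{lem:compare4-1}. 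What your approach buys is a cleaner single-pass argument with fewer case distinctions; what the paper's approach buys is a decomposition that makes visible exactly where the amalgamation axiom enters, which fits the surrounding narrative of \S\ref{ss:compare} a bit more transparently.
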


\begin{proof}
By assumption, we have a commutative diagram
\begin{displaymath}
\xymatrix@C=3em{
\Omega^{[X']} \ar[d]_{\omega_{i'}} \ar[r]^{\phi} & \Omega^{[X]} \ar[d]^{\omega_{i}} \\
\Omega^{[Y']} \ar[r]^{\psi} & \Omega^{[Y]} }
\end{displaymath}
where $\phi$ and $\psi$ are isomorphisms of $G$-sets. We proceed in three steps.

\textit{Step 1: $\psi$ is the identity.} Suppose that $Y=Y'$ and $\psi$ is the identity map. Applying Lemma~\ref{lem:compare3}, write $\phi=\omega_b \circ \omega_a^{-1}$ where $a \colon X' \to X''$ and $b \colon X \to X''$ are embeddings with $a$ a pre-isomorphism; note that since $\phi$ and $\omega_a$ are isomorphisms, so is $\omega_b$, that is, $b$ is a pre-isomorphism. We thus have a commutative diagram
\begin{displaymath}
\xymatrix@C=4em{
\Omega^{[X'']} \ar[r]^{\omega_b} \ar[d]_{\omega_a} & \Omega^{[X]} \ar[d]^{\omega_i} \\
\Omega^{[X']} \ar[r]^{\omega_{i'}} & \Omega^{[Y]} }
\end{displaymath}
Since $\omega$ is faithful, we have $a \circ i'=b \circ i$. Thus, by \dref{defn:model-meas}{b}, we have $[a][i']=[b][i]$. Since $a$ and $b$ are pre-isomorphisms, we have $[a]=[b]=1$ by Lemma~\ref{lem:compare4-1}. Thus $[i]=[i']$, as required.

\textit{Step 2: $\psi=\omega_j$.} Now suppose that $\psi=\omega_j$ for some embedding $j \colon Y \to Y'$. Since $\phi$ and $\psi$ are isomorphisms, our original diagram is cartesian; in particular, we see that the fiber product $\Omega^{[X]} \times_{\Omega^{[Y]}} \Omega^{[Y']}$ is a transitive $G$-set. Thus, by Lemma~\ref{lem:compare1}, there is a unique amalgamation of $(i,j)$, call it $(X'', i'', j')$, and $\Omega^{[X'']}$ is also the fiber product. By uniqueness of fiber products, we thus have a diagram
\begin{displaymath}
\xymatrix{
\Omega^{[X']} \ar[rr] \ar[rd]_{\omega_{i'}} && \Omega^{[X'']} \ar[ld]^{\omega_{i''}} \\
& \Omega^{[Y']} }
\end{displaymath}
where the top map is an isomorphism. By Step~1, we have $[i']=[i'']$. By \dref{defn:model-meas}{c}, we have $[i'']=[i]$. Thus $[i]=[i']$, as required.

\textit{Step 3: general case.} Applying Lemma~\ref{lem:compare3}, write $\psi=\omega_j \circ \omega_k^{-1}$, where $k \colon Y' \to Y''$ and $j \colon Y \to Y''$ are embeddings, with $k$ a pre-isomorphism; note that since $\psi$ is an isomorphism, $j$ is a pre-isomorphism too. Let $(X'',i'',j')$ be the unique amalgamation $(i',j)$; uniqueness follows from Lemma~\ref{lem:compare1} since $\omega_j$ is an isomorphism. We now have the following commutative diagram
\begin{displaymath}
\xymatrix@C=3em{
\Omega^{[X']} \ar[d]_{\omega_{i'}} &
\Omega^{[X'']} \ar[d]_{\omega_{i''}} \ar[r]^{\omega_{j'}} \ar[l] &
\Omega^{[X]} \ar[d]^{\omega_i} \\
\Omega^{[Y']} &
\Omega^{[Y'']} \ar[r]^{\omega_j} \ar[l]_{\omega_k} &
\Omega^{[Y]} }
\end{displaymath}
The top left arrow is the isomorphism $\phi^{-1} \circ \omega_{j'}$. Since the right square is cartesian (by Lemma~\ref{lem:compare1}), we see that $\omega_{j'}$ is an isomorphism, and so the top left arrow is an isomorphism too. Thus, by Step~2 applied to each square separately, we find $[i']=[i'']$ and $[i'']=[i]$, and so $[i]=[i']$.
\end{proof}

\begin{lemma} \label{lem:compare5}
There is a ring homomorphism $\psi \colon \Theta'(G; \sE) \to \Theta(\fA)$ satisfying $\psi(\langle \omega_i \rangle) = [i]$ whenever $i \colon Y \to X$ is an embedding in $\fA$.
\end{lemma}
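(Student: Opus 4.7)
The plan is to construct $\psi$ as a candidate inverse to the map $\phi$ from Lemma~\ref{lem:compare2}. Every transitive $\sE$-smooth $G$-set is of the form $\Omega^{[X]}$ for some $X\in\fA$. For a morphism $f\colon A\to B$ of $\sE$-smooth $G$-sets with $A$ finitary and $B$ transitive, I first restrict to a single $G$-orbit of $A$ to reduce to the case $f\colon \Omega^{[X]}\to\Omega^{[Y]}$. By Lemma~\ref{lem:compare3}, we may write $f = \omega_j\circ\omega_i^{-1}$, where $i\colon X\to X'$ is a pre-isomorphism and $j\colon Y\to X'$ is an embedding in $\fA$. I then set $\tilde\psi_1(f)=[j]\in\Theta(\fA)$, and extend this additively in the domain to arbitrary finitary $A$, producing an element $\tilde\psi(\langle f\rangle)\in\Theta(\fA)$ for each generator of $\Theta'(G;\sE)$.

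The first substantive step is to show $\tilde\psi_1$ is well defined, i.e.\ independent of the chosen factorization. If $f=\omega_{j_1}\circ\omega_{i_1}^{-1}=\omega_{j_2}\circ\omega_{i_2}^{-1}$ with $i_1,i_2$ pre-isomorphisms, then $\omega_{j_1}$ and $\omega_{j_2}$ are isomorphic as $G$-maps via $\omega_{i_2}^{-1}\circ\omega_{i_1}$, so Lemma~\ref{lem:compare4} gives $[j_1]=[j_2]$. The same lemma also handles isomorphism invariance of $\tilde\psi$ in $f$, establishing relation \dref{defn:Theta-prime}{a}. Relation \dref{defn:Theta-prime}{b} follows because $\id_{\Omega^{[X]}}=\omega_{\id_X}$ and $[\id_X]=1$ by \dref{defn:model-meas}{b}; relation \dref{defn:Theta-prime}{c} (orbit decomposition) is built into the definition.

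For \dref{defn:Theta-prime}{d} (composition), given composable maps $f\colon\Omega^{[X]}\to\Omega^{[Y]}$ and $g\colon\Omega^{[Y]}\to\Omega^{[Z]}$, I realize $f\cong\omega_j$ with $j\colon Z\to Y$... wait, more carefully: after absorbing pre-isomorphisms I may assume $f=\omega_j$ and $g=\omega_k$ for embeddings $j\colon Y\to X$, $k\colon Z\to Y$; then $g\circ f=\omega_{j\circ k}$... actually $\omega_j\circ\omega_k = \omega_{k\circ j}$ by the contravariance of $\omega$. Either way, \dref{defn:model-meas}{c} yields $[k\circ j]=[j]\cdot[k]$, giving multiplicativity.

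The main obstacle is relation \dref{defn:Theta-prime}{e} (base change), and this is where Lemma~\ref{lem:compare1} plays its decisive role. Given $f\cong\omega_i\colon\Omega^{[X]}\to\Omega^{[Y]}$ and a map $g\cong\omega_j\colon\Omega^{[Y']}\to\Omega^{[Y]}$ of transitive $G$-sets, Lemma~\ref{lem:compare1} identifies the base change $f'$ with the disjoint union $\coprod_{\alpha=1}^r \omega_{i'_\alpha}\colon\coprod\Omega^{[X'_\alpha]}\to\Omega^{[Y']}$, indexed by the amalgamations of $(i,j)$ in $\fA$. Consequently $\tilde\psi(\langle f'\rangle)=\sum_{\alpha=1}^r [i'_\alpha]$, which equals $[i]=\tilde\psi(\langle f\rangle)$ precisely by axiom \dref{defn:model-meas}{d} defining $\Theta(\fA)$. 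Thus $\tilde\psi$ descends to the desired ring homomorphism $\psi\colon\Theta'(G;\sE)\to\Theta(\fA)$ with $\psi(\langle\omega_i\rangle)=[i]$.
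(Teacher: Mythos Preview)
Your proposal is correct and follows essentially the same route as the paper: define $\tilde\psi$ on transitive maps via Lemma~\ref{lem:compare3}, use Lemma~\ref{lem:compare4} for well-definedness and relation~(a), and invoke Lemma~\ref{lem:compare1} together with \dref{defn:model-meas}{d} for base change. The only cosmetic wobble is in your treatment of~(d), where the contravariance bookkeeping gets momentarily tangled (it is $g\circ f=\omega_k\circ\omega_j=\omega_{j\circ k}$, and then \dref{defn:model-meas}{c} gives $[j\circ k]=[j][k]$), but this does not affect the argument.
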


\begin{proof}
Let $f$ be a morphism of transitive $\sE$-smooth $G$-sets. We define $\tilde{\psi}(f)=[i]$ where $i \colon Y \to X$ is an embedding in $\fA$ such that $f$ is isomorphic to $\omega_i$. This is well-defined since $f$ is isomorphic to $\omega_i$ for some $i$ (by Lemma~\ref{lem:compare3}), and if $\omega_i \cong \omega_j$ then $[i]=[j]$ (by Lemma~\ref{lem:compare4}). More generally, suppose $f \colon X \to Y$ is a morphisms of $\sE$-smooth $G$-sets with $Y$ transitive. Let $X_1, \ldots, X_r$ be the orbits on $X$ and let $f_i$ be the restriction of $f$ to $X_i$. We define $\tilde{\psi}(f)=\sum_{i=1}^r \tilde{\psi}(f_i)$.

We now show that $\tilde{\psi}$ respects the defining relations of $\Theta'(G; \sE)$ (see Definition~\ref{defn:Theta-prime}).
\begin{enumerate}
\item If $f$ is an isomorphism of transitive $\sE$-smooth $G$-sets then $f$ is isomorphic to $\omega_i$, where $i$ is the identity map on some structure, and so $\tilde{\psi}(f)=[i]=1$ by \dref{defn:model-meas}{a}.
\item This follows from the definition of $\tilde{\psi}$.
\item Let $f \colon A \to B$ and $g \colon B \to C$ be maps of transitive $\sE$-smooth $G$-sets. Choose an isomorphism $g \cong \omega_j$ where $j \colon Z \to Y$ is an embedding. Now choose an isomorphism $a \colon A \to \Omega^{[X']}$ for some $X'$ in $\fA$. Thus $f$ corresponds to a map $f' \colon \Omega^{[X']} \to \Omega^{[Y]}$. Applying Lemma~\ref{lem:compare3}, write $f'=\omega_i \circ \omega_k^{-1}$, where $i \colon Y \to X$ and $k \colon X' \to X$ are embeddings, with $k$ a pre-isomorphism. We thus have the following commutative diagram
\begin{displaymath}
\xymatrix{
A \ar[r]^f \ar[d] & B \ar[r]^g \ar[d] & C \ar[d] \\
\Omega^{[X]} \ar[r]^{\omega_i} & \Omega^{[Y]} \ar[r]^{\omega_j} & \Omega^{[Z]} }
\end{displaymath}
where the vertical maps are isomorphisms; the left vertical map is $\omega_k^{-1} \circ a$. We thus find
\begin{displaymath}
\tilde{\psi}(g \circ f)=[i \circ j]=[i][j]=\tilde{\psi}(f) \tilde{\psi}(g),
\end{displaymath}
where in the second step we used \dref{defn:model-meas}{b}.
\item Let $f \colon A \to B$ and $g \colon B' \to B$ be maps of $\sE$-smooth $G$-sets, with $B$ and $B'$ transitive, and let $f' \colon A' \to B'$ be the base change of $f$. We are free to replace $f$ and $g$ with isomorphic morphisms, so we may as well suppose that $f=\omega_i$ and $g=\omega_j$, where $i \colon Y \to X$ and $j \colon Y \to Y'$ are embeddings in $\fA$; note that $A=\Omega^{[X]}$, $B=\Omega^{[Y]}$, and $B'=\Omega^{[Y']}$. Let $(X'_{\alpha}, i'_{\alpha}, j'_{\alpha})$, for $1 \le \alpha \le r$, be the amalgamations of $(i,j)$. By Lemma~\ref{lem:compare1}, we have $A'=\coprod_{\alpha=1}^r \Omega^{[X'_{\alpha}]}$, and $f'=\coprod \omega_{i'_{\alpha}}$. We thus have
\begin{displaymath}
\tilde{\psi}(f')=\sum_{\alpha=1}^r [i'_{\alpha}] = [i] = \tilde{\psi}(f),
\end{displaymath}
where in the second step we used \dref{defn:model-meas}{c}.
\end{enumerate}
We thus see that $\tilde{\psi}$ respects the defining relations for $\Theta'(G; \sE)$, and thus induces a homomorphism $\psi$ as in the statement of the lemma.
\end{proof}

\begin{proof}[Proof of Theorem~\ref{thm:compare}]
In Lemmas~\ref{lem:compare2} and~\ref{lem:compare5}, we have constructed ring homomorphisms
\begin{displaymath}
\phi \colon \Theta(\fA) \to \Theta'(G; \sE), \qquad
\psi \colon \Theta'(G; \sE) \to \Theta(\fA).
\end{displaymath}
By their definitions, we have $\phi(\psi(\langle \omega_i \rangle))=\langle \omega_i \rangle$ and $\psi(\phi([i]))=[i]$ whenever $i \colon Y \to X$ is an embedding in $\fA$. Since the elements $[i]$ and $\langle \omega_i \rangle$ generate $\Theta(\fA)$ and $\Theta'(G; \sE)$, it follows that $\phi \circ \psi=\id$ and $\psi \circ \phi=\id$. Finally, we have an isomorphism $\Theta'(G; \sE) \cong \Theta(G; \sE)$ by the relative version of Proposition~\ref{prop:Theta-prime}.
\end{proof}

\begin{example} \label{ex:pre-isom}
We give an example of a  non-trivial pre-isomorphism. Let $\fA$ be the class of all finite graphs in which each vertex belongs to at most one edge. This is a Fra\"iss\'e class, and the limiting object $\Omega$ is a perfect matching on a countably infinite set. The automorphism group of $\Omega$ is the wreath product $(\bZ/2 \bZ) \wr \fS$. Let $Y$ be the graph with one vertex, let $X$ be the graph with one edge, and choose an embedding $i \colon Y \to X$. Then $i$ is not an isomorphism, but $\omega_i$ is, as any embedding $X \to \Omega$ is determined by what it does to any single vertex. Thus $i$ is a non-trivial pre-isomorphism.
\end{example}

\begin{remark}
Treating $\fA$ as a category (with morphisms being embeddings), we have a functor $\fA^{\op} \to \cS(G;\sE)$, taking a structure $X$ to $\Omega^{[X]}$ and an embedding $i$ to $\omega_i$. In fact, this realizes the category of atoms in $\cS(G;\sE)$ as the localization of $\fA^{\op}$ at the class of pre-isomorphisms; the key result here is Lemma~\ref{lem:compare3}.
\end{remark}

\subsection{Ultralimits of measures} \label{ss:ultra}

Let $\fA$ be a class of finite structures satisfying condition~(C). Write $\fA$ as a directed union $\bigcup_{\alpha \in I} \fA_{\alpha}$ of subclasses, i.e., $I$ is a directed set and $\alpha \le \beta$ implies $\fA_{\alpha} \subset \fA_{\beta}$. For $\alpha \in I$, let $I_{\ge \alpha}$ be the set of elements $\beta \in I$ with $\beta \ge \alpha$. Fix a non-principal ultrafilter $\cF$ on $I$ that contains $I_{\ge \alpha}$ for all $\alpha \in I$.

Suppose that for each $\alpha \in I$ we have a measure $\mu_{\alpha}$ for $\fA_{\alpha}$ valued in some ring $k_{\alpha}$. We now show that the ultralimit of the $\mu_{\alpha}$'s is a measure on $\fA$. Let $k$ be the ultraproduct of the $k_{\alpha}$'s. Suppose that $i \colon Y \to X$ in $\fA$ is an embedding in $\fA$. There is some $\beta \in I$ such that $X$ and $Y$ both belong to $\fA_{\beta}$, and thus to $\fA_{\alpha}$ for all $\alpha \ge \beta$. Thus $\mu_{\alpha}(i) \in k_{\alpha}$ is defined for such $\alpha$. We define $\mu(i)$ to be the element $(\mu_{\alpha}(i))_{\alpha \in I}$ of $k$.

\begin{proposition}
$\mu$ is a measure for $\fA$, and regular if each $\mu_{\alpha}$ is.
\end{proposition}

\begin{proof}
To show that $\mu$ is a measure, we must check the three conditions of Definition~\ref{defn:model-meas}. Consider an instance of one of these conditions. Let $\beta \in I$ be such that all relevant structures belong to $\fA_{\beta}$. Then $\mu_{\alpha}$ satisfies this instance of the condition for all $\alpha \ge \beta$. It follows that $\mu$ also satisfies this instance of the condition. Now suppose that each $\mu_{\alpha}$ is regular. Given $X \in \fA$, let $\beta$ be such that $X \in \fA_{\beta}$. Then $\mu_{\alpha}(X)$ is a unit of $k_{\alpha}$ for all $\alpha \ge \beta$, and so $\mu(X)$ is a unit of $k$. Thus $\mu$ is regular.
\end{proof}

\begin{corollary}
We have a ring map from $\Theta(\fA)$ to the ultraproduct of the $\Theta(\fA_{\alpha})$'s.
\end{corollary}

\begin{remark} \label{rmk:not-ultra}
The use of ultralimits above is not really necessary. Let $\tilde{k} = \prod_{\alpha \in I} k_{\alpha}$, and define $k'$ to be the quotient of $\tilde{k}$ where two elements $x$ and $y$ are identified if there is some $\beta$ such that $x_{\alpha}=y_{\alpha}$ for all $\alpha \ge \beta$; note that this does not depend on any choice of ultrafilter. Then the $\mu_{\alpha}$'s actually give a well-defined measure $\mu'$ in $k'$. The choice of an ultraproduct is equivalent to choosing a maximal ideal in $k'$, and the ultralimit measure $\mu$ defined above comes from $\mu'$ via extension of scalars along the quotient map $k' \to k$.
\end{remark}

\begin{remark}\label{rmk:ultra-categories}
Suppose that each $\fA_{\alpha}$ is a Fra\"iss\'e class, with corresponding oligomorphic group $G_{\alpha}$, and let $G$ be the group for $\fA$. One can then show that the category $\uPerm(G; \mu)$ we construct in Part~\ref{part:perm} is the ultraproduct of the categories $\uPerm(G_{\alpha}; \mu_{\alpha})$. This is closely related to an observation of Deligne \cite{Deligne3} (which was elaborated upon by the first author \cite{Harman2}) that the category $\uRep(\fS_t)$ at a transcendental value of $t$ can be constructed as an ultraproduct of the ordinary representation categories $\Rep(\fS_n)$.
\end{remark}

\subsection{Existence of measures} \label{ss:Happrox}

We now give a model-theoretic criterion that ensures $\Theta$ is non-zero, and therefore that measures exist. Fix a Fra\"iss\'e class $\fA$ satisfying condition~(C) from \S \ref{ss:model-meas}. For $X,Y \in \fA$, let $h_X(Y)$ denote the set of all embeddings $Y \to X$. We say that $\Gamma \in \fA$ is \defn{homogeneous} if $\Aut(\Gamma)$ acts transitively on $h_{\Gamma}(X)$, whenever the latter is non-empty. Such objects figure into the following definition:

\begin{definition}
The class $\fA$ is \defn{smoothly approximable} if every object of $\fA$ embeds into a homogeneous object in $\fA$.
\end{definition}

Smoothly approximable classes have received much attention in the literature, e.g., \cite{CherlinHrushovski, KantorLiebeckMacpherson}. We establish the following result about them:

\begin{theorem} \label{thm:R-approx}
If $\fA$ is smoothly approximable then $\Theta^*(\fA) \otimes \bQ$ is non-zero.
\end{theorem}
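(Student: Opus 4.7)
The plan is to construct a regular measure for $\fA$ with values in a non-zero $\bQ$-algebra; by Proposition~\ref{prop:R-meas} this amounts to producing an R-measure, which in turn gives a ring homomorphism $\Theta^*(\fA) \to R$ with $R$ a non-zero $\bQ$-algebra, and that immediately forces $\Theta^*(\fA) \otimes \bQ$ to be non-zero. I would build the R-measure as an ultrafilter limit of the embedding-counting functions $\nu_S(X) := \#h_{\Gamma_S}(X)$ attached to $S$-regular witnesses $\Gamma_S$ furnished by the R-approximability hypothesis.

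First, let $I$ be the directed poset of finite subsets of $\fA$ under inclusion, and for each $S \in I$ use R-approximability to fix some $\Gamma_S \in \fA$ that is both $S$-universal and $S$-regular. Set $\nu_S(X) := \#h_{\Gamma_S}(X) \in \bZ_{\ge 0}$ for every $X \in \fA$; by $S$-universality $\nu_S(X) > 0$ whenever $X \in S$, and $\nu_S(\emptyset) = 1$. The key input is Proposition~\ref{prop:S-reg}: for all $X, Y, Y' \in S$ and all embeddings $i \colon Y \to X$, $j \colon Y \to Y'$ with amalgamations $X'_1, \ldots, X'_r$ in $\fA$, one has
\begin{equation*}
\nu_S(X) \cdot \nu_S(Y') \;=\; \nu_S(Y) \cdot \sum_{\alpha=1}^r \nu_S(X'_\alpha)
\end{equation*}
in $\bQ$. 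Note that the amalgamations $X'_\alpha$ themselves need not lie in $S$.

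Next, I would fix an ultrafilter $\cU$ on $I$ refining the cofinal filter (so that $\{S : S \supseteq T\} \in \cU$ for every finite $T \subseteq \fA$), form the ultraproduct field $R = \prod_\cU \bQ$ (of characteristic zero), and set $\nu(X) := [\nu_S(X)]_\cU \in R$. Because $X \in S$ for $\cU$-almost every $S$, one has $\nu_S(X) > 0$ almost everywhere, so $\nu(X)$ is non-zero in the field $R$ and hence a unit. The R-measure axioms for $\nu$ then follow by a routine transfer argument: isomorphism invariance is automatic, $\nu(\emptyset) = 1$ is immediate, and for any fixed configuration $(i,j)$ condition~(C) yields only finitely many amalgamations $X'_1, \ldots, X'_r$, so the displayed identity holds in $\bQ$ for every $S \supseteq \{X, Y, Y'\}$, a collection lying in $\cU$, and hence descends to $R$.

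Proposition~\ref{prop:R-meas} then converts $\nu$ into a regular measure with values in the non-zero $\bQ$-algebra $R$, giving the desired homomorphism $\Theta^*(\fA) \to R$ that sends $1$ to $1 \ne 0$ and factors through $\Theta^*(\fA) \otimes \bQ$. The main conceptual point will be that Proposition~\ref{prop:S-reg} combined with the finiteness-of-amalgamations hypothesis~(C) makes the R-measure identity \emph{exact} for every configuration once $S$ is large enough; the ultraproduct merely packages these uniform-in-$S$ exact identities into a single identity in $R$. The essential use of the hypothesis is that $\Gamma_S$ is simultaneously $S$-universal (so the limiting values $\nu(X)$ are units, not zero) and $S$-regular (so the identity is sharp on $S$-bounded configurations); each weaker condition in isolation would not suffice.
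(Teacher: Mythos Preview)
Your proof is correct and uses the same key input as the paper (Proposition~\ref{prop:S-reg}), but packages the limiting step differently. The paper does not build a single R-measure; instead it presents $\Theta^*(\fA)\otimes\bQ$ as a Laurent polynomial ring modulo the R-measure relations, writes this as a directed colimit of subrings $R_S$ (one for each finite $S\subset\fA$, retaining only relations whose variables all lie in $S$), and then shows each $R_S$ is non-zero by observing that $[X]\mapsto\#h_{\Gamma_S}(X)$ defines a homomorphism $R_S\to\bQ$. Your ultraproduct is the compactness dual of this colimit argument: rather than showing every finite truncation survives, you glue the family of truncated homomorphisms $\{\nu_S\}$ into one homomorphism to $\prod_{\cU}\bQ$.

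What each buys: the paper's route is slightly more elementary (no ultrafilters, and the target is always $\bQ$), and it makes the follow-up Proposition~\ref{prop:mobile} immediate since one can control individual values $\phi_S([X])$. Your route has the conceptual payoff of actually producing a genuine R-measure for $\fA$ with values in a concrete field, rather than merely witnessing non-triviality; it also avoids having to spell out the Laurent-polynomial presentation of $\Theta^*(\fA)$. Both proofs rely on exactly the same two features of the witnesses $\Gamma_S$: $S$-universality to make the values units, and $S$-regularity to make the amalgamation identity exact (via Proposition~\ref{prop:S-reg}) once $S\supseteq\{X,Y,Y'\}$.
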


In other words, a smoothly approximable class always admits a regular measure valued in a field of characteristic~0. In fact, smoothly approximable is a stronger hypothesis than what is needed to prove the theorem; see \cite[\S 6.6]{arxiv} for details. For a homogeneous structure $\Gamma$, let $\fA_{\Gamma}$ be the age of $\Gamma$ (\S  \ref{sss:age}). The following is the key lemma we require:

\begin{lemma} \label{lem:S-reg}
Let $\Gamma \in \fA$ be homogeneous. Then $X \mapsto \# h_{\Gamma}(X)$ is an R-measure on $\fA_{\Gamma}$, if we regard it as valued in $\bQ$.
\end{lemma}

\begin{proof}
Conditions \dref{defn:R-meas}{a} and \dref{defn:R-meas}{b} are clear. We now verify \dref{defn:R-meas}{c}. Thus let $i \colon Y \to X$ and $j \colon Y \to Y'$ be embeddings in $\fA_{\Gamma}$, and let $(X'_{\alpha}, i'_{\alpha}, j'_{\alpha})$, for $1 \le \alpha \le r$, be the amalgamations of $(i,j)$ in $\fA_{\Gamma}$. We must show
\begin{displaymath}
\# h_{\Gamma}(X) \cdot \# h_{\Gamma}(Y') = \# h_{\Gamma}(Y) \cdot \sum_{\alpha=1}^r \# h_{\Gamma}(X'_{\alpha}).
\end{displaymath}
Since $\Gamma$ is homogeneous, the fibers of the map map $i^* \colon h_{\Gamma}(X) \to h_{\Gamma}(Y)$, all have the same cardinality, say $n$; note that $\# h_{\Gamma}(X) = n \cdot \# h_{\Gamma}(Y)$. Let $m$ be defined similarly for $j$, so that $\# h_{\Gamma}(Y') = m \cdot \# h_{\Gamma}(Y)$. The fibers of the map $h_{\Gamma}(Y') \times_{h_{\Gamma}(Y)} h_{\Gamma}(X) \to h_{\Gamma}(Y)$ all have size $nm$, and so the cardinality of the fiber product is $nm \cdot \# h_{\Gamma}(Y)$. Putting this all together, we find
\begin{displaymath}
\# h_{\Gamma}(X) \cdot \# h_{\Gamma}(Y') = \# h_{\Gamma}(Y) \cdot \#( h_{\Gamma}(X) \times_{h_{\Gamma}(Y)} h_{\Gamma}(Y') ).
\end{displaymath}
Thus to complete the proof, we must show
\begin{displaymath}
\#( h_{\Gamma}(Y') \times_{h_{\Gamma}(Y)} h_{\Gamma}(X) ) = \sum_{\alpha=1}^r \# h_{\Gamma}(X'_{\alpha}).
\end{displaymath}
In fact, we have a natural bijection
\begin{displaymath}
\phi \colon \coprod_{\alpha=1}^r h_{\Gamma}(X'_{\alpha}) \to h_{\Gamma}(Y') \times_{h_{\Gamma}(Y)} h_{\Gamma}(X)
\end{displaymath}
as follows. (This argument is similar to the proof Lemma~\ref{lem:compare1}.) Let $c \in h_{\Gamma}(X'_{\alpha})$ be given. Let $a=c \circ i'_{\alpha}$ and $b=c \circ j'_{\alpha}$. Then $a \colon Y' \to \Gamma$ and $b \colon X \to \Gamma$ are embeddings that agree on $Y$, i.e., $a \circ j=b \circ i$. Thus $(a,b)$ is an element of the fiber product, and we put $\phi(c)=(a,b)$.

To see that $\phi$ is a bijection, we construct its inverse $\psi$. Suppose $(a,b)$ belongs to the fiber product. Thus $a \colon Y' \to \Gamma$ and $b \colon X \to \Gamma$ are embeddings that agree on $Y$, i.e., $a \circ j = b \circ i$. Let $X'=\im(a) \cup \im(b)$ with the induced substructure. Then the restrictions of $a$ and $b$ define embeddings $i' \colon Y' \to X'$ and $j' \colon X \to X'$, which realizes $(X',i',j')$ an amalgamation of $(i,j)$. We thus have $(X',i',j') \cong (X'_{\alpha}, i'_{\alpha}, j'_{\alpha})$ for some $\alpha$, and the inclusion $X' \subset \Gamma$ translates to an embedding $c \colon X'_{\alpha} \to \Gamma$. We put $\psi(a,b)=c$. One readily checks that $\phi$ and $\psi$ are indeed mutually inverse, which completes the proof.
\end{proof}

\begin{proof}[Proof of Theorem~\ref{thm:R-approx}]
Since $\fA$ is smoothly approximable, we have $\fA=\bigcup_{\Gamma} \fA_{\Gamma}$, where the union is over the homogeneous structures $\Gamma \in \fA$. This union is directed: if $\Gamma$ and $\Gamma'$ are two homogeneous structures then they embed into a common structure (since $\fA$ is a Fra\"iss\'e class), which embeds into some homogeneous structure $\Gamma''$ (since $\fA$ is smoothly approximable), and so $\fA_{\Gamma}$ and $\fA_{\Gamma'}$ are each contained in $\fA_{\Gamma''}$. Since each $\fA_{\Gamma}$ admits a regular measure (Lemma~\ref{lem:S-reg}), $\fA$ admits an ultralimit measure that is regular (\S \ref{ss:ultra}).
\end{proof}

In fact, we can even say a bit more. Say that $X \in \fA$ is \defn{mobile} if for every integer $N$ there exists some $Y \in \fA$ such that $\# h_Y(X)>N$.

\begin{proposition} \label{prop:mobile}
Suppose that $\fA$ is smoothly approximable and $X \in \fA$ is mobile. Then the ring homomorphism $f \colon \bQ[x] \to \Theta^*(\fA) \otimes \bQ$ given by $x \mapsto [X]$ is injective.
\end{proposition}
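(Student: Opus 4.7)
The strategy is to show that $[X]$ takes arbitrarily large integer values under suitable ring homomorphisms $\Theta^*(\fA) \otimes \bQ \to \bQ$, which forces any polynomial annihilating $[X]$ to have infinitely many integer roots, hence to vanish. The two ingredients are the homomorphisms $\phi_S$ constructed in the proof of Theorem~\ref{thm:R-approx} from R-approximability, and the mobility hypothesis, which lets us force $\phi_S([X])$ to be as large as we like by enlarging $S$ appropriately.

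Concretely, I would argue by contradiction: suppose $p \in \bQ[x]$ is nonzero with $p([X]) = 0$ in $\Theta^*(\fA) \otimes \bQ$. As in the proof of Theorem~\ref{thm:R-approx}, write $\Theta^*(\fA) \otimes \bQ$ as the filtered colimit of the subrings $R_S$ over finite subsets $S \subset \fA$; then the relation $p([X]) = 0$ already holds in $R_{S_0}$ for some finite $S_0$ containing $X$. Now, for each integer $N$, use mobility of $X$ to pick $Y_N \in \fA$ with $\# h_{Y_N}(X) > N$, and apply R-approximability to the finite set $S_0 \cup \{Y_N\}$ to obtain $\Gamma_N \in \fA$ which is both $(S_0 \cup \{Y_N\})$-universal and $(S_0 \cup \{Y_N\})$-regular. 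In particular $\Gamma_N$ is $S_0$-universal and $S_0$-regular, so the construction in the proof of Theorem~\ref{thm:R-approx} yields a ring homomorphism $\phi_N \colon R_{S_0} \to \bQ$ with $\phi_N([Y]) = \# h_{\Gamma_N}(Y)$ for each $Y \in S_0$.

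The final step is to bound $\phi_N([X])$ from below: since $Y_N$ embeds into $\Gamma_N$ via some embedding $\iota \colon Y_N \to \Gamma_N$, precomposition with $\iota$ injects $h_{Y_N}(X)$ into $h_{\Gamma_N}(X)$, so
\[
\phi_N([X]) = \# h_{\Gamma_N}(X) \ge \# h_{Y_N}(X) > N.
\]
Applying $\phi_N$ to the relation $p([X]) = 0$ yields $p(\phi_N([X])) = 0$, so $p$ has an integer root exceeding $N$. Letting $N$ range over all positive integers produces infinitely many roots of $p$, contradicting $p \ne 0$. The only subtle point is the observation that enlarging $S$ to include a witness $Y_N$ to mobility forces the $S$-universal $\Gamma_N$ to realize enough embeddings of $X$; once this is set up the argument is essentially immediate from Theorem~\ref{thm:R-approx}, so I do not anticipate a significant obstacle.
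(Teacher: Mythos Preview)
Your proof is correct and follows essentially the same approach as the paper: use mobility together with R-approximability to produce homomorphisms $\phi_N$ under which $[X]$ takes arbitrarily large integer values, then derive a contradiction with a nonzero polynomial annihilating $[X]$. The only cosmetic difference is that the paper passes to the larger ring $R_T$ (with $T = S_0 \cup \{Y_N\}$) and applies $\phi_T$ there, whereas you observe directly that $\Gamma_N$ is already $S_0$-universal and $S_0$-regular and so defines a homomorphism on $R_{S_0}$; similarly, the paper picks a single $T$ with $\phi_T([X])$ exceeding every rational root of $p$, while you let $N$ vary to produce infinitely many roots---both conclude the same way.
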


\begin{proof}
By assumption, for an integer $N$ there is some $Y \in \fA$ such that $\# h_Y(X)>N$. Since $Y$ embeds into some homogeneous structure $\Gamma$, we also have $\# h_{\Gamma}(X)>N$. Regard $h_{\Gamma}$ as $\bQ$-valued measure on $\fA_{\Gamma}$. Let $\mu$ be the ultralimit measure, which is a regular measure valued in $\bQ^*$ (the ultrapower of $\bQ$). From the above, we see that $\mu(X) \in \bQ^*$ is transcendental. Thus $\mu \circ f$ is injective, and so $f$ is injective.
\end{proof}

\begin{example}
Let $\fA$ be the class of finite sets. Then $\fA$ is smoothly approximable: indeed, every finite set is homogeneous. Theorem~\ref{thm:R-approx} thus shows that $\Theta^*(\fA)$ is non-zero. The set $X=\{1\}$ is mobile since it has $>N$ embeddings into the set $\{1, \ldots, N+1\}$. Thus by Proposition~\ref{prop:mobile}, we see that the map $\bQ[x] \to \Theta^*(\fA) \otimes \bQ$ given by $x \mapsto [X]$ is injective. Of course, we already knew this from Example~\ref{ex:set-meas}, but the proof here is much easier.
\end{example}

\part{Permutation representations} \label{part:perm}

\section{Matrix algebra} \label{s:matrix}

\subsection{Overview}

An $n \times m$ matrix with entries in a ring $k$ can be thought of as a function $[n] \times [m] \to k$. If $Y$ and $X$ are finitary $\hat{G}$-sets then, by analogy, we define a $Y \times X$ matrix to be a Schwartz function $Y \times X \to k$. It turns out that many familiar ideas from linear algebra---including matrix multiplication, trace, determinant, and eigenvalues---can be adapted to this setting, if we have a measure.

The purpose of \S \ref{s:matrix} is to develop these ideas. This theory will play a significant role in our work on the representation categories $\uPerm(G)$ and $\uRep(G)$. The most important result in \S \ref{s:matrix} is Theorem~\ref{thm:tr-nilp}, which shows that nilpotent matrices have vanishing trace (under a certain hypothesis).  This result is used to show that certain endomorphism algebras in $\uPerm(G)$ are semi-simple, which is an important step in the proof of Theorem~\ref{thm:regss}.

We fix a pro-oligomorphic group $G$ and a $k$-valued measure $\mu$ throughout \S \ref{s:matrix}.

\subsection{Matrices and vectors}

The following is the main concept studied in \S \ref{s:matrix}:

\begin{definition}
Let $X$ and $Y$ be finitary $\hat{G}$-sets. A \defn{$Y \times X$ matrix} $A$ with entries in $k$ is a Schwartz function $A \colon Y \times X \to k$.
\end{definition}

Of course, in a sense this definition does not introduce anything new: matrices are the same mathematical objects as Schwartz functions. However, having a different name and notation will be helpful psychologically and in providing context.

We now introduce a few more notations, conventions, and definitions:
\begin{itemize}
\item We write $\Mat_{Y,X}(k)$, or simply $\Mat_{Y,X}$, for the set of all $Y \times X$ matrices. By convention, whenever we write $\Mat_{Y,X}$ we mean that $X$ and $Y$ are finitary $\hat{G}$-sets. We write $\Mat_X$ in place of $\Mat_{X,X}$.
\item Suppose $U$ is a group of definition for $X$ and $Y$. We say that a matrix $A \in \Mat_{Y,X}$ is \defn{$U$-invariant} if $A(gx,gy)=A(x,y)$ for all $g \in U$, $x \in X$, $y \in Y$. We write $\Mat_{Y,X}^U$ for the set of $U$-invariant matrices. We note that $\Mat^U_{Y,X}$ can be identified with the space of functions $U \backslash (Y \times X) \to k$, and is thus a free $k$-module of finite rank.
\item We define the \defn{identity matrix} $I_X \in \Mat_X$ by $I_X(x,y)=1_{\Delta}(x,y)$, where $\Delta \subset X \times X$ is the diagonal. We sometimes write~1 in place of $I_X$.
\item For $A \in \Mat_{Y,X}$, we define the \defn{tranpose} of $A$, denoted $A^t$, to be the element of $\Mat_{X,Y}$ given by $A^t(x,y)=A(y,x)$.
\item We write $\Vec_X$ in place of $\Mat_{X,\bone}$. We think of $\Vec_X$ as the space of column vectors of height $X$. It is sometimes also helpful to think of elements of $\Vec_X$ as infinite linear combinations of elements of $X$. We note that $\Vec_X$ is just another name for $\cC(X)$.
\end{itemize}

One of the most interesting features about matrices in classical linear algebra is matrix multiplication. We now extend this to our setting. Let $B \in \Mat_{Y,X}$ and $A \in \Mat_{Z,Y}$. We define the \defn{product matrix} $AB \in \Mat_{Z,X}$ by
\begin{displaymath}
(AB)(z,x) = \int_Y A(z,y) B(y,x) dy.
\end{displaymath}
One can view $AB$ as the push-forward of the integrand appearing above along the projection map $Z \times Y \times X \to Z \times X$, and so by general properties of push-forwards (specifically Proposition~\ref{prop:push-schwartz}) it is indeed a Schwartz function on $Z \times X$.

\begin{proposition} \label{prop:matrix-mult}
Matrix multiplication is associative (when defined), and the identity matrix is the identity for multiplication (when defined).
\end{proposition}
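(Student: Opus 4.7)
The plan is to reduce both claims to the integration machinery already established in \S \ref{ss:push}. Associativity should follow from Fubini's theorem (Corollary~\ref{cor:fubini}) together with the projection formula (Proposition~\ref{prop:projection}), while the identity law should follow from the fact that the measure of a one-point set is~$1$ (axiom \dref{defn:measure}{b}).

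For associativity, suppose $C \in \Mat_{Y,X}$, $B \in \Mat_{Z,Y}$, and $A \in \Mat_{W,Z}$, so that $(AB)C$ and $A(BC)$ both lie in $\Mat_{W,X}$. I would fix $w \in W$ and $x \in X$, and then compute $((AB)C)(w,x)$ by expanding the definition of matrix multiplication:
\[
((AB)C)(w,x) = \int_Y \Big( \int_Z A(w,z) B(z,y) dz \Big) C(y,x) dy.
\]
Since $y \mapsto C(y,x)$ is smooth on $Y$ and the inner integral is a Schwartz function of $y$ (by Proposition~\ref{prop:push-schwartz}), the projection formula lets me pull $C(y,x)$ inside the inner integral, and Fubini lets me interchange the order of integration. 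Pulling $A(w,z)$ back out of the resulting inner integral (again by the projection formula) yields $\int_Z A(w,z) (BC)(z,x) dz = (A(BC))(w,x)$, which is the desired identity. The only subtlety to check is that each application of the projection formula is legitimate, i.e., that the relevant functions lie in Schwartz space; this follows because $A$, $B$, $C$ are Schwartz functions on products of finitary $\hat{G}$-sets, and products of finitary $\hat{G}$-sets are finitary (Proposition~\ref{prop:smooth}(b)).

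For the identity law, I need to show $I_X \cdot A = A$ for $A \in \Mat_{X,Y}$ (and similarly on the right). Fix $x \in X$ and $y \in Y$; then
\[
(I_X A)(x,y) = \int_X 1_\Delta(x,x') A(x',y) dx'.
\]
The integrand, as a function of $x'$, equals $A(x,y) \cdot \delta_{X,x}(x')$, where $\delta_{X,x}$ is the indicator function of the singleton $\{x\}$. Since $\{x\} \subset X$ is a finite (hence finitary) $\hat{G}$-subset, its measure is $\mu(\{x\})=1$ by \dref{defn:measure}{b}, so the integral evaluates to $A(x,y)$. The right-identity law $A \cdot I_Y = A$ is symmetric.

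Neither step is a serious obstacle: the content lies entirely in the interplay of Fubini, the projection formula, and the normalization of the measure, all of which are already in hand. The only care needed is to confirm that all intermediate functions are Schwartz, so that the integration-theoretic tools apply; this is automatic from the finitary hypothesis on the $\hat{G}$-sets involved.
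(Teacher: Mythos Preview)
Your proposal is correct and follows essentially the same approach as the paper. The only minor difference is cosmetic: the paper pulls constants like $A(z,y)$ in and out of integrals by plain $k$-linearity rather than invoking the projection formula, and it reduces both $(A(BC))(z,w)$ and $((AB)C)(z,w)$ to the common product integral $\int_{X\times Y} A(z,y)B(y,x)C(x,w)\,d(x,y)$ via Fubini, rather than explicitly swapping the order of integration.
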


\begin{proof}
Let $A$ and $B$ be as above, and let $C \in \Mat_{X,W}$. Then
\begin{align*}
(A(BC))(z,w)
&= \int_Y A(z,y) \cdot \left( \int_X B(y,x) C(x,w) dx \right) dy \\
&= \int_Y \int_X A(z,y) B(y,x) C(x,w) dx dy \\
&= \int_{X \times Y} A(z,y) B(y,x) C(x,w) d(x,y),
\end{align*}
where in the final step we used Fubini's theorem (Corollary~\ref{cor:fubini}). By a similar computation, one sees that $((AB)C)(z,w)$ is equal to the last line above as well, which proves associativity.

Let $C \in \Mat_{X,W}$ be as above. Then
\begin{displaymath}
(I_X \cdot C)(x,w) = \int_X 1_{\Delta}(x,x') C(x',w) dx' = C(x,w).
\end{displaymath}
Thus $I_X \cdot C=C$. A similar computation shows that $B \cdot I_X=B$ for $B$ as above.
\end{proof}

\begin{corollary}
$\Mat_X$ is an associative unital algebra under matrix multiplication.
\end{corollary}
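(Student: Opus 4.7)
The plan is to observe that essentially all of the work has already been done in Proposition~\ref{prop:matrix-mult}; the corollary is a matter of assembling the pieces and checking the remaining module-theoretic compatibilities.

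First I would note closure: by the definition of matrix multiplication, the product of a $Z \times Y$ matrix and a $Y \times X$ matrix is a $Z \times X$ matrix, so specializing $X=Y=Z$ shows that multiplication restricts to a binary operation $\Mat_X \times \Mat_X \to \Mat_X$. Associativity on $\Mat_X$ is then the special case of Proposition~\ref{prop:matrix-mult} in which all four sets are taken to be $X$, and $I_X \in \Mat_X$ serves as both a left and right identity by the second half of that proposition.

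Next I would address the $k$-module structure. Since $\Mat_{Y,X}$ is by definition the Schwartz space $\cC(Y \times X)$, it is a $k$-module (see \S\ref{ss:schwartz}), and in particular so is $\Mat_X$. It remains to check that matrix multiplication is $k$-bilinear. This follows directly from $k$-linearity of the integral in the integrand, established in \S\ref{ss:int}: for $A_1, A_2 \in \Mat_{Z,Y}$, $B \in \Mat_{Y,X}$, and $c_1, c_2 \in k$,
\begin{displaymath}
((c_1 A_1 + c_2 A_2) B)(z,x) = \int_Y (c_1 A_1(z,y) + c_2 A_2(z,y)) B(y,x)\, dy = c_1 (A_1 B)(z,x) + c_2 (A_2 B)(z,x),
\end{displaymath}
and symmetrically in the second argument.

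Putting the above together, $\Mat_X$ is a $k$-module equipped with an associative, unital, $k$-bilinear multiplication, i.e., an associative unital $k$-algebra. I do not anticipate any real obstacle here; the corollary is essentially a bookkeeping statement.
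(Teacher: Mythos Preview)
Your proposal is correct and matches the paper's approach: the paper states this corollary with no proof at all, treating it as an immediate consequence of Proposition~\ref{prop:matrix-mult}, and your write-up simply spells out the routine verifications (closure, associativity, unit, $k$-bilinearity) that make this precise.
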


\begin{example} \label{ex:matrix}
Consider the permutation representation $\bC^n$ of the symmetric group $\fS_n$. Let $A_n \colon \bC^n \to \bC^n$ be the map sending every basis vector to the invariant vector $\sum_{i=1}^n e_i$. The matrix for $A_n$ has every entry equal to~1. We have $A_n^2 = nA_n$. For $n \ge 2$, the space $\End_{\fS_n}(\bC^n)$ is two-dimensional, with basis~1 and $A_n$.

Now consider the infinite symmetric group $\fS$ acting on $\Omega=\{1,2,\ldots\}$, and let $\mu_t$ be the measure from Example~\ref{ex:sym-measure}. Let $A \in \Mat_{\Omega}$ be the constant function $A(x,y)=1$. Then, as above, $A^2=tA$, and the matrices~1 and $A$ form a basis for $\Mat_{\Omega}^{\fS}$. One can view $A$ as a sort of limit of the $A_n$'s.
\end{example}

For $A \in \Mat_{Y,X}$ and $v \in \Vec_X$, we can consider the matrix-vector product $Av \in \Vec_Y$; this is defined simply by treating $v$ as an $X \times \bone$ matrix. This product defines a map
\begin{displaymath}
A \colon \Vec_X \to \Vec_Y, \qquad v \mapsto Av.
\end{displaymath}
As usual, this function determines the matrix $A$:

\begin{proposition} \label{prop:matrix-faithful}
Let $A,B \in \Mat_{Y,X}$ be two matrices such that $Av=Bv$ for all $v \in \Vec_X$. Then $A=B$.
\end{proposition}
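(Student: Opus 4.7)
The plan is to evaluate the linear map $A \colon \Vec_X \to \Vec_Y$ on ``delta functions'' concentrated at single points, and read off the individual entries of $A$ from the resulting vectors. By $k$-linearity, it suffices to show that if $Av = 0$ for all $v \in \Vec_X$ then $A = 0$, but I will phrase everything directly in terms of $A$ and $B$.

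For each point $x_0 \in X$, I would first note that the indicator function $\delta_{X,x_0} := 1_{\{x_0\}}$ lies in $\Vec_X = \cC(X)$. The support is the single point $\{x_0\}$, which is a finitary $\hat{G}$-subset of $X$, so the support is finitary; and $\delta_{X,x_0}$ is invariant under the stabilizer of $x_0$, which is open because $X$ is a smooth $\hat{G}$-set, so $\delta_{X,x_0}$ is smooth. Hence $\delta_{X,x_0}$ is a Schwartz function.

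Next I would compute $(A \delta_{X,x_0})(y)$ for a fixed $y \in Y$ directly from the definition of matrix--vector multiplication. As a function of $x$ the integrand $A(y,x) \delta_{X,x_0}(x)$ is the elementary Schwartz function $A(y,x_0) \cdot 1_{\{x_0\}}$, so
\begin{displaymath}
(A \delta_{X,x_0})(y) = \int_X A(y,x) \delta_{X,x_0}(x) \, dx = A(y,x_0) \cdot \mu(\{x_0\}) = A(y,x_0),
\end{displaymath}
where the last equality uses the normalization axiom $\mu(\bone)=1$ (applied to the one-point set $\{x_0\}$). Combining this identity with the analogous one for $B$ and the hypothesis $A \delta_{X,x_0} = B \delta_{X,x_0}$, we conclude $A(y,x_0) = B(y,x_0)$ for all $y \in Y$ and $x_0 \in X$, so $A = B$.

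There is no real obstacle here; the argument is purely a one-line computation, and its only nontrivial input is the normalization axiom $\mu(\bone)=1$, which guarantees that integration against the delta function at a point returns the value at that point.
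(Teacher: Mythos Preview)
Your proof is correct and follows essentially the same approach as the paper: both evaluate the matrix on point masses $\delta_{X,x_0}$ and observe that $(A\delta_{X,x_0})(y) = A(y,x_0)$, so the action on vectors recovers all entries. You are simply more explicit about why $\delta_{X,x_0}$ is Schwartz and about invoking the normalization axiom, whereas the paper leaves these routine verifications implicit.
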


\begin{proof}
Suppose $v$ is the point mass at $x$. Then
\begin{displaymath}
(Av)(y) = \int_X A(y,x') v(x') dx' = A(y,x).
\end{displaymath}
We thus see that $Av$ is the ``$x$th column'' of $A$. Thus the function $A \colon \Vec_X \to \Vec_Y$ determines all the columns of $A$, and thus $A$ itself.
\end{proof}

\subsection{Traces} \label{ss:matrix-trace}

Let $A \in \Mat_X$. We define the \defn{trace} of $A$ by
\begin{displaymath}
\tr(A) = \int_X A(x,x) dx.
\end{displaymath}
Trace has the usual symmetry property:

\begin{proposition} \label{prop:trace-product}
Let $A \in \Mat_{Y,X}$ and $B \in \Mat_{X,Y}$. Then $\tr(AB)=\tr(BA)$.
\end{proposition}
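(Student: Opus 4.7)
The plan is to unwind both sides of the identity to iterated integrals and recognize them as the same double integral by Fubini's theorem (Corollary~\ref{cor:fubini}).

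First I would write out $\tr(AB)$ directly from the definitions. Since $A \in \Mat_{Y,X}$ and $B \in \Mat_{X,Y}$, the product $AB$ lies in $\Mat_Y$, with
\begin{displaymath}
(AB)(y,y') = \int_X A(y,x) B(x,y') \, dx.
\end{displaymath}
Then by the definition of trace,
\begin{displaymath}
\tr(AB) = \int_Y (AB)(y,y) \, dy = \int_Y \int_X A(y,x) B(x,y) \, dx \, dy.
\end{displaymath}
A symmetric computation for $BA \in \Mat_X$ gives
\begin{displaymath}
\tr(BA) = \int_X \int_Y B(x,y) A(y,x) \, dy \, dx.
\end{displaymath}

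Next I would collapse both iterated integrals into a single integral over the product $\hat{G}$-set via Fubini. Applying Corollary~\ref{cor:fubini} to $\tr(AB)$ converts the iterated integral to $\int_{Y \times X} A(y,x) B(x,y) \, d(y,x)$, and similarly for $\tr(BA)$ to $\int_{X \times Y} B(x,y) A(y,x) \, d(x,y)$. Since the swap map $X \times Y \to Y \times X$ is an isomorphism of $\hat{G}$-sets (and, implicitly, integration is invariant under such isomorphisms by the change of variables built into the $G$-invariance/isomorphism-invariance of integration established in \S\ref{ss:int}), these two integrals are equal, yielding $\tr(AB) = \tr(BA)$.

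There is essentially no obstacle here: the identity is a direct consequence of the definitions, Fubini, and the fact that the Schwartz functions $A(y,x)B(x,y)$ and $B(x,y)A(y,x)$ on $Y \times X$ and $X \times Y$ agree under the canonical swap. The only thing one should check carefully is that the integrand $A(y,x) B(x,y)$ is indeed a Schwartz function on $Y \times X$ so that Fubini applies; this follows because it is (the pullback of) a product of smooth functions with finitary support, and Schwartz space is closed under pointwise multiplication and pullback along projections from finitary $\hat{G}$-sets.
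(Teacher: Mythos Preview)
Your proof is correct and essentially identical to the paper's: both unwind the definitions of matrix product and trace, apply Fubini (Corollary~\ref{cor:fubini}) to convert the iterated integral into a single integral over $X \times Y$, and observe that the resulting expression is symmetric in $A$ and $B$. The paper is slightly terser, simply noting that ``a similar manipulation leads to an identical expression for $\tr(BA)$,'' whereas you explicitly invoke the swap isomorphism; but this is the same argument.
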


\begin{proof}
By definition,
\begin{displaymath}
(AB)(y,y')=\int_X A(y,x) B(x,y') dx,
\end{displaymath}
and so
\begin{displaymath}
\tr(AB) = \int_Y \int_X A(y,x) B(x,y) dx dy = \int_{X \times Y} A(y,x) B(x,y) d(x,y),
\end{displaymath}
where we used Fubini's theorem (Corollary~\ref{cor:fubini}) in the second step. A similar manipulation leads to an identical expression for $\tr(BA)$.
\end{proof}

We now define some higher analogs of trace. For $n \ge 1$, define $A^{(n)} \in \cC(X^{(n)})$ by
\begin{displaymath}
A^{(n)}(x) = \det(A(x_i,x_j))_{1 \le i,j \le n}
\end{displaymath}
where $x=\{x_1,\ldots,x_n\}$. One easily sees that the right side above is independent of the enumeration of $x$. We define the $n$th \defn{higher trace} of $A$ by
\begin{displaymath}
T_n(A) = \int_{X^{(n)}} A^{(n)}(x) dx.
\end{displaymath}
We have $T_0(A)=1$ (essentially by convention) and $T_1(A)=\tr(A)$. Intuitively, $T_n(A)$ is the trace of the $n$th exterior power of $A$. In characteristic~0, we have the expression for $T_n(A)$ in terms of traces of powers of $A$ as this intuition would predict:

\begin{proposition} \label{prop:T}
Suppose $\bQ \subset k$. For a partition $\lambda=(\lambda_1,\ldots,\lambda_r)$, let
\begin{displaymath}
T_{\lambda}(A) = \prod_{j \ge 1} \left[ \frac{1}{m_j!} \left( \frac{(-1)^{j+1} \tr(A^j)}{j} \right)^{m_j} \right],
\end{displaymath}
where $m_j$ is the multiplicity of $j$ in $\lambda$. Then
\begin{displaymath}
T_n(A) = \sum_{\lambda \vdash n} T_{\lambda}(A),
\end{displaymath}
where the sum is over all partitions of $n$.
\end{proposition}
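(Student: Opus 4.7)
The plan is to reduce to a signed sum over $S_n$ of products of traces of powers of $A$, which is just the classical expansion of $e_n$ in terms of power sums arising from Newton's identities. The coefficient $\bQ \subset k$ will let me divide by $n!$ freely.

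First I would unpack the determinant: $A^{(n)}(x) = \sum_{\sigma\in S_n}\sgn(\sigma)\prod_{i=1}^n A(x_i,x_{\sigma(i)})$ is a well-defined function on $X^{(n)}$ because simultaneous row/column permutations leave $\det$ invariant. Pulling back along the $n!$-to-1 map $X^{[n]}\to X^{(n)}$ and using Corollary~\ref{cor:Theta-d-to-1}, I obtain
\begin{displaymath}
T_n(A)=\frac{1}{n!}\int_{X^{[n]}}\sum_{\sigma\in S_n}\sgn(\sigma)\prod_{i=1}^n A(x_i,x_{\sigma(i)})\,dx.
\end{displaymath}
The main step is then to show that the domain of integration can be enlarged from $X^{[n]}$ to $X^n$ without changing the value. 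I would stratify $X^n=\coprod_\pi Y_\pi$ over set partitions $\pi$ of $[n]$, where $Y_\pi$ is the locally closed $\hat{G}$-subset on which $x_i=x_j$ exactly when $i,j$ lie in the same block (this is a $\hat{G}$-subset since it is cut out by a conjunction of equalities and inequalities). For the discrete partition one recovers $X^{[n]}$, so it suffices to show the signed sum integrand vanishes pointwise on every other $Y_\pi$.

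For non-discrete $\pi$, let $H_\pi\leq S_n$ be the subgroup preserving the blocks set-wise, isomorphic to a product of symmetric groups of which at least one has order $\geq 2$. On $Y_\pi$ one has $x_{\tau(j)}=x_j$ for all $\tau\in H_\pi$, so $\prod_i A(x_i,x_{\tau\sigma(i)})=\prod_i A(x_i,x_{\sigma(i)})$ for every $\sigma\in S_n$. Averaging the signed sum over the substitution $\sigma\mapsto\tau\sigma$ yields the factor $|H_\pi|^{-1}\sum_{\tau\in H_\pi}\sgn(\tau)$, which vanishes since $H_\pi$ contains a symmetric-group factor of rank $\geq 2$. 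This gives
\begin{displaymath}
T_n(A)=\frac{1}{n!}\int_{X^n}\sum_{\sigma\in S_n}\sgn(\sigma)\prod_{i=1}^n A(x_i,x_{\sigma(i)})\,dx.
\end{displaymath}
Next, for a fixed $\sigma$ with cycles $C_1,\dots,C_r$ of lengths $c_1,\dots,c_r$, I would use Fubini (Corollary~\ref{cor:fubini}) to factor the integral over the cycles. Each cycle contributes $\int_{X^{c_k}} A(y_1,y_2)A(y_2,y_3)\cdots A(y_{c_k},y_1)\,dy = \tr(A^{c_k})$ directly from the definitions of matrix product and trace. Hence $\int_{X^n}\prod_i A(x_i,x_{\sigma(i)})\,dx=\prod_k \tr(A^{c_k})$.

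Finally, I would assemble: collecting permutations by cycle type $\mu\vdash n$, the number with cycle type $\mu$ equals $n!/\prod_j j^{m_j}m_j!$, while $\sgn(\sigma)=(-1)^{n-\ell(\mu)}=\prod_j ((-1)^{j+1})^{m_j}$ since $n-\ell(\mu)=\sum_j(j-1)m_j$. Substituting produces exactly $\sum_{\mu\vdash n} T_\mu(A)$, as claimed. The main obstacle is the vanishing argument on the non-discrete strata: everything else is bookkeeping of classical Newton's identity, but here one must verify that the $Y_\pi$ are genuine $\hat{G}$-subsets of $X^n$ and that the $H_\pi$-invariance manipulation is a pointwise identity valid before any integration, so that additivity of the integral over the stratification applies cleanly.
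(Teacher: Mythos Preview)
Your proposal is correct and follows essentially the same route as the paper. The only cosmetic difference is that the paper dispatches the extension from $X^{[n]}$ to $X^n$ in one line by noting that the determinant $\det(A(x_i,x_j))$ vanishes pointwise whenever two coordinates coincide (repeated row/column), which is exactly what your $H_\pi$-averaging argument amounts to; also, the passage from $X^{(n)}$ to $X^{[n]}$ is better justified via transitivity of push-forward (Proposition~\ref{prop:push-trans}) than by citing Corollary~\ref{cor:Theta-d-to-1}, which is a statement about measures of sets rather than integrals of functions.
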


\begin{proof}
Define $\phi \in \cC(X^n)$ by
\begin{displaymath}
\phi(x) = \det(A(x_i,x_j))_{1 \le i,j \le n}.
\end{displaymath}
On $X^{[n]}$, we see that $\phi$ coincides with the pullback of $A^{(n)}$, while off of $X^{[n]}$, we see that $\phi$ vanishes, as the determinant has a repeated column. For a permutation $\sigma \in \fS_n$, let $\phi_{\sigma} \in \cC(X^n)$ be the function given by
\begin{displaymath}
\phi_{\sigma}(x)= A(x_1,x_{\sigma(1)}) \cdots A(x_n, x_{\sigma(n)}).
\end{displaymath}
Expanding the determinant in $\phi$, we find
\begin{displaymath}
\phi(x) = \sum_{\sigma \in \fS_n} \sgn(\sigma) \phi_{\sigma}(x),
\end{displaymath}
and so integrating over $X^n$, we obtain
\begin{displaymath}
T_n(A) = \frac{1}{n!} \int_{X^n} \phi(x) dx = \sum_{\sigma \in \fS_n} \sgn(\sigma) \int_{X^n} \phi_{\sigma}(x) dx.
\end{displaymath}
Now, for $m \ge 0$, we have
\begin{displaymath}
\tr(A^m) = \int_{X^m} A(x_1, x_2) A(x_2, x_3) \cdots A(x_{m-1}, x_m) A(x_m, x_1) dx
\end{displaymath}
(for $m=2$, see the proof of Proposition~\ref{prop:trace-product}), from which one finds
\begin{displaymath}
\int_{X^n} \phi_{\sigma}(x) dx = \tr(A^{\lambda_1}) \cdots \tr(A^{\lambda_r})
\end{displaymath}
where $n=\lambda_1+\cdots+\lambda_r$ is the cycle type of $\sigma$. Putting this into our previous formula, we find
\begin{displaymath}
T_n(A) = \frac{1}{n!} \sum_{\lambda \vdash n} \left[ (-1)^{(1+\lambda_1)+\cdots+(1+\lambda_r)} \cdot \# C_{\lambda} \cdot \tr(A^{\lambda_1}) \cdots \tr(A^{\lambda_r}) \right],
\end{displaymath}
where the sum is over all partitions of $n$ and $C_{\lambda}$ is the conjugacy class of $\fS_n$ corresponding to $\lambda$. We have $\# C_{\lambda}=n!/\prod_{j \ge 1} (j^{m_j} m_j!)$, where $m_j$ is the multiplicity of $j$ in $\lambda$, which yields the stated formula.
\end{proof}

\subsection{Trace of nilpotents}

A fundamental property of nilpotent matrices, in classical linear algebra, is that they have vanishing trace. We now establish this for our matrices, under a hypothesis. This result will play an important role in proving semi-simplicity results later on (see Proposition~\ref{prop:semisimple-alg}). The following is the key observation:

\begin{proposition} \label{prop:tr-p}
If $k$ is a field of positive characteristic $p$ and $A \in \Mat_X$ then
\begin{displaymath}
\tr(A)^p = \tr(A^p).
\end{displaymath}
\end{proposition}

\begin{proof}
By Corollary~\ref{cor:meas-lift}, the measure $\mu$ is valued in $\bF_p$, and lifts uniquely to a measure $\tilde{\mu}$ valued in $\bZ_p$. Suppose $A$ takes on the values $a_1, \ldots, a_n \in k$. Put $R=\bZ_p[x_1, \ldots, x_n]$, and let $R \to k$ be the ring homomorphism satisfying $x_i \mapsto a_i$. Then $A$ lifts to a matrix $\tilde{A}$ with coefficients in $R$. For $n \ge 0$, we have the higher trace $T_n(\tilde{A}) \in R$, defined with respect to $\tilde{\mu}$.

Let $\lambda$ be a partition of $p$, and consider the quantity $T_{\lambda}(\tilde{A}) \in R \otimes \bQ$ defined in Proposition~\ref{prop:T}. One easily sees that the rational numbers appearing in this expression have denominators dividing $p!$, and so $p! \cdot T_{\lambda}(\tilde{A})$ belongs to $R$. In fact, if $\lambda$ is not $(1^p)$ or $(p)$ then there are no $p$'s in the denominators, and so $p! \cdot T_{\lambda}(\tilde{A}) \in pR$. Of course, $p! \cdot T_p(\tilde{A})$ also belongs to $pR$. Thus Proposition~\ref{prop:T} shows
\begin{displaymath}
p! \cdot T_{(1^p)}(\tilde{A}) + p! \cdot T_{(p)}(\tilde{A}) \in pR.
\end{displaymath}
Now, we have
\begin{displaymath}
p! \cdot T_{(1^p)}(\tilde{A}) = \tr(\tilde{A})^p, \qquad p! \cdot T_{(p)}(\tilde{A}) = (-1)^{p+1} (p-1)! \cdot  \tr(\tilde{A}^p).
\end{displaymath}
Applying the homomorphism $R \to k$ yields the stated result.
\end{proof}

\begin{corollary} \label{cor:tr-p}
If $k$ is as above and $A \in \Mat_X$ is nilpotent then $\tr(A)=0$.
\end{corollary}

\begin{proof}
Let $n$ be such that $A^{p^n}=0$. By Proposition~\ref{prop:tr-p}, we have
\begin{displaymath}
\tr(A)^{p^n} = \tr(A^{p^n}) = 0.
\end{displaymath}
Since $k$ is a field, it follows that $\tr(A)=0$
\end{proof}

We thus see that nilpotent matrices have vanishing trace in positive characteristic. We can extend this result to characteristic~0, provided that certain rings have enough maps to fields of positive characteristic. The precise condition we require is the following:

\begin{definition} \label{defn:P}
A commutative ring $R$ satisfies \defn{property~(P)} if for every finitely generated reduced $R$-algebra $R'$ we have $\bigcap \ker(\phi)=0$, where the intersection is over all homomorphisms $\phi$ from $R'$ to fields of positive characteristic. We say that a measure $\mu$ for $G$ satisfies (P) if $\mu(\Theta(G))$ is contained in a subring of $k$ that satisfies~(P).
\end{definition}

We make a few notes concerning this condition. First, if $\mu$ satisfies~(P) then any measure obtained from $\mu$ by extension of scalars satisfies~(P). In particular, if $\Theta(G)$ satisfies (P) then any measure satisfies (P). Second, any measure $\mu$ valued in a reduced ring of positive characteristic satisfies (P). And third, if $\mu$ satisfies~(P) then so does $\mu\vert_U$ for any open subgroup $U$, as $\mu(\Theta(U))$ is a subring of $\mu(\Theta(G))$.

The following is the most general result we are able to prove on traces of nilpotents:

\begin{theorem} \label{thm:tr-nilp}
Suppose $k$ is reduced, $\mu$ satisfies (P), and $A \in \Mat_X$ is nilpotent. Then $\tr(A)=0$.
\end{theorem}

\begin{proof}
Let $k_0$ be a subring of $k$ that contains $\mu(\Theta(G))$ and satisfies~(P), which exists by assumption. The matrix $A$ has only finitely many distinct entries. Let $k_1 \subset k$ be the $k_0$-subalgebra generated by these entries. We can then regard $A$ as living in $\Mat_X(k_1)$. For any homomorphism $\phi \colon k_1 \to k'$ with $k'$ a field of positive characteristic, we have $\phi(\tr(A))=0$ by Corollary~\ref{cor:tr-p}. Thus $\tr(A)=0$ by definition of~(P).
\end{proof}

\begin{remark}
The second author has given an example of a regular measure that does not satisfy~(P) for which there exist nilpotent matrices of non-zero trace \cite[\S 1.2(c)]{Snowden3}.
\end{remark}

\begin{remark}
One can prove a version of the theorem for the higher traces $T_n$ too. More generally, one can define an analog of the characteristic polynomial of a matrix (using a version of determinant), and prove that it has a particular form. See \cite[\S 7]{arxiv} for details.
\end{remark}

\subsection{Trace of idempotents} \label{ss:tr-idemp}

We now examine the higher traces of idempotent matrices:

\begin{proposition} \label{prop:idemp-0}
Suppose $k$ is $\bZ$-flat, let $E \in \Mat_X$ be idempotent, and put $a=\tr(E)$. Then $T_n(E)=\binom{a}{n}$ holds for all $n \ge 0$. In particular, $\binom{a}{n}$ belongs to $k$ for all $n \ge 0$.
\end{proposition}

\begin{proof}
Since $k$ is $\bZ$-flat, we can check the identity in $\bQ \otimes k$. Since $E$ is idempotent, we have $\tr(E^n)=a$ for any $n \ge 1$. The result thus follows from Proposition~\ref{prop:T}, and the polynomial identity
\begin{displaymath}
\binom{x}{n} = \sum_{\lambda \vdash n}   \prod_{j \ge 1} \left[ \frac{1}{m_j!} \left( \frac{(-1)^{j+1} x}{j} \right)^{m_j} \right].
\end{displaymath}
To prove this identity, it suffices to check that the two sides agree when $x=d$ is a positive integer. This in fact follows from Proposition~\ref{prop:T} by taking $A$ to be the $d \times d$ identity matrix.
\end{proof}

\begin{proposition} \label{prop:idemp-p}
Suppose $k$ is a field of positive characteristic $p$, and let $E \in \Mat_X$ be idempotent. Then there exists a unique $a \in \bZ_p$ such that $T_n(E)=\binom{a}{n}$ for all $n \ge 0$.
\end{proposition}

\begin{proof}
We can verify this over an extension field of $k$, so we assume $k$ is algebraically closed. It follows that the ring $W(k)$ of Witt vectors is a complete DVR with uniformizer $p$. Let $U$ be a group of definition for $X$ such that $E$ is $U$-invariant. We have a surjective algebra homomorphism
\begin{displaymath}
\Mat_X^U(W(k)) \to \Mat_X^U(k).
\end{displaymath}
(Note: to work with $\Mat_X(W(k))$ we use the canonical lift of $\mu$ to $W(k)$, provided by Corollary~\ref{cor:meas-lift}.) Since $\Mat_X^U(W(k))$ is a free $W(k)$-module of finite rank, it is $p$-adically complete. Thus we can lift $E$ to an idempotent matrix $\tilde{E}$ in $\Mat_X^U(W(k))$ (see \cite[Theorem~21.28]{Lam} for lifting modulo $p^n$, then take a limit).

Let $a = \tr(\tilde{E})$. By Proposition~\ref{prop:idemp-0}, we have $\binom{a}{n} \in W(k)$ for all $n$. This implies $a \in \bZ_p$. For instance, letting $\ol{a}$ be the image of $a$ in $k=W(k)/(p)$, the identity $p\binom{a}{p}=a(a-1) \cdots (a-p+1)$ shows that $0=\ol{a} (\ol{a}-1) \cdots (\ol{a}-p+1)$, and so $\ol{a} \in \bF_p$. By Proposition~\ref{prop:idemp-0}, we also have $T_n(\tilde{E})=\binom{a}{n}$. Reducing modulo $p$ shows that $T_n(E)=\binom{a}{n}$.
\end{proof}

\begin{definition}
Let $E \in \Mat_X$ be idempotent with $k$ a field of characteristic $p$. We define the \defn{$p$-adic trace} of $E$ to be the value $a \in \bZ_p$ given by Proposition~\ref{prop:idemp-p}.
\end{definition}

\begin{remark}
The $p$-adic trace of an idempotent is very closely related to $p$-adic dimension in tensor categories, as defined in \cite{EtingofHarmanOstrik}. See Remark~\ref{rmk:abenv} for more discussion.
\end{remark}

\subsection{Invariant algebras}

We define the \defn{trace pairing} on the algebra $\Mat_X$ by
\begin{displaymath}
\langle A, B \rangle = \tr(AB).
\end{displaymath}
A simple computation (see the proof of Proposition~\ref{prop:trace-product}) yields the following explicit expression for the pairing:
\begin{displaymath}
\langle A, B \rangle = \int_{X \times X} A(x,y) B(y,x) d(x,y).
\end{displaymath}
The trace pairing is a symmetric bilinear form on $\Mat_X$. Assuming $X$ is a $G$-set, it induces a symmetric bilinear form on the invariant subalgebra $\Mat_X^G$. We now examine this.

\begin{proposition} \label{prop:trace-disc}
Let $X$ be a finitary $G$-set and let $Z_1, \ldots, Z_n$ be the $G$-orbits on $X \times X$. Then the discriminant of the trace pairing on $\Mat_X^G$ is
\begin{displaymath}
(-1)^r \cdot \prod_{i=1}^n \mu(Z_i)
\end{displaymath}
where $r$ is the number of transpose-conjugate pairs among the $Z_i$ (see proof). In particular, the trace pairing is perfect if and only if $\mu(Z_i)$ is a unit for all $i$.
\end{proposition}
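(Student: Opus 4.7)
The plan is to identify an explicit basis of $\Mat_X^G$, compute the Gram matrix of the trace pairing in that basis, and read off the determinant from its block structure.

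First, since $X$ is a finitary $G$-set, so is $X \times X$ (Proposition~\ref{prop:smooth}), and thus each $G$-orbit $Z_i$ is a finitary $\hat{G}$-subset of $X \times X$. By Proposition~\ref{prop:Schwartz-gen} (or rather Remark~\ref{rmk:Schwartz-gen} applied with $U=G$), the indicator functions $1_{Z_1}, \ldots, 1_{Z_n}$ form a $k$-basis of $\Mat_X^G = \cC(X \times X)^G$. Next I would use the explicit integral formula for the trace pairing recorded above the proposition, together with the identity $1_{Z_j}(y,x) = 1_{Z_j^t}(x,y)$, where $Z^t = \{(x,y) : (y,x) \in Z\}$, to compute
\begin{displaymath}
\langle 1_{Z_i}, 1_{Z_j} \rangle = \int_{X \times X} 1_{Z_i \cap Z_j^t}(x,y) \, d(x,y) = \mu(Z_i \cap Z_j^t).
\end{displaymath}

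The key structural observation is that the transpose map $\tau \colon X \times X \to X \times X$, $(x,y) \mapsto (y,x)$, is a $G$-equivariant involution, so $Z \mapsto Z^t$ is an involution on the set $\{Z_1, \ldots, Z_n\}$. Distinct orbits are disjoint, so $\mu(Z_i \cap Z_j^t) = 0$ unless $Z_j = Z_i^t$, and in that case $\mu(Z_i \cap Z_j^t) = \mu(Z_i)$. Moreover, when $Z_j = Z_i^t$, the restriction of $\tau$ gives a $G$-equivariant isomorphism $Z_i \xrightarrow{\sim} Z_j$, so $\mu(Z_i) = \mu(Z_j)$ by the isomorphism invariance of $\mu$.

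To finish, reorder the basis so the symmetric orbits (those with $Z_i = Z_i^t$) come first, followed by the transpose-conjugate pairs. The Gram matrix becomes block diagonal: each symmetric orbit contributes a $1 \times 1$ block $(\mu(Z_i))$, and each pair $\{Z_i, Z_j\}$ contributes a $2 \times 2$ block $\left(\begin{smallmatrix} 0 & \mu(Z_i) \\ \mu(Z_j) & 0 \end{smallmatrix}\right)$ with determinant $-\mu(Z_i)\mu(Z_j)$. Multiplying block determinants produces the factor $(-1)^r$ from the $r$ pairs and the product $\prod_i \mu(Z_i)$ over all orbits, proving the formula. The final clause about perfectness is then immediate since $\det(M)$ is a unit precisely when each factor $\mu(Z_i)$ is. There is no real obstacle here; the only thing to keep clean is the bookkeeping that each transpose pair contributes a single sign and that symmetric orbits and pairs are counted correctly.
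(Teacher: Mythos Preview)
Your proof is correct and follows essentially the same approach as the paper: both take the indicator functions $1_{Z_i}$ as a basis, compute the Gram matrix via the explicit integral formula to get $\mu(Z_i \cap Z_j^t)$, and then read off the determinant from the resulting block-diagonal form under the transpose involution. The only cosmetic difference is that the paper names the involution as a permutation $\tau$ of indices rather than working directly with the sets $Z_i^t$.
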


\begin{proof}
Let $B_i \in \Mat_X^G$ be the matrix given by $B_i(x,y)=1_{Z_i}(x,y)$. Since the $1_{Z_i}$ clearly form a basis for the $G$-invariant functions on $X \times X$, we see that the $B_i$ form a basis for $\Mat_X^G$. Let $Z_i^t$ be the transpose of $Z_i$, i.e., the set of pairs $(y,x)$ with $(x,y) \in Z_i$. Clearly, $Z_i^t$ is a $G$-orbit on $X \times X$, and therefore equal to $Z_j$ for some $j$. Let $\tau$ be the involution of $\{1,\ldots,n\}$ defined by $Z_i^t=Z_{\tau(i)}$. We have
\begin{displaymath}
\langle B_i, B_j \rangle = \int_{X \times X} 1_{Z_i}(x,y) 1_{Z_j}(y,x) d(x,y) = \vol(Z_i \cap Z_{\tau(j)}) = \begin{cases}
\mu(Z_i) & \text{if $i=\tau(j)$} \\
0 & \text{otherwise.} \end{cases}
\end{displaymath}
We thus see that the matrix for the trace pairing is block diagonal: each fixed point $i$ of $\tau$ contributes a $1 \times 1$ block with entry $\mu(Z_i)$, while each pair $i \ne \tau(i)$ contributes the $2 \times 2$ block
\begin{displaymath}
\begin{pmatrix} 0 & \mu(Z_i) \\ \mu(Z_i) & 0 \end{pmatrix}.
\end{displaymath}
(Note that $Z_i$ and $Z_i^t$ have the same measure.) The determinant of this matrix is the sign of $\tau$ times the product of the $\mu(Z_i)$'s, which yields the claim.
\end{proof}

\begin{proposition} \label{prop:semisimple-alg}
Suppose $k$ is a field and let $X$ be a finitary $G$-set. Suppose that
\begin{enumerate}
\item Every nilpotent of $\Mat_X^G$ has trace~0.
\item For every $G$-orbit $Z$ on $X \times X$ we have $\mu(Z) \ne 0$.
\end{enumerate}
Then $\Mat_X^G$ is a semi-simple algebra.
\end{proposition}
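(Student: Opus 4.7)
The plan is to use the trace pairing together with the fact that $\Mat_X^G$ is finite-dimensional to show the Jacobson radical vanishes. First I would observe that $\Mat_X^G$ is a finite-dimensional $k$-algebra: since $X$ is a finitary $G$-set, so is $X \times X$ by Proposition~\ref{prop:smooth}(b), and the indicator functions $1_{Z_i}$ of the (finitely many) $G$-orbits $Z_i$ on $X \times X$ give a basis of $\Mat_X^G$ as discussed in the proof of Proposition~\ref{prop:trace-disc}.

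Next I would invoke Proposition~\ref{prop:trace-disc}: since $k$ is a field and hypothesis~(b) says $\mu(Z_i) \ne 0$ for every $G$-orbit $Z_i$ on $X \times X$, the discriminant of the trace pairing on $\Mat_X^G$ is (up to sign) the product of the $\mu(Z_i)$'s, which is nonzero. Hence the trace pairing restricted to $\Mat_X^G$ is non-degenerate.

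Finally, let $J \subset \Mat_X^G$ denote the Jacobson radical. Since $\Mat_X^G$ is finite-dimensional, $J$ is a nilpotent (two-sided) ideal. For any $a \in J$ and any $b \in \Mat_X^G$, the element $ab$ lies in $J$ and is therefore nilpotent. By hypothesis~(a), $\tr(ab) = 0$, i.e., $\langle a, b \rangle = 0$ for all $b \in \Mat_X^G$. Non-degeneracy of the trace pairing then forces $a = 0$, so $J = 0$ and $\Mat_X^G$ is semi-simple.

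There is no real obstacle here: the only nontrivial inputs are Proposition~\ref{prop:trace-disc} (which converts the measure-theoretic hypothesis~(b) into non-degeneracy of the trace form) and the observation that hypothesis~(a) applied to elements of the Jacobson radical kills the pairing. If one wanted to emphasize the subtle point, it is that hypothesis~(a) is a statement about \emph{all} nilpotents in $\Mat_X^G$, which is stronger than what one gets automatically and is the place where, in applications, Corollary~\ref{cor:tr-nilp} (and hence property~(P)) will typically be invoked.
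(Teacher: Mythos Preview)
Your proof is correct and follows essentially the same approach as the paper: show the trace pairing is non-degenerate via Proposition~\ref{prop:trace-disc} and hypothesis~(b), then use hypothesis~(a) to show every element of the Jacobson radical is in the radical of the pairing, hence zero. Your write-up is in fact a bit more careful in that you explicitly note finite-dimensionality and invoke hypothesis~(a) directly rather than Corollary~\ref{cor:tr-nilp}.
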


\begin{proof}
Let $J$ be the Jacobson radical of $\Mat_X^G$. Every element of $J$ is nilpotent, and $\Mat_X^G$ is semi-simple if and only if $J=0$. If $A \in J$ and $B \in \Mat_X^G$ then $AB$ belongs to $J$ and is thus nilpotent, and so $\tr(AB)=0$ by Theorem~\ref{thm:tr-nilp}. Thus $\langle A, B \rangle=0$ for all $B$, and so $A=0$ since the pairing is non-degenerate (Proposition~\ref{prop:trace-disc}). Hence $J=0$, and so $\Mat_X^G$ is semi-simple.
\end{proof}

\subsection{Matrices associated to functions} \label{ss:alpha-beta}

Let $X$ and $Y$ be finitary $\hat{G}$-sets and let $f \colon X \to Y$ be a $\hat{G}$-equivariant function. We then have maps
\begin{displaymath}
f_* \colon \Vec_X \to \Vec_Y, \qquad f^* \colon \Vec_Y \to \Vec_X
\end{displaymath}
by identifying vectors with Schwartz functions. We now define matrices that induce these maps. These matrices will play a particularly important role in \S \ref{s:linear}.

Let $\Gamma(f) \subset X \times Y$ be the graph of $f$ and let $1_{\Gamma(f)} \in \cC(X \times Y)$ be the indicator function of $\Gamma(f)$. We define matrices
\begin{displaymath}
A_f \in \Mat_{Y,X}, \qquad B_f \in \Mat_{X,Y}
\end{displaymath}
by $A_f(y,x)=1_{\Gamma(f)}(x,y)$ and $B_f(x,y)=1_{\Gamma(f)}(x,y)$. Note that $B_f$ is the transpose of $A_f$. We now verify that these matrices have the desired properties:

\begin{proposition} \label{prop:C-alpha}
For $v \in \Vec_X$ and $w \in \Vec_Y$, we have
\begin{displaymath}
A_fv=f_*(v), \qquad B_fw=f^*(w).
\end{displaymath}
\end{proposition}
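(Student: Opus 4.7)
The plan is to prove both identities by directly unwinding the definitions of matrix-vector multiplication, of the matrices $A_f$ and $B_f$, and of the operations $f_*$ and $f^*$ on Schwartz functions. The computations reduce to observations about the indicator function $1_{\Gamma(f)}$ of the graph, namely that $1_{\Gamma(f)}(x,y)=1$ precisely when $y=f(x)$.

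For the first identity, let $v \in \Vec_X$ and $y \in Y$. By the definition of matrix-vector product and of $A_f$,
\begin{displaymath}
(A_f v)(y) = \int_X A_f(y,x)\, v(x)\, dx = \int_X 1_{\Gamma(f)}(x,y)\, v(x)\, dx.
\end{displaymath}
As a function of $x$ (with $y$ fixed), the integrand equals $v(x)$ on $f^{-1}(y)$ and vanishes elsewhere; this is just the Schwartz function $v \cdot 1_{f^{-1}(y)}$ on $X$. Its integral is $\int_{f^{-1}(y)} v(x)\,dx$, which by the definition in \S\ref{ss:push} is exactly $f_*(v)(y)$. Hence $A_f v = f_*(v)$.

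For the second identity, let $w \in \Vec_Y$ and $x \in X$. Then
\begin{displaymath}
(B_f w)(x) = \int_Y B_f(x,y)\, w(y)\, dy = \int_Y 1_{\Gamma(f)}(x,y)\, w(y)\, dy.
\end{displaymath}
With $x$ fixed, the integrand is supported on the singleton $\{f(x)\} \subset Y$, which is a finitary $\hat{G}$-subset (every finite subset is a $\hat{G}$-subset, being stabilized by the pointwise fixer of its elements). By additivity and the normalization condition \dref{defn:measure}{b}, $\mu(\{f(x)\})=1$, so the integral evaluates to $w(f(x)) = f^*(w)(x)$. Hence $B_f w = f^*(w)$.

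There is essentially no obstacle: the proof is a direct unpacking, and the only subtle point worth mentioning is checking that the $x$-sections of $1_{\Gamma(f)}$ really are Schwartz and that singletons are $\hat{G}$-subsets, both of which are immediate from \S\ref{ss:Ghat}. The whole argument should fit in one short display-math calculation for each identity.
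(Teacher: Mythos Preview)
Your proof is correct and follows essentially the same approach as the paper's: both identities are obtained by directly expanding the matrix-vector product, recognizing the integrand as supported on $f^{-1}(y)$ (respectively the singleton $\{f(x)\}$), and reading off $f_*(v)(y)$ (respectively $w(f(x))$). If anything, you include slightly more justification (the remark about singletons being $\hat{G}$-subsets of measure~1) than the paper does.
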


\begin{proof}
We have
\begin{displaymath}
(A_f v)(y) = \int_X 1_{\Gamma(f)}(x,y) v(x) dx = \int_{f^{-1}(y)} v(x) dx=(f_* v)(y),
\end{displaymath}
which proves the first statement. We have
\begin{displaymath}
(B_f w)(x) = \int_Y 1_{\Gamma(f)}(x,y) w(y) dy = w(f(x)),
\end{displaymath}
which proves the second.
\end{proof}

The following proposition gives the most important properties of these matrices:

\begin{proposition} \label{prop:alpha-prop}
We have the following:
\begin{enumerate}
\item Let $f \colon X \to Y$ and $g \colon Y \to Z$ be maps of finitary $\hat{G}$-sets. Then
\begin{displaymath}
A_g A_f = A_{gf}, \qquad
B_f B_g = B_{gf}.
\end{displaymath}
\item Consider a cartesian square of finitary $\hat{G}$-sets
\begin{displaymath}
\xymatrix{
X' \ar[r]^{g'} \ar[d]_{f'} & X \ar[d]^f \\
Y' \ar[r]^g & Y }
\end{displaymath}
Then we have the base change formula $B_g A_f=A_{f'} B_{g'}$.
\item Let $f \colon X \to Y$ be a map of transitive $G$-sets, and let $c$ be the common measure of a fiber. Then $A_f B_f=c \cdot I_Y$.
\item Let $X$ and $Y$ be finitary $\hat{G}$-set and let $i \colon X \to X \amalg Y$ and $j \colon Y \to X \amalg Y$ be the natural maps. Then
\begin{displaymath}
B_i A_i = I_X, \quad
B_j A_i = 0, \quad
B_i A_j = 0, \quad
B_j A_j = I_Y, \quad
A_iB_i+A_jB_j = I_{X \amalg Y}.
\end{displaymath}
\end{enumerate}
\end{proposition}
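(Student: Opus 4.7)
The plan is to prove all four parts by direct computation, expanding each matrix product via the integration theory developed in \S\ref{s:int} and using the description of $A_f$ and $B_f$ in terms of the indicator function of the graph $\Gamma(f) \subset X \times Y$. Throughout, the essential trick is that the integrand is itself an indicator function of an explicit $\hat{G}$-subset whose measure is either $0$, $1$, or (only in part~(c)) the constant $c$.

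For part~(a), I would compute $(A_gA_f)(z,x) = \int_Y 1_{\Gamma(g)}(y,z)\,1_{\Gamma(f)}(x,y)\,dy$ and observe that the integrand is the indicator of the singleton $\{y : f(x)=y \text{ and } g(y)=z\}$, which is non-empty (and then a one-point $\hat{G}$-set of measure~$1$) precisely when $g(f(x))=z$. So $A_gA_f = A_{gf}$. The corresponding identity for the $B$-matrices then falls out either by the same calculation or, more cleanly, by transposing and using that $B_f = A_f^t$. For part~(b), compute $(B_gA_f)(y',x) = \int_Y 1_{\Gamma(g)}(y',y)\,1_{\Gamma(f)}(x,y)\,dy$, which evaluates to~$1$ exactly when $f(x) = g(y')$ and to $0$ otherwise. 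On the other side, $(A_{f'}B_{g'})(y',x) = \int_{X'} 1_{\Gamma(f')}(x',y')\,1_{\Gamma(g')}(x',x)\,dx'$ is supported on those $x' \in X' = X \times_Y Y'$ with $f'(x')=y'$ and $g'(x')=x$; the universal property of the fiber product says there is such a (unique) $x'$ iff $f(x)=g(y')$, and then this $x'$ is a one-point set of measure~$1$. So both sides agree.

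For part~(c), I would compute $(A_fB_f)(y,y') = \int_X 1_{\Gamma(f)}(x,y)\,1_{\Gamma(f)}(x,y')\,dx$, whose integrand is the indicator of $f^{-1}(y) \cap f^{-1}(y')$. This is empty when $y \ne y'$ and equals $f^{-1}(y)$ when $y=y'$; the assumption that $X$ and $Y$ are transitive $G$-sets together with conjugation invariance of $\mu$ gives that all fibers have the common measure~$c$, so $A_fB_f = c\cdot I_Y$. Part~(d) is essentially a bookkeeping exercise: for the inclusion $i \colon X \to X\amalg Y$, the graph $\Gamma(i)$ sits inside $X \times (X\amalg Y)$ and is disjoint from $\Gamma(j)$, so the four product computations all reduce to integrals whose supports are either the diagonal of $X$, the diagonal of $Y$, or empty; the last identity $A_iB_i + A_jB_j = I_{X \amalg Y}$ then expresses the decomposition $(X\amalg Y) \times (X\amalg Y) = (X \times X) \sqcup (X \times Y) \sqcup (Y \times X) \sqcup (Y \times Y)$ with the cross terms vanishing.

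There is no real obstacle: each identity is a direct unwinding once one notes that a product of two indicator functions of graphs is the indicator of an intersection, and that intersection is either empty, a single point of $\hat{G}$-measure $1$, or (for part~(c)) a full fiber of the transitive map $f$. The only care needed is to track which variable is being integrated against which, and in part~(b) to invoke the universal property of the fiber product to identify the relevant singleton.
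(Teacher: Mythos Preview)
Your proof is correct, but it takes a genuinely different route from the paper. The paper's proof does not compute matrix entries directly; instead it invokes Proposition~\ref{prop:matrix-faithful} (a matrix is determined by its action on vectors) together with Proposition~\ref{prop:C-alpha} (which says $A_f v = f_*(v)$ and $B_f w = f^*(w)$), thereby reducing each matrix identity to an identity about push-forwards and pull-backs of Schwartz functions. Part~(a) then becomes transitivity of push-forward (Proposition~\ref{prop:push-trans}), part~(b) becomes the base change formula (Proposition~\ref{prop:push-bc}), part~(c) becomes the projection formula (Proposition~\ref{prop:projection}) applied to $f_*(f^*\psi) = \psi \cdot f_*(1_X) = c\psi$, and part~(d) becomes obvious facts about extension-by-zero and restriction along an open-closed decomposition.

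Your entry-wise approach is self-contained and makes the mechanism completely transparent: each product collapses because the support of the integrand is empty, a singleton, or a full fiber. The paper's approach, by contrast, is more conceptual: it shows that these matrix identities are precisely the functional identities of \S\ref{ss:push} in disguise, which both avoids redoing those computations and makes clear that the $A_f$/$B_f$ formalism is a faithful linearization of the push/pull calculus. Either argument is fine here; the paper's version pays off later when the same reduction-to-vectors trick is reused (e.g., in Proposition~\ref{prop:Theta-const-fiber} and in \S\ref{s:linear}).
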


\begin{proof}
For each identity, it suffices to show that each side acts the same on vectors. Proposition~\ref{prop:C-alpha} then reduces the problem to an identity involving push-forwards and pull-backs. We now go through the details.

(a) We must show $g_* f_*=(gf)_*$ and $f^*g^*=(gf)^*$. The first is Proposition~\ref{prop:push-trans} and the second is obvious.

(b) We must show $g^*f_*=f'_* (g')^*$, which is exactly Proposition~\ref{prop:push-bc}.

(c) We must show $f_*(f^*(\phi))=c \cdot \psi$ for any $\psi \in \cC(Y)$. By the projection formula (Proposition~\ref{prop:projection}), we have $f_*(f^*(\psi))=\psi \cdot f_*(1_X)$. As $f_*(1_X)$ is clearly equal to $c \cdot 1_Y$, the result follows.

(d) We must show
\begin{displaymath}
i^*i_* = \id_{\cC(X)}, \quad
j^*i_*=0, \quad
i^*j_*=0, \quad
j^*j_* = \id_{\cC(Y)}, \quad
i_*i^*+j_*j^* = \id_{\cC(X \amalg Y)}.
\end{displaymath}
These are clear.
\end{proof}

Finally, we show that the $A$ and $B$ matrices generate all matrices.

\begin{proposition} \label{prop:hom-basis}
Let $X$ and $Y$ be finitary $G$-sets, let $Z_1, \ldots, Z_n$ be the $G$-orbits on $X \times Y$, let $C_i \in \Mat_{Y,X}^G$ be the matrix given by $C_i(y,z)=1_{Z_i}(x,y)$, and let $p_i \colon Z_i \to X_i$ and $q_i \colon Z_i \to Y_i$ be the projection maps. Then:
\begin{enumerate}
\item The matrices $C_i$ form a $k$-basis for $\Mat_{Y,X}^G$.
\item We have $C_i=A_{q_i} B_{p_i}$.
\end{enumerate}
\end{proposition}

\begin{proof}
It is clear that the functions $1_{Z_i}$ for a $k$-basis for $\cC(X \times Y)^G$, and so (a) follows. One easily verifies that the the two matrices in (b) act the same on vectors, and so the equality follows.
\end{proof}

\subsection{The relative setting} \label{ss:rel-matrix}

We make a few comments concerning the relative setting. The definitions of matrix, matrix multiplication, and trace go through unchanged. There is an issue, however, with the definition of the higher trace $T_n$ since it uses $X^{(n)}$, which need not exist in the relative setting. If the $X^{(n)}$ always exist---which is exactly the condition $(\ast)$ from Remark~\ref{rmk:rel-binom}---then everything goes through. Without this, some results are no longer true: e.g., we give an example of a nilpotent matrix with non-zero trace in \S \ref{ss:sym-rel}.

In characteristic~0, even without $(\ast)$, one can \emph{define} $T_n$ using the formula in Proposition~\ref{prop:T}. This allows one to define determinants and characteristic series. However, since our proof of Theorem~\ref{thm:tr-nilp} goes through positive characteristic, it does not apply in this setting in general.

\section{The category of permutation representations} \label{s:perm}

\subsection{Overview}

Fix a pro-oligomorphic group $G$ and a $k$-valued measure $\mu$ for $G$. In this section, we construct a $k$-linear rigid tensor category $\uPerm_k(G;\mu)$ of ``permutation modules'' of $G$. The motivation for our construction comes from the case of finite groups, as discussed in \S \ref{sss:intro-perm}. This construction is one of the main achievements of this paper.

We remind the reader to consult \S \ref{ss:notation} for our conventions on tensor categories. For general background on tensor categories, we refer to \cite{DeligneMilne} and \cite{Etingof}.

\subsection{The category}

The following definition introduces the main object of study in \S \ref{s:perm}:

\begin{definition} \label{defn:permcat}
We define a $k$-linear category $\uPerm_k(G; \mu)$ as follows:
\begin{itemize}
\item For each finitary $G$-set $X$ there is an object $\Vec_X$.
\item We let $\Hom(\Vec_X,\Vec_Y)=\Mat_{Y,X}^G$ be the space of $G$-invariant $Y \times X$ matrices.
\item Composition is given by matrix multiplication.
\end{itemize}
The properties of matrix multiplication established in Proposition~\ref{prop:matrix-mult} show that this is a well-defined category. When there is no danger of ambiguity, we write $\uPerm(G;\mu)$ or just $\uPerm(G)$ in place of $\uPerm_k(G;\mu)$.
\end{definition}

In $\uPerm(G)$, one can treat $\Vec_X$ either as a formal symbol or as the $k$-module $\cC(X)$. Similarly, one can treat a morphism $\Vec_X \to \Vec_Y$ simply as a matrix $A$, or as the $k$-linear map $\Vec_X \to \Vec_Y$ defined by $A$; the two perspectives are equivalent by Corollary~\ref{prop:matrix-faithful}. In any case, there is a faithful $k$-linear functor
\begin{displaymath}
\Phi \colon \uPerm(G) \to \Mod_k
\end{displaymath}
taking $\Vec_X$ to the Schwartz space $\cC(X)$ and a matrix to the linear map it defines.

\begin{proposition}
The category $\uPerm(G)$ is additive, with $\Vec_X \oplus \Vec_Y=\Vec_{X \amalg Y}$.
\end{proposition}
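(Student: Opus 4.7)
The plan is to verify the three defining features of an additive category in turn: the Hom-sets are abelian groups (already automatic since they are $k$-modules), composition is $\bZ$-bilinear, and the category admits a zero object and binary biproducts. The first two are essentially free from the construction. For a zero object I would use $\Vec_{\emptyset}$: since $\emptyset \times Z = \emptyset$ for any $Z$, the Hom spaces $\Hom(\Vec_{\emptyset}, \Vec_Z)$ and $\Hom(\Vec_Z, \Vec_{\emptyset})$ consist of $G$-invariant functions on the empty set, hence are both zero.

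The heart of the proof is the biproduct statement. Given finitary $G$-sets $X$ and $Y$, let $\iota_X \colon X \hookrightarrow X \amalg Y$ denote the canonical inclusion, and likewise for $\iota_Y$. I would define four $G$-invariant matrices:
\begin{displaymath}
\alpha_X \in \Mat_{X \amalg Y, X}^G, \qquad \alpha_Y \in \Mat_{X \amalg Y, Y}^G, \qquad \beta_X \in \Mat_{X, X \amalg Y}^G, \qquad \beta_Y \in \Mat_{Y, X \amalg Y}^G,
\end{displaymath}
by taking $\alpha_X$ to be the indicator function of the graph $\Gamma_{\iota_X} \subset (X \amalg Y) \times X$, and similarly for the others (using the transposed graph for $\beta_X, \beta_Y$). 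These are $G$-invariant because $\iota_X$ and $\iota_Y$ are $G$-equivariant, and they are Schwartz because $X$ and $Y$ are finitary. Thought of as morphisms in $\uPerm(G)$, the $\alpha$'s are the structure maps $\Vec_X, \Vec_Y \to \Vec_{X \amalg Y}$ and the $\beta$'s are the projections.

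To finish, I would verify the biproduct identities
\begin{displaymath}
\beta_X \alpha_X = I_X, \quad \beta_Y \alpha_Y = I_Y, \quad \beta_X \alpha_Y = 0, \quad \beta_Y \alpha_X = 0, \quad \alpha_X \beta_X + \alpha_Y \beta_Y = I_{X \amalg Y}
\end{displaymath}
by direct calculation with the matrix multiplication formula of \S \ref{s:matrix}. For instance, $(\beta_X \alpha_X)(x,x') = \int_{X \amalg Y} \beta_X(x,z) \alpha_X(z,x')\, dz$; the integrand is the indicator of the $\hat{G}$-subset $\{(x,x')\}$ of $X \amalg Y$ if $x = x'$ and is zero otherwise, so the normalization axiom \dref{defn:measure}{b} yields $I_X$. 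The mixed terms vanish because the images of $\iota_X$ and $\iota_Y$ in $X \amalg Y$ are disjoint, so the product of the two indicators is identically zero. The completeness relation $\alpha_X \beta_X + \alpha_Y \beta_Y = I_{X \amalg Y}$ reduces similarly to the observation that $X \amalg Y$ is covered by the images of $\iota_X$ and $\iota_Y$, together with the normalization axiom on singletons.

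There is no real obstacle here: the only subtle step is checking that the various $0$-$1$ valued Schwartz functions multiply the way ordinary matrices do, and this is handled uniformly by the single observation that $\mu(\{\pt\}) = 1$, i.e., integration of an indicator of a singleton is $1$. The proposition follows.
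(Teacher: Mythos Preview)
Your proof is correct and follows essentially the same approach as the paper: both construct the inclusion and projection morphisms as the indicator matrices of the graphs of the inclusion maps $X, Y \hookrightarrow X \amalg Y$ and verify the biproduct identities directly (the paper simply says ``one easily sees'' these matrices exist and satisfy the requisite properties, while you spell out the details). One tiny notational slip: in your computation of $\beta_X \alpha_X$, the integrand (as a function of $z$) is the indicator of the singleton $\{x\} \subset X \amalg Y$ when $x = x'$, not of ``$\{(x,x')\}$''---but your reasoning and conclusion are correct.
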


\begin{proof}
One easily sees that the usual matrices defining the inclusion $\Vec_X \to \Vec_{X \amalg Y}$ and projection $\Vec_{X \amalg Y} \to \Vec_X$ exist in our setting and satisfy the requisite properties. We note that the direct sum of two morphisms is given by the familiar block matrix.
\end{proof}

\subsection{Tensor structure} \label{ss:perm-tensor}

We now define a tensor product $\uotimes$ on $\uPerm(G)$. Let $X_i$, $Y_i$, and $Z_i$, for $i=1,2$, be finite $G$-sets. On objects, we define $\uotimes$ by
\begin{displaymath}
\Vec_{X_1} \uotimes \Vec_{X_2} = \Vec_{X_1 \times X_2}.
\end{displaymath}
Suppose now we have matrices $A_i \in \Mat_{Y_i,X_i}^G$ for $i=1,2$. We define
\begin{displaymath}
A_1 \uotimes A_2 \in \Mat_{Y_1 \times Y_2, X_1 \times X_2}
\end{displaymath}
to be the matrix given by
\begin{displaymath}
(A_1 \uotimes A_2)(y_1,y_2,x_1,x_2) = A_1(y_1,x_1) A_2(y_2,x_2).
\end{displaymath}
Note that this is a direct analog of the Kronecker product for ordinary matrices. We now show that this construction has the expected properties:

\begin{proposition}
With the above definitions, $\uotimes$ gives $\uPerm(G)$ the structure of a tensor category. The unit object is $\bbone=\Vec_{\bone}$.
\end{proposition}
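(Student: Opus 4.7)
The plan is to verify the tensor category axioms by reducing each one to a property of the underlying $G$-sets combined with an integration identity. Most of the structural isomorphisms (associator, unitor, braiding) will come for free from the corresponding isomorphisms of cartesian products of $G$-sets, since on objects $\uotimes$ is just cartesian product. The real content is to check that $\uotimes$ is a bifunctor, i.e.\ compatible with matrix multiplication.

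First I would check bifunctoriality. Given matrices $A_i \in \Mat^G_{Y_i,X_i}$ and $B_i \in \Mat^G_{Z_i,Y_i}$ for $i=1,2$, the matrix $(B_1 \uotimes B_2)(A_1 \uotimes A_2)$ has $(z_1,z_2;x_1,x_2)$-entry
\begin{displaymath}
\int_{Y_1 \times Y_2} B_1(z_1,y_1) B_2(z_2,y_2) A_1(y_1,x_1) A_2(y_2,x_2) \, d(y_1,y_2),
\end{displaymath}
and Fubini's theorem (Corollary~\ref{cor:fubini}), together with the fact that $B_i(z_i,y_i)$ and $A_i(y_i,x_i)$ are constants when integrating in the other variable, immediately rewrites this as $(B_1 A_1)(z_1,x_1) \cdot (B_2 A_2)(z_2,x_2)$, which is the $(z_1,z_2;x_1,x_2)$-entry of $(B_1 A_1) \uotimes (B_2 A_2)$. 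The same formula applied to the two identity matrices shows $I_{X_1} \uotimes I_{X_2} = I_{X_1 \times X_2}$, so $\uotimes$ is a bifunctor. The $k$-bilinearity on morphisms is obvious from the formula for $A_1 \uotimes A_2$.

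Next I would construct the structural isomorphisms. The associator $(\Vec_{X_1} \uotimes \Vec_{X_2}) \uotimes \Vec_{X_3} \to \Vec_{X_1} \uotimes (\Vec_{X_2} \uotimes \Vec_{X_3})$ is the permutation matrix coming from the canonical bijection $(X_1 \times X_2) \times X_3 \cong X_1 \times (X_2 \times X_3)$; naturality, the pentagon, and the triangle axiom then reduce to the analogous identities in the category of $G$-sets. The left and right unitors $\Vec_{\bone} \uotimes \Vec_X \cong \Vec_X \cong \Vec_X \uotimes \Vec_{\bone}$ come from the bijections $\bone \times X \cong X \cong X \times \bone$; the integral $\int_{\bone} f(\ast)\,d\ast = f(\ast) \cdot \mu(\bone) = f(\ast)$, coming from the normalization axiom, is what makes these compatible with matrix multiplication. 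The braiding is the permutation matrix for the flip $X_1 \times X_2 \cong X_2 \times X_1$, and the hexagon and symmetry axioms are again formal consequences of the analogous statements for cartesian product of sets.

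Finally I would check compatibility with the additive structure. The canonical identification $(X_1 \amalg X_2) \times Y = (X_1 \times Y) \amalg (X_2 \times Y)$ yields the distributor $(\Vec_{X_1} \oplus \Vec_{X_2}) \uotimes \Vec_Y \cong (\Vec_{X_1} \uotimes \Vec_Y) \oplus (\Vec_{X_2} \uotimes \Vec_Y)$, and one checks that, under the identification of morphisms with $G$-invariant matrices, the block-matrix description of direct sums matches the Kronecker-product description of $\uotimes$. None of these verifications presents a real obstacle; the only place where the measure $\mu$ enters nontrivially is in the bifunctoriality check above, where Fubini is essential, and in the normalization $\mu(\bone)=1$ needed for the unitors. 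The remaining axioms are purely combinatorial identities on finitary $G$-sets.
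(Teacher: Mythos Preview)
Your proposal is correct and follows essentially the same approach as the paper: the key step is the Fubini computation establishing $(B_1 \uotimes B_2)(A_1 \uotimes A_2) = (B_1 A_1) \uotimes (B_2 A_2)$, after which the coherence constraints are inherited from the symmetric monoidal structure of cartesian product on $\cS(G)$. You supply a bit more detail on the unitors, braiding, and distributor than the paper (which simply says these are induced from the corresponding structure on $\cS(G)$ and omits details), but the argument is the same.
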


\begin{proof}
We first show that $\uotimes$ is a bi-functor. It is clear from the definition that $I_{X_1} \uotimes I_{X_2}=I_{X_1 \times X_2}$, and so $\uotimes$ is compatible with identity morphisms. Now let $B_i \in \Mat_{Y_i \times X_i}^G$ and $A_i \in \Mat_{Z_i \times Y_i}^G$ for $i=1,2$ be given. We then have
\begin{align*}
&((A_1 \uotimes A_2)(B_1 \uotimes B_2))(z_1,z_2,x_1,x_2) \\
=& \int_{Y_1 \times Y_2} (A_1 \uotimes A_2)(z_1,z_2,y_1,y_2) (B_1 \uotimes B_2)(y_1,y_2,x_1,x_2) d(y_1, y_2) \\
=& \int_{Y_1 \times Y_2} A_1(z_1,y_1) A_2(z_2,y_2) B_1(y_1,x_1) B_2(y_2,x_2) d(y_1, y_2) \\
=& \bigg( \int_{Y_1} A_1(z_1,y_1) B_1(y_1,x_1) dy_1 \bigg) \cdot \bigg( \int_{Y_2} A_2(z_2,y_2) B_2(y_2,x_2) dy_2 \bigg) \\
=& ((A_1B_1) \uotimes (A_2B_2))(x_1,x_2,z_1,z_2),
\end{align*}
where in the fourth line we used Fubini's theorem (Corollary~\ref{cor:fubini}). We thus find
\begin{displaymath}
(A_1 \uotimes A_2)(B_1 \uotimes B_2) = (A_1B_1) \uotimes (A_2B_2),
\end{displaymath}
whic shows that $\uotimes$ is compatible with composition. We have thus shown that $\uotimes$ is a bi-functor. It is clear from the definition that $\uotimes$ is $k$-bilinear.

To complete the proof, we must construct the associativity, commutativity, and unital constraints on $\uotimes$. These are induced from the corresponding structure on the cartesian product $\times$ on the category $\cS(G)$ of finitary $G$-sets. We omit the details.
\end{proof}

\begin{remark}
There is a natural injective map of $k$-modules
\begin{displaymath}
\cC(X) \otimes_k \cC(Y) \to \cC(X \times Y) = \cC(X) \uotimes \cC(Y).
\end{displaymath}
This map is usually not an isomorphism. Thus the forgetful functor $\uPerm(G) \to \Mod_k$ is lax monoidal, but not monoidal.
\end{remark}

\begin{remark} \label{rmk:Bell}
Suppose $X$ is an infinite $G$-set. Let $A=\{A_1, \ldots, A_r\}$ be a partition of the set $[n]$, i.e., the $A_i$ are disjoint non-empty subsets of $[n]$ whose union is $[n]$. Consider the subset $X^n(A)$ of $X^n$ consisting of those tuples $(x_1, \ldots, x_n)$ such that $x_i=x_j$ if and only if $i$ and $j$ belong to the same part of $A$. This is non-empty since $X$ is infinite. We have a decomposition
\begin{displaymath}
X^n = \coprod_A X^n(A).
\end{displaymath}
Thus the number of orbits on $X^n$ is at least the number of set-partitions of $[n]$, which the Bell number $B_n$. It follows that $\Vec^{\uotimes n}_X=\Vec_{X^n}$ decomposes into a direct sum of at least $B_n$ non-zero subobjects. Since $B_n$ grows super-exponentially, we see that the tensor powers of $\Vec_X$ also grow at this rate (in the sense of direct sum decompositions just discussed).
\end{remark}

\subsection{Duality} \label{ss:perm-dual}

Recall that if $X$ is an object in a tensor category, a \defn{dual} of $X$ is an object $Y$ together with maps
\begin{displaymath}
\ev \colon X \otimes Y \to \bbone, \qquad
\cv \colon \bbone \to X \otimes Y,
\end{displaymath}
called \defn{evaluation} and \defn{co-evaluation}, which satisfy certain conditions (see \cite[Definition~2.10.1]{Etingof}). If $X$ admits a dual then it is unique (up to canonical isomorphism), and denoted $X^{\vee}$; in this case, $X$ is called \defn{rigid}. The tensor category itself is called \defn{rigid} if every object admits a dual.

We now investigate duality in $\uPerm(G)$. For a finitary $G$-set $X$, define matrices
\begin{displaymath}
\ev_X \in \Mat_{X \times X, \bone}, \qquad \cv_X \in \Mat_{\bone, X \times X}
\end{displaymath}
by
\begin{displaymath}
\ev_X((x,y), \ast) = \cv_X(\ast, (x,y))=1_{\Delta}(x,y),
\end{displaymath}
where $\Delta \subset X \times X$ is the diagonal.

\begin{proposition} \label{prop:perm-rigid}
Let $X$ be a finitary $G$-set. Then $\Vec_X$ is self-dual, with evaluation and co-evaluation maps given by $\ev_X$ and $\cv_X$.
\end{proposition}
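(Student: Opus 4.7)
The plan is to verify the two snake (zig-zag) identities characterizing duality, namely
\begin{equation*}
(\id_{\Vec_X} \uotimes \ev_X) \circ (\cv_X \uotimes \id_{\Vec_X}) = \id_{\Vec_X}, \qquad (\ev_X \uotimes \id_{\Vec_X}) \circ (\id_{\Vec_X} \uotimes \cv_X) = \id_{\Vec_X}.
\end{equation*}
First I would check that $\ev_X$ and $\cv_X$ are actually morphisms in $\uPerm(G)$. Since the diagonal $\Delta \subset X \times X$ is $G$-stable and a finitary $\hat{G}$-subset (as $X$ is finitary and $\Delta \cong X$), the function $1_\Delta$ is a $G$-invariant Schwartz function, and hence defines a valid $G$-equivariant matrix.

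Next I would unwind the definitions of $\uotimes$ on morphisms from \S \ref{ss:perm-tensor} to express each composition as a single matrix product. For the first snake identity, the matrix entry of $\cv_X \uotimes \id_{\Vec_X}$ at a pair $((x_1,x_2,x_3),w)$ is $1_\Delta(x_1,x_2) \cdot 1_\Delta(x_3,w)$, and similarly $\id_{\Vec_X} \uotimes \ev_X$ has entry $1_\Delta(y_1,y_2) \cdot 1_\Delta(y_3,y_4)$ reading the source and target off in the appropriate order. Composing these via the matrix multiplication of \S \ref{s:matrix} produces an integral over the intermediate factor $X \times X \times X$, which Fubini's theorem (Corollary~\ref{cor:fubini}) lets us reduce to iterated one-dimensional integrals.

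The essential calculation is then
\begin{equation*}
\int_X 1_\Delta(a,b) \cdot 1_\Delta(b,c) \, db = 1_\Delta(a,c),
\end{equation*}
valid for any $a, c \in X$. To see this, observe that the integrand, as a function of $b$, is the indicator function of $\{a\} \cap \{c\}$: it is identically zero when $a \neq c$, and equal to the indicator of the singleton $\{a\}$ when $a = c$. In either case it is a Schwartz function, and since $\mu(\{a\}) = 1$ by the normalization axiom \dref{defn:measure}{b}, the integral evaluates to $1_\Delta(a,c)$ as claimed. After iterating this identity over the two remaining $X$-integrals produced by Fubini, each composition becomes the matrix $1_\Delta$ on $X \times X$, which is precisely $I_X = \id_{\Vec_X}$. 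The second snake identity is verified by the symmetric computation.

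The only real obstacle is notational: keeping track of the numerous indices and the order of the tensor factors. The underlying measure-theoretic content is a one-line statement about integrating two delta functions on the diagonal, which is exactly the feature of our integration theory that makes $\uPerm(G;\mu)$ rigid without any hypothesis on $\mu$ beyond the basic axioms.
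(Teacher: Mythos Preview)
Your proposal is correct and follows essentially the same approach as the paper: both verify the snake identity by writing out the matrices for $\cv_X \uotimes \id$ and $\id \uotimes \ev_X$ explicitly, forming their product as an integral over $X^3$, and reducing via the elementary identity $\int_X 1_\Delta(a,b)\,1_\Delta(b,c)\,db = 1_\Delta(a,c)$. The paper compresses this last step to ``one easily sees that $A=I_X$,'' whereas you spell it out; otherwise the arguments are the same.
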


\begin{proof}
We must show that the composition
\begin{displaymath}
\xymatrix@C=3em{
\Vec_X \ar[r]^-{\cv \uotimes \id} &
\Vec_{X \times X \times X} \ar[r]^-{\id \uotimes \ev} &
\Vec_X }
\end{displaymath}
is the identity. The matrices $\id \uotimes \ev$ and $\cv \uotimes \id$ are the functions on $X^4$ which map $(x_1,x_2,x_3,x_4)$ to $1_{\Delta}(x_2,x_3) 1_{\Delta}(x_1,x_4)$ and $1_{\Delta}(x_1,x_4) 1_{\Delta}(x_2,x_3)$. The product matrix $A \in \Mat_{X,X}$ is given by
\begin{displaymath}
A(y,z) = \int_{X \times X \times X} 1_{\Delta}(y,x_4) 1_{\Delta}(x_2,x_3) 1_{\Delta}(x_3, x_4) 1_{\Delta}(x_2, z)  dx_2 dx_3 dx_4.
\end{displaymath}
One easily sees that $A=I_X$, as required.
\end{proof}

\begin{corollary}
$\uPerm(G)$ is a rigid tensor category.
\end{corollary}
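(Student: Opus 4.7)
The plan is to deduce this corollary almost immediately from Proposition~\ref{prop:perm-rigid}. By Definition~\ref{defn:permcat}, the objects of $\uPerm(G)$ are precisely the symbols $\Vec_X$, one for each finitary $G$-set $X$. Since Proposition~\ref{prop:perm-rigid} shows that each such $\Vec_X$ admits a dual (namely itself, with evaluation and coevaluation given by $\ev_X$ and $\cv_X$), every object of $\uPerm(G)$ is rigid, and hence $\uPerm(G)$ is a rigid tensor category by definition.

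The only minor point to mention is that the additive structure is compatible: since $\Vec_X \oplus \Vec_Y = \Vec_{X \amalg Y}$, any object built by direct sums is again of the form $\Vec_Z$ for some finitary $G$-set $Z$, and therefore already covered by the proposition. If one prefers not to invoke this, one can observe that in any tensor category the class of rigid objects is closed under finite direct sums (with $(A \oplus B)^{\vee} = A^{\vee} \oplus B^{\vee}$ and the evident evaluation/coevaluation), so even passing to a skeleton or a larger additive completion would preserve rigidity.

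There is essentially no obstacle here; the substantive work was already done in Proposition~\ref{prop:perm-rigid}, where the duality axioms were verified via a direct computation with the integration theory from Section~\ref{s:int}. The corollary is a one-line consequence: every object is $\Vec_X$ for some finitary $G$-set $X$, every such object is rigid, therefore $\uPerm(G)$ is rigid.
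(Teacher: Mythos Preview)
Your proof is correct and matches the paper's approach: the corollary is stated without proof in the paper, since it follows immediately from Proposition~\ref{prop:perm-rigid} and the fact that every object of $\uPerm(G)$ is by definition of the form $\Vec_X$. Your remark about direct sums is harmless but unnecessary, since Definition~\ref{defn:permcat} already stipulates that the objects are exactly the $\Vec_X$.
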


For a morphism $f$ between rigid objects in a tensor category, there is a canonical dual morphism $f^{\vee}$. We now explicitly  determine how this works in $\uPerm(G)$.

\begin{proposition} \label{prop:perm-dual}
Let $X$ and $Y$ be finitary $G$-sets and let $A \in \Mat_{Y,X}$. The $A^{\vee}=A^t$, where $A^{\vee} \colon \Vec_Y \to \Vec_X$ denotes the morphism dual to $A$, and $A^t \in \Mat_{X,Y}$ denotes the transpose matrix of $A$.
\end{proposition}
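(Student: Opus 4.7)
The plan is to unwind the categorical definition of $A^\vee$ and then compute, in the most direct way possible, the Schwartz function on $X \times Y$ that represents the resulting composition.

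Recall that for a morphism $A \colon \Vec_X \to \Vec_Y$ between self-dual objects with evaluation and coevaluation given by $\ev_X$, $\cv_X$, $\ev_Y$, $\cv_Y$ as in Proposition~\ref{prop:perm-rigid}, the dual morphism $A^\vee \colon \Vec_Y \to \Vec_X$ is by definition the composite
\begin{displaymath}
\xymatrix@C=2.2em{
\Vec_Y \ar[r]^-{\id \uotimes \cv_X} & \Vec_Y \uotimes \Vec_X \uotimes \Vec_X \ar[r]^-{\id \uotimes A \uotimes \id} & \Vec_Y \uotimes \Vec_Y \uotimes \Vec_X \ar[r]^-{\ev_Y \uotimes \id} & \Vec_X,}
\end{displaymath}
after using the unit constraints to identify $\Vec_Y$ with $\Vec_Y \uotimes \bbone$ and $\bbone \uotimes \Vec_X$ with $\Vec_X$. (These constraints just amount to bijections of the indexing sets on the nose.) I will compute the image of an arbitrary vector $v \in \Vec_Y$ under this composition and check it agrees with $A^t v$; by Proposition~\ref{prop:matrix-faithful}, this forces the underlying matrix of $A^\vee$ to equal $A^t$.

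First I would apply $\id \uotimes \cv_X$ to $v$, obtaining the Schwartz function on $Y \times X \times X$ whose value at $(y,x,x')$ is $v(y) \cdot 1_\Delta(x,x')$. Next, applying $\id \uotimes A \uotimes \id$ amounts to integrating in the middle variable against $A$; using Fubini (Corollary~\ref{cor:fubini}) to commute the point-mass-like factor $1_\Delta$ out, I get the function on $Y \times Y \times X$ sending $(y,y'',x')$ to $A(y'',x')\,v(y)$. Finally, $\ev_Y \uotimes \id$ integrates the two $Y$ variables against $1_\Delta$, collapsing $y''$ to $y$ and giving the function on $X$
\begin{displaymath}
x \longmapsto \int_Y A(y,x)\, v(y)\, dy,
\end{displaymath}
which is precisely $(A^t v)(x)$ in the sense of the matrix-vector product from \S\ref{s:matrix}.

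Since $v$ was arbitrary, the matrix representing $A^\vee$ acts on vectors identically to $A^t$, and hence equals $A^t$ by Proposition~\ref{prop:matrix-faithful}. I do not anticipate a real obstacle: the only substantive step is the single use of Fubini's theorem to interchange the integral defining $(\id \uotimes A \uotimes \id)$ with the constant $1_\Delta$, and the bookkeeping is entirely formal given the explicit descriptions of $\ev_X$ and $\cv_X$ in \S\ref{ss:perm-dual}. One small thing worth flagging, but handled en route, is that $\ev_X$ and $\cv_X$ are genuinely $G$-invariant matrices (the diagonal of $X \times X$ is a $G$-stable subset), so all three morphisms in the composition lie in $\uPerm(G;\mu)$.
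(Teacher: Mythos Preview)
Your proof is correct and takes essentially the same approach as the paper: both unwind the categorical definition of $A^\vee$ as the composite $(\ev_Y \uotimes \id) \circ (\id \uotimes A \uotimes \id) \circ (\id \uotimes \cv_X)$ and compute explicitly. The only cosmetic difference is that the paper computes the matrix entry $A^\vee(x,y)$ directly as a single integral over $Y^3 \times X^3$, whereas you track the action on a vector $v$ step by step and then invoke Proposition~\ref{prop:matrix-faithful}; the underlying integrals are the same.
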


\begin{proof}
By definition (see the discussion following \cite[Proposition~2.10.5]{Etingof}), $A^{\vee}$ is the following composition
\begin{displaymath}
\xymatrix@C=5em{
\Vec_Y \ar[r]^-{\id \uotimes \cv} &
\Vec_{Y \times X \times X} \ar[r]^-{\id \uotimes A \uotimes \id} &
\Vec_{Y \times Y \times X} \ar[r]^-{\ev \uotimes \id} & \Vec_X }
\end{displaymath}
where we have omitted subscripts for readability. Call the above maps $B_1$, $B_2$, and $B_3$. Thus $A^{\vee}=B_3B_2B_1$. Letting $\Delta$ be the diagonal of $X$ and $\Gamma$ the diagonal of $Y$, we have
\begin{align*}
B_1(y_1,x_1,x_2;y_2) &= 1_{\Gamma}(y_1,y_2) 1_{\Delta}(x_1,x_2) \\
B_2(y_1,y_2,x_1;y_3,x_2,x_3) &= 1_{\Gamma}(y_1,y_3) A(y_2,x_2) 1_{\Delta}(x_1,x_3) \\
B_3(x_1;y_1,y_2,x_2) &= 1_{\Gamma}(y_1,y_2) 1_{\Delta}(x_1,x_2).
\end{align*}
From the expression
\begin{displaymath}
A^{\vee}(x,y) = \int_{Y^3 \times X^3} B_3(x;y_2,y_3,x_3) B_2(y_2,y_3,x_3;y_1,x_1,x_2),B_1(y_1,x_1,x_2;y) dy_i dx_i
\end{displaymath}
one easily sees that $A^{\vee}=A^t$, which completes the proof.
\end{proof}

\begin{remark}
In a rigid tensor category, one can define internal Hom, denoted $\uHom$, via the usual tensor-Hom adjunction (see \cite[Definition~1.6]{DeligneMilne}). In $\uPerm(G)$, this is given by
\begin{displaymath}
\uHom(\Vec_X, \Vec_Y) = \Vec_X^{\vee} \uotimes \Vec_Y = \Vec_{X \times Y}. \qedhere
\end{displaymath}
\end{remark}

\subsection{Trace}

Recall that there is a notion of trace for an endomorphism of a rigid object in a tensor category (see \cite[(1.7.3)]{DeligneMilne} or \cite[\S 4.7]{Etingof}); we refer to this as the \defn{categorical trace}, and denote it $\utr$. Categorical trace plays an important role in the study of a tensor category, and so it is important to understand well. The following proposition computes the categorical trace for $\uPerm(G)$.

\begin{proposition} \label{prop:trace-formula}
Let $X$ be a finitary $G$-set and let $A \in \Mat_X^G$. Then $\utr(A)=\tr(A)$. That is, the categorical trace of $A$ as an endomorphism of $\Vec_X$ is equal to the trace of the matrix $A$ as defined in \S \ref{ss:matrix-trace}.
\end{proposition}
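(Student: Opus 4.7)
The plan is to unwind the definition of categorical trace in the rigid symmetric tensor category $\uPerm(G)$ using the explicit evaluation and coevaluation maps given in Proposition~\ref{prop:perm-rigid} and Proposition~\ref{prop:perm-dual}, and then carry out the resulting matrix product. Recall that for a rigid object $V$ with self-duality $V = V^{\vee}$, the categorical trace $\utr(A)$ of $A \colon V \to V$ is defined as the composition
\begin{displaymath}
\bbone \xrightarrow{\cv_X} \Vec_X \uotimes \Vec_X \xrightarrow{A \uotimes \id} \Vec_X \uotimes \Vec_X \xrightarrow{\ev_X} \bbone,
\end{displaymath}
where we have used the symmetry of $\uotimes$ together with the fact that $\ev_X$ and $\cv_X$ are symmetric in the two $X$-factors to eliminate the intermediate swap morphism.

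Next I would compute the two matrix products required. Using $(A \uotimes \id)((y_1,y_2),(x_1,x_2)) = A(y_1,x_1) \cdot 1_{\Delta}(y_2,x_2)$ from \S \ref{ss:perm-tensor} and $\cv_X((x_1,x_2),\ast) = 1_{\Delta}(x_1,x_2)$, the composition $(A \uotimes \id) \circ \cv_X$ is the $(X \times X) \times \bone$ matrix whose entry at $((y_1,y_2),\ast)$ equals
\begin{displaymath}
\int_{X \times X} A(y_1,x_1) 1_{\Delta}(y_2,x_2) 1_{\Delta}(x_1,x_2)\, d(x_1,x_2) = A(y_1,y_2),
\end{displaymath}
by Fubini (Corollary~\ref{cor:fubini}) and the observation that integrating $1_{\Delta}(y,x)$ against a Schwartz function of $x$ with $y$ fixed gives evaluation at $y$ (a single point has measure~$1$). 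Composing with $\ev_X$ then yields the scalar
\begin{displaymath}
\utr(A) = \int_{X \times X} 1_{\Delta}(y_1,y_2) \, A(y_1,y_2) \, d(y_1,y_2).
\end{displaymath}

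Finally I would reduce this last integral to $\int_X A(x,x)\, dx$. The diagonal $\Delta \subset X \times X$ is a $\hat{G}$-subset which is isomorphic to $X$ as a $\hat{G}$-set via the first projection, so the integrand $1_{\Delta} \cdot A$ is the extension by zero of the restriction $A|_{\Delta}$, and the base-change/pushforward formalism of \S \ref{ss:push} (concretely, Proposition~\ref{prop:push-trans} applied to the inclusion $\Delta \hookrightarrow X \times X$ and the terminal map, together with isomorphism-invariance of integrals) gives
\begin{displaymath}
\int_{X \times X} 1_{\Delta}(y_1,y_2) A(y_1,y_2)\, d(y_1,y_2) = \int_{\Delta} A|_{\Delta} = \int_X A(x,x)\, dx = \tr(A).
\end{displaymath}
There is no real obstacle here; the only point requiring mild care is verifying that restricting a Schwartz function to a $\hat{G}$-subset and then integrating matches integrating against the indicator, which is immediate from the elementary-function presentation of Schwartz space in \S \ref{ss:schwartz} and the definition of measure.
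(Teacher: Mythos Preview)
Your proof is correct and takes essentially the same approach as the paper: both unwind the definition of categorical trace using the explicit $\ev_X$, $\cv_X$ from Proposition~\ref{prop:perm-rigid} and reduce the resulting matrix product to $\int_X A(x,x)\,dx$. The only cosmetic difference is that the paper writes the triple product $\ev_X \cdot (A \uotimes \id) \cdot \cv_X$ as a single integral over $X^4$ and simplifies in one step, whereas you compute $(A \uotimes \id)\circ \cv_X$ first and then apply $\ev_X$.
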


\begin{proof}
By definition, the categorical trace of $A$ is the unique element $a$ of $k$ such that the composition
\begin{displaymath}
\xymatrix@C=4em{
\bbone \ar[r]^-{\cv} &
\Vec_{X \times X} \ar[r]^-{A \uotimes \id} &
\Vec_{X \times X} \ar[r]^-{\ev} &
\bbone }
\end{displaymath}
is multiplication by $a$. Identifying $\bone \times \bone$ matrices with scalars, we therefore have
\begin{align*}
\utr(A)
&= \ev_X \cdot (A \uotimes \id) \cdot \cv_X \\
&=\int_{X^4} 1_{\Delta}(x_1,x_2) A(x_1,x_3) 1_{\Delta}(x_2,x_4) 1_{\Delta}(x_3,x_4) dx_1 dx_2 dx_3 dx_4 \\
&= \int_X A(x,x) dx = \tr(A),
\end{align*}
which completes the proof.
\end{proof}

\begin{corollary} \label{cor:cat-tr-nilp}
Suppose $\mu$ satisfies (P) (see Definition~\ref{defn:P}) and $k$ is reduced. Then any nilpotent endomorphism of $\Vec_X$ has categorical trace equal to zero.
\end{corollary}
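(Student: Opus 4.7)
The proof is essentially an immediate combination of two results stated just before the corollary. Let me sketch how I would present it.

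First, I would unwind what "nilpotent endomorphism of $\Vec_X$" means in the category $\uPerm(G)$. An endomorphism of $\Vec_X$ is by definition a $G$-invariant matrix $A \in \Mat_X^G$, and composition in $\uPerm(G)$ is matrix multiplication in $\Mat_X$. So if $A$ is nilpotent as an endomorphism of $\Vec_X$, meaning $A^n = 0$ for some $n \ge 1$ under composition, then $A$ is also nilpotent as an element of the algebra $\Mat_X$, in the sense of \S\ref{s:matrix}.

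Next, I would apply Corollary~\ref{cor:tr-nilp}, which states that under the hypotheses that $k$ is reduced and $\mu$ satisfies~(P), any nilpotent matrix $A \in \Mat_X$ has $\tr(A) = 0$. This is precisely the situation at hand.

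Finally, I would invoke Proposition~\ref{prop:trace-formula}, which identifies the categorical trace $\utr(A)$ with the matrix trace $\tr(A)$ for any $A \in \Mat_X^G$. Combining this with the previous step gives $\utr(A) = \tr(A) = 0$, completing the proof. There is no real obstacle here—the corollary is simply the translation of the matrix-theoretic nilpotent trace vanishing (Corollary~\ref{cor:tr-nilp}) into the categorical language, via the trace formula of Proposition~\ref{prop:trace-formula}.
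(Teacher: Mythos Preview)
Your proposal is correct and follows exactly the same approach as the paper: the paper's proof simply notes that $\utr(A)=\tr(A)$ for $A \in \Mat_X^G$ (Proposition~\ref{prop:trace-formula}) and then appeals to Corollary~\ref{cor:tr-nilp}. Your version spells out the same two steps in slightly more detail.
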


\begin{proof}
Since $\utr(A)=\tr(A)$ for $A \in \Mat_X^G$, this follows from Theorem~\ref{thm:tr-nilp}.
\end{proof}

Recall that the \defn{categorical dimension} of a rigid object is the categorical trace of its identity map; it is an element of the coefficient ring $k$. We have:

\begin{corollary} \label{cor:cat-dim}
The categorical dimension of $\Vec_X$ is $\mu(X)$.
\end{corollary}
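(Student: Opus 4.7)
The plan is to deduce this as an immediate consequence of Proposition~\ref{prop:trace-formula} combined with the observation that the identity morphism of $\Vec_X$ in $\uPerm(G)$ is represented by the identity matrix $I_X \in \Mat_X^G$. By definition, the categorical dimension of $\Vec_X$ is the categorical trace $\utr(\id_{\Vec_X})$ of its identity endomorphism.

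First I would note that under the identification of morphisms $\Vec_X \to \Vec_X$ with $G$-invariant $X \times X$ matrices, the identity endomorphism corresponds to $I_X$. Then by Proposition~\ref{prop:trace-formula}, we have $\utr(I_X) = \tr(I_X)$, where the right-hand side is the matrix trace defined in \S\ref{ss:matrix-trace}.

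It then remains to compute $\tr(I_X)$ directly from its definition:
\begin{displaymath}
\tr(I_X) = \int_X I_X(x,x)\, dx = \int_X 1_{\Delta}(x,x)\, dx = \int_X 1\, dx = \mu(X),
\end{displaymath}
where the last equality is the definition of integration against $\mu$ applied to the constant function $1 = 1_X$, giving $\mu(X)$ by \dref{defn:measure}{b},(c) applied to the defining property of the integral (Proposition~\ref{prop:integral}). There is no real obstacle here; the work is entirely packaged into Proposition~\ref{prop:trace-formula}, and this corollary is simply the specialization of that identity to $A = I_X$.
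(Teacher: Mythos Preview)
Your proof is correct and matches the paper's intended approach: the corollary is stated immediately after Proposition~\ref{prop:trace-formula} precisely because it follows by specializing that proposition to $A = I_X$ and noting $\tr(I_X) = \int_X 1\,dx = \mu(X)$.
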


\begin{remark} \label{rmk:abenv}
We now explain the significance of the above results. Let $k$ be a field. Suppose that $\cC$ is a $k$-linear abelian rigid tensor category in which all objects have finite length. Then $\cC$ has the following two features:
\begin{enumerate}
\item The categorical trace of any nilpotent endomorphism is~0.
\item If $k$ has positive characteristic $p$ then the categorical dimension of any object belongs to $\bF_p$ \cite[Exercise~9.9.9(ii)]{Etingof}, and admits a canonical lift to $\bZ_p$ \cite[Definition~2.7]{EtingofHarmanOstrik} (specifically, we have in mind the exterior $p$-adic dimension).
\end{enumerate}
We thus see that if $\cC$ is a $k$-linear additive rigid tensor category then (a) and (b) are necessary conditions for $\cC$ to admit a ``good'' abelian envelope. Corollary~\ref{cor:cat-tr-nilp} shows that $\uPerm(G)$ satisfies (a), at least assuming~(P). Corollary~\ref{cor:cat-dim} and Theorem~\ref{thm:binom} show that it satisfies (b); better, Proposition~\ref{prop:idemp-p} shows that the Karoubian envelope of $\uPerm(G)$ satisfies (b). We therefore know no obstruction to $\uPerm(G)$ admitting a good abelian envelope. In Part~\ref{part:genrep}, we construct such an envelope under some conditions on $\mu$. Note that the comments here do not apply in the relative case, in general.
\end{remark}

\section{Classification of linearizations} \label{s:linear}

\subsection{Overview}

Let $G$ be a pro-oligomorphic group. The category $\cS(G)$ of finitary smooth $G$-sets can be thought of as ``non-linear tensor category,'' and our category $\uPerm(G; \mu)$ can be seen as a linear version of it. The purpose of \S \ref{s:linear} is to make this idea precise.

We introduce a rigorous notion of a ``linearization'' of $\cS(G)$ (Definition~\ref{defn:linear}). Roughly speaking, a linearization is a tensor category $\cT$ with the same objects as $\cS(G)$, and where disjoint union and cartesian product in $\cS(G)$ are transformed into direct sum and tensor product in $\cT$. The precise definition is a little more technical, but it is important to underscore that the definition is rather weak (in the sense that it demands little) and, in particular, does not make any reference to the notion of measure or the ring $\Theta(G)$.

Our main theorem (Theorem~\ref{thm:linear}) asserts that every linearization of $\cS(G)$ is of the form $\uPerm(G; \mu)$ for some measure $\mu$. The theorem can be reformulated as the following moduli-theoretic description of $\Theta(G)$ (see Corollary~\ref{cor:linear}):
\begin{displaymath}
\textit{$\Spec{\Theta(G)}$ is the space of linearizations of $\cS(G)$.}
\end{displaymath}
We therefore see that the concept of measure and the ring $\Theta(G)$ are, in a sense, inevitable. For this reason, we view Theorem~\ref{thm:linear} as philosophically important. It is also practically important: it is central to our classification of discrete pre-Tannakian categories in \cite{discrete}.

The proofs of the results in \S \ref{s:linear} are rather lengthy. We therefore state the main definitions and results in \S \ref{ss:balanced}--\ref{ss:linear}, and defer the proofs to the end of the section.

\subsection{Balanced functors} \label{ss:balanced}

Our first goal is to describe the $k$-linear functors out of $\uPerm(G; \mu)$ in terms of the category $\cS(G)$. We have two natural functors from $\cS(G)$ to $\uPerm(G; \mu)$: on objects, each takes $X$ to $\Vec_X$, but on morphisms one takes $f$ to $A_f$ and the other (which is contravariant) takes $f$ to $B_f$ (see \S \ref{ss:alpha-beta}). Thus if we have a $k$-linear functor $\uPerm(G; \mu) \to \cT$, for some $k$-linear category $\cT$, we obtain, by composition, two functors from $\cS(G)$ to $\cT$. This is the motivation for Definition~\ref{defn:balanced} below.

Before giving the definition, we introduce some notation. For a category $\cC$, we let $\cC^{\circ}$ be the maximal groupoid in $\cC$, i.e., the category with the same objects but with morphisms being isomorphisms in $\cC$. We note that there is a canonical isomorphism of categories $\cC^{\circ}=(\cC^{\circ})^{\op}$.

\begin{definition} \label{defn:balanced}
A \defn{balanced functor} $\cS(G) \to \cT$ is a pair $(\Phi, \Phi')$ of functors $\Phi \colon \cS(G) \to \cT$ and $\Phi' \colon \cS(G)^{\op} \to \cT$ that have \emph{equal} restriction to $\cS(G)^{\circ}=(\cS(G)^{\circ})^{\op}$.
\end{definition}

Explicitly, to give a balanced functor we must give for each finitary $G$-set $X$ an object $\Phi(X)$ of $\cT$, and for each $G$-morphism $f \colon X \to Y$ two morphisms
\begin{displaymath}
\alpha_f \colon \Phi(X) \to \Phi(Y), \qquad
\beta_f \colon \Phi(Y) \to \Phi(X).
\end{displaymath}
Here we write $\alpha_f$ in place of $\Phi(f)$ and $\beta_f$ in place of $\Phi'(f)$; we will prefer this notation in much of what follows. Formation of $\alpha$ and $\beta$ must be compatible with composition, and we must have $\alpha_f=\beta_f^{-1}$ whenever $f$ is an isomorphism. The notion of isomorphism of balanced functor is evident.

\begin{example} \label{ex:balanced}
Let $\mu$ be a $k$-valued measure for $G$. We then have a balanced functor
\begin{displaymath}
\Phi_{\mu} \colon \cS(G) \to \uPerm(G; \mu)
\end{displaymath}
given on objects by $\Phi_{\mu}(X)=\Vec_X$ and on morphisms by $\alpha_f=A_f$ and $\beta_f=B_f$, where $A$ and $B$ are as in \S \ref{ss:alpha-beta}. This is the motivating example.
\end{example}

We now introduce a few conditions on a balanced functor $\Phi$:
\begin{itemize}
\item We say that $\Phi$ is \defn{additive} if the following holds. Let $X$ and $Y$ be finitary $G$-sets and let $i \colon X \to X \amalg Y$ and $j \colon Y \to X \amalg Y$ be the natural maps. Then we demand
\begin{displaymath}
\beta_i \alpha_i = \id_{\Phi(X)}, \quad \beta_i \alpha_j = 0, \quad
\beta_j \alpha_i = 0, \quad \beta_j \alpha_j = \id_{\Phi(Y)}
\end{displaymath}
\begin{displaymath}
\alpha_i \beta_i + \alpha_j \beta_j = \id_{\Phi(X \amalg Y)}.
\end{displaymath}
If this condition holds then $\Phi(X \amalg Y)$ is the direct sum of $\Phi(X)$ and $\Phi(Y)$, with $\alpha_i$ and $\alpha_j$ giving the inclusions and $\beta_i$ and $\beta_j$ the projections. This condition also implies $\Phi(\emptyset)=0$.
\item We say that $\Phi$ satisfies \defn{base change} if whenever
\begin{displaymath}
\xymatrix{
X' \ar[r]^{g'} \ar[d]_{f'} & X \ar[d]^f \\
Y' \ar[r]^g & Y }
\end{displaymath}
is a cartesian square in $\cS(G)$, we have $\beta_g \alpha_f = \alpha_{f'} \beta_{g'}$.
\item Let $\mu$ be a $k$-valued measure for $G$. We say that $\Phi$ is \defn{$\mu$-adapted} if whenever $f \colon X \to Y$ is a map of transitive $G$-sets we have $\alpha_f \beta_f = c \cdot \id_Y$, where $c$ is the common measure of a fiber of $f$.
\end{itemize}
The functor $\Phi_{\mu}$ introduced in Example~\ref{ex:balanced} is is additive, satisfies base change, and is $\mu$-adapted (Proposition~\ref{prop:alpha-prop}). In fact, it is the universal such functor, as the following proposition spells out. This proposition can also be viewed as a mapping property for $\uPerm(G; \mu)$ (as a $k$-linear category). The proof is given in \S \ref{ss:perm-maps}.

\begin{proposition} \label{prop:perm-maps}
Let $\Phi \colon \cS(G) \to \cT$ be a balanced functor that is additive, satisfies base change, and is $\mu$-adapted. Then there is a unique $k$-linear functor $\Psi \colon \uPerm(G; \mu) \to \cT$ such that $\Phi=\Psi \circ \Phi_{\mu}$ (actual equality).
\end{proposition}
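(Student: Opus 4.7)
The plan is to define $\Psi$ in the only way compatible with $\Phi = \Psi \circ \Phi_\mu$ and check functoriality. On objects this forces $\Psi(\Vec_X) = \Phi(X)$. On morphisms, Proposition~\ref{prop:hom-basis} supplies a canonical $k$-basis for $\Hom(\Vec_X, \Vec_Y)$ indexed by the $G$-orbits $Z \subset Y \times X$: to each such $Z$ with projections $p \colon Z \to X$ and $q \colon Z \to Y$ it associates $C_Z = A_q B_p$. We are thus forced to set $\Psi(C_Z) = \alpha_q \beta_p$ and extend $k$-linearly, which defines a $k$-linear map on morphisms and settles uniqueness of $\Psi$. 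It remains to verify that this prescription respects identities and composition.

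The identity check is easy: for $X = \coprod_j X_j$ the $G$-orbit decomposition, $I_X = \sum_j C_{\Delta_{X_j}}$, and the canonical identification $\Delta_{X_j} \cong X_j$ turns both projections into the inclusion $\iota_j \colon X_j \hookrightarrow X$, so iterated additivity of $\Phi$ gives $\Psi(I_X) = \sum_j \alpha_{\iota_j}\beta_{\iota_j} = \id_{\Phi(X)}$. The substantial step is compatibility with composition, which by $k$-bilinearity reduces to checking $\Psi(C_V C_W) = \Psi(C_V)\Psi(C_W)$ for orbit basis elements $W \subset Y \times X$ (with projections $p,q$) and $V \subset Z \times Y$ (with projections $r,s$). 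Form the cartesian square
\[
\xymatrix@C=3em{
V \times_Y W \ar[r]^-{r'} \ar[d]_-{q'} & W \ar[d]^-q \\
V \ar[r]^-r & Y }
\]
decompose $V \times_Y W = \coprod_k T_k$ into $G$-orbits with inclusions $\iota_k \colon T_k \hookrightarrow V \times_Y W$, set $s_k = sq'\iota_k$ and $p_k = pr'\iota_k$, let $Z'_k \subset Z \times X$ be the (single) $G$-orbit image of $T_k$ under $(s_k,p_k)$ with its projections $s'_k, p'_k$ and resulting surjection $\phi_k \colon T_k \to Z'_k$, and let $c_k$ be the common measure of a fiber of $\phi_k$.

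Repeated application of Proposition~\ref{prop:alpha-prop} (base change, functoriality, the coproduct identity, and the transitive fibre identity) yields, on the $\uPerm(G;\mu)$ side,
\[
C_V C_W \;=\; A_s (B_r A_q) B_p \;=\; A_s A_{q'} B_{r'} B_p \;=\; A_{sq'} B_{pr'} \;=\; \sum_k A_{s_k} B_{p_k} \;=\; \sum_k c_k\, C_{Z'_k},
\]
so $\Psi(C_V C_W) = \sum_k c_k\, \alpha_{s'_k}\beta_{p'_k}$. The strictly parallel manipulations on the $\cT$-side use base change for $\Phi$ (to rewrite $\beta_r \alpha_q = \alpha_{q'}\beta_{r'}$), additivity of $\Phi$ (to insert $\id_{\Phi(V \times_Y W)} = \sum_k \alpha_{\iota_k}\beta_{\iota_k}$), and $\mu$-adaptedness of $\Phi$ (to evaluate $\alpha_{\phi_k}\beta_{\phi_k} = c_k \cdot \id_{\Phi(Z'_k)}$), giving
\[
\Psi(C_V)\Psi(C_W) \;=\; \alpha_s\beta_r\alpha_q\beta_p \;=\; \alpha_{sq'}\beta_{pr'} \;=\; \sum_k \alpha_{s_k}\beta_{p_k} \;=\; \sum_k c_k\, \alpha_{s'_k}\beta_{p'_k},
\]
matching $\Psi(C_V C_W)$ term by term. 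The main obstacle is simply organizing the orbit bookkeeping correctly; conceptually, the three hypotheses on $\Phi$ are tailored to be the exact $\cT$-side counterparts of the identities in Proposition~\ref{prop:alpha-prop}, so each step on one side has its mirror on the other.
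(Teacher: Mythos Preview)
Your proof is correct and follows essentially the same route as the paper's: define $\Psi$ on the orbit basis $C_Z = A_q B_p$ by $\Psi(C_Z) = \alpha_q\beta_p$, then verify functoriality by computing $C_V C_W$ via the fiber-product square, decomposing into orbits, and collapsing each orbit onto its image in $Z \times X$. The paper organizes the argument into four labeled steps (isolating the orbit-splitting and image-factoring identities as separate lemmas before combining them), whereas you run everything inline, but the content is the same. One small omission: you assert uniqueness but do not explicitly verify afterward that your constructed $\Psi$ actually satisfies $\Psi(A_f)=\alpha_f$ and $\Psi(B_f)=\beta_f$ (this needs the observation that for transitive $X$ the graph $\Gamma(f)$ is a single orbit whose projection to $X$ is an isomorphism); the paper checks this in a short paragraph after the composition argument.
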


\subsection{Monoidal balanced functors}

The category $\cS(G)$ is symmetric monoidal under cartesian product. Supposing $\cT$ is a tensor category, we can then consider balanced functors that respect the monoidal structures. Here is the precise definition:

\begin{definition}
A \defn{symmetric monoidal balanced functor} $\cS(G) \to \cT$ is a pair $(\Phi, \Phi')$ of symmetric monoidal functors $\Phi \colon \cS(G) \to \cT$ and $\Phi' \colon \cS(G)^{\op} \to \cT$ that have equal restriction to $\cS(G)^{\circ}=(\cS(G)^{\circ})^{\op}$, as monoidal functors.
\end{definition}

\begin{example}
The balanced functor $\Phi_{\mu} \colon \cS(G) \to \uPerm(G; \mu)$ carries a natural symmetric monoidal structure.
\end{example}

Fix a symmetric monoidal balanced functor $\Phi \colon \cS(G) \to \cT$ that is also additive. For a finitary $G$-set $X$, we define morphisms
\begin{displaymath}
\alpha_X \colon \Phi(X) \to \bbone, \qquad \beta_X \colon \bbone \to \Phi(X)
\end{displaymath}
in $\cT$ by $\alpha_X=\alpha_p$ and $\beta_X=\beta_p$, where $p \colon X \to \bone$ is the canonical map. The next two propositions show that $\Phi$ interacts well with duality. The proofs (and definition of Frobenius algebra) are given in \S \ref{ss:bifunc-frob}.

\begin{proposition} \label{prop:bifunc-frob}
Let $X$ be a finitary $G$-set, let $\Delta \colon X \to X \times X$ be the diagonal map, and put $X'=\Phi(X)$. Then the maps
\begin{displaymath}
\beta_X \colon \bbone \to X', \quad \beta_{\Delta} \colon X' \otimes X' \to X', \quad
\alpha_X \colon X' \to \bbone, \quad \alpha_{\Delta} \colon X' \to X' \otimes X'
\end{displaymath}
define the structure of a Frobenius algebra on $X'$. In particular, $X'$ is rigid and self-dual.
\end{proposition}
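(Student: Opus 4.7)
The plan is to translate each Frobenius axiom into an elementary identity among composites of $\Delta\colon X\to X\times X$ and $p\colon X\to\bone$ in $\cS(G)$, and then push those identities across to $\cT$ using contravariant functoriality of $\Phi'$, covariant functoriality of $\Phi$, the symmetric monoidal structure on both, and the base change axiom. Since the four structure morphisms are built from $\Delta$ and $p$, every identity we need in $\cT$ will descend from a correspondingly simple identity in $\cS(G)$.

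For the algebra $(X',\mu=\beta_\Delta,\eta=\beta_X)$, associativity follows from coassociativity $(\Delta\times\id_X)\circ\Delta=(\id_X\times\Delta)\circ\Delta$ in $\cS(G)$ combined with contravariant functoriality and monoidality of $\Phi'$; the unit law follows from $(p\times\id_X)\circ\Delta=\id_X$ after the canonical identification $\bone\times X\cong X$; commutativity follows from invariance of $\Delta$ under the swap of factors together with the symmetric structure on $\Phi'$. Applying the same three identities via $\Phi$ instead yields the coalgebra axioms for $(X',\alpha_\Delta,\alpha_X)$, giving coassociativity, counitality, and cocommutativity.

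The heart of the proof is the Frobenius compatibility, and this is where base change is essential. The square
\begin{displaymath}
\xymatrix{
X \ar[r]^-{\Delta} \ar[d]_-{\Delta} & X\times X \ar[d]^{\Delta\times\id_X}\\
X\times X \ar[r]_-{\id_X\times\Delta} & X\times X\times X
}
\end{displaymath}
is cartesian in $\cS(G)$: a pair $((a,b),(c,d))\in(X\times X)\times(X\times X)$ with $(a,a,b)=(c,d,d)$ forces $a=b=c=d$, so the pullback is $X$ via the diagonal on each side. Applying base change to this square and then unfolding via monoidality gives
\begin{displaymath}
(\id_{X'}\otimes\beta_\Delta)\circ(\alpha_\Delta\otimes\id_{X'}) \;=\; \alpha_\Delta\circ\beta_\Delta,
\end{displaymath}
which is one of the two Frobenius identities; the mirror-image cartesian square produces the other.

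Rigidity and self-duality are then formal: set $\ev=\alpha_X\circ\beta_\Delta\colon X'\otimes X'\to\bbone$ and $\cv=\alpha_\Delta\circ\beta_X\colon\bbone\to X'\otimes X'$, and verify the two zig-zag identities by a standard diagram chase that feeds Frobenius compatibility into the unit and counit laws; this is the classical fact that any Frobenius algebra in a symmetric monoidal category is self-dual. The main obstacle is isolating the correct cartesian square in the Frobenius step; once the square above is identified, everything else is a routine exercise in symmetric monoidal bookkeeping, and additivity of $\Phi$ plays no role.
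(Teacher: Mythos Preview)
Your argument relies on the base change property of $\Phi$, but base change is \emph{not} among the hypotheses here: the standing assumption before this proposition is only that $\Phi$ is a symmetric monoidal balanced functor that is additive. In the paper's logical flow, base change is established later (Lemma~\ref{lem:linear3}) and its proof goes through Lemma~\ref{lem:linear1}, which in turn depends on Proposition~\ref{prop:alpha-dual} and hence on the Frobenius structure proved here. So your argument, as written, is circular in the context of the paper. Your final remark that ``additivity of $\Phi$ plays no role'' is exactly backwards: additivity is the only structural tool available at this point, and it is what the paper actually uses.

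The paper's route is to exploit the decompositions $X\times X = X \amalg X^{[2]}$ and $X\times X\times X = X \amalg (X^{[2]})^{\amalg 3} \amalg X^{[3]}$ and then track, via additivity, where $\alpha_\Delta\otimes\id$ and $\id\otimes\beta_\Delta$ send each summand; the composite $(\id\otimes\beta_\Delta)\circ(\alpha_\Delta\otimes\id)$ ends up being the identity on the diagonal $\Phi(X)$ summand and zero on $\Phi(X^{[2]})$, which is exactly what $\alpha_\Delta\circ\beta_\Delta$ does. This argument also handles the ``special'' condition $\beta_\Delta\circ\alpha_\Delta=\id_{X'}$ (condition~(d) of Definition~\ref{defn:frob}), which you do not address at all---your proof sketch covers associativity, coassociativity, units, and the Frobenius compatibility, but never the specialness axiom, and that axiom is part of the definition being used. (It is true that specialness too would follow from base change, by pulling $\Delta$ back along itself, but you did not say this and in any case cannot invoke base change here.)
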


\begin{proposition} \label{prop:alpha-dual}
Let $f \colon X \to Y$ be a map of finitary $G$-sets, and put $X'=\Phi(X)$ and $Y'=\Phi(Y)$. Then $\alpha_f \colon X' \to Y'$ and $\beta_f \colon Y' \to X'$ are dual to one another, with respect to the self-dualities of $X'$ and $Y'$ coming from Proposition~\ref{prop:bifunc-frob}.
\end{proposition}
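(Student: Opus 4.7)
The approach is a direct computation using the explicit form of the self-duality provided by Proposition~\ref{prop:bifunc-frob}. For any commutative Frobenius algebra the induced self-duality has the standard form ``multiplication followed by counit'' for evaluation and ``unit followed by comultiplication'' for coevaluation; unpacked in our setting this yields
\[
\ev_X = \alpha_{p_X} \circ \beta_{\Delta_X}, \qquad
\cv_X = \alpha_{\Delta_X} \circ \beta_{p_X},
\]
where $\Delta_X \colon X \to X \times X$ is the diagonal and $p_X \colon X \to \bone$ the unique map.

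Starting from the standard formula
\[
\alpha_f^{\vee} = (\ev_Y \otimes \id_{X'}) \circ (\id_{Y'} \otimes \alpha_f \otimes \id_{X'}) \circ (\id_{Y'} \otimes \cv_X)
\]
for the dual morphism $Y' \to X'$, I would substitute the above expressions for $\ev_Y$ and $\cv_X$ and then use that $\Phi$ and $\Phi'$ are symmetric monoidal to rewrite each tensored morphism as $\alpha$ or $\beta$ applied to a single map of $G$-sets (for example $\id_{Y'} \otimes \alpha_g = \alpha_{\id_Y \times g}$, and analogously for $\beta$). This expresses $\alpha_f^{\vee}$ as a composition of the form $\alpha_a \circ \beta_b \circ \alpha_h \circ \beta_c$, where $a = p_Y \times \id_X$, $b = \Delta_Y \times \id_X$, $c = \id_Y \times p_X$, and $h \colon Y \times X \to Y \times Y \times X$ sends $(y,x)$ to $(y, f(x), x)$.

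The crux is to apply the base change axiom to the cartesian square
\[
\xymatrix@R=1.5em{
X \ar[r]^-{\Gamma_f} \ar[d]_{\Gamma_f} & Y \times X \ar[d]^{h} \\
Y \times X \ar[r]^-{\Delta_Y \times \id_X} & Y \times Y \times X
}
\]
where $\Gamma_f \colon X \to Y \times X$ is the graph $x \mapsto (f(x), x)$. That the square is cartesian is a direct check on points: an element of the fiber product is a pair $((y,x), (y',x'))$ with $(y, f(x), x) = (y', y', x')$, which forces $y = y' = f(x)$ and $x = x'$, so the fiber product is canonically $X$ with both projections equal to $\Gamma_f$. Base change then collapses the middle $\beta_b \circ \alpha_h$ to $\alpha_{\Gamma_f} \circ \beta_{\Gamma_f}$, after which functoriality of $\Phi$ and $\Phi'$ lets the outer $\alpha_a$ and $\beta_c$ absorb $\Gamma_f$: the composites $a \circ \Gamma_f \colon X \to X$ and $c \circ \Gamma_f \colon X \to Y$ are $\id_X$ and $f$ respectively, yielding $\alpha_f^{\vee} = \alpha_{\id_X} \circ \beta_f = \beta_f$.

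The companion identity $\beta_f^{\vee} = \alpha_f$ then follows formally, since duality is involutive in any rigid tensor category. The only real obstacle is bookkeeping: carefully tracking the several monoidal and unit identifications (in particular suppressing the isomorphisms $Y \times \bone \cong Y$ etc.), and then recognizing the specific cartesian square to which base change must be applied. Once this square is in hand, everything else collapses mechanically.
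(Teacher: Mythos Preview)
Your computation is clean and the cartesian square you identified is exactly right; if base change were available the argument would go through as written. The problem is that at this point in the paper the only hypotheses on $\Phi$ are that it is a symmetric monoidal balanced functor and that it is \emph{additive} --- base change has not been assumed (see the setup at the start of \S\ref{ss:bifunc-frob}). Worse, the paper later \emph{derives} base change (Lemma~\ref{lem:linear3}) from the adjunction in Lemma~\ref{lem:linear1}, whose proof in turn rests on Proposition~\ref{prop:alpha-dual}. So invoking base change here would make the argument circular.

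The paper's proof avoids this by using only additivity. It writes $Y \times X = \Gamma \sqcup W$ with $\Gamma$ the (transpose of the) graph of $f$, and then tracks the composite defining $\alpha_f^{\vee}$ through the resulting direct-sum decomposition $\Phi(Y \times X) = \Phi(\Gamma) \oplus \Phi(W)$. The key point is that the middle portion $(\beta_{\Delta_Y} \otimes 1) \circ (1 \otimes \alpha_f \otimes 1) \circ (1 \otimes \alpha_{\Delta_X})$ acts as the identity on $\Phi(\Gamma)$ and as zero on $\Phi(W)$, because the image of $W$ under $\id_Y \times f \times \id_X$ lands in $Y^{[2]} \times X$, where $\beta_{\Delta_Y}$ vanishes by additivity. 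What remains is $\alpha_p \beta_q$ for the projections $p,q$ from $\Gamma$, and since $p$ is an isomorphism and $q = f \circ p$ this is $\beta_f$. In effect, the paper replaces your appeal to base change with an explicit additivity argument tailored to the diagonal decomposition --- which is precisely the tool needed to later \emph{establish} base change.
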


\subsection{The main theorem} \label{ss:linear}

We introduce one final condition on balanced functors:
\begin{itemize}
\item A symmetric monoidal balanced functor $\Phi \colon \cS(G) \to \cT$ is \defn{plenary} if for every transitive $G$-set $X$ the $k$-module $\Hom_{\cT}(\Phi(X), \bbone)$ is free of rank one with basis $\alpha_X$.
\end{itemize}
We now come to the key definition of \S \ref{s:linear}:

\begin{definition} \label{defn:linear}
A \defn{linearization} of $\cS(G)$ is a pair $(\cT, \Phi)$ consisting of a $k$-linear tensor category $\cT$ and a symmetric monoidal balanced functor $\Phi \colon \cS(G) \to \cT$ that is additive, plenary, and essentially surjective.
\end{definition}

We note that $\Phi_{\mu} \colon \cS(G) \to \uPerm(G;\mu)$ is a linearization of $\cS(G)$. The following is the main theorem of \S \ref{s:linear}, and proven in \S \ref{ss:linear-pf} below.

\begin{theorem} \label{thm:linear}
The linearizations of $\cS(G)$ are exactly the categories $\uPerm(G; \mu)$. More precisely, if $\Phi \colon \cS(G) \to \cT$ is a linearization then there exists a $k$-valued measure $\mu$ for $G$ and an equivalence of $k$-linear tensor categories
\begin{displaymath}
\Psi \colon \uPerm(G; \mu) \to \cT,
\end{displaymath}
such that we have $\Phi = \Psi \circ \Phi_{\mu}$ (actual equality). Both $\mu$ and $\Psi$ are unique.
\end{theorem}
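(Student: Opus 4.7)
The plan is to extract a measure $\mu$ for $G$ from the functor $\Phi$, apply Proposition~\ref{prop:perm-maps} to obtain a $k$-linear functor $\Psi\colon\uPerm(G;\mu)\to\cT$, and then verify $\Psi$ is a fully faithful symmetric monoidal equivalence. Applying the plenary condition to the one-point $G$-set $\bone$ (and noting $\Phi(\bone)=\bbone$), one first observes $\End_\cT(\bbone)=k$. For each finitary $G$-set $X$ we may then define $\mu(X):=\alpha_X\beta_X\in k$. This assignment is isomorphism-invariant, normalized ($\mu(\bone)=1$), additive on disjoint unions (from additivity of $\Phi$), $G$-conjugation invariant, and multiplicative on products (from monoidality of $\Phi$). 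To extend to finitary $\hat{G}$-sets, I use the isomorphism $\Theta(G)\cong\Theta'(G)$ of Proposition~\ref{prop:Theta-prime}: it suffices to build a ring homomorphism $\Theta'(G)\to k$, i.e., to assign to each $G$-map $f\colon X\to Y$ with $Y$ transitive a scalar $\mu(f)\in k$ satisfying the relations of Definition~\ref{defn:Theta-prime}.

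The heart of the argument is the following key calculation: when $Y$ is transitive, the endomorphism $\alpha_f\beta_f\in\End_\cT(\Phi(Y))$ is a scalar multiple of $\id_{\Phi(Y)}$, and this scalar will be the desired $\mu(f)$. Consider the cartesian square with top map $(\id,f)\colon X\to X\times Y$, right map $f\times\id_Y$, bottom map $\Delta_Y\colon Y\to Y\times Y$, and left map $f$. Dualizing the base change axiom via Proposition~\ref{prop:alpha-dual} gives $\beta_{f\times\id}\,\alpha_{\Delta_Y}=\alpha_{(\id,f)}\,\beta_f$. Combining this with the Frobenius identity $\cv_{\Phi(Y)}=\alpha_{\Delta_Y}\beta_Y$ (Proposition~\ref{prop:bifunc-frob}) and the monoidality identities $\alpha_f\otimes\id=\alpha_{f\times\id_Y}$, $\beta_f\otimes\id=\beta_{f\times\id_Y}$, a short diagram chase identifies the adjoint of $\alpha_f\beta_f$ with $\alpha_{\Delta_Y}\circ(\alpha_f\beta_X)\in\Hom_\cT(\bbone,\Phi(Y\times Y))$. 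By plenary and self-duality, $\Hom_\cT(\bbone,\Phi(Y))$ is free of rank one on $\beta_Y$; writing $\alpha_f\beta_X=\mu(f)\cdot\beta_Y$, the adjoint of $\alpha_f\beta_f$ becomes $\mu(f)\cdot\cv_{\Phi(Y)}$, whence $\alpha_f\beta_f=\mu(f)\cdot\id_{\Phi(Y)}$.

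Extending $\mu(f)$ additively when $X$ is merely finitary, the five relations of Definition~\ref{defn:Theta-prime} follow: isomorphism invariance, the identity relation, and additivity are immediate; multiplicativity $\mu(gf)=\mu(g)\mu(f)$ is a one-line composition argument; and base change $\mu(f')=\mu(f)$ combines the base change axiom for $\Phi$ with the elementary relation $\beta_{X'}=\beta_{g'}\beta_X$. This yields the measure $\mu$, and the key calculation shows that $\Phi$ is $\mu$-adapted. Proposition~\ref{prop:perm-maps} then produces a unique $k$-linear functor $\Psi\colon\uPerm(G;\mu)\to\cT$ with $\Phi=\Psi\circ\Phi_\mu$.

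To complete the proof, I verify $\Psi$ is a fully faithful symmetric monoidal equivalence. By Proposition~\ref{prop:hom-basis}, $\Hom_{\uPerm}(\Vec_X,\Vec_Y)$ is a free $k$-module with basis $C_i=A_{q_i}B_{p_i}$ indexed by the $G$-orbits $Z_i$ on $X\times Y$. On the other side, $\Hom_\cT(\Phi(X),\Phi(Y))\cong\Hom_\cT(\bbone,\Phi(Y\times X))$ decomposes via additivity, self-duality, and plenary into a free $k$-module of the same rank, with one canonical generator per orbit. The same adjunction and base change argument as above shows that $\Psi(C_i)=\alpha_{q_i}\beta_{p_i}$ corresponds under the adjunction to the canonical generator of the $Z_i$-summand, so $\Psi$ is fully faithful. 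Essential surjectivity is inherited from $\Phi$. The monoidal structure on $\Psi$ comes from the canonical isomorphisms $\Psi(\Vec_X\uotimes\Vec_Y)=\Phi(X\times Y)\cong\Phi(X)\otimes\Phi(Y)$ supplied by $\Phi$'s monoidal structure, with coherences inherited from those of $\Phi$; the equality $\Phi=\Psi\circ\Phi_\mu$ then promotes to one of symmetric monoidal functors. Uniqueness is immediate: $\mu$ is pinned down on $G$-sets by $\mu(X)=\alpha_X\beta_X$, and $\Psi$ is unique by the uniqueness clause of Proposition~\ref{prop:perm-maps}. The main obstacle is the key calculation identifying $\alpha_f\beta_f$ as a scalar multiple of the identity, which requires the full package of additivity, base change, and plenary combined in precisely the right way.
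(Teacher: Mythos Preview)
Your proof has a genuine gap: you repeatedly invoke ``the base change axiom for $\Phi$,'' but base change is \emph{not} part of the definition of a linearization. Definition~\ref{defn:linear} requires only that $\Phi$ be a symmetric monoidal balanced functor that is additive, plenary, and essentially surjective. Base change must be \emph{proved} from these hypotheses before you can use it---and you need it in at least three places: (i) your ``key calculation'' that $\alpha_f\beta_f=\mu(f)\cdot\id_{\Phi(Y)}$ dualizes a base-change identity; (ii) your verification of relation~(e) in Definition~\ref{defn:Theta-prime}; and (iii) the application of Proposition~\ref{prop:perm-maps}, which explicitly assumes base change as a hypothesis.

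The paper addresses this by first establishing a pairing on $[Z,X]$ for $Z$ transitive (Lemma~\ref{lem:linear1}), then using plenary and self-duality to produce an explicit basis of $[X,Y]$ together with a formula expressing arbitrary maps in that basis (Lemma~\ref{lem:linear2}). Base change is then proved by expanding both sides of $\beta_g\alpha_f=\alpha_{f'}\beta_{g'}$ in this basis and comparing coefficients (Lemma~\ref{lem:linear3}). Only after this does the paper construct $\mu$ (via the same formula $\alpha_f\beta_X=\mu(f)\beta_Y$ you use) and verify $\mu$-adaptedness, again via the basis expansion (Lemmas~\ref{lem:linear4} and~\ref{lem:linear5}). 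Your overall strategy---extract $\mu$ via $\Theta'(G)$, prove $\mu$-adaptedness, apply Proposition~\ref{prop:perm-maps}---matches the paper's, but the missing ingredient is precisely this basis-and-pairing machinery that lets you deduce base change from additivity and plenary alone.
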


As a consequence of the theorem, we see that the notion of linearization actually determines the ring $\Theta(G)$:

\begin{corollary} \label{cor:linear}
Consider the functor
\begin{displaymath}
\sL \colon \{ \text{commutative rings} \} \to \{ \text{sets} \}
\end{displaymath}
attaching to a commutative ring $k$ the set of equivalence classes of linearizations of $\cS(G)$ over $k$. Then $\sL$ is represented by $\Theta(G)$.
\end{corollary}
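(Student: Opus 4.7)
The plan is to combine two natural identifications: linearizations with $k$-valued measures (by Theorem~\ref{thm:linear}) and $k$-valued measures with ring homomorphisms $\Theta(G) \to k$ (by construction of $\Theta(G)$). Modulo setting up the functoriality of $\sL$, the corollary is then immediate.

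First I would make $\sL$ functorial by base change. Given a ring homomorphism $\phi \colon k \to k'$ and a linearization $(\cT, \Phi)$ over $k$, define $\phi_* \cT$ to be the $k'$-linear category with the same objects as $\cT$ and morphism spaces $\Hom_{\phi_* \cT}(X, Y) = k' \otimes_k \Hom_\cT(X, Y)$, with composition and monoidal structure obtained by $k'$-linear extension; the balanced functor $\phi_* \Phi$ is the composition of $\Phi$ with the canonical $\cT \to \phi_* \cT$. One verifies that each of the defining properties of a linearization (additivity, plenary-ness, the monoidal and balanced conditions, essential surjectivity) is inherited by $\phi_* \cT$, making $\sL(\phi) \colon \sL(k) \to \sL(k')$ well-defined on equivalence classes.

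Next I would define the natural transformation $\eta_k \colon \sL(k) \to \Hom(\Theta(G), k)$. Given a linearization $(\cT, \Phi)$, Theorem~\ref{thm:linear} produces a unique $k$-valued measure $\mu$ for $G$ together with an equivalence $\Psi \colon \uPerm(G; \mu) \to \cT$ with $\Phi = \Psi \circ \Phi_\mu$. By the universal property of $\Theta(G)$ from \S\ref{s:theta}, the measure $\mu$ is the same datum as a ring homomorphism $\theta_\mu \colon \Theta(G) \to k$, $[X] \mapsto \mu(X)$; set $\eta_k(\cT, \Phi) = \theta_\mu$. The uniqueness of $\mu$ in Theorem~\ref{thm:linear} shows $\eta_k$ is well-defined on equivalence classes, and bijectivity follows because both steps in the chain (linearizations $\leftrightarrow$ measures, and measures $\leftrightarrow$ homomorphisms) are bijections.

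It remains to check naturality in $k$, which reduces to the identity $\sL(\phi)(\uPerm(G; \mu)) \simeq \uPerm(G; \phi \circ \mu)$ as linearizations. This holds because $\Mat_{Y,X}^G(k) = \cC(Y \times X)^G$ is a free $k$-module on the $G$-orbits of $Y \times X$, so $k' \otimes_k \Mat_{Y,X}^G(k) = \Mat_{Y,X}^G(k')$; and because the integration formula defining matrix multiplication is $k$-linear in the measure, matrix multiplication with respect to $\phi \circ \mu$ is the $\phi$-linear extension of matrix multiplication with respect to $\mu$. The balanced functors $\Phi_\mu$ and $\Phi_{\phi \circ \mu}$ also agree under this identification, since the matrices $A_f$ and $B_f$ of \S\ref{ss:alpha-beta} are indicator functions and thus independent of the measure. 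The main bookkeeping obstacle is simply tracking the tensor structure (and not merely the underlying additive category) through base change, but since the tensor product on $\uPerm(G; \mu)$ is defined by $\Vec_X \uotimes \Vec_Y = \Vec_{X \times Y}$ and Kronecker product of matrices, which is likewise measure-independent, this causes no difficulty.
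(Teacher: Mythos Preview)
Your proposal is correct and follows essentially the same approach as the paper: set up functoriality of $\sL$ via base change on $\Hom$ spaces, then use Theorem~\ref{thm:linear} together with the universal property of $\Theta(G)$ to identify $\sL(k)$ with $\Hom(\Theta(G),k)$. The only cosmetic difference is that the paper defines the natural transformation in the direction $\Hom(\Theta(G),k) \to \sL(k)$, sending $\mu$ to the class of $\uPerm(G;\mu)$, which is slightly more economical since this map is defined without invoking the theorem and naturality amounts to the base change identity you verify; the theorem is then used only to conclude bijectivity.
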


\begin{proof}
Given a linearization $(\Phi, \cT)$ of $\cS(G)$ over $k$ and a ring homomorphism $k \to k'$, there is a base change $(\Phi', \cT')$ which is a linearization over $k'$; to form $\cT'$, one simply applies $-\otimes_k k'$ to the $\Hom$ spaces in $\cT$. This base change operation induces map $\sL(k) \to \sL(k')$, which is how $\sL$ is a functor. Next, there is a natural map
\begin{displaymath}
\Hom(\Theta(G), k) \to \sL(k)
\end{displaymath}
taking a measure $\mu$ to the equivalence class of $\uPerm(G;\mu)$. This is easily seen to define a natural transformation. The theorem asserts that it is an isomorphism, which completes the proof.
\end{proof}

\subsection{Proof of Proposition~\ref{prop:perm-maps}} \label{ss:perm-maps}

Let $\cT$ be a $k$-linear category and let $\Phi \colon \cS(G) \to \cT$ be a balanced functor that is additive, satisfies base change, and is $\mu$-adapted. We must produce a $k$-linear functor $\Psi \colon \uPerm(G; \mu) \to \cT$ such that $\Phi=\Psi \circ \Phi_{\mu}$, and show that $\Psi$ is unique.

We begin by defining the functor $\Psi$. On objects, we put $\Psi(\Vec_X)=\Phi(X)$. Let $X$ and $Y$ be finitary $G$-sets, let $Z_1, \ldots, Z_n$ be the $G$-orbits on $X \times Y$, let $p_i \colon Z_i \to X$ and $q_i \colon Z_i \to Y$ be the projections, and let $C_i=A_{q_i}B_{p_i}$. Recall (Proposition~\ref{prop:hom-basis}) that the $C_i$ form a $k$-basis for $\Hom(\Vec_X, \Vec_Y)$. We define
\begin{displaymath}
\Psi \colon \Hom(\Vec_X, \Vec_Y) \to \Hom_{\cT}(\Phi(X), \Phi(Y))
\end{displaymath}
to be the unique $k$-linear map taking $C_i$ to $\alpha_{q_i} \circ \beta_{p_i}$. We now verify that $\Psi$ is a functor. We proceed in four steps. We note that since $\Phi \colon \cS(G) \to \cT$ is arbitrary, whatever we prove about it (and the $\alpha$ and $\beta$ morphisms) also applies to $\Phi_{\mu}$ (and the $A$ and $B$ matrices).

\textit{Step 1.} We first show that $\Psi$ is compatible with identity morphisms. Let $X$ be a finitary $G$-set. The identity map of $\Vec_X$ in $\uPerm(G; \mu)$ is the identity matrix $I_X$. Let $X_1, \ldots, X_n$ be the $G$-orbits on $X$ and let $\Delta_i$ be the diagonal of $X_i$; note that the $\Delta_i$ are the $G$-orbits on $\Delta$. Let $p_i,q_i \colon \Delta_i \to X$ be the two projections. Then, by definition, we have $\Psi(I_X)=\sum_{i=1}^n \alpha_{q_i} \beta_{p_i}$. Let $j_i \colon X_i \to X$ be the inclusion. Write $p_i=j_i \ol{p}_i$, where $\ol{p}_i \colon \Delta_i \to X_i$ is the projection, and similarly write $q_i=j_i \ol{q}_i$. We have
\begin{displaymath}
\alpha_{q_i} \beta_{p_i} = \alpha_{j_i} \alpha_{\ol{q}_i} \beta_{\ol{p}_i} \beta_{j_i} = \alpha_{j_i} \beta_{j_i}.
\end{displaymath}
In the first step we used compatibility of $\alpha$ and $\beta$ with composition. In the second step, we used the identity $\alpha_f \beta_f=\id$ when $f$ is an isomorphism, applied to $f=\ol{p}_i=\ol{q}_i$. We thus see that $\Psi(I_X)=\sum_{i=1}^n \alpha_{j_i} \beta_{j_i}$. This is the identity of $\Phi(X)$ since $\Phi$ is additive.

\textit{Step 2.} Let $p \colon D \to X$ and $q \colon D \to Z$ be maps of finitary $G$-sets. Let $D_1, \ldots, D_n$ be the $G$-orbits on $D$, and let $p_i$ and $q_i$ be the restrictions of $p$ and $q$ to $D_i$. We claim that
\begin{displaymath}
\alpha_q \beta_p = \sum_{i=1}^n \alpha_{q_i} \beta_{p_i}.
\end{displaymath}
Let $j_i \colon D_i \to D$ be the inclusion. Since $\Phi$ is additive, we have $\id_{\Phi(D)}=\sum_{i=1}^n \alpha_{j_i} \beta_{j_i}$. We thus have
\begin{displaymath}
\alpha_q \beta_p = \alpha_q \cdot \id_{\Phi(D)} \cdot \beta_p = \sum_{i=1}^n \alpha_q \alpha_{j_i} \beta_{j_i} \beta_{p_i} = \sum_{i=1}^n \alpha_{q_i} \beta_{p_i},
\end{displaymath}
which proves the claim.

\textit{Step 3.} Suppose $X$ and $Z$ are finitary $G$-sets and we have $G$-equivariant maps $p \colon D \to X$ and $q \colon D \to Z$, with $D$ a transitive $G$-set. Let $\ol{D}$ be the image of $D$ in $X \times Z$, let $\ol{p} \colon D \to X$ and $\ol{q} \colon D \to Z$ be the projections, and let $f \colon D \to \ol{D}$ be the natural map. We thus have a commutative diagram
\begin{displaymath}
\xymatrix@C=4em{
& D \ar[ld]_p \ar[d]^f \ar[rd]^q \\
X & \ol{D} \ar[l]_-{\ol{p}} \ar[r]^-{\ol{q}} & Z }
\end{displaymath}
Since $D$ is transitive, so is $\ol{D}$, and so the fibers of $f$ all have the same measure under $\mu$; call this $c$. We have
\begin{displaymath}
\alpha_q \beta_p = \alpha_{\ol{q}} \alpha_f \beta_f \beta_{\ol{p}} = c \cdot \alpha_{\ol{q}} \beta_{\ol{p}}.
\end{displaymath}
In the first step, we used the compatibility of $\alpha$ and $\beta$ with composition, and in the second step we used that $\Phi$ is $\mu$-adapted.

\textit{Step 4.} We now show that $\Psi$ respects composition. Thus let $A \colon \Vec_X \to \Vec_Y$ and $B \colon \Vec_Y \to \Vec_Z$ be morphisms in $\uPerm(G;\mu)$. We show
\begin{displaymath}
\Psi(BA)=\Psi(B) \circ \Psi(A).
\end{displaymath}
Since $\Psi$ is $k$-linear on $\Hom$ spaces, it suffices to consider the case where $A$ and $B$ are basis vectors as in Proposition~\ref{prop:hom-basis}. We thus suppose $A(y,x)=1_E(x,y)$ and $B(z,y)=1_F(y,z)$, where $E \subset X \times Y$ and $F \subset Y \times Z$ are $G$-orbits. Let $D=E \times_Y F$ be the fiber product, and consider the following commutative diagram:
\begin{displaymath}
\xymatrix{
&& D \ar[ld]_{p_3} \ar[rd]^{q_3} \\
& E \ar[ld]_{p_1} \ar[rd]^{q_1} && F \ar[ld]_{p_2} \ar[rd]^{q_2} \\
X && Y && Z }
\end{displaymath}
Here the $p$'s and $q$'s are the standard projection maps. Put $p_4=p_1 p_3$ and $q_4=q_2q_3$. Let $D_1, \ldots, D_n$ be the $G$-orbits on $D$, let $p_{4,i} \colon D_i \to X$ be the restriction of $p_4$, and let $q_{4,i} \colon D_i \to Z$ be the restriction of $q_4$. Let $\ol{D}_i$ be the image of $D_i$ in $X \times Z$ (under $p_{4,i} \times q_{4,i}$), let $\ol{p}_{4,i} \colon \ol{D}_i \to X$ and $\ol{q}_{4,i} \colon \ol{D}_i \to Z$ be the projection maps, and let $c_i$ be the common measure of the fibers of $D_i \to \ol{D}_i$. We have
\begin{displaymath}
(\alpha_{q_2} \beta_{p_2}) (\alpha_{q_1} \beta_{p_1})
= \alpha_{q_2} \alpha_{q_3} \beta_{p_3}  \beta_{p_1}
= \alpha_{q_4} \beta_{p_4}
= \sum_{i=1}^n \alpha_{q_{4,i}} \beta_{p_{4,i}} = \sum_{i=1}^n c_i \alpha_{\ol{q}_{4,i}} \beta_{\ol{p}_{4,i}}.
\end{displaymath}
In the first step we used base change, in the second compatibility with composition, in the third the identity from Step~2, and in the fourth the identity from Step~3. The same statement holds for the $A$ and $B$ matrices, and shows that $BA = \sum_{i=1}^n c_i \cdot C_i$ where $C_i(z,x)=1_{\ol{D}_i}(x,z)$; note that this is the expression for $BA$ in the basis for $\Hom(\Vec_X, \Vec_Z)$. Compatibility with composition now follows. Indeed
\begin{align*}
\Psi(BA) &= \sum_{i=1}^n c_i \Psi(C_i) = \sum_{i=1}^n c_i \alpha_{\ol{q}_{4,i}} \beta_{\ol{p}_{4,i}} \\
\Psi(B) \circ \Psi(A) &= (\alpha_{q_2} \beta_{p_2})(\alpha_{q_1} \beta_{p_1}) = \sum_{i=1}^n c_i \alpha_{\ol{q}_{4,i}} \beta_{\ol{p}_{4,i}}.
\end{align*}
We have thus shown that $\Psi$ is a functor.

We now show that $\Phi=\Psi \circ \Phi_{\mu}$. On objects this is clear. On morphisms, we must show that $\alpha_f=\Psi(A_f)$ and $\beta_f=\Psi(B_f)$ for any morphism $f \colon X \to Y$ in $\cS(G)$. Since everything is additive, we can reduce to the case where $X$ and $Y$ are transitive. Let $\Gamma(f) \subset X \times Y$ be the graph of $f$ and let $p \colon \Gamma(f) \to X$ and $q \colon \Gamma(f) \to Y$ be the projections. Then $A_f(y,x)=1_{\Gamma(f)}(x,y)$ and so, by definition, we have $\Psi(A_f)=\alpha_q \beta_p$. As $f=qp^{-1}$, we have $\alpha_f=\alpha_q \alpha_{p^{-1}}=\alpha_q \beta_p$. Thus $\Psi(A_f)=\alpha_f$. The proof for $\beta$ is similar.

It is clear that $\Psi$ is the unique $k$-linear functor satisfying $\Phi=\Psi \circ \Phi_{\mu}$. Indeed, this condition determines $\Psi$ on objects and morphisms of the form $A_f$ and $B_f$. Since these morphisms generate all morphisms in $\uPerm(G; \mu)$, it follows that $\Psi$ is determined on all morphisms.

\subsection{Proofs of Propositions~\ref{prop:bifunc-frob} and~\ref{prop:alpha-dual}} \label{ss:bifunc-frob}

Fix a $k$-linear tensor category $\cT$ and a symmetric monoidal balanced functor $\Phi \colon \cS(G) \to \cT$ that is also additive. Before giving the proofs, we recall the notion of Frobenius algebra:

\begin{definition} \label{defn:frob}
A \defn{Frobenius algebra} in $\cT$ is an object $X$ equipped with maps
\begin{displaymath}
\eta \colon \bbone \to X, \quad \mu \colon X \otimes X \to X, \quad \epsilon \colon X \to \bbone, \quad \delta \colon X \to X \otimes X
\end{displaymath}
such that the following conditions hold:
\begin{enumerate}
\item $(X, \mu, \eta)$ is a commutative, associative, unital algebra object.
\item $(X, \delta, \epsilon)$ is a co-commutative, co-associative, co-unital co-algebra object.
\item We have
\begin{displaymath}
(\id_X \otimes \mu) \circ (\delta \otimes \id_X) = \delta \circ \mu = (\mu \otimes \id_X) \circ (\id_X \otimes \delta)
\end{displaymath}
as maps $X \otimes X \to X \otimes X$.
\item We have $\mu \circ \delta=\id_X$. \qedhere
\end{enumerate}
\end{definition}

We note that the above structure is more commonly called a ``special commutative Frobenius algebra.'' We have no need for more general Frobenius algebras, so we stick to this terminology. A Frobenius algebra is naturally self-dual: in the above notation, the evaluation map is given by $\epsilon \circ \mu$, and the co-evaluation map by $\delta \circ \eta$.

\begin{proof}[Proof of Proposition~\ref{prop:bifunc-frob}]
Let $X$ be a finitary $G$-set and put $X'=\Phi(X)$. We must show that the maps
\begin{displaymath}
\beta_X \colon \bbone \to X', \quad \beta_{\Delta} \colon X' \otimes X' \to X', \quad
\alpha_X \colon X' \to \bbone, \quad \alpha_{\Delta} \colon X' \to X' \otimes X'
\end{displaymath}
define the structure of a Frobenius algebra on $X'$. Here $\Delta$ is the diagonal in $X \times X$.

The object $X$ is a co-commutative, co-associative, co-unital co-algebra in $\cS(G)$, with co-multiplication $\Delta$ and co-unit $X \to \bbone$. Applying $\Phi$, we see that $\alpha_X$ and $\alpha_{\Delta}$ endow $X'$ with a similar structure. Applying $\Phi'$, we see that $\beta_X$ and $\beta_{\Delta}$ endow $X'$ with the dual structure.

We have a decomposition $X \times X = X \amalg X^{[2]}$; note that $\Delta$ identifies $X$ with the first factor on the left side. Similarly, we have a decomposition
\begin{displaymath}
X \times X \times X = X \amalg X^{[2]} \amalg X^{[2]} \amalg X^{[2]} \amalg X^{[3]},
\end{displaymath}
where the first $X^{[2]}$ on the right side is the subobject of the left side where the first and second coordinates are equal and the third is different; the second and third $X^{[2]}$'s are defined similarly, but with respect to coordinates $(1,3)$ and $(2,3)$. We thus have a commutative diagram
\begin{displaymath}
\xymatrix@C=4em{
\Phi(X \times X) \ar[r]^-{\alpha_{\Delta} \otimes 1} \ar@{=}[d] & \Phi(X \times X \times X) \ar[r]^-{1 \otimes \beta_{\Delta}} \ar@{=}[d] & \Phi(X \times X) \ar@{=}[d] \\
\Phi(X) \oplus \Phi(X^{[2]}) \ar[r] & \Phi(X) \oplus \Phi(X^{[2]})^{\oplus 3} \oplus \Phi(X^{[3]}) \ar[r] & \Phi(X) \oplus \Phi(X^{[2]}) }
\end{displaymath}
The lower left map is the natural inclusion corresponding to the first copy of $\Phi(X^{[2]})$ in the middle, while the lower right map is the natural projection corresponding to the third copy of $\Phi(X^{[2]})$. It follows that the composition on the bottom is the identity on $\Phi(X)$ and~0 on $\Phi(X^{[2]})$.

Now consider the following commutative diagram:
\begin{displaymath}
\xymatrix@C=4em{
\Phi(X \times X) \ar[r]^-{\beta_{\Delta}} \ar@{=}[d] & \Phi(X) \ar@{=}[d] \ar[r]^-{\alpha_{\Delta}} & \Phi(X \times X) \ar@{=}[d] \\
\Phi(X) \oplus \Phi(X^{[2]}) \ar[r] & \Phi(X) \ar[r] & \Phi(X) \oplus \Phi(X^{[2]}) }
\end{displaymath}
The bottom left map is the projection onto $\Phi(X)$, while the bottom right map is the inclusion of $\Phi(X)$. It follows that the composition on the bottom is the identity on $\Phi(X)$ and~0 on $\Phi(X^{[2]})$.

The above analysis establishes the identity
\begin{displaymath}
(\id_{X'} \otimes \beta_{\Delta}) \circ (\alpha_{\Delta} \otimes \id_{X'}) = \alpha_{\Delta} \circ \beta_{\Delta}
\end{displaymath}
as maps $X' \otimes X' \to X' \otimes X'$. A similar analysis establishes the analogous identity where the tensor factors on the left are switched. The analysis of the second diagram above also shows that $\beta_{\Delta} \circ \alpha_{\Delta}=\id_{X'}$, as the bottom row is the composition of the inclusion and projection of $\Phi(X)$. This completes the proof.
\end{proof}

\begin{proof}[Proof of Proposition~\ref{prop:alpha-dual}]
Let $f \colon X \to Y$ be a map of finitary $G$-sets, and put $X'=\Phi(X)$ and $Y'=\Phi(Y)$. We must show that $\alpha_f \colon X' \to Y'$ and $\beta_f \colon Y' \to X'$ are dual to one another, with respect to the self-dualities of $X'$ and $Y'$ coming from the Frobenius algebra structures.

Write $Y \times X = \Gamma \sqcup W$, where $\Gamma$ is the (transpose of the) graph of $f$, and $W$ is its complement. Let $p$ and $q$ (resp.\ $p'$ and $q'$) be the projection maps from $\Gamma$ (resp.\ $W$) to $X$ and $Y$. By definition, the dual of $\alpha_f$ is the following composition
\begin{equation} \label{eq:alpha-dual}
\resizebox{.9\hsize}{!}{
\xymatrix@C=4em{
Y' \ar[r]^-{1 \otimes \beta_X} &
Y' \otimes X' \ar[r]^-{1 \otimes \alpha_{\Delta}} &
Y' \otimes X' \otimes X' \ar[r]^-{1 \otimes \alpha_f \otimes 1} &
Y' \otimes Y' \otimes X' \ar[r]^-{\beta_{\Delta} \otimes 1} &
Y' \otimes X' \ar[r]^-{\alpha_Y \otimes 1} &
X' }}
\end{equation}
Consider the following diagram of $G$-sets:
\begin{displaymath}
\xymatrix@C=4em{
Y \times X \ar[r]^-{1 \times \Delta} \ar@{=}[d] & Y \times X \times X \ar[r]^-{1 \times f \times 1} & Y \times Y \times X \ar@{=}[d] \\
\Gamma \amalg W \ar[rr]^{\id \amalg h} && \Gamma \amalg W \amalg (Y^{[2]} \times X) }
\end{displaymath}
On the right side, we have decomposed $Y \times Y$ as $Y \amalg Y^{[2]}$, and then further decomposed $Y \times X$. The bottom map is the identity on the $\Gamma$ pieces, and $h$ is some map $W \to Y^{[2]} \times X$. The key point here is that $\Gamma$ on the left maps into $\Delta(Y) \times X \subset Y \times Y \times X$, and $W$ does not. Now, $\beta_{\Delta} \colon Y' \otimes Y' \to Y'$ is the identity on the diagonal $Y'$, and~0 on $\Phi(Y^{[2]})$. We thus see that the following diagram commutes:
\begin{displaymath}
\xymatrix@C=4em{
Y' \otimes X' \ar[r]^-{1 \otimes \alpha_{\Delta}} \ar@{=}[d] &
Y' \otimes X' \otimes X' \ar[r]^-{1 \otimes \alpha_f \otimes 1} &
Y' \otimes Y' \otimes X' \ar[r]^-{\beta_{\Delta} \otimes 1} &
Y' \otimes X' \ar@{=}[d] \\
\Phi(\Gamma) \oplus \Phi(W) \ar[rrr]^{1 \oplus 0} &&& \Phi(\Gamma) \oplus \Phi(W) }
\end{displaymath}
Now, the first map in \eqref{eq:alpha-dual} is simply the $\beta$ map for the projection $Y \times X \to Y$. Thus, decomposing $Y' \otimes X'$ as $\Phi(\Gamma) \oplus \Phi(W)$, this map becomes $\beta_q \oplus \beta_{q'}$. Similarly, with this decomposition, the final map in \eqref{eq:alpha-dual} becomes $\alpha_p \oplus \alpha_{p'}$.

Putting all this together, we see that $\alpha_f^{\vee}$, i.e., the composition of \eqref{eq:alpha-dual}, is equal to the following composition
\begin{displaymath}
\xymatrix@C=4em{
Y' \ar[r]^-{\beta_q \oplus \beta_{q'}} &
\Phi(\Gamma) \oplus \Phi(W) \ar[r]^-{1 \oplus 0} &
\Phi(\Gamma) \oplus \Phi(W) \ar[r]^-{\alpha_p \oplus \alpha_{p'}} &
X' }
\end{displaymath}
Thus $\alpha_f^{\vee}=\alpha_p \beta_q$. Now, $q=fp$, and so $\beta_q=\beta_p \beta_f$. Since $p$ is an isomorphism, we thus have $\beta_f=\alpha_p \beta_q$. We have thus shown $\alpha_f^{\vee}=\beta_f$, which completes the proof.
\end{proof}

\subsection{Proof of Theorem~\ref{thm:linear}} \label{ss:linear-pf}

Fix a linearization $\Phi \colon \cS(G) \to \cT$. For finitary $G$-sets $X$ and $Y$, we put
\begin{displaymath}
[X,Y]=\Hom_{\cT}(\Phi(X), \Phi(Y)).
\end{displaymath}
We must show that $\cT$ is equivalent to $\uPerm(G;\mu)$ for some measure $\mu$. The plan is as follows:
\begin{enumerate}
\item We define a pairing on $[Z,X]$, when $Z$ is transitive, and show that the $\alpha$ and $\beta$ maps are adjoint (Lemma~\ref{lem:linear1}). This is a very useful computational tool that is employed several times in the subsequent steps.
\item We show that $[X,Y]$ has a natural basis indexed by $G$-orbits on $X \times Y$, and give a useful formula for expressing an arbitrary map in the basis (Lemma~\ref{lem:linear2}).
\item We show that $\Phi$ satisfies base change (Lemma~\ref{lem:linear3}). The basic idea is to express the two maps in bases using Lemma~\ref{lem:linear2} and compare.
\item We construct a measure $\mu$ (Lemma~\ref{lem:linear4}). The basic idea is as follows. If $f \colon X \to Y$ is a map of $G$-sets, with $Y$ transitive, then $\alpha_f \beta_X=c \beta_Y$ for some scalar $c$; this follows since $\Phi$ is plenary. We define $\mu(f)=c$. It is relatively straightforward to verify that $\mu$ respects the defining relations of $\Theta'(G)$ and therefore defines a measure.
\item We show that $\Phi$ is $\mu$-adapted (Lemma~\ref{lem:linear5}). The idea, once again, is to compute in bases using Lemma~\ref{lem:linear2}.
\end{enumerate}
Having completed these steps, Proposition~\ref{prop:perm-maps} yields a $k$-linear functor $\Psi \colon \uPerm(G;\mu) \to \cT$. It is then a simple matter to complete the proof of the theorem.

We now start on our plan. Let $X$, $Y$, and $Z$ be finitary $G$-sets, with $Z$ transitive. Let $\phi,\psi \colon \Phi(Z) \to \Phi(Y)$ be two maps in $\cT$. Consider the composition
\begin{displaymath}
\xymatrix@C=4em{
\Phi(Z) \ar[r]^-{\alpha_{\Delta}} &
\Phi(Z \times Z) \ar[r]^-{\phi \otimes \psi} &
\Phi(Y \times Y) \ar[r]^-{\ev_{\Phi(Y)}} &
\bbone }
\end{displaymath}
where $\ev_{\Phi(Y)} = \alpha_Y \beta_{\Delta}$ is the pairing on $\Phi(Y)$. Since $[Z, \bbone]$ is free of rank one and spanned by $\alpha_Z$, the above composition is equal to $c \cdot \alpha_Z$ for a unique $c \in k$. We define $\langle \phi, \psi \rangle=c$. In this way, we have a pairing $\langle, \rangle$ on the $k$-module $[Z,Y]$. It is easily seen to be symmetric and $k$-bilinear.

\begin{lemma} \label{lem:linear1}
Let $f \colon X \to Y$ be a map of $G$-sets. Then for $\phi \in [Z,X]$ and $\psi \in [Z,Y]$, we have $\langle \phi, \beta_f \psi \rangle=\langle \alpha_f \phi, \psi \rangle$; in other words, $\alpha_f$ and $\beta_f$ induced adjoint maps between $[Z,X]$ and $[Z,Y]$.
\end{lemma}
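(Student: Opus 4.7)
The plan is to unfold both sides of the adjointness identity into compositions in $\cT$ and then reduce to the duality statement already proven in Proposition~\ref{prop:alpha-dual}. First I would observe that by the definition of the pairing, for $\phi',\psi' \in [Z,W]$ (any $G$-set $W$) one has $\langle \phi', \psi' \rangle \cdot \alpha_Z = \ev_{\Phi(W)} \circ (\phi' \otimes \psi') \circ \alpha_\Delta^Z$, where $\ev_{\Phi(W)} = \alpha_W \circ \beta_\Delta^W$ is the evaluation map of the self-duality of $\Phi(W)$ coming from its Frobenius algebra structure (Proposition~\ref{prop:bifunc-frob}).

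Next I would move the structure maps for $f$ past the tensor factors: since $(\phi \otimes \beta_f \psi) = (\id_{\Phi(X)} \otimes \beta_f) \circ (\phi \otimes \psi)$ and $(\alpha_f \phi \otimes \psi) = (\alpha_f \otimes \id_{\Phi(Y)}) \circ (\phi \otimes \psi)$, the two sides of the desired identity become
\begin{align*}
\langle \phi, \beta_f \psi \rangle \cdot \alpha_Z &= \ev_{\Phi(X)} \circ (\id_{\Phi(X)} \otimes \beta_f) \circ (\phi \otimes \psi) \circ \alpha_\Delta^Z, \\
\langle \alpha_f \phi, \psi \rangle \cdot \alpha_Z &= \ev_{\Phi(Y)} \circ (\alpha_f \otimes \id_{\Phi(Y)}) \circ (\phi \otimes \psi) \circ \alpha_\Delta^Z.
\end{align*}
Since the map $(\phi \otimes \psi) \circ \alpha_\Delta^Z \colon \Phi(Z) \to \Phi(X) \otimes \Phi(Y)$ appears on the right of both expressions, it suffices to establish the identity of maps $\Phi(X) \otimes \Phi(Y) \to \bbone$ given by
\[
\ev_{\Phi(X)} \circ (\id_{\Phi(X)} \otimes \beta_f) = \ev_{\Phi(Y)} \circ (\alpha_f \otimes \id_{\Phi(Y)}).
\]

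Finally, this identity is precisely the defining property of a pair of dual morphisms between self-dual objects: if $g \colon A \to B$ has dual $g^\vee \colon B \to A$ relative to self-dualities with evaluations $\ev_A$ and $\ev_B$, then $\ev_A \circ (\id_A \otimes g^\vee) = \ev_B \circ (g \otimes \id_B)$ as maps $A \otimes B \to \bbone$. By Proposition~\ref{prop:alpha-dual}, $\beta_f = \alpha_f^\vee$ with respect to the self-dualities of Proposition~\ref{prop:bifunc-frob}, so the displayed identity holds. The only real obstacle is bookkeeping with the associativity/symmetry constraints to ensure the two evaluation maps are being compared over the same tensor factor; once Proposition~\ref{prop:alpha-dual} is invoked correctly, no substantive calculation remains.
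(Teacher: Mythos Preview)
Your proof is correct and takes essentially the same approach as the paper: both unfold the two pairings as compositions through $\Phi(X)\otimes\Phi(Y)$ and reduce to the identity $\ev_{\Phi(X)}\circ(\id\otimes\beta_f)=\ev_{\Phi(Y)}\circ(\alpha_f\otimes\id)$, which follows from Proposition~\ref{prop:alpha-dual}. The paper presents this as a commuting square rather than writing out the equations, but the content is identical.
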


\begin{proof}
Consider the following diagram
\begin{displaymath}
\xymatrix@C=3em{
&&& \Phi(Y \times Y) \ar[rd]^{\ev_{\Phi(Y)}} \\
\Phi(Z) \ar[r]^-{\alpha_{\Delta}} &
\Phi(Z \times Z) \ar[r]^{\phi \otimes \psi} &
\Phi(X \times Y) \ar[ru]^{\alpha_f \otimes 1} \ar[rd]_{1 \otimes \beta_f} &&
\bbone \\
&&& \Phi(X \times X) \ar[ru]_{\ev_{\Phi(X)}} }
\end{displaymath}
Since $\alpha_f$ and $\beta_f$ are dual (Proposition~\ref{prop:alpha-dual}), the square commutes. As the top path computes $\langle \alpha_f \phi, \psi \rangle$ and the bottom $\langle \phi, \beta_f \psi \rangle$, the result follows.
\end{proof}

\begin{lemma} \label{lem:linear2}
Let $Z_1, \ldots, Z_n$ be the $G$-orbits on $X \times Y$ and let $p_i \colon Z_i \to X$ and $q_i \colon Z_i\to Y$ be the projections.
\begin{enumerate}
\item The maps $\alpha_{q_i} \beta_{p_i}$ for $1 \le i \le n$ form a $k$-basis of $[X,Y]$.
\item Given $\phi \in [X,Y]$, we have $\phi=\sum_{i=1}^n c_i \alpha_{q_i} \beta_{p_i}$ where $c_i=\langle \phi \circ \alpha_{p_i}, \alpha_{q_i} \rangle$. 
\end{enumerate}
\end{lemma}

\begin{proof}
(a) Since $\Phi$ is additive, we have $\Phi(X \times Y) = \bigoplus_{i=1}^n \Phi(Z_i)$. Since $\Phi$ is plenary, we see that $[Z_i, \bbone]$ is a free $k$-module of rank~1, spanned by $\alpha_{Z_i}$. It follows that $[X \times Y, \bbone]$ is a free $k$-module of rank~$n$, with basis given by $\lambda_i=\alpha_{Z_i} \beta_{j_i}$ for $1 \le i \le n$, where $j_i \colon Z_i \to X \times Y$ is the inclusion. Now, since $\Phi(Y)$ is rigid, we have a natural isomorphism $[X,Y]=[X \times Y, \bbone]$. Let $\phi_i \in [X,Y]$ correspond to $\lambda_i \in [X \times Y, \bbone]$. Since the $\lambda_i$ form a basis, so do the $\phi_i$.

We now compute $\phi_i$ explicitly. Write $X'=\Phi(X)$, $Y'=\Phi(Y)$, $Z'_i=\Phi(Z_i)$. By definition, $\phi_i$ is the composition
\begin{displaymath}
\xymatrix@C=4em{
X' \ar[r]^-{1 \otimes \beta_Y} &
X' \otimes Y' \ar[r]^-{1 \otimes \alpha_{\Delta}} &
X' \otimes Y' \otimes Y' \ar[r]^-{\beta_{j_i} \otimes 1} &
Z'_i \otimes Y' \ar[r]^-{\alpha_{Z_i} \otimes 1} &
Y' }
\end{displaymath}
Now, $X' \otimes Y'$ decomposes as $\bigoplus_{i=1}^n Z_i'$; similarly, $X' \otimes Y' \otimes Y'$ decomposes as $\bigoplus_{i=1}^n Z_i' \otimes Y'$. With respect to these decompositions, the first map above is $\bigoplus_{i=1}^n \beta_{p_i}$, the second is $\bigoplus_{i=1}^n \alpha_{(\id,q_i)}$, where $(\id,q_i) \colon Z_i \to Z_i \times Y$, and the third map projects onto the $i$th piece. We thus see that $\phi_i$ is equal to the following composition
\begin{displaymath}
\xymatrix@C=4em{
X' \ar[r]^{\beta_{p_i}} &
Z_i' \ar[r]^-{\alpha_{(\id,q_i)}} &
Z_i' \otimes Y' \ar[r]^{\alpha_{Z_i} \otimes 1} &
Y' }
\end{displaymath}
The latter two maps above compose to $\alpha_{q_i}$. We thus see that $\phi_i=\alpha_{q_i} \beta_{p_i}$. As we know that the $\phi_i$ form a basis, this completes the proof.

(b) We first claim that $\langle \beta_{p_i} \alpha_{p_j}, \beta_{q_i} \alpha_{q_j} \rangle=\delta_{i,j}$ for all $1 \le i,j \le n$. This pairing is computed by the composition
\begin{displaymath}
\xymatrix@C=6em{
Z'_j \ar[r]^-{\alpha_{\Delta}} &
Z'_j \otimes Z'_j \ar[r]^-{\beta_{p_i} \alpha_{p_j} \otimes \beta_{q_i} \alpha_{q_j}} &
Z'_i \otimes Z'_i \ar[r]^-{\beta_{\Delta}} &
Z'_i \ar[r]^-{\alpha_{Z_i}} & \bbone }
\end{displaymath}
Now, we have
\begin{displaymath}
\beta_{p_i} \alpha_{p_j} \otimes \beta_{q_i} \alpha_{q_j} =
(\beta_{p_i} \otimes \beta_{q_i}) \circ (\alpha_{p_j} \otimes \alpha_{q_j}).
\end{displaymath}
Also, $(\alpha_{p_j} \otimes \alpha_{q_j}) \circ \alpha_{\Delta}=\alpha_{(p_j,q_j)}$, where $(p_j,q_j) \colon Z_j \to X \times Y$, and similarly with $\beta$'s. We thus see that the above composition is equal to the following composition
\begin{displaymath}
\xymatrix@C=4em{
Z'_j \ar[r]^-{\alpha_{(p_j,q_j)}} &
X' \otimes Y' \ar[r]^-{\beta_{(p_i,q_i)}} &
Z'_i \ar[r]^-{\alpha_{Z_i}} & \bbone }
\end{displaymath}
Since $\Phi$ is additive, the first two maps above compose to~0 if $i \ne j$ and the identity if $i=j$. Thus the whole composition is $\delta_{i,j} \cdot \alpha_{Z_j}$, which proves the claim.

Now, let $\phi \in [X,Y]$ be given. By (a), we have $\phi=\sum_{i=1}^n c_i \alpha_{q_i} \beta_{p_i}$ for some $c_i \in k$. We have
\begin{displaymath}
\langle \phi \alpha_{p_j}, \alpha_{q_j} \rangle
= \sum_{i=1}^n c_i \langle \alpha_{q_i} \beta_{p_i} \alpha_{p_j}, \alpha_{q_j} \rangle
= \sum_{i=1}^n c_i \langle \beta_{p_i} \alpha_{p_j}, \beta_{q_i} \alpha_{q_j} \rangle
= c_j
\end{displaymath}
where in the second step we used adjunction (Lemma~\ref{lem:linear1}), and in the third step the computation from the previous paragraph. This completes the proof.
\end{proof}

\begin{lemma} \label{lem:linear3}
$\Phi$ satisfies base change.
\end{lemma}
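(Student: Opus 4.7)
The plan is to expand both sides in the basis for $[X,Y']$ provided by Lemma~\ref{lem:linear2} and check that the coefficients agree. Let $Z_1, \ldots, Z_n$ be the $G$-orbits on $X \times Y'$, with projections $p_j \colon Z_j \to X$ and $q_j \colon Z_j \to Y'$, so that the $\alpha_{q_j}\beta_{p_j}$ form a basis of $[X,Y']$. The fiber product $X' = X \times_Y Y'$ is a $G$-stable subset of $X \times Y'$, hence a union of some of the orbits $Z_j$. I will show that both $\beta_g\alpha_f$ and $\alpha_{f'}\beta_{g'}$ equal $\sum_{Z_j \subseteq X'} \alpha_{q_j}\beta_{p_j}$.

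For $\alpha_{f'}\beta_{g'}$ this is a direct consequence of additivity. Writing $X' = \bigsqcup_{j : Z_j \subseteq X'} Z_j$ with inclusions $\iota_j \colon Z_j \to X'$, additivity of $\Phi$ gives $\id_{\Phi(X')} = \sum_j \alpha_{\iota_j}\beta_{\iota_j}$. Inserting this between $\alpha_{f'}$ and $\beta_{g'}$ and using the compatibility of $\alpha$ and $\beta$ with composition yields
\[
\alpha_{f'}\beta_{g'} \;=\; \sum_{j} \alpha_{f' \circ \iota_j}\,\beta_{g' \circ \iota_j} \;=\; \sum_{j} \alpha_{q_j}\beta_{p_j},
\]
since the restrictions of $f'$ and $g'$ to $Z_j \subseteq X' \subseteq X \times Y'$ are precisely $q_j$ and $p_j$.

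For $\beta_g\alpha_f$ I would use Lemma~\ref{lem:linear2}(b): the coefficient at $Z_j$ equals $c_j = \langle \beta_g\alpha_f\alpha_{p_j},\, \alpha_{q_j}\rangle$. Adjunction (Lemma~\ref{lem:linear1}) lets me shift $\beta_g$ across the pairing, and compatibility of $\alpha$ with composition gives $c_j = \langle \alpha_{f\circ p_j}, \alpha_{g \circ q_j}\rangle$, a pairing of two $\alpha$-maps from the transitive $G$-set $Z_j$ to $Y$. The main step is then the following sublemma: \emph{for any transitive $G$-set $W$, target $G$-set $T$, and $G$-maps $u,v \colon W \to T$, one has $\langle \alpha_u, \alpha_v\rangle = 1$ if $u = v$ and $0$ otherwise.} Granting this, $c_j = 1$ iff $f\circ p_j = g\circ q_j$, iff $Z_j \subseteq X'$, matching the expansion above.

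The sublemma is where the main work lies; it is proved by unpacking the definition of the pairing and invoking additivity on the decomposition $T \times T = \Delta_T \sqcup T^{[2]}$. Writing $(u,v) \colon W \to T \times T$ for the tupling and $\iota_1, \iota_2$ for the two inclusions, we have $\alpha_u \otimes \alpha_v \circ \alpha_{\Delta_W} = \alpha_{(u,v)}$, so
\[
\langle \alpha_u, \alpha_v\rangle \cdot \alpha_W \;=\; \alpha_T \circ \beta_{\Delta_T} \circ \alpha_{(u,v)}.
\]
Since $W$ is transitive, either $(u,v)$ factors through $\iota_1$ (when $u=v$) or through $\iota_2$ (when $u \neq v$). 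In the first case, writing $\Delta_T = \iota_1 \circ \sigma$ for the canonical isomorphism $\sigma \colon T \to \Delta_T$, additivity of $\Phi$ gives $\beta_{\iota_1}\alpha_{\iota_1} = \id$, hence $\beta_{\Delta_T}\alpha_{\Delta_T} = \id$, and the composite collapses to $\alpha_T\alpha_u = \alpha_W$, giving coefficient $1$. In the second case, $\beta_{\iota_1}\alpha_{\iota_2} = 0$ by additivity, giving $0$. I expect no further obstacles; the rest of the argument is just the matching of basis expansions.
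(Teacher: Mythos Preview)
Your proof is correct and follows essentially the same route as the paper: both expand each side in the basis of Lemma~\ref{lem:linear2}, reduce the coefficient for $\beta_g\alpha_f$ via adjunction to the pairing $\langle \alpha_{fp_j}, \alpha_{gq_j}\rangle$, and evaluate this pairing by noting that $(fp_j,gq_j)$ maps the transitive set $Z_j$ into either $\Delta(Y)$ or $Y^{[2]}$, giving $1$ or $0$ accordingly. Your packaging of this last step as a standalone sublemma is a minor presentational difference; the underlying argument is the same.
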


\begin{proof}
Consider a cartesian square of finitary $G$-sets
\begin{displaymath}
\xymatrix{
X' \ar[r]^{g'} \ar[d]_{f'} & X \ar[d]^f \\
Y' \ar[r]^g & Y }
\end{displaymath}
Notice that $X'=X \times_Y Y'$ is a $G$-subset of $X \times Y'$. Let $Z_1, \ldots, Z_n$ be the $G$-orbits on $X \times Y'$, labeled so that $Z_1, \ldots, Z_m$ are the orbits on $X'$, and let $p_i \colon Z_i \to X$ and $q_i \colon Z_i \to Y'$ be the projections.

By Lemma~\ref{lem:linear2}, we have $\beta_g \alpha_f = \sum_{i=1}^n c_i \alpha_{q_i} \beta_{p_i}$ where
\begin{displaymath}
c_i = \langle \beta_g \alpha_f \alpha_{p_i}, \alpha_{q_i} \rangle = \langle \alpha_{fp_i}, \alpha_{gq_i} \rangle,
\end{displaymath}
and in the second step we used adjunction (Lemma~\ref{lem:linear1}). The above pairing is computed by the composition
\begin{displaymath}
\xymatrix@C=5em{
\Phi(Z_i) \ar[r]^-{\alpha_{(fp_i,gq_i)}} & \Phi(Y \times Y) \ar[r]^-{\ev_{\Phi(Y)}} & \bbone }
\end{displaymath}
where $(fp_i,gq_i) \colon Z_i \to Y \times Y$. Since $Z_i$ is transitive, it either maps into $\Delta(Y)$ or $Y^{[2]}$. The former happens if and only if $fp_i=gq_i$, meaning $Z_i$ belongs to the fiber product (and so $1 \le i \le m$). In the latter case, the above composition is~0, since $\ev_{\Phi(Y)}$ is zero on $Y^{[2]}$. Thus assume the former. Then the above composition coincides with the following one
\begin{displaymath}
\xymatrix@C=4em{
\Phi(Z_i) \ar[r]^-{\alpha_{fp_i}} & \Phi(Y) \ar[r]^-{\alpha_{\Delta}} & \Phi(Y \times Y) \ar[r]^-{\ev_{\Phi(Y)}} & \bbone }
\end{displaymath}
which simplifies to
\begin{displaymath}
\xymatrix@C=4em{
\Phi(Z_i) \ar[r]^-{\alpha_{fp_i}} & \Phi(Y) \ar[r]^-{\alpha_Y} & \bbone }
\end{displaymath}
which is clearly $\alpha_{Z_i}$. We thus see that the pairing is~1 in this case. In conclusion, we have $\beta_g \alpha_f = \sum_{i=1}^m \alpha_{q_i} \beta_{p_i}$.

Now, let $j_i \colon Z_i \to X \times Y$ be the inclusion. Then $\sum_{i=1}^m \alpha_{j_i} \beta_{j_i}$ is the identity map on $\Phi(X')$, and so
\begin{displaymath}
\alpha_{f'} \beta_{g'} = \sum_{i=1}^m \alpha_{f'} \alpha_{j_i} \beta_{j_i} \beta_{g'} = \sum_{i=1}^m \alpha_{q_i} \beta_{p_i}.
\end{displaymath}
This agrees with our formula for $\beta_g \alpha_f$, and so the proof is complete.
\end{proof}

Given a morphism $f \colon X \to Y$ of $G$-sets, with $Y$ transitive, define $\mu(f) \in k$ to be the unique scalar such that $\alpha_f \beta_X=\mu(f) \beta_Y$. Note that such a unique scalar exists since $[\bbone, Y]$ is free of rank one over $k$ by the normalization condition.

\begin{lemma} \label{lem:linear4}
$\mu$ defines a measure for $G$.
\end{lemma}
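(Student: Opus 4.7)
The cleanest route is to verify that the rule $\langle f \rangle \mapsto \mu(f)$, for a morphism $f \colon X \to Y$ of finitary $G$-sets with $Y$ transitive, respects the five defining relations of $\Theta'(G)$ listed in Definition~\ref{defn:Theta-prime}. Once this is done, Proposition~\ref{prop:Theta-prime} and the identification $\Theta(G) = \Theta'(G)$ produce a ring homomorphism $\Theta(G) \to k$, which is by definition a $k$-valued measure for $G$. Throughout, I will repeatedly use two basic calculations: first, that $\beta_{Y'} = \beta_g \beta_Y$ whenever $g \colon Y' \to Y$ is a $G$-map and $Y'$ (hence $Y$) is transitive, which follows from functoriality of $\beta$ applied to $Y' \xrightarrow{g} Y \to \bone$; and second, the analogous identity $\beta_{X'} = \beta_{g'} \beta_X$ for any $G$-map $g' \colon X' \to X$.

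The easy relations are \dref{defn:Theta-prime}{a} (isomorphism invariance, immediate from the definition), \dref{defn:Theta-prime}{b} ($\alpha_{\id_X} \beta_X = \beta_X$, so $\mu(\id_X)=1$), and \dref{defn:Theta-prime}{d} (multiplicativity on composable transitive morphisms, obtained by applying $\alpha_g$ to $\alpha_f \beta_X = \mu(f) \beta_Y$). For additivity \dref{defn:Theta-prime}{c}, given a decomposition $X = X_1 \sqcup X_2$ with inclusions $i_j \colon X_j \to X$, I will first observe, using that $\Phi$ is additive and that $\beta$ is functorial on $X_j \xrightarrow{i_j} X \to \bone$, that
\begin{displaymath}
\beta_X = \alpha_{i_1} \beta_{X_1} + \alpha_{i_2} \beta_{X_2}.
\end{displaymath}
Applying $\alpha_f$ and using $\alpha_f \alpha_{i_j} = \alpha_{f_j}$ where $f_j = f \circ i_j$, both sides collapse to $(\mu(f_1)+\mu(f_2))\beta_Y$, yielding $\mu(f) = \mu(f_1)+\mu(f_2)$.

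The only relation requiring genuine input is base change \dref{defn:Theta-prime}{e}. Here, given $f \colon X \to Y$, $g \colon Y' \to Y$ (with $Y,Y'$ transitive) and $f' \colon X' \to Y'$ the pullback, I will start from $\alpha_f \beta_X = \mu(f) \beta_Y$, apply $\beta_g$ on the left, and use $\beta_g \beta_Y = \beta_{Y'}$ on the right to obtain $\beta_g \alpha_f \beta_X = \mu(f) \beta_{Y'}$. The base change identity $\beta_g \alpha_f = \alpha_{f'} \beta_{g'}$ from Lemma~\ref{lem:linear3} rewrites the left side as $\alpha_{f'} \beta_{g'} \beta_X$; collapsing $\beta_{g'} \beta_X = \beta_{X'}$ gives $\alpha_{f'} \beta_{X'} = \mu(f) \beta_{Y'}$, and comparing with the defining equation $\alpha_{f'} \beta_{X'} = \mu(f') \beta_{Y'}$ (legitimate because $[\bbone,Y'] = k \cdot \beta_{Y'}$ by plenariness) yields $\mu(f) = \mu(f')$.

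The main obstacle is really conceptual rather than computational: every one of these verifications relies on the fact that $\beta_Y$ is a basis of $[\bbone,Y]$ for $Y$ transitive, so that scalar coefficients can actually be extracted from identities in $\cT$. This is exactly the plenariness hypothesis on $\Phi$, and the reason we cannot simply read off $\mu$ from some more naive attribute of $\Phi$. Once that hypothesis is in play, the five verifications are just bookkeeping with the functoriality of $\alpha$ and $\beta$ plus the single non-trivial input of Lemma~\ref{lem:linear3}.
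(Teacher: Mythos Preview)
Your proof is correct and follows essentially the same approach as the paper: both verify the five defining relations of $\Theta'(G)$ using the same chain of computations, with the base-change relation handled via Lemma~\ref{lem:linear3} and the identity $\beta_{g}\beta_Y=\beta_{Y'}$. Your treatment of additivity via $\beta_X=\alpha_{i_1}\beta_{X_1}+\alpha_{i_2}\beta_{X_2}$ is slightly more careful than the paper's ``slight abuse of notation'' version, but the content is the same.
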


\begin{proof}
We verify that $\mu$ respects the defining relations of $\Theta'(G)$ (see Definition~\ref{defn:Theta-prime}):
\begin{enumerate}
\item If $f \colon X \to Y$ is an isomorphism then $\alpha_f \beta_X=\beta_Y$, so $\mu(f)=1$.
\item Let $f \colon X \to Y$ be a map of $G$-sets with $Y$ transitive. Suppose that $X=X_1 \sqcup X_2$, and let $f_i$ be the restriction of $f$ to $X_i$. We have $\Phi(X)=\Phi(X_1) \oplus \Phi(X_2)$ and (with slight abuse of notation), $\alpha_f=\alpha_{f_1}+\alpha_{f_2}$ and $\beta_X=\beta_{X_1}+\beta_{X_2}$. Thus
\begin{align*}
\mu(f) \beta_Y
&= \alpha_f \beta_X = (\alpha_{f_1} +\alpha_{f_2})(\beta_{X_1}+\beta_{X_2}) \\
&= \alpha_{f_1} \beta_{X_1}+\alpha_{f_2} \beta_{X_2} = (\mu(f_1)+\mu(f_2)) \beta_Y
\end{align*}
Note that $\alpha_{f_1} \beta_{X_2}=0$. We thus find that $\mu(f)=\mu(f_1)+\mu(f_2)$.
\item Consider morphisms $f \colon X \to Y$ and $g \colon Y \to Z$ of transitive $G$-sets. Then
\begin{displaymath}
\mu(gf) \beta_Z = \alpha_{gf} \beta_X=\alpha_g \alpha_f \beta_X=\mu(f) \alpha_g \beta_Y=\mu(f) \mu(g) \beta_Z.
\end{displaymath}
We thus find $\mu(gf)=\mu(g)\mu(f)$.
\item Finally, consider a cartesian square of $G$-sets
\begin{displaymath}
\xymatrix{
X' \ar[r]^{g'} \ar[d]_{f'} & X \ar[d]^f \\
Y' \ar[r]^g & Y }
\end{displaymath}
with $Y'$ and $Y$ transitive. We then have
\begin{displaymath}
\mu(f) \beta_{Y'} = \mu(f) \beta_g \beta_Y = \beta_g \alpha_f \beta_X = \alpha_{f'} \beta_{g'} \beta_X = \alpha_{f'} \beta_{X'} = \mu(f') \beta_Z.
\end{displaymath}
In the third step above, we used base change (Lemma~\ref{lem:linear3}). We thus find that $\mu(f')=\mu(f)$.
\end{enumerate}
We have thus verified the relations, and so we have a homomorphism $\mu \colon \Theta'(G) \to k$. This yields a measure on $G$.
\end{proof}

\begin{lemma} \label{lem:linear5}
$\Phi$ is $\mu$-adapted.
\end{lemma}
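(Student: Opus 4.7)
To prove that $\alpha_f \beta_f = \mu(f) \cdot \id_{\Phi(Y)}$ for any map $f \colon X \to Y$ of transitive $G$-sets, my plan is to expand the left-hand side in the basis of $[Y,Y]$ provided by Lemma~\ref{lem:linear2}(a) and compute the coefficients via Lemma~\ref{lem:linear2}(b). Let $Z_0 = \Delta(Y) \cong Y$ and $Z_1, \ldots, Z_n \subset Y^{[2]}$ be the $G$-orbits on $Y \times Y$, with inclusions $j_i \colon Z_i \hookrightarrow Y \times Y$ and projections $p_i, q_i \colon Z_i \to Y$; the corresponding basis element of $[Y,Y]$ is $\alpha_{q_i} \beta_{p_i}$, and the $i=0$ basis element is $\id_{\Phi(Y)}$. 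Writing $\alpha_f \beta_f = \sum_i c_i \alpha_{q_i} \beta_{p_i}$, my goal is to show $c_0 = \mu(f)$ and $c_i = 0$ for all $i \geq 1$.

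By Lemma~\ref{lem:linear2}(b), $c_i = \langle \alpha_f \beta_f \alpha_{p_i}, \alpha_{q_i} \rangle$. Unfolding the pairing via $\ev_{\Phi(Y)} = \alpha_Y \beta_{\Delta_Y}$ and the identity $(\alpha_{p_i} \otimes \alpha_{q_i}) \circ \alpha_{\Delta_{Z_i}} = \alpha_{j_i}$ (which follows from symmetric monoidality of $\Phi$ and $(p_i \times q_i) \circ \Delta_{Z_i} = j_i$), the scalar $c_i$ is characterized by
\[
c_i \cdot \alpha_{Z_i} \;=\; \alpha_Y \circ \beta_{\Delta_Y} \circ \alpha_{f \times 1} \circ \beta_{f \times 1} \circ \alpha_{j_i}.
\]
I then apply base change (Lemma~\ref{lem:linear3}) twice. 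First, along the cartesian square with top-left vertex $U_i = Z_i \times_{p_i, Y, f} X$ and bottom arrow $f \times 1 \colon X \times Y \to Y \times Y$: letting $\pi_1 \colon U_i \to Z_i$ and $\pi_2 \colon U_i \to X \times Y$ denote the two projections, base change yields $\beta_{f \times 1} \alpha_{j_i} = \alpha_{\pi_2} \beta_{\pi_1}$, and then the identity $(f \times 1) \circ \pi_2 = j_i \circ \pi_1$ (which holds on $U_i$ since $p_i(z) = f(x)$ there) gives $\alpha_{f \times 1} \beta_{f \times 1} \alpha_{j_i} = \alpha_{j_i} \alpha_{\pi_1} \beta_{\pi_1}$. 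Second, base change of $\beta_{\Delta_Y} \alpha_{j_i}$ along $\Delta_Y \colon Y \to Y \times Y$ uses the fiber product $Z_i \times_{Y \times Y} Y = \{z \in Z_i : p_i(z) = q_i(z)\}$.

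For $i \geq 1$, this fiber product is empty since $Z_i \subset Y^{[2]}$, so $\beta_{\Delta_Y} \alpha_{j_i} = 0$ by additivity of $\Phi$ (which forces $\Phi(\varnothing) = 0$), and hence $c_i = 0$. For $i = 0$, the fiber product is $Y$ with both projections being $\id_Y$, so $\beta_{\Delta_Y} \alpha_{\Delta_Y} = \id_{\Phi(Y)}$; moreover $U_0 \cong X$ with $\pi_1 = f$, so the composition collapses to $\alpha_Y \circ \alpha_f \circ \beta_f = \alpha_X \circ \beta_f$ (using $p_Y \circ f = p_X$), yielding $c_0 \alpha_Y = \alpha_X \beta_f$. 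Applying the duality $(-)^\vee$, which by Proposition~\ref{prop:alpha-dual} exchanges $\alpha_g \leftrightarrow \beta_g$ for every morphism $g$, I get $c_0 \beta_Y = \alpha_f \beta_X$, which equals $\mu(f) \beta_Y$ by the defining relation of $\mu(f)$. Since $\beta_Y$ is a free generator of $\Hom(\bbone, \Phi(Y))$---via the isomorphism $\Hom(\bbone, \Phi(Y)) \cong \Hom(\Phi(Y), \bbone)$ coming from the self-duality of $\Phi(Y)$ (Proposition~\ref{prop:bifunc-frob}), under which $\beta_Y$ corresponds to $\alpha_Y$, a free generator by the plenary condition---this forces $c_0 = \mu(f)$, completing the argument.

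The main obstacle will be the careful bookkeeping in the two base-change steps, particularly unpacking the cartesian square structure on $U_i$ and verifying the projection identity $(f \times 1) \circ \pi_2 = j_i \circ \pi_1$. Once these computations are arranged correctly, the vanishing $c_i = 0$ for $i \geq 1$ becomes a clean emptiness argument, and the identification $c_0 = \mu(f)$ is a direct application of duality combined with the definition of $\mu$.
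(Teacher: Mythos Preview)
Your proof is correct and follows essentially the same route as the paper's: expand $\alpha_f\beta_f$ in the basis of Lemma~\ref{lem:linear2}, show the off-diagonal coefficients vanish because the relevant fiber product with $\Delta_Y$ is empty, and identify the diagonal coefficient via the identity $c_0\alpha_Y=\alpha_X\beta_f$ followed by dualization. The only real difference is in how the pairing is unpacked: the paper first applies the adjunction of Lemma~\ref{lem:linear1} to rewrite $c_i=\langle\beta_f\alpha_{p_i},\beta_f\alpha_{q_i}\rangle$ as a pairing valued in $\Phi(X)$ and then reads off the vanishing from the fact that $\beta_{\Delta}$ kills $\Phi(Y^{[2]})$, whereas you keep the pairing in $\Phi(Y)$ and invoke base change (Lemma~\ref{lem:linear3}) twice. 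Both manipulations are equally valid and lead to the same key identity.
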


\begin{proof}
Let $f \colon X \to Y$ be a map of transitive $G$-sets. We must compute the composition $\alpha_f \beta_f$. Let $Z_1, \ldots, Z_n$ be the $G$-orbits on $Y \times Y$, labeled so that $Z_1$ is the diagonal copy of $Y$, and let $p_i,q_i \colon Z_i \to X$ be the projections. By Lemma~\ref{lem:linear2}, we have $\alpha_f \beta_f = \sum_{i=1}^n c_i \alpha_{q_i} \beta_{p_i}$, where
\begin{displaymath}
c_i=\langle \alpha_f \beta_f \alpha_{p_i}, \alpha_{q_i} \rangle
=\langle \beta_f \alpha_{p_i}, \beta_f \alpha_{q_i} \rangle,
\end{displaymath}
where in the second step we used adjunction (Lemma~\ref{lem:linear1}). Consider the following commutative diagram
\begin{displaymath}
\xymatrix@C=4em{
\Phi(Z_i) \ar[r]^-{\alpha_{(p_i,q_i)}} &
\Phi(Y \times Y) \ar[r]^-{\beta_f \otimes \beta_f} \ar[d]_{\beta_{\Delta}} &
\Phi(X \times X) \ar[d]^{\beta_{\Delta}} \\
& \Phi(Y) \ar[r]^-{\beta_f} & \Phi(X) \ar[r]^-{\alpha_X} & \bbone }
\end{displaymath}
The composition is $c_i \cdot \alpha_{Z_i}$, as one sees by following the top path. Now, $\beta_{\Delta}$ vanishes on $\Phi(Y^{[2]})$. We thus see that $c_i=0$ for $i>1$, since $(p_i,q_i)$ maps into $Y^{[2]}$ in this case. We can identify $Z_1$ with $Y$ and $(p_1,q_1)$ with the diagonal map $Y \to Y \times Y$. It follows that $\beta_{\Delta} \circ \alpha_{(p_1,q_1)}$ is the identity. Thus the above composition is $\beta_f \alpha_X$. We have therefore obtained the equality $\alpha_X \beta_f=c_1 \alpha_Y$. Taking duals (and appealing to Proposition~\ref{prop:alpha-dual}), we have $\alpha_f \beta_X = c_1 \beta_Y$, and so $c_1=\mu(f)$ by definition of $\mu$. Putting all of the above together, we have $\alpha_f \beta_f = \mu(f) \alpha_{q_1} \beta_{p_1}$. Finally, since $p_1=q_1$ is an isomorphism $\alpha_{q_1} \beta_{p_1} = \id_{\Phi(Y)}$. This completes the proof.
\end{proof}

We thus see that $\Phi$ is additive (by assumption), satisfies base change (Lemma~\ref{lem:linear3}), and is $\mu$-adapted (Lemma~\ref{lem:linear5}). Applying Proposition~\ref{prop:perm-maps}, we find that there is a unique $k$-linear functor
\begin{displaymath}
\Psi \colon \uPerm(G; \mu) \to \cT
\end{displaymath}
such that $\Phi = \Psi \circ \Phi_{\mu}$. Lemma~\ref{lem:linear2} shows that the $\Hom$ spaces in $\cT$ admit the same bases as those in $\uPerm(G; \mu)$ (see Proposition~\ref{prop:hom-basis}), and so $\Psi$ is fully faithful. Since $\Phi$ is essentially surjective by assumption, so is $\Psi$. Thus $\Psi$ is an equivalence. Finally, since the symmetric monoidal structure on $\uPerm(G; \mu)$ is derived from that on $\cS(G)$, we see that $\Psi$ can be given a symmetric monoidal structure.

\part{General representations} \label{part:genrep}

\section{The completed group algebra and its modules} \label{s:gpalg}

\subsection{Overview}

In Part~\ref{part:genrep}, we define and study an abelian tensor category $\uRep_k(G; \mu)$ associated to a pro-oligomorphic group $G$ and a measure $\mu$. The main theorem (Theorem~\ref{thm:regss}) asserts that this construction results in a rigid tensor category in certain circumstances. The purpose of \S \ref{s:gpalg} is simply to construct the abelian category $\uRep(G)$.

We first define a $k$-algebra $A$ called the completed group algebra of $G$. We would like to define this as a function space on $G$ under convolution. This is not actually possible, since we do not have a theory of integration for functions on $G$ in general. However, if we imagined that we had such a theory, a function on $G$ would push-forward to one on $G/U$ for each open subgroup $U$, and we would thereby obtain a compatible sequence of elements of $\cC(G/U)$ as $U$ varies. We therefore \emph{define} $A$ to be the inverse limit of the Schwartz spaces $\cC(G/U)$, and we show that it carries an algebra structure under convolution. We then define $\uRep(G)$ as a certain category of $A$-modules.

We do not prove any substantial results about $\uRep(G)$ in \S \ref{s:gpalg}. In fact, we do not know any interesting results about $\uRep(G)$ that hold in general: to say anything, we must impose some conditions on the measure $\mu$. Such results are established in subsequent sections.

We fix a pro-oligomorphic group $G$ and a $k$-valued measure $\mu$ for the duration \S \ref{s:gpalg}. Throughout \S\S \ref{s:gpalg}--\ref{s:regular} we work in the absolute case; see \S \ref{ss:abrel} for comments about the relative case.

\subsection{Convolutions} \label{ss:convol}

To define the completed group algebra, we need properties of certain convolution operations. We establish them now.

Let $V$ be an open subgroup of $G$. We let $\cC(V \backslash G)$ be the set of $k$-valued functions on $V \backslash G$ that are smooth, i.e., right invariant by an open subgroup of $G$. Suppose that $\phi \in \cC(G/V)$ and $\psi \in \cC(V \bs G)$. We define their \emph{convolution} $\phi \ast_V \psi$ to be the function on $G$ given by
\begin{displaymath}
(\phi \ast_V \psi)(g) = \int_{G/V} \phi(x) \psi(x^{-1} g) dx.
\end{displaymath}
The $V$ in the notation $\ast_V$ is necessary, as the result depends on $V$. That is, if $W$ is an open subgroup of $V$ then $\phi \ast_W \psi$ is defined, but we have $\phi \ast_W \psi = \mu(V/W) \cdot \phi \ast_V \psi$. One easily sees that if $\phi$ is left invariant under $U$ then so is $\phi \ast_V \psi$; similarly, if $\psi$ is right invariant under $W$ then so is $\phi \ast_V \psi$. In other words, writing $\cC(V \bs G/U)$ for the set of all $k$-valued functions on the finite set $V \bs G/U$, convolution defines a $k$-bilinear function
\begin{displaymath}
\ast_V \colon \cC(W \bs G/V) \times \cC(V \bs G/U) \to \cC(W \bs G/U).
\end{displaymath}
As in other situations, convolution is associative:

\begin{proposition} \label{prop:convol1}
Let $\phi \in \cC(G/V)$, $\psi \in \cC(V \bs G/U)$, and $\theta \in \cC(U \bs G)$. Then
\begin{displaymath}
(\phi \ast_V \psi) \ast_U \theta = \phi \ast_V (\psi \ast_U \theta)
\end{displaymath}
\end{proposition}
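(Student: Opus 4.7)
The plan is to write out both sides as iterated double integrals, use $G$-invariance of integration to perform a change of variables in the right-hand side, and then invoke Fubini's theorem (Corollary~\ref{cor:fubini}) to conclude.

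First I would unwind the definitions. Evaluating the left side at $g \in G$ gives
\begin{displaymath}
((\phi \ast_V \psi) \ast_U \theta)(g) = \int_{G/U} \left( \int_{G/V} \phi(x) \psi(x^{-1} y) dx \right) \theta(y^{-1} g) dy,
\end{displaymath}
while the right side at $g$ gives
\begin{displaymath}
(\phi \ast_V (\psi \ast_U \theta))(g) = \int_{G/V} \phi(x) \left( \int_{G/U} \psi(z) \theta(z^{-1} x^{-1} g) dz \right) dx.
\end{displaymath}
Before combining these into double integrals I would note that all the integrands are Schwartz functions on the relevant spaces (this uses Proposition~\ref{prop:push-schwartz} applied to the projection maps, since convolutions are essentially pushforwards), so the integrals are all defined and Fubini applies.

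The key step is a change of variable in the inner integral on the right-hand side: substituting $y = xz$ (equivalently $z = x^{-1} y$), which is left-translation by $x^{-1}$ on $G/U$, and using the $G$-invariance of integration established in \S\ref{ss:int}, the inner integral becomes
\begin{displaymath}
\int_{G/U} \psi(x^{-1} y) \theta(y^{-1} g) dy.
\end{displaymath}
After this substitution both sides are iterated integrals of the same three-variable integrand $\phi(x) \psi(x^{-1} y) \theta(y^{-1} g)$ over $G/V$ and $G/U$, performed in opposite orders. Applying Fubini's theorem (Corollary~\ref{cor:fubini}) to the Schwartz function $(x,y) \mapsto \phi(x) \psi(x^{-1} y) \theta(y^{-1} g)$ on $G/V \times G/U$ (for fixed $g$) shows the two iterated integrals agree, completing the proof.

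There is no serious obstacle here; the argument is essentially a formal manipulation. The only points that warrant care are verifying that the combined integrand $(x,y) \mapsto \phi(x) \psi(x^{-1}y) \theta(y^{-1}g)$ has finitary support on $G/V \times G/U$ (so that Fubini applies) and that the change-of-variables step is legitimately an instance of $G$-invariance of integration rather than a genuine substitution on $G$ itself — both of which follow straightforwardly from the smoothness and finitary-support hypotheses on $\phi$, $\psi$, $\theta$.
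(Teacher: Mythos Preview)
Your proof is correct and follows essentially the same approach as the paper: expand both sides as double integrals, use $G$-invariance for the change of variables, and apply Fubini. The paper's proof is more terse, simply writing the left side as an integral over $G/U \times G/V$ and asserting that ``the other parenthesization leads to the same integral,'' whereas you spell out the substitution $y = xz$ explicitly; but the underlying argument is identical.
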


\begin{proof}
We have
\begin{align*}
((\phi \ast_V \psi) \ast_U \theta)(g)
&= \int_{G/U} \int_{G/V} \phi(y) \psi(y^{-1}x) \theta(x^{-1} g) dy dx. \\
&= \int_{G/U \times G/V} \phi(y) \psi(y^{-1}x) \theta(x^{-1} g) d(y,x),
\end{align*}
where in the second step we used Fubini's theorem (Corollary~\ref{cor:fubini}). The other parenthesization leads to the same integral, which proves the result.
\end{proof}

Given open subgroups $V \subset U$, let $\pi_{V,U} \colon \cC(G/V) \to \cC(G/U)$ be the push-forward operation. Thus, explicitly,
\begin{displaymath}
(\pi_{V,U} \phi)(g) = \int_{U/V} \phi(gh) dh
\end{displaymath}
We require the following two compatibility results between push-forward and convolution:

\begin{proposition} \label{prop:convol2}
Let $V_2 \subset V_1$ be open subgroups of $G$, let $\phi \in \cC(G/V_2)$ and let $\psi \in \cC(V_1 \bs G)$. Then
\begin{displaymath}
\phi \ast_{V_2} \psi = \pi_{V_2,V_2}(\phi) \ast_{V_1} \psi.
\end{displaymath}
\end{proposition}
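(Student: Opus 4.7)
The plan is to verify the identity by evaluating both sides at an arbitrary $g \in G$ and applying the push-forward formalism from \S\ref{s:int} to the fibration $f \colon G/V_2 \to G/V_1$. Note that $G/V_2$ and $G/V_1$ are transitive (hence finitary) $G$-sets, so the integration theory applies directly, and that $\psi \in \cC(V_1 \bs G) \subset \cC(V_2 \bs G)$ since left $V_1$-invariance implies left $V_2$-invariance (so the left-hand side is meaningful). I take the statement to contain a typo and read $\pi_{V_2,V_1}(\phi)$ for $\pi_{V_2,V_2}(\phi)$, as the push-forward is only defined between cosets by a proper containment.

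First I would unfold the right-hand side, evaluated at $g$:
\begin{displaymath}
((\pi_{V_2,V_1}\phi) \ast_{V_1} \psi)(g) = \int_{G/V_1} (\pi_{V_2,V_1}\phi)(x)\,\psi(x^{-1}g)\,dx.
\end{displaymath}
To match this with the left-hand side, I would use Proposition~\ref{prop:push-trans} applied to the composition $G/V_2 \xrightarrow{f} G/V_1 \to \pt$: it says that integration over $G/V_2$ factors through $f_*$, so for any $\theta \in \cC(G/V_2)$,
\begin{displaymath}
\int_{G/V_2} \theta(y)\,dy = \int_{G/V_1} (f_*\theta)(x)\,dx.
\end{displaymath}
Taking $\theta(y) = \phi(y)\psi(y^{-1}g)$ and choosing a lift $\tilde x \in G$ of $x \in G/V_1$, one computes $(f_*\theta)(x) = \int_{V_1/V_2} \phi(\tilde x h)\,\psi(h^{-1}\tilde x^{-1}g)\,dh$, using that the fiber of $f$ over $x$ is $\tilde x V_1/V_2$ and invoking the $G$-invariance of the measure to parametrize it by $V_1/V_2$.

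The crux is then the left-$V_1$-invariance of $\psi$: since $h^{-1} \in V_1$, we have $\psi(h^{-1}\tilde x^{-1}g) = \psi(\tilde x^{-1}g)$, so this factor pulls out of the inner integral, leaving $(f_*\theta)(x) = \psi(x^{-1}g)\cdot (\pi_{V_2,V_1}\phi)(x)$. Plugging this back in yields the right-hand side, while the left-hand side is just $\int_{G/V_2}\theta(y)dy = (\phi \ast_{V_2}\psi)(g)$ by definition. The proof is essentially mechanical; the only substantive input beyond the general push-forward machinery is the left-$V_1$-invariance of $\psi$, so there is no real obstacle.
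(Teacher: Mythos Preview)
Your proposal is correct and follows essentially the same approach as the paper: both evaluate at a point $g$, use transitivity of push-forward (Proposition~\ref{prop:push-trans}) to factor the integral over $G/V_2$ through $G/V_1$, and then invoke left $V_1$-invariance of $\psi$ to pull it out of the inner integral, which becomes $\pi_{V_2,V_1}(\phi)$. Your identification of the typo $\pi_{V_2,V_2} \to \pi_{V_2,V_1}$ is also correct.
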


\begin{proof}
We have
\begin{displaymath}
(\phi \ast_{V_2} \psi)(g)
= \int_{G/V_2} \phi(x) \psi(x^{-1} g) dx
= \int_{G/V_1} \int_{V_1/V_2} \phi(xy) \psi(y^{-1} x^{-1} g) dy dx
\end{displaymath}
In the second step, we have used the transitivity of push-forward (Proposition~\ref{prop:push-trans}). Since $\psi$ is left invariant under $V_1$, we have $\psi(y^{-1} x^{-1} g)=\psi(x^{-1} g)$. Thus the $\psi$ factor is constant on the inner integral and can be moved outside of it. Doing so, the inner integral becomes exactly $(\pi_{V_1,V_2} \phi)(x)$, and so the result follows.
\end{proof}

\begin{proposition} \label{prop:convol3}
Let $V$ and $U_2 \subset U_1$ be open subgroups of $G$, let $\phi \in \cC(G/V)$ and let $\psi \in \cC(V \bs G/U_2)$. Then
\begin{displaymath}
\pi_{U_2,U_1}(\phi \ast_V \psi) = \phi \ast_V \pi_{U_2,U_1}(\psi).
\end{displaymath}
\end{proposition}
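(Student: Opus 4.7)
The plan is to prove the identity by a direct manipulation of integrals, with Fubini's theorem (Corollary~\ref{cor:fubini}) doing the essential work. First I would verify a small compatibility: since $\psi$ is left $V$-invariant and the push-forward $\pi_{U_2,U_1}$ only involves right translation, the function $\pi_{U_2,U_1}(\psi)$ remains left $V$-invariant, so the right-hand convolution $\phi \ast_V \pi_{U_2,U_1}(\psi)$ is well-defined.

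Next I would unfold the left-hand side. For $g \in G$,
\[
\pi_{U_2,U_1}(\phi \ast_V \psi)(g) = \int_{U_1/U_2} \int_{G/V} \phi(x) \psi(x^{-1} g h)\, dx\, dh.
\]
Applying Fubini's theorem, this equals
\[
\int_{G/V} \phi(x) \left( \int_{U_1/U_2} \psi(x^{-1} g h)\, dh \right) dx.
\]
By the definition of $\pi_{U_2,U_1}$, the inner integral is precisely $(\pi_{U_2,U_1}\psi)(x^{-1}g)$, so the whole expression is
\[
\int_{G/V} \phi(x)\, (\pi_{U_2,U_1}\psi)(x^{-1}g)\, dx = \bigl( \phi \ast_V \pi_{U_2,U_1}(\psi) \bigr)(g),
\]
which is the right-hand side.

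There is essentially no obstacle here: everything reduces to swapping two integrals, which Fubini justifies, and recognizing that push-forward commutes with right-translation of the argument. The only point to keep in mind is the book-keeping on which variable the push-forward acts — that is why verifying at the outset that $\pi_{U_2,U_1}(\psi)$ is still left $V$-invariant is worth doing explicitly.
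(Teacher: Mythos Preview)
Your proof is correct and follows essentially the same route as the paper: unfold the definitions, apply Fubini's theorem (Corollary~\ref{cor:fubini}) to swap the two integrals, and recognize the inner integral as $(\pi_{U_2,U_1}\psi)(x^{-1}g)$. Your added remark that $\pi_{U_2,U_1}(\psi)$ remains left $V$-invariant is a useful sanity check the paper omits.
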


\begin{proof}
We have
\begin{align*}
\pi_{U_2,U_1}(\phi \ast_V \psi)(g)
&= \int_{U_1/U_2} \int_{G/V} \phi(x) \psi(x^{-1} gy) dx dy \\
&=  \int_{G/V}\phi(x) \left[ \int_{U_1/U_2} \psi(x^{-1} gy) dy \right] dx
= (\phi \ast_V \pi_{U_2,U_1}(g)
\end{align*}
In the second step, we switched the order of integration (by Fubini's theorem, Corollary~\ref{cor:fubini}) and pulled the $\phi(x)$ factor out of the $y$ integral.
\end{proof}

\subsection{The completed group algebra}

Let $A$ be the $k$-module defined by
\begin{displaymath}
A = \varprojlim_U \cC(G/U) = \varprojlim_U \varinjlim_V \cC(V \bs G/U).
\end{displaymath}
The transition maps in the inverse limit are the push-forward maps $\pi_{V,U}$. Explicitly, an element $a$ of $A$ is a tuple $a=(a_U)$ consisting of an element $a_U \in \cC(G/U)$ for each open subgroup $U$ of $G$ such that $\pi_{V,U}(a_V)=a_U$ whenever $V \subset U$.

Suppose $a,b \in A$. We define their product $ab \in A$ as follows. Let $U$ be an open subgroup of $G$ and let $V$ be such that $b_U$ is left $V$-invariant. We put
\begin{displaymath}
(ab)_U=a_V \ast_V b_U
\end{displaymath}
We now verify that this does indeed endow $A$ with an algebra structure.

\begin{proposition} \label{prop:cga}
The above multiplication law is well-defined, and gives $A$ the structure of an associative unital $k$-algebra.
\end{proposition}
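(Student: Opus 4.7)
The plan is to address the three claims—well-definedness of the product, associativity, and existence of a unit—one by one, each time invoking the appropriate one of the three convolution lemmas just proven. Throughout, the only subtle aspect is the bookkeeping of open subgroups, since $\ast_V$ depends explicitly on $V$; but Propositions~\ref{prop:convol1}--\ref{prop:convol3} are stated precisely in the form needed to absorb these adjustments, so no substantive obstacle arises.

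For well-definedness, I would verify two things: independence of the choice of $V$ in the formula $(ab)_U = a_V \ast_V b_U$, and that the resulting tuple $((ab)_U)_U$ forms a compatible inverse system. The first splits by passing to the intersection $V_1 \cap V_2$, reducing to the case $V_2 \subset V_1$, where Proposition~\ref{prop:convol2} combined with the compatibility relation $\pi_{V_2, V_1}(a_{V_2}) = a_{V_1}$ inherent in $a \in A$ gives $a_{V_2} \ast_{V_2} b_U = \pi_{V_2, V_1}(a_{V_2}) \ast_{V_1} b_U = a_{V_1} \ast_{V_1} b_U$. For the second, given $U_2 \subset U_1$, I choose a common $V$ small enough that $b_{U_2}$—and hence also $b_{U_1} = \pi_{U_2, U_1}(b_{U_2})$—is left $V$-invariant, and invoke Proposition~\ref{prop:convol3} to conclude $\pi_{U_2, U_1}((ab)_{U_2}) = a_V \ast_V b_{U_1} = (ab)_{U_1}$.

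For associativity, given $a, b, c \in A$ and an open subgroup $U$, I would pick $V$ making $c_U$ left $V$-invariant and then $W \subset V$ making $b_V$ left $W$-invariant, check that this forces $a_W \ast_W b_V$ and $b_V \ast_V c_U$ to be left $W$-invariant as well, and then unfold $((ab)c)_U$ and $(a(bc))_U$ as the two parenthesizations of the triple convolution $a_W \ast_W b_V \ast_V c_U$, which agree by Proposition~\ref{prop:convol1}.

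For the unit, I propose $e = (e_U)_U$ where $e_U \in \cC(G/U)$ is the indicator function of the identity coset. Since a one-point set has measure~$1$, the push-forward $\pi_{V, U}(e_V)$ picks out the identity coset of $G/U$ with value~$1$, confirming $\pi_{V, U}(e_V) = e_U$ and hence $e \in A$. Direct computation of the convolutions—with $V$ chosen to make $a_U$ left invariant for $ea$, and $V \subset U$ for $ae$ so that $e_U$ is left $V$-invariant—yields $(ea)_U(g) = a_U(g)$ and $(ae)_U(g) = \pi_{V, U}(a_V)(g) = a_U(g)$. Finally, $k$-bilinearity of multiplication is inherited from the evident $k$-bilinearity of each $\ast_V$.
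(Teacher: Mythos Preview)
Your proposal is correct and follows essentially the same approach as the paper's proof: well-definedness via Proposition~\ref{prop:convol2}, compatibility of the inverse system via Proposition~\ref{prop:convol3}, associativity via Proposition~\ref{prop:convol1}, and the unit given by the point masses at the identity coset. One minor slip: you claim $a_W \ast_W b_V$ is left $W$-invariant, but it is only left-invariant under whatever fixes $a_W$; fortunately this invariance is not actually needed, since $((ab)c)_U = (ab)_V \ast_V c_U$ requires only that $c_U$ be left $V$-invariant, and $(ab)_V = a_W \ast_W b_V$ requires only that $b_V$ be left $W$-invariant.
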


\begin{proof}
We first verify that $(ab)_U$ is well-defined, i.e., independent of the choice of $V$. Thus let $V'$ be a second open subgroup such that $b_U$ is left $V'$-invariant. We must show $a_{V'} \ast_{V'} b_U=a_V \ast_V b_U$. It suffices to treat the case $V' \subset V$. We then have
\begin{displaymath}
a_{V'} \ast_{V'} b_U = \pi_{V',V}(a_{V'}) \ast_V b_U = a_V \ast_V b_U,
\end{displaymath}
where in the second step we used Proposition~\ref{prop:convol2}. We thus see that $(ab)_U$ is well-defined.

We next show that the family $((ab)_U)$ defines an element of $A$. Let $U' \subset U$ be open subgroups. We must show that $\pi_{U',U}((ab)_{U'})=(ab)_U$. Let $V$ be such that $b_{U'}$ is left $V$-invariant. Then
\begin{displaymath}
\pi_{U',U}((ab)_{U'}) = \pi_{U',U}(a_V \ast_V b_{U'}) = a_V \ast_V \pi_{U',U}(b_{U'}) = a_V \ast_V b_U = (ab)_U,
\end{displaymath}
where in the second step we used Proposition~\ref{prop:convol3}. We thus have a well-defined element $ab \in A$.

It is clear that the multiplication law is $k$-bilinear. We now show that it is associative. Thus let $a,b,c \in A$. Let $U$ be an open subgroup, let $V$ be such that $c_U$ is left $V$-invariant, and let $W$ be such that $b_V$ is left $W$-invariant. Then $(bc)_U=b_V \ast_V c_U$ is left $W$-invariant, and so
\begin{displaymath}
(a(bc))_U=a_W \ast_W (bc)_U = a_W \ast_W (b_V \ast_V c_U).
\end{displaymath}
Similarly, $(ab)_V=a_W \ast_W b_V$, and so
\begin{displaymath}
(ab)c)_U=(ab)_V \ast_V c_U = (a_W \ast_W b_V) \ast_V c_U.
\end{displaymath}
Thus associativity follows from Proposition~\ref{prop:convol1}. Finally, we have an element $1 \in A$ given by $1_U=\delta_{1,G/U}$, which is easily seen to be the identity element.
\end{proof}

\begin{definition} \label{defn:complete-alg}
The \defn{completed group algebra} of $G$, denoted $A_k(G; \mu)$, is the $k$-algebra $A$ constructed above. We typically write $A$ or $A(G)$ instead of $A_k(G; \mu)$.
\end{definition}

\subsection{Basic properties} \label{ss:A-basic}

We now give a few simple properties of the algebra $A$.

\textit{(a) Relation to the group algebra.} For an element $g \in G$, let $c_{g,U} \in \cC(G/U)$ be the point-mass at $g \in G/U$. It is clear that $c_g=(c_{g,U})$ is an element of $A$. Furthermore, a simple computation shows that $c_g c_h=c_{gh}$. We thus obtain a $k$-algebra homomorphism $k[G] \to A$ by $g \mapsto c_g$, which is easily seen to be injective. In what follows, we regard $k[G]$ as a subalgebra of $A$ via this embedding.

\textit{(b) The augmentation map.} We define the \defn{augmentation map}
\begin{displaymath}
\epsilon \colon A \to k, \qquad \epsilon(a)=a_G.
\end{displaymath}
Here $a_G$ is an element of $\cC(G/G)$, which we identify with $k$. One easily verifies that $\epsilon$ is a map of $k$-algebras. In particular, we can define an $A$-module structure on $k$ by $a \cdot 1=\epsilon(a)$. We call this the \defn{trivial $A$-module}, and denote it $\bbone$.

\textit{(c) Open subgroups.} Let $U$ be an open subgroup of $G$. Given an open subgroup $V$ of $U$, we have an injective map $\cC(U/V) \to \cC(G/V)$ via extension by~0. One readily verifies that this is compatible with transition maps, and thus induces an inclusion of $k$-algebras $A(U) \to A(G)$. (Note that $A(G)$ can be identified with the inverse limit of $\cC(G/V)$ over open subgroups $V \subset U$, as such subgroups constitute a cofinial system.)

\subsection{Smooth modules}

Let $M$ be a $A$-module. Let $x \in M$ and let $U$ be an open subgroup of $G$. We say that $x$ is \emph{strictly $U$-invariant} if the action of $A$ on $x$ factors through $\cC(G/U)$, i.e., if $a \in A$ has $a_U=0$ then $ax=0$. Intuitively, this means that if $V \subset U$ is an open subgroup and $\phi \in \cC(G/V)$ then
\begin{displaymath}
\int_{G/V} \phi(g) gx \, dg = \int_{G/U} \psi(g) gx\, dg,
\end{displaymath}
where $\psi \in \cC(G/U)$ is the push-forward of $\phi$. We have not actually defined integrals of the above sort yet however, though we will in \S \ref{ss:modint}. We write $M^{sU}$ for the set of all strictly $U$-invariant elements of $M$.

We say that $x$ is \defn{smooth} if it is strictly $U$-invariant for some $U$, and we say that $M$ is \defn{smooth} if every element of $M$ is. We note that $M$ is smooth if and only if the multiplication map $A \times M \to M$ is continuous when $A$ is given the inverse limit topology and $M$ the discrete topology. Essentially every $A$-module we consider will be smooth. In particular, we note that the trivial module $\bbone$ is smooth.

We now introduce our abelian representation category:

\begin{definition} \label{defn:repcat}
We define $\uRep_k(G; \mu)$ to be the full subcategory of $\Mod_A$ spanned by smooth $A$-modules. As usual, we omit $\mu$ or $k$ from the notation when possible.
\end{definition}

\begin{proposition}
The category $\uRep(G)$ is a Grothendieck abelian category.
\end{proposition}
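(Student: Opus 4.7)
The plan is to verify the three conditions for a Grothendieck abelian category: (i) $\uRep(G)$ is an abelian category cocomplete in the usual sense, (ii) filtered colimits in $\uRep(G)$ are exact, and (iii) $\uRep(G)$ admits a generator. The key observation throughout is that smoothness is an elementwise condition that behaves well under all the natural module-theoretic constructions.

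First I would show that the inclusion $\uRep(G) \hookrightarrow \Mod_A$ is closed under subobjects, quotients, and arbitrary small colimits. Subobjects are immediate, since any element of a submodule of a smooth module is already in the ambient smooth module. Quotients are also immediate: if $x \in M$ is strictly $U$-invariant and $\bar{x}$ denotes its image in $M/N$, then $a_U=0$ forces $ax=0$, hence $a\bar{x}=0$. For coproducts $\bigoplus_i M_i$ a typical element is a finite sum $x_{i_1}+\cdots+x_{i_n}$ with $x_{i_j}$ strictly $U_j$-invariant; the key point is that strict $U$-invariance \emph{descends} to open subgroups $V \subset U$ (since $a_V=0$ forces $a_U=\pi_{V,U}(a_V)=0$), so taking $V=U_1\cap\cdots\cap U_n$ renders the whole sum strictly $V$-invariant. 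Filtered colimits and pushouts reduce to these cases. Since kernels, cokernels, and coproducts all exist and are smooth, $\uRep(G)$ is a cocomplete abelian category with the same kernels, cokernels, and colimits as $\Mod_A$.

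Filtered colimit exactness in $\uRep(G)$ is then inherited directly from $\Mod_A$, where filtered colimits commute with finite limits by a standard argument.

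The main thing to verify is the existence of a generator. For each open subgroup $U$ of $G$ I claim that $\cC(G/U)$ carries a natural smooth $A$-module structure via left convolution: given $\phi\in\cC(G/U)$ and an open subgroup $V$ such that $\phi$ is left $V$-invariant (i.e.\ $\phi\in\cC(V\backslash G/U)$), set $a\cdot\phi:=a_V\ast_V\phi$. Proposition~\ref{prop:convol2} shows this is independent of the choice of $V$, and Proposition~\ref{prop:convol1} together with a compatibility computation shows that $(ab)\cdot\phi=a\cdot(b\cdot\phi)$ and $1\cdot\phi=\phi$; clearly $\phi$ itself is strictly $V$-invariant, so $\cC(G/U)$ is a smooth $A$-module. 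The universal property of $\cC(G/U)$ is then the heart of the argument: given any smooth $M$ and any strictly $U$-invariant element $x\in M$, the rule $a_U\mapsto a\cdot x$ is a well-defined $A$-linear map $\cC(G/U)\to M$ sending $\delta_{1,G/U}$ to $x$ (the key identity being that if $b_U=\phi$ then $(ab)_U=a_V\ast_V\phi=a\cdot\phi$ for $V$ small enough). Since the open subgroups of $G$ form a set, the direct sum $\bigoplus_U\cC(G/U)$ taken over a set of representatives is a generator: every smooth $A$-module is a quotient of a direct sum of copies of such $\cC(G/U)$'s.

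The only nontrivial step is unpacking the universal property of $\cC(G/U)$ and checking that the module structure is compatible with convolution; everything else is a routine transfer from $\Mod_A$ via the elementwise smoothness condition. I expect no essential obstacle.
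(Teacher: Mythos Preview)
Your arguments for closure under subobjects, quotients, and colimits, and for exactness of filtered colimits, are correct and match the paper's approach. The gap is in your generator argument.

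You claim that the rule $a_U\mapsto a\cdot x$ defines an $A$-linear map $\cC(G/U)\to M$. For this to have domain all of $\cC(G/U)$, you need every $\phi\in\cC(G/U)$ to be of the form $a_U$ for some $a\in A$, i.e.\ the projection $A\to\cC(G/U)$ must be surjective. But this surjectivity is exactly the content of Proposition~\ref{prop:A-surj}, which is equivalent to the measure $\mu$ being \emph{normal} (and even then uses first-countability of $G$). The proposition you are proving sits in \S\ref{s:gpalg}, before normality is introduced, and must hold for an arbitrary measure. Without normality the transition maps $\pi_{V,U}\colon\cC(G/V)\to\cC(G/U)$ need not be surjective, so a given $\phi\in\cC(G/U)$ may fail to lift to a compatible system in $A$, and your map is only defined on the image of $A$ in $\cC(G/U)$.

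The paper sidesteps this by taking as generator the direct sum of all smooth cyclic modules $A/I$ (over left ideals $I$ with $A/I$ smooth). This works unconditionally: given $x\in M^{sU}$, the map $A\to M$, $a\mapsto ax$, has smooth image $Ax\cong A/\mathrm{Ann}(x)$, so $A/\mathrm{Ann}(x)$ appears as a summand and surjects onto $Ax\ni x$. Your argument can be repaired along the same lines by replacing $\cC(G/U)$ with the image of $A$ inside it, but as written it implicitly assumes normality.
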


\begin{proof}
It is clear that if $M$ is a smooth $A$-module then any sub or quotient of $M$ (as an $A$-module) is again smooth. Thus $\uRep(G)$ is an abelian subcategory of $\Mod_A$. Moreover, a direct limit of smooth $A$-modules is again smooth. It follows that $\uRep(G)$ is co-complete, and that direct limits in $\uRep(G)$ can be computed in $\Mod_A$. In particular, filtered colimits in $\uRep(G)$ are exact, and so Grothendieck's axiom (AB5) holds. Finally, $\uRep(G)$ has a generator, namely, the direct sum of all smooth modules of the form $A/I$ with $I$ a left ideal.
\end{proof}

\begin{remark}
If $M$ is an arbitrary $A$-module then the collection of all smooth elements of $M$ forms an $A$-submodule, which is smooth; we denote it by $M^{\rm sm}$. One can show that $M \mapsto M^{\rm sm}$ defines a functor $\Mod_A \to \uRep(G)$ that is right adjoint to the inclusion $\uRep(G) \to \Mod_A$. We thus see that $\uRep(G)$ is a co-reflective subcategory of $\Mod_A$. In particular, limits in $\uRep(G)$ can be computed by first taking the limit in $\Mod_A$ and then applying $(-)^{\rm sm}$. For example, if $\{M_i\}_{i \in I}$ is a family of smooth modules then the product in $\uRep(G)$ is given by
\begin{displaymath}
\big( \prod_{i \in I} M_i \big)^{\rm sm} = \bigcup_{U \subset G} \big( \prod_{i \in I} M_i^{sU_i} \big),
\end{displaymath}
where the directed union is over all open subgroups $U$ of $G$.
\end{remark}

Suppose $U$ is an open subgroup of $G$. Then we have an inclusion of completed group algebras $A(U) \to A(G)$, see \S \ref{ss:A-basic}(c). If $M$ is a smooth $A(G)$-module then one readily verifies that it is smooth as an $A(U)$-module. We thus have a \defn{restriction functor}
\begin{displaymath}
\res \colon \uRep(G) \to \uRep(U).
\end{displaymath}
Since this functor is the identity on the underlying $k$-modules, it is exact and co-continuous.

Since the group algebra $k[G]$ is naturally a subalgebra of $A$ (see \S \ref{ss:A-basic}(a)), any $A$-module can be regarded as a $k[G]$-module. In particular, for a subgroup $U$ of $G$, we can consider the $U$-invariant elements of $M$, which we denote by $M^U$. The following proposition describes how invariants and strict invariants relate; a partial converse is given in Proposition~\ref{prop:sinv}.

\begin{proposition} \label{prop:strict-smooth}
Let $M$ be an $A$-module and let $U$ be an open subgroup. Then $M^{sU} \subset M^U$, that is, any element that is strictly $U$-invariant is $U$-invariant.
\end{proposition}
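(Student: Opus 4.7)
The plan is to exploit the embedding $k[G] \hookrightarrow A$ from \S\ref{ss:A-basic}(a) to recast $U$-invariance as a very simple instance of strict $U$-invariance. Concretely, for $u \in U$, consider the element $c_u - 1 \in A$, where $c_u$ is the image of $u$ under the group-algebra embedding and $1$ is the unit of $A$. The key observation is that $(c_u - 1)_U = 0$ in $\cC(G/U)$: by definition $c_{u,U}$ is the point mass at the coset $uU \in G/U$, and since $u \in U$, this coset is the identity coset, so $c_{u,U} = \delta_{1,G/U} = 1_U$.

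With this in hand, the proof is immediate. Let $x \in M^{sU}$. By definition of strict $U$-invariance, any $a \in A$ with $a_U = 0$ satisfies $ax = 0$. Applying this to $a = c_u - 1$ yields $c_u x - x = 0$, i.e., $ux = x$. Since this holds for every $u \in U$, we conclude $x \in M^U$.

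There is essentially no obstacle: the only thing to verify carefully is the computation $c_{u,U} = 1_U$, which follows directly from the definitions of $c_g$ and of the unit in $A$, together with the fact that $u$ lies in $U$. No integration or convolution computation is required, so the proof is a one-line consequence of the definitions.
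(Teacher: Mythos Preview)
Your proof is correct and takes essentially the same approach as the paper: both argue that for $u \in U$ the element $c_u - 1$ (the paper writes $1-g$) has vanishing $U$-component in $\cC(G/U)$, and hence annihilates any strictly $U$-invariant element, giving $ux = x$.
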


\begin{proof}
Suppose $x \in M$ is strictly $U$-invariant. For $g \in U$, consider $1-g \in k[G]$ as an element of $A$. Its push-forward to $\cC(G/U)$ vanishes. Since $x$ is strictly $U$-invariant, we therefore have $(1-g)x=0$, and so $gx=x$. This shows that $x$ is $U$-invariant.
\end{proof}

\begin{remark}
Define a $k[G]$-module to be \defn{smooth} if every element has open stabilizer. The above proposition shows that any smooth $A$-module restricts to a smooth $k[G]$-module. We warn the reader that an $A$-module that is smooth as a $k[G]$-module may not be smooth as an $A$-module. The category of smooth $k[G]$-modules is currently being examined by Ilia Nekrasov \cite{Nekrasov1}; see also \cite{DLLX}.
\end{remark}

\subsection{Schwartz spaces}

The following proposition constructs a natural $A$-module structure on Schwartz spaces. These will be the most important $A$-modules in our discussion.

\begin{proposition} \label{prop:A-schwartz}
Let $X$ be a $G$-set. Then $\cC(X)$ carries the structure of a smooth $A$-module in the following manner: given $a \in A$ and $\phi \in \cC(X)$ that is left $U$-invariant, $a\phi \in \cC(X)$ is defined by
\begin{displaymath}
(a\phi)(x) = \int_{G/U} a_U(g) \phi(g^{-1} x) dg
\end{displaymath}
\end{proposition}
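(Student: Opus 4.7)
The plan is to verify, in order, the following four claims: (i) the formula for $a\phi(x)$ is independent of the open subgroup $U$ that leaves $\phi$ fixed; (ii) $a\phi$ belongs to $\cC(X)$; (iii) the axioms $1\phi = \phi$ and $(ab)\phi = a(b\phi)$ hold; and (iv) the resulting module is smooth.

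For (i), suppose $V \subset U$ are open subgroups and $\phi$ is left $U$-invariant (hence left $V$-invariant). The function $g \mapsto \phi(g^{-1}x)$ on $G$ is right $U$-invariant, so it descends to a function on $G/U$; as a function on $G/V$ it is the pullback along $\pi \colon G/V \to G/U$. By the projection formula (Proposition~\ref{prop:projection}) combined with transitivity of push-forward (Proposition~\ref{prop:push-trans}),
\begin{displaymath}
\int_{G/V} a_V(g)\,\phi(g^{-1}x)\,dg = \int_{G/U} \pi_*(a_V)(g)\,\phi(g^{-1}x)\,dg = \int_{G/U} a_U(g)\,\phi(g^{-1}x)\,dg,
\end{displaymath}
using the compatibility $\pi_{V,U}(a_V)=a_U$ built into the definition of $A$.

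For (ii), view $F(g,x)=a_U(g)\phi(g^{-1}x)$ as a function on the $G$-set $G/U \times X$ (with the diagonal action). Smoothness of $F$ is immediate: if $a_U$ and $\phi$ are both invariant under an open subgroup $W$ (shrinking if necessary), then a direct change-of-variable computation shows $F(wg,wx)=F(g,x)$ for $w \in W$. For the support, note that $Z:=\supp(\phi)$ is stable under $U$ (since $\phi$ is $U$-invariant) and is finitary as a $\hat{G}$-set, hence finitary as a $U$-set by Proposition~\ref{prop:smooth}(c). Consequently the induced $G$-set $G \times^U Z$ is finitary, and the $G$-equivariant map $G \times^U Z \to X$, $(g,z)\mapsto gz$, has finitary image. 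One checks that $\supp(F)$ is contained in the preimage of $\supp(a_U)$ under the projection $G \times^U Z \to G/U$, which is a finitary $\hat{G}$-set. Thus $F \in \cC(G/U \times X)$, and then $a\phi$, being the push-forward of $F$ along the projection $G/U \times X \to X$, is Schwartz by Proposition~\ref{prop:push-schwartz}.

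The identity $1\phi = \phi$ is immediate from $1_U = \delta_{1,G/U}$ and $\mu(\{1\})=1$. The heart of the argument, and the step I expect to be the main obstacle, is the associativity $(ab)\phi = a(b\phi)$. The strategy is to choose open subgroups nested as $V' \subset V \subset U$, with $\phi$ left $U$-invariant, $b_U$ left $V$-invariant, and $a_V$ left $V'$-invariant, so that both sides unfold to iterated integrals over $G/V \times G/U$ and $G/V' \times G/U$ respectively. Using Proposition~\ref{prop:convol2} to identify $(ab)_U$ with $a_V *_V b_U$, one expands
\begin{displaymath}
((ab)\phi)(x) = \int_{G/U}\!\int_{G/V} a_V(g)\,b_U(g^{-1}h)\,\phi(h^{-1}x)\,dg\,dh,
\end{displaymath}
applies Fubini (Corollary~\ref{cor:fubini}), makes the change of variables $h \mapsto gh$ using $G$-invariance of integration, and recognizes the inner integral as $(b\phi)(g^{-1}x)$, which by (i) may be integrated against $a_V$ over $G/V$; this yields $a(b\phi)(x)$. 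Making sure all integrands are Schwartz on the product set (so that Fubini applies) is exactly the calculation already done for (ii). Finally, smoothness of the module is visible from the construction: the formula shows that any $\phi$ which is left $U$-invariant is strictly $U$-invariant as an element of $\cC(X)$ (and in fact Proposition~\ref{prop:strict-smooth} gives consistency with the $k[G]$-action factored through $A$).
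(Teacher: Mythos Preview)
Your proof is correct, but it takes a more hands-on route than the paper's. The paper simply reduces to the case where $X = G/W$ is transitive (which is harmless since $\cC$ commutes with disjoint unions), observes that the formula for $a\phi$ is then literally the convolution $a_U \ast_U \phi$ from \S\ref{ss:convol}, and appeals to the already-proven Propositions~\ref{prop:convol1}--\ref{prop:convol3} and~\ref{prop:cga} for well-definedness, associativity, and the unit axiom. Your approach instead works with a general $X$ throughout, re-deriving those convolution identities in situ via the projection formula, Fubini, and the push-forward machinery of \S\ref{ss:push}. This costs you some repetition, but it has the virtue of being self-contained and of making explicit why $a\phi$ lands in $\cC(X)$ (which the paper leaves implicit in ``similar to the proof of Proposition~\ref{prop:cga}''). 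One presentational wrinkle: in step~(ii) your support bound is phrased in terms of $G \times^U Z$, but $F$ lives on $G/U \times X$; what you are really using is that the map $G \times^U Z \to G/U \times X$, $[g,z] \mapsto (gU, gz)$, is a $G$-equivariant injection with finitary source whose image contains $\supp(F)$. Stated that way the argument is clean.
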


\begin{proof}
It suffices to treat the case where $X=G/W$ for some $W$. Then $a\phi$ is just the convolution $a_U \ast_U \phi$ defined in \S \ref{ss:convol}. The proof that the stated formula defines an $A$-module structure on $\cC(X)$ is now similar to the proof that the algebra structure on $A$ is associative and unital (Proposition~\ref{prop:cga}). The fact that the $A$-module $\cC(X)$ is smooth follows from the definition of the module structure.
\end{proof}

If $a,b \in A$ then $(ab)_U=ab_U$, where on the left side we are using the multiplication law in $A$, and on the right side we are letting $a \in A$ act on $b_U \in \cC(G/U)$ via the $A$-module structure from the above proposition. This equality follows immediately from the definitions. In particular, we see that the projection map $A \to \cC(G/U)$ is one of left $A$-modules.

\begin{proposition} \label{prop:A-push-pull}
Let $f \colon X \to Y$ be a map of $G$-sets.
\begin{enumerate}
\item The map $f_* \colon \cC(X) \to \cC(Y)$ is $A$-linear.
\item If $X$ and $Y$ are finitary then the map $f^* \colon \cC(Y) \to \cC(X)$ is $A$-linear.
\end{enumerate}
\end{proposition}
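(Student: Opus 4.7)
The plan is to reduce both parts to routine manipulations involving the defining formula for the $A$-action on Schwartz spaces (Proposition~\ref{prop:A-schwartz}), using only Fubini's theorem (Corollary~\ref{cor:fubini}), the $G$-invariance of integration, and $G$-equivariance of $f$. In each case I would fix $a \in A$ and fix an open subgroup $U$ small enough that both the input Schwartz function and the relevant output are strictly $U$-invariant; since $f$ is $G$-equivariant, if $\phi$ is $U$-invariant then so are $f_*\phi$ and $f^*\phi$, so the pushforward/pullback actually land in strictly $U$-invariant Schwartz functions.

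For part (a), I would compute both $f_*(a\phi)(y)$ and $(af_*\phi)(y)$ directly. Unfolding definitions, the first expands as
\begin{displaymath}
\int_{f^{-1}(y)} \int_{G/U} a_U(g)\, \phi(g^{-1}x)\, dg\, dx,
\end{displaymath}
which by Fubini equals $\int_{G/U} a_U(g) \int_{f^{-1}(y)} \phi(g^{-1}x)\, dx\, dg$. The inner integral transforms, via the change of variables $x \mapsto gx$ (justified by $G$-invariance of integration) together with the identity $g^{-1} f^{-1}(y) = f^{-1}(g^{-1} y)$ coming from the $G$-equivariance of $f$, into $(f_*\phi)(g^{-1} y)$. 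That is precisely what $(af_*\phi)(y)$ unfolds to.

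For part (b), the argument is even shorter and more direct. One simply writes
\begin{displaymath}
f^*(a\psi)(x) = (a\psi)(f(x)) = \int_{G/U} a_U(g)\, \psi(g^{-1} f(x))\, dg,
\end{displaymath}
substitutes $g^{-1}f(x) = f(g^{-1}x)$ from $G$-equivariance, and reads off $(af^*\psi)(x)$. The role of the finitary hypothesis on $X$ (and, implicitly, $Y$) is only to ensure that $f^*\psi$ actually lies in $\cC(X)$: on a finitary $G$-set every smooth function is Schwartz, so the pullback of a Schwartz function is Schwartz. This was already noted in the preamble to Proposition~\ref{prop:projection}.

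There is no real obstacle here; the only subtle step is the change of variables in part (a), where one must correctly combine $G$-invariance of integration with the $G$-equivariance of $f$ to identify $\int_{f^{-1}(y)} \phi(g^{-1}x)\,dx$ with $(f_*\phi)(g^{-1}y)$. Once that identification is in hand, everything is formal.
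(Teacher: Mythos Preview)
Your proposal is correct and follows essentially the same approach as the paper's proof: unfold the $A$-action, use Fubini to swap the two integrals, and invoke $G$-equivariance of $f$ together with $G$-invariance of integration to identify $\int_{f^{-1}(y)} \phi(g^{-1}x)\,dx$ with $(f_*\phi)(g^{-1}y)$. You are simply more explicit than the paper about the change-of-variables step and about the role of the finitary hypothesis in part~(b).
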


\begin{proof}
(a) Suppose $\phi \in \cC(X)$ is left $U$-invariant. Then $f_*(\phi)$ is also left $U$-invariant. Thus
\begin{align*}
(a f_*(\phi))(y)
&= \int_{G/U} a_U(g) (f_*\phi)(g^{-1} y) dg
= \int_{G/U} \int_{f^{-1}(y)} a_U(g) \phi(g^{-1} x) dx dg \\
&= \int_{f^{-1}(y)} \int_{G/U} a_U(g) \phi(g^{-1} x) dg dx = (f_*(a\phi))(y),
\end{align*}
and so $f_*(a\phi)=a f_*(\phi)$ as required.

(b) Suppose $\psi \in \cC(Y)$ is left $U$-invariant. Then $f^*(\psi)$ is also left $U$-invariant. Thus
\begin{displaymath}
(a f^*(\psi))(x) =\int_{G/U} a_U(g) f(g^{-1} \psi(x)) dg = f^*(a\psi)(\psi(x)),
\end{displaymath}
and so $f^*(a\psi)=a f^*(\psi)$. This completes the proof.
\end{proof}

\begin{proposition} \label{prop:perm-rep-func}
We have a natural faithful $k$-linear functor
\begin{displaymath}
\Phi \colon \uPerm(G) \to \uRep(G), \qquad \Vec_X \mapsto \cC(X).
\end{displaymath}
\end{proposition}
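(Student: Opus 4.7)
The plan is to define $\Phi$ on objects by $\Phi(\Vec_X) = \cC(X)$, which is a smooth $A$-module by Proposition~\ref{prop:A-schwartz}, and on morphisms by sending $A \in \Mat_{Y,X}^G$ to the $k$-linear map $\cC(X) \to \cC(Y)$ defined by $v \mapsto Av$ (recall $\Vec_X = \cC(X)$, so this is just the matrix acting on column vectors). The fact that $Av$ lands in $\cC(Y)$ is already implicit in the matrix algebra developed in \S\ref{s:matrix} (thinking of $v$ as a $X \times \bone$ matrix).

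The main step is to check that $v \mapsto Av$ is $A(G)$-linear, which is what makes the target land in the category $\uRep(G)$. The hypothesis is that $A$ is $G$-invariant, meaning $A(gy, gx) = A(y,x)$, and we act on a Schwartz function $v$ that is left $U$-invariant for some open $U$. First I would observe that $Av$ is then also left $U$-invariant: for $g \in U$,
\begin{displaymath}
(Av)(gy) = \int_X A(gy,x) v(x)\, dx = \int_X A(y, g^{-1}x) v(x)\, dx = \int_X A(y,x') v(gx')\, dx' = (Av)(y),
\end{displaymath}
using $G$-invariance of $A$, $G$-invariance of the integral, and $U$-invariance of $v$. Then for $a \in A(G)$ I would compute
\begin{displaymath}
(A(a \cdot v))(y) = \int_X A(y,x) \int_{G/U} a_U(g) v(g^{-1}x)\, dg\, dx = \int_{G/U} a_U(g) \int_X A(g^{-1}y, x') v(x')\, dx'\, dg = (a \cdot (Av))(y),
\end{displaymath}
where I swap the order of integration using Fubini (Corollary~\ref{cor:fubini}), make the change of variable $x' = g^{-1}x$ using $G$-invariance of integration (both are legitimate once one localizes to the support), and then use $G$-invariance of $A$ a second time. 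This is the main obstacle; everything else is essentially formal.

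Functoriality is then straightforward. Given $A \in \Mat_{Y,X}^G$ and $B \in \Mat_{Z,Y}^G$, we have $\Phi(B) \circ \Phi(A) = \Phi(BA)$ because
\begin{displaymath}
((BA)v)(z) = \int_X \left( \int_Y B(z,y) A(y,x)\, dy \right) v(x)\, dx = \int_Y B(z,y) \left( \int_X A(y,x) v(x)\, dx \right) dy = (B(Av))(z),
\end{displaymath}
again by Fubini, which matches the associativity already proved in Proposition~\ref{prop:matrix-mult}. The identity matrix $I_X$ acts as the identity on $\cC(X)$ by the same computation that appears in the proof of Proposition~\ref{prop:matrix-mult}. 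The map $\Mat_{Y,X}^G \to \Hom_{A}(\cC(X), \cC(Y))$ is $k$-linear because integration against $A$ is $k$-linear in $A$.

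Finally, faithfulness is immediate from Proposition~\ref{prop:matrix-faithful}: if $A, B \in \Mat_{Y,X}^G$ induce the same $k$-linear map $\cC(X) \to \cC(Y)$, then in particular they agree on all point-mass Schwartz functions, forcing $A = B$ pointwise.
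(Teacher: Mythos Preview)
Your proof is correct. The direct computation that $v \mapsto Av$ is $A(G)$-linear goes through as written: the Fubini step is legitimate because $G/U \times X$ is a finitary $\hat{G}$-set (so any smooth function on it is Schwartz), and the change of variables is just $G$-invariance of the integral.

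The paper takes a slightly different route: rather than computing $A(a\cdot v) = a\cdot(Av)$ directly for an arbitrary $G$-invariant matrix $A$, it appeals to Proposition~\ref{prop:hom-basis} to reduce to the generating morphisms $A_f$ and $B_f$, and then invokes the preceding Proposition~\ref{prop:A-push-pull}, which establishes $A$-linearity of $f_*$ and $f^*$ separately. The core of that proposition's proof is the same Fubini/change-of-variables manipulation you carry out, just specialized to push-forward and pull-back. Your approach is more self-contained and avoids the detour through generators; the paper's approach has the virtue of isolating the $A$-linearity of $f_*$ and $f^*$ as an independent statement that is reused elsewhere.
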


\begin{proof}
Let $\Phi_0 \colon \uPerm(G) \to \Mod_k$ be the forgetful functor. This is $k$-linear and faithful, and takes $\Vec_X$ to $\cC(X)$. Since $\cC(X)$ is an $A$-module, we can regard $\Phi_0$ as taking values in $A$-modules on objects. To show that $\Phi_0$ defines a functor to $\uRep(G)$, it suffices to show that it carries morphisms in $\uPerm(G)$ to $A$-linear morphisms. Proposition~\ref{prop:A-push-pull} verifies this for the morphisms $A_f$ and $B_f$ constructed in \S \ref{ss:alpha-beta}. Since these generate all morphisms in $\uPerm(G)$ (Proposition~\ref{prop:hom-basis}), the result follows.
\end{proof}

\section{Normal measures} \label{s:normal}

\subsection{Overview}

In the previous section, we defined the representation category $\uRep(G)$. In this section, we introduce the notion of a normal measure (which is weaker than the quasi-regular condition introduced in \S \ref{ss:regular}), and prove a number of fundamental results about $\uRep(G)$ assuming normality. Here are some of these results:
\begin{itemize}
\item Schwartz space $\cC(X)$ has a mapping property.
\item The functor $\Phi \colon \uPerm(G) \to \uRep(G)$ is fully faithful.
\item We establish some fundamental exact sequences involving Schwartz spaces.
\item We show that invariants and strict invariants coincide for smooth modules.
\item We define some important averaging operators on smooth modules.
\item We define a version of integration for module-valued functions.
\end{itemize}
These foundational results are needed to establish the more exciting results about $\uRep(G)$ in subsequent sections. In \S \ref{ss:sym-char-p} we will see counterexamples to many of these results in the non-normal case.

We fix a pro-oligomorphic group $G$ and a $k$-valued measure $\mu$ throughout \S \ref{s:normal}. We assume for simplicity that $G$ is first-countable, though this is not needed for every result. We note that if $G$ is oligomorphic and $\Omega$ is countable then $G$ is first-countable. From \S \ref{ss:normal-schwartz} onwards, we assume that $\mu$ is normal.

\subsection{Normal measures}

We now introduce the key concept of \S \ref{s:normal}.

\begin{definition} \label{defn:normal}
We say that the measure $\mu$ is \defn{normal} if whenever $f \colon X \to Y$ is a surjection of finitary $G$-sets, the map $f_* \colon \cC(X) \to \cC(Y)$ is surjective.
\end{definition}

To verify $f_*$ is surjective, it is enough to realize $1_Y$ in its image: indeed, if $1_Y=f_*(\epsilon)$, for some $\epsilon \in \cC(X)$, then for any $\phi \in \cC(Y)$ we have $\phi=f_*(f^*(\phi) \cdot \epsilon)$ by the projection formula (Proposition~\ref{prop:projection}). From this, we see that normality is preserved under extension of scalars. (Given $k \to k'$ and a surjection $f \colon X \to Y$, choose a lift of $1_Y$ to a $k$-valued Schwartz function $\phi$ on $X$. Then the extension of scalars of $\phi$ to $k'$ is a lift of $1_Y$ as a $k'$-valued function.)

While the condition defining normality appears to be very plausible, there are cases where it does not hold. In particular, it often seems to fail in positive characteristic. See \S \ref{ss:sym-char-p} for an explicit example in the case of the symmetric group.

We now establish two basic results concerning normality.

\begin{proposition} \label{prop:normal-sub}
For $U \subset G$ an open subgroup, $\mu$ is normal if and only if $\mu \vert_U$ is.
\end{proposition}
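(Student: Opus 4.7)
The plan is to prove the two directions separately, with the reverse direction being essentially trivial and the forward direction passing through induced $G$-sets.

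For the ``if'' direction, suppose $\mu\vert_U$ is normal, and let $f \colon X \to Y$ be a surjection of finitary $G$-sets. By Proposition~\ref{prop:smooth}(c), $X$ and $Y$ are also finitary as $U$-sets, and $f$ is evidently $U$-equivariant and still surjective on underlying sets. Since integration of a Schwartz function depends only on the measure (not on the ambient group), the $U$-equivariant pushforward $f_* \colon \cC(X) \to \cC(Y)$ coincides with the $G$-equivariant one. Normality of $\mu\vert_U$ then forces $f_*$ to be surjective.

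For the ``only if'' direction, suppose $\mu$ is normal and let $f \colon X \to Y$ be a surjection of finitary $U$-sets. Form the induction $\tilde f = I_U^G(f) \colon \tilde X \to \tilde Y$, which is a surjection of finitary $G$-sets by the properties listed in \S\ref{ss:induced-Gset}. By construction, $\tilde Y$ maps to $G/U$ with fiber $Y$ over $1$, and similarly for $\tilde X$ and $X$; moreover $Y$ is a $\hat G$-subset of $\tilde Y$ (it is stable under the open subgroup $U$) and likewise for $X \subset \tilde X$. Given $\psi \in \cC(Y)$, extend it by zero to a function $\tilde\psi$ on $\tilde Y$. Since any open subgroup of $U$ that fixes $\psi$ is also an open subgroup of $G$ and fixes $\tilde\psi$ (the complement of $Y$ in $\tilde Y$ is $U$-stable and $\tilde\psi$ vanishes there), and since the support of $\tilde\psi$ is the finitary $\hat G$-subset $\mathrm{supp}(\psi) \subset Y$, we have $\tilde\psi \in \cC(\tilde Y)$. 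By normality of $\mu$ applied to $\tilde f$, there exists $\tilde\phi \in \cC(\tilde X)$ with $\tilde f_*(\tilde\phi) = \tilde\psi$.

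Let $\phi := \tilde\phi\vert_X$. Then $\mathrm{supp}(\phi) = \mathrm{supp}(\tilde\phi) \cap X$ is a $\hat U$-subset of a finitary set, hence finitary, and $\phi$ is smooth because $\tilde\phi$ is; thus $\phi \in \cC(X)$. For $y \in Y$, the fiber $\tilde f^{-1}(y)$ lies entirely in $X$ (since the map $\tilde Y \to G/U$ sends $y$ to $1$ and $\tilde f$ covers the identity on $G/U$), so $\tilde f^{-1}(y) = f^{-1}(y)$, and therefore
\begin{displaymath}
f_*(\phi)(y) = \int_{f^{-1}(y)} \phi(x)\,dx = \int_{\tilde f^{-1}(y)} \tilde\phi(x)\,dx = \tilde f_*(\tilde\phi)(y) = \tilde\psi(y) = \psi(y).
\end{displaymath}
Hence $f_*$ is surjective, establishing normality of $\mu\vert_U$. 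The only real obstacle is the bookkeeping around extension-by-zero and the identification of fibers of $\tilde f$ over points of $Y \subset \tilde Y$ with fibers of $f$, which is routine once the induction picture is set up.
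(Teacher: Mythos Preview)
Your proof is correct and follows essentially the same approach as the paper: the ``if'' direction is immediate from restricting $G$-sets to $U$-sets, and the ``only if'' direction proceeds by inducing $f$ to a surjection $\tilde f \colon I_U^G(X) \to I_U^G(Y)$ of finitary $G$-sets, extending the target function by zero, applying normality of $\mu$, and restricting the preimage back to $X$ using that $\tilde f^{-1}(Y) = X$. You supply slightly more detail on why the extension by zero remains Schwartz and why the fibers agree, but the argument is the same.
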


\begin{proof}
Suppose $\mu \vert_U$ is normal and let $f \colon X \to Y$ be a surjection of finitary $G$-sets. Then $f$ is also a surjection of finitary $U$-sets, and so $f_*$ is surjective. Thus $\mu$ is normal.

Now suppose that $\mu$ is normal. Let $f \colon X \to Y$ be a surjection of finitary $U$-sets and let $\phi \in \cC(Y)$ be given. Let $\tilde{X}=I_U^G(X)$ and $\tilde{Y}=I_U^G(Y)$ be the inductions of $X$ and $Y$ to $G$ (see \S \ref{ss:induced-Gset}), and let $\tilde{f} \colon \tilde{X} \to \tilde{Y}$ be the induced map. We have a commutative diagram of $G$-sets
\begin{displaymath}
\xymatrix{
\tilde{X} \ar[rr]^{\tilde{f}} \ar[rd] && \tilde{Y} \ar[ld] \\ & G/U }
\end{displaymath}
and $X$ and $Y$ are identified with the fibers over $1 \in G/U$; in particular, $\tilde{f}^{-1}(Y)=X$. Now, regard $\phi$ as a function on $\tilde{Y}$ by extending it to~0 outside of $Y$. As such, it is still a smooth function. Since $\mu$ is normal, there is some $\psi \in \cC(\tilde{X})$ such that $\tilde{f}_*(\psi)=\phi$. Since the only points above $Y$ belong to $X$, it follows that $f_*(\psi \vert_X)=\phi$. Thus $\mu \vert_U$ is normal.
\end{proof}

Recall the notion of (quasi-)regular measure from Definition~\ref{defn:regular}.

\begin{proposition}
If $\mu$ is quasi-regular then it is normal.
\end{proposition}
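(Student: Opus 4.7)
The plan is to combine Proposition~\ref{prop:normal-sub} with a direct proof that any regular measure is normal. If $\mu$ is quasi-regular then by definition there is an open subgroup $U \subset G$ such that $\mu\vert_U$ is regular. By Proposition~\ref{prop:normal-sub}, it suffices to show $\mu\vert_U$ is normal. Thus the claim reduces to showing: \emph{regular $\Rightarrow$ normal}. We therefore assume $\mu$ itself is regular and produce, for any surjection $f \colon X \to Y$ of finitary $G$-sets, a Schwartz function $\epsilon \in \cC(X)$ with $f_*(\epsilon) = 1_Y$; this suffices to establish normality since for an arbitrary $\phi \in \cC(Y)$ the projection formula (Proposition~\ref{prop:projection}) then gives $\phi = f_*(f^*(\phi) \cdot \epsilon)$.

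Next, I reduce to the case where $Y$ is transitive. If $Y = Y_1 \sqcup \cdots \sqcup Y_n$ is the decomposition into $G$-orbits, then $f$ restricts to surjections $f_i \colon f^{-1}(Y_i) \to Y_i$, and constructing $\epsilon_i \in \cC(f^{-1}(Y_i))$ with $(f_i)_*(\epsilon_i) = 1_{Y_i}$ is enough: their sum (extended by zero) works for $f$. So assume $Y$ is transitive.

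Now choose a $G$-orbit $X_0 \subset X$ whose image under $f$ meets $Y$; since $f(X_0)$ is a nonempty $G$-stable subset of the transitive $G$-set $Y$, we have $f(X_0) = Y$. Let $f_0 \colon X_0 \to Y$ denote the restriction, which is a $G$-equivariant surjection of transitive $G$-sets. Picking any $y \in Y$ with stabilizer $U \subset G$, the fiber $F = f_0^{-1}(y)$ is a transitive $U$-set, and all fibers of $f_0$ are $G$-conjugate to $F$ and hence have the common measure $c := \mu(F) \in k$. By Proposition~\ref{prop:regular-sub}, $\mu\vert_U$ is regular, so $c$ is a unit of $k$. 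Computing directly,
\begin{displaymath}
f_*(1_{X_0})(y) = \int_{f^{-1}(y)} 1_{X_0}(x)\, dx = \mu(F) = c
\end{displaymath}
for every $y \in Y$, where $1_{X_0}$ denotes the indicator function of $X_0$ viewed as an element of $\cC(X)$. Hence $f_*(c^{-1} \cdot 1_{X_0}) = 1_Y$, and we may take $\epsilon = c^{-1} \cdot 1_{X_0}$.

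There is really no serious obstacle here; the proof is essentially a bookkeeping exercise once one recognizes that the transitive-target reduction combined with the axiom~\dref{defn:measure}{e} produces a scalar that is guaranteed to be invertible under regularity. The only point worth flagging is the appeal to Proposition~\ref{prop:regular-sub} to ensure that the fiber's measure (a priori attached to a $U$-set, not a $G$-set) is still a unit.
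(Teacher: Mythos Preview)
Your proof is correct and follows essentially the same route as the paper: reduce via Proposition~\ref{prop:normal-sub} to the regular case, reduce to transitive target, pass to a single $G$-orbit in the source, and use that the common fiber measure is a unit so a scalar multiple of the orbit's indicator pushes forward to $1_Y$. The paper expresses this unit directly as $\mu(X_0)/\mu(Y)$ (a ratio of two units by regularity) rather than invoking Proposition~\ref{prop:regular-sub}, but the argument is otherwise identical.
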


\begin{proof}
Let $U$ be an open subgroup such that $\mu \vert_U$ is regular. Let $f \colon X \to Y$ be a surjection of $U$-sets. We claim that $f_*$ is surjective. We can consider each orbit on $Y$ separately, and thereby reduce to the case where $Y$ is transitive. And we can discard all but one orbit on $X$, and thus assume $X$ is transitive. Since $\mu \vert_U$ is regular, we have $f_*(c^{-1} 1_X)=1_Y$, where $c=\mu(X)/\mu(Y)$ is a unit of $k$, and so $f_*$ is surjective (see comments following Definition~\ref{defn:normal}). We thus see that $\mu \vert_U$ is normal, and so $\mu$ is normal by Proposition~\ref{prop:normal-sub}.
\end{proof}

In \S \ref{ss:sym-rel}, we give an example in the relative case of a measure that is normal and not quasi-regular. We know of no such example in the absolute case.

\subsection{Characterizations of normality}

We now give two alternate characterizations of normality (see \cite[\S 14.2]{arxiv} for a third). The first one is the main reason normality is so useful in studying $A$-modules.

\begin{proposition} \label{prop:A-surj}
The following conditions are equivalent:
\begin{enumerate}
\item The measure $\mu$ is normal.
\item For every open subgroup $U$ of $G$, the natural map $A \to \cC(G/U)$ is surjective.
\end{enumerate}
\end{proposition}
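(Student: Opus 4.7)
The plan is to prove the two implications separately, using the concrete description $A = \varprojlim_V \cC(G/V)$ under which the map $A \to \cC(G/U)$ is simply projection onto the $U$-component, and transition maps are push-forwards $\pi_{V,U}$ for $V \subset U$.

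For (b) $\Rightarrow$ (a), I would argue by successive reductions. Given a surjection $f \colon X \to Y$ of finitary $G$-sets, I first reduce to the case that $Y$ is transitive by splitting $Y$ into its $G$-orbits (and $X$ into the corresponding preimages): this is legitimate because $\cC(Y) = \bigoplus_i \cC(Y_i)$ and $f_*$ respects the decomposition. With $Y$ transitive, each $G$-orbit on $X$ has $G$-stable image in $Y$, hence maps onto all of $Y$, so I can discard all but one orbit and also assume $X$ is transitive. Thus $X \cong G/V$, $Y \cong G/U$; after replacing $U$ by a suitable conjugate I may take $V \subset U$ and $f$ to be the canonical projection, so $f_* = \pi_{V,U}$. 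Then any $\phi \in \cC(G/U)$ lifts as follows: by (b) pick $a \in A$ with $a_U = \phi$; the component $a_V \in \cC(G/V)$ satisfies $\pi_{V,U}(a_V) = \phi$ by the compatibility condition built into $A$.

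For (a) $\Rightarrow$ (b), I would exploit first countability. Since $G$ is non-archimedean and first-countable, I can choose a decreasing sequence $U = U_0 \supset U_1 \supset U_2 \supset \cdots$ of open subgroups forming a neighborhood basis at the identity. This sequence is cofinal in the poset of open subgroups ordered by reverse inclusion, so $A \cong \varprojlim_n \cC(G/U_n)$ is a countable inverse limit; any compatible system $(\phi_n)$ along the $U_n$ extends canonically to a compatible family over all open subgroups via $\phi_V := \pi_{U_n,V}(\phi_n)$ for any $U_n \subset V$. Each transition map $\pi_{U_{n+1},U_n}$ is the push-forward along the surjection $G/U_{n+1} \twoheadrightarrow G/U_n$ of (transitive, hence finitary) $G$-sets, and is therefore surjective by (a). Given $\phi \in \cC(G/U)$, I would set $\phi_0 = \phi$ and inductively choose preimages $\phi_{n+1}$ of $\phi_n$ to produce an element $a \in A$ with $a_U = \phi$.

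The one step requiring genuine care is the passage from surjectivity of each transition map to surjectivity of the map out of the inverse limit. Uncountable inverse limits of surjections need not be surjective in general, which is why the countable cofinal subsystem provided by first countability is essential; here the argument is a straightforward Mittag-Leffler–style sequential lifting, so no additional machinery is needed. Apart from that, the two implications are essentially bookkeeping: (b) $\Rightarrow$ (a) is just the observation that the coherence data packaged into $A$ provides the desired lifts, and (a) $\Rightarrow$ (b) assembles such coherence data one level at a time.
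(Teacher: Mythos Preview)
Your proposal is correct and follows essentially the same approach as the paper. For (a)$\Rightarrow$(b) you both use first-countability to pass to a countable cofinal system and then apply the standard sequential lifting (Mittag-Leffler) argument; for (b)$\Rightarrow$(a) you both reduce to showing surjectivity of $\pi_{V,U}$ for $V\subset U$ and extract this from the factorization $A \to \cC(G/V) \to \cC(G/U)$, with your write-up spelling out the reduction to the transitive case that the paper leaves implicit.
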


\begin{proof}
First suppose (a) holds. Then the transition maps $\pi_{U,V}$ in the inverse limit defining $A$ are surjective. Since $G$ is first-countable, the inverse limit can be taken over a countable cofinal family of open subgroups. Condition~(b) now follows from the following general fact: if $X_{\bullet}$ is a countably indexed inverse system of sets with surjective transition maps then the map $\varprojlim X_{\bullet} \to X_i$ is surjective for each $i$.

Now suppose (b) holds. Let $V \subset U$ be open subgroups of $G$. Then the surjective map $A \to \cC(G/U)$ factors through $\cC(G/V)$, and so wee see that $\pi_{V,U} \colon \cC(G/V) \to \cC(G/U)$ is surjective. It follows that if $f \colon Y \to X$ is any map of transitive $G$-sets then $f_*$ is surjective. Normality follows easily from this.
\end{proof}

\begin{remark} \label{rmk:not-1st-count}
Suppose $\mu$ is regular. Let $V \subset U$ be open subgroups, and let $f \colon G/V \to G/U$ be the natural map. Then $\mu(U/V)^{-1} f^*$ provides a canonical section to $f_*$ (by the projection formula Proposition~\ref{prop:projection}). From this, one can show Proposition~\ref{prop:A-surj}(b) holds even if $G$ is not first-countable. This argument can be adapted to the quasi-regular case as well.
\end{remark}

Our next result is a matrix-theoretic interpretation of normality.

\begin{proposition} \label{prop:A-right-inv}
The following conditions are equivalent:
\begin{enumerate}
\item The measure $\mu$ is normal.
\item Given a surjection $f \colon Y \to X$ of finitary $\hat{G}$-sets, the matrix $A_f \in \Mat_{X,Y}$ has a right inverse, i.e., there is a matrix $C \in \Mat_{Y,X}$ such that $A_f \cdot C=I_X$.
\end{enumerate}
\end{proposition}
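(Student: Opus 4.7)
The proof splits into showing (a) $\Rightarrow$ (b) and (b) $\Rightarrow$ (a). The easier direction is (b) $\Rightarrow$ (a): given a surjection $f \colon Y \to X$ of finitary $G$-sets and a right inverse $C$ for $A_f$, we have $f_*(Cv) = A_fCv = v$ for every $v \in \cC(X)$ by Proposition~\ref{prop:C-alpha}, so $f_*$ is surjective and $\mu$ is normal.

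For (a) $\Rightarrow$ (b), suppose $\mu$ is normal and let $f \colon Y \to X$ be a surjection of finitary $\hat{G}$-sets. First I would observe that normality passes from $G$ to any open subgroup of definition for $f$ via Proposition~\ref{prop:normal-sub}, so $f_*$ is still surjective in the $\hat{G}$-setting. Hence there exists $\epsilon \in \cC(Y)$ with $f_*(\epsilon) = 1_X$. The natural guess is to define the matrix $C \in \Mat_{Y,X}$ by the formula
\begin{displaymath}
C(y,x) = \epsilon(y) \cdot 1_{\Gamma(f)}(y,x),
\end{displaymath}
where $\Gamma(f) \subset Y \times X$ is the graph of $f$. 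I would then check that $C$ is a Schwartz function on $Y \times X$: smoothness is clear as $C$ is a product of smooth functions, and its support projects isomorphically onto $\supp(\epsilon) \subset Y$, which is finitary.

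To finish, I would compute directly that $A_f C = I_X$: using $A_f(x,y) = 1_{\Gamma(f)}(y,x)$, we get
\begin{displaymath}
(A_f C)(x,x') = \int_Y [x=f(y)] \, \epsilon(y) \, [f(y)=x'] \, dy,
\end{displaymath}
which vanishes unless $x = x'$, in which case it reduces to $\int_{f^{-1}(x)} \epsilon(y) dy = f_*(\epsilon)(x) = 1$. Thus $A_fC = I_X$ as required. Conceptually, this is just the matrix version of the standard ``section via the projection formula'' argument: $s(\phi) = \epsilon \cdot f^*(\phi)$ is a $k$-linear section of $f_*$ by Proposition~\ref{prop:projection}, and the content of the proposition is that this section is automatically realized by a matrix in our sense. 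The only thing to verify carefully is that the explicit formula for $C$ lives in $\Mat_{Y,X}$, which reduces to checking that $\supp(C)$ is a finitary $\hat{G}$-subset of $Y \times X$; there is no real obstacle here since $\supp(C) \subset \Gamma(f) \cap (\supp(\epsilon) \times X)$, and the graph $\Gamma(f)$ of a $\hat{G}$-equivariant map is itself a $\hat{G}$-subset.
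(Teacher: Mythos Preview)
Your proof is correct and follows essentially the same approach as the paper. The paper also defines $C(y,x)=\epsilon(y)B_f(y,x)$ (which is exactly your formula, since $B_f(y,x)=1_{\Gamma(f)}(y,x)$) and verifies $A_fC=I_X$; the only cosmetic difference is that the paper checks $A_fCv=v$ by computing $Cv=\epsilon\cdot f^*(v)$ and then invoking the projection formula and Proposition~\ref{prop:matrix-faithful}, whereas you compute the matrix product $(A_fC)(x,x')$ directly---but as you note yourself, these are the same argument.
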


\begin{proof}
First suppose that (b) holds. Let $f \colon Y \to X$ of finitary $\hat{G}$-sets. We claim that $f_* \colon \cC(Y) \to \cC(X)$ is surjective. To see this, let $w \in \Vec_X$ be given. Putting $v=Cw$, where $C$ is the right-inverse to $A_f$, we have $A_fv=w$. But $A_fv=f_*(v)$ by Proposition~\ref{prop:C-alpha}, and so the claim follows. Thus $\mu$ is normal.

Now suppose (a) holds. Let $f \colon Y \to X$ be a surjection of finitary $\hat{G}$-sets. Let $U$ be a group of definition for $X$ and $Y$ such that $f$ is $U$-equivariant. Since $\mu \vert_U$ is normal (Proposition~\ref{prop:normal-sub}), there is a function $\epsilon \in \cC(Y)$ such that $f_*(\epsilon)=1_X$. Let $C$ be the matrix given by $C(y,x)=\epsilon(y) B_f(y,x)$. For $v \in \Vec_X$, we have
\begin{displaymath}
(C v)(y) = \int_X C(y,x) v(x) dx = \epsilon(y) (B_fv)(y).
\end{displaymath}
We thus have $Cv=\epsilon \cdot f^*(v)$ by Proposition~\ref{prop:C-alpha}. Therefore,
\begin{displaymath}
A_fCv = f_*(\epsilon \cdot f^*(v)) = v \cdot f_*(\epsilon) = v,
\end{displaymath}
where in the first step we used Proposition~\ref{prop:C-alpha}, in the second the projection formula (Proposition~\ref{prop:projection}), and in the third the defining property of $\epsilon$. We thus have $A_fC=I_X$ by Proposition~\ref{prop:matrix-faithful}, and so (b) holds.
\end{proof}

\begin{remark} \label{rmk:normal-matrix}
We make three remarks concerning the above proposition:
\begin{enumerate}
\item If $f$ is a map of $G$-sets then the matrix $A_f$ is $G$-invariant. However, we cannot necessarily take $C$ to be $G$-invariant.
\item Taking transpose, condition~(b) is equivalent to the matrix $B_f$ having a left inverse. Note that if $f \colon X \to Y$ is a surjection of $G$-sets then $f^*$ is always injective. However, we need normality to conclude $B_f$ has a left inverse matrix (in general).
\item Suppose $\mu$ is regular, let $f \colon X \to Y$ be a surjection of transitive $G$-sets, and let $c$ be the common measure of fibers of $f$. Similar to Remark~\ref{rmk:not-1st-count}, we see that $C=c^{-1} B_f$ is a right inverse to $A_f$. We thus have an explicit $G$-invariant right inverse to $A_f$. \qedhere
\end{enumerate}
\end{remark}

\subsection{Schwartz spaces} \label{ss:normal-schwartz}

We assume for the remainder of \S \ref{s:normal} that the measure $\mu$ is normal. We now establish some basic properties of Schwartz spaces.

\begin{proposition} \label{prop:schwartz-generators}
Let $X$ be a $G$-set and let $\{x_i\}_{i \in I}$ be representatives for the $G$-orbits on $X$. Then the point masses $\delta_{X,x_i}$ generate $\cC(X)$ as an $A$-module.
\end{proposition}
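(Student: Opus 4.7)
The plan is to reduce the statement to the case where $\phi$ is supported on a single orbit, and then use the normality hypothesis via Proposition~\ref{prop:A-surj} to finish.

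First, since any $\phi \in \cC(X)$ has finitary support, it is supported on finitely many $G$-orbits, and by additivity of the module structure it is enough to show that every Schwartz function supported on a single orbit $G \cdot x_i$ lies in the $A$-submodule generated by $\delta_{X,x_i}$. So fix an index $i$, let $U$ be the stabilizer of $x_i$, and identify $G \cdot x_i$ with $G/U$ in such a way that $x_i$ corresponds to the trivial coset $eU$; under this identification $\delta_{X,x_i}$ becomes the point mass $\delta_{eU} \in \cC(G/U)$, which is left $U$-invariant.

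The next step is a direct computation of the map $A \to \cC(G/U)$ given by $a \mapsto a \cdot \delta_{eU}$. Applying the formula of Proposition~\ref{prop:A-schwartz} with the group of definition $U$, for $g \in G$,
\begin{displaymath}
(a \cdot \delta_{eU})(gU) \;=\; \int_{G/U} a_U(h)\, \delta_{eU}(h^{-1}gU)\, dh.
\end{displaymath}
The function $h \mapsto \delta_{eU}(h^{-1}gU)$ is the indicator of the single point $gU \in G/U$, which has measure $1$ by the normalization axiom of $\mu$. Hence the integral evaluates to $a_U(gU)$, so that $a \cdot \delta_{eU} = a_U$ as elements of $\cC(G/U)$.

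In other words, the $A$-linear map $A \to \cC(G/U)$, $a \mapsto a \cdot \delta_{eU}$, is simply the structure map $a \mapsto a_U$ of the inverse system defining $A$. Since $\mu$ is normal and $G$ is first-countable, Proposition~\ref{prop:A-surj} tells us that this map is surjective. Therefore every element of $\cC(G/U)$ has the form $a \cdot \delta_{X,x_i}$ for some $a \in A$, which completes the reduction and the proof. There is no real obstacle here: the only nontrivial input is the surjectivity of $A \to \cC(G/U)$, which has already been packaged as Proposition~\ref{prop:A-surj}.
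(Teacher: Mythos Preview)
Your proof is correct and essentially identical to the paper's: reduce to a single orbit $G/U$, compute directly that $a\cdot\delta_{eU}=a_U$, and invoke Proposition~\ref{prop:A-surj} (using normality of $\mu$) to conclude that this map is surjective. The only cosmetic difference is that you spell out the reduction to one orbit a bit more explicitly.
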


\begin{proof}
It suffices to treat the case where $X=G/U$ for some open subgroup $U$, and show that the point-mass $\delta_1$ at $1 \in G/U$ generates $\cC(G/U)$. Since $\delta_1$ is left $U$-invariant, for $a \in A$ we have
\begin{displaymath}
(a \delta_1)(y)
=\int_{G/U} a_U(g) \delta_1(g^{-1}y) dg
=a_U(y).
\end{displaymath}
We therefore find $a\delta_1=a_U$. Thus given any function $\phi \in \cC(X)$, we have $\phi=a \delta_1$ where $a$ is any element of $A$ with $a_U=\phi$; such an $a$ exists by Proposition~\ref{prop:A-surj}. We thus see that $\delta_1$ generates, as required.
\end{proof}

\begin{corollary} \label{cor:schwartz-fg}
If $X$ is a finitary $G$-set then $\cC(X)$ is a finitely generated $A$-module.
\end{corollary}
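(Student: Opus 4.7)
The plan is to deduce this directly from Proposition~\ref{prop:schwartz-generators}. That proposition says that if $\{x_i\}_{i \in I}$ is a set of representatives for the $G$-orbits on $X$, then the point masses $\delta_{X,x_i}$ generate $\cC(X)$ as an $A$-module. The finitary hypothesis is exactly the statement that $G$ has finitely many orbits on $X$, so the indexing set $I$ can be chosen finite.

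Concretely, I would pick one point $x_1,\dots,x_n$ in each of the finitely many $G$-orbits of $X$, and invoke Proposition~\ref{prop:schwartz-generators} to conclude that $\delta_{X,x_1},\dots,\delta_{X,x_n}$ generate $\cC(X)$ as an $A$-module. This exhibits $\cC(X)$ as finitely generated.

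There is no real obstacle here; the entire content is packaged into the preceding proposition, whose proof in turn relies on the normality hypothesis via Proposition~\ref{prop:A-surj} (which gives the surjection $A \to \cC(G/U)$ needed to hit an arbitrary smooth function by acting on a single point mass). Thus the only thing to verify in the corollary itself is that ``finitary'' supplies a finite indexing set, which is immediate from the definition.
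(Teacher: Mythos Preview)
Your proposal is correct and matches the paper's approach exactly: the corollary is stated immediately after Proposition~\ref{prop:schwartz-generators} with no separate proof, precisely because the deduction you give (finitary means finitely many orbits, hence finitely many generating point masses) is the intended one-line argument.
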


\begin{proposition} \label{prop:schwartz-map-prop}
Let $M$ be an arbitrary $A$-module and let $U$ be an open subgroup of $G$. We then have an isomorphism
\begin{displaymath}
\Phi \colon \Hom_A(\cC(G/U), M) \to M^{sU}, \qquad \Phi(f)=f(\delta_1).
\end{displaymath}
\end{proposition}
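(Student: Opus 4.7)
The plan is to write down the obvious inverse map to $\Phi$ and then verify it actually works. The only subtle ingredient is the use of normality (via Proposition~\ref{prop:A-surj}) to ensure that the inverse is well-defined.

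First I would check that $\Phi$ lands in $M^{sU}$, i.e.\ that $f(\delta_1)$ is strictly $U$-invariant whenever $f$ is $A$-linear. The computation in the proof of Proposition~\ref{prop:schwartz-generators} shows that for any $a \in A$ we have $a \delta_1 = a_U$ in $\cC(G/U)$; in particular, if $a_U=0$ then $a \delta_1 = 0$, so $a \cdot f(\delta_1) = f(a \delta_1) = 0$. Hence $\delta_1 \in \cC(G/U)^{sU}$, and the same conclusion holds for its image under any $A$-linear $f$. Injectivity of $\Phi$ is then immediate from Proposition~\ref{prop:schwartz-generators}, which says $\delta_1$ generates $\cC(G/U)$ as an $A$-module: if $f(\delta_1) = 0$ then $f(a \delta_1) = a f(\delta_1) = 0$ for every $a \in A$, so $f = 0$.

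For surjectivity, let $x \in M^{sU}$. I want to construct $f_x \colon \cC(G/U) \to M$ by setting $f_x(\phi) = ax$, where $a \in A$ is any lift of $\phi$, i.e.\ $a_U = \phi$. Such an $a$ exists by Proposition~\ref{prop:A-surj}, which is exactly where normality of $\mu$ enters. To see that $f_x$ is well-defined, suppose $a, a' \in A$ both satisfy $a_U = a'_U = \phi$. Then $(a - a')_U = 0$, and since $x$ is strictly $U$-invariant, $(a-a')x = 0$, so $ax = a'x$. The map $f_x$ is clearly $k$-linear. For $A$-linearity, fix $b \in A$ and pick a lift $a$ of $\phi$; the identity noted right after Proposition~\ref{prop:A-schwartz} gives $(ba)_U = b \cdot a_U = b\phi$, so $ba$ is a legitimate lift of $b\phi$, and therefore $f_x(b\phi) = (ba)x = b(ax) = b f_x(\phi)$.

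Finally, taking $a = 1 \in A$, whose projection to $\cC(G/U)$ is precisely $\delta_1$, we find $\Phi(f_x) = f_x(\delta_1) = 1 \cdot x = x$, so $\Phi$ is surjective and $f \mapsto f_x$ is its inverse. The only non-routine point is the well-definedness of $f_x$, and as indicated this is where the hypothesis that $\mu$ is normal is actually used; everything else is formal manipulation with the $A$-module structure on $\cC(G/U)$ already established in Proposition~\ref{prop:A-schwartz}.
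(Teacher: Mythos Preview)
Your proof is correct and follows essentially the same approach as the paper's: both verify that $\delta_1$ is strictly $U$-invariant (so $\Phi$ is well-defined), use Proposition~\ref{prop:schwartz-generators} for injectivity, and construct the inverse by lifting $\phi \in \cC(G/U)$ to $a \in A$ via Proposition~\ref{prop:A-surj} and setting $f_x(\phi)=ax$, with well-definedness coming from strict $U$-invariance of $x$ and $A$-linearity from the identity $(ba)_U = b \cdot a_U$.
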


\begin{proof}
The function $\delta_1$ is a strictly $U$-invariant element of $\cC(G/U)$. It follows that if $f \colon \cC(G/U) \to M$ is a map of $A$-modules then $f(\delta_1)$ is also strictly $U$-invariant. Thus $\Phi$ is a well-defined function. The element $\delta_1$ generates $\cC(G/U)$ by Proposition~\ref{prop:schwartz-generators}, and so $\Phi$ is injective. It remains to prove that $\Phi$ is surjective.

Let $x \in M^{sU}$ be given. We define a function $f \colon \cC(G/U) \to M$ by $f(\ol{a})=ax$ where $a$ is any lift of $\ol{a}$ to $A$. We note that a lift exists by Proposition~\ref{prop:A-surj}, and the element $ax$ is independent of the choice of $a$ since $x$ is strictly $U$-invariant. If $\ol{a} \in \cC(G/U)$ and $b \in A$ then $ba$ is a lift of $b \ol{a}$, since the map $A \to \cC(G/U)$ is $A$-linear (see the discussion following Proposition~\ref{prop:A-schwartz}). It follows that $f(b\ol{a})=bax=bf(\ol{a})$, and so $f$ is a map of $A$-modules. As $\Phi(f)=x$, we see that $\Phi$ is surjective, which completes the proof.
\end{proof}

\begin{proposition} \label{prop:schwartz-quot}
Let $M$ be a smooth $A$-module. Then there is a $G$-set $X$ and a surjection $\cC(X) \to M$ of $A$-modules. One can take $X$ finitary if and only if $M$ is finitely generated.
\end{proposition}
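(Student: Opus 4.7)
The plan is to assemble $M$ from Schwartz spaces of transitive $G$-sets, one for each generator, using the mapping property from Proposition~\ref{prop:schwartz-map-prop}.

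First I would choose a generating family $\{x_i\}_{i \in I}$ for $M$ as an $A$-module. Since $M$ is smooth, each $x_i$ is strictly $U_i$-invariant for some open subgroup $U_i$ of $G$. By Proposition~\ref{prop:schwartz-map-prop}, for each $i$ there is a unique $A$-linear map $f_i \colon \cC(G/U_i) \to M$ with $f_i(\delta_1) = x_i$. Set $X = \coprod_{i \in I} G/U_i$, which is a $G$-set, and observe that $\cC(X) = \bigoplus_{i \in I} \cC(G/U_i)$. The maps $f_i$ assemble to an $A$-linear map $f \colon \cC(X) \to M$. Since each $x_i$ lies in the image of $f$, and the $x_i$ generate $M$, the map $f$ is surjective.

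For the equivalence of ``$X$ finitary'' with ``$M$ finitely generated'': if $M$ is finitely generated, we take $I$ finite in the above construction, and then $X$ is a finite disjoint union of transitive $G$-sets, hence finitary. Conversely, if there is any surjection $\cC(X) \to M$ with $X$ finitary, then $\cC(X)$ is finitely generated as an $A$-module by Corollary~\ref{cor:schwartz-fg}, and so $M$ is too, as a quotient of a finitely generated module.

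There is no real obstacle here; the only subtle point is the appeal to Proposition~\ref{prop:schwartz-map-prop}, which itself relies on normality of $\mu$ via the surjectivity $A \twoheadrightarrow \cC(G/U)$ (Proposition~\ref{prop:A-surj}) to extend each strictly invariant vector to a well-defined $A$-linear map out of $\cC(G/U)$.
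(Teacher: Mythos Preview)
Your proof is correct and matches the paper's argument essentially line for line: choose generators, use strict invariance plus the mapping property for $\cC(G/U)$ to produce the component maps, assemble over the disjoint union, and handle the finitary $\Leftrightarrow$ finitely generated direction via Corollary~\ref{cor:schwartz-fg}. Your citation of Proposition~\ref{prop:schwartz-map-prop} is in fact the right one for producing the maps $f_i$.
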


\begin{proof}
Let $\{x_i\}_{i \in I}$ be a generating set for $M$ as an $A$-module. Let $U_i$ be an open subgroup such that $x_i$ is strictly $U_i$-invariant. By Proposition~\ref{prop:schwartz-map-prop}, we have a map $\cC(G/U_i) \to M$ of $A$-modules that takes $\delta_1$ to $x_i$. We thus obtain a surjection $\cC(X) \to M$ with $X=\coprod_{i \in I} G/U_i$. If $M$ is finitely generated, we can take $I$ to be finite, and then $X$ is finitary. Conversely, if we have a surjection $\cC(X) \to M$ with $X$ finitary then $M$ is finitely generated since $\cC(X)$ is (Corollary~\ref{cor:schwartz-fg}).
\end{proof}

\subsection{Towards abelian envelopes} \label{ss:abenv}

Recall (Proposition~\ref{prop:perm-rep-func}) we have a faithful functor
\begin{displaymath}
\Phi \colon \uPerm(G) \to \uRep(G).
\end{displaymath}
We now establish some necessary properties for $\Phi$ to be an abelian envelope. See Theorem~\ref{thm:abenv} for a more definitive result.

\begin{proposition} \label{prop:normal-full}
The functor $\Phi$ is full.
\end{proposition}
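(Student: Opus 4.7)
The plan is to reduce to the case of transitive source and then unwind both sides of the $\Hom$ using the mapping property from Proposition~\ref{prop:schwartz-map-prop}.

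First I would reduce to the transitive case: decomposing a finitary $G$-set $X$ into its finitely many $G$-orbits $X_i$ yields $\Vec_X = \bigoplus_i \Vec_{X_i}$ in $\uPerm(G)$ and $\cC(X) = \bigoplus_i \cC(X_i)$ in $\uRep(G)$, and since $\Phi$ is additive it suffices to show $\Phi$ is full on Hom spaces of the form $\Hom(\Vec_{G/U}, \Vec_Y)$ for $U \subset G$ open.

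For such a source, I would identify both Hom spaces explicitly. On the $\uPerm$ side, a $G$-invariant matrix $M \in \Mat^G_{Y, G/U}$ is determined by its restriction $M(-, 1)$ to the slice over the identity coset $1 \in G/U$; the assignment $M \mapsto M(-,1)$ gives a $k$-linear bijection $\rho \colon \Mat^G_{Y, G/U} \xrightarrow{\sim} \cC(Y)^U$, with inverse $\phi \mapsto M$ where $M(y, g) = \phi(g^{-1} y)$ (this $M$ is Schwartz because its support is a $G$-stable, hence finitary, subset of the finitary $G$-set $Y \times G/U$). On the $\uRep$ side, Proposition~\ref{prop:schwartz-map-prop} — which is where normality enters, via the surjection $A \twoheadrightarrow \cC(G/U)$ of Proposition~\ref{prop:A-surj} — furnishes a bijection $\Hom_A(\cC(G/U), \cC(Y)) \xrightarrow{\sim} \cC(Y)^{sU}$ by $f \mapsto f(\delta_1)$.

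A direct computation will then show that under these two identifications $\Phi$ becomes the tautological map $\cC(Y)^U \to \cC(Y)^{sU}$: if $\rho(M) = \phi$, then $\Phi(M)(\delta_1) = M \cdot \delta_1 = M(-, 1) = \phi$. Since $\Phi(M)$ is $A$-linear (Proposition~\ref{prop:perm-rep-func}) and $\delta_1$ is strictly $U$-invariant, the image $\phi$ is automatically strictly $U$-invariant; thus the mere well-definedness of $\Phi$ forces the inclusion $\cC(Y)^U \subset \cC(Y)^{sU}$. Combined with the reverse inclusion from Proposition~\ref{prop:strict-smooth}, we obtain equality $\cC(Y)^U = \cC(Y)^{sU}$, and $\Phi$ is a bijection on the Hom spaces in question. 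The proof is therefore short and has no substantial obstacle beyond the groundwork already done; indeed the content of the fullness assertion is encoded in the by-product equality $\cC(Y)^U = \cC(Y)^{sU}$, a partial converse to Proposition~\ref{prop:strict-smooth}.
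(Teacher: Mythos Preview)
Your proof is correct and follows essentially the same approach as the paper: reduce to transitive $X=G/U$, identify the source with $\cC(Y)^U$ and the target with $\cC(Y)^{sU}$ via Proposition~\ref{prop:schwartz-map-prop}, and show these coincide. The paper obtains the equality $\cC(Y)^U=\cC(Y)^{sU}$ by direct inspection of the $A$-module structure on $\cC(Y)$ (Proposition~\ref{prop:A-schwartz}) rather than via the $A$-linearity of $\Phi(M)$---these unwind to the same computation---and also offers an alternative dimension/determinant argument.
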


\begin{proof}
Let $X$ and $Y$ be finitary $G$-sets. We must show that the map
\begin{equation} \label{eq:normal-full}
\Phi \colon \Hom(\Vec_X, \Vec_Y) \to \Hom_A(\cC(X), \cC(Y))
\end{equation}
is surjective. It suffices to treat the case $X=G/U$, so we assume this in what follows. By definition of $\uPerm(G)$, we have
\begin{displaymath}
\Hom(\Vec_X, \Vec_Y)=\Mat_{Y,X}^G = \Fun(G \backslash (X \times Y), k).
\end{displaymath}
We also have
\begin{displaymath}
\Hom_A(\cC(X), \cC(Y)) = \cC(Y)^{sU} = \cC(Y)^U = \Fun(G \backslash (X \times Y), k).
\end{displaymath}
The first identification above is Proposition~\ref{prop:schwartz-map-prop}, the second comes from the definition of the $A$-module structure on $\cC(Y)$, and the third comes from the isomorphism $G \backslash (X \times Y)=U \backslash Y$. We thus see that the domain and target in \eqref{eq:normal-full} are canonically identified. One can now argue that, under these identifications, $\Phi$ is the identity. Alternatively, one can argue as follows. First suppose $k$ is a field. Then \eqref{eq:normal-full} is an injection of vector spaces of the same finite dimension, and so it is necessarily surjective. The general case follows from the field case, since the source and target in \eqref{eq:normal-full} are free modules of the same finite rank (look at the determinant).
\end{proof}

\subsection{Some exact sequences}

We now establish some fundamental exact sequences involving Schwartz spaces. These results crucially rely upon the normality of the measure. We use these sequences in \S \ref{ss:strict-invar} to compare invariants and strict invariants.

\begin{proposition}
Consider a cartesian square
\begin{displaymath}
\xymatrix{
X' \ar[r]^{g'} \ar[d]_{f'} & X \ar[d]^f \\
Y' \ar[r]^g & Y }
\end{displaymath}
of finitary $G$-sets, with $f$ and $g$ surjective. Then the sequence
\begin{displaymath}
\xymatrix@C=4em{
\cC(X') \ar[r]^-{g'_*+f'_*} & \cC(X) \oplus \cC(Y') \ar[r]^-{f_*-g_*} & \cC(Y) \ar[r] & 0 }
\end{displaymath}
is exact.
\end{proposition}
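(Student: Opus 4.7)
The proof naturally breaks into three parts, and the meat is in the middle exactness.

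The easy parts first. The rightmost map $f_*-g_*$ is surjective because $f \colon X \to Y$ is a surjection of finitary $G$-sets and $\mu$ is normal, so $f_*$ alone is already surjective onto $\cC(Y)$. The composition of the two arrows vanishes by the commutativity $fg' = gf'$ of the cartesian square together with transitivity of pushforward (Proposition~\ref{prop:push-trans}): $(f_*-g_*)\circ(g'_*+f'_*) = f_*g'_* - g_*f'_* = (fg')_* - (gf')_* = 0$.

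The substance is exactness in the middle: given $(\phi,\psi) \in \cC(X) \oplus \cC(Y')$ with $f_*\phi = g_*\psi$, I need to produce $\eta \in \cC(X')$ with $g'_*\eta = \phi$ and $f'_*\eta = \psi$. The plan is to write down $\eta$ by an explicit inclusion--exclusion formula. Using normality twice, choose Schwartz functions $\alpha \in \cC(X)$ and $\beta \in \cC(Y')$ with $f_*\alpha = 1_Y$ and $g_*\beta = 1_Y$; these exist because $f$ and $g$ are surjective. Set $\omega := f_*\phi = g_*\psi \in \cC(Y)$, and define, for $(x,y') \in X' = X \times_Y Y'$,
\[
\eta(x,y') \;:=\; \phi(x)\beta(y') + \alpha(x)\psi(y') - \alpha(x)\beta(y')\,\omega(f(x)).
\]
Each summand is smooth, and the support of $\eta$ lies in the finitary $\hat G$-subset $(\supp\phi \cup \supp\alpha) \times (\supp\beta \cup \supp\psi)$ intersected with $X'$, so $\eta \in \cC(X')$.

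To verify $g'_*\eta = \phi$, observe that the fiber $(g')^{-1}(x)$ is isomorphic as a $\hat G$-set to $g^{-1}(f(x)) \subset Y'$ via the projection to $Y'$. Integrating the three terms separately and using the projection formula (Proposition~\ref{prop:projection}) together with $g_*\beta = 1_Y$, $g_*\psi = \omega$ gives
\[
g'_*\eta(x) = \phi(x)\cdot 1 + \alpha(x)\omega(f(x)) - \alpha(x)\omega(f(x))\cdot 1 = \phi(x).
\]
The symmetric calculation, using $f_*\alpha = 1_Y$ and $f_*\phi = \omega$, yields $f'_*\eta = \psi$.

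The only genuine obstacle is guessing the right formula for $\eta$: once one sees that $\phi\beta$ alone has the correct row sums but wrong column sums, and $\alpha\psi$ has the correct column sums but wrong row sums, the correction term $\alpha\beta\omega(f(\cdot))$ removing the double count is forced. Normality enters precisely at the step where $\alpha$ and $\beta$ are chosen, so the argument genuinely requires the hypothesis; without such auxiliary ``partitions of unity'' along the surjections $f,g$ there is no way to glue $\phi$ and $\psi$ together over $X'$.
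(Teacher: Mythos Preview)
Your proof is correct, but it is organized differently from the paper's. The paper proceeds in two steps: first it uses normality of the base change $g'$ to lift $\phi$ to some $\tilde\phi \in \cC(X')$ with $g'_*\tilde\phi = \phi$, subtracts $(g'_*\tilde\phi, f'_*\tilde\phi)$ to reduce to the case $\phi = 0$, and then in that special case sets $\eta = (f')^*(\psi)\cdot(g')^*(\epsilon)$ where $\epsilon \in \cC(X)$ is a single lift of $1_Y$ along $f_*$; the verification that $g'_*\eta = 0$ there invokes base change (Proposition~\ref{prop:push-bc}) explicitly. Your approach instead writes down a single symmetric inclusion--exclusion formula using lifts of $1_Y$ along \emph{both} $f_*$ and $g_*$, and checks both pushforwards directly by fiber integration. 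Your route is a bit more explicit and avoids the reduction step; the paper's route is more modular and uses normality only for $f_*$ and $g'_*$ rather than for $f_*$ and $g_*$. Either way the essential inputs are the same: normality to produce the auxiliary lifts, and the projection formula to evaluate the fiber integrals.
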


\begin{proof}
Since $f$ is surjective, so is $f_*$ (by normality of $\mu$), and so $f_*-g_*$ is also surjective. By transitivity of push-forward (Proposition~\ref{prop:push-trans}), we have $f_*g'_*=g_*f'_*$, and so the first two maps in the sequence compose to zero.

Now suppose we have $\phi \in \cC(X)$ and $\psi \in \cC(Y')$ satisfying $f_*(\phi)=g_*(\psi)$, so that $(\phi, \psi)$ is a typical element of $\ker(f_*-g_*)$. We must show that $(\phi,\psi)$ belongs to the image of $g'_*+f'_*$. Since $g'$ is surjective, so is $g'_*$ (by normality), and so we can find $\tilde{\phi} \in \cC(X')$ with $g'_*(\tilde{\phi})=\phi$. Without loss of generality, we may as well replace $(\phi, \psi)$ with $(\phi,\psi)-(g'_*(\tilde{\phi}),f'_*(\tilde{\phi}))$, and thereby assume $\phi=0$ in what follows.

Appealing to normality again, let $\epsilon \in \cC(X)$ satisfy $f_*(\epsilon)=1_Y$. Let $\eta=(f')^*(\psi) \cdot (g')^*(\epsilon)$. We have
\begin{displaymath}
f'_*(\eta)=\psi \cdot f'_*((g')^*(\epsilon))=\psi \cdot g^*(f_*(\epsilon))=\psi \cdot 1_{Y'}=\psi
\end{displaymath}
where in the first step we used the projection formula (Proposition~\ref{prop:projection}) and in the second step base change (Proposition~\ref{prop:push-bc}). We also have
\begin{displaymath}
g'_*(\eta)=\epsilon \cdot g'_*((f')^*(\psi)) = \epsilon \cdot f^*(g_*(\psi))=0,
\end{displaymath}
again by the projection formula and base change. Note that $g_*(\psi)=0$ since $(0,\psi)$ belongs to the kernel of $f_*-g_*$. Thus $(0,\psi)=(g'_*(\eta), f'_*(\eta))$, and so $(0,\psi)$ is in the image of $f'_*+g'_*$. This completes the proof.
\end{proof}

\begin{corollary} \label{cor:exact}
Let $f \colon X \to Y$ be a surjection of finitary $G$-sets, and let $p,q \colon X \times_Y X \to X$ be the two projections. Then the sequence
\begin{displaymath}
\xymatrix@C=4em{
\cC(X \times_Y X) \ar[r]^-{p_*-q_*} & \cC(X) \ar[r]^-{f_*} & \cC(Y) \ar[r] & 0 }
\end{displaymath}
is exact. In particular, $\ker(f_*)$ is generated, as an $A$-module, by the elements $\delta_w-\delta_x$ where $w,x \in X$ satisfy $f(w)=f(x)$.
\end{corollary}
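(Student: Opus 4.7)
The plan is to specialize the preceding proposition to the cartesian square in which both legs are $f$:
\[
\xymatrix{
X \times_Y X \ar[r]^-{q} \ar[d]_{p} & X \ar[d]^{f} \\
X \ar[r]^{f} & Y
}
\]
In this case the exact sequence produced by the proposition reads
\[
\cC(X \times_Y X) \xrightarrow{\eta \mapsto (p_*\eta,\, q_*\eta)} \cC(X) \oplus \cC(X) \xrightarrow{(\phi,\psi)\mapsto f_*\phi - f_*\psi} \cC(Y) \to 0,
\]
and surjectivity of $f_*$ is immediate, e.g.\ by restricting the right-hand map to either summand (or, equivalently, directly from normality).

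For the kernel I would argue as follows: if $\phi \in \ker(f_*)$ then the pair $(\phi, 0)$ is killed by the second map, so by exactness there exists $\eta \in \cC(X \times_Y X)$ with $p_*\eta = \phi$ and $q_*\eta = 0$; hence $(p_* - q_*)(\eta) = \phi - 0 = \phi$, proving $\ker(f_*) \subseteq \im(p_* - q_*)$. The reverse containment is formal: $fp = fq$ together with the transitivity of push-forward (Proposition~\ref{prop:push-trans}) gives $f_* \circ (p_* - q_*) = 0$.

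For the generation statement I would first record the trivial observation that push-forward sends point masses to point masses: since $\delta_z = 1_{\{z\}}$ and the singleton $\{z\}$ has measure $\mu(\bone) = 1$, we have $h_*(\delta_z) = \delta_{h(z)}$ for any map $h$ of $\hat G$-sets. Applying this to $p$ and $q$ gives $(p_* - q_*)(\delta_{(w,x)}) = \delta_w - \delta_x$ for every $(w,x) \in X \times_Y X$. By Proposition~\ref{prop:schwartz-generators} the point masses at a set of $G$-orbit representatives already generate $\cC(X \times_Y X)$ as an $A$-module, and the $G$-action carries these to every other point mass, so applying the $A$-linear map $p_* - q_*$ shows that $\ker(f_*) = \im(p_* - q_*)$ is generated as an $A$-module by $\{\delta_w - \delta_x : f(w) = f(x)\}$, as claimed. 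There is no real obstacle in this argument: the corollary is a purely formal consequence of the preceding proposition together with the computation $h_*(\delta_z) = \delta_{h(z)}$.
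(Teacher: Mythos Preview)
Your proof is correct and follows essentially the same approach as the paper: specialize the preceding proposition to the self-fiber-product square, use the $(\phi,0)$ trick to show $\ker(f_*)\subseteq\im(p_*-q_*)$, and deduce the generation statement from Proposition~\ref{prop:schwartz-generators}. You spell out a couple of details the paper leaves implicit (the computation $h_*(\delta_z)=\delta_{h(z)}$ and the reverse containment via $fp=fq$), but the argument is the same.
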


\begin{proof}
As in the proposition, $f_*$ is surjective and the two maps compose to zero. Suppose now that $\phi \in \cC(X)$ satisfies $f_*(\phi)=0$. Then $(\phi,0)$ belongs to the kernel of the map
\begin{displaymath}
(f_*,-f_*) \colon \cC(X) \oplus \cC(X) \to \cC(Y).
\end{displaymath}
Thus, by the proposition, there is $\psi \in \cC(X \times_Y X)$ such that $(\phi,0)=(p_*(\psi),q_*(\psi))$, and so $\phi=p_*(\psi)-q_*(\psi)$ belongs to the image of $p_*-q_*$. This shows that the sequence is exact. The statement about generators follows since the elements $\delta_{(w,x)}$ generate $\cC(X \times_Y X)$ by Proposition~\ref{prop:schwartz-generators}.
\end{proof}

\subsection{Invariants and strict invariants} \label{ss:strict-invar}

We now show that invariants and strict invariants coincide for a smooth module. This allows us to improve the mapping property for Schwartz spaces, and is also a crucial ingredient for the version of integration defined in \S \ref{ss:modint} (see Remark~\ref{rmk:strict-invar}).

\begin{proposition} \label{prop:sinv}
Let $M$ be a smooth $A$-module and let $U$ be an open subgroup of $G$. Then
\begin{displaymath}
M^{sU}=M^U.
\end{displaymath}
That is, an element of $M$ is strictly $U$-invariant if and only if it is $U$-invariant.
\end{proposition}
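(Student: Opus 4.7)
The inclusion $M^{sU} \subset M^U$ is already Proposition~\ref{prop:strict-smooth}, so only the reverse inclusion needs work. Let $m \in M^U$. Since $M$ is smooth, $m$ is strictly $V$-invariant for some open subgroup $V$, and by shrinking we may assume $V \subset U$. We must show that if $a \in A$ satisfies $a_U = 0$, then $a \cdot m = 0$.

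Since $m$ is strictly $V$-invariant, Proposition~\ref{prop:schwartz-map-prop} gives an $A$-linear map $\cC(G/V) \to M$, which I will denote $\phi \mapsto \phi \cdot m$, characterized by $\delta_1 \cdot m = m$ (and thus, using the computation in the proof of Proposition~\ref{prop:schwartz-generators}, by $\delta_{gV} \cdot m = gm$ for all $g \in G$). The action of $a$ on $m$ factors as $a \cdot m = a_V \cdot m$. Now the hypothesis $a_U = 0$ translates, via the compatibility $\pi_{V,U}(a_V) = a_U$, to the statement that $a_V$ lies in the kernel of the push-forward $\pi_{V,U} \colon \cC(G/V) \to \cC(G/U)$ attached to the quotient map $f \colon G/V \to G/U$.

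The plan is now to apply Corollary~\ref{cor:exact} to $f$: this tells us that $\ker(\pi_{V,U})$ is generated as an $A$-module by elements of the form $\delta_w - \delta_y$ with $w, y \in G/V$ lying over a common coset in $G/U$. By the $A$-linearity of the action map $\cC(G/V) \to M$, it therefore suffices to prove that every such generator kills $m$. Writing $w = gV$ and $y = hV$ with $g^{-1}h \in U$, we compute
\begin{displaymath}
(\delta_w - \delta_y) \cdot m = gm - hm = g(m - g^{-1}h \cdot m) = 0,
\end{displaymath}
where the last equality uses $m \in M^U$ and $g^{-1}h \in U$. Consequently $a_V \cdot m = 0$, hence $a \cdot m = 0$, proving that $m$ is strictly $U$-invariant.

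The only non-routine ingredient is Corollary~\ref{cor:exact}, which is where normality of $\mu$ enters; given that, the argument is essentially formal. In particular, there is no real obstacle here beyond assembling the mapping property with the exact sequence and observing that the generators of the kernel are killed for the elementary reason that $M^U$ consists of genuine $U$-invariants.
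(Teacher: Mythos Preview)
Your proof is correct and follows essentially the same approach as the paper's: both reduce to the mapping property of Proposition~\ref{prop:schwartz-map-prop}, invoke Corollary~\ref{cor:exact} to identify generators of $\ker(f_*)$ for $f\colon G/V \to G/U$, and then observe that these generators $\delta_{gV}-\delta_{guV}$ (with $u\in U$) act on $m$ as $gm-gum=0$ by $U$-invariance. The only cosmetic difference is notation.
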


\begin{proof}
Let $x \in M$ be given. We have already seen that if $x$ is strictly $U$-invariant then it is $U$-invariant (Proposition~\ref{prop:strict-smooth}). Suppose now that $x$ is $U$-invariant. Let $V \subset U$ be an open subgroup such that $x$ is strictly $V$-invariant, and let $f \colon G/V \to G/U$ be the natural map. By Proposition~\ref{prop:schwartz-map-prop}, we have a map $h \colon \cC(G/V) \to M$ of $A$-modules given by $h(\ol{a})=ax$, where $a$ is any lift of $\ol{a}$ to $A$. For $g \in G$ and $u \in U$, we have
\begin{displaymath}
h(\delta_{G/V,g}-\delta_{G/V,gu}) = gx - gux = 0
\end{displaymath}
since $x$ is $U$-invariant. Since $\ker(h)$ is a $A$-submodule of $\cC(G/V)$, it contains the $A$-submodule generated by the elements $\delta_{G/V,g}-\delta_{G/V,gu}$. By Corollary~\ref{cor:exact}, these elements generate $\ker(f_*)$. Thus $\ker(f_*) \subset \ker(h)$. In particular, if $a \in A$ has $a_U=0$ then $ax=h(a_V)=0$ since $a_V \in \ker(f_*)$. Thus $x$ is strictly $U$-invariant, which completes the proof.
\end{proof}

\begin{corollary} \label{cor:sinv}
Let $M$ be a smooth $A$-module and let $U$ be an open subgroup of $G$. We then have an isomorphism
\begin{displaymath}
\Phi \colon \Hom_A(\cC(G/U), M) \to M^U, \qquad \Phi(f)=f(\delta_{G/U,1}).
\end{displaymath}
\end{corollary}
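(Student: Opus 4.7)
The plan is to deduce this corollary directly by chaining together the two immediately preceding results, which together do essentially all the work. The argument is short enough that the main task is simply to verify that the composition yields exactly the map $\Phi$ described in the statement.

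First I would invoke Proposition~\ref{prop:schwartz-map-prop}, which gives, for any $A$-module $M$ (not necessarily smooth), an isomorphism
\begin{displaymath}
\Hom_A(\cC(G/U), M) \xrightarrow{\ \sim\ } M^{sU}, \qquad f \mapsto f(\delta_{G/U,1}).
\end{displaymath}
This is exactly the map $\Phi$ from the statement, except that the target is the set $M^{sU}$ of strictly $U$-invariant elements rather than $M^U$. So at this point the map $\Phi$ is already known to be an injection onto $M^{sU} \subseteq M^U$.

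Next I would apply Proposition~\ref{prop:sinv}, which asserts that for a smooth $A$-module $M$ the two notions of $U$-invariance agree: $M^{sU} = M^U$. Since the corollary explicitly assumes that $M$ is smooth, this hypothesis is satisfied, and the equality $M^{sU} = M^U$ upgrades the isomorphism of Proposition~\ref{prop:schwartz-map-prop} to an isomorphism with target $M^U$, which is precisely the claimed statement.

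There is no real obstacle here, since the substantive content has already been proved: the mapping property of $\cC(G/U)$ is Proposition~\ref{prop:schwartz-map-prop}, and the coincidence of invariants and strict invariants on smooth modules is Proposition~\ref{prop:sinv} (which in turn relies on normality of $\mu$ via the exact sequence Corollary~\ref{cor:exact}). The only thing to check when writing out the proof is that the element $\delta_{G/U,1}$ used here really is the same distinguished generator as in Proposition~\ref{prop:schwartz-map-prop}, and that no compatibility is being glossed over — which is evident from the definitions.
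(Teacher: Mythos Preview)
Your proposal is correct and matches the paper's proof essentially verbatim: the paper simply cites Proposition~\ref{prop:schwartz-map-prop} for the isomorphism onto $M^{sU}$ and then Proposition~\ref{prop:sinv} for the equality $M^{sU}=M^U$ when $M$ is smooth.
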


\begin{proof}
We have already seen (Proposition~\ref{prop:schwartz-map-prop}) that the map $\Phi$ gives an isomorphism to $M^{sU}$, and since $M$ is smooth we have $M^{sU}=M^U$ (Proposition~\ref{prop:sinv}).
\end{proof}

This corollary can be reformulated as follows:

\begin{corollary} \label{cor:schwartz-adj}
Let $M$ be a smooth $A$-module and let $X$ be a $G$-set. Then we have a natural isomorphism
\begin{displaymath}
\Hom_A(\cC(X), M) = \Hom_G(X, M),
\end{displaymath}
where the right side consists of all $G$-equivariant functions $X \to M$. In particular, we see that $\cC$ is the left adjoint of the forgetful functor $\uRep(G) \to \{ \text{$G$-sets} \}$.
\end{corollary}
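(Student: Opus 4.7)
The plan is to leverage the orbit decomposition of $X$ together with Corollary~\ref{cor:sinv}, which identifies $\Hom_A(\cC(G/U), M)$ with $M^U$ (rather than $M^{sU}$) in the smooth setting.

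First I would define the two maps. Given an $A$-linear map $f \colon \cC(X) \to M$, I would associate the function $\tilde f \colon X \to M$, $\tilde f(x) = f(\delta_{X,x})$. Since $g \cdot \delta_{X,x} = \delta_{X, gx}$ (from the formula defining the $A$-action on $\cC(X)$ applied to the element $c_g \in A$), and $f$ is in particular $k[G]$-linear, $\tilde f$ is $G$-equivariant. Conversely, given a $G$-equivariant function $\phi \colon X \to M$, decompose $X = \coprod_{i \in I} X_i$ into $G$-orbits and choose basepoints $x_i \in X_i$ with stabilizer $U_i$, so that $X_i \cong G/U_i$ as $G$-sets. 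Since $\phi$ is $G$-equivariant, $\phi(x_i) \in M^{U_i}$, and $M^{U_i} = M^{sU_i}$ by Proposition~\ref{prop:sinv}. By Proposition~\ref{prop:schwartz-map-prop} (or equivalently Corollary~\ref{cor:sinv}), there is a unique $A$-linear map $f_i \colon \cC(G/U_i) \to M$ with $f_i(\delta_1) = \phi(x_i)$. Identifying $\cC(X) = \bigoplus_{i \in I} \cC(X_i)$, these assemble into an $A$-linear map $f = \bigoplus f_i \colon \cC(X) \to M$, giving the inverse construction $\phi \mapsto f$.

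The main check is that these two constructions are mutually inverse, which reduces to the case of a single orbit $X = G/U$: starting from $\phi$ with $\phi(1 \cdot U) = x \in M^U$, the associated $f$ sends $\delta_1 \mapsto x$, and then $\tilde f(gU) = f(\delta_{gU}) = f(c_g \delta_1) = g \cdot x = \phi(gU)$, recovering $\phi$; conversely, starting from $f$, the element $\tilde f(1 \cdot U) = f(\delta_1)$ determines $f$ uniquely by Corollary~\ref{cor:sinv}, so the round trip is the identity. Naturality in $M$ and in $X$ is immediate from the construction, and the adjunction statement follows by unwinding definitions: the forgetful functor $\uRep(G) \to \{G\text{-sets}\}$ sends $M$ to the underlying set with its $G$-action, and $\Hom_G(X,M)$ on the right is by definition the set of morphisms in $\{G\text{-sets}\}$.

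There is no significant obstacle here — the content of the statement is already contained in Corollary~\ref{cor:sinv}, and the only subtlety is the upgrade from strict invariants to ordinary invariants, which is exactly what Proposition~\ref{prop:sinv} provides. The mild bookkeeping step is the orbit decomposition and verifying that the resulting bijection is natural in both arguments, but these are formal consequences of the definitions.
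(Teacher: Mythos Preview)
Your proposal is correct and matches the paper's intent: the paper presents this corollary as a direct reformulation of Corollary~\ref{cor:sinv} and does not even spell out a proof, so your orbit-decomposition argument is precisely the implicit reasoning. The only point you might make explicit is that $M$, viewed with its $G$-action via $k[G]\subset A$, is indeed a smooth $G$-set (so the forgetful functor lands where claimed); this follows since every element of $M$ is strictly $U$-invariant for some $U$, hence $U$-invariant by Proposition~\ref{prop:strict-smooth}.
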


\subsection{Averaging operators}

We now define certain averaging operators on smooth modules. These operators give an alternate way of working with the $A$-module structure which can sometimes be more convenient.

Let $M$ be a smooth $A$-module and let $V \subset U$ be open subgroups. We define a map
\begin{displaymath}
\avg_{U/V} \colon M^V \to M^U
\end{displaymath}
by $\avg_{U/V}(x)=ax$, where $a \in A$ is any element with $a_V=1_{U/V}$. We note that such an element $a$ exists by Proposition~\ref{prop:A-surj}, and $ax$ is independent of the choice of $a$ since $x$ is strictly $V$-invariant by Proposition~\ref{prop:sinv}. If $b$ is a second element of $A$ then $(ba)_V=b_U \ast_U a_V$ since $a_V$ is left $U$-invariant. In particular, if $b_U=0$ then $(ba)_V=0$, and so $bax=0$ since $x$ is strictly $V$-invariant. This shows that $ax$ is strictly $U$-invariant, and so $\avg_{U/V}$ does indeed take values in $M^U$. We refer to $\avg_{U/V}$ as an \defn{averaging operator}.

\begin{example}
If $G$ is finite then $\avg_{U/V}(x)=\sum_{g \in U/V} gx$.
\end{example}

The following proposition shows that the averaging operators and the ordinary group algebra $k[G]$ generate $A$, in a sense.

\begin{proposition} \label{prop:A-avg}
Let $x \in M^V$ and let $a \in A$. Suppose that $a_V$ is left $U$-invariant, write $G=\bigsqcup_{i=1}^r U g_i V$, and put $W_i=U \cap g_iVg_i^{-1}$. Then
\begin{displaymath}
ax = \sum_{i=1}^r a_V(g_i) \avg_{U/W_i}(g_ix).
\end{displaymath}
\end{proposition}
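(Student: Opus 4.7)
My plan is to reduce to the case where $a_V$ is an indicator function, and then to a direct computation in $\cC(G/V)$.

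First, both sides of the claimed identity are $k$-linear in $a$, and, more strongly, only depend on $a$ through $a_V$: indeed, $ax$ depends only on $a_V$ because $x$ is strictly $V$-invariant (using Proposition~\ref{prop:sinv} together with the definition of strict invariance), and the right side depends only on the values $a_V(g_i)$. Since $a_V \in \cC(G/V)$ is left $U$-invariant and $G = \bigsqcup_{i=1}^r U g_i V$, we may decompose
\[
a_V = \sum_{i=1}^r a_V(g_i) \cdot 1_{U g_i V / V}.
\]
It therefore suffices to prove that if $a \in A$ satisfies $a_V = 1_{U g_i V / V}$ for a fixed $i$, then $ax = \avg_{U/W_i}(g_i x)$.

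Next I would verify that $\avg_{U/W_i}(g_i x)$ is defined. Since $x$ is $V$-invariant in the ordinary $k[G]$-module sense, $g_i x$ is $g_i V g_i^{-1}$-invariant, hence $W_i$-invariant because $W_i = U \cap g_i V g_i^{-1} \subset g_i V g_i^{-1}$. By Proposition~\ref{prop:sinv}, $g_i x$ is then strictly $W_i$-invariant, so $\avg_{U/W_i}(g_i x) = b \cdot (g_i x)$ for any $b \in A$ with $b_{W_i} = 1_{U/W_i}$. Writing $c_{g_i} \in k[G] \subset A$ for the point mass at $g_i$ (see \S\ref{ss:A-basic}(a)), we have $b \cdot (g_i x) = (b c_{g_i}) x$. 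Because $x$ is strictly $V$-invariant, the identity $ax = (b c_{g_i}) x$ will follow once we show that
\[
(b c_{g_i})_V = 1_{U g_i V / V}.
\]

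Finally, I would compute $(b c_{g_i})_V$ directly. Since the projection $A \to \cC(G/V)$ is left $A$-linear (as noted after Proposition~\ref{prop:A-schwartz}) and $(c_{g_i})_V = \delta_{G/V, g_i V}$, we have $(b c_{g_i})_V = b \cdot \delta_{G/V, g_i V}$ with $b$ acting on Schwartz space via Proposition~\ref{prop:A-schwartz}. The element $\delta_{G/V, g_i V}$ is $g_i V g_i^{-1}$-invariant and thus $W_i$-invariant, so
\[
(b \cdot \delta_{G/V, g_i V})(y) = \int_{G/W_i} b_{W_i}(g) \, \delta_{G/V, g_i V}(g^{-1} y) \, dg = \int_{U/W_i} 1_{\{g W_i \,:\, g g_i V = y\}}(g) \, dg.
\]
If $y \notin U g_i V / V$, the set of integration is empty and the integral is $0$. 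If $y = u g_i V$ with $u \in U$, then $g g_i V = u g_i V$ forces $u^{-1} g \in U \cap g_i V g_i^{-1} = W_i$, so $g W_i = u W_i$ is a single point in $U/W_i$; its measure is $1$. Thus $(b c_{g_i})_V = 1_{U g_i V / V}$, which completes the argument.

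\textbf{Main obstacle.} The proof is essentially a bookkeeping exercise; the only real step is the coset-counting in the last integral, which is a standard double-coset manipulation. The subtle point to keep straight is the distinction between invariance and strict invariance, which is precisely where Proposition~\ref{prop:sinv} (and hence normality of $\mu$) enters.
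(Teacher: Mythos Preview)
Your proof is correct and follows essentially the same approach as the paper's: both decompose $a_V$ as $\sum_i a_V(g_i)\,1_{Ug_iV/V}$, identify $1_{Ug_iV/V}$ with $(b\,c_{g_i})_V$ where $b_{W_i}=1_{U/W_i}$, and then read off $ax=\sum_i a_V(g_i)\,\avg_{U/W_i}(g_ix)$. The paper computes $(b\,c_{g_i})_V$ via the convolution formula $b_{W_i}\ast_{W_i}\delta_{g_i}$ and declares the result ``easily seen,'' whereas you compute it via the module action on Schwartz space and the explicit coset count; the content is identical.
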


\begin{proof}
Let $b^i,c^i \in A$ satisfy $b^i_{W_i}=1_{U/W_i}$ and $c^i_V=\delta_{g_i}$. Then $c^i_V$ is left $W_i$-invariant, and so $(b^ic^i)_V=b^i_{W_i} \ast_{W_i} c^i$, which is easily seen to be the characteristic function of $Ug_i \subset G/V$. We thus have
\begin{displaymath}
a_V = \sum_{i=1}^r a_V(g_i) (b^ic^i)_V,
\end{displaymath}
and so
\begin{displaymath}
ax = \sum_{i=1}^r a_V(g_i) b^ic^i x.
\end{displaymath}
As $b^ic^ix=\avg_{U/W_i}(g_ix)$, the result now follows.
\end{proof}

As a consequence of the proposition, we find that one can simply work with averaging operators and the group algebra in place of the completed group algebra in many circumstances. In fact, one can give a precise description of smooth $A$-modules as smooth $k[G]$-modules equipped with averaging operators satisfying certain relations, but we do not do this. Here is one sample application of this point of view:

\begin{corollary} \label{cor:avg-homo}
Let $N$ be a smooth $A$-module and let $f \colon M \to N$ be a $k$-linear map. Then $f$ is $A$-linear if and only if the following two conditions hold:
\begin{enumerate}
\item $f$ is $G$-equivariant (i.e., $k[G]$-linear).
\item If $x \in M^V$ then $f(\avg_{U/V}{x})=\avg_{U/V}(f(x))$ for any $U$ containing $V$.
\end{enumerate}
\end{corollary}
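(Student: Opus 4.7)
The plan is to prove the two directions separately, with the backward direction being the substantive one. The forward direction is essentially formal: if $f$ is $A$-linear then (a) follows because $k[G]$ sits inside $A$ as a subalgebra (\S \ref{ss:A-basic}(a)), and (b) follows because $\avg_{U/V}(x)$ is defined as $ax$ for a suitable $a \in A$, so $f(\avg_{U/V}(x)) = f(ax) = af(x) = \avg_{U/V}(f(x))$ (the last equality uses that $f(x)$ is strictly $V$-invariant, which follows from $G$-equivariance applied to $x \in M^V = M^{sV}$ via Proposition~\ref{prop:sinv}).

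For the backward direction, assume (a) and (b). I would fix $x \in M$ and $a \in A$, and show $f(ax) = af(x)$. Since $M$ is smooth (which I assume is implicit), pick an open subgroup $V$ with $x \in M^V = M^{sV}$, and let $U \supset V$ be any open subgroup such that $a_V$ is left $U$-invariant (exists because $a_V$ is smooth). Writing $G = \bigsqcup_{i=1}^r U g_i V$ and $W_i = U \cap g_i V g_i^{-1}$, Proposition~\ref{prop:A-avg} gives
\begin{equation*}
ax = \sum_{i=1}^r a_V(g_i)\, \avg_{U/W_i}(g_i x).
\end{equation*}
The element $g_i x$ lies in $M^{g_i V g_i^{-1}} \subset M^{W_i}$, so condition (b) applies at the pair $W_i \subset U$. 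Applying $f$, then (b), then (a), then Proposition~\ref{prop:A-avg} in $N$ (which applies since $f(x) \in N^V$ by $G$-equivariance), we compute
\begin{equation*}
f(ax) = \sum_{i=1}^r a_V(g_i)\, f(\avg_{U/W_i}(g_i x)) = \sum_{i=1}^r a_V(g_i)\, \avg_{U/W_i}(g_i f(x)) = a f(x).
\end{equation*}

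The main obstacle, if any, is just bookkeeping: one must check that condition~(b) is stated with enough flexibility to cover the subgroups $W_i$ that arise in the double coset decomposition (not just the original $V$), and one must verify that $f(x)$ and each $g_i x$ are strictly invariant under the relevant subgroups so that the averaging operators on both sides are defined. Both checks are immediate given (a) and Proposition~\ref{prop:sinv}, so no real difficulty remains.
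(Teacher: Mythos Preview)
Your proof is correct and is exactly the argument the paper intends: the corollary is stated without proof as an immediate consequence of Proposition~\ref{prop:A-avg}, and your backward direction spells out precisely that deduction. The only remark is that in the forward direction you do not need to appeal to $G$-equivariance and Proposition~\ref{prop:sinv} to see that $f(x)$ is strictly $V$-invariant; this is immediate from $A$-linearity of $f$ and strict $V$-invariance of $x$.
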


We will, in fact, need one relation that the averaging operators satisfy. This is given in the following proposition.

\begin{proposition} \label{prop:avg-decomp}
Let $V$ be an open subgroup of $G$, and let $W$ and $V'$ be open subgroups of $V$. Write $V=\bigsqcup_{i=1}^r V'g_iW$, and let $W'_i=V' \cap g_iWg_i^{-1}$. Then we have
\begin{displaymath}
\avg_{V/W}(m) = \sum_{i=1}^r \avg_{V'/W'_i}(g_i m)
\end{displaymath}
whenever $m$ is a $V$-invariant element of a smooth $A$-module.
\end{proposition}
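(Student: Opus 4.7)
Plan: First I would rewrite both sides as the action of explicit elements of the completed group algebra $A$ on $m$. Pick $a \in A$ with $a_W = 1_{V/W}$ and, for each $i$, $b^i \in A$ with $b^i_{W'_i} = 1_{V'/W'_i}$, and view each $g_i \in V$ as an element of $A$ via the canonical embedding $k[G] \hookrightarrow A$ of \S\ref{ss:A-basic}(a). By the definition of the averaging operators,
\[ \avg_{V/W}(m) = am \qquad \text{and} \qquad \avg_{V'/W'_i}(g_i m) = b^i g_i m, \]
so the proposition becomes the assertion that $\eta m = 0$ for $\eta := a - \sum_{i=1}^r b^i g_i \in A$, for every $V$-invariant $m$ in every smooth $A$-module $M$.

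Next I would reduce to a single identity in $\cC(G/V)$. By Proposition~\ref{prop:sinv}, $M^V = M^{sV}$, so $\eta m$ depends only on the image $\eta_V \in \cC(G/V)$. It therefore suffices to check the single functional identity $a_V = \sum_{i=1}^r (b^i g_i)_V$ in $\cC(G/V)$.

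The rest is a direct computation. The left hand side is $a_V = \pi_{W,V}(1_{V/W})$, and unwinding the definition of the pushforward one sees that this equals $\mu(V/W) \cdot \delta_V$, where $\delta_V$ denotes the point-mass at the identity coset of $G/V$. For the right hand side, since $g_i \in V$ the element $(c_{g_i})_V = \delta_{g_i V} = \delta_V$ is left $W'_i$-invariant, so the multiplication rule of Proposition~\ref{prop:cga} gives
\[ (b^i g_i)_V = b^i_{W'_i} \ast_{W'_i} (c_{g_i})_V = 1_{V'/W'_i} \ast_{W'_i} \delta_V, \]
and unwinding the convolution yields $\mu(V'/W'_i) \cdot \delta_V$. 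Equating coefficients, the identity reduces to the numerical equality
\[ \mu(V/W) = \sum_{i=1}^r \mu(V'/W'_i), \]
which follows from the standard Mackey-type decomposition: the double-coset decomposition $V = \bigsqcup_i V' g_i W$ induces a $V'$-equivariant decomposition $V/W = \bigsqcup_i V' g_i W / W$, and the orbit $V' g_i W / W$ is isomorphic to $V'/W'_i$ via $v W'_i \mapsto v g_i W$. Additivity and isomorphism invariance of $\mu$ then conclude.

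The main obstacle is not deep: essentially, the proof is bookkeeping convolutions and pushforwards in $A$. The one conceptually important step is the initial reduction from $\eta m = 0$ to $\eta_V = 0$, which crucially uses the upgrade from $V$-invariance to strict $V$-invariance supplied by normality via Proposition~\ref{prop:sinv}; without normality one could not conclude $\eta m = 0$ from the vanishing of the pushforward.
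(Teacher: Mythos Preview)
Your proof is correct for the statement as literally written, but it takes a genuinely different route from the paper, and the difference matters.

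The paper's proof works at the level of $\cC(G/W)$: it verifies the identity $(b_i g_i)_W = 1_{(V'g_iW)/W}$ in $\cC(G/W)$, so that $\big(\sum_i b_i g_i\big)_W = 1_{V/W} = a_W$. This only requires $m$ to be (strictly) $W$-invariant, and indeed the subsequent applications need exactly that: in the proof of Proposition~\ref{prop:elem2}, the decomposition is invoked with $m \in M^{W_i}$, not $m \in M^V$. The hypothesis ``$V$-invariant'' in the statement is almost certainly a slip for ``$W$-invariant''.

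Your proof instead pushes everything forward to $\cC(G/V)$, using the full $V$-invariance of $m$. The resulting numerical identity $\mu(V/W) = \sum_i \mu(V'/W'_i)$ is exactly the pushforward to a point of the paper's set-level identity $1_{V/W} = \sum_i 1_{(V'g_iW)/W}$; in collapsing to $G/V$ you have integrated out the information that distinguishes the individual double cosets. So your argument is cleaner to read but proves strictly less: it does not establish the result for merely $W$-invariant $m$, which is what the construction of module-valued integration actually requires. If you want your proof to serve in place of the paper's, you should redo the computation at the $\cC(G/W)$ level and verify $(b^i g_i)_W = 1_{(V'g_iW)/W}$ directly.
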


\begin{proof}
We have $V/W = \bigsqcup_{i=1}^r (V'g_iW)/W$ as subsets of $G/W$, and so $1_{V/W} = \sum_{i=1}^r 1_{(V'g_iW)/W}$ in $\cC(G/W)$. Let $a \in A$ satisfy $a_W=1_{V/W}$, and let $b_i \in A$ satisfy $(b_i)_{W'_i}=1_{V'/W'_i}$. By definition, we have
\begin{displaymath}
\avg_{V/W}(m)=am, \qquad \avg_{V'/W'_i}(g_im)=b_i g_i m.
\end{displaymath}
One readily verifies $(b_ig_i)_W=1_{(V'g_iW)/W}$, and so $(\sum_{i=1}^r b_i g_i)_W=1_{V/W}$. Thus one can take $a=\sum_{i=1}^r b_i g_i$, and so the result follows.
\end{proof}

In particular, we see that if $U \subset G$ is a fixed open subgroup then any averaging operator for $G$ can be expressed in terms of averaging operators involving subgroups of $U$. This suggests that $A(G)$ is generated by $A(U)$ and $k[G]$; in fact, one can show that $A(U) \cdot k[G]$ is dense in $A(G)$ (with respect to the inverse limit topology). The following corollary gives a concrete consequence of this:

\begin{corollary}
Let $N$ be a smooth $A$-module and let $f \colon M \to N$ be a $k$-linear map. Then $f$ is $A$-linear if and only if it is $G$-equivariant and $A(U)$-linear.
\end{corollary}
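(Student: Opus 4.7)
The forward implication is immediate: an $A$-linear map is $k[G]$-linear (hence $G$-equivariant) via the inclusion $k[G] \hookrightarrow A$, and it is obviously $A(U)$-linear via the inclusion $A(U) \hookrightarrow A$ from \S\ref{ss:A-basic}(c). So the work is in the converse, and my plan is to apply Corollary~\ref{cor:avg-homo}: assuming $f$ is $G$-equivariant and $A(U)$-linear, I need to show $f$ commutes with every averaging operator $\avg_{V/W}$ for $G$.

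The hinge of the argument is Proposition~\ref{prop:avg-decomp} applied with the specific choice $V' = U \cap V$. With this choice, the subgroups $W'_i = V' \cap g_i W g_i^{-1}$ are all contained in $V' \subset U$, so each $\avg_{V'/W'_i}$ is an averaging operator \emph{for $U$}, and $A(U)$-linearity of $f$ will let us pass $f$ through it (via Corollary~\ref{cor:avg-homo} applied to $U$ in place of $G$). The one point requiring care is that the coset representatives $g_i$ appearing in the decomposition $V = \bigsqcup_{i=1}^r V' g_i W$ lie in $V$, hence in $G$ but not in $U$ in general; this is where $G$-equivariance of $f$ enters, to move $g_i$ past $f$.

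Concretely, given a $V$-invariant element $m$ of $M$, observe first that $f(m)$ is $V$-invariant by $G$-equivariance, so Proposition~\ref{prop:avg-decomp} applies to both $m$ and $f(m)$. Since $g_i \in V$, we have $g_i V g_i^{-1} = V$, so $g_i m$ and $g_i f(m)$ are again $V$-invariant, hence $V'$-invariant. Then
\begin{align*}
f(\avg_{V/W}(m)) &= \sum_{i=1}^r f(\avg_{V'/W'_i}(g_i m)) = \sum_{i=1}^r \avg_{V'/W'_i}(f(g_i m)) \\
&= \sum_{i=1}^r \avg_{V'/W'_i}(g_i f(m)) = \avg_{V/W}(f(m)),
\end{align*}
using Proposition~\ref{prop:avg-decomp} at the first and last steps, $A(U)$-linearity of $f$ at the second (via Corollary~\ref{cor:avg-homo} for $U$), and $G$-equivariance at the third. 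Invoking Corollary~\ref{cor:avg-homo} once more, this time for $G$, concludes that $f$ is $A$-linear. The only potential obstacle I anticipate is verifying that all inputs to the averaging operators lie in the correct invariant subspaces at each step, but as sketched this is automatic from $g_i \in V$ and $G$-equivariance.
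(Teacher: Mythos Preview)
Your approach is exactly the argument the paper has in mind: decompose $\avg_{V/W}$ via Proposition~\ref{prop:avg-decomp} with $V' = U \cap V$, pass $f$ through each $\avg_{V'/W'_i}$ using $A(U)$-linearity, and handle the $g_i$'s by $G$-equivariance, then conclude by Corollary~\ref{cor:avg-homo}.

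One correction: to invoke Corollary~\ref{cor:avg-homo} for $G$ you must verify the averaging identity for every $m \in M^W$, not just $m \in M^V$ (the hypothesis ``$V$-invariant'' in the statement of Proposition~\ref{prop:avg-decomp} is a typo for ``$W$-invariant''; the proof there only uses $m \in M^W$). Your claim that $g_i m$ is $V'$-invariant is then too strong and unnecessary. What you actually need, and what holds, is $g_i m \in M^{W'_i}$: since $m \in M^W$ we have $g_i m \in M^{g_i W g_i^{-1}}$, and $W'_i = V' \cap g_i W g_i^{-1} \subset g_i W g_i^{-1}$. With that adjustment your chain of equalities goes through verbatim.
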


\subsection{Integrals with values in modules} \label{ss:modint}

Let $X$ be a $G$-set and let $M$ be a smooth $A$-module. Recall that a function $\phi \colon X \to M$ is smooth if it is $U$-equivariant for some open subgroup $U$. In this case, the support of $\phi$ is a $\hat{G}$-subset of $X$. We let $\cC(X,M)$ be the space of all smooth functions with finitary support. Our goal now is to define integration for functions in $\cC(X,M)$.

Given a point $x \in X$ with stabilizer $W$, an element $m \in M^W$, and an open subgroup $W \subset V$ there is a function $e^{V/W}_{x,m} \colon X \to M$ defined by $e^{V/W}_{x,m}(gx)=gm$ for $g \in V/W$, and $e^{V/W}_{x,m}(y)=0$ for $y \not\in Vx$. We call such functions \defn{elementary}. One easily sees that the elementary functions generate $\cC(X,M)$ as a $k$-module. The following proposition is the basis for our definition of integration.

\begin{proposition} \label{prop:elem2}
There is a unique $k$-linear map $\cI \colon \cC(X,M) \to M$ such that for an elementary function as above we have $\cI(e^{V/W}_{x,m})=\avg_{V/W}(m)$.
\end{proposition}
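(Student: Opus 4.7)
The plan is to mirror the argument for the scalar case (Proposition~\ref{prop:integral}): first show the elementary functions span $\cC(X,M)$ as a $k$-module, then identify the relations among them, and finally verify that the formula $e^{V/W}_{x,m}\mapsto\avg_{V/W}(m)$ respects those relations; given this, both linearity and uniqueness of $\cI$ are formal. For spanning, given $\phi\in\cC(X,M)$ I would choose an open subgroup $V$ of $G$ under which $\phi$ is equivariant (so that the support decomposes into a finite union of $V$-orbits $Vx_1,\ldots,Vx_r$), set $W_i=V\cap\Stab_G(x_i)$, observe that $m_i=\phi(x_i)$ lies in $M^{W_i}$ by equivariance, and write $\phi=\sum_i e^{V/W_i}_{x_i,m_i}$.

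The relations among elementary functions split into two families. The intra-orbit relation $e^{V/W}_{x,m}=e^{V/gWg^{-1}}_{gx,gm}$ for $g\in V$ is matched on the averaging side by $\avg_{V/W}(m)=\avg_{V/gWg^{-1}}(gm)$, which is a direct check from the definition of $\avg$ (the same element $a\in A$ witnesses both averages after a change of coset representatives). The refinement relation, when $V'\subset V$ is open and $V=\bigsqcup_{i=1}^r V'g_iW$ with $W'_i=V'\cap g_iWg_i^{-1}$, takes the form
\[
e^{V/W}_{x,m}=\sum_{i=1}^r e^{V'/W'_i}_{g_ix,g_im};
\]
on the averaging side this is precisely the identity of Proposition~\ref{prop:avg-decomp}. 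Any two presentations of a given $\phi$ can be reconciled by passing to a common refinement $V_1\cap V_2$, so these two families generate all relations.

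The main obstacle is a small but genuine mismatch of hypotheses in the refinement step: Proposition~\ref{prop:avg-decomp} is stated for $V$-invariant $m$, whereas in the present setting $m$ is only $W$-invariant. I would handle this by noting that the proof of that proposition only ever uses $W$-invariance---it constructs an element $a\in A$ with $a_W=1_{V/W}$, and then appeals to strict $W$-invariance (Proposition~\ref{prop:sinv}) to conclude $am$ is unambiguous. With this mild extension in hand, the verification of all relations is routine and $\cI$ is constructed; linearity is immediate from the construction, and uniqueness follows at once from spanning.
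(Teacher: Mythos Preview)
Your approach is correct and essentially the same as the paper's: the paper defines $\cI_V$ level-by-level on $\cC(X,M)^V$ using the direct sum decomposition $\cC(X,M)^V\cong\bigoplus_i M^{W_i}$ and then checks that $\cI_{V'}$ extends $\cI_V$ for $V'\subset V$ via Proposition~\ref{prop:avg-decomp}, which is exactly your refinement relation; the independence from orbit representatives is your intra-orbit relation. Your observation about the hypothesis of Proposition~\ref{prop:avg-decomp} is well taken---the paper's own application of it in this proof requires only $W$-invariance of $m$, which is all the proof of that proposition actually uses.
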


\begin{proof}
Let $V$ be an open subgroup of $G$, let $\{x_i\}_{i \in I}$ be representatives for the $V$-orbits on $X$, and let $W_i$ be the stabilizer of $x_i$ in $V$. Then we have an isomorphism
\begin{displaymath}
\cC(X, M)^V = \bigoplus_{i \in I} M^{W_i}.
\end{displaymath}
For $m \in M^{W_i}$ the corresponding element of $\cC(X,M)$ is the elementary function $e^{V/W_i}_{x_i,m}$. Define a $k$-linear map
\begin{displaymath}
\cI_V \colon \cC(X, M)^V \to M
\end{displaymath}
as follows: for $m \in M^{W_i}$, put $\cI_V(e^{V/W_i}_{x_i,m})=\avg_{V/W_i}(xm)$. One easily verifies that $\cI_V$ is a well-defined $k$-linear map, and is independent of the choice of orbit representatives.

Suppose now that $V' \subset V$ is a second open subgroup. Then we have $\cC(X,M)^V \subset \cC(X,M)^{V'}$, and we claim that $\cI_{V'}$ extends $\cI_V$. It suffices to consider a single $V$-orbit; thus fix $i \in I$. Write $V=\bigsqcup_{j=1}^r V' h_j W_i$ and let $W'_j=V' \cap h_j W_i h_j^{-1}$. Thus $Vx_i = \bigsqcup_{j=1}^r V'h_jx_i$, and $y_j$ has stabilizer $W'_j$ in $V'$. We have
\begin{displaymath}
e^{V/W_i}_{x_i,m} = \sum_{j=1}^r e^{V'/W'_j}_{h_jx_i, h_jm}.
\end{displaymath}
Indeed, as both sides are $V'$-equivariant and supported on $Vx_i$, it suffices to check the equality at the points $h_jx_i$ for $1 \le j \le r$, which is clear. Applying the definitions, we have
\begin{displaymath}
\cI_V(e^{V/W_i}_{x_i,m})=\avg_{V/W_i}(m), \qquad
\cI_{V'}(e^{V/W_i}_{x_i,m}) = \sum_{j=1}^r \avg_{V'/W'_j}(h_j m).
\end{displaymath}
These agree by Proposition~\ref{prop:avg-decomp}, and this establishes the claim.

It now follows that we have a well-defined $k$-linear map
\begin{displaymath}
\cI \colon \cC(X, M) \to M
\end{displaymath}
extending each $\cI_V$. It follows from the definition that $\cI(e^{V/W}_{x,m})=\avg_{V/W}(m)$. Since the elementary functions span $\cC(X,M)$ as a $k$-module, it follows that $\cI$ is unique.
\end{proof}

\begin{definition}
We define the \defn{integral} of a function $\phi$ in $ \cC(X,M)$ by
\begin{displaymath}
\int_X \phi(x) dx = \cI(\phi)
\end{displaymath}
where $\cI$ is the map constructed in Proposition~\ref{prop:elem2}.
\end{definition}

When $M=\bbone$ is the trivial module, we have $\cC(X,M)=\cC(X)$, and the above definition of integration agrees with the previous one: indeed, the elementary functions in the new sense match the elementary functions in the old sense, and the two integrals assign them the same value.

The action of $a \in A$ on $M$ can be expressed in a suggestive and useful way using integration, as the following proposition shows.

\begin{proposition}
Let $m \in M^V$ and $a \in A$. Then
\begin{displaymath}
am = \int_{G/V} a_V(g) \cdot gm\, dg.
\end{displaymath}
In particular, if $U$ is an open subgroup containing $V$ then
\begin{displaymath}
\avg_{U/V}(m) = \int_{U/V} gm\, dg.
\end{displaymath}
\end{proposition}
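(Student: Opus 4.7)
The plan is to unwind both sides into a common expression via the decomposition of $a_V$ along double cosets and then invoke Proposition~\ref{prop:A-avg}. First I would check that the integrand makes sense: the function $\phi \colon G/V \to M$ given by $\phi(g) = a_V(g)\cdot gm$ is well-defined on the quotient $G/V$ because $m \in M^V$ (so $gm$ depends only on the coset $gV$), is smooth (both factors are), and has finitary support (contained in the support of $a_V$). Hence $\phi \in \cC(G/V, M)$ and the integral on the right-hand side is defined.

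Next I would compute the integral explicitly using the definition from Proposition~\ref{prop:elem2}. Pick an open subgroup $U$ small enough that $a_V$ is left $U$-invariant. Write $G = \bigsqcup_{i=1}^r U g_i V$, and put $W_i = U \cap g_i V g_i^{-1}$, so that the $U$-orbits on $G/V$ are $U \cdot g_iV \cong U/W_i$. On the orbit through $g_iV$ the function $\phi$ is constant equal to $a_V(g_i) \cdot g_i m$ at the basepoint (by $U$-invariance of $a_V$), and the value $g_i m$ lies in $M^{W_i}$ because $m \in M^V$ and $g_i V g_i^{-1} \supset W_i$. Thus the restriction of $\phi$ to this orbit is precisely the elementary function $e^{U/W_i}_{g_iV,\,a_V(g_i)g_im}$, and so
\[
\int_{G/V} a_V(g)\cdot gm\, dg \;=\; \sum_{i=1}^r a_V(g_i)\,\avg_{U/W_i}(g_i m).
\]

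Finally I would invoke Proposition~\ref{prop:A-avg}, which states exactly that $am = \sum_{i=1}^r a_V(g_i)\,\avg_{U/W_i}(g_im)$ under the same hypotheses, yielding $am = \int_{G/V} a_V(g)\cdot gm\, dg$. For the ``in particular'' clause, choose any $a \in A$ with $a_V = 1_{U/V}$ (possible by Proposition~\ref{prop:A-surj} for normal $\mu$, but in fact the definition of the averaging operator guarantees such an $a$ exists generally and the value $am$ is independent of the choice since $m$ is strictly $V$-invariant by Proposition~\ref{prop:sinv}); then $am = \avg_{U/V}(m)$ by definition, while the right-hand side integral collapses to $\int_{U/V} gm\,dg$ because the integrand vanishes outside $U/V$.

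There is no real obstacle here: the whole argument is bookkeeping that reconciles the two parallel definitions (the one via $A$-action translated through $\avg$, and the one via elementary functions in Proposition~\ref{prop:elem2}). The only mild subtlety worth being careful about is matching the orbit decomposition used to expand $\phi$ into elementary functions with the double-coset decomposition used in Proposition~\ref{prop:A-avg}, but these are literally the same decomposition, so the proof will be short once the notation is aligned.
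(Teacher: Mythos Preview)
Your proposal is correct and follows essentially the same approach as the paper's proof: both rely on decomposing the integral into elementary functions (via the double-coset decomposition $G = \bigsqcup U g_i V$) and matching with Proposition~\ref{prop:A-avg}. The only difference is order: the paper first observes that the second formula is immediate from the definition of integration (the integrand $g \mapsto gm$ on $U/V$ is literally the elementary function $e^{U/V}_{1\cdot V,\, m}$, so $\cI$ sends it to $\avg_{U/V}(m)$), and then combines this with Proposition~\ref{prop:A-avg} to obtain the first; you instead prove the first formula directly and derive the second as a special case. One minor remark: where you write that $\phi$ ``is constant equal to $a_V(g_i)\cdot g_i m$ at the basepoint'', you should drop the word ``constant'' --- the function is not constant on the orbit, but it is the $U$-equivariant extension of that basepoint value, which is exactly what you then say.
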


\begin{proof}
The second formula follows immediately from the definition of integration. The first follows from the second and Proposition~\ref{prop:A-avg}.
\end{proof}

Here is one more sample application of integration. Suppose $X$ is a transitive $G$-set and $x \in X$ has stabilizer $U$. Let $M$ be a smooth $A$-module. Then we have seen that the natural map $\Hom_A(\cC(X), M) \to M^U$ given by evaluation on $\delta_x$ is an isomorphism (Corollary~ \ref{cor:sinv}). We can succinctly write down the inverse using integration. Let $m \in M^U$ be given. We define a map $f \colon \cC(X) \to M$ by
\begin{displaymath}
f(\phi) = \int_{G/U} \phi(gx) gm\, dg.
\end{displaymath}
One readily verifies that $f$ is a map of $A$-modules, and that $f(\delta_x)=m$.

\begin{remark} \label{rmk:strict-invar}
There is one subtlety in the definition of integration that is worth pointing out. Suppose $m$ is a $U$-invariant element of $M$, let $\psi \in \cC(G/U)$, and let $\phi \in \cC(G/U, M)$ be the function $\phi(g)=\psi(g) gm$. Then the integral of $\phi$ is $am$, where $a$ is any element of $A$ with $a_U=\psi$, and this is really the only sensible definition. For this to be well-defined, however, we actually need $m$ to be \emph{strictly} $U$-invariant. In other words, the equality $M^U=M^{sU}$ established in Proposition~\ref{prop:sinv} is in fact a prerequisite to having a theory of integration for module-valued functions.
\end{remark}

\section{Tensor products} \label{s:genten}

\subsection{Overview}

In this section, we define a tensor structure on the category $\uRep(G)$. There is a formal approach to this: one can declare $\cC(X) \uotimes \cC(Y)=\cC(X \times Y)$ by fiat, and then extend $\uotimes$ to all of $\uRep(G)$ by choosing presentations by Schwartz spaces. While it is possible to carry out this plan, it is rather unenlightening since it does not attached any meaning to the tensor product of two modules.

We take a more organic approach: given two smooth $A$-modules $M$ and $N$, we define a tensor product to be a smooth $A$-module $T$ equipped with a universal ``strongly bilinear'' map $M \times N \to T$. Strongly bilinear means that in each variable the map is $k[G]$-linear and also commutes with integrals. The main result of this section (Theorem~\ref{thm:tensor}) asserts that the tensor products always exist, and that $\uRep(G)$ therefore has the structure of a tensor category. We give a similar treatment to internal $\Hom$, and show that the usual tensor-Hom adjunction holds.

We fix a first-countable pro-oligomorphic group $G$ and a $k$-valued normal measure $\mu$ for the duration of \S \ref{s:genten}.

\subsection{Strongly bilinear maps}

Our notion of tensor product is based on the following notion of bi-linearity for $A$-modules:

\begin{definition} \label{defn:strong-bi}
Let $M$, $N$, and $E$ be smooth $A$-modules, and consider a function
\begin{displaymath}
q \colon M \times N \to E.
\end{displaymath}
We say that $q$ is \defn{strongly bilinear} if it satisfies the following conditions:
\begin{enumerate}
\item $q$ is $k$-bilinear.
\item $q$ is $G$-equivariant, that is, $q(gm,gn)=gq(m,n)$ for $g \in G$, $m \in M$, and $n \in N$.
\item Given finitary $G$-sets $X$ and $Y$ and smooth functions $\phi \colon X \to M$ and $\psi \colon Y \to N$, we have
\begin{displaymath}
q \left( \int_X \phi(x) dx, \int_Y \psi(y) dy \right) = \int_{X \times Y} q(\phi(x), \psi(y)) d(x,y).
\end{displaymath}
Note that $(x,y) \mapsto q(\phi(x), \psi(y))$ is smooth by (b), and so the integral on the right side is well-defined. \qedhere
\end{enumerate}
\end{definition}

The most subtle aspect of the above definition is condition~(c). We now discuss it in some more detail. Fix a map $q$ as above, and assume that it satisfies (a) and (b). We say that $q$ is \defn{strongly linear} in the first parameter if for any smooth function $\phi \colon X \to M$ and any $n \in N$ we have
\begin{displaymath}
q \left( \int_X \phi(x) dx, n \right) = \int_X q(\phi(x), n) dx
\end{displaymath}
Strongly linear in the second variable is defined similarly. It is easy to see that $q$ satisfies~(c) if and only if it is strongly linear in each variable.

\begin{proposition} \label{prop:strong-lin}
With the above notation, the following conditions are equivalent:
\begin{enumerate}
\item $q$ is strongly linear in the first variable.
\item For every $n \in N$ there is an open subgroup $U$ of $G$ such that $n$ is $U$-invariant and $q(am,n)=aq(m,n)$ for all $a \in A(U)$ and $m \in M$.
\item For every $n \in N$ there is an open subgroup $U$ of $G$ such that $n$ is $U$-invariant and
\begin{displaymath}
q(\avg_{V/W}(m),n)=\avg_{V/W}(q(m,n))
\end{displaymath}
for all open subgroups $W \subset V \subset U$ and $m \in M^V$.
\end{enumerate}
\end{proposition}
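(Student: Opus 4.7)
The plan is to prove the cycle (a)~$\Rightarrow$~(b)~$\Rightarrow$~(c)~$\Rightarrow$~(a). Throughout I note that in condition~(c), the natural hypothesis is $m \in M^W$ (so that $\avg_{V/W}(m)$ makes sense), and I read it this way. The only nontrivial translation between the three conditions is the passage from integrals over $G$-sets to the averaging-operator formalism, which is precisely what Proposition~\ref{prop:elem2} provides.

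For (a)~$\Rightarrow$~(b), given $n \in N$, pick an open subgroup $U$ with $n \in N^U$ (possible by smoothness of $N$). Let $a \in A(U)$ and $m \in M$. Shrink the group of definition to an open $V \subset U$ small enough that $V$ is a group of definition for both $m$ and $q(m,n)$ and such that $a_V \in \cC(G/V)$ is left $U$-invariant. Since $a \in A(U)$, the function $a_V$ is supported on $U/V$. Define $\phi \colon G/V \to M$ by $\phi(g) = a_V(g) \, gm$; this lies in $\cC(G/V, M)$ and satisfies $\int \phi = am$. Strong linearity in the first variable gives
\[ q(am,n) = \int_{G/V} a_V(g)\, q(gm,n)\, dg. \]
For $g \in U/V$ we have $gn = n$, hence $q(gm,n) = q(gm,gn) = g\,q(m,n)$ by $G$-equivariance; the integrand vanishes outside $U/V$ because $a_V$ does. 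Thus the right side equals $\int_{G/V} a_V(g)\, g\,q(m,n)\, dg = a\, q(m,n)$, proving~(b).

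For (b)~$\Rightarrow$~(c), given $W \subset V \subset U$ and $m \in M^W$, pick $a \in A(V) \subset A(U)$ with $a_W = 1_{V/W}$. Then $am = \avg_{V/W}(m)$; since $n \in N^U \subset N^W$ and $m \in M^W$, $G$-equivariance gives $q(m,n) \in E^W$, hence $a\,q(m,n) = \avg_{V/W}(q(m,n))$. Applying~(b) finishes this step. For (c)~$\Rightarrow$~(a), fix $n \in N$ and choose $U$ as in~(c). Given smooth $\phi \colon X \to M$ with finitary support, suppose $\phi$ is $V'$-equivariant and replace $V'$ by $V := V' \cap U \subset U$, so $\phi$ is still $V$-equivariant. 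Decompose $\supp(\phi) = \sqcup_j V \cdot y_j$, put $W_j := \mathrm{Stab}_V(y_j) \subset V$, and write $\phi = \sum_j e^{V/W_j}_{y_j,\, \phi(y_j)}$ with $\phi(y_j) \in M^{W_j}$. Since $n \in N^V$, the function $x \mapsto q(\phi(x),n)$ is also $V$-equivariant and decomposes as $\sum_j e^{V/W_j}_{y_j,\, q(\phi(y_j),n)}$. By Proposition~\ref{prop:elem2},
\[ q\!\left(\int_X \phi\, dx, n\right) = \sum_j q(\avg_{V/W_j}(\phi(y_j)), n), \qquad \int_X q(\phi(x),n)\, dx = \sum_j \avg_{V/W_j}(q(\phi(y_j),n)), \]
and condition~(c) (applied with $W_j \subset V \subset U$) equates the summands term by term.

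The main obstacle is (a)~$\Rightarrow$~(b): one must choose $V$ carefully so that \emph{simultaneously} (i)~$V$ is a group of definition for $m$ and for $q(m,n)$, (ii)~$a_V$ is left $U$-invariant, and (iii)~the function $\phi$ defined above actually lies in $\cC(G/V,M)$ so that strong linearity is applicable. The other two implications reduce essentially to bookkeeping via Proposition~\ref{prop:elem2} and the realization of averaging operators as actions of specific elements of $A(U)$.
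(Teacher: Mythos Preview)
Your proof is correct and follows essentially the same route as the paper. You are right that the hypothesis in~(c) should read $m \in M^W$ rather than $m \in M^V$; the paper's own proof of (c)~$\Rightarrow$~(a) applies~(c) to elements $h_i m \in M^{W_i}$, so this is indeed a typo.

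Two small remarks. First, in (a)~$\Rightarrow$~(b) your requirement that $a_V$ be left $U$-invariant is too strong---for a general $a \in A(U)$ this cannot be arranged by shrinking $V$---but fortunately you never use it: all you need is that $\phi(g)=a_V(g)\,gm$ lies in $\cC(G/V,M)$, which only requires $a_V$ to be left $W$-invariant for \emph{some} open $W$, and that is automatic from smoothness. Second, your (c)~$\Rightarrow$~(a) is a mild simplification of the paper's argument: by replacing $V'$ with $V' \cap U$ at the outset you ensure every elementary summand already has $W_j \subset V \subset U$, so~(c) applies directly. The paper instead takes an arbitrary elementary function $e^{V/W}_{x,m}$ (with $V$ not necessarily inside $U$) and invokes Proposition~\ref{prop:avg-decomp} to break $\avg_{V/W}$ into pieces indexed by $V' = U \cap V$; your shortcut avoids that extra step.
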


\begin{proof}
First suppose (a) holds, and let us prove (b). Let $n \in N$ be given, and let $U$ be any open subgroup such that $n$ is $U$-invariant. We show that $q(-, n)$ is $A(U)$-linear. Thus let $m \in M$ and $a \in A(U)$ be given. Suppose that $m$ is $V$-invariant with $V \subset U$. Then
\begin{align*}
q(am, n)
&=q\left( \int_{U/V} a_V(g) gm dg, n \right)
=\int_{U/V} a_V(g) q(gm, n) dg \\
&=\int_{U/V} a_V(g) g q(m,n) dg = aq(m,n).
\end{align*}
In the first step, we used the expression of $am$ as an integral; in the second step, we used the strong linearity of $q$ in the first argument; in the third step, we used that $q$ is $G$-equivariant and that $gn=n$ for $g \in U$; in the final step, we used the expression for $aq(m,n)$ as an integral (note that $q(m,n)$ is $V$-invariant since both $m$ and $n$ are). We thus see that (b) holds.

It is clear that (b) implies (c), since the averaging operators are defined using the $A$-module structure.

Finally, suppose (c) holds and let us prove (a). Fix $n \in N$, and let $U$ be as in (c). Let $\phi \colon X \to M$ be a smooth function with $X$ a finitary $G$-set. Since the elementary functions generate $\cC(X,M)$ as a $k$-module, it suffices to treat the case where $\phi$ is elementary, say $e^{V/W}_{x,m}$. In this case, the identity in question becomes
\begin{displaymath}
q(\avg_{V/W}(m), n) = \int_{V/W} q(gm, n) dg.
\end{displaymath}
Let $V'=U \cap V$. Write $V=\bigsqcup_{i=1}^r V' h_i W$ and put $W_i=V' \cap h_iWh_i^{-1}$, so that $V/W \cong \bigsqcup_{i=1}^r V'/W_i$. Thus
\begin{align*}
q(\avg_{V/W}(m),n)
&= \sum_{i=1}^r q(\avg_{V'/W_i}(h_i m), n)
= \sum_{i=1}^r \avg_{V'/W_i}(q(h_im, n)) \\
&= \sum_{i=1}^r \int_{V'/W_i} gq(h_im, n) dg
= \sum_{i=1}^r \int_{V'/W_i} q(gh_im, n) dg \\
&= \int_{V/W} q(gm, n) dg.
\end{align*}
In the first step, we broke $\avg_{V/W}$ up over the $V'$ orbits on $V/W$ via Proposition~\ref{prop:avg-decomp}; in the second, we used that $q$ commutes with $\avg_{V'/W_i}$ since $V' \subset U$; in the third, we simply wrote the definition of $\avg_{V'/W_i}$; in the fourth, we used that $q$ is $G$-equivariant and that $gn=n$ since $g \in V' \subset U$; and in the final step, we simply merged the $r$ integrals into one. We thus see that (a) holds.
\end{proof}

\subsection{The definition of tensor product}

We now arrive at the notion of tensor product:

\begin{definition}
Let $M$ and $N$ be smooth $A$-modules. A \defn{tensor product} of $M$ and $N$ is a smooth $A$-module $T$ equipped with a strongly bilinear map $q \colon M \times N \to T$ that is universal, in the sense that if $q' \colon M \times N \to T'$ is any other strongly bilinear map to a smooth $A$-module then there is a unique map $f \colon T \to T'$ of $A$-modules such that $q'=f \circ q$.
\end{definition}

Suppose that a tensor product of $M$ and $N$ exists. By the universal property, any two tensor products are canonically isomorphic, so we can speak of \emph{the} tensor product of $M$ and $N$; we denote it by $M \uotimes N$. For $x \in M$ and $y \in N$, we write $x \uotimes y$ for the image of $(x,y) \in M \times N$ under the universal strongly bilinear map $q$. These elements are the \defn{pure tensors} in $M \uotimes N$. Given finitary $G$-sets $X$ and $Y$ and smooth functions $\phi \colon X \to M$ and $\psi \colon Y \to N$, we have the identity
\begin{displaymath}
\bigg( \int_X \phi(x) dx \bigg) \uotimes \bigg( \int_Y \psi(y) dy \bigg) = \int_{X \times Y} \big( \phi(x) \uotimes \psi(y) \big) d(x,y).
\end{displaymath}
This is simply a restatement of Definition~\ref{defn:strong-bi}(c).

We now establish a few simple properties of tensor products.

\begin{proposition} \label{prop:pure-gen}
Let $M$ and $N$ be smooth $A$-modules such that the tensor product $M \uotimes N$ exists. Then the pure tensors $x \uotimes y$ with $x \in M$ and $y \in N$ generate $M \uotimes N$ as an $A$-module.
\end{proposition}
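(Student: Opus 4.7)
The plan is to use a standard Yoneda-style argument with the universal property. Let $T = M \uotimes N$ with universal strongly bilinear map $q \colon M \times N \to T$, and let $T' \subset T$ be the $A$-submodule generated by all pure tensors $x \uotimes y = q(x,y)$. My goal is to show $T/T' = 0$.

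First I would check that $T/T'$ is again a smooth $A$-module. This is immediate: quotients of smooth $A$-modules by $A$-submodules are smooth, since if $t \in T$ is strictly $U$-invariant, so is its image in $T/T'$. Then I would consider the projection $\pi \colon T \to T/T'$ and form the composition $\pi \circ q \colon M \times N \to T/T'$. By construction of $T'$, we have $\pi(q(x,y)) = 0$ for all $(x,y) \in M \times N$, so $\pi \circ q$ is the zero map.

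Now I would invoke universality. The zero map $0 \colon M \times N \to T/T'$ is (trivially) strongly bilinear, so by the universal property of $(T,q)$ there exists a \emph{unique} $A$-linear map $f \colon T \to T/T'$ with $f \circ q = 0$. But both $f = \pi$ and $f = 0$ satisfy $f \circ q = 0$, so uniqueness forces $\pi = 0$, whence $T' = T$.

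There is no real obstacle here; the argument is the standard ``quotient out the obvious submodule and use uniqueness in the universal property.'' The only thing one must be careful about is that the target of the comparison must itself be a smooth $A$-module (so that the universal property applies), which is why I flagged the smoothness of $T/T'$ as the first step.
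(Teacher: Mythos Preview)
Your proof is correct and is essentially the same argument as the paper's: both compare the quotient map $T \to T/T'$ with the zero map and use the uniqueness clause of the universal property to conclude they coincide. The paper phrases this as ``the quotient map and the zero map $T \to T/T_0$ agree on $T_0$, and so are equal,'' which is precisely your argument (and your explicit remark that $T/T'$ is smooth is a worthwhile sanity check that the paper leaves implicit).
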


\begin{proof}
Let $T=M \uotimes N$ and let $T_0$ be the $A$-submodule generated by the pure tensors. The universal property of $T$ implies that any $A$-linear map out of $T$ is determined by its restriction to $T_0$, and so $T=T_0$; indeed, the quotient map and the zero map $T \to T/T_0$ agree on $T_0$, and so are equal.
\end{proof}

\begin{proposition} \label{prop:ten-func}
Let $f \colon M \to M'$ and $g \colon N \to N'$ be maps of smooth $A$-modules. Suppose that the tensor products $M \uotimes N$ and $M' \uotimes N'$ exist. Then there is a unique map of $A$-modules
\begin{displaymath}
f \uotimes g \colon M \uotimes N \to M' \uotimes N'
\end{displaymath}
such that
\begin{displaymath}
(f \uotimes g)(x \uotimes y)=f(x) \uotimes g(y)
\end{displaymath}
holds for all $x \in M$ and $y \in N$.
\end{proposition}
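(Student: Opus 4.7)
The plan is to invoke the universal property of $M \uotimes N$. I would begin by defining the function
\[
q' \colon M \times N \to M' \uotimes N', \qquad q'(x,y) = f(x) \uotimes g(y),
\]
and then show that $q'$ is strongly bilinear in the sense of Definition~\ref{defn:strong-bi}. Granting this, the universal property of $M \uotimes N$ produces a unique $A$-linear map $f \uotimes g \colon M \uotimes N \to M' \uotimes N'$ with $(f \uotimes g)(x \uotimes y) = f(x) \uotimes g(y)$ for all $x \in M$, $y \in N$. Uniqueness of any such $A$-linear map is immediate from Proposition~\ref{prop:pure-gen}, since the pure tensors generate $M \uotimes N$ and hence any two $A$-linear maps out of $M \uotimes N$ that agree on pure tensors must coincide.

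The bulk of the work is therefore the verification that $q'$ is strongly bilinear. Both the $k$-bilinearity and $G$-equivariance of $q'$ are formal: they follow from the corresponding properties of $f$, $g$, and the universal strongly bilinear map $M' \times N' \to M' \uotimes N'$. For instance, $q'(hx, hy) = f(hx) \uotimes g(hy) = h f(x) \uotimes h g(y) = h\, q'(x,y)$ for $h \in G$.

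The essential point is condition~\dref{defn:strong-bi}{c}. I would establish it by checking strong linearity in each variable separately, using the characterization in Proposition~\ref{prop:strong-lin}(c). Fix $n \in N$ and choose an open subgroup $U$ witnessing $U$-invariance of $n$ for the universal strongly bilinear map to $M' \uotimes N'$ (this is possible since $g(n) \in N'$ is $U$-invariant because $g$ is $G$-equivariant). For $W \subset V \subset U$ and $m \in M^V$, I would compute
\[
q'(\avg_{V/W}(m), n) = f(\avg_{V/W}(m)) \uotimes g(n) = \avg_{V/W}(f(m)) \uotimes g(n) = \avg_{V/W}(f(m) \uotimes g(n)) = \avg_{V/W}(q'(m,n)),
\]
where the second equality uses that the $A$-linear map $f$ commutes with averaging operators (Corollary~\ref{cor:avg-homo}), and the third uses that the universal map $M' \times N' \to M' \uotimes N'$ is itself strongly linear in its first argument when the second argument is $U$-invariant. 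By Proposition~\ref{prop:strong-lin}, this shows $q'$ is strongly linear in the first variable; strong linearity in the second variable is entirely symmetric.

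The main (though mild) obstacle is simply being careful about the characterization of strong linearity via averaging operators; once one has Proposition~\ref{prop:strong-lin} and Corollary~\ref{cor:avg-homo} in hand, the verification is a short diagram chase. No deeper issue arises because every ingredient in the definition of strongly bilinear is preserved by composition with $A$-linear maps.
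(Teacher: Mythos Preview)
Your proposal is correct and follows precisely the approach of the paper: form the composition $M \times N \xrightarrow{f \times g} M' \times N' \to M' \uotimes N'$, verify it is strongly bilinear, and invoke the universal property. The paper's own proof is a single sentence asserting that this composition ``is strongly bilinear,'' so you have simply unpacked the verification that the paper leaves implicit; your use of Proposition~\ref{prop:strong-lin}(c) together with the fact that $A$-linear maps commute with averaging operators is exactly the right way to do it.
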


\begin{proof}
One easily sees that the composition
\begin{displaymath}
M \times N \to M' \times N' \to M' \uotimes N'
\end{displaymath}
is strongly bilinear, and so the proposition follows from the universal property of the tensor product $M \uotimes N$.
\end{proof}

\subsection{Tensor product of Schwartz spaces}

Let $X$ and $Y$ be $G$-sets. We now examine the tensor product of the Schwartz spaces $\cC(X)$ and $\cC(Y)$. This will play a crucial role in our study of general tensor products. We have a natural map
\begin{displaymath}
q \colon \cC(X) \times \cC(Y) \to \cC(X \times Y)
\end{displaymath}
defined as follows: given $\phi \in \cC(X)$ and $\psi \in \cC(Y)$, we let $q(\phi, \psi)$ be the function on $X \times Y$ given by $(x,y) \mapsto \phi(x) \psi(y)$, which one readily verifies is a Schwartz function. As more might expect, this gives the tensor product in this case:

\begin{proposition} \label{prop:schwartz-tensor}
The map $q$ is a universal strongly bilinear map, and so:
\begin{displaymath}
\cC(X) \uotimes \cC(Y) = \cC(X \times Y).
\end{displaymath}
\end{proposition}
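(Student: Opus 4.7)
My plan is to verify the two defining properties of a universal strongly bilinear map separately, handling strong bilinearity by reduction to Proposition~\ref{prop:strong-lin}(b) and the universal property via Corollary~\ref{cor:schwartz-adj}.

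First I would check that $q$ is strongly bilinear. Bilinearity and $G$-equivariance are immediate from the defining formula $q(\phi,\psi)(x,y) = \phi(x)\psi(y)$. For strong linearity in the first variable, given $\psi \in \cC(Y)$ I choose an open subgroup $U$ with $\psi \in \cC(Y)^U$ and verify $q(a\phi,\psi) = a\cdot q(\phi,\psi)$ for $a \in A(U)$. This is a direct computation from the convolution formula of Proposition~\ref{prop:A-schwartz}: the action of $a$ on $q(\phi,\psi)$ integrates $a_V(g)\phi(g^{-1}x)\psi(g^{-1}y)$ over (the image of) $U/V$, but $a_V$ is supported on $U/V$, and $\psi(g^{-1}y) = \psi(y)$ for $g \in U$, so $\psi(y)$ pulls outside the integral to give $\psi(y)\cdot (a\phi)(x)$. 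The second variable is symmetric.

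Next I would verify the universal property. Let $q' \colon \cC(X)\times \cC(Y) \to E$ be any strongly bilinear map, and define $h \colon X \times Y \to E$ by $h(x,y) = q'(\delta_{X,x}, \delta_{Y,y})$. Since $q'$ is $G$-equivariant and $g\delta_{X,x} = \delta_{X,gx}$, the map $h$ is $G$-equivariant, so by Corollary~\ref{cor:schwartz-adj} it corresponds to a unique $A$-linear map $f \colon \cC(X\times Y) \to E$ with $f(\delta_{X\times Y,(x,y)}) = h(x,y)$. It remains to show $f \circ q = q'$.

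The key identity for this last step is the representation of a Schwartz function as an integral of its point masses: for $\phi \in \cC(X)$, I claim
\[
\phi = \int_X \phi(x)\,\delta_{X,x}\,dx,
\]
where the right side is the $\cC(X)$-valued integral of \S\ref{ss:modint}. One checks the integrand is smooth with finitary support, then decomposes $\phi$ along its $\hat{G}$-orbit support; on a single $V$-orbit $V\cdot x_i$ with stabilizer $W_i$ and value $c_i$, the integrand is the elementary function $e^{V/W_i}_{x_i,c_i\delta_{X,x_i}}$, whose integral equals $\avg_{V/W_i}(c_i\delta_{X,x_i}) = c_i \cdot 1_{V\cdot x_i}$ (this averaging identity being a short direct computation). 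Applying strong bilinearity of $q'$ to this representation of $\phi$ and the analogous one for $\psi$ gives
\[
q'(\phi,\psi) = \int_{X\times Y} \phi(x)\psi(y)\,h(x,y)\,d(x,y).
\]
On the other hand, the analogous representation $q(\phi,\psi) = \int_{X\times Y}\phi(x)\psi(y)\,\delta_{X\times Y,(x,y)}\,d(x,y)$ in $\cC(X \times Y)$, combined with the fact that $A$-linear maps commute with integration of module-valued functions (since integration is built from averaging operators, with which $f$ commutes), yields the same expression for $f(q(\phi,\psi))$, so the two agree. Uniqueness of $f$ is clear from Corollary~\ref{cor:schwartz-adj} (or from the generation statement in Proposition~\ref{prop:pure-gen}). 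The main obstacle is this final step: one has to carefully justify the point-mass integral representation of a Schwartz function and then exchange a strongly bilinear map with a double integral, which is the technical heart of the argument and the reason we developed module-valued integration in \S\ref{ss:modint}.
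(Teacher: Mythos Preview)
Your proposal is correct and follows essentially the same approach as the paper: both verify strong bilinearity via Proposition~\ref{prop:strong-lin}, define the induced map via $(x,y) \mapsto q'(\delta_x,\delta_y)$, and establish $f \circ q = q'$ through the integral representation $\phi = \int_X \phi(x)\,\delta_x\,dx$. The only packaging difference is that the paper defines $f$ directly by the integral formula $f(\theta) = \int_{X\times Y} \theta(x,y)\,q'(\delta_x,\delta_y)\,d(x,y)$ and verifies $A$-linearity by hand, whereas you invoke Corollary~\ref{cor:schwartz-adj} to obtain $f$ and then must separately argue that $A$-linear maps commute with module-valued integration; these amount to the same computation.
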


\begin{proof}
We break the proof into four steps.

\textit{Step 1: $q$ is strongly bilinear.} It is clear that $q$ is $k$-bilinear and $G$-equivariant. We now show that $q$ is strongly linear in the first argument. Let $\psi \in \cC(Y)$ be given and let $U$ be an open subgroup such that $\psi$ is left $U$-invariant. Suppose we have open subgroups $W \subset V \subset U$ and $\phi \in \cC(X)$ is left $W$-invariant. We then have
\begin{displaymath}
q(\avg_{V/W}(\phi), \psi)=\avg_{V/W}(q(\phi,\psi)).
\end{displaymath}
Indeed, evaluating the above functions at $(x,y)$, this simply becomes the identity
\begin{displaymath}
\int_{V/W} \phi(h^{-1}x) dh \cdot \psi(y) = \int_{V/W} \phi(h^{-1} x) \psi(h^{-1} y) dh,
\end{displaymath}
which is clear since $\psi(h^{-1} y)=\psi(y)$ for all $h \in V$. We thus see that $q$ is strongly linear in the first argument by Proposition~\ref{prop:strong-lin}. Strong linearity in the second argument follows by symmetry, and so $q$ is strongly bilinear.

To complete the proof, we must show that $q$ is universal. Thus suppose that $q' \colon \cC(X) \times \cC(Y) \to E$ is some other strongly bilinear map. Define $f \colon \cC(X \times Y) \to E$ by
\begin{displaymath}
f(\phi) = \int_{X \times Y} \phi(x,y) q'(\delta_x, \delta_y) d(x,y).
\end{displaymath}
Here $\delta_x \in \cC(X)$ is the point mass at $x$. Since $q'$ is $G$-equivariant, the function $X \times Y \to E$ given by $(x,y) \mapsto q'(\delta_x, \delta_y)$ is as well. Thus the integrand above is smooth, and so the integral is defined.

\textit{Step 2: $f$ is $A$-linear.} It is clear that $f$ is $k$-linear. Let $a_{\bullet} \in A$ be given, and suppose $\phi \in \cC(X \times Y)$ is left $U$-invariant. Then
\begin{align*}
f(a\phi)
&= \int_{X \times Y} \int_{G/U} a_U(g) \phi(g^{-1}x, g^{-1}y) q'(\delta_x, \delta_y) dg d(x,y) \\
&= \int_{G/U} \int_{X \times Y} a_U(g) \phi(x,y) g q'(\delta_x, \delta_y) d(x,y) dg \\
&= \int_{G/U} a_U(g) gf(\phi) dg = a f(\phi).
\end{align*}
In the second step, we changed the order of integration, made the change of variables $(x,y) \to (gx,gy)$, and used the identity $q'(\delta_{gx}, \delta_{gy})=gq'(\delta_x, \delta_y)$, which follows from the $G$-equivariance of $\delta$. Note that $f(\phi)$ is easily seen to be $U$-invariant, which is why $af(\phi)$ coincides with the final integral above. We have thus shown that $f$ is $A$-linear.

\textit{Step 3: $q'$ factors as $f \circ q$.} Let $\phi \in \cC(X)$ and $\psi \in \cC(Y)$ be given. Let $\phi' \colon X \to \cC(X)$ be the function $\phi'(x)=\phi(x) \delta_x$. Then $\phi = \int_X \phi'(x) dx$. A similar identity holds for $\psi$. We thus have
\begin{align*}
q'(\phi, \psi)
&= q'\left( \int_X \phi'(x) dx, \int_Y \psi'(y) dy \right) \\
&= \int_{X \times Y} q'(\phi'(x), \psi'(y)) d(x,y) \\
&= \int_{X \times Y} \phi(x) \psi(y) q'(\delta_x, \delta_y) d(x,y)
= f(q(\phi, \psi)).
\end{align*}
In the second step we used we used property Definition~\ref{defn:strong-bi}(c) of $q'$, and in the third step we used the order $k$-bilinearity of $q'$.

\textit{Step 4: the factorization of $q'$ is unique.} Suppose that $f' \colon \cC(X \times Y) \to E$ is another $A$-linear map satisfying $q'=f' \circ q$. We must show $f'=f$. However, this is clear as
\begin{displaymath}
f(\delta_{(x,y)})=f(q(\delta_x, \delta_y))=f'(q(\delta_x,\delta_y))=f'(\delta_{(x,y)})
\end{displaymath}
and the functions $\delta_{(x,y)}$, with $x \in X$ and $y \in Y$, generate $\cC(X \times Y)$ as an $A$-module (Proposition~\ref{prop:schwartz-generators}).
\end{proof}

\begin{remark}
Proposition~\ref{prop:schwartz-tensor} can be seen as an analog of the isomorphism
\begin{displaymath}
\cS(\bR^n) \mathbin{\hat{\otimes}} \cS(\bR^m) \cong \cS(\bR^{n+m}),
\end{displaymath}
where $\cS$ denotes the classical Schwartz space and $\hat{\otimes}$ is the (projective or injective) tensor product of nuclear spaces. See \cite[Theorem~5.16]{Treves}.
\end{remark}

\subsection{Right exactness of tensor products}

We now establish the following important property of tensor products:

\begin{proposition} \label{prop:ten-right-exact}
Let $N$ be a smooth $A$-module and let
\begin{displaymath}
\xymatrix@C=3em{
M_3 \ar[r]^{\beta} & M_2 \ar[r]^{\alpha} & M_1 \ar[r] & 0 }
\end{displaymath}
be an exact sequence of smooth $A$-modules. Suppose that $M_3 \uotimes N$ and $M_2 \uotimes N$ exist. Then $M_1 \uotimes N$ exists, and the sequence
\begin{displaymath}
\xymatrix@C=3em{
M_3 \uotimes N \ar[r]^{\beta \uotimes 1} & M_2 \uotimes N \ar[r]^{\alpha \uotimes 1} & M_1 \uotimes N \ar[r] & 0 }
\end{displaymath}
is exact.
\end{proposition}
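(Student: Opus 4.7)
The plan is to construct a candidate for $M_1 \uotimes N$ directly as a quotient of $M_2 \uotimes N$, and then verify the universal property. Let $K \subset M_2 \uotimes N$ be the image of $\beta \uotimes 1$, and put $T = (M_2 \uotimes N)/K$, which is a smooth $A$-module. Define a map $q \colon M_1 \times N \to T$ by $q(m_1,n) = [m_2 \uotimes n]$, where $m_2 \in M_2$ is any lift of $m_1$ under the surjection $\alpha$. For well-definedness, if $m_2, m_2'$ both lift $m_1$, then $m_2 - m_2' \in \ker(\alpha) = \im(\beta)$, so $m_2 - m_2' = \beta(m_3)$ for some $m_3 \in M_3$, and hence $m_2 \uotimes n - m_2' \uotimes n = (\beta \uotimes 1)(m_3 \uotimes n) \in K$ by Proposition~\ref{prop:ten-func}.

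Next I would verify that $q$ is strongly bilinear. The $k$-bilinearity is immediate, and $G$-equivariance follows because $gm_2$ is a lift of $gm_1$ whenever $m_2$ lifts $m_1$. For strong linearity in the first argument at a fixed $n \in N$, I would use the characterization in Proposition~\ref{prop:strong-lin}(b): choose an open subgroup $U$ such that $n$ is $U$-invariant and such that the universal strongly bilinear map $q_2 \colon M_2 \times N \to M_2 \uotimes N$ satisfies $q_2(am_2,n) = aq_2(m_2,n)$ for all $a \in A(U)$ and $m_2 \in M_2$. Then for any $m_1 \in M_1$ with lift $m_2$, the element $am_2$ lifts $am_1$, so $q(am_1,n) = [am_2 \uotimes n] = a[m_2 \uotimes n] = aq(m_1,n)$. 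Strong linearity in the second argument is even easier: fix $m_1 \in M_1$ and a lift $m_2 \in M_2$, then $q(m_1, -)$ factors as the composition of the strongly linear map $n \mapsto m_2 \uotimes n$ with the $A$-linear projection $M_2 \uotimes N \to T$.

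For the universal property, let $q' \colon M_1 \times N \to E$ be any strongly bilinear map to a smooth $A$-module. The composition $q'' = q'(\alpha(-),-) \colon M_2 \times N \to E$ is strongly bilinear, so it factors through a unique $A$-linear map $\tilde f \colon M_2 \uotimes N \to E$. Since $\alpha \circ \beta = 0$, $\tilde f$ vanishes on every pure tensor of the form $\beta(m_3) \uotimes n$, and hence on all of $K$ by Proposition~\ref{prop:pure-gen} applied to $M_3 \uotimes N$. Thus $\tilde f$ descends to $f \colon T \to E$ with $f \circ q = q'$; uniqueness of $f$ follows because $T$ is generated as an $A$-module by the pure tensors $[m_2 \uotimes n] = q(\alpha(m_2), n)$, again by Proposition~\ref{prop:pure-gen}. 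This identifies $T$ with $M_1 \uotimes N$, and by construction the projection $M_2 \uotimes N \to T$ is $\alpha \uotimes 1$ (since both send $m_2 \uotimes n$ to $[m_2 \uotimes n]$), so the displayed sequence is exact at $M_1 \uotimes N$ and at $M_2 \uotimes N$ by the very definition of $K$.

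The only genuinely delicate point is the well-definedness of $q$, since it requires knowing that the ambiguity in lifting, namely $\beta(m_3) \uotimes n$, really lies in the image of $\beta \uotimes 1$; this is the one place where functoriality of $\uotimes$ (Proposition~\ref{prop:ten-func}) is essential. Everything else is a direct transport of the standard right-exactness argument for ordinary tensor products, made possible by the fact that lifting $m_1$ to $m_2$ automatically lifts $am_1$ to $am_2$, so the $A$-module structure on $T$ is compatible with the lifting procedure used to define $q$.
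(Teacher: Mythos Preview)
Your proof is correct and follows essentially the same route as the paper's: define the candidate tensor product as the cokernel of $\beta \uotimes 1$, construct the strongly bilinear map via lifting along the surjection $\alpha$, verify strong bilinearity in each variable using Proposition~\ref{prop:strong-lin}, and check universality by factoring an arbitrary strongly bilinear map through $M_2 \uotimes N$ and then through the cokernel using Proposition~\ref{prop:pure-gen}. The only cosmetic difference is that the paper phrases strong linearity in the second argument by invoking Proposition~\ref{prop:strong-lin}(b) directly rather than your ``compose with the $A$-linear projection'' shortcut, but these amount to the same thing.
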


\begin{proof}
We first note that the map $\beta \uotimes 1$ comes from Proposition~\ref{prop:ten-func}. Define $C=\coker(\beta \uotimes 1)$, and let $\pi \colon M_2 \uotimes N \to C$ be the quotient map. We show that $C$ is naturally the tensor product of $M_1$ and $N$. We proceed in three steps.

\textit{Step 1: existence of $q_1$.} Consider the diagram
\begin{displaymath}
\xymatrix@C=4em{
M_2 \uotimes N \ar[r]^-{\pi} & C \\
M_2 \times N \ar[u]^{q_2} \ar[r]^-{\alpha \times 1} & M_1 \times N \ar@{..>}[u]_{q_1} }
\end{displaymath}
Here $q_2$ is the universal strongly bilinear map. We claim that there is a unique function $q_1$ that makes the diagram commute. Uniqueness is clear, as $\alpha \times 1$ is surjective. As for existence, let $(x,y) \in M_1 \times N$ be given. We define $q_1(x,y)=\pi(q_2(x', y))$ where $x' \in M_2$ is a lift of $x$. To see that this is well-defined, supposed that $x'' \in M_2$ is a second lift. Then $x''-x'=\beta(w)$ for some $w \in M_3$. We have
\begin{displaymath}
q_2(x'',y)-q_2(x',y)=q_2(\beta(w),y)=(\beta \uotimes 1)(q_3(w,y)),
\end{displaymath}
where $q_3$ is the universal strongly bilinear map on $M_3 \uotimes N$. As the right term maps to~0 in $C$, we see that $\pi(q_2(x'',y))=\pi(q_2(x',y))$, and so $q_1$ is well-defined. This verifies the claim.

\textit{Step 2: $q_1$ is strongly bilinear.} It is clear that $q_1$ is $k$-bilinear and $G$-equivariant. Let $n \in N$ be given and let $U$ be an open subgroup such that $n \in N^U$ and $q_2(-,n)$ is $A(U)$-linear, which exists by Proposition~\ref{prop:strong-lin}. It is then clear that $q_1(-,n)$ is $A(U)$-linear, and so $q_1$ is strongly linear in the first variable by Proposition~\ref{prop:strong-lin}. Now let $m_1 \in M_1$ be given, and let $m_2 \in M_2$ be a lift of $m_1$. Let $V$ be an open subgroup such that $m_2$ is $V$-invariant and $q_2(m_2,-)$ is $A(V)$-linear, which exists by Proposition~\ref{prop:strong-lin}. Then it is clear that $q_1(m_1, -)$ is $A(V)$-linear, and so $q_1$ is strongly linear in the second variable by Proposition~\ref{prop:strong-lin}. Thus $q_1$ is strongly bilinear.

\textit{Step 3: $q_1$ is universal.} Let $q \colon M_1 \times N \to E$ be a strongly bilinear map. Then $q \circ (\alpha \times 1) \colon M_2 \times N \to E$ is also strongly bilinear, and so by the universal property of the tensor product, there is a unique $A$-linear map $\gamma \colon M_2 \uotimes N \to E$ such that $\gamma(q_2(x, y))=q(\alpha(x), y)$ for $x \in M_2$ and $y \in N$. For $x \in M_3$ and $y \in N$, we have
\begin{displaymath}
\gamma((\beta \uotimes 1)(q_3(x,y)))=\gamma(q_2(\beta(x),y))=q(\alpha(\beta(x)),y)=0,
\end{displaymath}
and so, by the universal property of $M_3 \uotimes N$, we have $\gamma \circ (\beta \uotimes 1)=0$. It follows that $\gamma$ factors through $C$, that is, there is a unique $A$-linear map $\delta \colon C \to E$ such that $\gamma=\delta \circ \pi$. Now let $x \in M_1$ and $y \in N$, and let $x' \in M_2$ lift $x$. Then
\begin{displaymath}
\delta(q_1(x,y))=\delta(\pi(q_2(x',y))=\gamma(q_2(x',y))=q(x,y).
\end{displaymath}
We have thus shown that $q$ factors through $q_1$. Suppose now that $\delta' \colon C \to E$ is a second $A$-linear map such that $\delta' \circ q_1=q$. Then $(\delta-\delta') \circ q_1=0$. However, the image of $q_1$ generates $C$ as an $A$-module, since the image of $q_2$ generates $M_2 \uotimes N$ (Proposition~\ref{prop:pure-gen}) and $\pi$ is surjective. Thus $\delta=\delta'$, and so $q_1$ is universal.

We have thus shown that $q_1 \colon M_1 \times N \to C$ is a tensor product of $M_1$ and $N$. The diagram in Step~1 shows that the map $\pi$ is identified with $\alpha \uotimes 1$, which yields the exact sequence in the statement of the lemma.
\end{proof}

\subsection{The main theorem on tensor products}

The following is our main theorem on the tensor product.

\begin{theorem} \label{thm:tensor}
We have the following:
\begin{enumerate}
\item The tensor product of any two smooth $A$-modules exists.
\item The tensor product $\uotimes$ endows $\uRep(G)$ with the structure of a tensor category.
\item The functor $\uotimes$ is co-continuous in each variable.
\item The tensor product of two finitely generated smooth $A$-modules is finitely generated.
\end{enumerate}
\end{theorem}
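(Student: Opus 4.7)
The plan is to bootstrap the general tensor product from the Schwartz space case already treated in Proposition~\ref{prop:schwartz-tensor}, using the presentation result Proposition~\ref{prop:schwartz-quot} and the right-exactness result Proposition~\ref{prop:ten-right-exact}. Note first that the definition of strongly bilinear map is manifestly symmetric in its two arguments, so whenever $M \uotimes N$ exists we automatically obtain $N \uotimes M = M \uotimes N$ with the evident symmetry, and Proposition~\ref{prop:ten-right-exact} applies in either variable.

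For (a), I would proceed in two steps. First, fix a $G$-set $X$ and an arbitrary smooth $N$, and show that $\cC(X) \uotimes N$ exists. Choose a presentation $\cC(Y_2) \to \cC(Y_1) \to N \to 0$ using Proposition~\ref{prop:schwartz-quot}. Since $\cC(X) \uotimes \cC(Y_i)$ exists and equals $\cC(X \times Y_i)$ by Proposition~\ref{prop:schwartz-tensor}, symmetry plus Proposition~\ref{prop:ten-right-exact} give existence of $\cC(X) \uotimes N$ as the cokernel of $\cC(X \times Y_2) \to \cC(X \times Y_1)$. Second, for arbitrary smooth $M, N$, choose a presentation $\cC(X_2) \to \cC(X_1) \to M \to 0$ and apply Proposition~\ref{prop:ten-right-exact} with $N$ fixed: the first step guarantees that $\cC(X_i) \uotimes N$ exists, so $M \uotimes N$ exists and is the cokernel of $\cC(X_2) \uotimes N \to \cC(X_1) \uotimes N$.

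For (c), the iterated presentation from (a) exhibits $M \uotimes N$ as a double cokernel, so right-exactness in each variable is automatic. To upgrade right-exactness to co-continuity, note that $\uotimes$ also preserves arbitrary direct sums: on Schwartz spaces this is the identity $\cC(\coprod_i X_i) = \bigoplus_i \cC(X_i)$, and for general modules it follows from the universal property (a strongly bilinear map out of $(\bigoplus M_i) \times N$ is the same as a compatible family of strongly bilinear maps out of $M_i \times N$, since the required conditions are tested on one element at a time). Right-exactness plus preservation of direct sums is equivalent to preservation of all small colimits. For (d), if $M$ and $N$ are finitely generated, Proposition~\ref{prop:schwartz-quot} lets us choose $X_1, Y_1$ finitary; then $X_1 \times Y_1$ is finitary by Proposition~\ref{prop:smooth}(b), so $\cC(X_1 \times Y_1)$ is finitely generated by Corollary~\ref{cor:schwartz-fg}, and hence so is its quotient $M \uotimes N$.

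For (b), the unit is $\bbone = \cC(\bone)$, and the unit isomorphism $\bbone \uotimes M \cong M$ is established first on Schwartz spaces (where it comes from $\bone \times X = X$) and then extended by the two-sided right-exactness. The associator and the symmetry are constructed the same way: on Schwartz spaces they come from the corresponding natural isomorphisms for Cartesian products of $G$-sets, and these Schwartz-level isomorphisms determine unique natural transformations on all of $\uRep(G)$ because any bifunctor (resp.\ trifunctor) that is right-exact in each variable is determined on objects and morphisms by its restriction to Schwartz spaces, via the canonical presentations and the universal property. The coherence axioms (pentagon, hexagon, triangle) then hold universally because they hold on Schwartz spaces (where they reduce to the corresponding axioms in $\cS(G)$) and both sides of each coherence diagram are natural transformations between bifunctors determined by the Schwartz case. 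I expect this last density-style verification of the coherence axioms to be the main technical nuisance: although each step is conceptually clear, one must keep careful track of the universal properties to ensure that natural transformations constructed in general variables are really well-defined and respect composition, rather than being assembled ad hoc from particular presentations.
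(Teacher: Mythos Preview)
Your argument for (a), (c), and (d) is essentially identical to the paper's: bootstrap from the Schwartz-space computation (Proposition~\ref{prop:schwartz-tensor}) via presentations (Proposition~\ref{prop:schwartz-quot}) and right-exactness (Proposition~\ref{prop:ten-right-exact}), then check direct sums on Schwartz spaces and extend. Your extra remark that direct sums could also be handled directly from the universal property is correct (the key point being that in Proposition~\ref{prop:strong-lin}(b) the open subgroup $U$ depends only on $n$, not on $m$, so a family of strongly bilinear maps $M_i\times N\to E$ does glue to a strongly bilinear map on the direct sum), though the paper does not take that route.

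The one genuine divergence is in (b). You propose to build the associator, unitor, and symmetry on Schwartz spaces and then extend by right-exactness in each variable, checking coherence by a density argument; you correctly flag this as the ``main technical nuisance.'' The paper avoids this entirely by working directly with the universal property: both $(M_1\uotimes M_2)\uotimes M_3$ and $M_1\uotimes(M_2\uotimes M_3)$ receive a universal strongly \emph{trilinear} map (defined in the evident way), hence are canonically isomorphic; the unitor comes from checking that $(a,x)\mapsto ax$ is itself a universal strongly bilinear map $\bbone\times M\to M$. This is both shorter and cleaner than your extension-by-presentations strategy, and it makes the coherence axioms automatic (they reduce to the corresponding statements for sets, just as in the ordinary tensor product of modules). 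Your approach would work, but the universal-multilinear-map argument is the one you want here.
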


\begin{proof}
(a) We have already seen that tensor products exist for Schwartz spaces (Proposition~\ref{prop:schwartz-tensor}). The general case follows from choosing presentations by Schwartz spaces and appealing to Proposition~\ref{prop:ten-right-exact}.

(b) We now know that the tensor product of any two objects in $\uRep(G)$ exists. Proposition~\ref{prop:ten-func} thus shows that $\uotimes$ defines a functor. To show that we have a tensor structure, we must construct associative, symmetry, and unit constraints and verify the relevant axioms. This follows in a similar manner as for the usual tensor product. For instance, to get the associativity constraint, one observes that $(M_1 \uotimes M_2) \uotimes M_3$ and $M_1 \uotimes (M_2 \uotimes M_3)$ both receive a universal strongly trilinear map, and are therefore canonically isomorphic. The unit object for $\uotimes$ is the trivial $A$-module $\bbone$: indeed, one easily sees that the map $\bbone \times M \to M$ given by $(a,x) \mapsto ax$ realizes $M$ as the tensor product $\bbone \uotimes M$.

(c) Proposition~\ref{prop:ten-right-exact} shows that $\uotimes$ is right-exact in each argument. It remains to show that $\uotimes$ commutes with arbitrary direct sums. Let $\{M_i\}_{i \in I}$ be a family of smooth $A$-modules, and let $N$ be another smooth $A$-module. We must show that the natural map
\begin{displaymath}
\bigoplus_{i \in I} (M_i \uotimes N) \to \big( \bigoplus_{i \in I} M_i \big) \uotimes N
\end{displaymath}
is an isomorphism. If $M_i=\cC(X_i)$ and $N=\cC(Y)$, for $G$-sets $X_i$ and $Y$, then the result follows from Proposition~\ref{prop:schwartz-tensor} and the identification $\bigoplus_{i \in I} \cC(X_i) = \cC(\coprod_{i \in I} X_i)$. The general case follows from this by choosing presentations.

(d) Let $M$ and $N$ be finitely generated smooth $A$-modules. Choose surjections $\cC(X) \to M$ and $\cC(Y) \to M$ with $X$ and $Y$ finitary (Proposition~\ref{prop:schwartz-quot}). By Proposition~\ref{prop:schwartz-tensor} and the right-exactness of tensor products, we have a surjection $\cC(X \times Y) \to M \uotimes N$. Since $X \times Y$ is finitary, we conclude that $M \uotimes N$ is finitely generated (Proposition~\ref{prop:schwartz-quot} again).
\end{proof}

\subsection{More properties of tensor product}

We now establish a few additional results on our tensor product.

\begin{proposition}
The functor $\uPerm(G) \to \uRep(G)$ is naturally a tensor functor.
\end{proposition}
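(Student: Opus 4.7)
The plan is to exhibit explicit coherence data making $\Phi$ into a symmetric monoidal functor, then verify compatibility on morphisms (the only nontrivial step) by computing on pure tensors, and finally note that the coherence axioms are inherited from $\cS(G)$.

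First, I would define the unit and monoidal coherence isomorphisms. The unit constraint $\Phi(\bbone_{\uPerm}) = \cC(\bone) \cong k = \bbone_{\uRep}$ is the canonical identification of functions on a singleton set with scalars. For the monoidal constraint, Proposition~\ref{prop:schwartz-tensor} furnishes a canonical isomorphism
\begin{displaymath}
  \tau_{X,Y} \colon \Phi(\Vec_X) \uotimes \Phi(\Vec_Y) = \cC(X) \uotimes \cC(Y) \xrightarrow{\ \sim\ } \cC(X \times Y) = \Phi(\Vec_X \uotimes \Vec_Y),
\end{displaymath}
characterized by sending the pure tensor $\phi \uotimes \psi$ to the function $(x,y) \mapsto \phi(x)\psi(y)$.

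The key step is naturality: given matrices $A \in \Mat^G_{X',X}$ and $B \in \Mat^G_{Y',Y}$, one must check that $\Phi(A \uotimes B)$ and $\Phi(A) \uotimes \Phi(B)$ agree as $A$-linear maps $\cC(X \times Y) \to \cC(X' \times Y')$ after identifying source and target via $\tau$. Both maps are $A$-linear, so by the universal property of $\cC(X) \uotimes \cC(Y)$ it suffices to check agreement on pure tensors. For $\phi \in \cC(X)$ and $\psi \in \cC(Y)$, a direct computation using the formula for $A \uotimes B$ from \S\ref{ss:perm-tensor} together with Fubini's theorem (Corollary~\ref{cor:fubini}) gives
\begin{displaymath}
  \bigl((A \uotimes B)(\phi \uotimes \psi)\bigr)(y_1,y_2) = \int_{X \times Y} A(y_1,x_1) B(y_2,x_2) \phi(x_1)\psi(x_2) \, d(x_1,x_2) = (A\phi)(y_1)(B\psi)(y_2),
\end{displaymath}
which is precisely $(A\phi) \uotimes (B\psi) = \bigl(\Phi(A) \uotimes \Phi(B)\bigr)(\phi \uotimes \psi)$ under $\tau$. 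This is the main (and essentially only) calculation.

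Finally, the coherence axioms (associativity pentagon, unit triangles, and symmetry hexagon) are automatic. The associativity, unit, and symmetry constraints in both $\uPerm(G)$ and $\uRep(G)$ are induced from the corresponding structures on the cartesian monoidal category $\cS(G)$: in $\uPerm(G)$ they are the matrices $A_f$ associated to the canonical bijections of $G$-sets (see \S\ref{ss:perm-tensor} and \S\ref{ss:alpha-beta}), while in $\uRep(G)$ they are the Schwartz-space isomorphisms induced by pushforward along these same bijections. Since $\Phi$ sends $A_f$ to $f_*$ (by construction of $\Phi$ from Proposition~\ref{prop:perm-rep-func}), the compatibility diagrams reduce to the corresponding identities in $\cS(G)$, which hold trivially. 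Thus $\Phi$ together with $\tau$ and the unit isomorphism is a symmetric monoidal, hence tensor, functor.
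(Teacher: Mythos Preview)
Your proof is correct and follows the same approach the paper intends: the paper's proof is the single sentence ``This follows from Proposition~\ref{prop:schwartz-tensor},'' and you have simply unpacked the routine verifications (naturality of $\tau$ on morphisms via Fubini, inheritance of coherence from $\cS(G)$) that this citation leaves implicit.
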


\begin{proof}
This follows from Proposition~\ref{prop:schwartz-tensor}.
\end{proof}

\begin{corollary} \label{cor:schwartz-rigid}
Let $X$ be a finitary $G$-set. Then $\cC(X)$ is a self-dual rigid object of $\uRep(G)$ of categorical dimension $\mu(X)$, and the functor $\cC(X) \uotimes -$ is exact.
\end{corollary}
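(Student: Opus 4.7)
The plan is to derive all three statements by transferring the corresponding facts from $\uPerm(G)$ to $\uRep(G)$ along the tensor functor $\Phi$ constructed in the preceding proposition, together with a general adjunction argument for exactness.

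First, recall from Proposition~\ref{prop:perm-rigid} that $\Vec_X$ is self-dual rigid in $\uPerm(G)$, with evaluation and coevaluation given by the explicit matrices $\ev_X$ and $\cv_X$. Since $\Phi$ is a symmetric monoidal functor, it carries rigid objects to rigid objects and duals to duals: the images $\Phi(\ev_X)$ and $\Phi(\cv_X)$ satisfy the same snake identities in $\uRep(G)$, so $\cC(X)=\Phi(\Vec_X)$ is rigid and self-dual. Moreover, categorical trace and categorical dimension are defined purely in terms of the unit, tensor product, and evaluation/coevaluation, hence are preserved by any tensor functor. Combined with Corollary~\ref{cor:cat-dim}, which computes the categorical dimension of $\Vec_X$ in $\uPerm(G)$ as $\mu(X)$, this immediately gives that the categorical dimension of $\cC(X)$ in $\uRep(G)$ is $\mu(X)$.

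For the exactness of $\cC(X) \uotimes -$, I would use the following standard consequence of rigidity in any tensor category: if an object $Z$ is rigid with dual $Z^\vee$, then the usual rigidity adjunctions give natural isomorphisms
\begin{displaymath}
\Hom(Z \uotimes M, N) \cong \Hom(M, Z^\vee \uotimes N), \qquad \Hom(M, Z \uotimes N) \cong \Hom(Z^\vee \uotimes M, N),
\end{displaymath}
so the functor $Z \uotimes -$ is simultaneously a left adjoint (to $Z^\vee \uotimes -$) and a right adjoint (again to $Z^\vee \uotimes -$, using that $(Z^\vee)^\vee \cong Z$). Applying this with $Z=\cC(X)$, we conclude that $\cC(X) \uotimes -$ preserves all small limits and colimits in $\uRep(G)$; in particular it is both left and right exact, hence exact. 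Right exactness and preservation of direct sums are already recorded in Theorem~\ref{thm:tensor}(c), but left exactness is the new input, and it falls out of the adjunction afforded by rigidity.

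There is no real obstacle here: once the previous proposition is in hand (so that $\Phi$ is a tensor functor) and the standard preservation properties of tensor functors are invoked, the corollary reduces to formal manipulations of rigid objects in a tensor category. The only point worth stating carefully is that the evaluation and coevaluation for $\cC(X)$ in $\uRep(G)$ are literally the images under $\Phi$ of the matrices $\ev_X$ and $\cv_X$ from \S\ref{ss:perm-dual}, so that the computation of the categorical dimension transfers verbatim.
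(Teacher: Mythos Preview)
Your proof is correct and follows essentially the same approach as the paper: transfer self-duality, rigidity, and categorical dimension along the tensor functor $\Phi$ (citing Proposition~\ref{prop:perm-rigid} and Corollary~\ref{cor:cat-dim}), then deduce exactness of $\cC(X)\uotimes -$ from the standard rigidity adjunction making it both a left and right adjoint. The paper's proof is slightly terser and cites \cite[Proposition~2.10.8]{Etingof} for the adjunction step, but the argument is the same.
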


\begin{proof}
We have seen that $\Vec_X$ is a self-dual rigid object of $\uPerm(G)$ of categorical dimension $\mu(X)$ (Propositions~\ref{prop:perm-rigid} and Corollary~\ref{cor:cat-dim}). Since a tensor functor preserves rigidity and categorical dimension, it follows that $\cC(X)$ is a self-dual rigid object of categorical dimension $\mu(X)$. If $M$ is any rigid object of a tensor category then $M \otimes -$ is exact, since $M^{\vee} \otimes -$ provides a left and right adjoint \cite[Proposition~2.10.8]{Etingof}. This in particular applies to $M=\cC(X)$.
\end{proof}

If $M$ and $N$ are $k[G]$-modules then the $k$-module underlying their tensor product $M \otimes_k N$ only depends on the $k$-modules underlying $M$ and $N$. For the tensor product $\uotimes$ of smooth $A$-modules, this is no longer the case. However, the next proposition shows that, in a sense, the tensor product only depends on the ``underlying $\hat{G}$-modules.''

\begin{proposition} \label{prop:res-tensor}
Let $U$ be an open subgroup of $G$. Then restriction $\uRep(G) \to \uRep(U)$ is naturally a tensor functor.
\end{proposition}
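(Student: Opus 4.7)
The plan is to construct, for smooth $A(G)$-modules $M$ and $N$, a natural isomorphism of $A(U)$-modules
\[ \iota_{M,N} \colon \res(M) \uotimes_U \res(N) \to \res(M \uotimes_G N), \]
and then verify that, together with the tautological identification $\res(\bbone_G) = \bbone_U$, this gives $\res$ the structure of a symmetric monoidal functor. The starting observation is that any $G$-strongly bilinear map $q \colon M \times N \to E$ (with $E$ a smooth $A(G)$-module) is $U$-strongly bilinear when viewed as mapping into $\res(E)$. The equivariance clause restricts trivially. For strong linearity in each variable, the criterion of Proposition~\ref{prop:strong-lin}(b) expresses strong $G$-linearity in terms of $A(W)$-linearity of $q(-,n)$ for some open subgroup $W \subset G$ depending on $n$; replacing $W$ by $W \cap U$ gives the corresponding criterion over $U$. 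Thus the universal $G$-strongly bilinear map $M \times N \to M \uotimes_G N$ restricts to a $U$-strongly bilinear map into $\res(M \uotimes_G N)$, and $\iota_{M,N}$ is defined via the universal property of $\uotimes_U$.

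To prove $\iota_{M,N}$ is an isomorphism, I would reduce to the case of Schwartz spaces. For a $G$-set $X$, the $k$-module $\cC(X)$ is unchanged upon regarding $X$ as a $U$-set (smoothness is unaffected since any open subgroup of $G$ contains an open subgroup of $U$, and finitariness is unaffected by Proposition~\ref{prop:smooth}(c)), and the $A(U)$-module structure is merely the restriction of the $A(G)$-module structure. Since $\mu|_U$ is normal by Proposition~\ref{prop:normal-sub}, Proposition~\ref{prop:schwartz-tensor} applied to both $G$ and $U$ gives
\[ \cC(X) \uotimes_U \cC(Y) = \cC(X \times Y) = \res\bigl(\cC(X) \uotimes_G \cC(Y)\bigr) \]
as $A(U)$-modules, and tracing the defining formula $(\phi,\psi) \mapsto \bigl((x,y) \mapsto \phi(x)\psi(y)\bigr)$ shows that $\iota_{\cC(X),\cC(Y)}$ is literally the identity of $\cC(X \times Y)$.

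For general $M$ and $N$, use Proposition~\ref{prop:schwartz-quot} to choose presentations $\cC(X_2) \to \cC(X_1) \to M \to 0$ and $\cC(Y_2) \to \cC(Y_1) \to N \to 0$ in $\uRep(G)$. Restriction is exact (being the identity on underlying $k$-modules), so these remain exact in $\uRep(U)$. Theorem~\ref{thm:tensor}(c) gives right exactness of $\uotimes$ in each variable over both $G$ and $U$; naturality of $\iota$ in both arguments, combined with the Schwartz-space case, then places $\iota_{M,N}$ in a commuting diagram of exact sequences whose two leftmost columns are isomorphisms, so $\iota_{M,N}$ is also an isomorphism.

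Finally, compatibility with the associators, symmetry, and unit constraints follows formally: $\iota$, the constraints in $\uRep(G)$, and the constraints in $\uRep(U)$ are all defined via universal properties against strongly bilinear (or trilinear) maps, and the restriction of a strongly $n$-linear map over $G$ is strongly $n$-linear over $U$ by the same argument as for $n=2$. I expect the only genuine obstacle is the first observation — that restriction preserves strong bilinearity — which is much cleaner to verify through Proposition~\ref{prop:strong-lin}(b) than from Definition~\ref{defn:strong-bi}(c) directly, since integrals over $G$-sets and over $U$-sets are not literally the same operation.
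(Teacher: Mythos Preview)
Your proof is correct and follows essentially the same approach as the paper: observe that a $G$-strongly bilinear map restricts to a $U$-strongly bilinear map (the paper states this as ``clear'' while you justify it via Proposition~\ref{prop:strong-lin}(b)), obtain the comparison map from the universal property, verify it is an isomorphism on Schwartz spaces via Proposition~\ref{prop:schwartz-tensor}, and extend to all modules by presentations and right exactness. You supply more detail than the paper on the monoidal coherence and on why the Schwartz-space case goes through over $U$, but the architecture is the same.
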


\begin{proof}
Write $\uotimes^G$ and $\uotimes^U$ for the tensor products on $\uRep(G)$ and $\uRep(U)$. Let $M$ and $N$ be smooth $A(G)$-modules. Then the universal map $M \times N \to M \uotimes^G N$ is clearly strictly bilinear after restricting to $U$, and so the universal property of $\uotimes^U$ furnishes us with a canonical map of $A(U)$-modules $M \uotimes^U N \to M \uotimes^G N$. This map is an isomorphism when $M$ and $N$ are Schwartz spaces by the explicit computation of tensor products in this case (Proposition~\ref{prop:schwartz-tensor}). The general case now follows by picking presentations of $M$ and $N$ by Schwartz spaces.
\end{proof}

\subsection{Internal Hom} \label{ss:gen-hom}

Let $M$ and $N$ be smooth $A$-modules. Define
\begin{displaymath}
\uHom_{A(G)}(M,N) = \bigcup_{U \subset G} \Hom_{A(U)}(M,N),
\end{displaymath}
where the union is taken over all open subgroups $U$ of $G$, and each $\Hom_{A(U)}(M,N)$ is regarded as a subset of $\Hom_k(M,N)$. Clearly, if $U$ is an open subgroup of $G$ then
\begin{displaymath}
\uHom_{A(G)}(M,N) = \uHom_{A(U)}(M,N).
\end{displaymath}
For this reason, we simply write $\uHom(M,N)$ when there is no danger of confusion.

\begin{proposition}
Let $M$ and $N$ be as above. Then $\uHom(M,N)$ carries the structure of a smooth $A$-module as follows: given $a \in A(G)$ and $f \in \Hom_{A(U)}(M,N)$, we have
\begin{displaymath}
(af)(m) = \int_{G/U} a_U(g) gf(g^{-1}m) dg.
\end{displaymath}
\end{proposition}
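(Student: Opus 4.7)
The plan has four parts: (i) extend the evident $k[G]$-action on $\uHom(M,N)$ to an $A$-action via integration, (ii) verify the formula is well-defined (independent of the choice of $U$), (iii) verify the result lies in $\uHom(M,N)$, and (iv) check the module axioms and smoothness.

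First, for $g \in G$ and $f \in \Hom_{A(U)}(M,N)$ I would define $(g \cdot f)(m) := gf(g^{-1}m)$. A direct check, using that conjugation identifies $A(U)$ with $A(gUg^{-1})$, shows $g \cdot f \in \Hom_{A(gUg^{-1})}(M,N) \subset \uHom(M,N)$ and gives a $k[G]$-action. Since $A(U)$-linearity implies $U$-equivariance (as $k[U] \subset A(U)$), for each fixed $m \in M$ the function $g \mapsto gf(g^{-1}m)$ on $G$ is right $U$-invariant and descends to an $N$-valued function $\phi_{f,m}$ on $G/U$. Choosing any open $V_0 \subset U$ with $m \in M^{V_0}$, one verifies that $\phi_{f,m}$ is smooth, so $a_U \cdot \phi_{f,m} \in \cC(G/U,N)$, and the integral formula defines $(af)(m)$ via the module-valued integration of \S \ref{ss:modint}. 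Well-definedness: if $V \subset U$ and $f \in \Hom_{A(U)}(M,N) \subset \Hom_{A(V)}(M,N)$, the function $\phi_{f,m}$ on $G/V$ is the pullback of the one on $G/U$; by transitivity of pushforward (Proposition~\ref{prop:push-trans}), extended to module-valued integrands by reduction to the elementary functions of Proposition~\ref{prop:elem2}, integration against $a_V$ equals integration against $\pi_{V,U}(a_V) = a_U$.

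Next, to see $af \in \uHom(M,N)$, I would choose $W \subset U$ open with $a_U$ left $W$-invariant and claim $af \in \Hom_{A(W)}(M,N)$. By Corollary~\ref{cor:avg-homo} it suffices to verify $W$-equivariance and compatibility with the averaging operators $\avg_{V'/W'}$ for $W' \subset V' \subset W$. Both reduce to a change of variables inside the integral: for $w \in W$, substituting $g \mapsto wg$ in the expression for $(af)(wm)$ together with left $W$-invariance of $a_U$ recovers $w(af)(m)$, and an analogous manipulation using Proposition~\ref{prop:avg-decomp} handles the averaging operators.

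For the module axioms, the unit axiom is immediate since $1_U = \delta_{G/U,1}$. For associativity $(a_1a_2)f = a_1(a_2 f)$, let $W$ be such that $(a_2)_U$ is left $W$-invariant, so by the previous paragraph $a_2 f \in \Hom_{A(W)}(M,N)$. Expanding and applying the module-valued analog of Fubini (Corollary~\ref{cor:fubini}) gives
\begin{displaymath}
(a_1(a_2 f))(m) = \int_{G/W \times G/U} (a_1)_W(h)(a_2)_U(g)\, (hg)f((hg)^{-1}m)\, d(h,g).
\end{displaymath}
Substituting $y = hg$ in the inner integral (using $G$-invariance of integration) and swapping with Fubini yields $\int_{G/U} ((a_1)_W \ast_W (a_2)_U)(y)\, yf(y^{-1}m)\, dy = ((a_1a_2)f)(m)$, where the last equality is the definition of multiplication in $A$ from Proposition~\ref{prop:cga}. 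Finally, for smoothness I observe that if $f \in \Hom_{A(U)}(M,N)$ and $a \in A$ satisfies $a_U = 0$, the integrand vanishes identically so $af = 0$; hence $f$ is strictly $U$-invariant in $\uHom(M,N)$.

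The main obstacle is step (iii), showing $af \in \uHom(M,N)$: one must carefully track how the support of $a_U$, the invariance group of $m$, and the $A(U)$-linearity of $f$ combine to give a specific subgroup $W$ of linearity for $af$. A secondary technical point pervading the proof is that Corollary~\ref{cor:fubini} was stated only for scalar-valued integrands, so one must either cite or sketch its extension to $N$-valued integrands by reducing to elementary functions — this is routine but needs to be articulated.
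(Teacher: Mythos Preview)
Your proposal is correct and follows essentially the same four-step structure as the paper's proof: well-definedness via transitivity of push-forward, membership in $\uHom(M,N)$ by exhibiting an open subgroup $W$ (chosen so that $a_U$ is left $W$-invariant) for which $af$ is $A(W)$-linear, associativity via Fubini and a change of variables $y=hg$ that recovers the convolution defining multiplication in $A$, and smoothness directly from the formula. The only stylistic difference is in step~(iii): you invoke Corollary~\ref{cor:avg-homo} to reduce $A(W)$-linearity to $W$-equivariance plus compatibility with averaging operators, whereas the paper verifies $A(W)$-linearity by directly computing $(af)(bm)$ for $b\in A(W)$; unwinding the definition of $\avg$, these are the same calculation. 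Your explicit flagging of the need to extend Fubini (and the translation-invariance of integration) to module-valued integrands is apt---the paper uses both freely in Steps~2 and~3 without separate comment, so your caveat is well placed rather than a gap.
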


\begin{proof}
We break the proof into four steps.

\textit{Step 1: $af$ is well-defined.} We first show that the above formula for $(af)(m)$ is independent of the choice of $U$. Thus suppose that $V$ is an open subgroup of $U$. We have
\begin{align*}
\int_{G/V} a_V(g) gf(g^{-1} m) dg
&= \int_{G/U} \int_{U/V} a_V(gh) ghf(h^{-1}g^{-1} m) dh dg \\
&= \int_{G/U} \left[ \int_{U/V} a_V(gh) dh \right] gf(g^{-1} m) dg \\
&=\int_{G/U} a_U(g) gf(g^{-1} m) dg
\end{align*}
In the first step, we used the transitive of push-forward (Proposition~\ref{prop:push-trans}); in the second, we used that $f$ is $U$-equivariant; and in the third, we used that $a_U$ is the push-forward of $a_V$. This verifies independence of $U$. We now define $(af)(m)$ by the formula in the statement of the proposition.

\textit{Step 2: $af$ belongs to $\uHom$.} We have constructed a well-defined function $af \colon M \to N$. We now show that it actually belongs to $\uHom(M,N)$. It is clear that $af$ is $k$-linear. Let $V$ be an open subgroup of $U$ such that $a_U(g)$ is left $V$-invariant. We show that $f$ is $A(V)$-linear. Thus let $b \in A(V)$ and $m \in M$ be given. Let $W$ be an open subgroup of $V$ such that $m$ and $(af)(m)$ are $W$-invariant. For $g \in G$, we have
\begin{displaymath}
f(g^{-1} bm) = f \left( \int_{V/W} b_W(h) g^{-1}h m dh \right) = \int_{V/W} b_W(h) f(g^{-1} hm) dh,
\end{displaymath}
as $f$ is $A(U$)-linear and thus commutes with integrals over $\hat{U}$-sets (this is clear from the definition of integral). We thus have
\begin{align*}
(af)(bm)
&= \int_{G/U} \int_{V/W} a_U(g) b_W(h) gf(g^{-1} hm) dh dg \\
&= \int_{V/W} \int_{G/U} a_U(g) b_W(h) gf(g^{-1} hm) dg dh \\
&= \int_{V/W} \int_{G/U} a_U(g) b_W(h) hgf(g^{-1} m) dg dh \\
&= b \cdot (af)(m)
\end{align*}
where in the first second we changed the order of integration using Fubini's theorem (Corollary~\ref{cor:fubini}), and in the third step we made the change of variables $g \to hg$, and used the left $V$-invariance of $a_U$. We have thus shown that $f$ is $A(V)$-linear, and so $f$ belongs to $\uHom(M,N)$.

\textit{Step 3: module structure.} We now show that the multiplication $(a,f) \mapsto af$ defines an $A$-module structure. It is easy to see that $1 \cdot f =f$, and so it suffices to show that $b(af)=(ba)f$ for $a,b \in A$ and $f \in \uHom(M,N)$. Let $f$ be $A(U)$-linear, and let $V \subset U$ be such that $a_U$ is left $V$-invariant. Then we saw in Step~2 that $af$ is $A(V)$-linear. Thus
\begin{align*}
(b (af))(m)
&= \int_{G/V} b_V(h) h(af)(h^{-1} m) dh \\
&= \int_{G/V} \int_{G/U} b_V(h) a_U(g) hgf(g^{-1} h^{-1} m) dg dh \\
&= \int_{G/V} \int_{G/U} b_V(h) a_U(h^{-1} g) gf(g^{-1} m) dg dh \\
&= \int_{G/U} (ba)_U(g) gf(g^{-1} m) dg
= ((ba)f)(m).
\end{align*}
In the first two steps, we simply expanded the definition; in the third, we made the change of variables $g \to gh$; in the fourth, we changed the order of integration, and used the definition of $(ba)_U$; in the final step, we simply used the definition of $((ba)f)(m)$. We thus have $b(af)=(ba)f$, as required.

\textit{Step 4: strict smoothness.} Finally, we show that $\uHom(M,N)$ is smooth. It is clear from the definition of $af$ that any element of $\Hom_{A(U)}(M,N)$ is strictly $U$-invariant. Thus every element of $\uHom(M,N)$ is strictly $U$-invariant for some $U$, as required.
\end{proof}

\begin{example} \label{ex:uhom}
Let $X$ be a finitary $G$-set and let $M$ be a smooth $A$-module. By Corollary~\ref{cor:schwartz-adj}, $\Hom_{A(U)}(X, M)$ is naturally identified with the space of $U$-equivariant functions $X \to M$. We thus have a natural identification $\uHom(\cC(X), M)=\cC(X,M)$. In particular, $\cC(X,M)$ carries a natural $A$-module structure. It is not hard to see that the integration map $\cC(X,M) \to M$ is $A$-linear.
\end{example}

\subsection{Adjunction}

We next establish the usual adjunction between tensor and Hom.

\begin{proposition} \label{prop:ten-hom-unit}
Let $M$ and $N$ be smooth $A$-modules.
\begin{enumerate}
\item Then there is a unique map of $A$-modules
\begin{displaymath}
\epsilon \colon M \uotimes \uHom(M, N) \to N
\end{displaymath}
satisfying $\epsilon(m \uotimes f)=f(m)$ for $m \in M$ and $f \in \uHom(M,N)$.
\item Then there is a unique map of $A$-modules
\begin{displaymath}
\eta \colon N \to \uHom(M, M \uotimes N)
\end{displaymath}
satisfying $\eta(n)(m)=m \uotimes n$ for $m \in M$ and $n \in N$.
\end{enumerate}
\end{proposition}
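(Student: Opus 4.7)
The plan is to derive both maps from universal properties: $\epsilon$ will come from the universal property of the tensor product applied to an evaluation-style strongly bilinear map, while $\eta$ will be constructed directly and then verified to be $A$-linear by invoking the strong bilinearity of the universal map into $M \uotimes N$.

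For part (a), I would define
\begin{displaymath}
q \colon M \times \uHom(M,N) \to N, \qquad q(m,f) = f(m),
\end{displaymath}
and show $q$ is strongly bilinear. The $k$-bilinearity is immediate, and $G$-equivariance follows from the explicit formula $(g_0 f)(m) = g_0 f(g_0^{-1} m)$ obtained by specializing the $A$-action on $\uHom$ to $a = g_0 \in G$. For strong linearity, I would use the criterion of Proposition~\ref{prop:strong-lin}(b). Fixing $f$ with $f$ being $A(V)$-linear, the fact that $f \in \uHom(M,N)^V$ (which uses $M^{sV} = M^V$, i.e.\ Proposition~\ref{prop:sinv}) together with $A(V)$-linearity of $f$ gives $q(am, f) = f(am) = af(m) = aq(m,f)$ for $a \in A(V)$. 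Fixing $m \in M^U$, the key point is that for $a \in A(U)$ and $f$ that is $A(V)$-linear with $V \subseteq U$, the integral defining $(af)(m)$ becomes $\int_{U/V} a_V(g)\, gf(g^{-1}m)\,dg = \int_{U/V} a_V(g)\, gf(m)\,dg = a \cdot f(m)$, since $g^{-1}m = m$ for $g \in U$. The universal property of $M \uotimes \uHom(M,N)$ then yields the required $A$-linear $\epsilon$, and uniqueness is automatic since the pure tensors generate by Proposition~\ref{prop:pure-gen}.

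For part (b), I would define $\eta$ by the prescribed formula $\eta(n)(m) = m \uotimes n$. First, to see $\eta(n)$ lies in $\uHom(M, M\uotimes N)$, choose an open subgroup $U$ with $n \in N^U$; then strong linearity in the first variable of the universal map $M \times N \to M \uotimes N$ (in the form of Proposition~\ref{prop:strong-lin}(b)) says that for $a \in A(U)$ and $m \in M$ we have $am \uotimes n = a(m \uotimes n)$, which is precisely $A(U)$-linearity of $\eta(n)$. Next, to verify $\eta$ is $A$-linear, I would compute $(a\eta(n))(m)$ using the definition of the $A$-action on $\uHom$ and compare it with $\eta(an)(m) = m \uotimes an$. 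The first equals $\int_{G/U} a_U(g)\, g(g^{-1}m \uotimes n)\,dg = \int_{G/U} a_U(g)\,(m \uotimes gn)\,dg$, and using strong linearity in the second variable of the universal map together with the integral representation $an = \int_{G/U} a_U(g)\,gn\,dg$ from \S\ref{ss:modint}, the second equals $m \uotimes \int_{G/U} a_U(g)\,gn\,dg$, which matches. Uniqueness is immediate because an $A$-linear map $N \to \uHom(M, M\uotimes N)$ is a function specified pointwise by its values, and the formula determines those values on every $n$ and every $m$.

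The main obstacle is bookkeeping around the definition of the $A$-module structure on $\uHom$: every computation must be done at a level of invariance where the integrals defining the action actually collapse to recognizable quantities. In particular, the identity $(af)(m) = a\cdot f(m)$ for $a \in A(U)$ and $m \in M^U$ is the technical heart of part (a), and is precisely what makes evaluation strongly bilinear; once this is in hand, part (b) is essentially a dual computation using strong linearity of the universal bilinear map.
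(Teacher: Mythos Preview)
Your proposal is correct and follows essentially the same route as the paper: for (a) you verify that evaluation $q(m,f)=f(m)$ is strongly bilinear via Proposition~\ref{prop:strong-lin}(b), treating the two variables exactly as the paper does, and for (b) you build $\eta$ pointwise and check $A$-linearity by the same integral computation using strong bilinearity of the universal map. One cosmetic remark: the fact that an $A(V)$-linear $f$ lies in $\uHom(M,N)^V$ follows directly from the construction of the module structure on $\uHom$ (Step~4 of that proof shows such $f$ is strictly $V$-invariant, hence $V$-invariant by Proposition~\ref{prop:strict-smooth}); you do not really need the full equality $M^{sV}=M^V$ of Proposition~\ref{prop:sinv} here.
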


\begin{proof}
(a) Consider the function
\begin{displaymath}
q \colon M \times \uHom(M,N) \to N, \qquad q(m,f)=f(m).
\end{displaymath}
It suffices to show that $q$ is strongly bilinear, for then the existence and uniqueness of $\epsilon$ will follow from the universal property of the tensor product. It is clear that $q$ is $k$-bilinear and $G$-equivariant. We show that $q$ is strongly linear in each argument. First, suppose that $f \in \uHom(M,N)$ is given, and let $U$ be an open subgroup such that $f$ is $A(U)$-linear. Then for any $a \in A(U)$ and $m \in M$, we have
\begin{displaymath}
q(am,f)=\phi(am)=a f(m)=aq(m,f).
\end{displaymath}
Thus $q$ is strongly linear in the first argument by Proposition~\ref{prop:strong-lin}. Next suppose that $m \in M$ is given, and let $U$ be an open subgroup such that $m$ is $U$-invariant. Let $a \in A(U)$ and $f \in \uHom(M,N)$, and let $V \subset U$ be an open subgroup such that $f$ is $A(V)$-linear. Then
\begin{align*}
q(m,af) &= (af)(m) = \int_{U/V} a_V(g) gf(g^{-1} m) dg \\
&= \int_{U/V} a_V(g) gf(m) dg = af(m)=aq(m,f).
\end{align*}
In the third step, we used the $U$-invariance of $m$. We thus see that $q$ is strongly linear in its second argument by Proposition~\ref{prop:strong-lin}.

(b) For $n \in N$, let $f_n \colon M \to M \uotimes N$ be the function given by $f_n(m)=m \uotimes n$. By Proposition~\ref{prop:strong-lin}, there is an open subgroup $U$, depending only on $n$, such that $f_n(am)=af_n(m)$ for $a \in A(U)$ and $m \in M$. We thus see that $f_n$ belongs to $\uHom(M,M \uotimes N)$. We thus have a well-defined function $\eta$ by $\eta(n)=f_n$.

We now show that $\eta$ is $A$-linear. We thus must show $f_{an}(m)=(af_n)(m)$ for $a \in A$, $m \in M$, and $n \in N$. Let $a$, $m$, and $n$ be given. Let $U$ be an open subgroup such that $f_n$ is $A(U)$-linear. Then
\begin{align*}
(af_n)(m)
&= \int_{G/U} a_U(g) gf_n(g^{-1} m) dg
= \int_{G/U} a_U(g) (m \uotimes gn) dg \\
&= m \uotimes \left( \int_{G/U} a_U(g) gn dg \right) = m \uotimes (an) = f_{an}(m).
\end{align*}
In the third step, we used the strong bilinearity of $\uotimes$. This completes the proof.
\end{proof}

\begin{proposition} \label{prop:ten-hom-adj}
Let $M$ be an $A$-module. Then the functors $M \uotimes -$ and $\uHom(M, -)$ are naturally an adjoint pair: that is, for smooth $A$-modules $X$ and $Y$ we have a natural isomorphism
\begin{displaymath}
\Hom_A(X, \uHom(M,Y)) = \Hom_A(X \uotimes M, Y).
\end{displaymath}
In fact, we have a natural isomorphism
\begin{displaymath}
\uHom(X, \uHom(M,Y)) = \uHom(X \uotimes M, Y).
\end{displaymath}
of $A$-modules.
\end{proposition}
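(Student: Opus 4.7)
The plan is to apply the standard unit--counit construction of an adjunction, using the unit $\eta$ and counit $\epsilon$ from Proposition~\ref{prop:ten-hom-unit}. Define
\begin{displaymath}
\Phi\colon \Hom_A(X\uotimes M, Y) \to \Hom_A(X, \uHom(M,Y))
\end{displaymath}
by $\Phi(f)(x)(m) = f(x\uotimes m)$, and
\begin{displaymath}
\Psi\colon \Hom_A(X, \uHom(M,Y)) \to \Hom_A(X\uotimes M, Y)
\end{displaymath}
by $\Psi(g)(x\uotimes m) = g(x)(m)$. To see that $\Phi$ is well-defined, note that for $f$ as above and fixed $x\in X$, the map $m\mapsto f(x\uotimes m)$ is a partial specialization of a strongly bilinear map, so by Proposition~\ref{prop:strong-lin} it is $A(U)$-linear for some open $U$ depending on $x$; hence it lies in $\uHom(M,Y)$. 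One then checks directly that $x\mapsto \Phi(f)(x)$ is $A$-linear using the explicit formula for the $A$-action on $\uHom$. For $\Psi$, the function $(x,m)\mapsto g(x)(m)$ is $k$-bilinear and $G$-equivariant (the latter from $A$-linearity of $g$ combined with the definition of the $A$-action on $\uHom(M,Y)$), and strong bilinearity follows from the local $A$-linearity encoded in $\uHom$, so by the universal property of $X\uotimes M$ the map descends to an $A$-linear map.

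The two constructions are tautologically inverse on the level of pure tensors and point values: $\Psi(\Phi(f))(x\uotimes m) = \Phi(f)(x)(m) = f(x\uotimes m)$, and equality on all of $X\uotimes M$ follows because the pure tensors generate as an $A$-module (Proposition~\ref{prop:pure-gen}) and both maps are $A$-linear; conversely, $\Phi(\Psi(g))(x)$ is the map $m\mapsto \Psi(g)(x\uotimes m) = g(x)(m)$, which is $g(x)$ itself. This gives the Hom-set isomorphism. For the internal statement, I would observe that the \emph{same} formulas $\Phi$ and $\Psi$ define mutually inverse bijections
\begin{displaymath}
\Hom_{A(U)}(X\uotimes M, Y) \xrightarrow{\sim} \Hom_{A(U)}(X, \uHom(M,Y))
\end{displaymath}
for every open subgroup $U$, compatibly with restriction as $U$ shrinks. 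Taking the union over $U$ yields a bijection on the underlying sets of the two $\uHom$ modules. It remains to verify this bijection is $A$-linear, which by Corollary~\ref{cor:avg-homo} reduces to checking $G$-equivariance and compatibility with averaging operators; the former is immediate from the formulas, while the latter follows from the construction of the $A$-action on $\uHom$ (where averaging commutes with pointwise evaluation up to the $G$-twist already built into the definition) together with the strong bilinearity of $\uotimes$.

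The main obstacle will be the careful bookkeeping of local $A$-linearity at each step: confirming that $\Phi(f)(x)$ truly lies in $\uHom(M,Y)$ rather than merely $\Hom_k(M,Y)$, and that $(x,m)\mapsto g(x)(m)$ is strongly bilinear rather than just $k$-bilinear and $G$-equivariant. Both reductions require locating a single open subgroup that simultaneously controls the relevant $A$-actions; this is precisely the local uniformity encoded in the definition of $\uHom$ and in Proposition~\ref{prop:strong-lin}, so the verification is essentially mechanical once those uniform bounds are tracked.
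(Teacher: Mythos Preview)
Your approach is correct and is essentially the same as the paper's: the paper simply says that the maps $\eta$ and $\epsilon$ from Proposition~\ref{prop:ten-hom-unit} serve as the unit and co-unit of the adjunction and leaves the verification to the reader, while you have written out the induced bijections $\Phi$ and $\Psi$ explicitly and sketched the required checks. Your treatment of the internal $\uHom$ statement by passing to open subgroups $U$ and taking the union is exactly what the paper does as well.
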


\begin{proof}
The maps constructed in Proposition~\ref{prop:ten-hom-unit} provide the unit and co-unit for the adjunction. We leave the details to the reader. The second statement follows by passing to open subgroups.
\end{proof}

\begin{corollary} \label{cor:ten-hom-adj}
Let $M$ and $N$ be smooth $A$-modules, with $M$ rigid. Then we have a natural isomorphism
\begin{displaymath}
\uHom(M,N) = M^{\vee} \uotimes N
\end{displaymath}
\end{corollary}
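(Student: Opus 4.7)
The plan is to deduce the corollary from the adjunction in Proposition~\ref{prop:ten-hom-adj} together with the standard duality adjunction available for any rigid object in a symmetric tensor category, and then invoke Yoneda.

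First, I would record the following general fact, which holds in any symmetric tensor category: if $M$ is rigid with dual $M^\vee$, then the functor $- \uotimes M$ is left adjoint to $- \uotimes M^\vee$. The unit and counit are built from the coevaluation $\cv_M \colon \bbone \to M \uotimes M^\vee$ and the evaluation $\ev_M \colon M^\vee \uotimes M \to \bbone$ in the usual way (tensored with the identity on the relevant object); the triangle identities are precisely the defining conditions of a duality. Consequently, for any smooth $A$-modules $X$ and $Y$ there is a natural isomorphism
\begin{displaymath}
\Hom_A(X \uotimes M, Y) \;\cong\; \Hom_A(X, Y \uotimes M^\vee) \;\cong\; \Hom_A(X, M^\vee \uotimes Y),
\end{displaymath}
where the last identification uses the symmetry constraint of $\uotimes$.

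Next, I would combine this with Proposition~\ref{prop:ten-hom-adj}, which supplies a natural isomorphism $\Hom_A(X \uotimes M, Y) \cong \Hom_A(X, \uHom(M, Y))$. Putting the two together gives a natural isomorphism
\begin{displaymath}
\Hom_A(X, M^\vee \uotimes Y) \;\cong\; \Hom_A(X, \uHom(M, Y))
\end{displaymath}
of functors in $X \in \uRep(G)$. By the Yoneda lemma, the representing objects are canonically isomorphic, giving an isomorphism $M^\vee \uotimes Y \cong \uHom(M, Y)$ as objects of $\uRep(G)$.

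Finally, I would upgrade this to an isomorphism of $A$-modules (i.e., at the level of internal Hom, reflecting all subgroup-strict invariants). The cleanest way is to rerun the Yoneda argument with the $\uHom$ version of the adjunction provided in the second part of Proposition~\ref{prop:ten-hom-adj}; equivalently, for every open subgroup $U \subset G$ one applies the same argument inside $\uRep(U)$, using Proposition~\ref{prop:res-tensor} (restriction is a tensor functor, so $M^\vee$ remains dual to $M$ after restriction) to see that the isomorphism just constructed is compatible with restriction, and hence identifies strict $U$-invariants on both sides. I do not expect any serious obstacle: the adjunction machinery from Proposition~\ref{prop:ten-hom-adj} and the formal properties of rigidity do all the work, and the only mild care required is to write down the duality adjunction $-\uotimes M \dashv -\uotimes M^\vee$ explicitly enough that its naturality (both in the module arguments and in the strict-equivariance enrichment) is manifest.
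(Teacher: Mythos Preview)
Your proposal is correct and follows essentially the same approach as the paper: combine the tensor--Hom adjunction of Proposition~\ref{prop:ten-hom-adj} with the rigidity adjunction $-\uotimes M \dashv -\uotimes M^\vee$, then apply Yoneda. Your final ``upgrade'' paragraph is unnecessary, though: the Yoneda argument already takes place in $\uRep(G)$, so the resulting isomorphism of representing objects is automatically an isomorphism of smooth $A$-modules.
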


\begin{proof}
For any smooth $A$-module $E$, we have isomorphisms
\begin{displaymath}
\Hom_A(E, \uHom(M,N))=\Hom_A(E \uotimes M, N) = \Hom_A(E, M^{\vee} \uotimes N).
\end{displaymath}
The first isomorphism comes from Proposition~\ref{prop:ten-hom-adj}, while the second is the adjunction one always has with a rigid object \cite[Proposition~2.10.8]{Etingof}. The result now follows from Yoneda's lemma.
\end{proof}

\begin{example} \label{ex:ten-schwartz}
Let $X$ be a finitary $G$-set and let $M$ be a smooth $A$-module. Since $\cC(X)$ is rigid and self-dual (Corollary~\ref{cor:schwartz-rigid}), we have
\begin{displaymath}
\uHom(\cC(X), M)=\cC(X) \uotimes M
\end{displaymath}
by adjunction (Corollary~\ref{cor:ten-hom-adj}). Thus, by Example~\ref{ex:uhom}, we have a natural isomorphism
\begin{displaymath}
\cC(X) \uotimes M=\cC(X,M).
\end{displaymath}
Let $I \colon \cC(X,M) \to \cC(M)$ and $J \colon \cC(X) \to k$ be the integration maps. One can show that, under the above identification, we have $I=J \uotimes \id$. Using this, one can easily transfer properties of integrals of $k$-valued functions to integrals of $M$-valued functions. In particular, we obtain a good theory of push-forwards for $M$-valued functions.
\end{example}

\section{(Quasi-)regular measures} \label{s:regular}

\subsection{Overview}

So far, assuming $\mu$ is normal, we have constructed an abelian tensor category $\uRep(G)$. In this section, we establish various finer results about $\uRep(G)$ assuming $\mu$ is quasi-regular or regular. In particular, we show that the category of finite length objects in $\uRep(G)$ forms a rigid tensor category under some hypotheses.

Throughout this section, $G$ denotes a first-countable pro-oligomorphic group and $\mu$ a $k$-valued measure. The measure $\mu$ will always be regular or quasi-regular (see Definition~\ref{ss:regular}), and $k$ will always be a field.

\subsection{The main theorem} \label{ss:regss}

Before stating our theorem, we introduce some terminology:

\begin{definition} \label{defn:pre-tan}
Let $\cC$ be a $k$-linear tensor category, with $k$ a field. We say that $\cC$ is \defn{pre-Tannakian} if the following conditions hold:
\begin{enumerate}
\item $\cC$ is abelian and all objects have finite length.
\item $\Hom_k(M,N)$ is finite dimensional for all $M,N \in \cC$.
\item The unit object $\bbone$ is simple, and $\End(\bbone)=k$.
\item $\cC$ is rigid, i.e., all objects are rigid.
\end{enumerate}
We say that $\cC$ is \defn{locally pre-Tannakian} if every object of $\cC$ is the sum of its finite length subobjects, and the category $\cC^{\rf}$ of finite length objects is pre-Tannakian.
\end{definition}

We now come to our the main result of \S \ref{s:regular}:

\begin{theorem} \label{thm:regss}
Suppose that $k$ is a field, and $\mu$ is quasi-regular and satisfies condition~(P) from Definition~\ref{defn:P}. Then $\uRep(G)$ is locally pre-Tannakian. Moreover, if $\mu$ is regular then it is semi-simple.
\end{theorem}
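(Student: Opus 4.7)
The plan is to first establish semi-simplicity in the regular case, and then deduce the locally pre-Tannakian property in the quasi-regular case by restriction to an open subgroup where $\mu$ becomes regular. Two recurring ingredients throughout are: the endomorphism algebras $\End_A(\cC(X)) \cong \Mat_X^G$ are finite-dimensional over $k$ (Proposition~\ref{prop:normal-full}), and under hypothesis~(P) their nilpotent elements are traceless (Corollary~\ref{cor:tr-nilp}).

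\textbf{Regular case.} First I would show that each Schwartz space $\cC(G/U)$ is projective in $\uRep(G)$. By Corollary~\ref{cor:sinv}, this reduces to exactness of the invariants functor $M \mapsto M^U$: given a surjection $N \twoheadrightarrow M$ and $m \in M^U$, any lift $n \in N^V$ with $V \subset U$ open averages to $\mu(U/V)^{-1}\avg_{U/V}(n) \in N^U$, where $\mu(U/V)$ is a unit by regularity together with the multiplicativity axiom applied to $G/V \to G/U$. Next, Proposition~\ref{prop:semisimple-alg} gives that $\Mat_X^G$ is semi-simple, so primitive idempotents decompose $\cC(X) = \bigoplus_i P_i^{\oplus n_i}$, where each $P_i$ is projective with $\End(P_i)$ a division algebra. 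A standard argument using projectivity then shows each $P_i$ is simple: any proper nonzero subobject $N \subsetneq P_i$ would, combined with projectivity of $P_i$, either yield a nonzero noninvertible endomorphism of $P_i$ (contradicting that $\End(P_i)$ is a division algebra) or force $\Hom(P_i, N) = 0$, which upon iteration against a chain of subobjects leads to a contradiction. Since every finitely generated smooth $A$-module is a quotient of some $\cC(X)$ (Proposition~\ref{prop:schwartz-quot}) and quotients of semi-simple projectives are semi-simple, the whole category $\uRep(G)$ is semi-simple.

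\textbf{Quasi-regular case.} By Proposition~\ref{prop:regular-sub}, $\mu\vert_U$ is regular for some open $U \subset G$, so $\uRep(U)$ is semi-simple by the regular case. For any open $V \subset G$, the set $G/V$ is a finitary $U$-set, so $\cC(G/V)\vert_U$ has finite length in $\uRep(U)$; since the restriction functor is faithful and exact, this bounds the length of $\cC(G/V)$ in $\uRep(G)$, and so $\cC(G/V)$ has finite length. Every cyclic smooth $A$-module is a quotient of such a $\cC(G/V)$, hence of finite length, and any $M \in \uRep(G)$ is the sum of its cyclic submodules. Thus every object is the union of its finite-length subobjects.

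\textbf{Remaining axioms and main obstacle.} The unit $\bbone = \cC(\bone)$ has $\End(\bbone) = \Mat_{\bone}^G = k$ and is simple; Hom spaces between finite-length objects are finite-dimensional because each embeds into some $\Mat_{Y,X}^G$. The main difficulty will be establishing rigidity of every finite-length object in the quasi-regular (non-semi-simple) case, since subquotients of rigid objects are not automatically rigid in a general tensor category. The plan is to present a finite-length object $M$ as the cokernel of a map of Schwartz spaces, use self-duality of Schwartz spaces (Corollary~\ref{cor:schwartz-rigid}) together with exactness of $\cC(X) \uotimes -$ to dualize the presentation, and thereby construct $M^\vee$ explicitly; equivalently one realizes $\uRep(G)^{\rf}$ as the abelian envelope of $\uPerm(G)$ in the sense of \cite{CEAH} (compare Theorem~\ref{thm:abenv}), which packages rigidity into the construction.
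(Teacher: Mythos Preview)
Your overall architecture matches the paper's: establish semi-simplicity for regular $\mu$, restrict to an open subgroup for quasi-regular $\mu$, then check the pre-Tannakian axioms with rigidity as the hard part. Two steps need repair.

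\textbf{Simplicity of the $P_i$.} In the regular case you decompose $\cC(X)=\bigoplus P_i$ using primitive idempotents of the semi-simple ring $\Mat_X^G$, and assert each $P_i$ is simple because $\End(P_i)$ is a division algebra and $P_i$ is projective. But projectivity plus division endomorphism ring does \emph{not} by itself force simplicity in a general Grothendieck category: your proposed dichotomy reaches the branch ``$\Hom(P_i,N)=0$'' and then appeals to an unspecified ``iteration against a chain of subobjects,'' which has no evident termination since you have not yet established finite length. The paper's fix (Lemma~\ref{lem:regss-1}) is to use the full strength of the hypothesis that \emph{every} $\End(\cC(Z))$ is semi-simple: given a nonzero finitely generated $N\subset P_i$, cover $N$ by some $\cC(Y)$ and work inside $E=\cC(X\amalg Y)$. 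Semi-simplicity of $\End(E)$ forces the image of any $a\in\End(E)$ to be a summand (since $aR=eR$ for an idempotent $e$ when $R$ is semi-simple), so $N$ is a summand of $E$ and hence of $P_i$; then finiteness of $\dim_k\End(E)$ bounds the length of $E$, so all subobjects are finitely generated and $P_i$ is simple. Your projectivity argument for $\cC(G/U)$ is correct (and appears later as Proposition~\ref{prop:schwartz-proj}) but is not what makes the simplicity step go through.

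\textbf{Rigidity.} Your plan to dualize a presentation $\cC(Y)\to\cC(X)\to M\to 0$ is the right idea and is what the paper carries out, via the functor $\bD(-)=\uHom(-,\bbone)$. But the step you elide is why $\bD$ and $\uotimes$ are \emph{exact} in both variables: this is not automatic, and the paper obtains it by restricting to $\uRep(U)$, which is semi-simple, so every additive functor there is exact, and formation of $\uotimes$ and $\uHom$ commutes with restriction (Proposition~\ref{prop:res-tensor}). With exactness in hand one shows $M\cong\bD\bD(M)$ and $\bD(M)\uotimes\bD(N)\cong\bD(M\uotimes N)$ by comparing Schwartz presentations (Lemmas~\ref{lem:qr-rigid-2} and~\ref{lem:qr-rigid-3}), and rigidity follows. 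Your alternative of invoking Theorem~\ref{thm:abenv} is circular: that theorem assumes $\uRep^{\rf}(G)$ is already pre-Tannakian, and its proof uses the present theorem.
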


It is possible to weaken the hypotheses of the theorem in various ways. For example, (P) is only used to ensure nilpotent matrices have trace~0, so one could use this condition instead. We note that in the regular case, the theorem implies that $\uRep^{\rf}(G)$ is the Karoubian envelope of $\uPerm(G)$.

We assume $k$ is a field and $\mu$ is quasi-regular and satisfies~(P) for the remainder of \S \ref{ss:regss}. We break the proof into a number of lemmas.

\begin{lemma} \label{lem:regss-1}
Let $\cC$ be a $k$-linear Grothendieck abelian category such that every object is the union of its finitely generated subobjects. Let $\Sigma$ be class of finitely generated objects of $\cC$ such that
\begin{enumerate}
\item $\Sigma$ is closed under finite direct sums.
\item Every finitely generated object of $\cC$ is a quotient of an object in $\Sigma$.
\item $\End(M)$ is a finite dimensional semi-simple $k$-algebra for every $M \in \Sigma$.
\end{enumerate}
Then $\cC$ is a semi-simple category, and $\End(M)$ is finite dimensional over $k$ for all finite length objects $M$.
\end{lemma}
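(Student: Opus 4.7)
My plan is to prove the lemma in four stages: (i) for every $P \in \Sigma$, any finitely generated subobject of $P$ is a direct summand; (ii) every $P \in \Sigma$ is a finite direct sum of simple objects; (iii) every finitely generated object of $\cC$ is semisimple of finite length; and (iv) arbitrary objects of $\cC$ are semisimple, with the endomorphism algebra of any finite length object being finite-dimensional over $k$. Each stage propagates semisimplicity outward from $\Sigma$.

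The main work is stage (i). Fix $P \in \Sigma$ and a finitely generated subobject $N \subset P$ with inclusion $\iota \colon N \hookrightarrow P$. By hypothesis (b), pick $R \in \Sigma$ and a surjection $q \colon R \twoheadrightarrow N$; extend $q$ by zero on the $P$ factor to get a surjection $\tilde q \colon P \oplus R \twoheadrightarrow N$. Since $\Sigma$ is closed under direct sums, $A = \End(P \oplus R)$ is finite-dimensional semisimple. The subset $I \subset A$ consisting of those endomorphisms whose image lies in $\iota(N)$ is a right ideal of $A$, so by semisimplicity $I = eA$ for some idempotent $e \in A$, characterized by $eh = h$ for all $h \in I$. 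Since $e \in I$, we can factor $e = \iota \tilde e$ with $\tilde e \colon P \oplus R \to N$. Applying $eh = h$ to $h = \iota \tilde q$ and cancelling the monomorphism $\iota$ on the left gives $\tilde e \iota \tilde q = \tilde q$; cancelling the epimorphism $\tilde q$ on the right then yields $\tilde e \iota = \id_N$. Thus $\tilde e$ retracts $\iota$, so $N$ is a direct summand of $P \oplus R$, and intersecting the complement with $P$ makes $N$ a direct summand of $P$ as well.

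For stage (ii), I would use Wedderburn to decompose $\End(P) \cong \prod_i M_{n_i}(D_i)$, lift the matrix units to obtain $P \cong \bigoplus_i L_i^{n_i}$ with $\End(L_i) = D_i$ a division $k$-algebra, and then argue each $L_i$ is simple: if $0 \neq N \subsetneq L_i$, the hypothesis that every object is a union of its finitely generated subobjects produces a nonzero finitely generated $N_0 \subsetneq L_i$, and stage (i) applied to $P$ forces $N_0$ to be a direct summand of $L_i$, yielding a nontrivial idempotent in the division ring $D_i$, a contradiction. Stage (iii) is then immediate: any finitely generated $M$ is a quotient of some $P \in \Sigma$, hence of a finite direct sum of simples, hence semisimple of finite length; each simple summand $L$ of $M$ is itself a summand of some $P' \in \Sigma$, so $\End(L)$ is a block of $\End(P')$ and hence finite-dimensional over $k$. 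Stage (iv) follows by writing an arbitrary object as the sum of its finitely generated (hence semisimple) subobjects and invoking the standard Zorn-type argument that a sum of simple subobjects in a Grothendieck abelian category realizes as a direct sum; the finite-dimensionality claim then reduces to Schur's lemma applied to a decomposition $M = \bigoplus_i L_i^{n_i}$ of a finite length object.

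I expect the chief subtlety to lie in stage (i), specifically in the handedness bookkeeping: the right ideal $I$ is generated by an idempotent $e$ acting as a \emph{left} identity on $I$, so the surjection $\tilde q$ must be composed on the correct side in order to cancel it as an epimorphism and conclude $\tilde e \iota = \id_N$. Beyond this, the argument is essentially a direct combination of Wedderburn's theorem with the two axioms on $\Sigma$; hypothesis (b), which produces the auxiliary surjection $\tilde q$, is precisely what transforms the abstract semisimplicity of $\End(P \oplus R)$ into the concrete splitting of $\iota$.
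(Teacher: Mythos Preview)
Your proof is correct and follows essentially the same approach as the paper: the key step in both is your stage~(i), where one passes to $P \oplus R$ for some $R \in \Sigma$ surjecting onto $N$ and uses semisimplicity of $\End(P \oplus R)$ to produce a splitting idempotent (the paper phrases this as ``the image of any endomorphism of an object in $\Sigma$ is a direct summand''). The only cosmetic difference is that for semisimplicity of each $P \in \Sigma$ you invoke Wedderburn's theorem to decompose $P$, whereas the paper instead bounds the length of any strict chain of finitely generated subobjects directly by $\dim_k \End(P)$ via the orthogonal idempotents such a chain produces.
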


\begin{proof}
Let $E$ be an object of $\Sigma$. Put $R=\End(E)$, and consider a subobject of $E$ of the form $aE$ with $a \in R$. Since $R$ is semi-simple, the right ideal $aR$ is generated by an idempotent $e$. Thus we have $a=ex$ and $e=ay$ for $x,y \in R$. It follows that $eE \subset aE \subset eE$, and so $aE=eE$. In particular, we see that $aE$ is a summand of $E$.

Now suppose $M$ is a finitely generated subobject of $E$. Choose a surjection $f \colon F \to M$ with $F \in \Sigma$. Let $a \in \End(E \oplus F)$ be the endomorphism defined by $a(x)=f(x)$ for $x \in F$ and $a(y)=0$ for $y \in E$. Then $M=a (E \oplus F)$. By the previous paragraph, $M$ is a summand of $E \oplus F$, and thus of $E$. We have thus shown that every finitely generated subobject of $E$ is a summand.

Suppose now that $0 \subsetneq M_1 \subsetneq \cdots \subsetneq M_n \subseteq E$ is a chain of finitely generated subobjects. We can then write $E=N_1 \oplus \cdots \oplus N_{n+1}$ so that $M_i=N_1 \oplus \cdots \oplus N_i$; note that $N_1, \ldots, N_n$ are non-zero. By considering the projectors onto the $N_i$'s, we see that $\dim(\End(E)) \ge n$. Since $\End(E)$ is finite dimensional, it follows that $E$ has finite length. Thus every subobject is finitely generated, and therefore a summand, and so $E$ is semi-simple. Since every object of $\cC$ is a quotient of a sum of objects in $\Sigma$, it follows that $\cC$ is semi-simple.

As for the final statement, suppose that $M$ is a finite length object of $\cC$. Then, by assumption, $M$ is a quotient of some $E \in \Sigma$. By semi-simplicity, $M$ is actually a summand of $E$. Thus $\End(M)$ is a summand of $\End(E)$, and hence finite dimensional.
\end{proof}

\begin{lemma} \label{lem:regss-2}
If $\mu$ is regular then $\uRep(G)$ is semi-simple, and $\End(M)$ is finite dimensional if $M$ has finite length.
\end{lemma}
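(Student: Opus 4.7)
The plan is to apply Lemma~\ref{lem:regss-1} to the Grothendieck abelian category $\uRep(G)$, taking $\Sigma$ to be the class of Schwartz spaces $\{\cC(X) : X \text{ a finitary } G\text{-set}\}$. Note first that every object of $\uRep(G)$ is the union of its finitely generated subobjects, since any single element generates a cyclic $A$-submodule. Note also that regularity implies normality (via quasi-regularity), so all results of Sections~\ref{s:normal}--\ref{s:genten} are available.

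The first two hypotheses of Lemma~\ref{lem:regss-1} are easy. Condition~(a) is the identity $\cC(X) \oplus \cC(Y) = \cC(X \amalg Y)$. Condition~(b), that every finitely generated object is a quotient of some $\cC(X)$ with $X$ finitary, is exactly Proposition~\ref{prop:schwartz-quot}.

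The main work goes into condition~(c): for each finitary $G$-set $X$, the algebra $\End_A(\cC(X))$ must be finite-dimensional and semi-simple over $k$. By Proposition~\ref{prop:normal-full}, the functor $\Phi \colon \uPerm(G) \to \uRep(G)$ is fully faithful, so $\End_A(\cC(X)) = \Mat_X^G$, which is a free $k$-module of finite rank equal to the number of $G$-orbits on $X \times X$. To get semi-simplicity I would invoke Proposition~\ref{prop:semisimple-alg}, whose two hypotheses I would verify as follows. First, every nilpotent element of $\Mat_X^G$ has trace zero: this is Corollary~\ref{cor:tr-nilp}, which applies because $k$ is a field (hence reduced) and $\mu$ satisfies~(P) by hypothesis. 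Second, $\mu(Z) \ne 0$ for every $G$-orbit $Z$ on $X \times X$: since each such $Z$ is a transitive $G$-set and $\mu$ is regular, $\mu(Z)$ is a unit of $k$.

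The principal obstacle is the trace-vanishing condition, since this is what ultimately rests on the deep machinery of characteristic series in Section~\ref{s:matrix} and on property~(P). Once Lemma~\ref{lem:regss-1} is applied, both conclusions of the lemma fall out at once: $\uRep(G)$ is semi-simple, and $\End_A(M)$ is finite-dimensional for every finite length object $M$.
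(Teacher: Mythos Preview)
Your proof is correct and follows essentially the same approach as the paper: apply Lemma~\ref{lem:regss-1} with $\Sigma$ the class of Schwartz spaces $\cC(X)$, verifying condition~(c) via $\End_A(\cC(X))=\Mat_X^G$ (Proposition~\ref{prop:normal-full}) and the semi-simplicity criterion of Proposition~\ref{prop:semisimple-alg}. You have simply spelled out in more detail the verification of the hypotheses of Proposition~\ref{prop:semisimple-alg}, which the paper leaves implicit.
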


\begin{proof}
We apply Lemma~\ref{lem:regss-1} with $\Sigma$ the class of all modules of the form $\cC(X)$ with $X$ a finitary $G$-set. We must verify the three conditions of the lemma. Condition~(a) is clear, while (b) follows from Proposition~\ref{prop:schwartz-quot}. As for (c), we have $\End_A(\cC(X))=\Mat_X^G$ by Proposition~\ref{prop:normal-full}, and this algebra is semi-simple by Proposition~\ref{prop:semisimple-alg}. The result follows.
\end{proof}

\begin{lemma} \label{lem:regss-3}
$\uRep(G)$ is locally of finite length, and $\End(M)$ is finite dimensional if $M$ has finite length.
\end{lemma}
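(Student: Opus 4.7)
The plan is to reduce to the regular case handled by Lemma~\ref{lem:regss-2} via restriction to an open subgroup. By hypothesis there is an open subgroup $U \subset G$ with $\mu\vert_U$ regular, and property~(P) is inherited by $U$ (since $\mu\vert_U(\Theta(U))$ is a subring of $\mu(\Theta(G))$), so Lemma~\ref{lem:regss-2} applies: $\uRep(U)$ is semisimple and every finite length object there has finite-dimensional endomorphism algebra. The restriction functor $\res \colon \uRep(G) \to \uRep(U)$ acts as the identity on underlying $k$-modules, so any strictly increasing chain of $A(G)$-submodules of $M \in \uRep(G)$ restricts to a strictly increasing chain of $A(U)$-submodules of $\res(M)$.

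The key intermediate claim is that every finitely generated $M \in \uRep(G)$ has finite length. By Proposition~\ref{prop:schwartz-quot}, fix a surjection $\cC(X) \twoheadrightarrow M$ with $X$ a finitary $G$-set; by Proposition~\ref{prop:smooth}(c), $X$ is finitary as a $U$-set as well, so $\cC(X)$ is a finitely generated $A(U)$-module whose endomorphism algebra $\End_{A(U)}(\cC(X)) = \Mat_X^U$ has finite $k$-dimension. The argument used in the proof of Lemma~\ref{lem:regss-1} shows that any such object in a semisimple Grothendieck category has finite length (a strict chain of subobjects of length $n$ yields $n$ linearly independent idempotents in the endomorphism algebra). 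Hence $\res(\cC(X))$, and therefore its quotient $\res(M)$, has finite length in $\uRep(U)$, which forces $M$ itself to have finite length in $\uRep(G)$ by the observation of the first paragraph.

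Both assertions of the lemma now fall out. Every $M \in \uRep(G)$ is the directed union of its cyclic submodules $Ax$ for $x \in M$; each $Ax$ is smooth (being a submodule of a smooth module) and finitely generated, hence of finite length by the intermediate claim, so $\uRep(G)$ is locally of finite length. If $M$ has finite length in $\uRep(G)$ then $M$ is in particular finitely generated, so $\res(M)$ has finite length in $\uRep(U)$ by the same argument, and $\End_{A(U)}(\res(M))$ is finite-dimensional by Lemma~\ref{lem:regss-2}. Since $\End_{A(G)}(M)$ is a $k$-linear subspace of $\End_{A(U)}(\res(M))$, it too is finite-dimensional. The main conceptual step is the realization that restriction to $U$ preserves enough structure to transfer the regular-case conclusions back to $G$; once the surjection from a Schwartz space is in hand, everything proceeds by routine manipulation and no individual verification is technically demanding.
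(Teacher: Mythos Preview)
Your proof is correct and follows essentially the same approach as the paper: restrict to an open subgroup $U$ where $\mu\vert_U$ is regular (noting that (P) is inherited), observe that a finitely generated $A(G)$-module is finitely generated as an $A(U)$-module via Proposition~\ref{prop:schwartz-quot}, conclude finite length from Lemma~\ref{lem:regss-2}, and deduce finite-dimensionality of $\End_{A(G)}(M)$ from its containment in $\End_{A(U)}(M)$. You spell out a few more details (the role of the argument in Lemma~\ref{lem:regss-1} and the passage to locally finite length via cyclic submodules), but the substance is the same.
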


\begin{proof}
Let $U$ be an open subgroup such that $\mu \vert_U$ is regular; note that $\mu \vert_U$ still satisfies~(P) (see the discussion following Definition~\ref{defn:P}). If $M$ is a finitely generated smooth $A(G)$-module then it is finitely generated as an $A(U)$-module (this follows easily from Proposition~\ref{prop:schwartz-quot}), and thus finite length as an $A(U)$-module by Lemma~\ref{lem:regss-2}. Hence it is finite length as an $A(G)$-module. Moreover, $\End_{A(G)}(M)$ is a subalgebra of $\End_{A(U)}(M)$, and is thus finite dimensional by Lemma~\ref{lem:regss-2}.
\end{proof}

It now remains to prove that $\uRep^{\rf}(G)$ is pre-Tannakian. Condition~(a) of Definition~\ref{defn:pre-tan} holds tautologically for this category. Condition~(b) follows from the above lemma, since $\Hom(M,N)$ injects into $\End(M \oplus N)$. Also, (c) follows easily from the above results. If $U$ is an open subgroup for which $\mu \vert_U$ is regular then $\bbone \in \uRep(U)$ is semi-simple and has $\End_{A(U)}(\bbone)=k$, and is thus simple; hence $\bbone$ is simple in $\uRep(G)$ as well. It remains to prove condition~(d). We begin with the following lemma (which is actually implied by the theorem):

\begin{lemma}
The functors $\uotimes$ and $\uHom$ on $\uRep(G)$ are exact in both arguments.
\end{lemma}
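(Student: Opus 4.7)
The plan is to reduce everything to the regular case via restriction to an open subgroup, where semisimplicity makes exactness of every additive functor automatic.

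First I would use quasi-regularity of $\mu$ to pick an open subgroup $U \subset G$ with $\mu\vert_U$ regular (this is the definition of quasi-regular, together with Proposition~\ref{prop:regular-sub}). By Lemma~\ref{lem:regss-2}, the category $\uRep_k(U;\mu\vert_U)$ is then semisimple, so every short exact sequence in it splits. Since $\uotimes$ and $\uHom$ are additive in each argument (cocontinuity of $\uotimes$ gives this for tensor, and Hom-in-additive-categories gives it for $\uHom$), and any additive functor preserves split short exact sequences, both $\uotimes$ and $\uHom$ are automatically exact in each argument on $\uRep(U)$.

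Next I would transfer this exactness back to $\uRep(G)$ via the restriction functor $\res \colon \uRep(G) \to \uRep(U)$. Three properties are needed:
\begin{enumerate}
\item[(i)] $\res$ is exact and faithful, since it is the identity on underlying $k$-modules.
\item[(ii)] $\res$ is a tensor functor, which is Proposition~\ref{prop:res-tensor}.
\item[(iii)] $\res$ commutes with $\uHom$, in the sense that $\res(\uHom^G(M,N)) = \uHom^U(\res M, \res N)$ as smooth $A(U)$-modules.
\end{enumerate}
Property (iii) follows from unwinding the definition $\uHom(M,N) = \bigcup_V \Hom_{A(V)}(M,N)$: the two unions (over open $V \subset G$ versus open $V \subset U$) are equal as subsets of $\Hom_k(M,N)$ once one observes that for any open $V \subset G$ the intersection $V \cap U$ is open in $U$ and $A(V \cap U) \subset A(V)$; and the defining convolution formula for the $A$-action on $\uHom$ restricts identically in both cases.

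Given (i)--(iii), take any short exact sequence $0 \to N' \to N \to N'' \to 0$ in $\uRep(G)$ and any $M \in \uRep(G)$. Applying $M \uotimes -$, or $\uHom(M,-)$, or $\uHom(-,N)$, and then restricting to $\uRep(U)$, yields the corresponding functor (in $\uRep(U)$) applied to the restricted sequence. By semisimplicity of $\uRep(U)$, the resulting sequence in $\uRep(U)$ is short exact. Faithful exactness of $\res$ then implies the original sequence in $\uRep(G)$ is short exact, giving the desired exactness in both arguments. The main obstacle, such as it is, lies entirely in verifying (iii); the rest of the argument is formal once semisimplicity is available downstairs.
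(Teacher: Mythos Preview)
Your proposal is correct and follows essentially the same approach as the paper: restrict to an open subgroup $U$ where $\mu\vert_U$ is regular, invoke semisimplicity of $\uRep(U)$ so that additive functors are exact there, and then use that restriction is exact, faithful, and compatible with both $\uotimes$ (Proposition~\ref{prop:res-tensor}) and $\uHom$. The paper treats your property~(iii) as ``obvious'' because the identity $\uHom_{A(G)}(M,N) = \uHom_{A(U)}(M,N)$ was already recorded immediately after the definition of $\uHom$ in \S\ref{ss:gen-hom}.
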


\begin{proof}
Let $U$ be an open subgroup of $G$ such that $\mu \vert_U$ is regular. Then $\uRep(U)$ is semi-simple by Lemma~\ref{lem:regss-2}. It follows that any additive functor on $\uRep(U)$ is exact; in particular, $\uotimes$ and $\uHom$ are exact on $\uRep(U)$. Since formation of $\uotimes$ and $\uHom$ on $\uRep(G)$ are compatible with restriction to $\uRep(U)$ (for $\uotimes$ this is Proposition~\ref{prop:res-tensor}, and for $\uHom$ it is obvious) it follows that they are exact on $\uRep(G)$.
\end{proof}

For an object $M$ of $\uRep(G)$, put $\bD(M)=\uHom(M, \bbone)$. Thus $\bD$ defines an exact contravariant endofunctor of $\uRep(G)$. If $M$ is a rigid object of $\uRep(G)$ with dual $M^{\vee}$ then we have $\bD(M)=M^{\vee}$ by Corollary~\ref{cor:ten-hom-adj}. In particular, if $X$ is a finitary $G$-set then we have a natural isomorphism $\cC(X)=\bD(\cC(X))$ by Corollary~\ref{cor:schwartz-rigid}. We aim to show that whenever $M$ has finite length, $M$ is rigid and $M^{\vee}=\bD(M)$.

\begin{lemma} \label{lem:qr-rigid-2}
If $M$ has finite length then so does $\bD(M)$, and the natural map $M \to \bD(\bD(M))$ is an isomorphism.
\end{lemma}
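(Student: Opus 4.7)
The plan is to reduce everything to a two-step resolution of $M$ by Schwartz spaces and then exploit the self-duality $\bD(\cC(X)) = \cC(X)$ from Corollary~\ref{cor:schwartz-rigid}. First I would fix an open subgroup $U$ of $G$ with $\mu \vert_U$ regular, so that $\uRep(U)$ is semi-simple by Lemma~\ref{lem:regss-2}. The proof of Lemma~\ref{lem:regss-1} shows that each Schwartz space $\cC(X)$ has finite length as an $A(U)$-module, so every $A(U)$-submodule of $\cC(X)$ is itself finite length, and in particular finitely generated. Note that finite length over $A(U)$ automatically implies finite length over $A(G)$, since any chain of $A(G)$-submodules is a fortiori a chain of $A(U)$-submodules.

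Next I would build a two-step presentation of $M$. Since $M$ is finitely generated, Proposition~\ref{prop:schwartz-quot} yields a surjection $\alpha \colon \cC(X) \twoheadrightarrow M$ with $X$ finitary. The kernel $K = \ker(\alpha)$ is an $A(G)$-submodule of $\cC(X)$, hence finite length over $A(U)$ by the previous step, hence finitely generated over $A(U)$ and therefore also over $A(G)$. A second application of Proposition~\ref{prop:schwartz-quot} gives a surjection $\cC(Y) \twoheadrightarrow K$ with $Y$ finitary, yielding an exact sequence
\begin{equation*}
\cC(Y) \xrightarrow{\beta} \cC(X) \xrightarrow{\alpha} M \to 0.
\end{equation*}
Applying the exact contravariant functor $\bD$ (exact by the preceding lemma on exactness of $\uHom$) and using $\bD(\cC(X)) = \cC(X)$, $\bD(\cC(Y)) = \cC(Y)$, I obtain an embedding of $\bD(M)$ into $\cC(X)$, which shows that $\bD(M)$ has finite length.

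For biduality, there is a canonical natural transformation $\eta \colon \id \to \bD \circ \bD$ sending $m \in M$ to the evaluation map $\phi \mapsto \phi(m)$ on $\bD(M)$. Since each $\cC(X)$ is rigid and self-dual, the component $\eta_{\cC(X)}$ is an isomorphism; this can be extracted from the general fact that biduality is an isomorphism on rigid objects, or checked directly against the tautological pairing. Applying the exact covariant functor $\bD \circ \bD$ to the two-step presentation and comparing with the original via naturality of $\eta$ produces a commutative diagram with right-exact rows whose first two vertical maps are isomorphisms; the five lemma then forces $\eta_M \colon M \to \bD(\bD(M))$ to be an isomorphism.

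The step I expect to be most delicate is constructing the two-step presentation—specifically, the assertion that $K = \ker(\alpha)$ is finitely generated over $A(G)$. This is the one place where quasi-regularity is indispensable: it supplies, via the semi-simplicity of $\uRep(U)$, a Noetherian-like property that is simply not available in general, and without which submodules of Schwartz spaces can easily fail to be finitely generated.
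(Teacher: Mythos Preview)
Your proof is correct and follows essentially the same route as the paper: build a two-step presentation $\cC(Y)\to\cC(X)\to M\to 0$ by Schwartz spaces, apply the exact functor $\bD$ to see $\bD(M)\hookrightarrow\cC(X)$ has finite length, and then use naturality of the biduality map together with $\bD(\bD(\cC(X)))\cong\cC(X)$ and the five lemma. The only cosmetic difference is that the paper disposes of your ``delicate step'' in one line by citing Lemma~\ref{lem:regss-3} directly (every finitely generated smooth $A(G)$-module has finite length, so $K=\ker\alpha\subset\cC(X)$ has finite length and is therefore finitely generated), rather than passing through the semi-simplicity of $\uRep(U)$ as you do.
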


\begin{proof}
Choose a presentation
\begin{displaymath}
\cC(Y) \to \cC(X) \to M \to 0
\end{displaymath}
with $X$ and $Y$ finitary $G$-sets; this is possible by Lemma~\ref{lem:regss-3} and Proposition~\ref{prop:schwartz-quot}. Then we have an injection $\bD(M) \to \bD(\cC(X))=\cC(X)$, and so $\bD(M)$ is finite length. Applying $\bD$ twice, we obtain a commutative diagram
\begin{displaymath}
\xymatrix{
\cC(Y) \ar[r] \ar[d] & \cC(X) \ar[r] \ar[d] & M \ar[r] \ar[d] & 0 \\
\bD(\bD(\cC(Y))) \ar[r] & \bD(\bD(\cC(X))) \ar[r] & \bD(\bD(M)) \ar[r] & 0 }
\end{displaymath}
The left two vertical maps are easily seen to be isomorphisms, and so the right vertical map is as well. This completes the proof.
\end{proof}

\begin{lemma} \label{lem:qr-rigid-3}
If $M$ and $N$ are finite length objects of $\uRep(G)$ then the natural map
\begin{displaymath}
\iota \colon \bD(M) \uotimes \bD(N) \to \bD(M \uotimes N)
\end{displaymath}
is an isomorphism.
\end{lemma}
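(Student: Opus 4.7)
The plan is to reduce to the case of Schwartz spaces by using presentations and the exactness properties established in the previous lemma. First I verify that $\iota$ is an isomorphism when $M=\cC(X)$ and $N=\cC(Y)$ for finitary $G$-sets $X,Y$. Under the canonical self-duality $\bD(\cC(X))\cong \cC(X)$ (Corollary~\ref{cor:schwartz-rigid}), the source $\bD(\cC(X))\uotimes \bD(\cC(Y))$ becomes $\cC(X)\uotimes \cC(Y) = \cC(X\times Y)$ by Proposition~\ref{prop:schwartz-tensor}, and the target $\bD(\cC(X)\uotimes \cC(Y)) = \bD(\cC(X\times Y))$ likewise becomes $\cC(X\times Y)$. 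Unwinding the construction of $\iota$ from the tensor-Hom adjunction, and using that $\Phi\colon \uPerm(G)\to \uRep(G)$ is a tensor functor that sends the evaluation/coevaluation maps of $\Vec_X$ to those of $\cC(X)$, one sees that $\iota$ is the identity on $\cC(X\times Y)$.

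Next, consider the two bifunctors
\[
F(M,N)=\bD(M)\uotimes \bD(N),\qquad G(M,N)=\bD(M\uotimes N),
\]
from $\uRep^{\rf}(G)\times \uRep^{\rf}(G)$ to $\uRep(G)$. Both are contravariantly left-exact in each variable: $F$ because $\bD$ is exact and $-\uotimes \bD(N)$ is exact (both by the previous lemma), and $G$ because $\uotimes$ is right-exact (Theorem~\ref{thm:tensor}(c)) and $\bD$ is exact. In particular, a right-exact sequence $M_1\to M_0\to M\to 0$ produces left-exact sequences $0\to F(M,N)\to F(M_0,N)\to F(M_1,N)$ and $0\to G(M,N)\to G(M_0,N)\to G(M_1,N)$ linked by the natural transformation $\iota$.

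Now do a two-step induction on presentations. By Proposition~\ref{prop:schwartz-quot} and Lemma~\ref{lem:regss-3}, any finite length $M$ admits a presentation $\cC(Y_M)\to \cC(X_M)\to M\to 0$ by Schwartz spaces of finitary $G$-sets, and similarly for $N$. First fix $N=\cC(Y)$: applying $F(-,\cC(Y))$ and $G(-,\cC(Y))$ to the presentation of $M$ gives two left-exact sequences connected by $\iota$, and since $\iota$ is already known to be an iso on the two Schwartz-space terms, the four-lemma yields that $\iota_{M,\cC(Y)}$ is an iso. Then, for arbitrary finite length $N$, apply $F(M,-)$ and $G(M,-)$ to the presentation of $N$ and repeat the four-lemma argument using the Schwartz-case just established.

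The main obstacle is the base case: one has to be careful that under the identifications $\bD(\cC(X)) = \cC(X)$, the map $\iota$ really is the identity on $\cC(X\times Y)$, not merely some automorphism. This is a compatibility between the self-duality coming from the Frobenius-algebra structure on $\Vec_X$ in $\uPerm(G)$ (recall $\Phi$ is monoidal and sends $\Vec_X$ to $\cC(X)$) and the self-duality arising from $\uHom(\cC(X),\bbone) = \cC(X)^{\vee}\uotimes \bbone$ via Corollary~\ref{cor:ten-hom-adj}. Once this identification is pinned down, the rest of the argument is a routine diagram chase.
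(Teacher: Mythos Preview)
Your proof is correct and follows the same overall strategy as the paper---reduce to the Schwartz case via exactness---but the execution differs. The paper argues injectivity and surjectivity of $\iota$ separately: for injectivity it embeds $\bD(M)\uotimes\bD(N)$ and $\bD(M\uotimes N)$ into the corresponding Schwartz objects via surjections $\cC(X)\to M$ and $\cC(Y)\to N$; for surjectivity it chooses surjections $\cC(X')\to\bD(M)$ and $\cC(Y')\to\bD(N)$, dualizes to get injections $M\hookrightarrow\cC(X')$ and $N\hookrightarrow\cC(Y')$ (this is where Lemma~\ref{lem:qr-rigid-2} enters), and then projects from the Schwartz case. Your approach instead uses a single two-step presentation argument, exploiting that both $F(-,N)$ and $G(-,N)$ take right-exact sequences to left-exact sequences and then invoking a kernel/four-lemma argument. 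Your route is slightly more economical in that it never appeals to $\bD\bD\cong\id$, and it is the more standard ``check on generators, extend by exactness'' template. The paper's route is a bit more symmetric and perhaps makes the role of duality more visible. Both rely on the same base case, and your caution there is appropriate: one only needs $\iota$ to be \emph{an} isomorphism on Schwartz spaces, not literally the identity, and this follows from the rigidity of $\cC(X)$ and naturality.
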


\begin{proof}
Choose surjections $\cC(X) \to M$ and $\cC(Y) \to N$, with $X$ and $Y$ finitary $G$-sets. We then have a commutative diagram
\begin{displaymath}
\xymatrix{
\bD(M) \uotimes \bD(N) \ar[r]^{\iota} \ar[d] & \bD(M \uotimes N) \ar[d] \\
\bD(\cC(X)) \uotimes \bD(\cC(Y)) \ar[r] & \bD(\cC(X) \uotimes \cC(Y)) }
\end{displaymath}
The bottom map is easily seen to be an isomorphism. Since $\bD$ is exact and $\uotimes$ are exact, the left vertical map is an injection. Thus $\iota$ is injective.

Now choose surjections $\cC(X') \to \bD(M)$ and $\cC(Y') \to \bD(N)$, with $X'$ and $Y'$ finitary $G$-sets; this is possible since $\bD(M)$ and $\bD(N)$ are finite length by Lemma~\ref{lem:qr-rigid-2}. We thus have injections $M \to \cC(X')$ and $N \to \cC(Y')$, and obtain a commutative diagram
\begin{displaymath}
\xymatrix{
\bD(M) \uotimes \bD(N) \ar[r]^{\iota} & \bD(M \uotimes N) \\
\bD(\cC(X')) \uotimes \bD(\cC(Y')) \ar[r] \ar[u] & \bD(\cC(X') \uotimes \cC(Y')) \ar[u] }
\end{displaymath}
Again, the bottom map is an isomorphism and the right vertical map is surjective. Thus $\iota$ is surjective. This completes the proof.
\end{proof}

The following lemma completes the proof of the theorem:

\begin{lemma}
Let $M$ be a finite length object of $\uRep(G)$. Then $M$ is rigid and $\bD(M)=M^{\vee}$.
\end{lemma}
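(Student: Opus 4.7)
I will set $M^{\vee} := \bD(M)$, take $\ev_M := \epsilon \colon M \uotimes \bD(M) \to \bbone$ from Proposition~\ref{prop:ten-hom-unit}, and reduce the rigidity of $M$ to showing that the natural morphism
\[
\alpha_{M,N} \colon \bD(M) \uotimes N \longrightarrow \uHom(M,N)
\]
induced from $\ev_M$ via the adjunction of Proposition~\ref{prop:ten-hom-adj} is an isomorphism for every $N \in \uRep(G)$. Granted this, $\uHom(M,-) \cong \bD(M) \uotimes -$ naturally; since $\uHom(M,-)$ is right adjoint to $M \uotimes -$, so is $\bD(M) \uotimes -$. By the standard characterization of dualizable objects in a closed symmetric monoidal category (see, e.g., \cite[Proposition~2.10.8]{Etingof}), this forces $M$ to be rigid with dual $\bD(M)$: the coevaluation is obtained as the image of $\id_M$ under $\alpha_{M,M}^{-1} \colon \uHom(M,M) \to \bD(M) \uotimes M$, and the triangle identities follow from the unit--counit equations for the adjunction.

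To establish that $\alpha_{M,N}$ is an isomorphism, I exploit the fact that $M$ has finite length, hence is finitely generated (Lemma~\ref{lem:regss-3}). By Proposition~\ref{prop:schwartz-quot} there is a presentation $\cC(Y) \to \cC(X) \to M \to 0$ by finitary Schwartz spaces. Applying the exact contravariant functor $\bD$, using the self-duality $\bD(\cC(X)) \cong \cC(X)$ of Corollary~\ref{cor:schwartz-rigid}, and then the exact functor $- \uotimes N$ (exactness was proved in the preceding lemma), yields an exact sequence
\[
0 \to \bD(M) \uotimes N \to \cC(X) \uotimes N \to \cC(Y) \uotimes N.
\]
On the other hand, $\uHom(-,N)$ is left exact as a contravariant functor, so it produces
\[
0 \to \uHom(M,N) \to \uHom(\cC(X),N) \to \uHom(\cC(Y),N).
\]
Naturality of $\alpha_{-,N}$ in its first variable connects these sequences by a commutative ladder with vertical maps $\alpha_{M,N}$, $\alpha_{\cC(X),N}$, $\alpha_{\cC(Y),N}$; the latter two are isomorphisms by the rigidity of Schwartz spaces (Corollary~\ref{cor:schwartz-rigid}) combined with Corollary~\ref{cor:ten-hom-adj}. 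The five lemma (or directly, the universal property of kernels) then forces $\alpha_{M,N}$ to be an isomorphism as well.

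The main technical obstacle is purely formal: one must verify that the map $\alpha_{-,N}$ is natural in its first variable, and that under the identification of Corollary~\ref{cor:ten-hom-adj} for rigid objects, $\alpha_{\cC(X),N}$ coincides with the isomorphism supplied by that corollary. Both are routine compatibility checks among the adjunctions of Propositions~\ref{prop:ten-hom-unit} and~\ref{prop:ten-hom-adj}, and no further idea is needed beyond careful bookkeeping.
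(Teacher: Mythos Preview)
Your argument is correct and takes a genuinely different route from the paper. The paper constructs the coevaluation $\cv \colon \bbone \to M \uotimes \bD(M)$ directly: it applies $\bD$ to $\ev$ and then invokes Lemma~\ref{lem:qr-rigid-2} ($M \cong \bD\bD M$) and Lemma~\ref{lem:qr-rigid-3} ($\bD(M \uotimes N) \cong \bD(M) \uotimes \bD(N)$) to identify the target, after which the zigzag identities are checked. Those two lemmas are themselves proved by resolving $M$ (and $N$) by Schwartz spaces, so the presentation argument is present in the paper as well, just packaged one step earlier.

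Your approach instead shows that the canonical comparison $\alpha_{M,N}\colon \bD(M)\uotimes N \to \uHom(M,N)$ is an isomorphism for every $N$, using a single presentation of $M$ and a five-lemma argument; rigidity then follows from the adjoint-functor characterization of dualizability. This bypasses Lemmas~\ref{lem:qr-rigid-2} and~\ref{lem:qr-rigid-3} entirely: you never need $\bD\bD M\cong M$ or the compatibility of $\bD$ with $\uotimes$. What you gain is a shorter, more conceptual path that makes the role of the closed monoidal structure transparent. What the paper's route buys is an explicit formula for the coevaluation and, as a byproduct, the two auxiliary lemmas, which are of independent interest. Both arguments rest on the same bedrock: exactness of $\uotimes$ and $\uHom$, rigidity of Schwartz spaces, and the ability to present finite length objects by finitary Schwartz modules.
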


\begin{proof}
We have a natural map $\ev \colon \bD(M) \uotimes M \to \bbone$. Applying $\bD$, we obtain a map
\begin{displaymath}
\xymatrix@C=3em{
\bbone \ar[r]^-{\bD(\ev)} & \bD(\bD(M) \uotimes M) \ar@{=}[r] & \bD(\bD(M)) \uotimes \bD(M) \ar@{=}[r] & M \uotimes \bD(M) }
\end{displaymath}
where the first identification above is Lemma~\ref{lem:qr-rigid-3}, and the second is Lemma~\ref{lem:qr-rigid-2}. Define $\cv \colon \bbone \to \bD(M) \uotimes M$ to be the above composition. One then verifies that $\ev$ and $\cv$ satisfy the necessary conditions for $\bD(M)$ to be the dual of $M$.
\end{proof}

\begin{remark}
When $G$ is a finite group, Theorem~\ref{thm:regss} in the regular case recovers Maschke's theorem (see Example~\ref{ex:ordinary}). Our proof amounts to showing that the group algebra $k[G]$ is semi-simple by computing the discriminant of the trace pairing.
\end{remark}

\begin{remark}
The proof of Theorem~\ref{thm:regss} shows that the dual of a finite length object $M$ is given by $\uHom(M, \bbone)$, which is (essentially) the space of smooth vectors in the ordinary dual space $M^*$. This is very similar to the construction of the contragredient representation in the theory of admissible representations of $p$-adic groups; see, e.g., \cite[\S 4.2]{Bump}.
\end{remark}

\subsection{Abelian envelopes} \label{ss:abenv2}

We recall the notion of abelian envelope for tensor categories proposed in \cite[Definition~3.1.2]{CEAH}:

\begin{definition} \label{defn:abenv}
Let $\cP$ be a $k$-linear tensor category. An \defn{abelian envelope} of $\cP$ is a pair $(\cR, \Phi)$, where $\cR$ is a pre-Tannakian category and $\Phi$ is a tensor functor, such that for any pre-Tannakian category $\cT$ the functor
\begin{displaymath}
\Fun^{\rm ex}_{\otimes}(\cR, \cT) \to \Fun^{\rm faith}_{\otimes}(\cP, \cT), \qquad \Psi \mapsto \Psi \circ \Phi
\end{displaymath}
is an equivalence, where $\Fun^{\rm ex}_{\otimes}$ denotes the category of exact tensor functors and $\Fun^{\rm faith}_{\otimes}$ the category of faithful tensor functors.
\end{definition}

We note that an abelian envelope is unique up to canonical equivalence when it exists, so we can speak of ``the'' abelian envelope. With this definition, we can now formulate a precise result for when and how $\uRep(G)$ is an abelian envelope of $\uPerm(G)$.

\begin{theorem} \label{thm:abenv}
Suppose $k$ is a field, and $\mu$ is quasi-regular and satisfies condition~(P). Then $\uRep^{\rf}(G)$ is the abelian envelope of $\uPerm(G)$ in the sense of Definition~\ref{defn:abenv}.
\end{theorem}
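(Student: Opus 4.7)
The plan is to verify the universal property of Definition~\ref{defn:abenv} directly for the canonical tensor functor $\Phi_0 \colon \uPerm(G) \to \uRep^{\rf}(G)$. Note that $\Phi_0$ is itself a faithful tensor functor: faithfulness is in Proposition~\ref{prop:perm-rep-func}; it lands in $\uRep^{\rf}(G)$ because each $\cC(X)$ has finite length by Theorem~\ref{thm:regss} and Lemma~\ref{lem:regss-3}; and its monoidal structure comes from Proposition~\ref{prop:schwartz-tensor}. Moreover, since quasi-regular implies normal, Proposition~\ref{prop:normal-full} tells us $\Phi_0$ is also full, which is the key property enabling the extension.

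The first preparatory step I would establish is that every $M \in \uRep^{\rf}(G)$ admits a two-term presentation $\cC(Y) \xrightarrow{\beta} \cC(X) \xrightarrow{\alpha} M \to 0$ with $X,Y$ finitary. Proposition~\ref{prop:schwartz-quot} gives the surjection $\alpha$, and since $\uRep^{\rf}(G)$ is a Serre subcategory of $\uRep(G)$ (Lemma~\ref{lem:regss-3}), the kernel of $\alpha$ is again finite length, hence finitely generated, so a second application of Proposition~\ref{prop:schwartz-quot} yields $\beta$. Given a pre-Tannakian $\cT$ and a faithful tensor functor $\Phi \colon \uPerm(G) \to \cT$, I would then define $\widetilde{\Phi}(M) = \coker\bigl(\Phi(\Vec_Y) \to \Phi(\Vec_X)\bigr)$. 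Independence of presentation uses the standard argument that any two presentations are comparable via lifts, with lifts existing because the fullness of $\Phi_0$ matches $\Hom$-sets in $\uPerm(G)$ with those in $\uRep^{\rf}(G)$; similarly for the lifting of morphisms $M \to M'$. Right-exactness of $\widetilde{\Phi}$ is then automatic from its definition as a cokernel, and left-exactness follows by combining this with the duality $\bD(N) = N^\vee$ on $\uRep^{\rf}(G)$ from the proof of Theorem~\ref{thm:regss}, which $\widetilde{\Phi}$ intertwines with the duality of $\cT$ since tensor functors preserve rigid duals.

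Next I would endow $\widetilde{\Phi}$ with a tensor structure. Given presentations of $M$ and $N$, taking cartesian products of the indexing $G$-sets and applying the identification $\cC(X) \uotimes \cC(Y) = \cC(X \times Y)$ (Proposition~\ref{prop:schwartz-tensor}) together with the exactness of $\cC(X) \uotimes -$ (Corollary~\ref{cor:schwartz-rigid}) yields a presentation for $M \uotimes N$. Applying $\Phi$ on $\uPerm(G)$ and using that each $\Phi(\Vec_X)$ is rigid in $\cT$ (so $\Phi(\Vec_X) \otimes -$ is exact by \cite[Proposition~2.10.8]{Etingof}), one gets a natural identification $\widetilde{\Phi}(M) \otimes \widetilde{\Phi}(N) \cong \widetilde{\Phi}(M \uotimes N)$ and transports the associativity and symmetry constraints. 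For full-faithfulness of the restriction functor on the natural transformation level: any natural transformation between two extensions is determined on Schwartz spaces (by cokernels), and any natural transformation on $\uPerm(G)$ extends uniquely by the same token.

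The main obstacle is the rigorous construction of the tensor structure on $\widetilde{\Phi}$: one must verify not only that the comparison map $\widetilde{\Phi}(M) \otimes \widetilde{\Phi}(N) \to \widetilde{\Phi}(M \uotimes N)$ is an isomorphism, but also that these maps satisfy the coherence axioms functorially. The bookkeeping here requires careful use of the bi-exactness of $\otimes$ in $\cT$ against rigid objects, plus the stability of rigidity under tensor functors, and checking that the cokernel presentations for $(M \uotimes N) \uotimes P$ computed two ways are canonically identified. A parallel issue arises in verifying that the constructed $\widetilde{\Phi}$ is genuinely exact (not merely right-exact), where the duality argument must be checked to give a symmetric left-exactness statement.
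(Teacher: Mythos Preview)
Your approach attempts to construct the extension $\widetilde{\Phi}$ directly via cokernels of presentations, bypassing the criterion of \cite[Theorem~3.1.4]{CEAH} that the paper invokes. This is a reasonable strategy in spirit, but there is a genuine gap in your well-definedness argument.

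You claim that independence of presentation follows because ``the fullness of $\Phi_0$ matches $\Hom$-sets in $\uPerm(G)$ with those in $\uRep^{\rf}(G)$.'' But fullness only tells you that every $A$-linear map $\cC(X_1) \to \cC(X_2)$ comes from a matrix; it says nothing about lifting a map $\cC(X_1) \to M$ through a surjection $\cC(X_2) \twoheadrightarrow M$. Comparing two presentations in the standard way requires such lifts, which in turn requires $\cC(X_1)$ to be projective. In the merely quasi-regular case this fails: for instance, in $\uRep(H_3)$ the module $\cC(\bS)$ is not projective (\S\ref{ss:circle}). So the cokernel $\coker\bigl(\Phi(\Vec_Y) \to \Phi(\Vec_X)\bigr)$ is not obviously independent of the presentation, and your functor $\widetilde{\Phi}$ is not yet well-defined. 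The duality trick you invoke for left-exactness presupposes that $\widetilde{\Phi}$ is already a well-defined tensor functor, so it cannot rescue the construction.

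The paper's route is to check the three hypotheses of \cite[Theorem~3.1.4]{CEAH}. Conditions~(i) and~(ii) are close to what you establish. The crucial extra ingredient is condition~(iii): for every epimorphism $f \colon M \to N$ in $\uRep^{\rf}(G)$ there exists $T \in \uPerm(G)$ such that $\Phi(T) \uotimes f$ splits. The paper proves this by choosing $U$ with $\mu\vert_U$ regular, so that $\uRep(U)$ is semi-simple and $f$ has an $A(U)$-linear section $s_0$; one then twists $s_0$ into an $A(G)$-linear splitting of $\cC(G/U) \uotimes f$ via $s(\delta_g \uotimes n) = \delta_g \uotimes g s_0(g^{-1}n)$. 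This splitting condition is precisely the substitute for projectivity that makes the cokernel-style extension in \cite{CEAH} go through, and it is where the quasi-regularity hypothesis does its real work.
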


\begin{proof}
Following \cite[Theorem~3.1.4]{CEAH}, it suffices to check:
\begin{enumerate}[(i)]
\item The functor $\Phi \colon \uPerm(G) \to \uRep^{\rf}(G)$ is fully faithful. 
\item Any $M \in \uRep^{\rf}(G)$ can be realized as the image of a map $\Phi(P) \to \Phi(Q)$ with $P,Q \in \uPerm(G)$.  
\item For any epimorphism $f \colon M \to N$ in $\uRep^{\rf}(G)$, there exists $T \in \uPerm(G)$ such that the map $\id \uotimes f \colon \Phi(T) \uotimes M \to \Phi(T) \uotimes N$ splits.
\end{enumerate}
(i) is Propositions~\ref{prop:perm-rep-func} and~\ref{prop:normal-full}. As for (ii), Proposition~\ref{prop:schwartz-quot} shows that for every $M \in \uRep^{\rf}(G)$ there is a surjection $\Phi(P) \to M$ for some $P \in \uPerm(G)$. Since $\uRep^{\rf}(G)$ is rigid and objects in $\uPerm(G)$ are self-dual, we also see that every $M$ admits an injection $M \to \Phi(Q)$ with $Q \in \uPerm(G)$. Composing these realizes $M$ as the image of a map $\Phi(P) \to \Phi(Q)$ as desired. 
 
We now prove (iii). Let $f \colon M \to N$ be given. Since $\mu$ is quasi-regular, there is an open subgroup $U$ such that $\mu \vert_U$ is regular, and so $\uRep^{\rf}(U)$ is semi-simple (Theorem~\ref{thm:regss}). We can therefore find an $A(U)$-linear splitting $s_0 \colon N \to M$ of $f$. Let $T=\Vec_{G/U}$, so that $\Phi(T)=\cC(G/U)$. We now define a map
\begin{displaymath}
s \colon \Phi(T) \uotimes N \to \Phi(T) \uotimes M.
\end{displaymath}
Identify $\Phi(T) \uotimes N$ with $\cC(G/U, N)$ (see Example~\ref{ex:ten-schwartz}), and similarly for the target. For $\phi \in \cC(G/U, N)$, we define $s(\phi) \in \cC(G/U, M)$ by
\begin{displaymath}
(s \phi)(g)=gs_0(g^{-1} \phi(g)).
\end{displaymath}
One readily verifies that $s$ is $A(G)$-linear and splits $\id \uotimes f$, which completes the proof. We note that in terms of the original tensor product, $s$ satisfies
\begin{displaymath}
s(\delta_g \uotimes n) = \delta_g \uotimes gs_0(g^{-1} n),
\end{displaymath}
and this uniquely determines it by $A(G)$-linearity.
\end{proof}

\subsection{The relative case} \label{ss:abrel}

Suppose that $\sE$ is a stabilizer class in $G$. The material in \S\S \ref{s:gpalg}--\ref{s:genten} goes through with minimal changes in the relative setting; essentially, one just restricts to $\sE$-smooth $G$-sets everywhere. Theorem~\ref{thm:regss}, on the other hand, does not extend to the relative case in general, as it relies upon Proposition~\ref{prop:semisimple-alg} (see \S \ref{ss:rel-matrix}). We give a counterexample in \S \ref{ss:sym-rel}. However, Theorem~\ref{thm:regss} (and our proof) holds if the condition from Remark~\ref{rmk:rel-binom} is verified.

\part{Examples} \label{part:ex}

\section{The symmetric group} \label{s:sym}

\subsection{Overview}

Let $\fS$ be the infinite symmetric group, which we take to be the group of all permutations of the set $\Omega=\{1,2,\ldots\}$. In \S \ref{s:sym}, we examine the theory developed in this paper in this case. We show that our work recover Deligne's interpolation category $\uRep(\fS_t)$ and some known results about it (e.g., the theorem of Comes--Ostrik \cite{ComesOstrik} on abelian envelopes). We also examine what happens in positive characteristic, and observe some pathological behavior.

\subsection{Subgroup structure}

For $n \in \bN$, let $\fS_n$ be the usual finite symmetric group, regarded as a subgroup of $\fS$, and let $\fS(n)$ be the subgroup of $\fS$ fixing the numbers $1, \ldots, n$. Thus $\fS_n \times \fS(n)$ is a Young subgroup of $\fS$, and the $\fS(n)$'s form a neighborhood basis of the identity in $\fS$. The following important proposition gives the structure of open subgroups of $\fS$. It is likely well-known (see, e.g., \cite[Proposition~A.4]{Sciarappa} for the analogous statement for finite symmetric groups), but we include a proof for the sake of completeness.

\begin{proposition} \label{prop:sym-open-subgroups}
Let $U$ be an open subgroup of $\fS$. Then there exists an integer $n \ge 0$ and a subgroup $H$ of $\fS_n$ such that $U$ is conjugate to $H \times \fS(n)$.
\end{proposition}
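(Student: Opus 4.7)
The plan is to extract a finite subset $F \subseteq \Omega$ of minimal cardinality such that $\fS(F) \subseteq U$, to show that $U$ stabilizes $F$ setwise, and then to split the resulting restriction homomorphism. Since $U$ is open in the pointwise-convergence topology, it contains $\fS(F_0)$ for some finite $F_0 \subseteq \Omega$, so we may choose $F$ of minimal cardinality $n$ with $\fS(F) \subseteq U$.

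The main step will be to show that $u(F) = F$ for every $u \in U$. If this failed for some $u$, then $\fS(u(F)) = u \fS(F) u^{-1}$ would also lie in $U$, and combining with $\fS(F)$ would give
\[
\fS(F \cap u(F)) = \langle \fS(F),\, \fS(u(F)) \rangle \subseteq U,
\]
contradicting the minimality of $n$, since $|F \cap u(F)| < n$. The main obstacle is proving this bracket identity: for finite $F, F' \subseteq \Omega$, $\langle \fS(F), \fS(F')\rangle = \fS(F \cap F')$. After replacing $\Omega$ by $\Omega \setminus (F \cap F')$ and $F, F'$ by $F \setminus F', F' \setminus F$, this reduces to showing that whenever $A, B$ are disjoint finite subsets of an infinite set $X$, the group $\Sym(X)$ is generated by $\Sym(X \setminus A)$ and $\Sym(X \setminus B)$. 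I would prove this by a direct combinatorial argument: given $\sigma \in \Sym(X)$, the infiniteness of $X \setminus (A \cup B)$ allows one to factor $\sigma$ as a product of permutations, each fixing either $A$ or $B$ pointwise, by routing the images of $A$ through temporary parking spots disjoint from $A \cup B$.

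Once setwise stability of $F$ is established, I conjugate in $\fS$ to assume $F = \{1,\ldots,n\}$, so that $\fS(n) \subseteq U$ and the restriction gives a homomorphism $\pi \colon U \to \Sym(F) = \fS_n$ with kernel $U \cap \fS(n) = \fS(n)$. Setting $H = \pi(U)$, the extension $1 \to \fS(n) \to U \to H \to 1$ splits canonically: given $u \in U$, define $w \in \fS$ to be the identity on $F$ and to agree with $u^{-1}$ on $\Omega \setminus F$, which makes sense because $u$ preserves $F$ setwise. Then $w \in \fS(n) \subseteq U$, and $uw \in U$ lies in the natural subgroup $\fS_n \subset \fS$ of permutations supported on $F$, realizing the image $\pi(u) \in H$. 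Since $\fS_n$ (so embedded) and $\fS(n)$ act on disjoint subsets of $\Omega$ they commute and intersect trivially, yielding $U = H \times \fS(n)$.
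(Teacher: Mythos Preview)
Your proof is correct and follows the same high-level strategy as the paper: choose $n$ minimal with $\fS(n)$ contained in (a conjugate of) $U$, show that $U$ must preserve $\{1,\ldots,n\}$ setwise, and then read off $U = H \times \fS(n)$. The difference lies entirely in how setwise stability is established. The paper argues directly: given $g \in U$ moving some $i \le n$ outside $\{1,\ldots,n\}$, it performs an explicit cycle manipulation (conjugating by a transposition in $\fS(n)$) to produce a $3$-cycle and then the transposition $(n\;n{+}1)$ inside $U$, whence $\fS(n{-}1) \subseteq U$, contradicting minimality. You instead prove the cleaner structural lemma $\langle \fS(F), \fS(F')\rangle = \fS(F \cap F')$ and apply it with $F' = u(F)$. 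Your lemma is exactly the symmetric-group analogue of what the paper does for $\GL_\infty(\bF)$ in Propositions~\ref{prop:stabilizerlemma} and~\ref{prop:subgroupstructure}, so your argument unifies the two cases; the paper's argument for $\fS$ avoids the auxiliary lemma at the cost of a hands-on combinatorial step. One small remark on your sketch of the lemma: since $\Sym(X)$ for infinite $X$ is not generated by transpositions, the ``parking spots'' factorization genuinely needs three factors in general (for instance $\sigma = \beta_1 \alpha \beta_2$ with $\beta_i$ fixing $B$ and $\alpha$ fixing $A$, choosing $\beta_2$ so that $\sigma\beta_2^{-1}(A)$ avoids $B$); your description is consistent with this but a reader might appreciate seeing it spelled out.
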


\begin{proof}
Let $n \ge 0$ be minimal such that $\fS(n)$ is contained in a conjugate of $U$. Replacing $U$ with a conjugate, we simply assume that $\fS(n) \subset U$.

We claim that $U$ is contained in $\fS_n \times \fS(n)$. Suppose not, and let $g$ be an element of $U$ not contained in $\fS_n \times \fS(n)$. Let $g=\sigma \tau_1 \cdots \tau_r$ be a decomposition of $g$ into disjoint cycles such that $\sigma$ does not belong to $\fS_n \times \fS(n)$. Write
\begin{displaymath}
\sigma=(a_1 \; a_2 \; \cdots \; a_k)
\end{displaymath}
where $a_1>n$ and $a_2 \le n$. Let $a_1'>n$ be an integer not appearing in any cycle in $g$. Since the transposition $(a_1\;a_1')$ belongs to $\fS(n)$, it belongs to $U$, and so the conjugate $g'$ of $g$ by this transposition also belong to $U$. We have $g'=\sigma' \tau_1 \cdots \tau_r$, where $\sigma'$ is like $\sigma$ but with $a_1$ changed to $a_1'$. We thus see that $U$ contains the 3-cycle
\begin{displaymath}
g' g^{-1} = (a_1 \; a'_1  \; a_2).
\end{displaymath}
Multiplying by the transposition $(a_1 \; a_1')$, we thus see that $U$ contains the transposition $(a_1\;a_2)$. Conjugating $U$ by an element of $\fS_n \times \fS(n)$, we may as well suppose that $U$ contains $(n\;n+1)$. But this transposition and $\fS(n)$ generate $\fS(n-1)$, and so $U$ contains $\fS(n-1)$, a contradiction. This proves the claim.

Since $U$ is contained in $\fS_n \times \fS(n)$ and contains $\fS(n)$, it necessarily has the form $H \times \fS(n)$ for some subgroup $H$ of $\fS_n$, which completes the proof.
\end{proof}

\begin{corollary} \label{cor:S-set}
Let $X$ be a transitive $\fS$-set. Then there exists a map of $\fS$-sets $X \to \Omega^{(n)}$ that is everywhere $d$-to-1, for unique integers $n \ge 0$ and $d \ge 1$.
\end{corollary}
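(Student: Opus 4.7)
My plan is to deduce the corollary directly from Proposition~\ref{prop:sym-open-subgroups}. Writing $X \cong \fS/U$ for some open subgroup $U$, Proposition~\ref{prop:sym-open-subgroups} allows us (after replacing $U$ by a conjugate, which only changes $X$ up to isomorphism) to assume $U = H \times \fS(n)$ for a unique $n \ge 0$ and a subgroup $H \le \fS_n$ determined up to conjugation in $\fS_n$. Here I will take $n$ to be minimal, as in the proof of that proposition; in particular, the classification is unique in the following sense: if $U$ is also conjugate to $H' \times \fS(n')$ then $n'=n$ (because both realize the minimal integer such that a conjugate of $U$ contains $\fS(n)$).

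For existence, the containment $U = H \times \fS(n) \subset \fS_n \times \fS(n)$ and the identification $\Omega^{(n)} = \fS/(\fS_n \times \fS(n))$ (via $g \mapsto g\{1,\ldots,n\}$) yield a canonical $\fS$-equivariant surjection $X \to \Omega^{(n)}$ whose fiber over $\{1,\ldots,n\}$ is $(\fS_n \times \fS(n))/U = \fS_n/H$. This fiber has cardinality $d = [\fS_n:H] = n!/|H| \ge 1$, and by transitivity of $\Omega^{(n)}$ every other fiber has the same cardinality, so the map is everywhere $d$-to-$1$.

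For uniqueness, suppose $\phi \colon X \to \Omega^{(m)}$ is $\fS$-equivariant and everywhere $e$-to-$1$ with $e \ge 1$ finite. Such a $\phi$ sends the coset of $1 \in \fS/U$ to some $m$-subset $S \subset \Omega$ with $U \subset \fS_S \times \fS_{\Omega \setminus S}$, and the fiber has size $[\fS_S \times \fS_{\Omega \setminus S} : U]$. Since $\fS(n) \subset U$ acts transitively on $\{n+1, n+2, \ldots\}$ and preserves $S$, and $S$ is finite, we must have $S \subset \{1,\ldots,n\}$. Write $A = \{1,\ldots,n\} \setminus S$, so that $\Omega \setminus S = A \sqcup \{n+1, n+2, \ldots\}$. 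The image of $U$ in $\fS_{\Omega \setminus S}$ is contained in $\fS_A \times \fS_{\{n+1, n+2, \ldots\}}$, since $U$ preserves this partition; hence
\[
[\fS_{\Omega \setminus S} : U] \;\ge\; [\fS_{\Omega \setminus S} : \fS_A \times \fS_{\{n+1,n+2,\ldots\}}],
\]
and the right-hand side is infinite whenever $A$ is nonempty, since one can move elements of $A$ to infinitely many targets in $\{n+1, n+2, \ldots\}$. Finiteness of $e$ therefore forces $A = \emptyset$, i.e.\ $S = \{1,\ldots,n\}$ and $m = n$; then $\fS_S \times \fS_{\Omega \setminus S} = \fS_n \times \fS(n)$ and $e = [\fS_n : H] = d$.

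The only substantive step is the uniqueness argument, specifically identifying $\{n+1, n+2, \ldots\}$ as a single infinite $U$-orbit and using this to force $S \subset \{1,\ldots,n\}$ and then $S = \{1,\ldots,n\}$ when we demand finite fibers. All other ingredients are either immediate from Proposition~\ref{prop:sym-open-subgroups} or routine index computations inside $\fS$.
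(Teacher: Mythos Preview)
Your existence argument is exactly the paper's: write $X=\fS/V$, conjugate so $V=H\times\fS(n)$, and project to $\fS/(\fS_n\times\fS(n))=\Omega^{(n)}$ with fiber size $d=[\fS_n:H]$. The paper's proof actually stops there and does not spell out uniqueness; you supply a correct argument for it, using that $\fS(n)\subset U$ forces any target $S\in\Omega^{(m)}$ to lie in $\{1,\ldots,n\}$, and that finite fibers then force $S=\{1,\ldots,n\}$.

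One small notational slip: in your displayed inequality the left-hand side should be the actual fiber size $e=[\fS_S\times\fS_{\Omega\setminus S}:U]$, not $[\fS_{\Omega\setminus S}:U]$ (the latter is not well-defined since $U$ need not lie in $\fS_{\Omega\setminus S}$). The bound you want follows from the containment $U=H\times\fS(n)\subset \fS_S\times(\fS_A\times\fS(n))$, which gives $e\ge[\fS_{\Omega\setminus S}:\fS_A\times\fS(n)]$, infinite when $A\neq\emptyset$.
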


\begin{proof}
Write $X=G/V$. By Proposition~\ref{prop:sym-open-subgroups}, we can assume $V=H \times \fS(n)$ for some $H \subset \fS_n$. Thus $V$ is contained in $U=\fS_n \times \fS(n)$ with finite index $d=[\fS_n:H]$. The map $G/V \to G/U=\Omega^{(n)}$ is everywhere $d$-to-1, as required.
\end{proof}

\subsection{Fixed point measures}

We now give a general method for constructing measures. This method will apply to the symmetric group and some linear groups (see \S \ref{s:finlin}). Let $G$ be a first-countable pro-oligomorphic group, and fix a chain $\cdots \subset U_2 \subset U_1$ of open subgroups that form a neighborhood basis of the identity. Let $\tilde{R}$ be the ring of functions $f \colon \bZ_{>0} \to \bZ$, let $I$ be the ideal of $\tilde{R}$ consisting of functions $f$ such that $f(n)=0$ for all $n \gg 0$, and let $R=\tilde{R}/I$.

For a finitary $\hat{G}$-set $X$, we let $f_X \in R$ be the function defined by $f_X(n)=\# X^{U_n}$, where $(-)^{U_n}$ denotes fixed points. Note that $U_n$ is a group of definition for $X$ for all $n \gg 0$, and so $X^{U_n}$ makes sense for all but finitely many $n$. Furthermore, $U_n$ has finitely many orbits on $X$, and in particular, finitely many fixed points. Thus $f_X$ is well-defined in $R$.

\begin{proposition} \label{prop:fixed-pt}
Suppose the following conditions hold:
\begin{enumerate}
\item Given $g \in G$ and $n>0$ there is some $N$ such that for every $m>N$ the group $U_m$ is normalized by some element of $U_ng$.
\item For every open subgroup $V$ of $G$ there is some $N$ such that $U_N \subset V$, and for all $n > N$ all $G$-conjugate copies of $U_n$ in $V$ are also $U$-conjugate. That is, if $gU_ng^{-1} \subseteq V$ for some $g \in G$ then $g U_n g^{-1} = hU_nh^{-1}$ for some $h \in V$.   
\end{enumerate}
Then $\mu(X)=f_X$ defines an $R$-valued measure on $G$.
\end{proposition}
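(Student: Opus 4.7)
\medskip

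The plan is to verify the five axioms of Definition~\ref{defn:measure} for the assignment $\mu(X) = f_X$. First I would check that $f_X$ is a well-defined element of $R$: for $n \gg 0$, $U_n$ is a group of definition for $X$, and since $X$ is finitary as a $\hat{G}$-set, Proposition~\ref{prop:smooth}(c) shows that $U_n$ has finitely many orbits on $X$; the fixed set $X^{U_n}$ is a union of singleton orbits, hence finite, so $f_X(n)$ is well-defined for all but finitely many $n$.

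Axioms \dref{defn:measure}{a}, \dref{defn:measure}{b}, \dref{defn:measure}{c} are immediate from the definition of $f_X$ (isomorphism invariance and the decomposition $(X \amalg Y)^{U_n} = X^{U_n} \amalg Y^{U_n}$). For \dref{defn:measure}{d}, I would use condition~(a) as follows. Since $(X^g)^{U_m}$ is naturally identified with $X^{g^{-1} U_m g}$, it suffices to show $\#X^{U_m} = \#X^{g^{-1} U_m g}$ for $m \gg 0$. Fix $n$ large enough that $U_n$ is a group of definition for $X$, and apply condition~(a) to $g^{-1}$: for $m \gg 0$ there is some $h = u g^{-1} \in U_n g^{-1}$ normalizing $U_m$, whence $g^{-1} U_m g = u^{-1} U_m u$ with $u \in U_n$. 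Since $u$ acts on $X$, left multiplication by $u$ is a bijection $X^{U_m} \to X^{u^{-1}U_m u}$, giving the required equality of cardinalities.

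The main work is \dref{defn:measure}{e}, where condition~(b) enters. Let $X \to Y$ be a map of transitive $U$-sets with fiber $F$; write $Y = U/V$, $X = U/W$ with $W \subseteq V$, so $F \cong V/W$. I would argue fiberwise: for $m \gg 0$ (large enough that $U_m \subset U$ and that condition~(b) applies to $V$), I will show every $y \in Y^{U_m}$ has exactly $\#F^{U_m}$ preimages in $X^{U_m}$, which gives $f_X = f_Y \cdot f_F$ in $R$. If $y = gV$ is $U_m$-fixed, then $g^{-1} U_m g \subset V$, so by condition~(b) there exists $v \in V$ with $g^{-1} U_m g = v^{-1} U_m v$. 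The set of $U_m$-fixed preimages of $y$ is naturally identified with $(V/W)^{g^{-1} U_m g} = (V/W)^{v^{-1} U_m v}$, and left multiplication by $v$ is a bijection of $V/W$ carrying this to $(V/W)^{U_m} = F^{U_m}$. Summing over $Y^{U_m}$ yields the desired identity.

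The main obstacle is bookkeeping the two conjugation manoeuvres in \dref{defn:measure}{d} and \dref{defn:measure}{e}: conditions~(a) and~(b) are engineered precisely to convert an arbitrary $G$-conjugate of $U_m$ into an ``internal'' conjugate (by an element of the ambient group of definition), which is what makes the fixed-point count invariant and multiplicative on the nose. Once this pattern is in place the verification is essentially a careful cardinality count; the only subtlety is ensuring all bounds on $m$ are compatible, which is harmless since equalities in $R$ need only hold eventually.
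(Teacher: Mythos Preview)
Your proof is correct and follows essentially the same route as the paper's: verify \dref{defn:measure}{a}--\dref{defn:measure}{c} directly, use condition~(a) to reduce conjugation invariance to an internal conjugate of $U_m$, and use condition~(b) to show all fibers over $Y^{U_m}$ have the same $U_m$-fixed-point count. The only quibble is a direction slip in \dref{defn:measure}{d}: left multiplication by $u$ carries $X^{U_m}$ to $X^{uU_mu^{-1}}$, not $X^{u^{-1}U_mu}$, so you want multiplication by $u^{-1}$ there---harmless for the cardinality statement.
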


\begin{proof}
We must check that $\mu$ satisfies conditions \dref{defn:measure}{a}--\dref{defn:measure}{e}. Conditions \dref{defn:measure}{a}--\dref{defn:measure}{c} are clear. We now verify \dref{defn:measure}{d}. We must show $\mu(X)=\mu(X^g)$ for a finitary $\hat{G}$-set $X$ and an element $g \in \fS$. Let $U_n$ be a group of definition of $X$. Let $N$ be as in (a), relative to $g$ and $n$. Let $m>N$ and suppose that $hg$ normalizes $U_m$, with $h \in U_n$. We have $X^h=X$ since $U_n$ is a group of definition for $X$, and so $X^g=X^{hg}$. Since $hg$ normalizes $U_m$, we have $(X^g)^{U_m}=(X^{hg})^{U_m}=X^{U_m}$. We thus see that $f_X(m)=f_{X^g}(m)$ for all $m>N$, and so $f_X=f_{X^g}$. Thus $\mu(X)=\mu(X^g)$, as required.

We now verify \dref{defn:measure}{e}. Thus let $X \to Y$ be a surjection of transitive $U$-sets, for some open subgroup $U$, and let $F=F_y$ be the fiber of $y \in Y$. Let $V \subset U$ be the stabilizer of $y$, so that $F$ is a $V$-set. We must show $\# X^{U_n} = (\# F^{U_n}) (\# Y^{U_n} )$ for all $n \gg 0$. We have
\begin{displaymath}
X^{U_n} = \bigsqcup_{y' \in Y^{U_n}} F_{y'}^{U_n}
\end{displaymath}
and so it suffices to show that $F_{y'}^{U_n}$ has the same cardinality as $F^{U_n}$ for all $y'$ as above, at least for $n \gg 0$.

Let $N$ be as in (b), let $n>N$, and suppose $y' \in Y^{U_n}$. Write $y'=gy$ with $g \in U$. The stabilizer of $y'$ is $gVg^{-1}$, and so $U_n \subset gVg^{-1}$, and so $g^{-1}U_ng \subset V$. By (b), we have $g^{-1}U_ng=h^{-1}U_nh$ for some $h \in V$. We now have
\begin{displaymath}
F_{y'}^{U_n}=F_{gy}^{U_n} \cong F_y^{g^{-1}U_ng} = F_y^{h^{-1}U_nh} \cong F_{hy}^{U_n} = F_y^{U_n}
\end{displaymath}
which shows $\# F^{U_n}_{y'}=\# F^{U_n}_y$, as required.
\end{proof}

\subsection{The ring \texorpdfstring{$\Theta(\fS)$}{\textTheta(S)}}

Recall (\S \ref{ss:intpoly}) that $\bZ\langle x \rangle$ denotes the ring of integer-valued polynomials. The elements $\lambda_n(x)=\binom{x}{n}$ form a $\bZ$-basis for $\bZ\langle x \rangle$. The following is one of our main results on the symmetric group:

\begin{theorem} \label{thm:Theta-sym}
We have a ring isomorphism $i \colon \bZ\langle x \rangle \to \Theta(\fS)$ given by $i(\lambda_n(x))=[\Omega^{(n)}]$.
\end{theorem}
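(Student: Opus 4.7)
The map $i$ is provided by Proposition~\ref{prop:R-Theta} applied with $X=\Omega$. To prove it is an isomorphism I will construct a retraction $\pi\colon\Theta(\fS)\to\bZ\langle x\rangle$ via a fixed-point polynomial construction, and then verify surjectivity of $i$ by an elementary combinatorial argument.

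For the retraction, my plan is to apply Proposition~\ref{prop:fixed-pt} with the neighborhood basis $U_n=\fS(n)$. Condition~(a) is immediate: any $g\in\fS$ has finite support, so for $m$ larger than the support of $g$ the element $g$ preserves $\{1,\ldots,m\}$ setwise and hence normalizes $\fS(m)$, giving $g=1\cdot g\in\fS(n)\,g$ as the required element. Condition~(b) reduces via Proposition~\ref{prop:sym-open-subgroups} to the case $V=H\times\fS(N)$; an inclusion $g\fS(n)g^{-1}\subseteq V$ with $n>N$ forces $g\{1,\ldots,n\}\supseteq\{1,\ldots,N\}$, and a permutation $h\in\fS(N)\subseteq V$ suitably reshuffling $\{N+1,N+2,\ldots\}$ can be chosen so that $h\{1,\ldots,n\}=g\{1,\ldots,n\}$, i.e.\ $h\fS(n)h^{-1}=g\fS(n)g^{-1}$. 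The proposition thus yields a measure $\mu\colon\Theta(\fS)\to R$ in the notation of Proposition~\ref{prop:fixed-pt}. Next, for a transitive $\hat{\fS}$-set $X$ I would compute $|X^{\fS(a)}|$ for $a\gg 0$. After choosing $\fS(m)$ as a group of definition and using additivity, this reduces to the case of a transitive $\fS(m)$-set, which by Proposition~\ref{prop:sym-open-subgroups} applied to $\fS(m)\cong\fS$ has the form $\fS(m)/(H\times\fS(n))$ for some $H\subseteq\fS_{n-m}$ acting on $\{m+1,\ldots,n\}$. The cosets correspond bijectively to $H$-orbits of injections $\{m+1,\ldots,n\}\hookrightarrow\Omega(m)$, where $\Omega(m):=\Omega\setminus\{1,\ldots,m\}$; such an orbit is $\fS(a)$-fixed (for $a\geq n$) precisely when its image lies in $\{m+1,\ldots,a\}$. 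A direct count gives
\begin{equation*}
|X^{\fS(a)}|=[\fS_{n-m}:H]\cdot\binom{a-m}{n-m},
\end{equation*}
a polynomial in $\bZ\langle x\rangle$. Since $\bZ\langle x\rangle$ injects into $R$ (only the zero polynomial is eventually zero), $\mu$ refines to $\pi\colon\Theta(\fS)\to\bZ\langle x\rangle$ with $\pi([X])=[\fS_{n-m}:H]\binom{x-m}{n-m}$. In particular $\pi([\Omega^{(n)}])=\binom{x}{n}=\lambda_n(x)$, so $\pi\circ i=\id_{\bZ\langle x\rangle}$ and $i$ is injective.

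For surjectivity, the natural map $\fS(m)/(H\times\fS(n))\to\Omega(m)^{(n-m)}$ is everywhere $[\fS_{n-m}:H]$-to-one, so by Corollary~\ref{cor:Theta-d-to-1} every transitive summand of any $[X]\in\Theta(\fS)$ is an integer multiple of some $[\Omega(m)^{(k)}]$; it therefore suffices to show $[\Omega(m)^{(k)}]\in\on{im}(i)$ for all $m,k\geq 0$. Decomposing $N$-element subsets of $\Omega$ by the size of their intersection with $\{1,\ldots,m\}$ yields an $\fS(m)$-equivariant isomorphism $\Omega^{(N)}\cong\coprod_{j}\{1,\ldots,m\}^{(j)}\times\Omega(m)^{(N-j)}$, whence
\begin{equation*}
[\Omega^{(N)}]=\sum_{j=0}^{\min(m,N)}\binom{m}{j}\,[\Omega(m)^{(N-j)}]
\end{equation*}
in $\Theta(\fS)$. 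This upper-triangular system (with $1$'s on the diagonal coming from $j=0$) inverts by induction on $N$, expressing each $[\Omega(m)^{(N)}]$ as a $\bZ$-linear combination of the $[\Omega^{(N')}]$'s. I expect the main obstacle to be the fixed-point count in the second paragraph, where the integer-valued-polynomial structure of $\bZ\langle x\rangle$ actually enters the picture; once that closed form is in hand, the rest is essentially bookkeeping with the Chu--Vandermonde-type identity above.
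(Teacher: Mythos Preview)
Your overall strategy matches the paper's exactly: Proposition~\ref{prop:R-Theta} gives $i$, Proposition~\ref{prop:fixed-pt} with $U_n=\fS(n)$ gives the fixed-point measure, and the upper-triangular identity relating $[\Omega^{(N)}]$ to $[\Omega(m)^{(k)}]$ gives surjectivity (this is precisely Lemma~\ref{lem:Theta-sym-1}). Your explicit computation of the fixed-point polynomial $[\fS_{n-m}:H]\binom{x-m}{n-m}$ is a bit more than the paper does---there one only observes that $j\circ i$ sends $f$ to $(n\mapsto f(n))\in R$, which suffices for injectivity---but it is correct and gives the retraction $\pi$ directly.

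There is, however, a genuine slip in your verification of condition~(a). In this paper $\fS$ denotes the group of \emph{all} permutations of $\Omega$, not just the finitely supported ones, so ``any $g\in\fS$ has finite support'' is false and you cannot simply take $g$ itself as the normalizing element. The fix is to actually use the coset $\fS(n)g$: choose $N$ with $g^{-1}(\{1,\ldots,n\})\subseteq\{1,\ldots,N\}$; then for $m>N$ one has $\{1,\ldots,n\}\subseteq g(\{1,\ldots,m\})$, so $g(\{1,\ldots,m\})\setminus\{1,\ldots,n\}$ has exactly $m-n$ elements in $\{n+1,n+2,\ldots\}$, and any $\sigma\in\fS(n)$ sending this set to $\{n+1,\ldots,m\}$ satisfies $\sigma g(\{1,\ldots,m\})=\{1,\ldots,m\}$, hence $\sigma g$ normalizes $\fS(m)$. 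With this correction, the rest of your argument goes through unchanged.
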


We establish two lemmas before proving the theorem.

\begin{lemma} \label{lem:Theta-sym-1}
The classes $[\Omega^{(n)}]$ generate $\Theta(\fS)$ as a $\bZ$-module.
\end{lemma}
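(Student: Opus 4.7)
The plan is to show that $[X]$ lies in the $\bZ$-span of $\{[\Omega^{(m)}]\}_{m\ge 0}$ for every transitive $\hat{\fS}$-set $X$; additivity of $[-]$ then finishes. By conjugation invariance and Proposition~\ref{prop:sym-open-subgroups}, I can take $X=U/V$ with $U=H\times\fS(n)$ for some $H\subset\fS_n$, and $V$ an open subgroup of $U$. Set $V_2=V\cap\fS(n)$ and $H_0=\pi_1(V)$, where $\pi_1\colon U\to H$ is the projection, so that $V/V_2\cong H_0$. Since $V\subset H_0\times\fS(n)\subset U$, the map $U/V\to H/H_0$ exhibits $U/V$ as a disjoint union of $|H|/|H_0|$ fibers, each $\hat{\fS}$-isomorphic to $(H_0\times\fS(n))/V$ (the isomorphism between fibers is given by left-multiplication by elements of $H\subset\fS_n$, which commutes with $\fS(n)$ and hence with any group of definition). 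The $|H_0|$-to-$1$ map $(H_0\times\fS(n))/V_2\to(H_0\times\fS(n))/V$, combined with $(H_0\times\fS(n))/V_2\cong H_0\times(\fS(n)/V_2)$ as $\hat{\fS}$-sets (a disjoint union of $|H_0|$ copies of $\fS(n)/V_2$), yields $|H_0|\cdot[(H_0\times\fS(n))/V]=|H_0|\cdot[\fS(n)/V_2]$. Invoking the torsion-freeness of $\Theta(\fS)$ from Theorem~\ref{thm:binom}, we conclude
\[ [X]=\frac{|H|}{|H_0|}\,[\fS(n)/V_2]. \]

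To handle $[\fS(n)/V_2]$, I apply Proposition~\ref{prop:sym-open-subgroups} inside $\fS(n)$ (which is isomorphic to $\fS$ as a topological group) to write $V_2=K\times\fS(M)$ with $K\subset\fS_{\{n+1,\ldots,M\}}$ and $M\ge n$, after an $\fS(n)$-conjugation. Since $\fS(M)\subset V_2$ with $V_2/\fS(M)=K$, the map $\fS(n)/\fS(M)\to\fS(n)/V_2$ is $|K|$-to-$1$, so $|K|\cdot[\fS(n)/V_2]=[\fS(n)/\fS(M)]$. The $\fS(n)$-set $\fS(n)/\fS(M)$ is naturally identified with $(\Omega\setminus\{1,\ldots,n\})^{[M-n]}$, and iterated fibration (using $[\Omega\setminus\{1,\ldots,k\}]=x-k$ with $x=[\Omega]$) yields $[\fS(n)/\fS(M)]=(x-n)(x-n-1)\cdots(x-M+1)=(M-n)!\,\binom{x-n}{M-n}$. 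Since $|K|$ divides $(M-n)!$ (as $K\subset\fS_{M-n}$), torsion-freeness gives $[\fS(n)/V_2]=\frac{(M-n)!}{|K|}\binom{x-n}{M-n}$. Substituting back,
\[ [X]=\frac{|H|\,(M-n)!}{|H_0|\,|K|}\,\binom{x-n}{M-n}, \]
and the Chu--Vandermonde identity $\binom{x-n}{M-n}=\sum_k\binom{-n}{M-n-k}\binom{x}{k}$, together with $\phi(\binom{x}{k})=[\Omega^{(k)}]$ from Proposition~\ref{prop:R-Theta}, exhibits $[X]$ as an integer linear combination of the $[\Omega^{(k)}]$'s.

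The main obstacle is the systematic use of torsion-freeness of $\Theta(\fS)$ (Theorem~\ref{thm:binom}) to convert fibration identities of the form $d\cdot a=b$ into explicit formulas $a=b/d$ whenever $d$ divides the relevant integer; without this input the fibration relations only determine $[X]$ modulo $d$-torsion. The remaining ingredients---the iterated application of Proposition~\ref{prop:sym-open-subgroups}, the identification of $\fS(n)/\fS(M)$ with the $\hat{\fS}$-set $(\Omega\setminus\{1,\ldots,n\})^{[M-n]}$, and the iterated fibration computing its class---are comparatively routine.
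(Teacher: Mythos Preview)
Your proof is correct, but it takes a more roundabout path than the paper's and leans on torsion-freeness (Theorem~\ref{thm:binom}) in places where it can be avoided.

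The paper's proof begins by \emph{shrinking} the group of definition to $\fS(m)$, so that a transitive $\hat\fS$-set is $\fS(m)/(H\times\fS(\ell))$ for some $H\subset\fS_{\ell-m}$. It then fibres \emph{upward} via the $[\fS_{\ell-m}:H]$-to-1 map to $\fS(m)/(\fS_{\ell-m}\times\fS(\ell))\cong\Omega_m^{(\ell-m)}$, giving $[X]=d\cdot[\Omega_m^{(\ell-m)}]$ directly from Corollary~\ref{cor:Theta-d-to-1}, with no division needed. Your approach instead fibres \emph{downward} from $\fS(n)/\fS(M)$ onto $\fS(n)/V_2$, producing an identity $|K|\cdot[\fS(n)/V_2]=[\fS(n)/\fS(M)]$ that you then have to divide. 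The upward fibration is the simpler move.

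Your first appeal to torsion-freeness is in fact unnecessary: since $\pi_1(V)=H_0$, the subgroup $\fS(n)$ already acts transitively on $(H_0\times\fS(n))/V$ with stabilizer $\fS(n)\cap V=V_2$, so $(H_0\times\fS(n))/V\cong\fS(n)/V_2$ as $\hat\fS$-sets outright. (Even more simply, had you shrunk $U$ to $\fS(n)$ at the outset, this whole step would disappear.) For the second step, the paper's decomposition $\Omega^{(N)}=\bigsqcup_{i+j=N}[m]^{(i)}\times\Omega_m^{(j)}$ is exactly your Chu--Vandermonde identity read combinatorially, and again avoids any division.

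So: your argument is valid given Theorem~\ref{thm:binom} (which is proved earlier and independently), but the paper's proof is more self-contained. The moral is that when you have $d$-to-$1$ maps available in both directions, choose the one that makes your target class the \emph{base} rather than the \emph{total space}.
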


\begin{proof}
Let $\Omega_m=\{m+1,m+2,\ldots\}$, which is a $\hat{\fS}$-subset of $\Omega$, and let $c_{m,n}=[\Omega_m^{(n)}]$. We must show that the $c_{0,n}$'s generate $\Theta(G)$ as a $\bZ$-module. We proceed in two steps.

\textit{Step 1.} We first show that the $c_{m,n}$'s generate $\Theta(\fS)$. Thus let $X$ be a finitary $U$-set for some open subgroup $U$. We must realize $X$ as a $\bZ$-linear combination of the classes $[\Omega_m^{(n)}]$. We are free to shrink $U$, so we may suppose $U=\fS(m)$ for some $m$. Clearly, it suffices to treat the case where $U$ acts transitively on $X$. By Proposition~\ref{prop:sym-open-subgroups}, there is some $\ell \ge m$ such that $X \cong \fS(m)/(H \times \fS(\ell))$, where $H$ is a subgroup of $\fS_{\ell-m}$. Let $X'=\fS(m)/(\fS_{\ell-m} \times \fS(\ell))$. The natural map $X \to X'$ is $d$-to-1, where $d=[\fS_{\ell-m}:H]$, and so $[X]=d \cdot [X']$ by Corollary~\ref{cor:Theta-d-to-1}. As $X' \cong \Omega_m^{(\ell-m)}$, we see that $[X]=dc_{m,\ell-m}$, as required.

\textit{Step 2.} We now show that the $c_{m,\bullet}$'s can be generated by the $c_{0,\bullet}$'s. We have $\Omega=[m] \sqcup \Omega_m$. We thus find
\begin{displaymath}
\Omega^{(n)} = \bigsqcup_{i+j=n} \big( [m]^{(i)} \times \Omega_m^{(j)} \big),
\end{displaymath}
and so
\begin{displaymath}
c_{0,n} = \sum_{i+j=n} \binom{m}{i} c_{m,j}.
\end{displaymath}
Thus $c_{0,\bullet}$ is related to $c_{m,\bullet}$ by an upper-triangular matrix with 1's on the diagonal, and so the $c_{0,\bullet}$'s generate the $c_{m,\bullet}$'s.
\end{proof}

\begin{lemma} \label{lem:Theta-sym-2}
The hypotheses of Proposition~\ref{prop:fixed-pt} hold with $U_n=\fS(n)$. We thus have a fixed point measure for $\fS$.
\end{lemma}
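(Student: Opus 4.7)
The plan is to verify hypotheses~(a) and~(b) of Proposition~\ref{prop:fixed-pt} for the chain $U_n = \fS(n)$. The structural input throughout is the identification $\sigma\fS(n)\sigma^{-1} = \fS_{\sigma(\{1,\ldots,n\})}$ (pointwise stabilizer) and its consequence $N_\fS(\fS(n)) = \fS_n \times \fS(n)$, the setwise stabilizer of $\{1,\ldots,n\}$.

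For~(a), given $g \in \fS$ and $n > 0$, choose $N \ge n$ with $g^{-1}(\{1,\ldots,n\}) \subseteq \{1,\ldots,N\}$; this is possible since the left-hand set is finite. For $m > N$, let $B = g(\{1,\ldots,m\})$; then $\{1,\ldots,n\} \subseteq B$, and $B \setminus \{1,\ldots,n\}$ and $\{n+1,\ldots,m\}$ are two $(m-n)$-element subsets of $\Omega \setminus \{1,\ldots,n\}$. Pick any bijection between them and extend (by the identity on $\{1,\ldots,n\}$ and any bijection on the remainder of $\Omega \setminus \{1,\ldots,n\}$) to a permutation $h \in \fS(n)$ with $h(B) = \{1,\ldots,m\}$. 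Then $hg$ preserves $\{1,\ldots,m\}$ setwise, hence normalizes $\fS(m)$.

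For~(b), apply Proposition~\ref{prop:sym-open-subgroups} to write $V = g_0 V_0 g_0^{-1}$ with $V_0 = H \times \fS(n_0)$ and $H \le \fS_{n_0}$. Choose $N$ so that $\fS(N) \subseteq V$ and $g_0(\{1,\ldots,n_0\}) \subseteq \{1,\ldots,N\}$; both conditions can be met by openness of $V$ and finiteness of $g_0(\{1,\ldots,n_0\})$. For $n > N$ and $g \in \fS$ with $g\fS(n)g^{-1} \subseteq V$, set $A = g(\{1,\ldots,n\})$; since both $g\fS(n)g^{-1}$ and $h\fS(n)h^{-1}$ are pointwise stabilizers (of $A$ and of $h(\{1,\ldots,n\})$ respectively), and a pointwise stabilizer determines its set of fixed points, it suffices to find $h \in V$ with $h(\{1,\ldots,n\}) = A$, i.e., to realize $A$ in the $V$-orbit of $\{1,\ldots,n\}$ on $\Omega^{(n)}$. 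Conjugating by $g_0^{-1}$ transports this to the problem for $V_0$ acting on $\Omega^{(n)}$: letting $A_0 = g_0^{-1}(A)$, we need $A_0$ in the $V_0$-orbit of $g_0^{-1}(\{1,\ldots,n\})$. A direct check shows $\fS_{A_0} \subseteq V_0$ iff $\{1,\ldots,n_0\} \subseteq A_0$: the $(\Leftarrow)$ direction is immediate since then $\fS_{A_0} \subseteq \fS(n_0) \subseteq V_0$; for $(\Rightarrow)$, if some $i \in \{1,\ldots,n_0\}$ lies outside $A_0$, pick $j \in \Omega \setminus (A_0 \cup \{1,\ldots,n_0\})$, note that the transposition $(i\,j)$ lies in $\fS_{A_0}$ but fails to preserve $\{1,\ldots,n_0\}$ setwise, contradicting $V_0 \subseteq \fS_{n_0} \times \fS(n_0)$. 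The $\fS(n_0)$ factor then acts transitively on $n$-subsets containing $\{1,\ldots,n_0\}$, producing a single $V_0$-orbit. Our choice of $N$ forces $\{1,\ldots,n_0\} \subseteq g_0^{-1}(\{1,\ldots,n\})$, so $g_0^{-1}(\{1,\ldots,n\})$ lies in that orbit along with $A_0$, completing the verification.

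The main obstacle is really just the conjugation bookkeeping in~(b): one must choose $N$ large enough, relative to $g_0$'s action on $\{1,\ldots,n_0\}$, that after transport by $g_0^{-1}$ the reference set $\{1,\ldots,n\}$ still contains $\{1,\ldots,n_0\}$. Once this is arranged, both hypotheses reduce to transparent combinatorial statements about subsets of $\Omega$.
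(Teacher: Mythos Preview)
Your proof is correct and follows essentially the same approach as the paper's. Both arguments reduce~(a) to the observation that a coset $\fS(n)g$ contains an element preserving $\{1,\ldots,m\}$ setwise once $m$ is large enough, and reduce~(b) via Proposition~\ref{prop:sym-open-subgroups} to the transitivity of $\fS(n_0)$ on $n$-subsets of $\Omega$ containing $\{1,\ldots,n_0\}$. Your treatment is in fact more careful: for~(a) the paper asserts one can find $h \in gU_n$ lying in $\fS_{2n}$, but the bound $2n$ cannot be independent of $g$ (your choice of $N$ via $g^{-1}(\{1,\ldots,n\})$ is what is actually needed); and for~(b) you track the conjugating element $g_0$ explicitly rather than asserting ``we can replace $V$ by a conjugate,'' which requires a small argument to justify. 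One notational remark: you write $\fS_A$ for the pointwise stabilizer of $A$, whereas the paper reserves $\fS_n$ for the finite symmetric group and writes $G(A)$ or $\fS(A)$ for pointwise stabilizers; this is clear from your parenthetical but worth aligning.
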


\begin{proof}
(a) Let $g \in \fS$ and $n>0$ be given. We can then find $h \in gU_n$ that belongs to the finite symmetric group $\fS_{2n}$. This element centralizes $\fS(m)$ for each $m>2n$.

(b) Let $U$ be a given open subgroup of $G$. To verify this condition, we can replace $U$ by a conjugate. By Proposition~\ref{prop:sym-open-subgroups}, we can therefore assume $U=H \times \fS(n)$ for some finite subgroup $H$ of $\fS_n$. For $m>n$, the $\fS$-conjugates of $\fS(m)$ that are contained in $U$ are the subgroups $\fS(A)$, where $A$ is an $m$-element subset of $\Omega$ containing $\{1,\ldots,n\}$. It is clear that all such subgroups are $U$-conjugate, which verifies the condition.
\end{proof}

\begin{proof}[Proof of Theorem~\ref{thm:Theta-sym}]
By Proposition~\ref{prop:R-Theta}, there is a ring homomorphism $i \colon \bZ\langle x \rangle \to \Theta(G)$ satisfying $i(\lambda_n(x))=[\Omega^{(n)}]$. By Lemma~\ref{lem:Theta-sym-1}, $i$ is surjective. Let $j \colon \Theta(\fS) \to R$ be the fixed-point measure provided by Lemma~\ref{lem:Theta-sym-2}. Consider the composition
\begin{displaymath}
\xymatrix{
\bZ\langle x \rangle \ar[r]^-i & \Theta(\fS) \ar[r]^-j & R }
\end{displaymath}
Since $U_n$ has $n$ fixed points on $\Omega$, we see that $j(i(x))=j([\Omega]) \in R$ is the function given by $n \mapsto n$. Since $i$ and $j$ are ring homomorphisms, we see that $j(i(f))$ is the function $n \mapsto f(n)$. It follows that $j \circ i$ is injective, and so $i$ is injective. This completes the proof.
\end{proof}

The proof of the theorem shows that $j$ is actually the inverse to $i$. In other words, if $X$ is a finitary $\hat{\fS}$-set then there is a unique polynomial $p_X \in \bZ\langle x \rangle$ such that $p_X(n)=\# X^{\fS(n)}$ for all $n \gg 0$, and $p_X$ is the element of $\bZ\langle x \rangle$ corresponding to $[X] \in \Theta(\fS)$ under the isomorphism $i$. We now see that the measure $\mu_t$ for $\fS$ defined in Example~\ref{ex:sym-measure} is actually a well-defined measure. In fact, it is now an easy matter to classify all field-valued measures for $\fS$; see \S \ref{ss:sym-rep} and \S \ref{ss:sym-char-p} below.

\subsection{Property~(P)} \label{ss:symP}

To apply our results on $\uRep$, we must verify the technical condition~(P) from Definition~\ref{defn:P}. We now do this. We begin with a somewhat more general statement.

\begin{proposition} \label{prop:P}
Let $R$ be a subring of $\bQ[x]$ containing $\bZ[x]$. Suppose that the following conditions hold:
\begin{enumerate}
\item Let $K$ be a number field, let $t \in K$, and let $\phi_t \colon \bQ[x] \to K$ be the ring homomorphism given by $\phi_t(x)=t$. Then the ring $\phi_t(R)$ has infinitely many prime ideals.
\item If $\fp$ is a prime ideal of $R$ containing the prime number $p$ then $R/\fp$ is a finitely generated $\bF_p$-algebra.
\end{enumerate}
Then $R$ satisfies~(P).
\end{proposition}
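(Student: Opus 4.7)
The strategy is to deduce (P) from two intermediate assertions: (i) $R$ is a Jacobson ring, and (ii) every maximal ideal $\fm$ of $R$ has $R/\fm$ of positive characteristic. Granted these, the standard Nullstellensatz for Jacobson rings (every finitely generated algebra over a Jacobson ring is Jacobson, with maximal ideals contracting to maximal ideals via finite residue extensions) immediately yields (P): a finitely generated reduced $R$-algebra $R'$ and a nonzero $r \in R'$ produce the nonzero Jacobson $R$-algebra $R'[r^{-1}]$, any maximal ideal of which gives a map $R' \to k$ with $k$ of positive characteristic and $r \mapsto {}$nonzero.

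The positive-characteristic assertion at maximal ideals is almost immediate from (a): if $R/\fm$ had characteristic $0$, it would contain $\bQ$ and hence be a field quotient of $R \otimes \bQ = \bQ[x]$, i.e.\ a number field $K$. Writing $t$ for the image of $x$ in $K$, we would have $\phi_t(R) = R/\fm = K$, a field with only one prime ideal, contradicting (a).

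For Jacobsonness I would verify that $R/\fp$ has zero Jacobson radical for every prime $\fp \subset R$. If $\fp$ contains a rational prime $p$, then (b) makes $R/\fp$ a finitely generated $\bF_p$-algebra, hence Jacobson classically. If $\fp = (0)$, for a nonzero $r \in R \subset \bQ[x]$ I would pick an integer $n$ that is not a root of $r$, so $r(n) \ne 0$; the subring $\phi_n(R) \subset \bQ$ has the form $\bZ[P^{-1}]$ for some set $P$ of primes, which (a) forces to be coinfinite. A prime $q \notin P$ not dividing the numerator or denominator of $r(n)$ then yields a surjection $R \twoheadrightarrow \bZ[P^{-1}] \twoheadrightarrow \bF_q$ whose kernel is a maximal ideal of $R$ avoiding $r$.

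The main obstacle, and the technical heart of the proof, is the remaining case $\fp \ne 0$ with $\fp \cap \bZ = (0)$. Here $S := R/\fp$ embeds in the number field $K := S \otimes \bQ = \bQ[x]/(\fp \bQ[x])$ as $\phi_t(R)$, where $t$ is the image of $x$. A dimension argument — any finite-dimensional $\bQ$-subalgebra of $K$ that is a domain must be a subfield — forces every nonzero prime of $S$ to have nonzero $\bZ$-contraction, so (b) ensures that each maximal ideal of $S$ has finite residue field. To conclude $J(S) = 0$, the plan is, for any nonzero $r = g(t) \in S$ with $g \in R \setminus \fp$, to construct a maximal ideal of $R$ containing $\fp$ but not $g$ via an evaluation-at-$\tau$ map $R \to \bZ_q \to \bF_q$, where $q$ is a rational prime for which the minimal polynomial $f$ of $t$ admits a Hensel-liftable root $\tau \in \bZ_q$ with $g(\tau) \not\equiv 0 \pmod{q}$. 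Showing that suitable $q$ exist — namely, that only finitely many primes are excluded by the conditions ``evaluation at $\tau$ is well-defined on $R$'' and ``$g(\tau) \equiv 0 \pmod{q}$'' — is where (a) ultimately plays its decisive role, by supplying the abundance of residue characteristics that appear in $\phi_{t'}(R)$ as $t'$ varies.
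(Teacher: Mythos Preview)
Your Jacobson-ring strategy is sound and genuinely different from the paper's. The paper does not prove $R$ Jacobson; instead, for a domain $R'$ of characteristic~$0$ it introduces the finitely generated $\bZ[x]$-subalgebra $R'_0 \subset R'$ with $R'_0 \otimes \bQ = R' \otimes \bQ$, and shows that any $f \in J \cap R'_0[1/N]$ vanishes under every homomorphism $\phi \colon R'_0 \to \cO_K[1/M]$: extending $\phi$ to $R' \otimes \bQ \to K$, one uses (a) to find infinitely many primes $\fp$ of $\phi(R') \subset K$, at each of which $f \in J$ forces $\phi(f) \in \fp$; hence $\phi(f) \in \cO_K[1/M]$ vanishes modulo infinitely many primes and is zero. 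Density of $\ol{\bQ}$-points in the finite-type $\bZ$-scheme $\Spec R'_0[1/N]$ then gives $f = 0$. Your route trades this point-counting for general Nullstellensatz machinery, which is conceptually cleaner but obliges you to analyse the typically non-noetherian ring $R$ directly.

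Your hard case --- $\fp \ne 0$ with $\fp \cap \bZ = 0$ --- is not complete, and the closing sentence misidentifies how (a) enters. With $S = \phi_t(R) \subset K$, condition (a) gives infinitely many primes of $S$; combined with (b) and the fact that every element of $S$ is $\bQ$-algebraic, every nonzero prime of $S$ is in fact maximal with finite residue field (you gesture at this but do not quite state it). What remains is to show that any nonzero $r \in S$ lies in only \emph{finitely many} of these maximal ideals, so that infinitely many avoid $r$. This finiteness is not supplied by (a) --- nor by letting ``$t'$ vary'', since $t$ is fixed by $\fp$ --- but by the number-field structure of $K$: each maximal $\fm \subset S$ is dominated by a valuation ring of $K$, so $\fm = \fm_v \cap S$ for some finite place $v$ with $S \subset \cO_v$, and $v(r)>0$ holds at only finitely many places. (Equivalently, pass to the integral closure $\tilde S$, an overring of $\cO_K$ and hence Dedekind, and use lying-over.) Your restriction to $\tau \in \bZ_q$ corresponds to degree-one places and is an unnecessary complication; once you allow arbitrary places of $K$ the argument goes through cleanly.
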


\begin{proof}
Let $R'$ be a finitely generated reduced $R$-algebra and let $J=\bigcap \ker(\phi)$, where the intersection is over all homomorphisms $\phi$ from $R'$ to fields of positive characteristic. We must show $J=0$. One easily reduces to the case where $R'$ is integral. If $R'$ has characteristic~$p$ then by (b) it is finitely generated as an $\bF_p$-algebra, and the result is clear. We thus suppose that $\Frac(R')$ has characteristic~0.

Pick a finite generating set for $R'$ as an $R$-algebra, and let $R'_0$ be the $\bZ[x]$-subalgebra of $R'$ they generate. Note that $R'_0 \otimes \bQ = R' \otimes \bQ$. Let $N \ge 1$ be a positive integer, and consider an element $f$ of $R'_0[1/N] \cap J$. Let $M$ be a multiple of $N$ and consider a ring homomorphism $\phi \colon R'_0 \to \cO_K[1/M]$ for some number field $K$. Then $\phi$ extends uniquely to a homomorphism $\phi \colon R' \otimes \bQ \to K$, and $\phi(R')$ has infinitely many prime ideals by (a). Let $\fp$ be a prime of $\phi(R')$ not dividing $M$. Then we have a commutative diagram
\begin{displaymath}
\xymatrix{
R'_0[1/N] \ar[r] \ar[d]_{\phi} & R'[1/N] \ar[d] \\
\cO_K[1/M] \ar[r] & \phi(R')/\fp }
\end{displaymath}
Since $f \in J$, its image under the right map is zero. Thus $\phi(f) \in \cO_K[1/M]$ reduces to~0 modulo infinitely many primes, and is therefore~0. We thus see that $f$ vanishes on all $\cO_K[1/M]$-points of $R'_0[1/N]$, for all $K$ and $M$. Since $R'_0$ is a finitely generated reduced ring, it follows that $f=0$. Finally, since every element of $J$ belongs to $R'_0[1/N]$ for some $N$, we see that $J=0$.
\end{proof}

\begin{corollary} \label{cor:sym-P}
The ring $\bZ\langle x \rangle$ of integer-valued polynomials satisfies~(P). Therefore any measure for $\fS$ satisfies~(P).
\end{corollary}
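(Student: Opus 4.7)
The plan is to apply Proposition~\ref{prop:P} to $R = \bZ\langle x \rangle$. The second assertion of the corollary then follows automatically: every measure on $\fS$ factors as $\Theta(\fS) \cong \bZ\langle x \rangle \to k$ via Theorem~\ref{thm:Theta-sym}, so $\mu(\Theta(\fS))$ is a quotient of $\bZ\langle x \rangle$ and inherits~(P).

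Condition~(b) of Proposition~\ref{prop:P} is essentially immediate from the $p$-adic rigidity already established in this paper. By Theorem~\ref{thm:binom} together with Elliott's theorem (the same input used to prove Corollary~\ref{cor:meas-lift}), the identity $f^p = f$ holds throughout $\bZ\langle x \rangle/p$. Hence for any prime $\fp$ containing $p$, the integral domain $\bZ\langle x \rangle/\fp$ has every element satisfying $\bar f^p = \bar f$, which forces $\bZ\langle x \rangle/\fp = \bF_p$, trivially a finitely generated $\bF_p$-algebra.

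For condition~(a), fix $t \in K$ and set $A = \phi_t(\bZ\langle x \rangle) \subseteq K$. I would exhibit infinitely many rational primes $p$ that are non-units in $A$; each then produces a maximal ideal of $A$ containing $p$. The mechanism is to build, for infinitely many $p$, an embedding $\iota_p \colon \bQ(t) \hookrightarrow \bQ_p$ with $\iota_p(t) \in \bZ_p$. Picking a primitive algebraic integer $\alpha$ for $\bQ(t)$ with minimal polynomial $m \in \bZ[x]$, the elementary fact that a non-constant integer polynomial has roots modulo infinitely many primes supplies infinitely many $p$ at which $m$ has a simple root mod~$p$; Hensel's lemma lifts each such root to $\bZ_p$, yielding $\iota_p$. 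Excluding the finitely many $p$ dividing the denominators of $t$ expressed over $\alpha$, one has $\iota_p(t) \in \bZ_p$, and Mahler's theorem then forces $\iota_p\bigl(\binom{t}{n}\bigr) \in \bZ_p$ for all $n$; so $\iota_p(A) \subseteq \bZ_p$ and $1/p \notin A$.

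The main technical care goes into landing in a completion of residue degree exactly one, namely $\bQ_p$ itself, rather than a generic completion $\cO_{\bQ(t),\mathfrak{p}}$: an unramified extension of $\bZ_p$ is typically not a binomial ring (already $\binom{i}{3} = (3+i)/6$ fails to lie in $\bZ_3[i]$), so Mahler's theorem does not extend verbatim to the larger completion. Producing infinitely many degree-one primes—via the elementary root-counting argument above, or equivalently via Chebotarev density—is therefore the substantive step of the argument.
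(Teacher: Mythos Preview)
Your proof is correct and follows essentially the same approach as the paper's: both verify the two hypotheses of Proposition~\ref{prop:P}, embedding $\phi_t(R)$ into $\bZ_p$ via degree-one primes for~(a) and using the binomial-ring identity $f^p \equiv f \pmod p$ for~(b). The only cosmetic differences are that the paper invokes Chebotarev for primes split in $K$ rather than your elementary root-counting argument for $\bQ(t)$, and cites Cahen--Chabert directly for~(b) rather than rederiving it from Elliott; your explicit remark on why residue degree one is essential (the $\bZ_3[i]$ counterexample) is a nice addition the paper leaves implicit.
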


\begin{proof}
We check the two conditions of Proposition~\ref{prop:P}.

(a) Let $\phi_t \colon \bQ[x] \to K$ be given. Let $S_1$ be the set of primes $\fp$ of $\cO_K$ such that $t$ is integral at $\fp$, i.e., $t \in \cO_{K,\fp}$. This set contains all but finitely many primes of $\cO_K$. Let $S_2$ be the set of primes $\fp$ such that $K_{\fp}=\bQ_p$. This set is infinite; in fact, by the Chebotarev density theorem, a positive density set of primes of $\bQ$ split completely in $K$. Let $\fp \in S_1 \cap S_2$. Then $t$ belongs to $\cO_{K,\fp} \cong \bZ_p$, and so $\binom{t}{n}$ does as well, for all $n \ge 0$. We thus see that $\phi_t(R) \subset \cO_{K,\fp}$, and so $\fp$ is (or rather, extends to) a prime of $\phi_t(R)$.

(b) If $\fp$ is a prime of $R$ containing a prime number $p$ then $R/\fp=\bF_p$ \cite[Theorem~13]{CahenChabert}.
\end{proof}

\subsection{Characteristic~0} \label{ss:sym-rep}

Let $k$ be a field of characteristic~0. Given $t \in k$, we have a homomorphism $\bZ\langle x \rangle \to k$ by evaluating a polynomial at $t$, and all homomorphisms $\bZ\langle x \rangle \to k$ have this form; the key point here is that $\bZ\langle x \rangle \otimes \bQ=\bQ[x]$. Let $\mu_t$ be the measure for $\fS$ corresponding to this homomorphism. Thus
\begin{displaymath}
\mu_t(\Omega^{(n)})=\binom{t}{n}.
\end{displaymath}
The $\mu_t$ account for all of the $k$-valued measures for $\fS$.

\begin{proposition} \label{prop:sym-qreg}
The measure $\mu_t$ is quasi-regular for any $t \in k$. It is regular if and only if $t \in k \setminus \bN$.
\end{proposition}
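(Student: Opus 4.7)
The plan is to reduce both the regularity and quasi-regularity questions to a direct computation of $\mu_t$ on the $\fS$-orbits $\Omega^{(n)}$ and, more generally, on subsets of the form $\Omega_N^{(n)} \subset \Omega^{(n)}$, where $\Omega_N = \{N+1,N+2,\ldots\}$. The structural input is Corollary~\ref{cor:S-set}, which says every transitive $\fS$-set $X$ admits a $d$-to-$1$ map onto some $\Omega^{(n)}$, so by Corollary~\ref{cor:Theta-d-to-1} we get $[X]=d\cdot[\Omega^{(n)}]$ in $\Theta(\fS)$, hence
\[
\mu_t(X) \;=\; d\,\binom{t}{n}
\]
in $k$. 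Since $k$ has characteristic $0$, $d\ge 1$ is a unit, so $\mu_t(X)$ is a unit iff $\binom{t}{n}\neq 0$ in $k$.

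The regularity claim follows immediately. The measure $\mu_t$ is regular iff $\binom{t}{n}\neq 0$ for every $n\ge 0$. Since $\binom{t}{n}=0$ holds iff $t\in\{0,1,\ldots,n-1\}$, we see that $\mu_t$ is regular iff $t\notin\bN$. In particular, $\mu_t$ is quasi-regular whenever $t\in k\setminus\bN$ (by taking $U=\fS$).

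It remains to handle the case $t\in\bN$. Fix any integer $N$ with $N>t$, let $U=\fS(N)$, and note that $U$ is admissible (with $U\cong\fS$ via its action on $\Omega_N$). I would first compute $\mu_t(\Omega_N^{(n)})$ for all $n$. Using the disjoint decomposition
\[
\Omega^{(n)} \;=\; \coprod_{i+j=n}\bigl([N]^{(i)}\times \Omega_N^{(j)}\bigr),
\]
the additivity and multiplicativity of $\mu_t$ yield
\[
\binom{t}{n} \;=\; \sum_{i+j=n}\binom{N}{i}\,\mu_t\!\bigl(\Omega_N^{(j)}\bigr),
\]
and Vandermonde's identity then gives $\mu_t(\Omega_N^{(n)})=\binom{t-N}{n}$. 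I would then apply Proposition~\ref{prop:sym-open-subgroups} (to $U\cong\fS$) and Corollary~\ref{cor:S-set} inside $U$: every transitive $U$-set $Y$ fits into a $d'$-to-$1$ $U$-map $Y\to\Omega_N^{(n)}$, so
\[
\mu_t(Y) \;=\; d'\,\binom{t-N}{n}.
\]
Because $N>t$ as integers, $t-N$ is a negative integer, and so $\binom{t-N}{n}=(-1)^n\binom{N-t+n-1}{n}\neq 0$ in $k$ (char.\ $0$) for every $n\ge 0$. Hence $\mu_t\vert_U$ is regular and $\mu_t$ is quasi-regular.

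There is no real obstacle: the only care-points are (i) interpreting ``$t\in\bN$'' as $t$ being the image of a non-negative integer in $k$, so that the inequality $N>t$ and the resulting non-vanishing of $\binom{t-N}{n}$ make sense, and (ii) making sure to use the structure theorem for open subgroups of $\fS$ applied inside $U\cong\fS$, so that every transitive $U$-set is accounted for and not just those of the form $\Omega_N^{(n)}$.
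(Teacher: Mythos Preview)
Your proof is correct and follows essentially the same approach as the paper. The only cosmetic difference is that the paper takes specifically $N=t+1$, so that under the isomorphism $\fS(t+1)\cong\fS$ the restricted measure is identified with $\mu_{-1}$, which is regular by the first part; you instead allow any $N>t$ and verify $\mu_t(\Omega_N^{(n)})=\binom{t-N}{n}$ explicitly via the Vandermonde identity, which amounts to the same thing.
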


\begin{proof}
First suppose $t \not\in \bN$. By the above formula, we see that $\mu_t(\Omega^{(n)})$ is non-zero for all $n \ge 0$. Let $X$ be a transitive $\fS$-set. By Corollary~\ref{cor:S-set}, there is a $d$-to-1 map $X \to \Omega^{(n)}$ for some $n$ and $d$, and so $\mu(X)=d \cdot \mu(\Omega^{(n)})$ is non-zero. Thus $\mu_t$ is regular.

Now suppose $t=m$ is a natural number. Then $\mu_t(\Omega^{(m+1)})=0$, and so $\mu$ is not regular. The group $\fS(m+1)$ is isomorphic to $\fS$. One easily sees that the restriction of $\mu_t$ to $\fS(m+1)$ corresponds to the measure $\mu_{-1}$ on $\fS$ under this isomorphism, and is therefore regular. Thus $\mu_t$ is quasi-regular.
\end{proof}

Fix $t$ and let $\uRep(\fS)$ denote the category $\uRep_k(\fS; \mu_t)$ in what follows. Since $\mu_t$ is normal, $\uRep(\fS)$ has the structure of a tensor category. The following is our main result about it:

\begin{theorem} \label{thm:S-cat}
The category $\uRep(\fS)$ is locally pre-Tannakian, and semi-simple if $t \not\in \bN$. Moreover, $\uRep^{\rf}(\fS)$ is the abelian envelope of $\uPerm(\fS)$ in the sense of Definition~\ref{defn:abenv}.
\end{theorem}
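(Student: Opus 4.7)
The plan is to apply the general machinery developed in Part~\ref{part:genrep}, specifically Theorem~\ref{thm:regss} (which says $\uRep(G;\mu)$ is locally pre-Tannakian, and semi-simple for $\mu$ regular) and Theorem~\ref{thm:abenv} (which says $\uRep^{\rf}(G;\mu)$ is the abelian envelope of $\uPerm(G;\mu)$). Both theorems require only three hypotheses: $k$ is a field, $\mu$ is quasi-regular, and $\mu$ satisfies property~(P). Thus the entire theorem reduces to verifying these three hypotheses for $(\fS, \mu_t)$, all of which have already been established in the preceding subsections.

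More precisely, I would first note that $k$ is a field of characteristic zero by assumption. Proposition~\ref{prop:sym-qreg} asserts that $\mu_t$ is quasi-regular for every $t \in k$, and that $\mu_t$ is moreover regular when $t \notin \bN$. Corollary~\ref{cor:sym-P} asserts that any measure for $\fS$ satisfies property~(P), since $\Theta(\fS) \cong \bZ\langle x \rangle$ by Theorem~\ref{thm:Theta-sym} and this ring satisfies (P). With these three hypotheses verified, Theorem~\ref{thm:regss} immediately yields that $\uRep(\fS)$ is locally pre-Tannakian, and moreover semi-simple when $t \notin \bN$.

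For the abelian envelope statement, I would invoke Theorem~\ref{thm:abenv} directly: under exactly the same three hypotheses ($k$ a field, $\mu$ quasi-regular, (P)), it asserts that $\uRep^{\rf}(G;\mu)$ is the abelian envelope of $\uPerm(G;\mu)$ in the precise sense of Definition~\ref{defn:abenv}. Having just verified these for $(\fS, \mu_t)$, the conclusion follows. I would also remark that normality of $\mu_t$, needed implicitly for $\uRep(\fS)$ to carry a tensor structure in the first place, is subsumed by quasi-regularity.

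There is essentially no main obstacle here: all of the work has already been done, either in proving the general Theorems~\ref{thm:regss} and~\ref{thm:abenv} or in the earlier preparatory lemmas of \S\ref{s:sym} (the computation of $\Theta(\fS)$, the quasi-regularity analysis, and the verification of (P)). The only point requiring even a moment's thought is the case $t \in \bN$: here $\mu_t$ fails to be regular because $\mu_t(\Omega^{(t+1)})=0$, so one must use quasi-regularity (restriction to $\fS(t+1) \cong \fS$, on which $\mu_t$ becomes $\mu_{-1}$ and is regular) rather than regularity. This is exactly the content of the second half of Proposition~\ref{prop:sym-qreg}, so no additional argument is needed.
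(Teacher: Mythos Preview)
Your proposal is correct and follows essentially the same approach as the paper: invoke Proposition~\ref{prop:sym-qreg} for (quasi-)regularity, Corollary~\ref{cor:sym-P} for property~(P), and then apply Theorems~\ref{thm:regss} and~\ref{thm:abenv} directly. The paper's proof is in fact even terser than yours, occupying two sentences.
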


\begin{proof}
As $\mu_t$ is (quasi-)regular (Proposition~\ref{prop:sym-qreg}) and satisfies condition~(P) (Corollary~\ref{cor:sym-P}), the first statement follows from Theorem~\ref{thm:regss}. The second statement follows from Theorem~\ref{thm:abenv}
\end{proof}

In \cite[\S 2]{Deligne3}, Deligne constructed his interpolation category $\uRep(\fS_t)$, and in \cite[Proposition~8.19]{Deligne3}, he constructed an abelian version $\uRep^{\rm ab}(\fS_t)$. In \cite[Conjecture~8.21]{Deligne3}, Deligne formulated a conjecture that implies $\uRep^{\rm ab}(\fS_t)$ is the abelian envelope of $\uRep(\fS_t)$ in the sense of Definition~\ref{defn:abenv}. This conjecture was proved by Comes--Ostrik \cite[Theorem~1.2(b)]{ComesOstrik}. It is not difficult to see that the Karoubian envelope our category $\uPerm(\fS; \mu_t)$ is equivalent to Deligne's $\uRep(\fS_t)$, and that $\uRep(\fS;\mu_t)$ is also the abelian envelope of this Karoubian envelope. Thus, by the uniqueness of abelian envelopes, we see that our $\uRep(\fS; \mu_t)$ is equivalent to Deligne's $\uRep^{\rm ab}(\fS_t)$.

\begin{remark}
If $t=m$ belongs to $\bN$ then Deligne's construction of $\uRep^{\rm ab}(\fS_t)$ takes place within the semi-simple category $\uRep(\fS_{-1})$. We also make use of $\fS_{-1}$, in a certain sense, as we restrict to $\fS(m+1)$ to identify $\mu_t$ with the regular measure $\mu_{-1}$ (see Proposition~\ref{prop:sym-qreg}).
\end{remark}

\subsection{Positive characteristic} \label{ss:sym-char-p}

Let $k$ be a field of positive characteristic $p$. Given a $p$-adic integer $t \in \bZ_p$, there is a ring homomorphism $\bZ\langle x \rangle \to \bF_p \subset k$ given by evaluating a polynomial at $t$ (which results in a $p$-adic integer) and reducing modulo $p$. In fact, these account for all homomorphisms $\bZ\langle x \rangle \to k$, see \cite[\S 4]{CahenChabert}. Let $\mu_t \colon \Theta(\fS) \to k$ be the homomorphism corresponding to $t$. Thus
\begin{displaymath}
\mu_t(\Omega^{(n)})=\binom{t}{n} \pmod{p}.
\end{displaymath}
The $\mu_t$ account for all of the $k$-valued measures for $\fS$.

We now give an important example that illustrates the somewhat pathological behavior of the measure $\mu_t$. Let $X=\Omega^{[p]}$, let $Y=\Omega^{(p)}$, and let $f \colon X \to Y$ be the natural map. Given $\phi \in \cC(X)$, there is $c \in k$ and $N \in \bN$ such that $\phi(a_1, \ldots, a_p)=c$ whenever $a_i>N$ for all $i$. We call $c$ the \defn{generic value} of $\phi$. Similar remarks apply to $\phi\in \cC(Y)$ (simply consider $f^*\phi$). Let $M \subset \cC(Y)$ be the space of all functions with generic value~0.

\begin{proposition}
The image of $f_* \colon \cC(X) \to \cC(Y)$ is $M$.
\end{proposition}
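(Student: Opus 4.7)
The plan is to prove the two inclusions separately; both rely on an explicit understanding of the $\fS(n)$-orbits on $X$ and $Y$ and exploit the fact that $p! \equiv 0 \pmod p$ while $(p-k)!$ is a unit in $k$ for $1 \le k \le p$.

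For the easy direction $\im(f_*) \subseteq M$, I will pick $\phi \in \cC(X)$, choose $n$ so that $\phi$ is left $\fS(n)$-invariant, and examine $f_*\phi$ at a generic point $y = \{a_1,\dots,a_p\}$ with all $a_i > n$. The key observation is that if $x,x' \in f^{-1}(y)$ are two orderings of $y$ related by $\pi \in \fS_p$, the permutation of $\Omega$ that acts as $\pi$ on $\{a_1,\dots,a_p\}$ and as the identity elsewhere lies in $\fS(n)$ (since each $a_i > n$), so $\phi(x)=\phi(x')$. Thus $\phi$ is constant on this $p!$-element fiber, and $(f_*\phi)(y) = p! \cdot c = 0$ in characteristic~$p$. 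Hence $f_*\phi$ has generic value zero.

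For the harder direction $M \subseteq \im(f_*)$, I will first describe $M$ combinatorially, then exhibit explicit preimages. The $\fS(n)$-orbits on $Y$ are indexed by subsets $S \subseteq \{1,\dots,n\}$ with $|S| \le p$, where $O_S = \{y \in Y : y \cap \{1,\dots,n\} = S\}$; the orbit $O_\emptyset$ is the generic one, and $M \cap \cC(Y)^{\fS(n)}$ is spanned by the indicator functions $1_{O_S}$ for $S \neq \emptyset$. Given such an $S = \{s_1,\dots,s_k\}$ with $1 \le k \le p$, I will define $\phi_S \in \cC(X)$ to be the indicator of those tuples whose first $k$ coordinates are exactly $(s_1,\dots,s_k)$ and whose remaining $p-k$ coordinates are distinct elements of $\Omega \setminus \{1,\dots,n\}$. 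A direct fiber count shows $f_*\phi_S = (p-k)! \cdot 1_{O_S}$: for $y \in O_S$ the contributing orderings are those freely permuting $y \setminus S$ in the last $p-k$ slots, and for $y$ in any other orbit no ordering satisfies the support condition. Since $1 \le k \le p$ the scalar $(p-k)!$ is a unit of $k$, so $1_{O_S}$ lies in the image. Taking the union over $n$ yields $M \subseteq \im(f_*)$.

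Neither direction is difficult; the main bookkeeping obstacle is making sure the fiber combinatorics in the computation of $f_*\phi_S$ is airtight — in particular, verifying that no ordering of a point $y \in O_{S'}$ with $S' \neq S$ can contribute, which forces $S \subseteq S'$ and $S' \setminus S \subseteq \{>n\}$, hence $S = S'$.
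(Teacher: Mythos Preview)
Your proof is correct and follows the same strategy as the paper. The first inclusion is argued identically. For the reverse inclusion, the paper only writes out the case $p=2$: it observes that point masses lie in the image, that $f$ restricts to a bijection from $\{(1,n):n>1\}$ to $\{\{1,n\}:n>1\}$ (so the indicator of the latter is hit), and then finishes by noting that these two kinds of functions generate $M$ as a $k[\fS]$-module. Your argument is the natural extension to arbitrary $p$: instead of reducing to a small generating set and invoking $\fS$-equivariance, you exhibit an explicit preimage $\phi_S$ for every nongeneric $\fS(n)$-orbit indicator $1_{O_S}$, the key point being that $(p-k)!$ is invertible for $1\le k\le p$. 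Specializing your $\phi_S$ to $p=2$, $n=1$, $S=\{1\}$ recovers exactly the paper's set $B$. So the two arguments coincide in spirit; yours is just carried out uniformly in $p$ rather than by example plus a module-generation shortcut.
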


\begin{proof}
Let $\phi$ be an element of $\cC(X)$. Let $c$ be the generic value of $\phi$, and let $N$ be such that $\phi(a_1,\ldots,a_p)=c$ when $a_i>N$. We thus see that $(f_*\phi)(\{a_1,\ldots,a_p\})=p! \cdot c=0$ if $a_i>N$. Thus $f_* \phi$ has generic value~0, so it belongs to $M$.

For notational simplicity, we prove the reverse inclusion just for $p=2$. It is clear that $\im(f_*)$ contains all point masses. Now let $A \subset Y$ consist of all 2-element subsets of the form $\{1,n\}$, and let $B \subset X$ consist of all ordered pairs of the form $(1,n)$. Then $f$ induces a bijection $B \to A$, and so $f_*(1_B)=1_A$. It is clear that $\im(f_*)$ is stable under the action of the group $\fS$, and thus a $k[\fS]$-module. One easily sees that $M$ is generated as a $k[\fS]$-module by the point masses and $1_A$, and so $M \subset \im(f_*)$.
\end{proof}

\begin{corollary} \label{cor:char-p-bad}
We have the following:
\begin{enumerate}
\item The measure $\mu_t$ is not normal.
\item $M$ is a subobject of $\cC(Y)$ in $\uRep(\fS; \mu_t)$.
\item The functor $\Phi \colon \uPerm(\fS; \mu_t) \to \uRep(\fS; \mu_t)$ is not full.
\item The Schwartz space $\cC(Y)$ is not generated by point masses (as an $A(\fS)$-module). 
\end{enumerate}
\end{corollary}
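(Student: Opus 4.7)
The plan is to derive all four statements from the preceding proposition identifying $\im(f_*) = M$, together with a ``generic value'' functional $\gen \colon \cC(Y) \to \bbone$ sending each Schwartz function to its eventual constant value. Parts (a), (b), and (d) then follow essentially formally from this identification; the real work is in (c).

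For (a), normality of $\mu_t$ would force $f_* \colon \cC(X) \to \cC(Y)$ to be surjective by Definition~\ref{defn:normal}, but $1_Y \in \cC(Y)$ has generic value $1$ and so lies outside $M = \im(f_*)$. For (b), since $f_*$ is $A$-linear by Proposition~\ref{prop:A-push-pull}, the image $M$ is automatically an $A$-submodule of $\cC(Y)$, hence a subobject in $\uRep(\fS;\mu_t)$. For (d), every point mass $\delta_{Y,S}$ with $S \in Y$ is supported at a single point and therefore has generic value zero, so lies in $M$; since $M$ is a proper $A$-submodule of $\cC(Y)$ (as $1_Y \notin M$), the $A$-submodule generated by all point masses is contained in $M$ and cannot be all of $\cC(Y)$.

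For (c), I would first verify that $\gen$ is $A$-linear, with $k$ regarded as the trivial module $\bbone$ via the augmentation $\epsilon \colon A \to k$, and that $\ker(\gen) = M$. The $\fS$-equivariance is clear since any $g \in \fS$ alters a Schwartz function in only finitely many places. The substantive check is $A$-linearity: given $\phi \in \cC(Y)^U$ with generic value $c$ and $a \in A$, the formula $(a\phi)(T) = \int_{\fS/U} a_U(h)\,\phi(h^{-1}T)\,dh$ from Proposition~\ref{prop:A-schwartz} shows that for $T$ whose elements are all sufficiently large integers, every $h^{-1}T$ with $h$ in the finitary support of $a_U$ lies in the generic region of $\phi$, so $\phi(h^{-1}T) = c$; hence $(a\phi)(T) = c \int_{\fS/U} a_U(h)\,dh = c \cdot \epsilon(a)$ by transitivity of push-forward (Proposition~\ref{prop:push-trans}), giving $\gen(a\phi) = \epsilon(a)\gen(\phi)$. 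With this in hand, if $\Phi$ were full then $\gen$ would lift to a morphism $\Vec_Y \to \Vec_{\bone}$ in $\uPerm(\fS;\mu_t)$; but $\Hom_{\uPerm}(\Vec_Y, \Vec_{\bone}) = \Mat_{\bone, Y}^{\fS}$ is the space of $\fS$-invariant Schwartz functions on the transitive $\fS$-set $Y = \Omega^{(p)}$, which is one-dimensional, spanned by $1_Y$, with image under $\Phi$ the integration map $\int_Y \colon \cC(Y) \to \bbone$. For any $S_0 \in Y$ one has $\int_Y \delta_{Y,S_0} = \mu_t(\{S_0\}) = 1$ while $\gen(\delta_{Y,S_0}) = 0$, so $\gen$ cannot be a scalar multiple of $\int_Y$, and $\Phi$ is not full.

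The only non-formal step is the $A$-linearity of $\gen$, which requires a mild ``uniform genericity'' argument relating the finitary support of $a_U$ inside $\fS/U$ to the cofinite generic region of $\phi$ inside $\Omega^{(p)}$. Once that is verified, the remaining deductions reduce to bookkeeping on top of the preceding proposition and the computation of $\Hom_{\uPerm}(\Vec_Y, \Vec_{\bone})$.
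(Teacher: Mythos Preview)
Your arguments for (a), (b), and (d) match the paper's. For (c) you take a slightly different tack: the paper uses the direct sum decomposition $\cC(Y)=M\oplus N$ (with $N$ the constant functions) to produce an $A$-linear \emph{endomorphism} of $\cC(Y)$ killing all point masses, and then notes that any matrix killing point masses is zero. You instead produce an $A$-linear map $\gen\colon\cC(Y)\to\bbone$ and compare it against the one-dimensional space $\Mat^{\fS}_{\bone,Y}$. Both routes are fine and are really two views of the same splitting.

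However, your direct verification of $A$-linearity of $\gen$ has a gap. You claim that for $T$ with sufficiently large entries, ``every $h^{-1}T$ with $h$ in the finitary support of $a_U$ lies in the generic region of $\phi$.'' But the support of $a_U$ in $\fS/U$ is typically an \emph{infinite} set (finitary means finitely many orbits under some open subgroup, not finitely many points). For instance if $a_U$ is a nonzero constant function on $\fS/U$, then $h$ ranges over all of $\fS/U$ and $h^{-1}T$ hits every $p$-subset of $\Omega$; you certainly cannot pin all of these inside the generic region of $\phi$. So the step ``$\phi(h^{-1}T)=c$, hence factor $c$ out of the integral'' is not justified.

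The clean fix---which is exactly what the paper does---is to bypass the integral computation entirely: the constant functions $N\subset\cC(Y)$ form an $A$-submodule, being the image of the $A$-linear map $p^*\colon\bbone\to\cC(Y)$ (Proposition~\ref{prop:A-push-pull}). Since $M=\im(f_*)$ is also an $A$-submodule and $\cC(Y)=M\oplus N$ as $k$-modules, this is a decomposition of $A$-modules, and your $\gen$ is precisely the projection onto $N\cong\bbone$, hence $A$-linear for free. With this repair your argument for (c) goes through.
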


\begin{proof}
(a) is clear, as $f$ is a surjection of finitary $\fS$-sets but $f_*$ is not surjective.

(b) follows since $f_*$ is $A(\fS)$-linear (Proposition~\ref{prop:A-push-pull}).

(c) Let $N \subset \cC(Y)$ be the space of constant functions. This is an $A(\fS)$-submodule as well, being the image of the natural map $\bbone \to \cC(Y)$. We have
\begin{displaymath}
\cC(Y) = M \oplus N.
\end{displaymath}
We thus see that there is a non-zero $A(\fS)$-linear endomorphism $a$ of $\cC(Y)$ that kills all point masses (they all belong to $M$). However, any matrix that kills all point masses is zero (see Proposition~\ref{prop:matrix-faithful}). It follows that $a$ is not induced by a matrix, and so $\Phi$ is not full.

(d) Since $M$ contains the point masses in $\cC(Y)$ they do not generate.
\end{proof}

Despite these negative results about $\uRep(\fS; \mu_t)$, our results about $\uPerm(\fS; \mu_t)$ are consistent with it embedding into a rigid abelian tensor category (see Remark~\ref{rmk:abenv}). In fact, it is known that $\uPerm(\fS; \mu_t)$ does embed into such a category: this category can be found inside the ultraproduct of the categories $\Rep_k(\fS_n)$, as explained in \cite{Harman} (following an idea of Deligne). The results of \cite{Harman} suggest that $\uPerm(\fS; \mu_t)$ is the ``correct'' category of permutation modules in this setting, as it reflects the limiting behavior of the representation theory of finite symmetric groups over $k$. (Similarly, these results suggest that the $\bZ\langle x \rangle$-linear category $\uPerm(\fS; \mu_{\rm univ})$ is the ``correct'' integral category to consider.)

\begin{remark}
In fact, \cite{Harman} actually works in the relative setting $(G, \sY)$, where $\sY$ is the stabilizer class of Young subgroups (see Example~\ref{ex:stab-young}). It seems that this relative case is pretty much the same as the absolute case, though; for instance, one can show that $\Theta(\fS)=\Theta(\fS;\sY)$.
\end{remark}

\subsection{A relative case} \label{ss:sym-rel}

Let $\sE$ be the stabilizer class generated by $\Omega$ (see Example~\ref{ex:stab-class}). We now examine how our theory works in the relative case $(\fS, \sE)$. We begin by computing the $\Theta$ ring:

\begin{proposition} \label{prop:sym-theta-rel}
We have an isomorphism $i \colon \bZ[x] \to \Theta(G; \sE)$ given by $i(x)=[\Omega]$. More generally, we have $i((x)_n)=[\Omega^{[n]}]$, where $(x)_n=x(x-1) \cdots (x-n+1)$.
\end{proposition}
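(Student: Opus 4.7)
The plan is to construct the map $i$ and verify the formula $i((x)_n) = [\Omega^{[n]}]$ first, then argue surjectivity and injectivity separately. For the formula, note that the projection $\Omega^{[n]} \to \Omega^{[n-1]}$ forgetting the last coordinate is an $\fS(n-1)$-equivariant surjection with fiber $\Omega \setminus \{1, \ldots, n-1\}$, a set whose class in $\Theta(\fS; \sE)$ is $[\Omega] - (n-1)$ by the additivity axiom applied to the decomposition $\Omega = [n-1] \sqcup (\Omega \setminus [n-1])$. Multiplicativity in fibrations (axiom (e)) then yields $[\Omega^{[n]}] = [\Omega^{[n-1]}]([\Omega] - n + 1)$ inductively, giving $[\Omega^{[n]}] = ([\Omega])_n$. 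Since $(x)_n$ is a $\bZ$-basis for $\bZ[x]$, the existence and formula for $i$ follow, and the condition $i(x) = [\Omega]$ determines $i$ uniquely.

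For surjectivity, I would adapt the two-step strategy of Lemma~\ref{lem:Theta-sym-1} to the $\sE$-constraint. By Proposition~\ref{prop:sym-open-subgroups}, every open subgroup of $\fS$ is conjugate to some $H \times \fS(\ell)$ with $H \subseteq \fS_\ell$, but the $\sE$-smoothness requirement forces stabilizers to be conjugates of $\fS(n)$; intersecting with a group of definition $\fS(m)$, one finds that a transitive $\sE$-smooth $\hat{\fS}$-set is (up to isomorphism of $\fS(m)$-sets) of the form $\fS(m)/\fS(A)$ with $A \supseteq [m]$ a finite subset, which is just $\Omega_m^{[k]}$ for $k = |A \setminus [m]|$, where $\Omega_m = \Omega \setminus [m]$. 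Thus every class in $\Theta(\fS; \sE)$ is a $\bZ$-linear combination of classes $[\Omega_m^{[k]}]$. The decomposition
\[
\Omega^{[k]} = \coprod_{B \subseteq [k]} \{(x_1,\ldots,x_k) \in \Omega^{[k]} : x_i \in [m] \iff i \in B\}
\]
and the fact that each piece with $|B| = j$ splits as $(m)_j$ copies of $\Omega_m^{[k-j]}$ (after choosing the injection $B \to [m]$) gives
\[
[\Omega^{[k]}] = \sum_{j=0}^{\min(k,m)} \binom{k}{j}(m)_j [\Omega_m^{[k-j]}],
\]
which is an upper-triangular invertible relation expressing $[\Omega_m^{[k]}]$ in terms of the $[\Omega^{[j]}]$ for $j \le k$. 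Hence the $[\Omega^{[k]}]$ generate $\Theta(\fS; \sE)$, proving $i$ is surjective.

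For injectivity, I would use the fixed-point measure. The inclusion of stabilizer classes $\sE \subseteq \sE_{\mathrm{all}}$ (where $\sE_{\mathrm{all}}$ denotes all open subgroups) induces a ring homomorphism $\Theta(\fS; \sE) \to \Theta(\fS)$, which under the isomorphism of Theorem~\ref{thm:Theta-sym} sends $[\Omega^{[n]}]$ to $(x)_n \in \bZ\langle x \rangle \subseteq \bQ[x]$. Composing $i$ with this map gives the natural inclusion $\bZ[x] \hookrightarrow \bZ\langle x \rangle$ (sending $x$ to $x$), which is injective, hence $i$ is injective. Alternatively, composing with the fixed-point measure of Lemma~\ref{lem:Theta-sym-2} valued in $R = \prod_n \bZ / \bigoplus_n \bZ$ sends $[\Omega]$ to the class of $n \mapsto n$, and a non-zero polynomial in this element would remain non-zero in $R$.

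The main obstacle is the surjectivity step, specifically the structural claim that every transitive $\sE$-smooth $\hat{\fS}$-set is of the form $\Omega_m^{[k]}$ --- this requires ruling out the Young-type subgroups $H \times \fS(\ell)$ with $H \subsetneq \fS_\ell$ nontrivial, which appear in the absolute computation of Theorem~\ref{thm:Theta-sym} (introducing the index factor $d$ that in turn produces the $1/n!$ needed for $\bZ\langle x \rangle$); the whole point of restricting to $\sE$ is that these are excluded, giving $\bZ[x]$ rather than $\bZ\langle x \rangle$. Once this analysis of $\sE$-smooth stabilizers is in place, the rest is bookkeeping.
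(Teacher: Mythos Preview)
Your proposal is correct and follows essentially the same approach as the paper: classify transitive $\sE$-smooth $\hat{\fS}$-sets as $\Omega_m^{[k]}$, reduce to the classes $[\Omega^{[n]}]$ via an upper-triangular relation, and deduce injectivity from the composite $\bZ[x] \to \Theta(\fS;\sE) \to \Theta(\fS) \cong \bZ\langle x\rangle$. Your treatment is in fact more detailed than the paper's --- you spell out the fibration argument for $[\Omega^{[n]}]=([\Omega])_n$ and give the explicit coefficients $\binom{k}{j}(m)_j$ in the triangular system, whereas the paper just says ``arguing as in Lemma~\ref{lem:Theta-sym-1}.''
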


\begin{proof}
Let $X$ be a transitive $\fS$-smooth $\fS(n)$-set for some $n$. By Proposition~\ref{prop:sym-open-subgroups}, we see that $X$ is isomorphic to $\fS(n)/\fS(m)$ for some $m>n$. In the notation of Lemma~\ref{lem:Theta-sym-1}, this is isomorphic to $\Omega_n^{[m-n]}$. We thus see that these the classes of such sets span $\Theta(G; \sE)$. Arguing as in Lemma~\ref{lem:Theta-sym-1}, one then shows that the classes of the sets $\Omega^{[n]}$ span. It follows that $i$ is surjective. Since the composition
\begin{displaymath}
\bZ[x] \to \Theta(G; \sE) \to \Theta(G) \cong \bZ\langle x \rangle
\end{displaymath}
is a ring homomorphism taking $x$ to $x$, it is injective. Thus $i$ is injective as well, which completes the proof.
\end{proof}

Let $k$ be a field and let $t \in k$. By the above proposition, we get a measure $\mu_t \colon \Theta(G; \sE) \to k$ by $[\Omega] \mapsto t$. Since the characteristic~0 case is not much different from the absolute case, we assume for simplicity that $k$ has characteristic $p>0$ in what follows. If $t \not\in \bF_p$ then $\mu_t$ is regular, while if $t \in \bF_p$ then $\mu_t$ is not even quasi-regular. One can show that $\mu_t$ is always normal (see \cite[\S 15.8]{arxiv}). Thus the category $\uRep(\fS, \sE; \mu_t)$ has a tensor structure, and the functor from $\uPerm(\fS, \sE; \mu_t)$ is full. It is also not difficult to show that $\uRep(\fS,\sE;\mu_t)$ is locally of finite length using properties of smooth $k[\fS]$-modules.

However, there is some pathological behavior. Suppose $t \not\in \bF_p$. Since the object $\cC(\Omega)$ has categorical dimension $t$, it follows that $\uRep(\fS,\sE;\mu_t)$ is not a rigid tensor category (see Remark~\ref{rmk:abenv}). Also, a standard construction produces a nilpotent matrix of non-zero trace. Indeed, let $X=\Omega^p$ and let $A \in \Mat_X$ be the matrix giving the action of the $p$-cycle $(1\;2\;\cdots\;p)$ on the tensor factors of $\Vec_X=\Vec_{\Omega}^{\otimes p}$. Then $A^p=I_X$, and so $B=A-I_X$ is nilpotent. A simple computation shows that $\tr(B)=t-t^p$, which is non-zero.

When $t \in \bF_p$, the above obstructions to rigidity disappear. However, we still suspect that $\uRep(\fS,\sE;\mu_t)$ is not rigid.

\section{Linear groups over finite fields} \label{s:finlin}

\subsection{Overview}

Fix a finite field $\bF$ with $q$ elements, and put
\begin{displaymath}
\bV_n=\bF^n, \quad \bV = \bigcup_{n \ge 1} \bV_n, \quad
G_n = \GL_n(\bF), \quad G = \bigcup_{n \ge 1} G_n.
\end{displaymath}
The group $G$ is oligomorphic with respect to its action on $\bV$. The main purpose of \S \ref{s:finlin} is to examine our theory in this case. We compute $\Theta(G)$ and establish some results about $\uRep(G)$. We also examine some variants in \S \ref{ss:levi} and \S \ref{ss:otherlinear}.

\subsection{Subgroup structure}

For a finite dimensional subspace $W$ of $\bV$, let $U(W)$ be the subgroup of $G$ fixing every element of $W$. Put $U_n=U(\bV_n)$, so that
\begin{displaymath}
U_n = 
\begin{bmatrix}
1_n & * \\
0 & *  \\
\end{bmatrix}
\end{displaymath}
By definition, a subgroup of $G$ is open if and only if it contains some $U_n$. We thus see that the $U(W)$ are open subgroups. We now study the structure of open subgroups.

\begin{proposition} \label{prop:stabilizerlemma}
Let $U$ be an open subgroup containing $U(W)$ and $U(W')$, for two finite dimensional subspaces $W$ and $W'$ of $\bV$. Then $U$ contains $U(W \cap W')$.
\end{proposition}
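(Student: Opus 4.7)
The plan is to recognize that the statement is purely algebraic: since $U$ contains $U(W)$ and $U(W')$, it also contains the subgroup they generate, so it suffices to prove
\begin{displaymath}
\langle U(W), U(W')\rangle = U(W \cap W').
\end{displaymath}
The inclusion $\subseteq$ is immediate because both generators fix $W \cap W'$ pointwise; the content is the reverse containment. The hypothesis that $U$ is open plays no role beyond furnishing the containments $U \supseteq U(W), U(W')$.

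I would first choose a basis of $\bV$ adapted to both subspaces: a basis $A$ of $W \cap W'$, extended separately by $B_W$ to a basis of $W$ and by $B_{W'}$ to a basis of $W'$. Since $(W+W')/(W \cap W')$ decomposes canonically as $W/(W \cap W') \oplus W'/(W \cap W')$, the set $A \cup B_W \cup B_{W'}$ is linearly independent and gives a basis of $W + W'$; I extend it by $B_0$ to a basis of $\bV$ and set $B = B_W \cup B_{W'} \cup B_0$. In this basis, the basis vectors lying in $W$, $W'$, and $W \cap W'$ are precisely $A \cup B_W$, $A \cup B_{W'}$, and $A$.

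Next, since every element of $G$ lies in some $\GL_N(\bF)$, standard Gauss--Jordan reduction inside a finite-dimensional subspace containing $W + W'$ shows that $U(W \cap W')$ is generated by the elementary transvections $E_{v,w}(\lambda) \colon w \mapsto w + \lambda v$ (fixing every other basis vector), for $w \in B$, $v$ any basis vector distinct from $w$, and $\lambda \in \bF$, together with the dilations $D_w(\lambda)$ for $w \in B$ and $\lambda \in \bF^{\times}$. The crux is the observation that each such generator fixes every basis vector except $w$, so it lies in $U(W)$ iff $w \notin W$ (equivalently $w \in B_{W'} \cup B_0$) and in $U(W')$ iff $w \notin W'$ (equivalently $w \in B_W \cup B_0$). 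The union of these two conditions is $w \in B_W \cup B_{W'} \cup B_0 = B$, which is precisely the index set of our generators. Hence every generator lies in $U(W) \cup U(W') \subseteq \langle U(W), U(W')\rangle$, completing the proof.

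There is no real obstacle beyond picking the basis correctly; the remaining work is combinatorial bookkeeping. The essential observation is that a basis vector $w \in B$ cannot lie in both $W$ and $W'$ simultaneously (since that would force $w \in A$), so each elementary generator automatically lands in at least one of $U(W)$ or $U(W')$.
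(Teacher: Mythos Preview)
Your argument is correct and essentially the same as the paper's. Both proofs reduce to showing $\langle U(W), U(W')\rangle = U(W \cap W')$ by choosing an adapted basis (the paper conjugates to standard basis vectors, you pick an abstract adapted basis) and observing that every elementary generator of $U(W \cap W')$ fixes either all of $W$ or all of $W'$; the paper phrases this via block matrix forms rather than transvections and dilations, but the content is identical.
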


\begin{proof}
Suppose $\dim(W \cap W') = a$, $\dim(W) = a+b$, and $\dim(W) = a+c$ then by conjugating we may assume that $W = \text{span}(e_1,e_2, \dots e_{a+b})$ and $W = \text{span}(\{e_1,e_2, \dots e_{a}\} \cup \{e_{a+b+1},e_{a+b+2}, \dots e_{a+b+c} \})$. Explicitly this means we want to write an arbitrary element of $U_a$, the pointwise stabilizer of $W \cap W'$, as a product of $(a + b +c + \infty) \times (a + b +c + \infty)$-block matrices of the form:
\begin{displaymath}
\begin{bmatrix}
1 & * & 0 & * \\  
0 & * & 0 & * \\
0 & * & 1 & * \\
0 & * & 0 & *
\end{bmatrix}
\text{ and }
\begin{bmatrix}
1 & 0 & * & * \\  
0 & 1& * & * \\
0 & 0  & * & * \\
0 & 0 & * & *
\end{bmatrix}
\end{displaymath}
These two forms include all elementary matrices (i.e., those invertible matrices differing from the identity matrix in a single entry) in $U_a$, which are well-known to generate $U_a$. \end{proof}

\begin{proposition} \label{prop:subgroupstructure}
If $U$ is an open subgroup of $G$ then $U$ is conjugate to a subgroup of the form
\begin{displaymath}
H\cdot U_n = 
\begin{bmatrix}
H & * \\
0 & *  \\
\end{bmatrix}
\end{displaymath}
where $H$ is a subgroup of $G_n$.
\end{proposition}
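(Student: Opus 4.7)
The plan is to locate a minimal finite-dimensional subspace $W_0 \subseteq \bV$ whose pointwise stabilizer lies in $U$, show that $U$ must preserve $W_0$ setwise, and then read off the block-triangular form after conjugating $W_0$ to $\bV_n$.

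First I would let $\cW$ denote the collection of finite-dimensional subspaces $W$ of $\bV$ with $U(W)\subseteq U$; this is non-empty since $U$ is open and therefore contains some $U_N=U(\bV_N)$. By Proposition~\ref{prop:stabilizerlemma}, $\cW$ is closed under pairwise (hence finite) intersection. Fixing any $W_1\in\cW$, the subcollection of members contained in $W_1$ is a non-empty, intersection-closed family of subspaces of a finite-dimensional space; its intersection $W_0$ is its minimum. Since for any $W\in\cW$ the subspace $W\cap W_1$ again lies in $\cW$ and is contained in $W_1$, we get $W_0\subseteq W\cap W_1\subseteq W$, so $W_0$ is a global minimum of $\cW$. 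After conjugating $U$ by an element of $G$ sending $W_0$ to $\bV_n$ (with $n=\dim W_0$), I may assume $W_0=\bV_n$ and hence $U_n\subseteq U$.

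The central step is to show that every $g\in U$ satisfies $g\bV_n=\bV_n$. For any $g\in G$ one has $gU(\bV_n)g^{-1}=U(g\bV_n)$, and since $g\in U$ and $U_n\subseteq U$ this gives $U(g\bV_n)\subseteq U$, i.e.\ $g\bV_n\in\cW$. If $g\bV_n\neq\bV_n$ then $\bV_n\cap g\bV_n$ is a proper subspace of $\bV_n$ (both have dimension $n$), yet Proposition~\ref{prop:stabilizerlemma} places it in $\cW$, contradicting the minimality of $\bV_n$. This minimality argument is the only real content of the proof and is the step I expect to be the main (if modest) obstacle: one has to see that the intersection lemma forces setwise stability of the minimal subspace.

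Given this, $U$ is contained in the parabolic subgroup $P_n\subseteq G$ of matrices $\begin{bmatrix}A & B\\ 0 & D\end{bmatrix}$ with $A\in G_n$. The projection $\pi\colon P_n\to G_n$, $\begin{bmatrix}A & B\\ 0 & D\end{bmatrix}\mapsto A$, has kernel exactly $U_n$. Since $U\supseteq U_n=\ker\pi$, setting $H=\pi(U)\subseteq G_n$ yields $U=\pi^{-1}(H)=H\cdot U_n$, which is precisely the required form.
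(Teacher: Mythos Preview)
Your proof is correct and follows essentially the same approach as the paper: use Proposition~\ref{prop:stabilizerlemma} to obtain a unique minimal subspace $W_0$ whose pointwise stabilizer lies in $U$, then observe that for $g\in U$ the subspace $gW_0$ also lies in $\cW$, forcing $gW_0=W_0$ by minimality. Your write-up is more explicit than the paper's about the existence of the minimum and about the final identification $U=\pi^{-1}(H)=H\cdot U_n$, but the underlying argument is the same.
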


\begin{proof}
Proposition~\ref{prop:stabilizerlemma} implies there is a unique minimal subspace $W$ for which $U$ contains the pointwise stabilizer of $W$, it then suffices to show that $U$ stabilizes $W$ setwise. Conjugating the pointwise stabilizer of $W$ by $g \in U$ we see that $U$ also contains the stabilizer of $gW$, and therefore $W \cap gW$. Since $W$ was minimal we see that $W \cap gW = W$, so $gW = W$.  Hence $W$ is stabilized by all $g \in U$.
\end{proof}

\begin{proposition} \label{prop:subgroupinclusion}
Suppose $U \subset U_n$ is an open subgroup, then $U$ is conjugate within $U_n$ to a subgroup of the form
\begin{displaymath}
(K \rtimes H)\cdot U_\ell =
\begin{bmatrix}
1_n & K & * \\
0 & H  & *  \\
0& 0 & *
\end{bmatrix}
\end{displaymath}
where $H$ is a subgroup of $G_{\ell-n}$, and $K$ is an $H$-stable subgroup of $\bF^{n \times (\ell-n)}$.
\end{proposition}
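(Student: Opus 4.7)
The strategy parallels the proof of Proposition~\ref{prop:subgroupstructure}, but with the conjugation constrained to lie in $U_n$, and one must afterwards produce a Levi-type splitting inside $\bV_\ell$.

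I would first extract a canonical $U$-stable subspace refining $\bV_n$. Let $\mathcal{F}$ be the collection of finite-dimensional subspaces $W \subseteq \bV$ with $\bV_n \subseteq W$ and $U(W) \subseteq U$. Since $U$ is open, it contains some $U_N$ with $N \ge n$, so $\bV_N \in \mathcal{F}$, and by Proposition~\ref{prop:stabilizerlemma}, $\mathcal{F}$ is closed under intersection. Hence $\mathcal{F}$ has a unique minimum element $W$, of dimension $\ell$ say. For any $g \in U$ we have $U(gW) = g\,U(W)\,g^{-1} \subseteq U$, and $\bV_n \subseteq gW$ since $g \in U \subseteq U_n$ fixes $\bV_n$ pointwise; so $gW \in \mathcal{F}$, and minimality forces $gW = W$. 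Thus $W$ is $U$-stable.

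Next I would conjugate within $U_n$ to align $W$ with $\bV_\ell$. Choose vectors $w_{n+1}, \ldots, w_\ell \in W$ lifting a basis of $W/\bV_n$, extend $\{e_1, \ldots, e_n, w_{n+1}, \ldots, w_\ell\}$ to a basis of some $\bV_N$, and let $g \in U_n$ be the element sending $w_{n+i}$ to $e_{n+i}$ and acting as the identity on the extra basis vectors and outside $\bV_N$. After replacing $U$ by $gUg^{-1}$, we have $U_\ell \subseteq U \subseteq U_n$ and $U$ preserves $\bV_\ell$ setwise.

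The final step is the extraction of $K$ and $H$. The quotient $\overline{U} := U/U_\ell$ embeds into the finite-dimensional group $(U_n \cap \mathrm{Stab}(\bV_\ell))/U_\ell \cong \bF^{n \times (\ell-n)} \rtimes \GL_{\ell-n}(\bF)$. Let $H$ be the image of $\overline{U}$ under the second projection and let $K$ be the kernel of this projection; since $K$ is normal in $\overline{U}$, and conjugation in the ambient semidirect product acts on $\bF^{n \times (\ell-n)}$ by right multiplication, $K$ is stable under right multiplication by $H$. This already recovers most of the statement, and the remaining task is to produce a section $H \to \overline{U}$ so that $\overline{U} = K \rtimes H$ as a subgroup of $\bF^{n \times (\ell-n)} \rtimes \GL_{\ell-n}(\bF)$. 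I would attempt this by a further conjugation in $U_n$, which modulo $U_\ell$ amounts to replacing the chosen complement $\mathrm{span}(e_{n+1}, \ldots, e_\ell)$ of $\bV_n$ in $\bV_\ell$ by an affinely translated complement of the form $\{e_{n+i} + f(e_{n+i})\}$. The hard part will be ruling out the cohomological obstruction to such a splitting; here the minimality of $\ell$ must play the decisive role, since otherwise the preimage in $\bV_\ell$ of a proper $H$-invariant subspace of $\bV_\ell/\bV_n$ would produce a smaller member of $\mathcal{F}$, contradicting the choice of $W$.
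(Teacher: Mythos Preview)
Your first two steps---extracting the minimal subspace $W \supseteq \bV_n$ with $U(W)\subseteq U$, showing it is $U$-stable, and aligning it with $\bV_\ell$ by a $U_n$-conjugation---are correct and coincide with the paper's brief proof, which simply invokes Proposition~\ref{prop:subgroupstructure} and the transitivity of $U_n$ on $\ell$-dimensional subspaces containing $\bV_n$.

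You are also right to isolate the splitting of $\bar U \subseteq \bF^{n\times(\ell-n)} \rtimes G_{\ell-n}$ as the remaining issue, but your proposed resolution via minimality does not work, and in fact the splitting can genuinely fail. Take $\bF=\bF_2$, $n=1$, $\ell=3$, and let $g\in U_1$ act on $\bV_3$ by $e_1\mapsto e_1$, $e_2\mapsto e_1+e_3$, $e_3\mapsto e_2$, fixing $e_i$ for $i\ge 4$. Then $g$ has order $4$ and normalizes $U_3$, so $U=\langle g\rangle\cdot U_3$ is an open subgroup of $U_1$ with $\bar U=U/U_3\cong\bZ/4$. Here $K=\bar U\cap\bF_2^{\,2}$ and $H=\im(\bar U\to\GL_2(\bF_2))$ each have order $2$, so the only candidate split form is $(\bZ/2)^2\not\cong\bZ/4$; and since all cyclic subgroups of order $4$ in $\bF_2^{\,2}\rtimes\GL_2(\bF_2)\cong \fS_4$ are conjugate, no $U_1$-conjugation repairs this. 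Your minimality argument breaks because an $H$-invariant line $L\subset\bV_3/\bV_1$ does \emph{not} give $U(W')\subseteq U$ for its preimage $W'$: the group $U(W')$ contains elements sending $e_2$ or $e_3$ outside $\bV_3$, whereas every element of $U$ preserves $\bV_3$; so $\ell=3$ really is minimal here.

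The paper's own proof does not address the splitting either; it stops after the conjugation step and simply asserts the form. What the downstream applications (Lemma~\ref{lem:GL-1} and the fixed-point measure verification) actually use is only that some $U_n$-conjugate of $U$ satisfies $U_\ell\subseteq U\subseteq(\bF^{n\times(\ell-n)}\rtimes G_{\ell-n})\cdot U_\ell$, yielding a finite-to-one map $U_n/U\to\Omega_{n,\ell-n}$. Your argument through step~3 already establishes exactly this; the ``$K\rtimes H$'' in the statement should be read as shorthand for the finite subgroup $\bar U$ rather than a literal internal semidirect product.
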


\begin{proof}
By Proposition~\ref{prop:subgroupstructure}, the maximal finite dimensional $U$-stable subspace $V$ coincides with the minimal subspace for which $U$ contains the pointwise stabilizer. Define $\ell = \dim(V)$ and note that  $U$ stabilizes $\bV_n$ since since $U \subseteq U_n$, so $\bV_n \subseteq V$. Since $U_n$ acts transitively on the $\ell$-dimensional subspaces of $\bV$ which contain $\bV_n$, we see that some $U_n$-conjugate of $U$ stabilizes $\bV_{\ell}$ and is of the desired form.
\end{proof}

\subsection{Quantum integer-valued polynomials}

Recall that the $q$-integer $[n]_q$ is defined by
\begin{displaymath}
[n]_q = 1+ q+q^2+ \dots q^{n-1}.
\end{displaymath}
In the present context, $q$ is a fixed prime power (as opposed to a variable), and so $[n]_q$ is an actual integer; in fact, it is the cardinality of the projective space $\bP(\bV_n)$. We also require the $q$-factorial
\begin{displaymath}
[n]_q! = [1]_q \cdot [2]_q\cdots [n]_q,
\end{displaymath}
which counts the number of complete flags in $\bV_n$, and the $q$-binomial coefficient
\begin{displaymath}
\qbinom{n}{d}_q = \frac{[n]_q!}{[d]_q! \cdot [n-d]_q!},
\end{displaymath}
which counts the number of $d$-dimensional subspaces of $\bV_n$ (and is in particular an integer).

Following \cite{HarmanHopkins}, we define $\cR_q$ to be the subring of $\bQ[x]$ consisting of all polynomials $p$ such that $p([n]_q)$ belongs to $\bZ[\tfrac{1}{q}]$ for all $n \in \bN$. This ring is a $q$-analog of the ring of integer-valued polynomials. Define the \defn{$q$-binomial coefficient polynomial} $\omega_{0,d}$ by
\begin{displaymath}
\omega_{0,d}(x) = \frac{x(x-[1]_q)\dots(x-[d-1]_q)}{q^{\binom{d}{2}} \cdot [d]_q!}
\end{displaymath}
for $d \ge 1$ and $\omega_{0,0,}=1$. This polynomial is denoted $\qbinom{x}{d}$ in \cite{HarmanHopkins}. These polynomials satisfy
\begin{displaymath}
\omega_{0,d}([n]_q) = \qbinom{n}{d}_q
\end{displaymath}
for $n \in \bN$, and therefore belong to $\cR_q$. In fact:

\begin{proposition} \label{prop:qbinomsspan}
The $\omega_{0,d}$, for $d \in \bN$, form a basis for $\cR_q$ as a $\bZ[\frac{1}{q}]$-module.
\end{proposition}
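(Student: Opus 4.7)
The plan is to establish the result by the standard triangularity argument used for the classical ring of integer-valued polynomials, adapted to the $q$-setting.

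First I would check that each $\omega_{0,d}$ actually lies in $\cR_q$. The key computation is the identity $[n]_q - [k]_q = q^k [n-k]_q$ (for $k \le n$), which telescopes the numerator of $\omega_{0,d}([n]_q)$ into $q^{\binom{d}{2}} [n]_q [n-1]_q \cdots [n-d+1]_q$. Dividing by the denominator yields $\omega_{0,d}([n]_q) = \qbinom{n}{d}_q$ for $n \ge d$, and $\omega_{0,d}([n]_q) = 0$ for $n < d$ (one factor in the numerator vanishes). In particular $\omega_{0,d}([n]_q) \in \bZ \subset \bZ[\tfrac{1}{q}]$, so $\omega_{0,d} \in \cR_q$.

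Next, linear independence is immediate: since $\omega_{0,d}$ has degree exactly $d$ in $x$, the collection $\{\omega_{0,d}\}_{d \ge 0}$ is a $\bQ$-basis of $\bQ[x]$, and hence $\bZ[\tfrac{1}{q}]$-linearly independent.

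For spanning, let $p \in \cR_q$. Since the $\omega_{0,d}$ are a $\bQ$-basis of $\bQ[x]$, we may uniquely write $p = \sum_{d=0}^N c_d \, \omega_{0,d}$ with $c_d \in \bQ$ and $N = \deg p$. Evaluating at $x = [n]_q$ for $n = 0, 1, \ldots, N$ and using the computation above gives the triangular system
\begin{displaymath}
p([n]_q) = \sum_{d=0}^{n} c_d \qbinom{n}{d}_q,
\end{displaymath}
whose coefficient matrix is lower-triangular with $1$'s on the diagonal. Inverting this system expresses each $c_d$ as a $\bZ$-linear combination of the values $p([0]_q), p([1]_q), \ldots, p([d]_q)$, all of which lie in $\bZ[\tfrac{1}{q}]$ by hypothesis. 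Hence $c_d \in \bZ[\tfrac{1}{q}]$, which proves that the $\omega_{0,d}$ span $\cR_q$ as a $\bZ[\tfrac{1}{q}]$-module.

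There is no real obstacle: the whole argument rests on the triangularity of the evaluation pairing, and the only nontrivial ingredient is the $q$-identity $[n]_q - [k]_q = q^k [n-k]_q$, which is elementary. The single point to handle carefully is ensuring that the inverted triangular system produces \emph{integer} (not merely $\bZ[\tfrac{1}{q}]$) combinations of the $p([n]_q)$, but this is automatic because the diagonal entries of the evaluation matrix are $1$.
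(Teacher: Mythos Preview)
Your argument is correct and complete. The paper itself does not give an independent proof here: it simply cites \cite[Proposition~1.2]{HarmanHopkins} and remarks that the argument there (where $q$ is a formal variable) goes through unchanged when $q$ is a fixed prime power. Your proof is precisely the standard triangularity argument that underlies that cited result, so you have supplied the details that the paper defers to the reference.
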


\begin{proof}
This is essentially \cite[Proposition~1.2]{HarmanHopkins}. Here we are taking $q$ to be a fixed prime power, as opposed to a formal variable, but the proof goes through unchanged.
\end{proof}

We also require a ``more integral'' version of $\cR_q$. Define $\cR_q^{\gg 0}$ to be the subring of $\cR_q$ consisting of all polynomials $p$ such that $p([n]_q) \in \bZ$ for all $n$ sufficiently large. This ring contains the $\omega_{0,d}$, and so, by Proposition~\ref{prop:qbinomsspan}, we have $\cR_q=\cR^{\gg 0}_q[\tfrac{1}{q}]$. The map $x \to qx+1$ takes $[n]_q$ to $[n+1]_q$, and induces an isomorphism $S \colon \cR_q \to \cR_q$ sending $p(x)$ to $p(qx+1)$. This is the shift operator studied in \cite[\S 5]{HarmanHopkins}. Define the \defn{shifted $q$-binomial coefficient polynomial} $\omega_{m,d}$ to be $S^{-m}\omega_d$. We have
\begin{displaymath}
\omega_{m,d}([n]_q)=\qbinom{n-m}{d}_q,
\end{displaymath}
for $n \ge m$, and so $\omega_{m,d}$ belongs to the subring $\cR^{\gg 0}_q$. In fact:

\begin{proposition} \label{prop:shiftedqbinomsspan}
The ring $\cR_q^{\gg 0}$ is generated as a $\bZ$-module by the $\omega_{m,d}$.
\end{proposition}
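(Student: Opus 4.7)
The plan is to introduce an intermediate ring and use the shift operator $S$ to transport a $\bZ$-basis from it to $\cR_q^{\gg 0}$. Define $\cG_q \subset \cR_q$ to be the subring of polynomials $p$ satisfying $p([n]_q) \in \bZ$ for \emph{every} $n \ge 0$ (not just $n \gg 0$). Each $\omega_{0,d}$ lies in $\cG_q$ since $\omega_{0,d}([n]_q)=\qbinom{n}{d}_q \in \bZ$. The strategy has two steps: (i) show that $\{\omega_{0,d}\}_{d\ge 0}$ is a $\bZ$-basis of $\cG_q$, and (ii) for $p \in \cR_q^{\gg 0}$, push $p$ into $\cG_q$ by applying $S^N$ for $N$ large, expand in the $\omega_{0,d}$ basis, then pull back via $S^{-N}$.

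For step~(i), I would argue as follows. Proposition~\ref{prop:qbinomsspan} already gives that the $\omega_{0,d}$ are a $\bZ[\tfrac{1}{q}]$-basis of $\cR_q$, so linear independence over $\bZ$ is immediate. To see they span $\cG_q$ over $\bZ$, take $p \in \cG_q$ of degree $D$ and write $p = \sum_{d \le D} c_d \omega_{0,d}$ with $c_d \in \bZ[\tfrac{1}{q}]$. Evaluating at $x=[n]_q$ for $n=0,1,\ldots,D$ and using the triangularity $\omega_{0,d}([n]_q)=\qbinom{n}{d}_q$, which vanishes for $n<d$ and equals $1$ for $n=d$, one inductively solves
\begin{displaymath}
c_d = p([d]_q) - \sum_{d' < d} c_{d'} \qbinom{d}{d'}_q,
\end{displaymath}
and since $p([d]_q), \qbinom{d}{d'}_q \in \bZ$ and the $c_{d'}$ for $d' < d$ are integers by induction, each $c_d$ is an integer.

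For step~(ii), take $p \in \cR_q^{\gg 0}$ and choose $N$ so that $p([n]_q) \in \bZ$ for all $n \ge N$. The shift operator acts on $q$-integer evaluations by $Sp([n]_q) = p([n+1]_q)$ (since $q[n]_q+1 = [n+1]_q$), so by iteration $S^N p([n]_q) = p([n+N]_q) \in \bZ$ for all $n \ge 0$, which means $S^N p \in \cG_q$. By step~(i) we can expand $S^N p = \sum_d c_d \omega_{0,d}$ with $c_d \in \bZ$; applying $S^{-N}$ and using the definition $\omega_{N,d} = S^{-N}\omega_{0,d}$ yields $p = \sum_d c_d \omega_{N,d}$, exhibiting $p$ as a $\bZ$-linear combination of shifted $q$-binomial coefficient polynomials. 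The main obstacle is simply being careful in step~(i) that the triangular system really produces integers (not merely elements of $\bZ[\tfrac{1}{q}]$); once this is secured, the shift argument in step~(ii) is essentially formal.
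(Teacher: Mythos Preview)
Your proof is correct and takes a genuinely different route from the paper's. The paper argues by induction on degree: given $p \in \cR_q^{\gg 0}$ of degree $d$, it writes the leading $\omega_{0,d}$-coefficient as $a/q^{m}$ with $a \in \bZ$ (using Proposition~\ref{prop:qbinomsspan}), observes that $p - a\,\omega_{m',d}$ has degree $< d$ for a suitable $m'$, and finishes by induction. Your approach instead isolates the subring $\cG_q$ of polynomials integral at \emph{all} $[n]_q$, proves by a clean unitriangular evaluation argument that the $\omega_{0,d}$ form an honest $\bZ$-basis of $\cG_q$, and then uses $S^N$ to transport any $p \in \cR_q^{\gg 0}$ into $\cG_q$ and back. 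Your argument yields the slightly sharper conclusion that each $p \in \cR_q^{\gg 0}$ is a $\bZ$-combination of $\{\omega_{N,d}\}_d$ for a \emph{single} shift $N$ (namely any $N$ with $p([n]_q) \in \bZ$ for $n \ge N$), whereas the paper's degree induction naturally mixes different shifts. The paper's proof is shorter and avoids introducing $\cG_q$, but your method makes the role of the shift operator more transparent and gives a basis statement for $\cG_q$ as a byproduct.
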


\begin{proof}
We proceed by induction on degree. For degree~$0$ polynomials this is clear: degree~$0$ polynomials in $\cR_q^{\gg 0}$ are just integers, and $\omega_{m,0}=1$. If $p \in \cR_q^{\gg 0}$ is of degree $d$, then in $\cR_q$ we can write $p = \frac{a}{q^m} \omega_{0,d} + (\text{lower degree terms}) $ for some integer $a$ by Proposition~\ref{prop:qbinomsspan}. Then $p - a \omega_{m,d} \in \cR_q^{\gg 0}$ is a polynomial of degree less than $d$, so by induction we can write it as a $\bZ$-linear combination of shifted $q$-binomial coefficient polynomials; rearranging expresses $p$ in the desired way.
\end{proof}

\begin{remark}
The ring $\cR_q^{\gg 0}$ is a subring of $\bQ[x]$, and therefore $\bZ$-torsion free. Suppose $f \in \cR_q^{\gg 0}$, and let $f'=\binom{f}{d}$. Then $f'([n]_q)=\binom{f([n]_q)}{d}$ for $n \in \bN$. Since $f([n]_q)$ is integral for $n \gg 0$, so is $f'([n]_q)$. It follows that $f' \in \cR_q^{\gg 0}$. We thus see that $\cR_q^{\gg 0}$ is a binomial ring.
\end{remark}

\subsection{The ring \texorpdfstring{$\Theta(G)$}{\textTheta(G)}}

We now determine the ring $\Theta(G)$. We first introduce some notation. Let $\Omega_{m,d}$ denote the set of all $(d+m)$-dimensional subspaces of $\bV$ that contain $\bV_m$; we identify this with the set of $d$-dimensional subspaces of $\bV/\bV_m$. We also write $\Gr(d)$ for $\Omega_{0,d}$, as it is the collection of all $d$-dimensional subspaces of $\bV$. The set $\Omega_{m,d}$ carries a natural smooth action of $U_m$, and hence is a $\hat{G}$-set. The following is our main result:

\begin{theorem} \label{thm:GL-Theta}
There exists a unique isomorphism of rings
\begin{displaymath}
\phi \colon \cR_q^{\gg 0} \to \Theta(G), \qquad \phi(\omega_{m,d}) = [\Omega_{m,d}].
\end{displaymath}
\end{theorem}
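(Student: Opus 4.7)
The plan mirrors the proof of Theorem~\ref{thm:Theta-sym} for the symmetric group, with additional care needed for the $q$-deformation and the fact that $\{\omega_{m,d}\}$ only spans $\cR_q^{\gg 0}$ rather than forming a basis. I would construct $\phi$ so that it fits into a commutative triangle
\[
\xymatrix@C=3em{
\cR_q^{\gg 0} \ar[r]^-{\phi} \ar[dr]_-{\rho} & \Theta(G) \ar[d]^-{j} \\
& R }
\]
in which $j$ comes from a fixed-point measure and $\rho(f)(n) = f([n]_q)$ is the evaluation map; injectivity of $\rho$ then forces injectivity of $\phi$.

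First I would show that $\{[\Omega_{m,d}]\}$ generates $\Theta(G)$ as a $\bZ$-module. Any transitive finitary $\hat G$-set can be written as $U_n/V$, and Proposition~\ref{prop:subgroupinclusion} conjugates $V$ inside $U_n$ to $(K \rtimes H)\cdot U_\ell$. The setwise stabilizer $P$ of $\bV_\ell$ in $U_n$ has the form $\bF^{n \times (\ell-n)} \rtimes G_{\ell-n} \cdot U_\ell$ and contains $(K \rtimes H)\cdot U_\ell$ with finite index $[\bF^{n \times (\ell-n)} : K] \cdot [G_{\ell-n} : H]$; since $U_n/P = \Omega_{n,\ell-n}$, Corollary~\ref{cor:Theta-d-to-1} writes $[U_n/V]$ as an integer multiple of $[\Omega_{n,\ell-n}]$. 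Next I would apply Proposition~\ref{prop:fixed-pt} to the chain $(U_n)$ to produce $j$. Hypothesis (a) is clear because any $g \in G_k$ stabilizes $\bV_m$ setwise---and hence normalizes $U_m$---for all $m \ge k$. For hypothesis (b), after replacing $V$ by a conjugate via Proposition~\ref{prop:subgroupstructure} we may take $V = H \cdot U_\ell$ and $N = \ell$: for $n > \ell$ a $G$-conjugate $gU_ng^{-1} = U(g\bV_n)$ lies in $V$ only if $g\bV_n$ is contained in the pointwise-fixed set $\bV_\ell^H$, which has dimension at most $\ell < n$, so the condition holds vacuously. The resulting measure satisfies $j([\Omega_{m,d}])(n) = \#\Omega_{m,d}^{U_n} = \qbinom{n-m}{d}_q = \omega_{m,d}([n]_q)$ for $n \ge m$, since the $U_n$-fixed $d$-dimensional subspaces of $\bV/\bV_m$ are precisely those contained in $\bV_n/\bV_m$. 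In particular $j([\Omega_{m,d}]) = \rho(\omega_{m,d})$.

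The main obstacle is constructing $\phi$, for which I would define $\tilde\phi \colon \bQ[x] \to \Theta(G) \otimes \bQ$ by $\tilde\phi(x) = [\Omega_{0,1}]$ and prove the key identity $\tilde\phi(\omega_{m,d}) = [\Omega_{m,d}]$. To this end, consider the $\hat G$-set $X_{m,d}$ of ordered $d$-tuples $(L_1, \ldots, L_d)$ of lines in $\bV$ for which $L_1, \ldots, L_d, \bV_m$ are linearly independent, and compute $[X_{m,d}]$ in two ways. Via the span map $X_{m,d} \to \Omega_{m,d}$, each fiber contains $q^{\binom{d}{2}}[d]_q!$ linearly independent ordered $d$-tuples of lines in $\bV/\bV_m$, each of which lifts in $q^{dm}$ ways, so $[X_{m,d}] = q^{dm + \binom{d}{2}}[d]_q! \cdot [\Omega_{m,d}]$ by Corollary~\ref{cor:Theta-d-to-1}. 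On the other hand, realizing $X_{m,d}$ as the complement in $X_{m,d-1} \times \Omega_{0,1}$ of the tuples with $L_d \subset L_1 + \cdots + L_{d-1} + \bV_m$ (an $[m+d-1]_q$-to-1 fibration over $X_{m,d-1}$) gives the recursion $[X_{m,d}] = [X_{m,d-1}] \cdot ([\Omega_{0,1}] - [m+d-1]_q)$, from which induction yields $[X_{m,d}] = \prod_{i=0}^{d-1}([\Omega_{0,1}] - [m+i]_q)$. Combined with the closed form $\omega_{m,d}(x) = \prod_{i=0}^{d-1}(x - [m+i]_q)/(q^{dm + \binom{d}{2}}[d]_q!)$ (obtained from $\omega_{m,d} = S^{-m}\omega_{0,d}$ together with the identity $[m]_q + q^m[i]_q = [m+i]_q$), this gives $[\Omega_{m,d}] = \omega_{m,d}([\Omega_{0,1}]) = \tilde\phi(\omega_{m,d})$. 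Since $\{\omega_{m,d}\}$ spans $\cR_q^{\gg 0}$ (Proposition~\ref{prop:shiftedqbinomsspan}), the restriction of $\tilde\phi$ lands in the $\bZ$-span of $\{[\Omega_{m,d}]\} \subset \Theta(G)$, defining the sought-after ring homomorphism $\phi$.

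Surjectivity of $\phi$ is immediate from the generation step. For injectivity, both $j \circ \phi$ and $\rho$ are ring homomorphisms $\cR_q^{\gg 0} \to R$ agreeing on the spanning set $\{\omega_{m,d}\}$, so they coincide; since $\rho$ is injective (a polynomial in $\bQ[x]$ is determined by its values on the infinite set $\{[n]_q : n \in \bN\}$), so is $\phi$. Uniqueness of $\phi$ follows since it is determined on generators.
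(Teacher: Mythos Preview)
Your overall architecture matches the paper's: show that the $[\Omega_{m,d}]$ generate $\Theta(G)$, build a fixed-point measure $j$, construct the ring map, and deduce bijectivity. Your construction of $\phi$ via the sets $X_{m,d}$ is, however, genuinely different from the paper's and arguably cleaner. The paper first defines a $\bZ[\tfrac{1}{q}]$-linear map $\phi_0 \colon \cR_q \to \Theta(G)\otimes\bZ[\tfrac{1}{q}]$ on the basis $\{\omega_{0,d}\}$, verifies that it is a ring homomorphism by matching the structure constants of \cite[Theorem~3.2]{HarmanHopkins} against an explicit decomposition of $\Gr(i)\times\Gr(j)$, and then proves $\phi_0(\omega_{m,d})=[\Omega_{m,d}]$ by a separate $q$-Pascal induction. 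Your recursion $[X_{m,d}]=\prod_{i=0}^{d-1}([\Omega_{0,1}]-[m+i]_q)$, combined with the closed form for $\omega_{m,d}$, handles both at once: $\tilde\phi$ is automatically a ring homomorphism (being evaluation at $[\Omega_{0,1}]$), and the identity $\tilde\phi(\omega_{m,d})=[\Omega_{m,d}]$ falls out directly.

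There is one genuine error, in your verification of hypothesis~(b) of Proposition~\ref{prop:fixed-pt}. You claim that $gU_ng^{-1}=U(g\bV_n)\subset V=H\cdot U_\ell$ forces $g\bV_n\subset\bV_\ell^H$, making the condition vacuous for $n>\ell$. The containment goes the other way: by Propositions~\ref{prop:stabilizerlemma} and~\ref{prop:subgroupstructure}, the minimal subspace $W$ with $U(W)\subset V$ is $\bV_\ell$, so $U(g\bV_n)\subset V$ is equivalent to $\bV_\ell\subset g\bV_n$. This is certainly not vacuous for $n>\ell$; for instance $U_n$ itself lies in $V$. The correct argument, which the paper indicates by citing Proposition~\ref{prop:subgroupinclusion}, is that $U_\ell\subset V$ already acts transitively on the $n$-dimensional subspaces of $\bV$ containing $\bV_\ell$, so all $G$-conjugates of $U_n$ inside $V$ are $V$-conjugate. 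With this fix your proof goes through.
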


We require a number of lemmas before giving the proof.

\begin{lemma} \label{lem:GL-1}
The classes $[\Omega_{m,d}]$ span $\Theta(G)$.
\end{lemma}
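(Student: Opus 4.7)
The plan is to reduce the problem, via the subgroup analysis already carried out, to a simple index calculation together with Corollary~\ref{cor:Theta-d-to-1}. First, I would observe that any finitary $\hat{G}$-set $X$ is, by definition, a finitary $U$-set for some open subgroup $U$, and since every open subgroup of $G$ contains some $U_m$, we may regard $X$ as a finite disjoint union of transitive $U_m$-orbits. By additivity of the class $[-]$, it therefore suffices to prove that $[U_m/V]$ lies in the $\bZ$-span of the $[\Omega_{m,d}]$ for an arbitrary open subgroup $V \subseteq U_m$.

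Next, I would apply Proposition~\ref{prop:subgroupinclusion} to put $V$ into standard form. Replacing $V$ by a $U_m$-conjugate leaves the class $[U_m/V]$ unchanged (by the conjugation-invariance axiom \dref{defn:Theta}{d}), so we may assume $V = (K \rtimes H) \cdot U_\ell$ for some integer $\ell \geq m$, some subgroup $H \subseteq G_{\ell-m}$, and some $H$-stable subgroup $K \subseteq \bF^{m \times (\ell-m)}$. Setting $d = \ell-m$, a direct block-matrix check (the $U_m$-stabilizer of the subspace $\bV_\ell$ consists of matrices of shape $\left[\begin{smallmatrix} 1_m & * & * \\ 0 & G_d & * \\ 0 & 0 & * \end{smallmatrix}\right]$) identifies $\Omega_{m,d}$ as a $U_m$-set with $U_m/V'$, where $V' = (\bF^{m\times d} \rtimes G_d) \cdot U_\ell$.

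The final step is to compare $V$ and $V'$. Since $V \subseteq V'$ has finite index $N = q^{md}|G_d|/(|K|\cdot |H|)$, the natural map $U_m/V \twoheadrightarrow U_m/V' \cong \Omega_{m,d}$ is a surjection of finitary $U_m$-sets with every fiber of cardinality $N$. Corollary~\ref{cor:Theta-d-to-1} then yields $[U_m/V] = N \cdot [\Omega_{m,d}]$ in $\Theta(G)$, which exhibits the class in the required form.

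I do not anticipate any substantive obstacle. Essentially all the content resides in the subgroup structure results (Propositions~\ref{prop:subgroupstructure} and~\ref{prop:subgroupinclusion}), which have already been proved; the only step not imported from prior work is the routine verification that the $U_m$-stabilizer of $\bV_\ell$ has the stated block form, and the index computation $[V':V] = N$.
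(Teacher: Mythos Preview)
Your argument is correct and follows essentially the same route as the paper's proof: reduce to transitive $U_m$-sets, invoke Proposition~\ref{prop:subgroupinclusion} to write the stabilizer in standard form, and then apply Corollary~\ref{cor:Theta-d-to-1} to the finite-to-one map onto $\Omega_{m,d}$. One small remark: when you replace $V$ by a $U_m$-conjugate, the class $[U_m/V]$ is unchanged already by isomorphism invariance \dref{defn:Theta}{a} (since $U_m/V \cong U_m/gVg^{-1}$ as $U_m$-sets for $g \in U_m$), so you need not invoke axiom~(d) there.
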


\begin{proof}
Let $X$ be a finitary $U$-set for some open subgroup $U$ of $G$. We must realize $[X]$ as a $\bZ$-linear combination of the classes $[\Omega_{m,d}]$. We are free to shrink or conjugate $U$, so we may suppose $U=U_m$ for some $m$. It suffices to treat the case where $U$ acts transitively on $X$. By Proposition~\ref{prop:subgroupinclusion} we have $X = U_m / (K \rtimes H)\cdot U_\ell$, for some subgroup $K \rtimes H$ of $\bF^{m \times (\ell-m)} \rtimes G_{\ell-m}$. Then $X$ admits a $n$-to-$1$ map to $U_m /(\bF^{m \times (\ell-m)} \rtimes G_{\ell-m}) \cong \Omega_{m,\ell-m}$ where $n$ is the index of $K \rtimes H$ in $\bF^{m \times (\ell-m)} \rtimes G_{\ell-m}$. We therefore have $[X] = n \cdot [\Omega_{m,\ell-m}]$ in $\Theta(G)$ by Corollary~\ref{cor:Theta-d-to-1}.
\end{proof}

\begin{lemma} \label{lem:GL-2}
The classes $[\Omega_{0,d}] = [\Gr(d)]$ span $\Theta(G) \otimes \bZ[\frac{1}{q}]$ as a $\bZ[\frac{1}{q}]$-module.
\end{lemma}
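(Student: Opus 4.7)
The plan is to express each class $[\Omega_{m,d}]$ as a $\bZ[\tfrac{1}{q}]$-linear combination of the classes $[\Gr(d')]$. Combined with Lemma~\ref{lem:GL-1}, this immediately gives the result, since $\{[\Omega_{m,d}]\}$ spans $\Theta(G)$ over $\bZ$ and hence also spans $\Theta(G) \otimes \bZ[\tfrac{1}{q}]$ over $\bZ[\tfrac{1}{q}]$.

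The key step is to establish the geometric $q$-Pascal identity
\begin{displaymath}
[\Omega_{m-1,\,d+1}] \;=\; [\Omega_{m,d}] \;+\; q^{d+1} \cdot [\Omega_{m,\,d+1}]
\end{displaymath}
in $\Theta(G)$, valid for all $m \geq 1$ and $d \geq 0$. To prove this, I would view $\Omega_{m-1,d+1}$ as a $U_m$-set via the inclusion $U_m \subset U_{m-1}$, and decompose it into two $\hat{G}$-subsets according to whether a given $(d+1)$-dimensional subspace of $\bV/\bV_{m-1}$ contains $\bar e_m$, the class of $e_m$ modulo $\bV_{m-1}$. The subset $\Omega^1$ of subspaces containing $\bar e_m$ is $U_m$-equivariantly identified with $\Omega_{m,d}$ via the quotient $U \mapsto U/\langle \bar e_m \rangle$. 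The complementary subset $\Omega^2$ admits a $U_m$-equivariant map to $\Omega_{m,d+1}$ via further projection $\pi \colon \bV/\bV_{m-1} \to \bV/\bV_m$; for each $V \in \Omega_{m,d+1}$, the fiber is the set of hyperplanes of the $(d+2)$-dimensional space $\pi^{-1}(V)$ that do not contain $\bar e_m$, of which there are exactly $[d+2]_q - [d+1]_q = q^{d+1}$. Corollary~\ref{cor:Theta-d-to-1} then yields $[\Omega^2] = q^{d+1} \cdot [\Omega_{m,d+1}]$, and additivity completes the identity.

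Rearranging gives the recursion
\begin{displaymath}
[\Omega_{m,\,d+1}] \;=\; q^{-(d+1)}\,\big( [\Omega_{m-1,\,d+1}] - [\Omega_{m,d}] \big)
\end{displaymath}
in $\Theta(G) \otimes \bZ[\tfrac{1}{q}]$. A double induction, outer on $m$ and inner on $d$, with base cases $[\Omega_{0,d}] = [\Gr(d)]$ (by definition) and $[\Omega_{m,0}] = 1 = [\Gr(0)]$ (since $\Omega_{m,0}$ is a single point), then expresses every $[\Omega_{m,d}]$ as a $\bZ[\tfrac{1}{q}]$-linear combination of the $[\Gr(d')]$. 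The only real content is the verification of the geometric identity; the subsequent induction is routine, and the recursion is a geometric avatar of the classical $q$-Pascal relation satisfied by the polynomials $\omega_{m,d}$, which provides a useful sanity check.
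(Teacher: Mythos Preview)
Your proof is correct, but it takes a different route from the paper's argument. The paper decomposes $\Gr(d)$ as a $U_m$-set in one shot, stratifying by the dimension of the intersection with $\bV_m$: writing $\Gr(d)_i$ for the locus where this intersection is $i$-dimensional, one gets a $q^{m(d-i)}$-to-$1$ map $\Gr(d)_i \to \Gr(m,i) \times \Omega_{m,d-i}$ and hence the closed-form identity
\[
[\Gr(d)] \;=\; \sum_{i+j=d} q^{mj} \qbinom{m}{i}_q\, [\Omega_{m,j}],
\]
which is upper triangular with powers of $q$ on the diagonal and therefore invertible over $\bZ[\tfrac{1}{q}]$. Your approach instead establishes the one-step $q$-Pascal identity $[\Omega_{m-1,d+1}] = [\Omega_{m,d}] + q^{d+1}[\Omega_{m,d+1}]$ and iterates it. This is exactly the identity the paper proves later, in Lemma~\ref{lem:GL-4}, so your argument effectively anticipates that lemma and uses it here; this buys some economy (the same geometric decomposition serves both purposes), at the cost of not producing the explicit change-of-basis matrix. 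Either way the conclusion follows.
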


\begin{proof}
We need to show that after inverting $q$ the $[\Omega_{m,\bullet}]$ classes can be expressed just in terms of the  $[\Omega_{0,\bullet}] = [\Gr(\bullet)]$ classes. To start, we decompose $\Gr(d)$ as an $U_m$ set. First let $\Gr(d)_i$ denote the set of those $k$ dimensional subspaces which intersect $\bV_m$ in an $i$ dimensional subspace. We have a decomposition:
\begin{displaymath}
\Gr(d) = \bigsqcup_{i\le d} \Gr(d)_i
\end{displaymath}
Let $\Gr(m,i)$ denote the set of $i$-dimensional subspaces of $\bV_m$. If $j = d-i$, we have a $U_m$-equivariant map $\Gr(d)_i \to \Gr(m,i) \times \Omega_{m,j}$ given by sending a subspace $V$ to the intersection $V \cap \bV_m$ in the first factor, and to the image of $V$ in the quotient $\bV/ \bV_m$ in the second factor. The fibers of this map have cardinality $q^{mj}$. Therefore, in $\Theta(G)$ we have
\begin{displaymath}
[\Gr(d)_i] =  q^{mj} \cdot \# \Gr(m,i) \cdot [\Omega_{m,j}]
\end{displaymath}
by Corollary~\ref{cor:Theta-d-to-1}. Combining this with the previous equation, plugging in $\# \Gr(m,i) = \qbinom{m}{i}_q$, we obtain
\begin{displaymath}
[\Gr(d)] = \sum_{i+j =d} q^{mj} \qbinom{m}{i}_q [\Omega_{m,j}]
\end{displaymath}
Thus the $[\Omega_{0,\bullet}]$'s are related to the $[\Omega_{m,\bullet}]$'s via an upper triangular matrix with powers of $q$ on the diagonal, so the $[\Omega_{0,\bullet}]$'s generate the $[\Omega_{m,\bullet}]$'s over $\bZ[\tfrac{1}{q}]$.
\end{proof}

Define a $\bZ[\tfrac{1}{q}]$-linear map
\begin{displaymath}
\phi_0 \colon \cR_q \to \Theta(G) \otimes \bZ[\tfrac{1}{q}], \qquad \phi_0(\omega_{0,k})=[\Gr(k)].
\end{displaymath}
There is a unique such map since the $\omega_{0,n}$'s form a basis for $\cR_q$ as a $\bZ[\tfrac{1}{q}]$-module (Proposition~\ref{prop:qbinomsspan}).

\begin{lemma} \label{lem:GL-3}
The map $\phi_0$ is a ring homomorphism.
\end{lemma}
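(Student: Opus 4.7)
My plan is to mimic the argument of Proposition~\ref{prop:R-Theta} verbatim, substituting the polynomials $\omega_{0,d}$ for $\binom{x}{d}$ and the $\hat{G}$-sets $\Gr(d)$ for $X^{(d)}$. Since the $\omega_{0,d}$ form a $\bZ[\tfrac{1}{q}]$-basis of $\cR_q$ by Proposition~\ref{prop:qbinomsspan}, and $\phi_0$ is already defined as the unique $\bZ[\tfrac{1}{q}]$-linear extension of $\omega_{0,d}\mapsto [\Gr(d)]$, it suffices to check two things: $\phi_0(1)=1$ and $\phi_0(\omega_{0,n}\,\omega_{0,m})=\phi_0(\omega_{0,n})\,\phi_0(\omega_{0,m})$ for all $n,m\ge 0$. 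The unit condition is immediate since $\omega_{0,0}=1$ and $\Gr(0)$ is a one-point set, giving $[\Gr(0)]=1$ in $\Theta(G)$.

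For multiplicativity, the key is to establish a pair of matching identities, one polynomial and one geometric. On the polynomial side, I will prove a $q$-Vandermonde-type expansion
\begin{displaymath}
\omega_{0,n}\,\omega_{0,m} \;=\; \sum_{k=\max(n,m)}^{n+m} N^{k}_{n,m}\,\omega_{0,k},
\end{displaymath}
where $N^{k}_{n,m}$ is the (nonnegative integer) count of pairs $(A,B)$ of subspaces of $\bF^k$ with $\dim A=n$, $\dim B=m$, and $A+B=\bF^k$. I will obtain this by evaluating at $x=[r]_q$ for all $r\ge n+m$: the left side becomes $\qbinom{r}{n}_q\qbinom{r}{m}_q$, which counts pairs of subspaces of the prescribed dimensions inside $\bF^r$; stratifying such a pair by $k=\dim(A+B)$ and observing that a $k$-dimensional subspace of $\bF^r$ may be chosen in $\qbinom{r}{k}_q$ ways yields the right side. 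Since the identity holds for infinitely many integer specializations of $x$, it holds in $\bQ[x]$, and hence in $\cR_q$; in particular the $N^{k}_{n,m}$ are the correct structure constants.

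On the $\Theta(G)$ side, I will decompose $\Gr(n)\times\Gr(m)$ as a disjoint union of the $\hat{G}$-subsets $Y_k=\{(A,B):\dim(A+B)=k\}$ and introduce the $G$-equivariant map $h_k\colon Y_k\to\Gr(k)$, $(A,B)\mapsto A+B$. The fiber of $h_k$ over any $C\in\Gr(k)$ is, after a $G$-translation identifying $C$ with $\bF^k$, exactly the set of pairs enumerated by $N^{k}_{n,m}$, so Corollary~\ref{cor:Theta-d-to-1} gives $[Y_k]=N^{k}_{n,m}[\Gr(k)]$ in $\Theta(G)$. Summing over $k$,
\begin{displaymath}
[\Gr(n)]\,[\Gr(m)] \;=\; [\Gr(n)\times\Gr(m)] \;=\; \sum_{k} N^{k}_{n,m}\,[\Gr(k)].
\end{displaymath}
Applying $\phi_0$ to the polynomial expansion and comparing with this identity yields $\phi_0(\omega_{0,n}\omega_{0,m})=\phi_0(\omega_{0,n})\phi_0(\omega_{0,m})$, completing the proof.

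The argument is entirely parallel to Proposition~\ref{prop:R-Theta}, so I do not anticipate any substantive obstacle. The only mild subtlety is that $\cR_q$-bases involve coefficients in $\bZ[\tfrac{1}{q}]$, whereas the structure constants $N^{k}_{n,m}$ arising from subspace counts are manifestly in $\bZ_{\ge 0}$; but this matching is forced by the set-theoretic interpretation, since both sides of the polynomial identity count the same pairs of subspaces of $\bF^r$ for all large $r$.
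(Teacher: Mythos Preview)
Your proof is correct and follows essentially the same approach as the paper: both decompose $\Gr(n)\times\Gr(m)$ by $\dim(A+B)$, use the map $(A,B)\mapsto A+B$ together with Corollary~\ref{cor:Theta-d-to-1} to compute each stratum, and match against the product expansion of $\omega_{0,n}\omega_{0,m}$ in $\cR_q$. The only difference is that the paper cites \cite[Theorem~3.2]{HarmanHopkins} for the explicit closed form $N^k_{n,m}=q^{(k-n)(k-m)}[k]_q!/([k-n]_q![k-m]_q![n+m-k]_q!)$, whereas you obtain the product identity by evaluating at $x=[r]_q$ for all large $r$ and invoking the finite counting argument directly.
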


\begin{proof}
We need to check this map is compatible with multiplication. According to \cite[Theorem 3.2]{HarmanHopkins}, multiplication in $\cR_q$ is given by
\begin{displaymath}
\omega_{0,i} \omega_{0,j} = \sum_{d = \max(i,j)}^{i+j}  \frac{q^{(d-i)(d-j)} \, [d]_q!}{[d-i]_q![d-j]_q![i+j-d]_q!} \omega_{0,d}.
\end{displaymath}
To see the corresponding identity in $\Theta(G)$, first write
\begin{displaymath} 
\Gr(i) \times \Gr(j) = \bigsqcup_{d = \max(i,j)}^{i+j} (\Gr(i) \times \Gr(j))_d
\end{displaymath}
where $(\Gr(i) \times \Gr(j))_d$ denotes those pairs of subspaces $(V, W) \in \Gr(i) \times \Gr(j)$ such that $V + W$ has dimension $d$. The set $(\Gr(i) \times \Gr(j))_d$ has a map to $\Gr(d)$ taking $(V,W)$ to $V+W$. This map is $n$-to-$1$ where
\begin{displaymath}
n = \frac{q^{(d-i)(d-j)} \, [d]_q!}{[d-i]_q![d-j]_q![i+j-d]_q!};
\end{displaymath}
one sees this by fixing a $d$-dimensional subspace of $\bV$ and counting the pairs $(V,W)$ of subspaces of appropriate dimension which together span the space. Appealing to Corollary~\ref{cor:Theta-d-to-1}, this implies that in $\Theta(G)$ we have
\begin{displaymath}
[\Gr(i)] [\Gr(j)] = \sum_{d = \max(i,j)}^{i+j} \frac{q^{(d-i)(d-j)} \, [d]_q!}{[d-i]_q![d-j]_q![i+j-d]_q!} [\Gr(d)].
\end{displaymath}
So indeed this map respects multiplication.
\end{proof}

\begin{lemma} \label{lem:GL-4}
We have $\phi_0(\omega_{m,d})=[\Omega_{m,d}]$ for all $m, d \in \bN$.
\end{lemma}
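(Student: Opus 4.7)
The plan is to combine the group-theoretic identity expressing $[\Gr(d)]$ in terms of the $[\Omega_{m,j}]$ that was produced in the proof of Lemma~\ref{lem:GL-2} with the parallel polynomial identity in $\cR_q$ expressing $\omega_{0,d}$ in terms of the $\omega_{m,j}$. Applying $\phi_0$ to the latter and matching coefficients against the former will force $\phi_0(\omega_{m,d}) = [\Omega_{m,d}]$, modulo a short induction.

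First I would write down explicitly the expansion produced in the proof of Lemma~\ref{lem:GL-2}:
\[
[\Gr(d)] = \sum_{i+j=d} c_{i,j,m}\,[\Omega_{m,j}] \qquad \text{in } \Theta(G),
\]
where the coefficients $c_{i,j,m}$ are non-negative integer products of $q$-binomial coefficients and powers of $q$ coming from the decomposition of $\Gr(d)$ by intersection with $\bV_m$. The crucial structural feature is that the coefficient of the ``top'' term $[\Omega_{m,d}]$ (corresponding to $i=0$) is a positive power of $q$, hence a unit in $\bZ[\tfrac{1}{q}]$.

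Next I would establish the matching polynomial identity
\[
\omega_{0,d} = \sum_{i+j=d} c_{i,j,m}\,\omega_{m,j} \qquad \text{in } \cR_q,
\]
with the \emph{same} coefficients $c_{i,j,m}$. This is the $q$-Vandermonde identity: both sides are polynomials in $x$, and evaluating at $x=[n]_q$ for any $n\ge m$ gives the combinatorial identity counting $d$-dimensional subspaces of $\bV_n$ by the dimension of their intersection with $\bV_m$. Since these values are Zariski dense, the identity holds in $\cR_q$.

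Given these two identities, the proof is by induction on $d$. The base case $d=0$ is trivial: $\omega_{m,0}=1$ and $\Omega_{m,0}$ is a one-point set, so both sides equal $1$. For the inductive step, apply the ring homomorphism $\phi_0$ of Lemma~\ref{lem:GL-3} to the polynomial identity; since $\phi_0(\omega_{0,d}) = [\Gr(d)]$ by definition, subtracting from the group-theoretic identity yields
\[
\sum_{i+j=d} c_{i,j,m}\,\bigl(\phi_0(\omega_{m,j}) - [\Omega_{m,j}]\bigr) = 0
\]
in $\Theta(G)\otimes\bZ[\tfrac{1}{q}]$. The inductive hypothesis kills every term with $j<d$, leaving only $c_{0,d,m}\cdot(\phi_0(\omega_{m,d})-[\Omega_{m,d}])=0$. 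Since $c_{0,d,m}$ is a unit, we conclude $\phi_0(\omega_{m,d})=[\Omega_{m,d}]$.

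The only delicate point is checking that the coefficients on the two sides really are the same, but this is automatic because both expansions are instances of exactly the same $q$-Vandermonde convolution, one combinatorial and one polynomial; so I do not expect any serious obstacle.
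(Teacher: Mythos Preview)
Your argument is correct and takes a genuinely different route from the paper. The paper proceeds by a double induction on $m$ and $d$ using the two-term $q$-Pascal identity $\omega_{m,d} = q^d\,\omega_{m+1,d} + \omega_{m+1,d-1}$, together with a direct geometric verification of the matching identity $[\Omega_{m,d}] = q^d\,[\Omega_{m+1,d}] + [\Omega_{m+1,d-1}]$ in $\Theta(G)$, obtained by splitting $\Omega_{m,d}$ according to whether a subspace of $\bV/\bV_m$ contains the line $\bV_{m+1}/\bV_m$. You instead fix $m$ and induct on $d$, recycling the full $q$-Vandermonde expansion already produced in Lemma~\ref{lem:GL-2} rather than introducing a new decomposition; the fact that the polynomial and $\Theta$-ring expansions share the same coefficients is exactly your observation that both are instances of the same finite count (intersect with $\bV_m$, project to $\bV/\bV_m$, count lifts). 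Your approach has the virtue of reusing existing work and avoiding a fresh geometric argument; the paper's approach uses a shorter recursion and makes more visible which single identity drives each inductive step. Both ultimately rest on the same mechanism: a matched pair of identities in $\cR_q$ and $\Theta(G)\otimes\bZ[\tfrac{1}{q}]$ whose leading coefficient is a power of $q$, hence a unit that can be cancelled.
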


\begin{proof}
We will proceed by induction on $m$ and $d$. When $m = 0$, this is part of the definition of the homomorphism, and when $d=0$ we always have $\omega_{m,d}= 1$ and $[\Omega_{m,d}] = [\bone]$.  In $\cR_q^{\gg 0}$ we have the $q$-Pascal identity \cite[(5.4)]{HarmanHopkins}:
\begin{displaymath}
\omega_{m,d} = q^d \omega_{m+1,d} + \omega_{m+1,d-1}
\end{displaymath}
So by induction we see that it suffices to check the corresponding identity holds in $\Theta(G)$. To see this write $\Omega_{m,d} = A \sqcup B$ where $A$ consists of those $d$-dimensional subspaces of $\bV/\bV_m$ that do not contain the kernel of the map $\pi \colon \bV/\bV_m \to \bV/\bV_{m+1}$, and $B$ consists of those $d$-dimensional subspaces that do.  

The map $\pi$ induces a $q^d$-to-$1$ map from $A$ to $\Omega_{m+1,d}$, with $q^d$ counting the ways to lift a $d$-dimensional subspace. So $[A] = q^d \cdot [\Omega_{m+1,d}]$ in $\Theta(G)$ by Corollary~\ref{cor:Theta-d-to-1}. Similarly, $\pi$ induces a bijection between $B$ and $\Omega_{m+1, d-1}$, with the preimage of a $(d-1)$-dimensional subspace defining a unique element of $B$, so $[B] = [\Omega_{m+1,d-1}]$ in $\Theta(G)$. Combining these gives the desired formula:
\begin{displaymath}
[\Omega_{m,d}]= q^d \cdot [\Omega_{m+1,d}] + [\Omega_{m+1,d-1}]. \qedhere
\end{displaymath}
\end{proof}

\begin{lemma}
The subgroups $U_{\bullet}$ satisfy the conditions of Proposition~\ref{prop:fixed-pt}. In particular, $G$ admits a fixed point measure.
\end{lemma}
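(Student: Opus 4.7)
The plan is to verify both conditions of Proposition~\ref{prop:fixed-pt} for the chain $U_n = U(\bV_n)$, the pointwise stabilizer of $\bV_n$. Condition (a) is straightforward with the simple choice $h = 1 \in U_n$: given $g \in G = \bigcup_{N \ge 1} G_N$, fix $N$ with $g \in G_N$. For any $m \ge N$, the element $g$ preserves $\bV_m$ set-wise---it acts on $\bV_N \subseteq \bV_m$ and fixes each basis vector $e_j$ with $j > N$---so $g$ normalizes $U_m = U(\bV_m)$, and thus $hg = g$ works.

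For condition (b), given an open subgroup $V \subseteq G$, I would use Proposition~\ref{prop:subgroupstructure} to conjugate $V$ so that $V = H \cdot U_n$ for some $H \subseteq G_n$, and take $N = n$. A $G$-conjugate of $U_m$ has the shape $U(W) = g U_m g^{-1}$, where $W = g\bV_m$ is an $m$-dimensional subspace of $\bV$. I would reduce (b) to two claims: first, that $U_n \subseteq V$ acts transitively on the set of $m$-dimensional subspaces of $\bV$ containing $\bV_n$; and second, that any $G$-conjugate $U(W) \subseteq V$ with $m > n$ in fact satisfies $\bV_n \subseteq W$. The first claim is immediate, since $U_n$ surjects onto $\GL(\bV/\bV_n)$ via the standard splitting, and $\GL(\bV/\bV_n)$ acts transitively on $(m-n)$-dimensional subspaces.

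The heart of the argument is the second claim, which I would prove by contrapositive. Suppose $W \not\supseteq \bV_n$ and pick $v \in \bV_n \setminus W$. Choose a basis vector $w = e_M$ with $M$ large enough that $w \notin W + \bV_n$, and then a linear functional $\phi$ on $\bV_M$ with $\phi(W) = 0$, $\phi(v) = 1$, and $\phi(w) = 0$; these conditions are consistent because $v$ and $w$ are linearly independent modulo $W$. Define $u \colon \bV \to \bV$ by $u(x) = x + \phi(x) w$ for $x \in \bV_M$, extended by the identity on $e_j$ for $j > M$. Then $u \in G_M$ with inverse $u^{-1}(x) = x - \phi(x) w$ (using $\phi(w) = 0$), and $u$ fixes $W$ pointwise, so $u \in U(W)$. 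But $u(v) = v + w \notin \bV_n$, so $u$ fails to preserve $\bV_n$ set-wise; since every element of $V = H \cdot U_n$ preserves $\bV_n$, this forces $u \notin V$, contradicting $U(W) \subseteq V$.

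The main obstacle is the contrapositive step in the second claim: producing an honest element of the ind-finite group $G$ (rather than merely a linear endomorphism of $\bV$) that fixes $W$ pointwise but displaces $\bV_n$. The transvection-style formula, together with the conditions $\phi(w) = 0$ and $\phi(W) = 0$, ensures both invertibility and that $u$ differs from the identity only in a finite upper-left block. Once (a) and (b) are established, Proposition~\ref{prop:fixed-pt} immediately yields the desired fixed-point measure.
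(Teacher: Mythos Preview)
Your proof is correct. For condition~(a) your argument is essentially identical to the paper's: both observe that $g \in G_d$ normalizes $U_m$ for all $m \ge d$, so $g$ itself (i.e.\ $h=1$) serves as the required element of $U_n g$.

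For condition~(b) the paper simply invokes Proposition~\ref{prop:subgroupinclusion} (the classification of open subgroups of $U_k$) without spelling out details, whereas you give a direct, self-contained argument using only the coarser Proposition~\ref{prop:subgroupstructure}. Both approaches ultimately establish the same key fact---that a $G$-conjugate $U(W) \subseteq V = H \cdot U_n$ with $\dim W = m > n$ must satisfy $\bV_n \subseteq W$---after which transitivity of $U_n$ on such subspaces finishes the job. The paper extracts this from the structural classification; you instead construct an explicit transvection $u(x) = x + \phi(x)w$ witnessing $U(W) \not\subseteq V$ whenever $\bV_n \not\subseteq W$. Your route is more elementary and avoids the finer subgroup analysis, at the cost of a short explicit construction; the paper's route is terser but leans on a result whose proof already contains the relevant idea. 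One minor point you leave implicit: the reduction ``conjugate $V$ so that $V = H \cdot U_n$'' is valid because the conjugating element lies in some $G_d$ and hence normalizes $U_m$ for all large $m$, so condition~(b) is preserved under conjugation of $V$.
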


\begin{proof}
If $g \in G_d$ then $gU_ng^{-1} = U_n$ for all $n \ge d$, so condition~(a) is satisfied. Condition~(b) follows from the classification of subgroups of $U_k$ in Proposition~\ref{prop:subgroupinclusion}.
\end{proof}

\begin{lemma} \label{Grindep}
The classes $[\Omega_{0,d}] = [\Gr(d)]$ are linearly independent over $\bZ$.
\end{lemma}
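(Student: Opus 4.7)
The plan is to use the fixed-point measure $\mu$ provided by the preceding lemma to detect the classes $[\Gr(d)]$ numerically, and then to deduce their independence from the linear independence of the polynomials $\omega_{0,d}$.

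First I would compute $\mu(\Gr(d))$ explicitly. Recall that $\mu(X) \in R=\tilde{R}/I$ is represented by the function $n \mapsto \#X^{U_n}$ for $n \gg 0$. I claim that every finite-dimensional $U_n$-invariant subspace $W$ of $\bV$ is contained in $\bV_n$. Indeed, $U_n$ fixes $\bV_n$ pointwise, and the induced action of $U_n$ on $\bV/\bV_n$ is transitive on non-zero vectors (because $U_n$ surjects onto $\bigcup_{m>n}\GL_{m-n}(\bF)$ acting on $\bV/\bV_n\cong \bV$), so the only finite-dimensional $U_n$-invariant subspace of $\bV/\bV_n$ is $0$; hence the image of $W$ in $\bV/\bV_n$ must vanish. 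Consequently $\Gr(d)^{U_n}$ is exactly the set of $d$-dimensional subspaces of $\bV_n$, which has cardinality $\qbinom{n}{d}_q$. Therefore
\[
\mu([\Gr(d)]) \;=\; \bigl(n\mapsto \qbinom{n}{d}_q\bigr) \;=\; \bigl(n\mapsto \omega_{0,d}([n]_q)\bigr) \quad\text{in } R.
\]

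Now suppose $\sum_{d=0}^N c_d\,[\Gr(d)] = 0$ in $\Theta(G)$ for some integers $c_d$. Applying $\mu$ gives
\[
\sum_{d=0}^N c_d\,\omega_{0,d}([n]_q) \;=\; 0 \quad \text{for all } n\gg 0.
\]
The polynomials $\omega_{0,d}\in\bQ[x]$ have pairwise distinct degrees, so they are linearly independent over $\bQ$; hence $p(x):=\sum_d c_d\,\omega_{0,d}(x)$ is a polynomial that vanishes at infinitely many points $[n]_q$ (these are pairwise distinct since $q>1$, and also valid when $q=1$ by reinterpretation), and so $p=0$ in $\bQ[x]$. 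Linear independence of the $\omega_{0,d}$ then forces each $c_d=0$, which proves the lemma.

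The substantive content is entirely in the fixed-point computation $\Gr(d)^{U_n} = \Gr(\bV_n,d)$; once that is in place, the rest is just invoking linear independence of a finite family of polynomials of distinct degrees evaluated on an infinite set of arguments. I do not anticipate any obstacle, since the subgroup-structure results (Propositions~\ref{prop:subgroupstructure} and~\ref{prop:subgroupinclusion}) already encode the rigidity needed to pin down the $U_n$-invariant subspaces.
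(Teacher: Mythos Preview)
Your proof is correct and follows essentially the same approach as the paper: both apply the fixed-point measure to send $[\Gr(d)]$ to the function $n\mapsto\qbinom{n}{d}_q$ and then argue these functions are linearly independent. The only difference is in the final step: the paper observes that $\qbinom{n}{d}_q$ grows like a constant times $q^{dn}$, so distinct growth rates force linear independence, whereas you recognize these values as $\omega_{0,d}([n]_q)$ and invoke linear independence of polynomials of distinct degree evaluated at infinitely many points. Your version is a bit more explicit about why the fixed-point count equals $\qbinom{n}{d}_q$; the parenthetical about $q=1$ is unnecessary since $q$ is the order of a finite field and hence $q\ge 2$.
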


\begin{proof}
The fixed point measure sends $[\Gr(d)]$  to the function $f(n) = \qbinom{n}{d}_q$ which grows in $n$ approximately like a constant times $q^{dn}$, so we see that the asymptotic growth of the fixed point measure evaluated on any linear combination $\sum a_i [\Gr(i)]$ is dominated by the leading non-zero term, and the only way a linear combination can get sent to $f(n) = 0$ is if all the $a_i$'s vanish.
\end{proof}

\begin{proof}[Proof of Theorem~\ref{thm:GL-Theta}]
By Lemma~\ref{lem:GL-1}, the classes $[\Omega_{m,d}]$ span $\Theta(G)$ as a $\bZ$-module. Thus Lemma~\ref{lem:GL-4} (and Proposition~\ref{prop:shiftedqbinomsspan}) shows that $\phi_0$ restricts to a surjection $\phi \colon \cR_q^{\gg0} \to \Theta(G)$. Lemma~\ref{Grindep} shows that the elements $\phi(\omega_{0,d})$ are $\bZ$-linearly independent in $\Theta(G)$. Since $\cR_q^{\gg 0}$ is torsion-free and the $\omega_{0,d}$ form a basis for $\cR_q^{\gg 0} \otimes \bQ$, this implies the map is injective.
\end{proof}

\subsection{Representations}

Let $k$ be a field of characteristic~0. Since $\Theta(G) \otimes \bQ=\bQ[x]$, the $k$-valued measures for $G$ are parametrized by elements of $k$. Given $t \in k$, let $\mu_t$ be the corresponding measure. By definition, this measure satisfies
\begin{displaymath}
\mu_t(\Omega_{m,d}) = \omega_{m,d}(t).
\end{displaymath}
We fix $t \in k$ in what follows. We also let $\bN_q$ denote the set of numbers of the form $[n]_q$ with $n \in \bN$.

\begin{proposition}
The measure $\mu_t$ is quasi-regular, and regular if and only if $t \not\in \bN_q$.
\end{proposition}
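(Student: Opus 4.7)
The plan is to parametrize the transitive $G$-sets using Proposition~\ref{prop:subgroupstructure} and then directly evaluate $\mu_t$ on them in terms of the polynomials $\omega_{m,d}$. Concretely, by Proposition~\ref{prop:subgroupstructure} every open subgroup of $G$ is conjugate to a group of the form $H \cdot U_n$ with $H \subseteq G_n$, and the natural map $G/(H \cdot U_n) \to G/(G_n \cdot U_n) = \Gr(n)$ is everywhere $[G_n:H]$-to-one. By Corollary~\ref{cor:Theta-d-to-1}, any transitive $G$-set $X$ therefore satisfies $\mu_t(X) = [G_n:H] \cdot \omega_{0,n}(t)$ for some $n,H$. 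Since $k$ has characteristic~$0$, the integer factor is always a unit, so regularity reduces to showing that $\omega_{0,n}(t)$ is non-zero for all $n \ge 0$.

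Now read off the defining formula
\[
\omega_{0,n}(x) = \frac{x(x-[1]_q)(x-[2]_q)\cdots(x-[n-1]_q)}{q^{\binom{n}{2}} [n]_q!}.
\]
Its roots are exactly $[0]_q, [1]_q, \ldots, [n-1]_q$, so $\omega_{0,n}(t) \ne 0$ for all $n$ precisely when $t \notin \bN_q$. This gives the ``if'' direction of regularity; for the converse, if $t = [m]_q$ then $\omega_{0,m+1}(t) = 0$, so $\mu_t(\Gr(m+1)) = 0$ and $\mu_t$ fails to be regular.

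For quasi-regularity when $t = [m]_q$, I would restrict $\mu_t$ to the open subgroup $U_{m+1}$ and show that this restriction is regular. Proposition~\ref{prop:subgroupinclusion} gives the analogous classification: every transitive $U_{m+1}$-set admits a $d$-to-one map to some $\Omega_{m+1, \ell}$ with $d \in \bZ_{>0}$, so it suffices to show $\omega_{m+1, \ell}([m]_q)$ is a unit of $k$ for every $\ell \ge 0$. Iterating the shift relation $\omega_{j+1,\ell}(qx+1) = \omega_{j,\ell}(x)$ starting from $x = [m]_q$ and descending gives
\[
\omega_{m+1,\ell}([m]_q) = \omega_{1,\ell}(0) = \omega_{0,\ell}(-1/q),
\]
and a direct computation using $-1/q - [j]_q = -[j+1]_q/q$ telescopes this to $(-1)^\ell/q^{\binom{\ell+1}{2}}$. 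This is manifestly a unit in $k$, so $\mu_t \vert_{U_{m+1}}$ is regular and $\mu_t$ is quasi-regular.

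The only non-routine step is the shift computation $\omega_{m+1,\ell}([m]_q) = (-1)^\ell/q^{\binom{\ell+1}{2}}$; once it is in hand, everything else is bookkeeping. A mild subtlety is to note that the subgroup $U_m$ does \emph{not} work here ($\mu_t(\Omega_{m,\ell}) = \delta_{0,\ell}$ vanishes for $\ell \ge 1$), which is why one must pass to the strictly smaller group $U_{m+1}$ to witness quasi-regularity.
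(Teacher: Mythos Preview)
Your proof is correct and follows essentially the same approach as the paper: reduce regularity to nonvanishing of $\omega_{0,n}(t)$ via Proposition~\ref{prop:subgroupstructure}, and for $t=[m]_q$ restrict to $U_{m+1}$ and use Proposition~\ref{prop:subgroupinclusion} to reduce quasi-regularity to nonvanishing of $\omega_{m+1,\ell}([m]_q)$. Your treatment is in fact more complete than the paper's, which simply asserts $\omega_{n+1,d}(t)\ne 0$ without justification, whereas you explicitly compute $\omega_{m+1,\ell}([m]_q)=(-1)^\ell/q^{\binom{\ell+1}{2}}$ via the shift relation.
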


\begin{proof}
First suppose $t \not\in \bN_q$. Given an open subgroup $U$ of $G$, Proposition~\ref{prop:subgroupstructure} shows that there is an $e$-to-1 map $G/U \to \Gr(d)$ for some $e$ and $n$, and so $\mu_t(G/U)=e \mu_t(\Gr(d))=e \omega_{0,d}(t)$, which is non-zero. Thus $\mu_t$ is regular.

Now suppose that $t=[n]_q$ for some $n \in \bN$. Then $\mu_t(\Gr(n+1))=\omega_{0,n+1}([n]_q)=0$, and so $\mu_t$ is not regular. We claim that the restriction of $\mu_t$ to $U_{n+1}$ is regular. Let $V$ be an open subgroup of $U_{n+1}$. By Proposition~\ref{prop:subgroupinclusion}, there is an $e$-to-1 map $U_{n+1}/V \to \Omega_{n+1,d}$ for some $e$ and $d$. Thus $\mu_t(U_{n+1}/V)=e \omega_{n+1,d}(t) \ne 0$. This completes the proof.
\end{proof}

Let $\uRep(G)$ denote the representation category $\uRep_k(G; \mu_t)$.

\begin{theorem} \label{thm:GLcat}
The category $\uRep(G)$ is locally pre-Tannakian, and semi-simple if $t \not\in \bN_q$. Moreover, $\uRep^{\rf}(G)$ is the abelian envelope of $\uPerm(G)$ in the sense of Definition~\ref{defn:abenv}.
\end{theorem}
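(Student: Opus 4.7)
The proof follows the template of Theorem~\ref{thm:S-cat}. The preceding proposition already establishes that $\mu_t$ is quasi-regular (and regular precisely when $t \notin \bN_q$), which in particular forces $\mu_t$ to be normal. By Theorems~\ref{thm:regss} and~\ref{thm:abenv}, it remains to verify condition~(P) from Definition~\ref{defn:P}.

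For (P), I would show that $\Theta(G) = \cR_q^{\gg 0}$ itself satisfies (P) by applying Proposition~\ref{prop:P}. Condition (b) of that proposition is immediate: $\cR_q^{\gg 0}$ is a binomial ring, since for $f \in \cR_q^{\gg 0}$ and $m \gg 0$ we have $\binom{f}{n}([m]_q) = \binom{f([m]_q)}{n} \in \bZ$, hence $\binom{f}{n} \in \cR_q^{\gg 0}$; so $x^p = x$ in $\cR_q^{\gg 0}/p$, forcing any characteristic-$p$ residue field to equal $\bF_p$. Condition (a) --- that $\phi_t(\cR_q^{\gg 0}) \subset K$ has infinitely many primes for every $t$ in a number field $K$ --- is the main obstacle. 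The strategy parallels Corollary~\ref{cor:sym-P}: produce a Chebotarev-dense family of primes $\fP$ of $\cO_K$ such that $\phi_t(\cR_q^{\gg 0}) \subset \cO_{K,\fP}$. The extra difficulty compared with the symmetric case is that $\omega_{0,d}(t) = t(t-[1]_q)\cdots(t-[d-1]_q)/(q^{\binom{d}{2}}[d]_q!)$ is not automatically integral at a prime with $p \nmid q$, because the $q$-factorial denominators can pick up extra $p$-adic valuation. The key input is the identity $\omega_{m,d}([n]_q) = \qbinom{n-m}{d}_q \in \bZ$ for $n \ge m$: restricting to primes $\fP$ with $K_\fP = \bQ_p$, $p \nmid q$, and $t$ sufficiently $\fP$-adically close to some $[n]_q$, one gets $\omega_{m,d}(t) \in \cO_{K,\fP}$ by $\fP$-adic continuity. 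A Chebotarev-plus-pigeonhole argument (working one residue class at a time against the finitely many residues $[n]_q \bmod p$) shows such primes remain infinite in number, yielding infinitely many primes of $\phi_t(\cR_q^{\gg 0})$.

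Once (P) is verified, Theorem~\ref{thm:regss} yields the locally pre-Tannakian conclusion and semi-simplicity when $t \notin \bN_q$, and Theorem~\ref{thm:abenv} identifies $\uRep^{\rf}(G)$ with the abelian envelope of $\uPerm(G)$. As noted following the theorem, an alternative route for $t \notin \bN_q$ is to invoke Knop's prior construction of the semi-simple rigid category \cite{Knop,Knop2} and match it with our $\uRep_{\bC}(G;\mu_t)$, bypassing (P) in the semi-simple case; the genuinely new content is the case $t \in \bN_q$, for which the Chebotarev argument above (in characteristic zero) and the reduction to the regular subgroup $U_n$ are essential. The hard step in the whole proof is thus the $q$-adic compatibility lemma underlying condition (a) of Proposition~\ref{prop:P}.
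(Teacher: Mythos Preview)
Your Chebotarev-plus-pigeonhole argument for condition~(a) of Proposition~\ref{prop:P} does not go through: take $t=1/(1-q)$. Then $t-[k]_q=-q^k/(q-1)$ is a $p$-adic unit for every prime $p\nmid q(q-1)$, so $t$ is never $p$-adically close to any $[n]_q$ and your density argument finds no primes at all. In fact one computes $\omega_{0,d}(t)=(-1)^d/\prod_{k=1}^d(q^k-1)$, and since every prime $p\nmid q$ divides some $q^k-1$, the image $\phi_t(\cR_q^{\gg 0})$ inverts all such primes and has only finitely many prime ideals. So condition~(a) genuinely fails for $R=\cR_q^{\gg 0}$, and the paper says as much in Remark~\ref{rmk:GL-P}: property~(P) for arbitrary $t$ is open.

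You also have the two cases reversed. For $t=[n]_q\in\bN_q$ the argument is elementary and needs no Chebotarev input: $\omega_{0,d}([n]_q)=\qbinom{n}{d}_q\in\bZ$, and since the $\omega_{0,d}$ span $\cR_q$ over $\bZ[1/q]$, the measure $\mu_t$ takes values in $\bZ[1/q]$, which trivially satisfies~(P). The case $t\notin\bN_q$ is where~(P) is unavailable, and there the paper does \emph{not} verify it; instead it identifies $\End(\cC(\bV^{\oplus n}))$ in $\uPerm(G)$ with the corresponding endomorphism algebra in Knop's category $\cT(\cA,\delta)$, which Knop proved semi-simple, and then runs Lemma~\ref{lem:regss-1} directly with $\Sigma=\{\cC(\bV^{\oplus n})\}$. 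So Knop's result is not an optional alternative route---it is the actual mechanism for the semi-simple case, replacing~(P) rather than supplementing it.
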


\begin{proof}
First suppose that $t \in \bN_q$, say $t=[n]_q$ with $n \in \bN$. Then $\omega_{0,d}(t)=\qbinom{n}{d}_q$. This is an integer for all $d$ (it vanishes for $d>n$). Since the $\omega_{0,d}$ span $\Theta(G)[\tfrac{1}{q}]=\cR_q$ over $\bZ[\tfrac{1}{q}]$ (Proposition~\ref{prop:qbinomsspan}), we see that the measure $\mu_t$ takes values in $\bZ[\tfrac{1}{q}] \subset k$. It follows that $\mu_t$ satisfies property~(P). The statements now follow from Theorems~\ref{thm:regss} and~\ref{thm:abenv}.

Now suppose that $t \not\in \bN_q$. In this case, property~(P) is more subtle, and we do not know an unconditional proof (see Remark~\ref{rmk:GL-P} below). However, \cite{Knop2} treats this case, and we can appeal to the results there to get around this issue. In the notation and terminology of \cite{Knop2}, let $\cA$ be the category of finite dimensional $\bF$-vector spaces (with all linear maps), and let $\delta$ be the degree function defined for $\cA$ by $\delta(e)=((t-1)/(q-1))^{\dim \ker{e}}$; see \cite[\S 3, Example~3]{Knop2}. (We note that our parameter $[n]_q$ corresponds to Knop's parameter $q^n$, which is why we use $(t-1)/(q-1)$ in the definition of $\delta$ instead of $t$.) Knop shows that his tensor category $\cT(\cA,\delta)$ is semi-simple in this case (see \cite[\S 8, Example~4]{Knop2}).

Now, one can show that the endomorphism algebra of $\cC(\bV^{\oplus n})$ in our category $\uPerm(G)$ is isomorphic to the endomorphism algebra of $\bF^n$ in Knop's category $\cT(\cA,\delta)$. In particular, we see that the endomorphism algebra of $\cC(\bV^{\oplus n})$ is semi-simple. This is enough for the proof of Theorem~\ref{thm:regss} to go through (e.g., to get semi-simplicity, apply Lemma~\ref{lem:regss-1} with $\Sigma$ being the $\cC(\bV^{\oplus n})$. It is also enough to get the proof of Theorem~\ref{thm:abenv} to go through (the most difficult part of that proof, condition~(iii), is trivial in the semi-simple case).
\end{proof}

Interpolation categories associated to $\GL_n(\bF)$ were first considered in detail in the work of Knop \cite{Knop,Knop2} used above. In particular, he constructed a pre-Tannakian interpolation category in the case $t \not\in \bN_q$. In forthcoming work \cite{EntovaAizenbudHeidersdorf}, Entova-Aizenbud and Heidersdorf establish additional properties of these interpolation categories, including a mapping property. As far as we know, the existence of an abelian envelope in the case $t \in \bN_q$ has not previously appeared in the literature. We have been informed that Entova-Aizenbud and Heidersdorf also have a construction of the envelope.

\begin{remark} \label{rmk:GL-P}
As stated in the proof of Theorem~\ref{thm:GLcat}, property~(P) for the measure $\mu_t$ can be subtle. Suppose $t$ belongs to a number field $K$ (which is the most difficult case), and put $t'=(q-1)t+1$. Then~(P) would be implied by the following:
\begin{itemize}
\item[$(\ast)$] There are infinitely many prime ideals $\fp$ of $\cO_K$ such that $t'$ belongs to the closure of the subgroup of $K_{\fp}^{\times}$ generated by $q$.
\end{itemize}
Indeed, if $\fp$ were such a prime then $\omega_{0,d}(t)$ would be integral at $\fp$ since $t$ is $\fp$-adically very close to some $q$-integer. Thus the image of $\mu_t$ in $K_{\fp}$ would be contained in the integers of $K_{\fp}$, and we would obtain property~(P) by arguing as in Proposition~\ref{prop:P}. We note that $(\ast)$ does not hold for $t=(1-q)^{-1}$ (which corresponds to $t'=0$), but for any other $t$ it seems reasonable to expect. We also note that if $K=\bQ$ then a result of P\'olya \cite{Polya} (see also \cite[\S 8.4.2]{Moree}) shows that there are infinitely many primes $p$ such that the reduction mod $p$ of $t$ belongs to the subgroup of $\bF_p^{\times}$ generated by $q$.
\end{remark}

\subsection{The Levi topology} \label{ss:levi}

Recall that $\bV_n=\bF^n$. The standard projection map $\bV_{n+1} \to \bV_n$ induces an inclusion of dual spaces $\bV_n^* \to \bV_{n+1}^*$. The \defn{restricted dual} of $\bV$ is $\bV_*=\bigcup_{n \ge 1} \bV_n^*$. The group $G$ acts on $\bV_*$, and the action of $G$ on $\bV \oplus \bV_*$ is oligomorphic. This induces a second pro-oligomorphic topology on $G$, which we call the \defn{Levi topology}. The subgroup
\begin{displaymath}
L_n = \begin{bmatrix}
1_n & 0 \\
0 & *  \\
\end{bmatrix}
\end{displaymath}
is exactly the subgroup of $G$ fixing each of $e_1, \ldots, e_n$ and $e_1^*, \ldots, e_n^*$. Thus $L_n$ is open in the Levi topology, and any open subgroup contains some $L_n$. Write $G^{\ell}$ (resp.\ $G^p$) for the group $G$ equipped with the Levi (resp.\ parabolic) topology. The ring $\Theta(G^p)$ was computed in Theorem~\ref{thm:GL-Theta}. We now compute $\Theta(G^{\ell})$.

Since every open subgroup in the parabolic topology is also open in the Levi topology, there is a continuous homomorphism $G^{\ell} \to G^p$. This induces a ring homomorphism
\begin{displaymath}
\phi \colon \Theta(G^p) \to \Theta(G^{\ell})
\end{displaymath}
by \S \ref{ss:Theta-prop}(a). The following is our main result on $G^{\ell}$:
  
\begin{theorem} \label{thm:parabolic-levi}
The map $\phi$ is an isomorphism.
\end{theorem}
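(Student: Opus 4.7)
The plan is to prove injectivity and surjectivity of $\phi$ separately, with the surjectivity direction being the more involved argument.

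For injectivity, I would apply Proposition~\ref{prop:fixed-pt} to the chain $L_1 \supset L_2 \supset \cdots$ of open subgroups in $G^\ell$ to produce a fixed-point measure $\rho \colon \Theta(G^\ell) \to R$. The hypotheses are checked as in the parabolic case: $N_G(L_n) = \GL(\bV_n) \times L_n$ contains every $G_d$ for $n \geq d$, giving hypothesis~(a), and a Levi-analog of Proposition~\ref{prop:subgroupinclusion} gives hypothesis~(b). The critical observation is that $\Omega_{m,d}^{L_n} = \Omega_{m,d}^{U_n}$ for all $n \geq m$: if $W \subset \bV$ is $L_n$-invariant and contains $\bV_m$, then $W \cap \bV_n'$ is an $L_n$-invariant finite-dimensional subspace of $\bV_n'$, which must be $0$ because $L_n \cong \GL(\bV_n')$ has no nonzero proper finite-dimensional invariant subspace in $\bV_n'$; hence both sets equal $\Gr(\bV_n/\bV_m)(d)$, of cardinality $\qbinom{n-m}{d}_q$. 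Consequently $\rho \circ \phi$ coincides with the parabolic fixed-point measure on the $\bZ$-spanning set $\{[\Omega_{m,d}]\}$ from Proposition~\ref{prop:shiftedqbinomsspan}, and the argument of Lemma~\ref{Grindep} gives injectivity of $\rho \circ \phi$, hence of $\phi$.

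For surjectivity, I would show every class $[G^\ell/V]$ with $V$ open in $G^\ell$ lies in $\phi(\Theta(G^p))$. A Levi-analog of Propositions~\ref{prop:subgroupstructure}--\ref{prop:subgroupinclusion} classifies such $V$ up to conjugacy as sitting between $L_n$ and a subgroup of the form $(K \rtimes G_n) \cdot L_n$; iterated multiplicativity in fibrations then reduces the problem to showing $[G^\ell/L_n] \in \phi(\Theta(G^p))$ for every $n$. Applying multiplicativity to the map $G^\ell/L_n \to G^p/U_n$ with fiber $F = U_n/L_n \cong \Hom(\bV_n', \bV_n)$ yields $[G^\ell/L_n] = [F] \cdot [G^p/U_n]$, and since $[G^p/U_n] \in \phi(\Theta(G^p))$ automatically, the task reduces further to computing $[F]$. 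Stratifying $F$ by rank, $F = \coprod_{r=0}^n F_r$, one identifies each $F_r$ $\hat{G^\ell}$-equivariantly with $\Gr(\bV_n)(r) \times \mathrm{Surj}(\bV_n', \bF^r)$ (image paired with choice of surjection), and $\mathrm{Surj}(\bV_n', \bF^r) \cong \bV_*^{(r),\mathrm{lin}}$ fibers over $\Gr(\bV_*)(r)$ with fibers of size $|\GL_r(\bF)|$. Summing gives
\[
[F] \;=\; \sum_{r=0}^n \qbinom{n}{r}_q \cdot |\GL_r(\bF)| \cdot [\Gr(\bV_*)(r)] \;\in\; \Theta(G^\ell),
\]
so surjectivity reduces to showing $[\Gr(\bV_*)(r)] \in \phi(\Theta(G^p))$; I expect this class to equal $[\Gr(r)]$.

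The main obstacle will be establishing the identity $[\Gr(\bV_*)(r)] = [\Gr(r)]$ in $\Theta(G^\ell)$. Since $\Gr(\bV_*)(r)$ is not a $\hat{G^p}$-set (its stabilizers are opposite parabolics, which contain no $U_m$), this cannot be seen by a direct set-theoretic isomorphism. Both classes take the value $n \mapsto \qbinom{n}{r}_q$ under $\rho$, but deducing equality requires injectivity of $\rho$ beyond $\phi(\Theta(G^p))$, which does not follow formally from the injectivity of $\rho \circ \phi$ proved above. The natural route is to analyze the incidence variety $I = \{(W, K) \in \Gr(\bV)(r) \times \Gr(\bV_*)(r) : K|_W \text{ is an isomorphism}\}$, which fibers over both Grassmannians with ``splitting torsor'' fibers; showing the two fiber classes coincide in $\Theta(G^\ell)$ (by a further symmetric analysis using duality between $\bV$ and $\bV_*$ under $G^\ell$) would yield the desired identity. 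An alternative would be to enlarge $\rho$ using a richer family of open subgroups encompassing both parabolic and opposite-parabolic stabilizers, sufficient to separate $[\Gr(\bV_*)(r)]$ from $[\Gr(r)]$ if they were unequal.
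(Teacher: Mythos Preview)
Your injectivity argument is essentially what the paper does: build a fixed-point measure on $\Theta(G^{\ell})$ using the chain $L_n$ and check it restricts to the parabolic one on the spanning classes $[\Omega_{m,d}]$.

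Your surjectivity argument, however, has a genuine gap in the subgroup-structure step. The claimed Levi analogue of Propositions~\ref{prop:subgroupstructure}--\ref{prop:subgroupinclusion}, placing every open $V\subset G^{\ell}$ between $L_n$ and a finite extension of $L_n$, is false: the parabolic $U_n$ itself is open in $G^{\ell}$ (it contains $L_m$ for $m\ge n$), yet $[U_n:L_n]$ is infinite (the quotient is $\Hom(\bV'_n,\bV_n)$). The same holds for the transpose $U_n^t$. So the open subgroups of $G^{\ell}$ genuinely mix parabolic and opposite-parabolic pieces, and the reduction ``everything comes from the $[G^{\ell}/L_n]$'' does not go through. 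Even in the finite-index situation you describe, multiplicativity gives $[G^{\ell}/L_n]=d\cdot[G^{\ell}/V]$, which puts $d\cdot[G^{\ell}/V]$ in $\im(\phi)$ but not $[G^{\ell}/V]$ itself; $\im(\phi)$ is a subring, not a saturated one.

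The paper organizes surjectivity around the transpose-inverse automorphism $\sigma$ of $G^{\ell}$, which is continuous for the Levi topology and swaps $\bV$ with $\bV_*$. This gives a second homomorphism $\tilde\phi=\sigma_*\circ\phi$, and the argument splits into two lemmas: (i) $\im(\phi)$ and $\im(\tilde\phi)$ together generate $\Theta(G^{\ell})$, proved (with details omitted) via the Aschbacher--Dynkin classification of maximal subgroups of $\GL_n(\bF_q)$; (ii) $\phi=\tilde\phi$. Your obstacle $[\Gr(\bV_*)(r)]=[\Gr(r)]$ is exactly the content of (ii), and your incidence-variety idea is on the right track. The paper's version is cleaner: since $\Theta(G^p)\otimes\bQ=\bQ[x]$ with $x=[\bV]$ and $\Theta(G^{\ell})$ is $\bZ$-torsion-free (Theorem~\ref{thm:binom}), it suffices to prove the single identity $[\bV]=[\bV_*]$, which the paper does by projecting the incidence variety $Z=\{(v,\lambda):\lambda(v)=0\}\subset(\bV\setminus 0)\times(\bV_*\setminus 0)$ onto each factor and comparing.
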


We refer to \cite[\S 16.6]{arxiv} for the proof. We have not studied the representation categories associated to the group $G^{\ell}$. We expect $\uRep(G^{\ell}; \mu_t)=\uRep(G^p; \mu_t)$ for generic $t$, but that the categories may differ for special values of $t$. It would be interesting to investigate this in more detail.

\subsection{Other linear groups} \label{ss:otherlinear}

There are other oligomorphic linear groups, such as the infinite orthogonal, symplectic, and unitary groups. It would be interesting to compute the $\Theta$ rings in these cases. These cases are all smoothly approximable, and so $\Theta \ne 0$ by the results of \S \ref{ss:Happrox}. Deligne \cite{DeligneLetter2} has observed that the infinite orthogonal group admits two 1-parameter families of measures (essentially from approximating it by even or odd finite orthogonal groups), and it seems probable to us that $\Theta \otimes \bQ$ is isomorphic to $\bQ[x,y]/(xy)$ in this case.

\section{Homeomorphisms of the line and circle} \label{s:order}

\subsection{Overview}

Let $G=\Aut(\bR, <)$ be the group of all order-preserving self-bijections of the real line $\bR$ considered in Example~\ref{ex:oligo}(c). In \S \ref{s:order}, we determine the measures on $G$ and construct a pre-Tannakian category associated to $G$. We also consider a variant where the line is replaced with the circle. These categories are studied in much greater depth in the follow-up papers \cite{line,circle}.

Throughout \S \ref{s:order}, the word ``interval'' will mean ``open interval in $\bR$.'' A \defn{left interval} is one of the form $(-\infty,a)$ where $a<\infty$; \defn{right interval} is defined analogously. A \defn{middle interval} is one of the form $(a,b)$ with $a<b$ both finite. For an interval $I$, we let $G^I$ be the group of orientation-preserving homeomorphisms of $I$; this is isomorphic to $G$. Note that if $x \in I$, and we write $I=J \sqcup \{x\} \sqcup K$, with $J$ and $K$ intervals, then the stabilizer of $x$ in $G^I$ is $G^J \times G^K$. This recursive structure of stabilizer groups will play a prominent role.

\subsection{Subgroup structure}

We begin by classifying the open subgroups of $G$. For a finite subset $A \subset \bR$, recall that $G(A)$ is the subgroup of $G$ that fixes each element of $A$ (which, in the present case, is equivalent to fixing the set $A$).

\begin{proposition} \label{prop:H1-sub}
The open subgroups of $G$ are exactly the $G(A)$ for $A \subset \bR$ finite.
\end{proposition}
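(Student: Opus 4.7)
The easy direction is immediate from the definition of the topology on $G$ (\S\ref{ss:admiss}): each $G(A)$ with $A \subset \bR$ finite is open. For the converse, let $U \subseteq G$ be open, and set $B := \{x \in \bR : u(x) = x \text{ for all } u \in U\}$. Since $U$ contains $G(A_0)$ for some finite $A_0$, and $G(A_0)$ acts transitively on each open interval in $\bR \setminus A_0$, no such point is fixed by $U$; hence $B \subseteq A_0$ is finite. The trivial inclusion $U \subseteq G(B)$ reduces matters to proving $G(B) \subseteq U$.

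I would prove $U = G(B)$ by induction on $n := \min\{|A| : G(A) \subseteq U\}$. Pick $A = \{a_1 < \cdots < a_n\}$ achieving the minimum. When $n = 0$ we get $U = G = G(\emptyset) = G(B)$. For $n \ge 1$, I would show $U = G(A)$, so that $B \subseteq A$ combined with the minimality of $n$ forces $A = B$ and completes the proof. Suppose for contradiction that $g \in U \setminus G(A)$; since $g$ preserves order, $g(a_i) \ne a_i$ for some $i$. Setting $a_0 := -\infty$, $a_{n+1} := +\infty$, and $J := (a_{i-1}, a_{i+1})$, my aim is to derive $G(A \setminus \{a_i\}) \subseteq U$, contradicting minimality. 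The key reduction is that once one exhibits $g' \in U$ with $g'(a_j) = a_j$ for all $j \ne i$ and $g'(a_i) \in J \setminus \{a_i\}$, the subgroup $\langle G(A), g' \rangle \subseteq U$ contains $G^{(a_{i-1}, a_i)}$, $G^{(a_i, a_{i+1})}$ and an element moving $a_i$ inside $J$; decomposing an arbitrary element of $G^J$ using these pieces together with a suitable power of $g'$ then yields all of $G^J$, hence $G(A \setminus \{a_i\}) \subseteq U$.

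The main obstacle is producing $g'$, since $g$ itself may move several $a_j$'s at once. My plan is to combine $g$ with the conjugate stabilizer $G(g(A)) = g G(A) g^{-1} \subseteq U$ to build a rich supply of elements of $U$, intersect with the pointwise stabilizer of $\{a_j : j \ne i\}$, and then project onto the factor $G^J$ in the natural product decomposition $G(A \setminus \{a_i\}) = \prod_k G^{I_k}$ (where the $I_k$ are the connected components of $\bR \setminus (A \setminus \{a_i\})$, one of which is $J$). This projection is an open subgroup of $G^J \cong G$ containing $G^J(\{a_i\})$. The inductive hypothesis applied inside $G^J$ at level $n = 1$ then forces this projection to equal either $G^J(\{a_i\})$ or all of $G^J$; the first option I would rule out by showing that a suitable correction of $g$ by elements of $G(A)$ lies in the stabilizer of $\{a_j : j \ne i\}$ while still moving $a_i$, via a case analysis on how $(g(a_1), \ldots, g(a_n))$ interleaves with $(a_1, \ldots, a_n)$. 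The order-rigidity of $G$ — any order-preserving map setwise stabilizing a finite subset fixes it pointwise — is what makes this analysis terminate. The $n = 1$ base case — that every open subgroup of $G$ strictly containing $G(\{c\})$ equals $G$ — must be verified directly: any $h \in G$ factors as an element of $G(\{c\})$ times a power of a fixed witness $v$ with $v(c) \ne c$, using the transitive action of $G(\{c\})$ on each of $(-\infty, c)$ and $(c, +\infty)$.
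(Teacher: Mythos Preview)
Your overall architecture matches the paper's: reduce to showing that any open $U \supsetneq G(A)$ contains $G(A \setminus \{a_i\})$ for some $i$, handle the one-point case directly, and then bootstrap inside the interval $J=(a_{i-1},a_{i+1})$. The gap is precisely at the step you flag as ``the main obstacle'': producing $g' \in U$ that fixes every $a_j$ with $j \ne i$ but moves $a_i$.

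Your proposed mechanism, ``a suitable correction of $g$ by elements of $G(A)$,'' cannot work. If $h_1,h_2 \in G(A)$ then $(h_1 g h_2)(a_j) = h_1(g(a_j))$, and since $h_1$ fixes each point of $A$ and is order-preserving, $h_1(g(a_j)) = a_j$ forces $g(a_j)=a_j$. So left/right multiplication by $G(A)$ cannot make $g$ fix any $a_j$ it did not already fix. A two-point example such as $A=\{0,1\}$, $g(x)=x+1$ already exhibits this. Your projection argument onto the $G^J$-factor is then circular: ruling out the possibility that the projection equals $G^J(\{a_i\})$ is exactly equivalent to producing such a $g'$, so nothing has been gained. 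Invoking $G(g(A)) \subset U$ is the right instinct, but as stated you have not explained how to get an element of $U$ fixing all $a_j$ with $j\ne i$.

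The paper's resolution is a short conjugation trick that replaces the case analysis entirely. Since $g \notin G(A)$ and $g$ is order-preserving, $g(A) \ne A$ as sets, so some $g(a_i) \notin A$. Choose a small interval $J'$ around $g(a_i)$ disjoint from $A$ and from $g(A)\setminus\{g(a_i)\}$; then $G^{J'} \subset G(A) \subset U$. Pick $h \in G^{J'}$ with $h(g(a_i)) \ne g(a_i)$ and set $g' = g^{-1} h g \in U$. For $j \ne i$ we have $g(a_j) \notin J'$, so $h$ fixes $g(a_j)$ and $g'(a_j)=a_j$; but $g'(a_i)=g^{-1}(h(g(a_i))) \ne a_i$. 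After composing with an element of $G(A)$ one may assume $g'$ is supported in $J=(a_{i-1},a_{i+1})$, and then the one-point case inside $G^J$ gives $G^J \subset U$, hence $G(A\setminus\{a_i\}) \subset U$.

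A smaller issue: in your $n=1$ base case, ``a power of a fixed witness $v$'' does not suffice, since $\{v^k(c):k\in\bZ\}$ is countable. You need to combine a single $v$ with the transitivity of $G(\{c\})$ on each half-line to manufacture, for every $a\in\bR$, an element $g_a\in U$ with $g_a(c)=a$; then $g_{h(c)}^{-1}h \in G(\{c\}) \subset U$ for arbitrary $h \in G$. The paper does exactly this.
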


\begin{proof}
If suffices to show that a subgroup $U$ of $G$ properly containing $G(A)$ contains some $G(B)$ with $B$ a proper subset of $A$. First suppose that $A=\{x\}$ is a single point, and let $g \in U$ not fix $x$. Using $g$ and $g^{-1}$, we can move $x$ to both a smaller and larger number. Since $G_x$ acts transitively on the intervals $(x,\infty)$ and $(-\infty,x)$, it follows that for each $a \in \bR$ there exists some $g_a \in U$ with $g_a(x)=a$. Now, let $h \in G$ be given. Put $a=h(x)$ and $h'=g_a^{-1} h$. Then $h'(x)=x$, and so $h'$ belongs to $U$. Since $g_a$ also belongs to $U$, we find that $h \in U$, and so $U=G$, as desired.

We now treat the general case. Let $A=\{x_1,\ldots,x_n\}$ with $x_1<\cdots<x_n$, and let $g$ be an element of $U$ not belonging to $G(A)$. If $g(A)=A$ then $g$ would induce an order-preserving self-map of the finite totally ordered set $A$ and would therefore fix each element of $A$. Since this is not the case, there is some $x_i$ such that $g(x_i) \not\in A$. Let $J$ be an interval around $g(x_i)$ that is small enough so that it contains no other element of $g(A)$ and is contained in $\bR \setminus A$. Then $G^J \subset G(A)$. Let $h \in G^J$ not fix $x$, and put $g'=g^{-1}hg$. Then $g'$ belongs to $U$ and fixes each $x_j$ for $j \ne i$. Let $I=(x_{i-1},x_{i+1})$, with the convention that $x_0=-\infty$ and $x_{n+1}=\infty$. Composing $g'$ with an element of $G(A)$, we can assume that it is the identity outside of $I$, and thus belongs to $G^I \subset G$. By the previous paragraph, $G^I$ is generated by $G^I_{x_i}$ and $g'$, and so $U$ contains $G^I$. Since $G^I$ and $G(A)$ clearly generate $G(B)$, where $B=A \setminus \{x_i\}$, we see that $U$ contains $G(B)$, which completes the proof.
\end{proof}

\begin{corollary}
The transitive $G$-sets are exactly the $\bR^{(n)}$ with $n \ge 0$.
\end{corollary}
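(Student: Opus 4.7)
My plan is to use Proposition~\ref{prop:H1-sub} together with the high homogeneity of $G$ on $\bR$. Any transitive $G$-set is isomorphic to $G/U$ for some open subgroup $U$, which by Proposition~\ref{prop:H1-sub} has the form $G(A)$ for a finite set $A = \{a_1 < \cdots < a_n\} \subset \bR$. I would then construct a $G$-equivariant bijection
\[
\phi \colon G/G(A) \to \bR^{(n)}, \qquad gG(A) \mapsto g(A).
\]

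To verify this is well-defined and injective, I need to show that the pointwise stabilizer $G(A)$ coincides with the setwise stabilizer of $A$. This is immediate: an element $g \in G$ with $g(A) = A$ restricts to an order-preserving self-map of the finite totally ordered set $A$, which is necessarily the identity on $A$; this observation was already used in the proof of Proposition~\ref{prop:H1-sub}. For surjectivity, I must show $G$ acts transitively on $\bR^{(n)}$: given $A = \{a_1 < \cdots < a_n\}$ and $B = \{b_1 < \cdots < b_n\}$, define a piecewise-linear function $g \colon \bR \to \bR$ that sends $a_i$ to $b_i$ by linear interpolation on each bounded interval $[a_i, a_{i+1}]$ and by affine extensions on the two unbounded rays with positive slope. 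This $g$ is an orientation-preserving homeomorphism in $G$ with $g(A) = B$.

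No step here looks like a serious obstacle: all the work has already been done in Proposition~\ref{prop:H1-sub} and in the elementary construction of piecewise-linear homeomorphisms of $\bR$. The only thing worth being a little careful about is the identification $G(A) = \{g \in G : g(A) = A\}$, but as noted this follows because the action of $G$ is by order-preserving bijections, so no nontrivial permutation of the finite ordered set $A$ can be realized. This completes the identification $G/G(A) \cong \bR^{(n)}$ as transitive $G$-sets, and hence shows that every transitive $G$-set is isomorphic to some $\bR^{(n)}$, with distinct $n$ giving non-isomorphic $G$-sets (e.g.\ by comparing stabilizers, or by noting that $G$ acts faithfully on $\bR^{(n)}$ only with the appropriate orbit structure).
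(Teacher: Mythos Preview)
Your proof is correct and is exactly the natural argument the paper leaves implicit (the paper states the corollary without proof, as an immediate consequence of Proposition~\ref{prop:H1-sub}). The only superfluous part is your final parenthetical about distinguishing different $\bR^{(n)}$: the statement only asks that the transitive $G$-sets are precisely the $\bR^{(n)}$, and distinctness for different $n$ is in any case clear since the stabilizer of a point in $\bR^{(n)}$ is $G(A)$ with $|A|=n$, and Proposition~\ref{prop:H1-sub} shows such groups are not conjugate for different $n$.
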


\begin{corollary} \label{cor:A-open-trans}
Let $A$ be an $r$-element subset of $\bR$ and let $I_0, \ldots, I_r$ be the components of $\bR \setminus A$. Then the transitive $G(A)$-sets are exactly the $I_0^{(n_0)} \times \cdots \times I_r^{(n_r)}$ where $n_0, \ldots, n_r$ are non-negative integers.
\end{corollary}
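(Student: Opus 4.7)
The strategy is to transfer Proposition~\ref{prop:H1-sub} from $G$ to the open subgroup $G(A)$, and then identify the resulting quotient sets with the stated products. Since the open subgroups of $G$ are exactly the groups $G(B)$ with $B \subset \bR$ finite, and since any open subgroup $U$ of $G(A)$ is in particular an open subgroup of $G$ contained in $G(A)$, the first step is to observe that $U = G(B)$ for some finite $B$, and that $G(B) \subset G(A)$ forces $A \subset B$ (because the fixed-point set of $G(B)$ in $\bR$ is precisely $B$).

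Next, I would translate ``transitive $G(A)$-sets up to isomorphism'' into ``$G(A)$-conjugacy classes of open subgroups of $G(A)$,'' and then, via the previous paragraph, into ``$G(A)$-orbits on finite subsets of $\bR$ containing $A$.'' The key point here is $g G(B) g^{-1} = G(gB)$, and the fact that distinct finite subsets of $\bR$ give distinct pointwise stabilizers.

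For the final identification, I would use that any $g \in G(A)$ is orientation-preserving and fixes $A$ pointwise, so it permutes the connected components $I_0, \ldots, I_r$ of $\bR \setminus A$ trivially, i.e.\ each $I_j$ is setwise preserved. Writing $B = A \sqcup \coprod_{j=0}^r (B \cap I_j)$, the $G(A)$-orbit of $B$ is determined by the tuple $(n_0, \ldots, n_r) = (|B \cap I_0|, \ldots, |B \cap I_r|)$, because the restriction of $G(A)$ to each $I_j$ is $G^{I_j}$, and $G^{I_j}$ acts highly homogeneously on $I_j$ (an order-preserving homeomorphism of $I_j \cong \bR$ can take any $n_j$-element subset to any other). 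Under this bijection, the transitive $G(A)$-set with cardinality profile $(n_0, \ldots, n_r)$ is exactly $I_0^{(n_0)} \times \cdots \times I_r^{(n_r)}$, and conversely every such product is clearly a transitive $G(A)$-set.

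The only step requiring any care is the high homogeneity of $G^{I_j}$ on $I_j$; this is immediate from the analogous property of $G = H_1$ on $\bR$ together with the fact that every open interval (or half-line) in $\bR$ is order-isomorphic to $\bR$, so no new ideas are needed.
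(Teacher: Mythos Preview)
Your proposal is correct and is exactly the natural way to deduce the corollary from Proposition~\ref{prop:H1-sub}: open subgroups of $G(A)$ are the $G(B)$ with $B \supset A$, their $G(A)$-conjugacy classes are parametrized by the profile $(n_0,\ldots,n_r)$, and the corresponding homogeneous space is the stated product (the setwise/pointwise stabilizer issue being handled by orientation-preservation). The paper gives no explicit proof, treating this as an immediate corollary, so your argument is the intended one spelled out.
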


Note that any finitary $\hat{G}$-set is a finite union of transitive $G(A)$-sets for some $A$, and so the above corollary tells us what these look like.

\subsection{Burnside rings}

We now determine the Burnside rings of $G$, its open subgroups, and $\hat{G}$. This is one of the few examples where a complete answer is possible. Recall (\S \ref{ss:intpoly}) that $\bZ\langle x_i \rangle_{i \in I}$ denotes the ring of integer-valued polynomials in indeterminates $\{x_i\}_{i \in I}$, and we write $\lambda_n(x)$ for $\binom{x}{n}$.

\begin{proposition} \label{prop:A-burnside}
We have a ring isomorphism $\bZ\langle x \rangle \to \Omega(G)$ via $\lambda_n(x) \mapsto \lbb \bR^{(n)} \rbb$.
\end{proposition}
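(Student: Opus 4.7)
The plan is to verify the map on generators and check it is a ring homomorphism. By the corollary immediately preceding the proposition, the transitive $G$-sets are exactly $\bR^{(n)}$ for $n \ge 0$, so $\Omega(G)$ is the free abelian group on the classes $\lbb \bR^{(n)} \rbb$. Since the polynomials $\lambda_n(x) = \binom{x}{n}$ form a $\bZ$-basis for $\bZ\langle x \rangle$, the prescribed map is an isomorphism of abelian groups. The content is therefore to check multiplicativity.

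To check this, I would compute $\lbb \bR^{(n)} \rbb \cdot \lbb \bR^{(m)} \rbb = \lbb \bR^{(n)} \times \bR^{(m)} \rbb$ by decomposing the product according to the size of the union. Namely, let $W_k \subseteq \bR^{(n)} \times \bR^{(m)}$ consist of those pairs $(A,B)$ with $|A \cup B| = k$, so that $\bR^{(n)} \times \bR^{(m)} = \coprod_{k = \max(n,m)}^{n+m} W_k$ as $G$-sets, and let $h_k \colon W_k \to \bR^{(k)}$ be the $G$-equivariant map $(A,B) \mapsto A \cup B$. The fiber of $h_k$ over a $k$-element set $C$ has cardinality $N_{n,m}^k = \binom{k}{n}\binom{n}{n+m-k}$, as in the proof of Proposition~\ref{prop:R-Theta}.

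The key point now is that $W_k$ is in fact isomorphic to $N_{n,m}^k$ copies of $\bR^{(k)}$. This holds because the $G$-stabilizer of a finite subset $C \subset \bR$ fixes $C$ \emph{pointwise}: any order-preserving bijection of a finite totally ordered set is the identity. Hence the stabilizer of $C$ acts trivially on the fiber $h_k^{-1}(C)$, so each of its $N_{n,m}^k$ points generates its own $G$-orbit (isomorphic to $\bR^{(k)}$). This gives the identity $\lbb \bR^{(n)} \rbb \cdot \lbb \bR^{(m)} \rbb = \sum_{k=\max(n,m)}^{n+m} N_{n,m}^k \lbb \bR^{(k)} \rbb$ in $\Omega(G)$, which matches the known product formula for $\lambda_n(x) \lambda_m(x)$ in $\bZ\langle x \rangle$ (cf.\ the proof of Proposition~\ref{prop:R-Theta}). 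Multiplicativity follows.

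The only nontrivial step is the observation that stabilizers of finite subsets act trivially on those subsets, which is really just the order-preservation property of $G$; this makes the orbit decomposition of $W_k$ transparent. No serious obstacle is anticipated.
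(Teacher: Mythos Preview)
Your proof is correct and takes a somewhat different route from the paper's. The paper computes only the products $\lbb \bR \rbb \cdot \lbb \bR^{(n)} \rbb$ directly (via the decomposition $\bR \times \bR^{(n)} \cong (\bR^{(n+1)})^{\amalg(n+1)} \amalg (\bR^{(n)})^{\amalg n}$), obtaining the recursion $x_1 x_n = (n+1)x_{n+1} + n x_n$, and then checks multiplicativity after tensoring with $\bQ$, where $\bQ[x]$ is generated by $x$ as an algebra. Your approach instead computes the full product $\lbb \bR^{(n)} \rbb \cdot \lbb \bR^{(m)} \rbb$ directly, and the crucial ingredient is the observation that the $G$-stabilizer of a finite subset $C \subset \bR$ fixes $C$ pointwise (since $G$ preserves order); this is exactly what upgrades the fiber-counting argument from Proposition~\ref{prop:R-Theta} (which only yields an identity in $\Theta(G)$) to an honest orbit decomposition in the Burnside ring. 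Your argument is more self-contained and avoids the passage to $\bQ$; the paper's argument is shorter to write since it only needs one product computation, but leans on the rational reduction.
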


\begin{proof}
Put $X_n=\bR^{(n)}$ and $x_n=\lbb X_n \rbb$. We have an isomorphism of $G$-sets
\begin{displaymath}
X_1 \times X_n \cong X_{n+1}^{\amalg n+1} \amalg X_n^{\amalg n}
\end{displaymath}
Indeed, an element of $X_1 \times X_n$ is a pair $(a, b_1, \ldots, b_n)$ where $b_1<\cdots<b_n$. There are $n$ ways that $a$ could coincide with one of the $b$'s; this contributes $n$ copies of  $X_n$. Away from this locus, $a$ is in one of the $n+1$ intervals obtained by deleting the $b$'s, and this gives $n+1$ copies of $X_{n+1}$. As a consequence, we find $x_1 x_n=(n+1)x_{n+1}+nx_n$.

Let $a_n=\lambda_n(x)$. Since the $a_n$'s form a $\bZ$-basis of $\bZ\langle x \rangle$, we have a unique $\bZ$-linear map $f \colon \bZ\langle x \rangle \to \Omega(G)$ given by $f(a_n)=x_n$. We can verify that it is a ring homomorphism after extending scalars to $\bQ$, since the domain and target are $\bZ$-free. This follows easily from the above identity involving the $x$'s.
\end{proof}

The same argument yields the following result:

\begin{corollary} \label{cor:A-burnside}
Let $A$ be an $r$-element subset of $\bR$ and let $I_0, \ldots, I_r$ be the components of $\bR \setminus A$. Then we have an isomorphism of rings
\begin{displaymath}
\bZ\langle x_0, \ldots, x_r \rangle \to \Omega(G(A)), \qquad \lambda_n(x_i) \mapsto \lbb I_i^{(n)} \rbb
\end{displaymath}
\end{corollary}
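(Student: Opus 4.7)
The plan is to reduce Corollary~\ref{cor:A-burnside} to Proposition~\ref{prop:A-burnside} by exploiting the product decomposition of $G(A)$. Write $A = \{a_1 < \cdots < a_r\}$, so that $I_0 = (-\infty,a_1)$, $I_j = (a_j,a_{j+1})$ for $1 \le j \le r-1$, and $I_r = (a_r,\infty)$. Every element of $G(A)$ preserves each $I_j$ setwise, and conversely one can glue an orientation-preserving self-homeomorphism of each $I_j$ into an element of $G(A)$. This yields a canonical isomorphism of topological groups
\[
G(A) \;\cong\; G^{I_0} \times G^{I_1} \times \cdots \times G^{I_r},
\]
and each $G^{I_j}$ is isomorphic to $G$ since open intervals, half-lines, and $\bR$ are all order-isomorphic.

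Next, I would establish that this product decomposition passes to Burnside rings. By Corollary~\ref{cor:A-open-trans}, every transitive $G(A)$-set has the form $X_0 \times \cdots \times X_r$ where $X_j = I_j^{(n_j)}$ is a transitive $G^{I_j}$-set, and two such products are isomorphic as $G(A)$-sets if and only if the corresponding factors are isomorphic as $G^{I_j}$-sets. Moreover, cartesian product is componentwise:
\[
(X_0 \times \cdots \times X_r) \times (Y_0 \times \cdots \times Y_r) \;\cong\; (X_0 \times Y_0) \times \cdots \times (X_r \times Y_r)
\]
as $G(A)$-sets. Hence the map
\[
\Omega(G^{I_0}) \otimes_{\bZ} \cdots \otimes_{\bZ} \Omega(G^{I_r}) \;\longrightarrow\; \Omega(G(A)), \qquad \lbb X_0 \rbb \otimes \cdots \otimes \lbb X_r \rbb \mapsto \lbb X_0 \times \cdots \times X_r \rbb,
\]
is a well-defined ring isomorphism.

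Finally, I would apply Proposition~\ref{prop:A-burnside} to each factor $G^{I_j}$ (where $I_j$ plays the role of $\bR$) to identify $\Omega(G^{I_j})$ with $\bZ\langle x_j \rangle$ via $\lambda_n(x_j) \mapsto \lbb I_j^{(n)} \rbb$. Combined with the standard identification $\bigotimes_{j=0}^{r} \bZ\langle x_j \rangle = \bZ\langle x_0,\ldots,x_r \rangle$ recalled in \S\ref{ss:intpoly}, this delivers the stated isomorphism.

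There is no serious obstacle here: the argument is essentially a compatibility check, with the only nontrivial input being Proposition~\ref{prop:A-burnside} itself. The step requiring a moment's care is the claim that $\Omega$ of a product of oligomorphic groups (in this restricted setting, where all transitive sets factor) is the tensor product of the individual Burnside rings; this is what makes the parallel to the single-variable case clean.
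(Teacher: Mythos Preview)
Your proof is correct and amounts to a fleshed-out version of what the paper means by ``the same argument yields the following result.'' The only difference is organizational: the paper implicitly redoes the computation of Proposition~\ref{prop:A-burnside} directly in $\Omega(G(A))$ using the basis $\lbb I_0^{(n_0)} \times \cdots \times I_r^{(n_r)}\rbb$ furnished by Corollary~\ref{cor:A-open-trans}, whereas you explicitly factor through the tensor product decomposition $\Omega(G(A)) \cong \bigotimes_j \Omega(G^{I_j})$ before invoking the single-variable case.
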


We now introduce a ring that we will play a prominent role in our discussion: we let $R=\bZ\langle x_I \rangle$, where $I$ varies over all intervals.

\begin{proposition} \label{prop:Ahat-burnside}
Let $\fc$ be the $\lambda$-ideal of $R$ generated by the equations
\begin{displaymath}
x_{(a,c)}=x_{(a,b)}+1+x_{(b,c)}
\end{displaymath}
over all choices of $-\infty \le a<b<c \le \infty$. Explicitly, $\fc$ is generated by the elements
\begin{displaymath}
\binom{x_{(a,c)}}{n} - \sum_{i+j+k=n} \binom{x_{(a,b)}}{i} \binom{1}{j} \binom{x_{(b,c)}}{k}
\end{displaymath}
for $a$, $b$, and $c$ as above, and $n \ge 0$. Then we have a ring isomorphism $R/\fc \to \Omega(\hat{G})$ given by $\lambda_n(x_I) \mapsto \lbb I^{(n)} \rbb$.
\end{proposition}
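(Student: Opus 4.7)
The strategy is to compute both $R/\fc$ and $\Omega(\hat G)$ as filtered colimits over finite subsets $A \subset \bR$ and match them level by level. By Proposition~\ref{prop:H1-sub}, every open subgroup of $G$ has the form $G(A)$, so $\Omega(\hat G) = \varinjlim_A \Omega(G(A))$. On the source side, let $R_A \subset R$ be the subring generated by $x_I$ for those intervals $I$ with endpoints in $A \cup \{\pm\infty\}$, and let $\fc_A \subset R_A$ be the ideal generated by those explicit relations in which $a,b,c \in A \cup \{\pm\infty\}$. Since any finite collection of variables lies in some $R_A$, we will have $R = \bigcup_A R_A$ and $\fc = \bigcup_A \fc_A$, hence $R/\fc = \varinjlim_A R_A/\fc_A$. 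The task therefore reduces to establishing an isomorphism $R_A/\fc_A \cong \Omega(G(A))$ naturally in $A$.

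For the map itself, Corollary~\ref{cor:A-burnside} shows $\Omega(G(A)) \cong \bZ\langle x_0,\ldots,x_m\rangle$ is a binomial ring, so the assignment $x_I \mapsto \lbb I \rbb$ will extend uniquely to a ring homomorphism $R_A \to \Omega(G(A))$; for a non-basic interval one uses the expansion $\lbb (a_i,a_j) \rbb = \sum_{k=i}^{j-1} \lbb I_k \rbb + (j-i-1)$ (where $I_k = (a_k, a_{k+1})$ are the basic intervals, with $a_0 = -\infty$ and $a_{m+1} = +\infty$) to specify the image. This homomorphism will kill $\fc_A$ because the decomposition
\begin{equation*}
(a,c)^{(n)} = \bigsqcup_{i+j+k=n} (a,b)^{(i)} \times \{b\}^{(j)} \times (b,c)^{(k)}
\end{equation*}
in $\Omega(G(\{a,b,c\}))$ is precisely the identity imposed by the $n$-th $\fc$-relation (with $\binom{1}{j}$ matching the cardinality of $\{b\}^{(j)}$, which vanishes for $j \geq 2$). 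To check the resulting map $R_A/\fc_A \to \Omega(G(A))$ is an isomorphism, I would iterate the trichotomy relation to reduce every $x_{(a_i,a_j)}$ modulo $\fc_A$ to $\sum_{k=i}^{j-1} x_{I_k} + (j-i-1)$, producing a surjection $\bZ\langle x_{I_0},\ldots,x_{I_m}\rangle \twoheadrightarrow R_A/\fc_A$ whose composition with the map to $\Omega(G(A))$ is the isomorphism of Corollary~\ref{cor:A-burnside}; this forces both arrows to be bijective.

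Finally, one has to verify that these level-$A$ isomorphisms glue into an isomorphism of colimits. For $A \subset A'$, both the transition map $R_A/\fc_A \to R_{A'}/\fc_{A'}$ (induced by $R_A \hookrightarrow R_{A'}$ followed by reduction) and the transition map $\Omega(G(A)) \to \Omega(G(A'))$ will send the class of a basic interval $I_k$ of $\bR \setminus A$ to $\sum_l \lbb J_l \rbb + c$, where the $J_l$ are the basic intervals of $\bR \setminus A'$ contained in $I_k$ and $c$ counts the points of $A' \setminus A$ lying in $I_k$. Passing to the colimit then yields the desired isomorphism $R/\fc \cong \Omega(\hat G)$. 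I expect the main obstacle to be the careful bookkeeping in the colimit step, namely confirming $\fc = \bigcup_A \fc_A$ as subsets of $R$ and checking the naturality of the level-$A$ identifications; once these formal points are in place the substance of the proof is entirely contained in Corollary~\ref{cor:A-burnside} and the elementary decomposition of $n$-element subsets of a three-part disjoint union.
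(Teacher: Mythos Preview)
Your proposal is correct and follows essentially the same approach as the paper. The paper identifies $\Omega(G(A))$ with $\bZ\langle x_{I_0},\ldots,x_{I_r}\rangle$ via Corollary~\ref{cor:A-burnside}, computes that the restriction map $\Omega(G(A))\to\Omega(G(B))$ for $A\subset B$ is the $\lambda$-ring map sending $x_{I_i}\mapsto x_{J_k}+\cdots+x_{J_{k+\ell}}+(\ell-1)$, and then asserts that the colimit description $R/\fc$ follows; you package the same argument by introducing the intermediate rings $R_A/\fc_A$ and matching the colimits term by term, which amounts to the same computation with slightly more explicit bookkeeping.
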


\begin{proof}
Let $a \in \bR$ and let $I_1=(-\infty,a)$ and $I_2=(a,\infty)$. By Corollary~\ref{cor:A-burnside}, we have an isomorphism $\bZ\langle y_1,y_2 \rangle \to \Omega(G_a)$ given by $\lambda_n(y_1) \mapsto \lbb I_1^{(n)} \rbb$ and $\lambda_n(y_2) \mapsto \lbb I_2^{(n)} \rbb$. Write $\Omega(G)=\bZ\langle x \rangle$ with $x=\lbb \bR \rbb$. We now examine the restriction map $\Omega(G) \to \Omega(G_a)$. The group $G_a$ has three orbits on $\bR$, namely $I_1$, $\{a\}$, and $I_2$; thus $\bR=I_1 \sqcup \{a\} \sqcup I_2$. More generally, we have
\begin{displaymath}
\bR^{(n)} = \bigsqcup_{i+j+k=n} I_1^{(i)} \times \{a\}^{(j)} \times I_2^{(k)}.
\end{displaymath}
We thus see that the restriction map $\Omega(G) \to \Omega(G_a)$ corresponds to the map $\bZ\langle x \rangle \to \bZ\langle y_1,y_2 \rangle$ given by $\lambda_n(x) \mapsto \lambda_n(y_1+1+y_2)$.

The general situation is similar to the above. Given a finite subset $A$ of $\bR$, let $I_0, \ldots, I_r$ be the components of $\bR \setminus A$. Then $\Omega(G_A)$ is identified with $\bZ\langle x_{I_0}, \ldots, x_{I_r} \rangle$. Let $B$ be another finite subset of $\bR$, containing $A$, and let $J_0, \ldots, J_s$ be the components of $\bR \setminus B$. Then the restriction map $\Omega(G_A) \to \Omega(G_B)$ is described as follows. Write $I_i=J_k \sqcup \pt \sqcup \cdots \sqcup \pt \sqcup J_{k+\ell}$. Then $x_{I_i}$ is mapped to $x_{J_k}+\cdots+x_{J_{k+\ell}}+(\ell-1)$, and the map is one of $\lambda$-rings. The description of $\Omega(\hat{G})$ now follows.
\end{proof}

\subsection{Analysis of \texorpdfstring{$\Theta(G)$}{\textTheta(G)}}
 
The following theorem gives a complete description of $\Theta(G)$.

\begin{theorem} \label{thm:A-Theta}
For $\epsilon,\delta \in \{-1,0\}$ there exists a unique ring homomorphism
\begin{displaymath}
\mu_{\epsilon,\delta} \colon \Theta(G) \to \bZ
\end{displaymath}
given by
\begin{displaymath}
\mu_{\epsilon,\delta}([I^{(n)}]) = \begin{cases}
(-1)^n & \text{if $I$ is a middle interval} \\
\epsilon^n & \text{if $I$ is a left interval} \\
\delta^n & \text{if $I$ is a right interval}
\end{cases}
\end{displaymath}
The ring homomorphism $\Theta(G) \to \bZ^4$ furnished by the four $\mu$'s is an isomorphism.
\end{theorem}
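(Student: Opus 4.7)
The plan is to identify $\Theta(G)$ with $\bZ[\ell, r]/(\ell^2 + \ell,\ r^2 + r) \cong \bZ^4$. By conjugation invariance, the classes $m = [M]$, $\ell = [L]$, $r = [R]$ in $\Theta(G)$ are independent of the chosen representative interval. First I would invoke the partition relation from Proposition~\ref{prop:Ahat-burnside} applied to $(a,c) = (a,b) \sqcup \{b\} \sqcup (b,c)$ with all three points finite: this gives $m = m + 1 + m$, so $m = -1$. Next, multiplicativity in fibrations applied to the two surjections $L^{(2)} \rightrightarrows L$ obtained by dropping the larger element (fiber a middle interval) or the smaller element (fiber a left interval) produces two expressions $[L^{(2)}] = m\ell = -\ell$ and $[L^{(2)}] = \ell^2$, hence $\ell^2 + \ell = 0$; the symmetric argument gives $r^2 + r = 0$. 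An analogous induction shows that $[I^{(n)}]$ equals $(-1)^n$, $\ell^n$, or $r^n$ for $I$ middle, left, or right, so Corollary~\ref{cor:A-open-trans} together with multiplicativity yields a surjective ring homomorphism
\[ \phi \colon S := \bZ[x,y]/(x^2+x,\ y^2+y) \twoheadrightarrow \Theta(G),\qquad x \mapsto \ell,\ y \mapsto r. \]

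The Chinese Remainder Theorem identifies $S$ with $\bZ^4$ via the four homomorphisms $(x,y) \mapsto (\epsilon, \delta)$ for $(\epsilon, \delta) \in \{0, -1\}^2$. To finish it therefore suffices to construct, for each such pair, a $\bZ$-valued measure $\mu_{\epsilon, \delta}$ on $G$ satisfying $\mu_{\epsilon, \delta}(\ell) = \epsilon$ and $\mu_{\epsilon, \delta}(r) = \delta$; the composite $(\prod \mu_{\epsilon, \delta}) \circ \phi$ will then be the CRT isomorphism, forcing $\phi$ to be an isomorphism and simultaneously identifying $\Theta(G) \xrightarrow{\sim} \bZ^4$. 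For $(\epsilon, \delta) = (-1, -1)$ I would take the compactly supported Euler characteristic of Example~\ref{ex:AutR-measure}. For the remaining three pairs the candidate is defined combinatorially: given a $V$-orbit decomposition $X = \bigsqcup_\alpha \prod_i I_i^{(n_i)}$ (Corollary~\ref{cor:A-open-trans}), set
\[ \mu_{\epsilon, \delta}(X) = \sum_\alpha \prod_i \eta(I_i)^{n_i}, \]
where $\eta(I)$ equals $-1$, $\epsilon$, or $\delta$ according as $I$ is middle, left, or right. Conceptually, each $\mu_{\epsilon, \delta}$ is the compactly supported Euler characteristic measure on one of the four partial compactifications of $\bR$ by subsets of $\{-\infty, +\infty\}$.

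The hardest part will be verifying that each such candidate is actually a measure in the sense of Definition~\ref{defn:measure}. Independence of $V$ under refinement should reduce to the Vandermonde identity for $\binom{\eta}{n}$ (noting $\eta^n = \binom{\eta}{n}$ for $\eta \in \{0, -1\}$) applied to the partition $I = I' \sqcup \{b\} \sqcup I''$; conjugation invariance, additivity, and normalization at a point are immediate from the orbit-sum formula. The crux is multiplicativity in fibrations: here I would use Proposition~\ref{prop:H1-sub} to describe the stabilizer $V_y$ of a base point $y$ as a smaller $G(\tilde A)$, refine the orbit decomposition of $X$ accordingly, and reduce the identity $\mu(X) = \mu(F)\,\mu(Y)$ to a finite family of elementary polynomial identities among the values $\eta^n$ for $\eta \in \{0, -1\}$.
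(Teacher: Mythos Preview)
Your approach is essentially the same as the paper's: derive $m=-1$ and $\ell^2+\ell=r^2+r=0$ to obtain a surjection $\bZ[x,y]/(x^2+x,y^2+y)\twoheadrightarrow\Theta(G)$, then use the four $\bZ$-valued measures to show it is an isomorphism. The only notable difference is that the paper packages the relations through the explicit presentation $\Theta(G)=R/(\fc+\fd)$ of Lemma~\ref{lem:A-Theta-1}, which reduces the existence of each $\mu_{\epsilon,\delta}$ to a mechanical check that it kills the listed generators of $\fc$ and $\fd$; this sidesteps the direct verification of the measure axioms that you flag as ``the hardest part,'' and you may find it worthwhile to adopt that shortcut.
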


\begin{remark}
By Corollary~\ref{cor:A-open-trans}, every $\hat{G}$-set admits a smooth manifold structure. One easily sees that $\mu_{-1,-1}(X)$ is the compact Euler characteristic of $X$. The other measures are also related to Euler characteristics. For example, $\mu_{\epsilon,\delta}(\bR)$ is the compact Euler characteristic of a partial compactification $\bR$, where a left endpoint is added if $\epsilon=0$ and a right one is added if $\delta=0$. We thank David Treumann for this observation.
\end{remark}

We require a lemma before giving the proof. Let $R$ and $\fc$ be as in Proposition~\ref{prop:Ahat-burnside}, so that $\Omega(\hat{G})=R/\fc$. We classify intervals into four types: left, right, middle, and $\bR$ itself.

\begin{lemma} \label{lem:A-Theta-1}
Let $\fd \subset R$ be the ideal generated by the following elements:
\begin{enumerate}
\item $\lambda_n(x_I)-\lambda_n(x_J)$ for $n \ge 0$, whenever $I$ and $J$ have the same type.
\item For $-\infty \le a<b<c \le \infty$ and $n=i+j+1$, the element
\begin{displaymath}
\lambda_n(x_{(a,c)})-x_{(a,c)} \lambda_i(x_{(a,b)}) \lambda_j(x_{(b,c)}).
\end{displaymath}
\end{enumerate}
Then we have an isomorphism $R/(\fc+\fd) \to \Theta(G)$ given by $\lambda_n(x_I) \mapsto [I^{(n)}]$.
\end{lemma}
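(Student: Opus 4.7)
My plan is to combine the identification $\Theta(G) = \Omega(\hat G)/\fa(G)$ from Proposition~\ref{prop:Burnside-Theta} with the presentation $\Omega(\hat G) = R/\fc$ from Proposition~\ref{prop:Ahat-burnside}. This reduces the lemma to the assertion that, in $\Omega(\hat G)$, the image of $\fd$ coincides with $\fa(G) = \fa_1(G) + \fa_2(G)$; the ring map in the lemma is then automatic, so everything comes down to matching generators.

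The forward inclusion $\fd \subseteq \fa(G) + \fc$ is quick. Type~(a) relations reflect that $G = H_1$ acts transitively on intervals of each of the four types, so for same-type $I, J$ there is $g \in G$ with $g(I) = J$, giving $(I^{(n)})^g \cong J^{(n)}$ as $\hat G$-sets and hence the relation in $\fa_1(G)$. For a type~(b) relation with $-\infty \le a < b < c \le \infty$, the $G^{(a,c)}$-equivariant surjection $(a,c)^{(n)} \to (a,c)$ sending $\{z_1 < \cdots < z_n\}$ to $z_{i+1}$ is a map of transitive sets for the open subgroup $G(\{a,c\} \cap \bR) = G^{(-\infty, a)} \times G^{(a,c)} \times G^{(c, \infty)}$ (the outer factors act trivially while $G^{(a,c)}$ acts transitively on both sides), with fiber $(a,b)^{(i)} \times (b,c)^{(j)}$ over $b$; axiom~\dref{defn:measure}{e} then places the associated element in $\fa_2(G)$.

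For the reverse inclusion on $\fa_1(G)$, any finitary $\hat G$-set $X$ is a finite disjoint union of transitive $G(A)$-sets, each of the form $\prod_i I_i^{(n_i)}$ with $I_i$ the components of $\bR \setminus A$ (Corollary~\ref{cor:A-open-trans}); additivity in $\fc$ reduces to this case, and since $g \in G$ is orientation-preserving each $g(I_i)$ has the same type as $I_i$, so $\lbb X \rbb = \lbb X^g \rbb$ in $R/(\fc + \fd)$ follows from type~(a) relations factor by factor.

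The main obstacle is the reverse inclusion on $\fa_2(G)$. Given a surjection $X \to Y$ of transitive $G(A)$-sets with fiber $F$, the classification of open subgroups (Proposition~\ref{prop:H1-sub}) forces $X = G(A)/G(A \cup B)$ and $Y = G(A)/G(A \cup C)$ with $C \subseteq B$ finite, and the factorization $G(A) = \prod_i G^{I_i}$ makes both $X = \prod_i I_i^{(m_i)}$ and $Y = \prod_i I_i^{(n_i)}$, with the map a product of component maps $I_i^{(m_i)} \to I_i^{(n_i)}$. Working in a single interval $I$, this component map selects positions $j_1 < \cdots < j_n$ in $\{1, \ldots, m\}$, with fiber over $\{c_1 < \cdots < c_n\}$ equal to $\prod_{k=0}^{n} J_k^{(p_k)}$, where $J_k = (c_k, c_{k+1})$ (using the endpoints of $I$ for $c_0, c_{n+1}$) and $p_k = j_{k+1} - j_k - 1$ (with $j_0 = 0$, $j_{n+1} = m+1$). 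I will prove the resulting identity
\[
\lambda_m(x_I) = \lambda_n(x_I) \cdot \prod_{k=0}^{n} \lambda_{p_k}(x_{J_k})
\]
in $R/(\fc + \fd)$ by induction on $n$; the base case $n = 0$ is trivial since then $J_0 = I$ and $p_0 = m$. For $n \ge 1$, I apply relation~(b) to $I$ split at the rightmost selected point $b_n$ (with $i = p_0 + \cdots + p_{n-1} + (n-1)$ and $j = p_n$) to obtain $\lambda_m(x_I) = x_I \cdot \lambda_{p_0 + \cdots + p_{n-1} + n - 1}(x_{(a, b_n)}) \cdot \lambda_{p_n}(x_{J_n})$; the inductive hypothesis applied to $(a, b_n)$ partitioned by $b_1, \ldots, b_{n-1}$ rewrites the middle factor as $\lambda_{n-1}(x_{(a,b_n)}) \cdot \prod_{k=0}^{n-1} \lambda_{p_k}(x_{J_k})$; and one final application of~(b) with $(i, j) = (n-1, 0)$ recognizes $x_I \cdot \lambda_{n-1}(x_{(a, b_n)})$ as $\lambda_n(x_I)$, completing the induction and the lemma.
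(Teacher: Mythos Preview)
Your argument is correct and follows the same overall scheme as the paper: identify $\Theta(G)$ with $(R/\fc)/\fa(G)$ via Propositions~\ref{prop:Burnside-Theta} and~\ref{prop:Ahat-burnside}, and show that the image of $\fd$ in $\Omega(\hat G)$ equals $\fa(G)$. The forward inclusion and the treatment of $\fa_1$ match the paper's essentially verbatim. For $\fa_2$ the paper takes a slightly different route: it first reduces (using transitivity of the fibration relation) to generators $r_{A,B,C}$ with $\#B = \#A + 1$, so that $Y = G(A)/G(B)$ is a single interval $I_i$; the relation $r_{A,B,C}$ then visibly factors as $\prod_{j \ne i} \lbb I_j^{(n_j)} \rbb$ times a single type-(b) generator. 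You instead factor the general map into its interval components first and handle the resulting single-interval identity $\lambda_m(x_I) = \lambda_n(x_I)\prod_k \lambda_{p_k}(x_{J_k})$ by induction on $n$, peeling off one selected position at a time via two applications of relation~(b). Both approaches are valid; the paper's one-step reduction effectively absorbs your induction into the observation that $\fa_2$ is generated by its ``one-point'' relations, while your argument is more explicit about how the type-(b) relations compose to yield the general fibration identity.
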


\begin{proof}
Let $\fd_1$ be the ideal of $R$ generated by the elements in (a), and let $\fd_2$ be the ideal generated by the elements in (b), so that $\fd=\fd_1+\fd_2$. Recall (Proposition~\ref{prop:Burnside-Theta}) that $\Theta(G)$ is the quotient of $\Omega(\hat{G})$ by an ideal $\fa$, and $\fa$ is similarly expressed as $\fa=\fa_1+\fa_2$. Let $\pi \colon R \to \Omega(\hat{G})$ be the natural map. To prove the lemma, it suffices to show that $\pi(\fd)=\fa$. In fact, we show that $\pi(\fd_i)=\fa_i$ for $i=1,2$.

The ideal $\fa_1$ is generated by the elements $\lbb X \rbb-\lbb X^g \rbb$ where $X$ is a finitary $G$-set and $g \in G$. We have
\begin{displaymath}
\pi\big( \lambda_n(x_I)-\lambda_n(x_J) \big) = \lbb I^{(n)} \rbb-\lbb J^{(n)} \rbb.
\end{displaymath}
If $I$ and $J$ have the same type the $I=J^g$ for some $g \in G$, and so the above element belongs to $\fa_1$. Thus $\pi(\fd_1) \subset \fa_1$. In fact, we have equality. Indeed, $\fa_1$ is in fact generated by the elements $\lbb X \rbb-\lbb X^g \rbb$ where $X$ is a transitive $G(A)$-set, for variable $A$. Let $A$ be given, and let $I_1, \ldots, I_r$ be the components of $\bR \setminus A$. A transitive $G(A)$-set then has the form $I_1^{(n_1)} \times \cdots \times I_r^{(n_r)}$ (Corollary~\ref{cor:A-open-trans}). Given $g \in G$, let $J_i=g(I_i)$. Then
\begin{displaymath}
\lbb X \rbb-\lbb X^g \rbb = \lbb I_1^{(n_1)} \rbb \cdots \lbb I_r^{(n_r)} \rbb - \lbb J_1^{(n_1)} \rbb \cdots \lbb J_r^{(n_r)} \rbb.
\end{displaymath}
This is clearly a consequence of the relations in $\pi(\fd_1)$. We have thus shown that $\pi(\fd_1)=\fa_1$.

The ideal $\fa_2$ is generated by the elements
\begin{displaymath}
r_{A,B,C}=\lbb G(A)/G(C) \rbb-\lbb G(A)/G(B) \rbb \cdot \lbb G(B)/G(C) \rbb
\end{displaymath}
for finite subsets $A \subset B \subset C$ of $\bR$. One easily sees that it suffices to consider the case where $\#B=1+\# A$ (i.e., these elements generate $\fa_2$), so we restrict our attention to such elements. Let us first examine these relations more closely. Let $I_1, \ldots, I_r$ be the components of $\bR \setminus A$. Write $B=A \cup \{x\}$ with $x \in I_i$, and let $I_i=J \sqcup \{x\} \sqcup K$. Let $n_j$ be the number of points in $C$ contained in $I_j$. Write $n_i=p+q+1$, where $p=\# (C \cap J)$ and $q=\# (C \cap K)$. We then have
\begin{align*}
G(A)/G(C) &= I_1^{(n_1)} \times \cdots \times I_r^{(n_r)} \\
G(A)/G(B) &= I_i \\
G(B)/G(C) &= I_1^{(n_1)} \times \cdots \times I_{i-1}^{(n_i)} \times J^{(p)} \times K^{(q)} \times I_{i+1}^{(n_{i+1})} \times \cdots \times I_r^{(n_r)}
\end{align*}
We thus find that
\begin{displaymath}
r_{A,B,C}= \big( \prod_{j \ne i} \lbb I_j^{(n_j)} \rbb \big) \cdot \big( \lbb I_i^{(n_i)} \rbb-\lbb I_i \rbb \lbb J^{(p)} \rbb \lbb K^{(p)} \rbb \big).
\end{displaymath}
Let $C'$ be obtained from $C$ by removing all points in $I_j$ with $j \ne i$. Then the left factor above is $r_{A,B,C'}$, and thus already belongs to $\fa_2$.

The above analysis can be summarized as follows. Let $I$ be an interval, let $x \in I$, and write $I=J \sqcup \{x\} \sqcup K$. Also, let $n=p+1+q$ with $p,q \in \bN$. Put
\begin{displaymath}
r_{I,J,K}^{p,q} = \lbb I^{(n)} \rbb -\lbb I \rbb \lbb J^{(p)} \rbb \lbb K^{(q)} \rbb.
\end{displaymath}
Then $\fa_2$ is generated by the elements $r_{I,J,K}^{p,q}$.

It is clear that the generators of $\fd_2$ are exactly lifts of the elements $r_{I,J,K}^{p,q}$, and so $\pi(\fd_2)=\fa_2$. This completes the proof.
\end{proof}

\begin{proof}[Proof of Theorem~\ref{thm:A-Theta}]
We first observe that the maps $\mu_{\epsilon,\delta}$ in the statement of the theorem are well-defined: one simply has to verify that they kill the generators of $\fc$ and $\fd$, which is a straightforward computation.

For an interval $I$, let $\lambda_n(y_I)$ be the element $[I^{(n)}]$ of $\Theta(G)$. From the first generators for $\fd$, we see that $y_I$ only depends on the type of $I$. Let $y_{\ell}=y_{(-\infty,1)}$, $y_m=y_{(-1,1)}$, and $y_r=y_{(1,\infty)}$. Here $\ell$, $m$, and $r$ stand for left, middle, and right. Note that $y_{(-\infty,\infty)}=y_{\ell}+1+y_m+1+y_r$, and so is redundant. From $\fc$, we have the equation
\begin{displaymath}
y_m=y_{(-1,1)}=y_{(-1,0)}+1+y_{(0,1)}=2y_m+1,
\end{displaymath}
from which we conclude $y_m=-1$. A similar argument shows that $\lambda_n(y_m)=(-1)^n$ for each $n \ge 0$. The remainder of the equations defining $\fc$ yield no new information. For instance, we have
\begin{displaymath}
y_{\ell}=y_{(-\infty,-1)}=y_{(-\infty,-2)}+1+y_{(-1,-2)}=y_{\ell}+1+y_m,
\end{displaymath}
but since $y_m=-1$ this holds automatically.

We now look at the second generators of $\fd$. For any $n=i+j+1$, we find
\begin{displaymath}
\lambda_n(y_{\ell})=(-1)^j y_{\ell} \lambda_i(y_{\ell}),
\end{displaymath}
where we have used $\lambda_j(y_m)=(-1)^j$. Taking $j=0$ and $i=n-1$, these equations inductively show that $\lambda_n(y_{\ell})$ belongs to the subring generated by $y_{\ell}$. Taking $n=2$ and $i=0$ and then $i=1$, we find
\begin{displaymath}
\lambda_2(y_{\ell})=-y_{\ell}=y_{\ell}^2.
\end{displaymath}
Thus $y_{\ell}^2+y_{\ell}=0$. A similar analysis holds for $y_r$.

The above discussion can be summarized as follows: we have a surjective ring homomorphism
\begin{displaymath}
\bZ[u,v]/(u^2+u,v^2+v) \to \Theta(G), \qquad u \mapsto x_{\ell}, \quad v \mapsto x_r.
\end{displaymath}
Note that the domain above is isomorphic to $\bZ^4$. The $\mu$'s thus provide a map in the opposite direction, which is easily seen to be an inverse.
\end{proof}

\subsection{Representations} \label{ss:order-rep}

Put $\mu=\mu_{-1,-1}$ in what follows, and fix a field $k$. We write simply $\uRep(G)$ for the category $\uRep_k(G; \mu_k)$, where $\mu_k$ is the extension of scalars of $\mu$ to $k$. The following is our main result about this category:

\begin{theorem} \label{thm:ord-rigid}
The category $\uRep(G)$ is semi-simple and locally pre-Tannakian. Moreover, it is the abelian envelope of $\uPerm(G)$ in the sense of Definition~\ref{defn:abenv}.
\end{theorem}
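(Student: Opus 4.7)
The plan is to reduce the theorem to direct applications of Theorems~\ref{thm:regss} and~\ref{thm:abenv}. Together these yield all three conclusions (local pre-Tannakian structure, semi-simplicity, and the abelian envelope property) as soon as one verifies that the $k$-valued extension of $\mu = \mu_{-1,-1}$ is a regular measure satisfying condition~(P) from Definition~\ref{defn:P}. The whole proof therefore amounts to checking these two hypotheses.

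First I would verify regularity. By Proposition~\ref{prop:H1-sub} (or, equivalently, Corollary~\ref{cor:A-open-trans} applied with $A = \emptyset$), every transitive $G$-set is isomorphic to $\bR^{(n)}$ for some $n \ge 0$, and Example~\ref{ex:AutR-measure} --- or equivalently the computation $\mu_{-1,-1}(\bR^{(n)}) = \binom{-1}{n} = (-1)^n$ via the binomial $\lambda$-ring structure of Theorem~\ref{thm:binom} --- shows $\mu(\bR^{(n)}) = (-1)^n$. Since $\pm 1$ is a unit of every field $k$, we conclude that $\mu$ is regular. Next I would verify~(P): since $\mu_{-1,-1}$ is $\bZ$-valued, its extension to $k$ factors through the prime subring of $k$, which is either $\bZ$ or some $\bF_p$. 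Both satisfy (P) by a standard Jacobson-ring argument --- any finitely generated reduced algebra over $\bZ$ or $\bF_p$ has maximal ideals with (finite) residue fields of positive characteristic, and the intersection of the kernels of the associated quotient maps is the nilradical, which vanishes for a reduced ring.

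With regularity and (P) in hand, Theorem~\ref{thm:regss} immediately gives that $\uRep(G)$ is semi-simple and locally pre-Tannakian, and Theorem~\ref{thm:abenv} identifies $\uRep^{\rf}(G)$ as the abelian envelope of $\uPerm(G)$. The only potentially delicate point is (P) over $\bZ$, but even there the argument is entirely standard. In truth the substantive work lies not in this proof but in the earlier computation $\Theta(G) = \bZ^4$ (Theorem~\ref{thm:A-Theta}) together with the abstract framework of Part~\ref{part:genrep}; by the time one has these, the present theorem drops out by inspection of the explicit formula $\mu_{-1,-1}(\bR^{(n)}) = (-1)^n$.
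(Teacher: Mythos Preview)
Your approach is essentially identical to the paper's: verify regularity (via $\mu(\bR^{(n)}) = (-1)^n$ being a unit), verify~(P) (via $\bZ$-valuedness), and invoke Theorems~\ref{thm:regss} and~\ref{thm:abenv}. The regularity and~(P) checks are fine.

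There is, however, a genuine gap. Theorems~\ref{thm:regss} and~\ref{thm:abenv} (and indeed all of \S\S\ref{s:normal}--\ref{s:regular}) are stated under the standing hypothesis that $G$ is \emph{first-countable}. The group $G=\Aut(\bR,<)$ acts on the uncountable set $\bR$, and is not first-countable, so you cannot invoke those theorems directly. The paper addresses this explicitly: first-countability is used only to establish Proposition~\ref{prop:A-surj}(b) (surjectivity of $A \to \cC(G/U)$), and Remark~\ref{rmk:not-1st-count} shows this surjectivity holds for regular measures without any countability assumption, so the machinery of Part~\ref{part:genrep} still applies. Alternatively, one can replace $G$ by $\Aut(\bQ,<)$, which is first-countable and has an equivalent category $\cS(G)$. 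You need to say something along one of these lines; otherwise the appeal to Theorems~\ref{thm:regss} and~\ref{thm:abenv} is not justified.
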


\begin{proof}
We have $\mu(\bR^{(n)})=(-1)^n$, and so $\mu$ is a regular measure. Since $\mu$ takes values in $\bZ$, it satisfies property~(P) of Definition~\ref{defn:P}. The group $G$ is not first-countable; however, by Remark~\ref{rmk:not-1st-count} we see that Proposition~\ref{prop:A-surj}(b) holds, and this is the only place where the first-countable condition was really used in Part~\ref{part:genrep}. (Alternatively, one could work with $\Aut(\bQ,<)$ instead of $G$, which is first-countable.) Applying Theorems~\ref{thm:regss} and~\ref{thm:abenv}, we obtain the stated result.
\end{proof}

When $k$ is a field of positive characteristic, $\uRep^{\rf}(G)$ is the first known example of a semi-simple pre-Tannakian category of superexponential growth. (Note that superexponential growth follows from Remark~\ref{rmk:Bell}.) Results from \cite{line} show that this category has a number of other very interesting properties.

\subsection{Decomposition of the standard module}

We now examine the simple decomposition of the ``standard module'' $\cC(\bR)$. This discussion is intended as an extended example to illustrate how one can work with objects in $\uRep(G)$.

Define $A,B \in \Mat_{\bR}$ by
\begin{displaymath}
A(x,y) = \begin{cases} 1 & \text{if $x<y$} \\ 0 & \text{otherwise} \end{cases}
\qquad
B(x,y) = \begin{cases} 1 & \text{if $x>y$} \\ 0 & \text{otherwise} \end{cases}
\end{displaymath}
Then $A$, $B$, and $I_{\bR}$ form a basis for $\End(\cC(\bR))$. We have
\begin{displaymath}
(A^2)(x,z) = \int_{\bR} A(x,y) A(y,z) dy = \mu(\{y \in \bR \mid x<y<z \})=-A(x,z).
\end{displaymath}
Indeed, if $x<z$ then the set in $\mu$ is an open interval, which has volume $-1$; otherwise, the set is empty and has volume~0. We have
\begin{displaymath}
(AB)(x,z) = \int_{\bR} A(x,y) B(y,z) dy = \mu(\{y \in \bR \mid x,z<y \})=-1.
\end{displaymath}
Indeed, the set in $\mu$ is always an open interval, and thus has volume $-1$. Appealing to the symmetry between $A$ and $B$, we thus have
\begin{displaymath}
A^2=-A, \quad B^2=-B, \quad AB=BA=-I_{\bR}-A-B.
\end{displaymath}
Note that every entry of $I_{\bR}+A+B$ is equal to~1, which is why the rightmost equation above holds. We thus see that $\End(\cC(\bR))$ is isomorphic to $k^3$, as an algebra. The primitive idempotents are $A+1$, $B+1$ and $AB=-1-A-B$. Since $\End(\cC(\bR))$ is semi-simple, the images of these idempotents are non-isomorphic simple submodules.

Recall that $1_{(p,q)} \in \cC(\bR)$ denotes the indicator function of the interval $(p,q)$. We have
\begin{displaymath}
(A \cdot 1_{(p,q)})(x) = \int_{\bR} A(x,y) 1_{(p,q)}(y) dy = \mu((x,\infty) \cap (p,q))=-1_{(-\infty,q)}(x).
\end{displaymath}
We also have
\begin{displaymath}
(A \cdot \delta_q)(x) = \int_{\bR} A(x,y) \delta_q(y) dy = 1_{(-\infty,q)}(x).
\end{displaymath}
We thus have
\begin{displaymath}
(A+1) 1_{(p,q)} = 1_{(p,q)}-1_{(-\infty,q)} = 1_{(-\infty,p]}
\end{displaymath}
and
\begin{displaymath}
(A+1) \delta_q = 1_{(-\infty,q)}+\delta_q = 1_{(-\infty,q]}
\end{displaymath}
We conclude that the image of $A+1$ is the subspace $M$ of $\cC(\bR)$ spanned by functions of the form $1_{(-\infty,q]}$. In particular, $M$ is a simple $A(G)$-submodule of $\cC(\bR)$.

By symmetry, the image of $B+1$ is the subspace $M'$ of $\cC(\bR)$ spanned by functions of the form $1_{[p,\infty)}$. It is a simple submodule that is not isomorphic to $M$.

We have
\begin{displaymath}
(1+A+B)1_{(p,q)}=1_{p,q}-1_{(-\infty,q)}-1_{(p,\infty)}=-1
\end{displaymath}
and
\begin{displaymath}
(1+A+B)\delta_q = \delta_q + 1_{(-\infty,q)} + 1_{(q,\infty)} = 1.
\end{displaymath}
We thus see that the image of $1+A+B$ is the subspace $N$ of $\cC(\bR)$ consisting of constant functions. Of course, $N$ is isomorphic to the trivial representation $\bbone$.

To conclude, we see that
\begin{displaymath}
\cC(\bR) = M \oplus M' \oplus N
\end{displaymath}
is the decomposition of $\cC(\bR)$ into simple objects in the category $\uRep(G)$.

\subsection{Interpolation categories} \label{ss:not-interp}

We now show that $\uRep(G)$ cannot be obtained by interpolating finite groups. We first recall the general construction of interpolation, as put forth in \cite{Deligne3}. Let $I$ be an index set equipped with ultrafilter $\cF$. For each $i \in I$, let $k_i$ be a field and $\cC_i$ a $k_i$-linear rigid tensor category. Let $k^*$ be the ultraproduct of the $k_i$, and let $\cC^*$ be the ultraproduct of the $\cC_i$. By an interpolation of the $\cC_i$, we mean a $k^*$-linear rigid tensor subcategory $\cC$ of $\cC^*$.

\begin{theorem} \label{thm:not-interp}
The category $\uRep^{\rf}_{k^*}(G)$ is not an interpolation of representation categories of finite groups.
\end{theorem}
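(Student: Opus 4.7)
My plan is to argue by contradiction. Suppose $\uRep^{\rf}_{k^*}(G) \simeq \cC$ as $k^*$-linear rigid tensor categories, where $\cC$ is a tensor subcategory of $\cC^* = \prod_{\cF} \Rep_{k_i}(\Gamma_i)$ for some collection of finite groups $\Gamma_i$ and fields $k_i$. The key object to analyze is the simple summand $M$ of $\cC(\bR)$ from the decomposition $\cC(\bR) = \bbone \oplus M \oplus M^\vee$ computed in \S\ref{ss:order-rep}: $M$ is absolutely simple, satisfies $M \not\cong M^\vee$, and has categorical dimension $-1$. Under the putative equivalence, $M$ corresponds to an object $(M_i) \in \cC$ modulo $\cF$, with each $M_i \in \Rep_{k_i}(\Gamma_i)$ absolutely simple.

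The heart of the argument is to show that $M$ is an \emph{invertible} object of $\uRep^{\rf}_{k^*}(G)$, meaning $M \otimes M^\vee \cong \bbone$. Since $M$ is absolutely simple, $\Hom(\bbone, M \otimes M^\vee) \cong \End(M) \cong k^*$ is one-dimensional, and since $\dim(M \otimes M^\vee) = 1$, it remains only to rule out a possible ``phantom'' zero-dimensional summand. I would verify this by a direct computation inside the $13$-dimensional algebra $\End(\cC(\bR^2))$, whose basis is indexed by the $G$-orbits on $\bR^2 \times \bR^2$: the projection onto $M \otimes M^\vee$ is the image of the idempotent $(A+1) \otimes (B+1)$, and an explicit decomposition of this idempotent in the invariant-kernel algebra should exhibit it as primitive, with image isomorphic to $\bbone$.

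Granted invertibility of $M$, the interpolation hypothesis forces each $M_i$ to be invertible in $\Rep_{k_i}(\Gamma_i)$, hence to be a one-dimensional representation (character) of $\Gamma_i$. So $\dim_{k_i} M_i = 1$ for $\cF$-almost all $i$, and therefore the categorical dimension of $M$ in $k^*$ equals $1$; combined with $\dim M = -1$ this forces $1 = -1$ in $k^*$, i.e., $\mathrm{char}(k^*) = 2$. To handle the remaining case $\mathrm{char}(k^*) = 2$, I would apply the same template to a second invertible object of $\uRep^{\rf}_{k^*}(G)$ (for instance, an invertible simple summand of $\cC(\bR^{(n)})$ for higher $n$) to obtain a relation in its Picard group that no subgroup of the ultraproduct of character groups $\prod_\cF \widehat{\Gamma_i}$ can realize.

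The main obstacle I expect is verifying $M \otimes M^\vee \cong \bbone$ in $\uRep^{\rf}_{k^*}(G)$: this requires a concrete calculation of idempotent decompositions in $\End(\cC(\bR^2))$ and a careful argument ruling out any dimension-zero summand, which in turn may require identifying enough of the simple constituents of $\cC(\bR^{(2)})$. A cleaner alternative, which would bypass this computation, would be to compute the Picard group of $\uRep^{\rf}_{k^*}(G)$ intrinsically from the classification of transitive $G$-sets in \S\ref{s:order-setup} and show directly that its structure is incompatible with being realized inside any ultraproduct of character groups of finite groups.
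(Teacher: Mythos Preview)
Your central claim—that $M$ is invertible—is false, so the argument cannot be completed along these lines. To see this, recall from the paper's explicit computation that $M$ is spanned as a $k$-module by the functions $1_{(-\infty,q]}$, so $M \uotimes M \subset \cC(\bR^2)$ is spanned by the functions $1_{(-\infty,p]\times(-\infty,q]}$. The symmetry $\sigma_{M,M}$ is the restriction of the coordinate swap on $\cC(\bR^2)$, and it sends $1_{(-\infty,p]\times(-\infty,q]}$ to $1_{(-\infty,q]\times(-\infty,p]}$; these are linearly independent for $p\ne q$, so $\sigma_{M,M}$ is not a scalar. But if $M$ were invertible then $M^{\uotimes 2}$ would be invertible as well, hence simple, forcing $\sigma_{M,M}=\pm\id$. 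Thus $M^{\uotimes 2}$ is not simple, $M\uotimes M^\vee\not\cong\bbone$, and the computation you propose in $\End(\cC(\bR^2))$ cannot succeed. (Incidentally, that algebra is $75$-dimensional, not $13$: its basis is indexed by $G$-orbits on $\bR^4$, i.e., by ordered set-partitions of $[4]$.) Your fallback—computing $\mathrm{Pic}(\uRep^{\rf}(G))$ and comparing to character groups—would first require producing \emph{some} nontrivial invertible object, and the same obstruction applies to the obvious candidates.

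The paper's proof takes a completely different route that sidesteps these difficulties. Rather than analyzing a single simple summand, it exploits the Frobenius algebra structure on $\cC(\bR)$: under any putative interpolation, $\cC(\bR)$ corresponds to a sequence $(V_i)$, and the Frobenius structure forces $V_i\cong k_i[X_i]$ for finite $\Gamma_i$-sets $X_i$. Orbit counts on powers of $\bR$ are then encoded tensor-categorically (e.g., as dimensions of $\Hom(\cC(\bR)^{\uotimes n},\bbone)$), so they transfer to orbit counts of $\Gamma_i$ on powers of $X_i$. Since $G$ is transitive on $\bR^{(5)}$, the $\Gamma_i$ are transitive on $X_i^{(5)}$; the Livingstone--Wagner theorem then forces transitivity on $X_i^{[5]}$, hence of $G$ on $\bR^{[5]}$—contradicting the fact that $G$ is not even $2$-transitive.
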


\begin{proof}
In the above notation, let $\cC_i=\Rep^{\rf}_{k_i}(\Gamma_i)$ where $\Gamma_i$ is a finite group. Suppose we have a tensor equivalence $\uRep^{\rf}_{k^*}(G) \cong \cC$ for some $\cC$. We will obtain a contradiction.

The standard object $\cC(\bR)$ of $\uRep_{k^*}(G)$ corresponds to an object $(V_i)_{i \in I}$ of $\cC$. Since $\cC(\bR)$ has the structure of a Frobenius algebra, so does each $V_i$ (after possibly shrinking $I$, i.e., passing to a subset in $\cF$). We can thus write $V_i=k[X_i]$, where $X_i$ is a finite $\Gamma_i$-set (the set of primitive idempotents of $V_i$).

One can detect how many orbits $G$ has on $\bR^n$ or $\bR^{[n]}$ or $\bR^{(n)}$ purely in terms of the Frobenius algebra structure and tensor operations. For example, $G$ has three orbits on $\bR^2$, which corresponds to the fact that $\Hom(\cC(\bR)^{\otimes 2}, \bbone)$ is 3-dimensional. A similar observation applies to the action of $\Gamma_i$ on $X_i$. We can thus transfer information about orbits of $G$ and orbits of $\Gamma_i$ back and forth.

We now come to the key point. Since $G$ is transitive on $\bR^{(5)}$, it follows that $\Gamma_i$ is transitive on $X_i^{(5)}$ for all $i$ (after possibly shrinking $I$). A theorem of Livingstone--Wagner \cite[Theorem~2(b)]{LivinstoneWagner} now shows that $\Gamma_i$ is transitive on $X_i^{[5]}$. Thus $G$ is transitive on $\bR^{[5]}$, which is a contradiction: $G$ is not even transitive on $\bR^{[2]}$. (We note that the case where $\#X_i$ is bounded is easily dealt with, so we can assume that $\# X_i \ge 10$ for all $i$, which is needed for \cite{LivinstoneWagner}.)
\end{proof}

The above proof illustrates a general principle: if $H$ is an oligomorphic group and $\uRep(H)$ can be obtained by interpolating finite groups, then $H$ inherits properties of finite permutation groups. Thus if $H$ does not satisfy some property of finite permutation groups, one should be able to show that $\uRep(H)$ cannot be obtained by interpolation.

\subsection{Homeomorphisms of the circle} \label{ss:circle}

Let $\bS$ denote the unit circle, fix a point $\infty \in \bS$, and identify $\bS \setminus \{\infty\}$ with the real line $\bR$. Let $H$ be the group of orientation-preserving self-homeomorphisms of $\bS$. The stabilizer in $H$ of the point $\infty$ is identified with $G$; by definition, this is an open subgroup of $H$.

\begin{theorem}
The measure $\mu_{-1,-1}$ for $G$ extends to $H$, and induces an isomorphism $\Theta(H)=\bZ$. We have $\mu_{-1,-1}(\bS)=0$.
\end{theorem}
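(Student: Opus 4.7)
The plan is to parlay our knowledge of $\Theta(G)=\bZ^4$ from Theorem~\ref{thm:A-Theta} into a description of $\Theta(H_3)$, by identifying the extra relations that $H_3$-conjugation imposes on top of $G$-conjugation. The key point is that $G=H_1$ is an open subgroup of $H_3$, so by \S\ref{ss:Theta-prop}(a) there is a surjective ring homomorphism $\rho\colon\Theta(G)\to\Theta(H_3)$, and it suffices to compute $\ker(\rho)$. Extension of a measure $\mu\colon\Theta(G)\to k$ to $H_3$ is then equivalent to $\mu$ annihilating $\ker(\rho)$.

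First I would identify some elements of $\ker(\rho)$. Recall from the proof of Theorem~\ref{thm:A-Theta} that $\Theta(G)\cong\bZ[u,v]/(u^{2}+u,v^{2}+v)$ with $u=[\bR_{\ell}]$ (left interval), $v=[\bR_{r}]$ (right interval), and with the middle-interval class $y_{m}=-1$ already built in. I claim $u+1,v+1\in\ker(\rho)$. To see this, view a left interval $(-\infty,a)\subset\bR$ as an open arc in $\bS$ joining $\infty$ to $a$. Since $H_3$ acts transitively on $\bS^{(2)}$, there exists $g\in H_3$ sending the pair $\{\infty,a\}$ to a pair $\{p,q\}$ of points of $\bR\subset\bS$; after such a conjugation, the original left arc becomes one of the two arcs of $\bS$ joining $p$ and $q$, i.e.\ a middle interval $(p,q)\subset\bR$ (choosing $g$ to land on the appropriate side). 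Thus $[u]=[y_m]=-1$ in $\Theta(H_3)$, and by the same argument $[v]=-1$. Hence $\rho$ factors through the quotient
\[
\bZ[u,v]\big/\bigl(u^{2}+u,\;v^{2}+v,\;u+1,\;v+1\bigr)\cong\bZ,
\]
so $\Theta(H_3)$ is a quotient of $\bZ$.

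Next I would extend $\mu_{-1,-1}$. As a homomorphism $\Theta(G)\to\bZ$, $\mu_{-1,-1}$ sends $u,v\mapsto-1$, so it kills $u+1$ and $v+1$ and therefore factors through $\rho$ to give a (necessarily unique) ring homomorphism $\widetilde{\mu}\colon\Theta(H_3)\to\bZ$ extending $\mu_{-1,-1}$. Since $\widetilde{\mu}([\bone])=1$, the composite $\bZ\twoheadrightarrow\Theta(H_3)\xrightarrow{\widetilde\mu}\bZ$ is the identity, forcing $\Theta(H_3)=\bZ$ with $\widetilde\mu$ the identity map. Finally, using additivity (axiom~\dref{defn:measure}{c}) on the decomposition $\bS=\{\infty\}\sqcup\bR$ and the computation $\mu_{-1,-1}(\bR)=\mu_{-1,-1}(\bR_\ell)+\mu_{-1,-1}(\{0\})+\mu_{-1,-1}(\bR_r)=(-1)+1+(-1)=-1$, we obtain $\mu_{-1,-1}(\bS)=1+(-1)=0$.

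The main obstacle is the transitivity argument in the second paragraph: one must verify carefully that an $H_3$-conjugate of the $\hat{G}$-set $(-\infty,a)$ really does realize a middle interval as a $\hat{G}$-set (and similarly for right intervals), i.e.\ that the classes $[(-\infty,a)]$ and $[(p,q)]$ agree in $\Omega(\hat H_3)$ modulo the conjugation-invariance ideal $\fa_1(H_3)$. Everything else is essentially formal book-keeping with the presentation of $\Theta(G)$ from Theorem~\ref{thm:A-Theta}; alternatively, one could instead construct $\widetilde\mu$ directly as the compactly supported Euler characteristic on the canonical manifold structure that every finitary $\hat H_3$-set carries, but this would require re-verifying axioms~\dref{defn:measure}{a}--\dref{defn:measure}{e} and in particular multiplicativity in fibrations, which seems strictly more work than the quotient argument above.
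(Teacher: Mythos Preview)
Your conjugation argument is correct and is in fact a cleaner proof of what the paper only sketches (``the other measures for $G$ do not extend, since $\infty$ plays an asymmetrical role''): you rigorously show that a left or right interval is $H_3$-conjugate to a middle interval, hence $u+1,v+1\in\ker(\rho)$ and $\Theta(H_3)$ is a quotient of $\bZ$.

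However, there is a genuine gap in the next paragraph. You write that $\mu_{-1,-1}$ ``kills $u+1$ and $v+1$ and therefore factors through $\rho$.'' Killing $u+1$ and $v+1$ means $\mu_{-1,-1}$ factors through $\Theta(G)/(u+1,v+1)\cong\bZ$, but to factor through $\rho$ you need $\ker(\rho)\subseteq\ker(\mu_{-1,-1})=(u+1,v+1)$, and you have only shown the reverse inclusion. A priori $\Theta(H_3)$ could be $\bZ/n$ for some $n\ge1$, in which case no ring homomorphism $\Theta(H_3)\to\bZ$ exists and $\mu_{-1,-1}$ does not extend at all. Your argument that ``the composite $\bZ\twoheadrightarrow\Theta(H_3)\xrightarrow{\widetilde\mu}\bZ$ is the identity'' presupposes $\widetilde\mu$ exists, which is exactly what is in question.

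The fix requires the two ingredients you set aside. First, invoke Theorem~\ref{thm:binom} (torsion-freeness of $\Theta(H_3)$) to conclude from your quotient argument that $\Theta(H_3)\in\{\bZ,0\}$. Second, to rule out $0$ you must exhibit \emph{some} measure for $H_3$; the compactly supported Euler characteristic is the natural choice, and this is precisely what the paper does. So your dismissal of the Euler-characteristic construction as ``strictly more work'' is premature: without it (or some substitute showing $\Theta(H_3)\neq0$) your quotient argument cannot be completed. Once both ingredients are in place, your computation of $\mu_{-1,-1}(\bS)=0$ via $\bS=\{\infty\}\sqcup\bR$ is fine.
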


\begin{proof}
Similar to what we saw with $G$, a finitary $H$-set admits a canonical smooth manifold structure, and compact Euler characteristic defines a measure. This shows that $\mu_{-1,-1}$ extends, and that $\mu_{-1,-1}(\bS)=0$. The other measures for $G$ do not extend, since $\infty$ plays an asymmetrical role for them. Since the map $\Theta(G) \to \Theta(H)$ is surjective (\S \ref{ss:Theta-prop}(a)) and $\Theta(H_3)$ is torsion-free (Theorem~\ref{thm:binom}), the result follows.
\end{proof}

In what follows, we let $\mu=\mu_{-1,-1}$ be the unique $\bZ$-valued measure for $H$. Fix a field $k$, and write simply $\uRep(H)$ for $\uRep_k(H; \mu_k)$.

\begin{theorem}
The category $\uRep(H)$ is locally pre-Tannakian.
\end{theorem}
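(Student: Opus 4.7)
The plan is to deduce this directly from Theorem~\ref{thm:regss}, by verifying that the measure $\mu$ on $H_3$ is quasi-regular and satisfies property~(P), and by handling the fact that $H_3$ (like $G=H_1$) is not first-countable.

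First, since $\mu$ takes values in the subring $\bZ \subset k$, and $\bZ$ satisfies property~(P) by the elementary noetherian argument recalled in \S \ref{s:sym} (or simply by Corollary~\ref{cor:sym-P} applied to the constant subring), the measure $\mu$ satisfies~(P). Next, I would show that $\mu$ is quasi-regular on $H_3$. The point stabilizer of $\infty \in \bS$ in $H_3$ is the open subgroup $G=H_1$, and the restriction of $\mu$ to $G$ is precisely the measure $\mu_{-1,-1}$ analyzed in \S \ref{ss:order-rep}. As shown in the proof of Theorem~\ref{thm:ord-rigid}, $\mu_{-1,-1}$ is regular on $G$: every transitive $G$-set is isomorphic to some $\bR^{(n)}$ (Corollary~\ref{cor:A-open-trans}), with $\mu_{-1,-1}(\bR^{(n)}) = (-1)^n$ a unit of $k$. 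Thus $\mu$ is quasi-regular on $H_3$.

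The only genuine subtlety, as in the $G$-case, is that $H_3$ is not first-countable, while Part~\ref{part:genrep} assumes first-countability. As indicated in Remark~\ref{rmk:not-1st-count} and exploited already in the proof of Theorem~\ref{thm:ord-rigid}, first-countability is used essentially only to guarantee that the maps $A \to \cC(H_3/U)$ are surjective, i.e., Proposition~\ref{prop:A-surj}(b); and the argument sketched in that remark gives this surjectivity directly in the (quasi-)regular case via the canonical section $\mu(U/V)^{-1} f^*$ of push-forward. I would verify that this workaround carries over verbatim to $H_3$, and check that the remaining arguments in \S \ref{s:gpalg}--\ref{s:regular} depend only on this consequence of first-countability rather than on first-countability itself.

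With these ingredients in place, Theorem~\ref{thm:regss} applies directly and yields that $\uRep(H_3)$ is locally pre-Tannakian. The main (and essentially only) obstacle is the bookkeeping around first-countability; the algebraic input (quasi-regularity and property~(P)) is immediate from what has already been established for $G=H_1$ in \S \ref{s:order}. Note that we do \emph{not} obtain semi-simplicity, since $\mu(\bS)=0$ means $\mu$ itself is not regular on $H_3$; this is consistent with the statement of the theorem, which only asserts the locally pre-Tannakian property.
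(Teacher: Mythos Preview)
Your proposal is correct and follows essentially the same approach as the paper's proof: verify that $\mu$ is quasi-regular (because its restriction to $G=H_1$ is the regular measure $\mu_{-1,-1}$), that $\mu$ satisfies~(P) (because it is $\bZ$-valued), handle the first-countability issue via Remark~\ref{rmk:not-1st-count} as in the proof of Theorem~\ref{thm:ord-rigid}, and then invoke Theorem~\ref{thm:regss}. The paper's proof is simply a terser version of exactly this argument.
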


\begin{proof}
The measure $\mu$ is quasi-regular since its restriction to $G$ is regular. Since $\mu$ takes values in $\bZ$, it satisfies~(P). The remarks about first-countability in the proof of Theorem~\ref{thm:ord-rigid} apply here as well. Thus the result follows from Theorem~\ref{thm:regss}.
\end{proof}

The category $\uRep(H)$ is not semi-simple. Indeed, since $\bS$ is a transitive $H$-set, the natural maps
\begin{displaymath}
\phi \colon \bbone \to \cC(\bS), \qquad \psi \colon \cC(\bS) \to \bbone
\end{displaymath}
span their respective mapping spaces. The composition $\psi \circ \phi$ is equal to $\mu(\bS) \cdot \id_{\bbone}=0$. Thus $\psi$ is a non-split surjection.

\subsection{Additional remarks}

The line and circle admit orientation-reversing involutions, which define involutions of the groups $G$ and $H$; let $G'=\bZ/2 \ltimes G$ and $H'=\bZ/2 \ltimes H$.  It follows from \S \ref{ss:Theta-prop}(e) that $\Theta(G')=\bZ^2$ and $\Theta(H')=\bZ$.

The groups discussed in this section are significant from the perspective of permutation group theory. A permutation group $(\Gamma, \Omega)$ is called \defn{highly homogeneous} if $\Gamma$ acts transitively on $\Omega^{(n)}$ for all $n$. The four groups $G$, $G'$, $H$, and $H'$ are all highly homogeneous, as is the infinite symmetric group. Cameron \cite{Cameron7} proved that these are the only examples, up to a certain notion of equivalence.

The groups $G$ and $H$ are symmetry groups of one-dimensional manifolds. It would be interesting if one could construct tensor categories associated to higher dimensional manifolds. There is some speculative discussion about this in \cite[\S 17.10]{arxiv}.

\section{Boron trees} \label{s:boron}

\subsection{Overview}

A \defn{boron tree} is a tree in which all internal vertices have valence three. The internal vertices are called \defn{boron atoms} and the leaves are called \defn{hydrogen atoms}, in a nomenclature inspired by chemistry. When drawing boron trees, we depict boron atoms as unfilled circles and hydrogen atoms as filled circles. See Figures~\ref{fig:boron8} and~\ref{fig:boron} for some examples. For general background on boron trees, see \cite{Cameron6} and \cite[\S 2.6]{Cameron}.

Let $T$ be a boron tree, and let $T_H$ be the set of hydrogen atoms in it. Given $w,x \in T_H$, there is a unique geodesic in $T$ through $w$ and $x$. We define a quaternary relation $R$ on $T_H$ by declaring $R(w,x;y,z)$ to be true if the geodesic joining $w$ and $x$ meets the one joining $y$ and $z$. One can show that the tree $T$ can be recovered from the structure $(T_H, R)$, so the two points of view are equivalent; however, the latter will be more convenient for the moment.

Let $\fA$ be the class of all finite structures on a single quaternary relation that are isomorphic to $(T_H, R)$ for some boron tree $T$. This is a Fra\"iss\'e class \cite[Exercise 2.6.4]{Cameron} (see \S \ref{ss:fraisse} for the definition). Let $\Omega$ be the Fra\"iss\'e limit and let $G$ be its automorphism group. The group $G$ is oligomorphic (with respect to its action on $\Omega$).

In \S \ref{s:boron}, we prove two main results: the first (Theorem~\ref{thm:boron-theta}) computes $\Theta(\fA)$; the second (Theorem~\ref{thm:boron}) constructs rigid tensor categories associated to $G$. We do not completely determine $\Theta(G)$, but one could by extending our methods. The discussion in this section illustrates how measures can be effectively studied from the model-theoretic perspective developed in \S \ref{s:model}.

\begin{figure}
\begin{displaymath}
\begin{tikzpicture}
\tikzset{leaf/.style={circle,fill=black,draw,minimum size=1mm,inner sep=0pt}}
\tikzset{boron/.style={circle,fill=white,draw,minimum size=1.3mm,inner sep=0pt}}
\node[boron] (A) at (-.5,0) {};
\node[boron] (B) at (.5,0) {};
\node[leaf] (C) at (-1.366, .5) {};
\node[leaf] (D) at (-1.366, -.5) {};
\node[leaf] (E) at (1.366, .5) {};
\node[leaf] (F) at (1.366, -.5) {};
\path[draw] (A)--(B);
\path[draw] (A)--(C);
\path[draw] (A)--(D);
\path[draw] (B)--(E);
\path[draw] (B)--(F);
\end{tikzpicture}
\hspace{60pt}
\begin{tikzpicture}
\tikzset{leaf/.style={circle,fill=black,draw,minimum size=1mm,inner sep=0pt}}
\tikzset{boron/.style={circle,fill=white,draw,minimum size=1.3mm,inner sep=0pt}}
\node[boron] (A) at (-1,0) {};
\node[boron] (B) at (1,0) {};
\node[boron] (G) at (0,0) {};
\node[leaf] (C) at (-1.866, .5) {};
\node[leaf] (D) at (-1.866, -.5) {};
\node[leaf] (E) at (1.866, .5) {};
\node[leaf] (F) at (1.866, -.5) {};
\node[leaf] (H) at (0,1) {};
\path[draw] (A)--(G);
\path[draw] (G)--(B);
\path[draw] (A)--(C);
\path[draw] (A)--(D);
\path[draw] (B)--(E);
\path[draw] (B)--(F);
\path[draw] (G)--(H);
\end{tikzpicture}
\end{displaymath}
\caption{The unique boron trees on four and five hydrogen atoms.}
\label{fig:boron}
\end{figure}
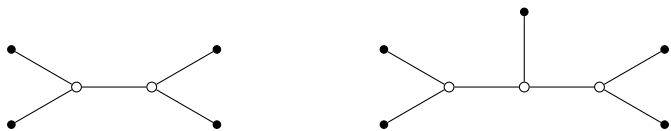

\subsection{Two measures} \label{ss:boron-meas}

We now define two $\bQ$-valued measures $\mu$ and $\nu$ on the class $\fA$, in the sense of Definition~\ref{defn:model-meas}. The measure $\mu$ is regular, and given on a tree $T$ by
\begin{displaymath}
\mu(T) = \begin{cases}
\tfrac{3}{2} & \text{if $n=1$} \\
3 \cdot (-\tfrac{1}{2})^n & \text{if $n \ge 2$} \end{cases}; \qquad n=\# T.
\end{displaymath}
Technically, the above formula is for the R-measure associated to $\mu$ (see Proposition~\ref{prop:R-meas}), but we drop this distinction. Here, and throughout this section, $\# T$ denotes the number of leaves (hydrogen atoms) in $T$. For a general inclusion of trees $T \subset T'$, we have $\mu(T \subset T')=\mu(T') \mu(T)^{-1}$. Note that if $T$ has at least two vertices then $\mu(T \subset T')=(-\tfrac{1}{2})^n$ where $n=\# T' - \# T$.

The measure $\nu$ is not regular, and a bit more complicated to describe. For $0 \le i \le 5$, let $\bT_i$ be the unique boron tree with $i$ hydrogen atoms. Note that for $0 \le i \le 3$ there is a unique inclusion $\bT_i \subset \bT_{i+1}$, up to isomorphism. We put
\begin{align*}
\nu(\bT_0 \subset \bT_1) &= 3 & \nu(\bT_2 \subset \bT_3) &= 1 \\
\nu(\bT_1 \subset \bT_2) &= 2 & \nu(\bT_1 \subset \bT_3) &= 2 \\
\nu(\bT_0 \subset \bT_2) &= 6 & \nu(\bT_0 \subset \bT_3) &= 6
\end{align*}
We also put $\nu(\bT_i \subset T)=0$ for $i \in \{0,1,2,3\}$ whenever $\# T \ge 4$. Now consider an inclusion $T \subset T'$ of boron trees where $\# T \ge 4$. We say that a hydrogen atom $x$ of $T'$ is \defn{paired} if there is another hydrogen atom $y$ such that $x$ and $y$ are connected to the same boron atom. We say that $T \subset T'$ is \defn{bad} if there is a paired vertex of $T'$ that does not belong to $T$, and \defn{good} otherwise. With this terminology in hand, we define
\begin{displaymath}
\nu(T \subset T')= \begin{cases}
0 & \text{if $T \subset T'$ is bad} \\
(-1)^n & \text{otherwise, with $n=\# T'-\# T$} \end{cases}
\end{displaymath}
The main result of \S \ref{ss:boron-meas} is the following theorem:

\begin{theorem} \label{thm:boron-meas}
The rules $\mu$ and $\nu$ are measures.
\end{theorem}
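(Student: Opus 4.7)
The plan is to verify the axioms of Definition~\ref{defn:model-meas} for each of $\mu$ and $\nu$. For $\mu$, which is regular, it will be more convenient to check the R-measure axioms of Definition~\ref{defn:R-meas} via Proposition~\ref{prop:R-meas}, so that axioms (b) and (c) of Definition~\ref{defn:model-meas} collapse into a single normalization. For $\nu$, which is not regular (it vanishes on many inclusions), the four axioms of Definition~\ref{defn:model-meas} must be checked directly. Isomorphism invariance is immediate from the formulas, as is $\mu(\mathrm{id}) = \nu(\mathrm{id}) = 1$. Multiplicativity under composition, $\mu(i \circ j) = \mu(i)\mu(j)$, is transparent for $\mu$ from the closed form $\mu(T \subset T') \sim (-\tfrac{1}{2})^{\# T' - \# T}$; for $\nu$ it requires a case analysis that separates the ``small-tree'' range $\# T \le 3$ from the tail, and tracks whether the chain $Y \subset Y' \subset X$ is good or bad, using that a composite inclusion is good iff each step is.

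The heart of the proof is the amalgamation axiom. Using axiom (c) and an induction, one reduces to the case where $j \colon Y \hookrightarrow Y'$ adds a single hydrogen atom $v$. The amalgamations of $(i,j)$ in $\fA$ then split into two families: \emph{collisions}, where $v$ is identified with some hydrogen atom of $X \setminus i(Y)$ whose quaternary relation to $i(Y)$ matches that of $v$ to $Y$ in $Y'$; and \emph{extensions}, where $v$ becomes a fresh hydrogen atom of $X'$. In a boron tree $T$ with $n \ge 2$ leaves there are $2n-3$ edges, and an extension amounts to choosing an edge of $X$ compatible with the way $v$ is attached in $Y'$ (the chosen edge must lie on the portion of $X$ corresponding to the subtree of $Y'$ cut out by $v$'s attachment), subdividing it with a new boron atom and attaching $v$ there. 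Carrying out this classification explicitly is the combinatorial engine of the proof.

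For $\mu$, substituting the amalgamation data into the R-measure identity of Definition~\ref{defn:R-meas}(c) and cancelling the common factor $(-\tfrac{1}{2})^{\bullet}$ reduces the claim to the identity
\[
\#\{\text{extensions}\} \;-\; 2\cdot\#\{\text{collisions}\} \;=\; 1,
\]
together with a handful of boundary checks for $\#Y \le 1$ where the anomalous value $\mu(\bT_1)=\tfrac{3}{2}$ intervenes. The identity above is verified by a direct count organized according to the edge of $Y$ to which $v$ attaches: the edge picks out a region of $X$ containing some leaves and some internal edges, and the count of extensions minus twice the count of collisions telescopes, exactly as in the leaf-counting identity for binary trees. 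For $\nu$, the vanishing of $\nu$ on bad inclusions and on inclusions landing in $\bT_i$ with $i\le 3$ kills nearly every term in axiom (d); the surviving contributions come only from good extensions that avoid creating a new paired leaf outside $Y$, and the resulting signed identity is checked by case analysis on whether the attaching vertex of $Y$ is paired or unpaired in $X$. The main obstacle is the last paragraph: although the enumeration of amalgamations is clean, verifying the combinatorial identities in every configuration (particularly for $\nu$, where ``goodness'' must be tracked across all three inclusions $Y \subset X$, $Y \subset Y'$, $Y \subset X'_\alpha$ simultaneously) is delicate and constitutes the bulk of the argument.
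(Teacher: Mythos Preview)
Your approach is correct in outline but diverges from the paper's in one structural choice that makes everything harder. You reduce only $j \colon Y \hookrightarrow Y'$ to a single-leaf extension and then attack the amalgamation identity for a \emph{general} $i \colon Y \hookrightarrow X$. The paper instead reduces \emph{both} $i$ and $j$ to single-leaf extensions (this is legitimate: factor $i$ through an intermediate $T''$, use axiom~(c), and induct on $\#T' - \#T$). Once both sides add a single leaf, the amalgamations can be listed by hand: there are exactly four when the two one-leaf extensions are isomorphic (the ``Case~1'' picture with subtrees $A$, $B$ and four ways to place $x,y$) and exactly one when they are not (``Case~2''). The verification of \eqref{eq:boron-amalg} then becomes a finite check in each case, with the small-tree range $\#T \le 2$ handled separately. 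No general edge-counting identity is needed.

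Your identity $\#\{\text{extensions}\} - 2\cdot\#\{\text{collisions}\} = 1$ for $\mu$ is correct and can be proved cleanly: the edge $e$ of $T_Y$ where $v$ attaches cuts out a subtree of $T_X$ which, together with its two boundary vertices, is a boron tree with $c+2$ leaves and hence $2c+1$ edges, giving $(2c+1) - 2c = 1$. So your route for $\mu$ works. For $\nu$, however, your sketch is underspecified and slightly garbled: $\nu$ does not vanish on ``inclusions landing in $\bT_i$ with $i \le 3$'' (e.g.\ $\nu(\bT_2 \subset \bT_3) = 1$); rather $\nu(\bT_i \subset T) = 0$ when $i \le 3$ and $\#T \ge 4$. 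More seriously, with general $X$ the bookkeeping of which extensions and collisions are \emph{good} requires tracking how subdividing a leaf edge of $X$ creates and destroys paired leaves, and the bad case (where $Y \subset X$ is already bad) splits further according to how many bad paired leaves there are. This can be pushed through, but it is substantially more intricate than the paper's argument, where reducing $i$ to a single-leaf extension means one only asks whether the single new leaf $x$ is paired in $T'$---a one-bit case distinction.
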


We must verify conditions (a)--(d) of Definition~\ref{defn:model-meas}. Conditions (a) and (b) are clear. We now turn to condition (c). This states that for boron trees $T_1 \subset T_2 \subset T_3$, we have
\begin{displaymath}
\theta(T_1 \subset T_3) = \theta(T_1 \subset T_2) \cdot \theta(T_2 \subset T_3).
\end{displaymath}
Here, and in what follows, we use $\theta$ to denote either $\mu$ or $\nu$.

\begin{lemma}
The rules $\mu$ and $\nu$ satisfy Definition~\dref{defn:model-meas}{b}.
\end{lemma}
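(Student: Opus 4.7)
The plan is to handle $\mu$ and $\nu$ separately. The case of $\mu$ is essentially formal because $\mu$ was specified as an R-measure on trees and the corresponding measure on inclusions is the ratio $\mu(T \subset T') = \mu(T')/\mu(T)$; the substantive work is the case analysis for $\nu$, and that analysis is where the "good/bad" dichotomy pulls its weight.

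For $\mu$, I would simply observe that by Proposition~\ref{prop:R-meas} (applied to the R-measure values listed in \S \ref{ss:boron-meas}), the associated regular measure is given on an inclusion $T \subset T'$ by $\mu(T \subset T') = \mu(T')/\mu(T)$, and both factors are nonzero. Condition \dref{defn:model-meas}{c} then telescopes: $\mu(T_1 \subset T_3) = \mu(T_3)/\mu(T_1) = (\mu(T_3)/\mu(T_2))(\mu(T_2)/\mu(T_1)) = \mu(T_2 \subset T_3) \cdot \mu(T_1 \subset T_2)$.

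For $\nu$, I would split according to the sizes of $T_1, T_2, T_3$. If $\# T_3 \leq 3$ there are only finitely many possibilities, and I would verify all of them by direct substitution into the table of values (e.g.\ $6 = 3 \cdot 2 = \nu(\bT_0 \subset \bT_1) \cdot \nu(\bT_1 \subset \bT_2)$, and similarly for the other five chains). If some but not all of the trees have $\leq 3$ hydrogens, then either $\nu(T_1 \subset T_2) = 0$ or $\nu(T_2 \subset T_3) = 0$ by the boundary clause of the definition, and one checks directly that the corresponding clause also forces $\nu(T_1 \subset T_3) = 0$.

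The substantive case is $\# T_1, \# T_2, \# T_3 \geq 4$, where $\nu$ is $(-1)^n$ on good inclusions and $0$ on bad ones. The key observation is that being "paired" is preserved under restriction: if $x, y \in T_2$ are joined to a common boron in $T_3$, then the minimal subtree of $T_3$ spanning $T_2$ keeps that boron of valence three, so $x$ and $y$ remain paired in $T_2$. Given this, I would prove the contrapositive: if $T_1 \subset T_3$ is bad, witnessed by a paired vertex $x \in T_3 \setminus T_1$ with pair $y$, then either $x \notin T_2$ (so $T_2 \subset T_3$ is bad, via $x$), or $x \in T_2$ and $y \notin T_2$ (so $y \in T_3 \setminus T_2$ is paired in $T_3$, so $T_2 \subset T_3$ is bad, via $y$), or $x, y \in T_2$ (so by preservation of pairing $x$ is paired in $T_2$ and lies in $T_2 \setminus T_1$, so $T_1 \subset T_2$ is bad). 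In every branch at least one factor vanishes and both sides are $0$. Conversely, if both $T_1 \subset T_2$ and $T_2 \subset T_3$ are good, the same contrapositive shows $T_1 \subset T_3$ is good, and the multiplicativity $(-1)^{n_1+n_2} = (-1)^{n_1}(-1)^{n_2}$ is immediate. The main obstacle here is being confident in the preservation-of-pairing lemma, which rests on the structural fact about restriction of boron trees sketched above; once that is in hand the three-way case split is mechanical.
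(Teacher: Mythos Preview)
Your approach is essentially the same as the paper's: $\mu$ telescopes because it comes from an R-measure, and for $\nu$ you do the small-tree cases by hand, handle the boundary case where sizes cross the threshold~$3$, and in the $\#T_1 \ge 4$ regime reduce to the statement that $T_1 \subset T_3$ is bad iff one of $T_1 \subset T_2$, $T_2 \subset T_3$ is bad. The forward implication you give (bad composite $\Rightarrow$ bad factor, via the three-way split on where $x,y$ lie) is exactly the paper's argument, and your ``preservation of pairing'' observation is the key point.

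There is, however, a small logical gap in your case analysis at the end, and it is worth naming. You treat two cases: (a) $T_1 \subset T_3$ bad, and (b) both factors good. But these are not obviously exhaustive: the contrapositive of what you proved gives only (b) $\Rightarrow \neg$(a), not $\neg$(a) $\Rightarrow$ (b). The missing case is ``$T_1 \subset T_3$ good but some factor bad,'' and you need to rule it out. The direction $T_2 \subset T_3$ bad $\Rightarrow T_1 \subset T_3$ bad is immediate (a paired vertex of $T_3$ outside $T_2$ is outside $T_1$). The direction $T_1 \subset T_2$ bad $\Rightarrow T_1 \subset T_3$ bad is less obvious, because a vertex paired in $T_2$ need not be paired in $T_3$. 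One clean way: if $x$ is paired with $y$ in $T_2$ and the $xy$-geodesic in $T_3$ has length $\ge 3$, then some side-branch $C_j$ off that geodesic contains no hydrogens of $T_2$; a pigeonhole count on degrees shows $C_j$ contains a hydrogen paired in $T_3$, which then lies outside $T_2 \supseteq T_1$. The paper's proof states the iff but also only writes out the forward direction, so you are in good company; but as written, your two cases do not cover everything, and you should add a sentence closing this off.
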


\begin{proof}
This is immediate from the definition for $\mu$. We now handle $\nu$. Thus let $T_1 \subset T_2 \subset T_3$ be given. If $T_1$ has at most three leaves it is clear: indeed, if $T_3$ has $\le 3$ leaves then one just examines the various cases, and otherwise both sides of the identity vanish. Now suppose that $T_3$ has $\ge 4$ leaves. It suffices to prove that $T_1 \subset T_3$ is bad if and only if $T_1 \subset T_2$ or $T_2 \subset T_3$ is bad. Thus suppose $T_1 \subset T_3$ is bad, and let $x$ be a paired vertex of $T_3$ that does not belong to $T_1$; let $y$ be the vertex that $x$ is paired with. If either $x$ or $y$ does not belong to $T_2$ then $T_2 \subset T_3$ is bad. Thus suppose that $x$ and $y$ both belong to $T_2$. Then $x$ is a paired vertex of $T_2$, as it is paired to $y$, and so $T_1 \subset T_2$ is bad. This completes the proof.
\end{proof}

We now treat Definition~\dref{defn:model-meas}{c}, which we refer to as the \defn{amalgamation property}. We recall the statement of this condition. Let $T \subset T'$ and $T \subset U$ be inclusions of boron trees, and let $U'_1, \ldots, U'_n$ be the various amalgamations. The amalgamation property states 
\begin{equation} \label{eq:boron-amalg}
\theta(T \subset T') = \sum_{i=1}^n \theta(U \subset U_i').
\end{equation}
It is not difficult to see that one can reduce to the case where $\# U = \# T' = \# T+1$, and we assume this in what follows. Let $x$ be the leaf of $T'$ not contained in $T$, and let $y$ be the leaf of $U$ not contained in $T$. We break our analysis into three cases:
\begin{itemize}
\item Case 1: $T \subset T'$ and $T \subset U$ are isomorphic, and $\# T \ge 3$.
\item Case 2: $T \subset T'$ and $T \subset U$ are non-isomorphic, and $\# T \ge 3$.
\item Case 3: $\# T \le 2$.
\end{itemize}
We treat each case in turn.

\textit{Case~1.} We can identify $U$ with $T'$, but with $x$ relabeled to $y$. Draw $T'$ schematically as
\begin{displaymath}
\begin{tikzpicture}
\tikzset{leaf/.style={circle,fill=black,draw,minimum size=1mm,inner sep=0pt}}
\tikzset{box/.style={rectangle,draw}}
\tikzset{boron/.style={circle,fill=white,draw,minimum size=1.3mm,inner sep=0pt}}
\node[boron] (A) at (0,0) {};
\node[leaf,label=right:{\tiny $x$}] (B) at (0, 1) {};
\node[box] (C) at (-1, 0) {\tiny $A$};
\node[box] (D) at (1, 0) {\tiny $B$};
\path[draw] (A)--(B);
\path[draw] (A)--(C);
\path[draw] (A)--(D);
\end{tikzpicture}
\end{displaymath}
Here $A$ and $B$ are subtrees, necessarily non-empty. Note that we must have this picture since $\# T \ge 3$. We have the following four amalgamations:
\begin{displaymath}
\begin{tikzpicture}
\tikzset{leaf/.style={circle,fill=black,draw,minimum size=1mm,inner sep=0pt}}
\tikzset{box/.style={rectangle,draw}}
\tikzset{boron/.style={circle,fill=white,draw,minimum size=1.3mm,inner sep=0pt}}
\node[boron] (A) at (0,0) {};
\node[leaf,label=above:{\tiny $x/y$}] (B) at (0, 1) {};
\node[box] (C) at (-1, 0) {\tiny $A$};
\node[box] (D) at (1, 0) {\tiny $B$};
\path[draw] (A)--(B);
\path[draw] (A)--(C);
\path[draw] (A)--(D);
\end{tikzpicture}
\hskip 1.5in
\begin{tikzpicture}
\tikzset{leaf/.style={circle,fill=black,draw,minimum size=1mm,inner sep=0pt}}
\tikzset{box/.style={rectangle,draw}}
\tikzset{boron/.style={circle,fill=white,draw,minimum size=1.3mm,inner sep=0pt}}
\node[boron] (A) at (0,0) {};
\node[boron] (B) at (0, 1) {};
\node[leaf,label=left:{\tiny $x$}] (E) at (-.866, 1.5) {};
\node[leaf,label=right:{\tiny $y$}] (F) at (.866, 1.5) {};
\node[box] (C) at (-1, 0) {\tiny $A$};
\node[box] (D) at (1, 0) {\tiny $B$};
\path[draw] (A)--(B);
\path[draw] (B)--(E);
\path[draw] (B)--(F);
\path[draw] (A)--(C);
\path[draw] (A)--(D);
\end{tikzpicture}
\end{displaymath}
\vskip 5pt
\begin{displaymath}
\begin{tikzpicture}
\tikzset{leaf/.style={circle,fill=black,draw,minimum size=1mm,inner sep=0pt}}
\tikzset{box/.style={rectangle,draw}}
\tikzset{boron/.style={circle,fill=white,draw,minimum size=1.3mm,inner sep=0pt}}
\node[boron] (A) at (-.5,0) {};
\node[boron] (B) at (.5,0) {};
\node[leaf,label=right:{\tiny $x$}] (C) at (-.5, 1) {};
\node[leaf,label=right:{\tiny $y$}] (D) at (.5, 1) {};
\node[box] (E) at (-1.5, 0) {\tiny $A$};
\node[box] (F) at (1.5, 0) {\tiny $B$};
\path[draw] (A)--(B);
\path[draw] (A)--(C);
\path[draw] (A)--(E);
\path[draw] (B)--(D);
\path[draw] (B)--(F);
\end{tikzpicture}
\hskip 1.5in
\begin{tikzpicture}
\tikzset{leaf/.style={circle,fill=black,draw,minimum size=1mm,inner sep=0pt}}
\tikzset{box/.style={rectangle,draw}}
\tikzset{boron/.style={circle,fill=white,draw,minimum size=1.3mm,inner sep=0pt}}
\node[boron] (A) at (-.5,0) {};
\node[boron] (B) at (.5,0) {};
\node[leaf,label=right:{\tiny $y$}] (C) at (-.5, 1) {};
\node[leaf,label=right:{\tiny $x$}] (D) at (.5, 1) {};
\node[box] (E) at (-1.5, 0) {\tiny $A$};
\node[box] (F) at (1.5, 0) {\tiny $B$};
\path[draw] (A)--(B);
\path[draw] (A)--(C);
\path[draw] (A)--(E);
\path[draw] (B)--(D);
\path[draw] (B)--(F);
\end{tikzpicture}
\end{displaymath}
It is not difficult to see that these are all the amalgamations.

\begin{lemma}
The rules $\mu$ and $\nu$ satisfy the amalgamation property in this case.
\end{lemma}

\begin{proof}
Let $U'_1$ and $U'_2$ be the first two trees above, and $U'_3$ and $U'_4$ the second two. Note that we actually have $U'_1=U$, and so $\theta(U \subset U'_1)=1$. In this case, \eqref{eq:boron-amalg} takes the form
\begin{displaymath}
\theta(T \subset T') = 1 + \sum_{i=2}^4 \theta(U \subset U'_i).
\end{displaymath}
The rule $\mu$ assigns value $-\tfrac{1}{2}$ to all of the above inclusions (recall $\# T \ge 3$), and so the equation takes the form $-\tfrac{1}{2} = 1 + 3 \cdot (-\tfrac{1}{2})$, which holds.

We now treat $\nu$. First suppose that $x$ is not a paired leaf in $T'$, or, equivalently, $A$ and $B$ each contain $\ge 2$ leaves (and thus $\# T \ge 4$). Then $x$ is not a paired vertex in either $U_3'$ or $U_4'$, but is a paired vertex in $U_2'$. We thus see that \eqref{eq:boron-amalg} becomes
\begin{displaymath}
-1 = 1 + 0 + (-1) + (-1)
\end{displaymath}
in this case, and thus holds. Now suppose that $x$ is a paired leaf in $T'$. Thus either $A$ or $B$ consists of a single leaf; without loss of generality, suppose $A$ does. Since $\# T \ge 3$, it follows that $B$ contains $\ge 2$ leaves. We thus see that $x$ is a paired vertex in $U_2'$ and $U_3'$, but not in $U'_4$. Therefore, \eqref{eq:boron-amalg} takes the form
\begin{displaymath}
0 = 1 + 0 + 0 + (-1),
\end{displaymath}
which again holds.
\end{proof}

\textit{Case~2.} Going back to the depiction of $T'$ above, choose a ray from the boron atom touching $x$ to a leaf $a$ in $A$, and also one to a leaf $b$ in $B$. We can then depict $T'$ as
\begin{displaymath}
\begin{tikzpicture}
\tikzset{leaf/.style={circle,fill=black,draw,minimum size=1mm,inner sep=0pt}}
\tikzset{box/.style={rectangle,draw}}
\tikzset{boron/.style={circle,fill=white,draw,minimum size=1.3mm,inner sep=0pt}}
\node[boron] (A) at (0,0) {};
\node[leaf,label=above:{\tiny $x$}] (B) at (0, 1) {};
\node[boron] (C) at (-1, 0) {};
\node[box] (C1) at (-1, 1) {\tiny $E_i$};
\node[boron] (D) at (-3, 0) {};
\node[box] (D1) at (-3, 1) {\tiny $E_1$};
\node[leaf,label=above:{\tiny $a$}] (E) at (-4, 0) {};
\node[boron] (F) at (1,0) {};
\node[box] (F1) at (1,1) {\tiny $E_{i+1}$};
\node[boron] (G) at (3,0) {};
\node[box] (G1) at (3,1) {\tiny $E_n$};
\node[leaf,label=above:{\tiny $b$}] (H) at (4,0) {};
\path[draw] (A)--(B);
\path[draw] (A)--(C);
\draw[dotted] (C)--(D);
\path[draw] (D)--(E);
\path[draw] (C)--(C1);
\path[draw] (D)--(D1);
\path[draw] (A)--(F);
\draw[dotted] (F)--(G);
\path[draw] (F)--(F1);
\path[draw] (G)--(G1);
\path[draw] (G)--(H);
\end{tikzpicture}
\end{displaymath}
where each $E_i$ is some non-empty subtree. If $x$ is paired with $a$ then $i=0$, and if $x$ is paired with $b$ then $i=n$; otherwise, $x$ is unpaired. The tree $T$ is obtained by deleting $x$ from the above picture. Since we have assumed $\# T \ge 3$, it follows that $n \ge 1$, i.e., we have at least one $E$ subtree. The tree $U$ is obtained from $T$ by adding one new vertex $y$. There are two ways to do this:
\begin{enumerate}
\item The vertex $y$ is added to one of the $E$ subtrees, say $E_j$. Let $E_j'$ be the subtree thus obtained. Thus the picture for $U$ is just like the one for $T$, but with $E_j$ changed to $E_j'$. We obtain an amalgamation $U'$ by taking the picture for $T'$ and replacing $E_j$ with $E_j'$.
\item The vertex $y$ ``sprouts'' directly from the $a$--$b$ ray, as $x$ does in $T'$. In this case, $y$ occurs between $E_j$ and $E_{j+1}$ for some $0 \le j \le n$, with the $j=0$ and $j=n$ cases interpreted just like the $i=0$ and $i=n$ cases above. Since we have assumed $T \subset T'$ and $T \subset U$ are non-isomorphic, it follows that $i \ne j$. We obtain an amalgamation $U'$ by taking the picture for $T'$ and ``sprouting'' $y$ in between $E_j$ and $E_{j+1}$.
\end{enumerate}
In each case, we have constructed one amalgamation. Again, it is not difficult to see that these are all the amalgamations.

\begin{lemma}
The rules $\mu$ and $\nu$ satisfy the amalgamation property in this case.
\end{lemma}

\begin{proof}
The equation \eqref{eq:boron-amalg} takes the form $\theta(T \subset T') = \theta(U \subset U')$ in this case. It is clear that $\mu$ satisfies this, since it assigns value $-\tfrac{1}{2}$ to each containment. We now treat $\nu$. First suppose that $x$ is not paired in $T'$, which means $i \ne 0,n$; note that in this case, $n \ge 2$, and so $\# T \ge 4$. Then clearly $x$ is not paired in $U'$ either, and so $\nu$ assigns value $-1$ to both $T \subset T'$ and $U \subset U'$. Now suppose that $x$ is paired in $T'$; without loss of generality, suppose that $i=0$, so that it is paired to $a$. Then $x$ is also paired in $U'$ (to $a$) and so $\nu$ assigns value~0 to both $T \subset T'$ and $U \subset U'$. This completes the proof.
\end{proof}

\textit{Case~3.} Recall that for $0 \le i \le 5$ we denote by $\bT_i$ the unique boron tree with $i$ vertices. We examine amalgamation with $T=\bT_i$ and $U=T'=\bT_{i+1}$, for $0 \le i \le 2$.

First suppose $i=0$. Then there are two amalgamations: $U'_1=\bT_1$ and $U'_2=\bT_2$. Equation \eqref{eq:boron-amalg} becomes
\begin{displaymath}
\theta(\bT_0 \subset \bT_1) = 1 + \theta(\bT_1 \subset \bT_2).
\end{displaymath}
For $\mu$ this becomes $\tfrac{3}{2}=1+\tfrac{1}{2}$, while for $\nu$ it becomes $3=1+2$. Thus the equation holds in this case.

Now suppose $i=1$. There are again two amalgamations: $U'_1=\bT_2$ and $U'_2=\bT_3$. Equation \eqref{eq:boron-amalg} becomes
\begin{displaymath}
\theta(\bT_1 \subset \bT_2) = 1 + \theta(\bT_2 \subset \bT_3).
\end{displaymath}
For $\mu$ this becomes $\tfrac{1}{2}=1+(-\tfrac{1}{2})$, while for $\nu$ it becomes $2=1+1$. Thus it holds.

Finally suppose $i=2$. Let $a$ and $b$ be the two vertices of $T$, and recall that $x$ and $y$ are the new vertices of $T'$ and $U$. As in Case~1 above, there are four amalgamations: $U'_1=\bT_3$ with $x=y$, and $U'_i=\bT_4$ for $2 \le i \le 4$. (The final three amalgamations correspond to the three ways of labeling $\bT_4$ with $a$, $b$, $x$, and $y$.) The equation Equation \eqref{eq:boron-amalg} is thus
\begin{displaymath}
\theta(\bT_2 \subset \bT_3) = 1 + 3 \cdot \theta(\bT_3 \subset \bT_4).
\end{displaymath}
For $\mu$ this becomes $-\tfrac{1}{2}=1+3 \cdot (-\tfrac{1}{2})$, while for $\nu$ it becomes $1=1+3 \cdot 0$. This it holds.

We have now completed the proof that $\mu$ and $\nu$ satisfy the amalgamation property, which concludes the proof of Theorem~\ref{thm:boron-meas}.

\subsection{The $\Theta$ ring}

Recall the ring $\Theta(\fA)$ introduced in Definition~\ref{defn:model-theta}. Let $c \in \Theta(\fA)$ be the class $[\bT_3 \subset \bT_4]$. The following is the main theorem of \S \ref{s:boron}:

\begin{theorem} \label{thm:boron-theta}
We have a ring isomorphism
\begin{displaymath}
\bZ[x]/(2x^2+x) \to \Theta(\fA), \qquad x \mapsto c.
\end{displaymath}
This induces an isomorphism
\begin{displaymath}
\bZ[\tfrac{1}{6}] \cong \Theta^*(\fA).
\end{displaymath}
\end{theorem}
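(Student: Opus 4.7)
The strategy is to exhibit the ring homomorphism $\phi \colon \bZ[x]/(2x^2+x) \to \Theta(\fA)$ with $\phi(x) = c$ and show it is a bijection; the localization statement then follows formally. Both the measures $\mu, \nu$ constructed in Theorem~\ref{thm:boron-meas} and direct amalgamation calculations in $\Theta(\fA)$ play essential roles.

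To establish well-definedness, i.e.\ that $2c^2 + c = 0$ in $\Theta(\fA)$, I would compute the class of a single embedding $\bT_3 \hookrightarrow \bT_5$ in two different ways. Writing $\bT_5$ with hydrogens $\{p,q,r,s,z\}$, pairs $\{p,q\}$ and $\{r,s\}$, and $z$ unpaired, fix the $\bT_3$-subset $\{p,q,r\}$. This embedding factors through two intermediate $\bT_4$'s: $\{p,q,r,s\}$ (whose inclusion into $\bT_5$ drops the unpaired vertex $z$, a ``Type~1'' good embedding), and $\{p,q,r,z\}$ (whose inclusion drops the paired vertex $s$, a ``Type~2'' bad embedding). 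Before comparing, I would establish via direct amalgamation that these two outer classes $[\bT_4 \hookrightarrow \bT_5]$ are $-c-1$ and $c$ respectively: the Type~1 value comes from amalgamating $\bT_4 \hookrightarrow \bT_5$ with itself over $\bT_4$ and summing over the four amalgamation classes (one in $\bT_5$ and three in $\bT_6$), while the Type~2 value comes from the simpler amalgamation of two non-isomorphic $\bT_3 \hookrightarrow \bT_4$ embeddings over $\bT_3$. The two factorizations of $[\bT_3 \hookrightarrow \bT_5]$ then evaluate to $c(-c-1) = -c^2-c$ and $c\cdot c = c^2$, yielding $2c^2+c = 0$.

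For surjectivity, the multiplicativity axiom reduces us to expressing each elementary embedding class $[T \subset T']$ (with $\#T' = \#T+1$) as a polynomial in $c$. The small cases $[\bT_i \subset \bT_{i+1}]$ for $0 \le i \le 3$ yield to direct amalgamation of the inclusion with itself over $\bT_i$, producing $3c+3$, $3c+2$, $3c+1$, and $c$ respectively. For $\#T \ge 4$ I would argue that every elementary class equals $c$ when the new vertex is paired with a vertex of $T$ and $-c-1$ otherwise, the prototype being the two $\bT_4 \hookrightarrow \bT_5$ classes already computed, by induction on $\#T$ reducing the general case to these base values via appropriate amalgamations.

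Injectivity then follows immediately from the fact that the composite $(\mu,\nu) \circ \phi \colon \bZ[x]/(2x^2+x) \to \bQ \times \bQ$ sends $1 \mapsto (1,1)$ and $c \mapsto (-1/2, 0)$, and one checks directly that this is injective: its image is the subring $\{(a,n) : n \in \bZ,\ a - n \in \bZ[1/2]\}$, which is a faithful copy of $\bZ[x]/(2x^2+x)$. Finally, in $\Theta^*(\fA)$, inverting $c$ together with $c(2c+1) = 0$ forces $c = -1/2$, reducing the ring to $\bZ[1/2]$; inverting $[\bT_0 \subset \bT_1] = 3/2$ then adjoins $1/3$, giving $\bZ[1/6]$. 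The hardest part will be the inductive step of the surjectivity argument for $\#T \ge 4$: producing, for an arbitrary elementary embedding, an amalgamation that reduces its class, via the relation $2c^2+c = 0$, to either $c$ or $-c-1$ depending on the good/bad dichotomy.
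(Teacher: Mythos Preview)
Your overall strategy matches the paper's: establish $2c^2+c=0$, show $c$ generates, appeal to $(\mu,\nu)$ for injectivity, then localize. There is one genuine gap and one place where you are working harder than necessary.

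The gap is in your computation of the Type~1 value. Amalgamating the good embedding $\bT_4 \hookrightarrow \bT_5$ with itself over $\bT_4$ gives, by the amalgamation axiom,
\[
[\bT_4 \subset \bT_5]_{\mathrm{good}} \;=\; 1 \;+\; \sum_{i=1}^{3} [\bT_5 \subset (\bT_6)_i],
\]
and the three classes on the right are unknown at this stage; you cannot solve for the left side. The amalgamation you need is one level down: amalgamate $\bT_3 \hookrightarrow \bT_4$ with an isomorphic copy of itself (this is the ``Case~1'' situation of \S\ref{ss:boron-meas} with $\#T=3$). The four amalgamations are then one $\bT_4$ and three $\bT_5$'s, two of Type~2 and one of Type~1, and the relation reads
\[
c \;=\; 1 + 2c + [\text{Type 1}],
\]
so $[\text{Type 1}]=-1-c$. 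Note that this uses the Type~2 value you have already computed. Your sentence ``one in $\bT_5$ and three in $\bT_6$'' suggests an off-by-one in which level you intended to amalgamate at; shifting everything down by one fixes it.

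For surjectivity you propose to prove that every elementary class with $\#T\ge 4$ is exactly $c$ or $-c-1$ according to the good/bad dichotomy. This is true, but stronger than needed and requires tracking whether the pairing status of the new vertex is preserved under each reduction step. The paper takes the lighter route: for $\#T \ge 5$, choose a vertex $y$ of $T$ so that the two embeddings $T\setminus\{y\}\hookrightarrow T$ and $T\setminus\{y\}\hookrightarrow T'\setminus\{y\}$ are non-isomorphic over $T\setminus\{y\}$; then there is a unique amalgamation, namely $T'$, and $[T\subset T']=[T\setminus\{y\}\subset T'\setminus\{y\}]$. Iterating lands you among the six classes with $\#T\le 4$, all already expressed in $c$, and no good/bad bookkeeping is required.
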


We require several lemmas before proving the theorem.

\begin{lemma}
The ring $\Theta(\fA)$ is generated by classes $[U \subset U']$ where $\#U'=1+\#U$ and $\#U \le 4$.
\end{lemma}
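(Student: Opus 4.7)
The plan has three stages.

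First, by the composition relation \dref{defn:model-theta}{c}, every class in $\Theta(\fA)$ is a product of \emph{atomic} classes of the form $[U \subset U']$ with $\#U' = 1 + \#U$: interpolating a chain $Y = U_0 \subset \cdots \subset U_m = X$ with $\#U_{i+1} = 1 + \#U_i$ gives $[Y \subset X] = \prod_i [U_i \subset U_{i+1}]$. So it suffices to show that every atomic class with $\#U \geq 5$ lies in the subring $R_4$ of $\Theta(\fA)$ generated by atomic classes with $\#U \leq 4$.

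Second, I would prove this by induction on $n = \#U \geq 5$, using the Case~2 identity established in the proof of Theorem~\ref{thm:boron-meas}: if $T \subset T'$ and $T \subset U$ are non-isomorphic atomic inclusions with $\#T \geq 3$, then their amalgamation is the unique tree $T \cup \{x,y\}$ (with $x,y$ the two added leaves attached at their respective positions), and the axiom \dref{defn:model-theta}{d} yields $[T \subset T'] = [U \subset T \cup \{x,y\}]$. Given an atomic inclusion $U \subset U'$ with $\#U = n \geq 5$ and $x$ the leaf of $U'$ not in $U$, I would select a leaf $y \in U$ such that, setting $T = U \setminus \{y\}$ and $T' = U' \setminus \{y\}$ (contracting any resulting degree-two boron), the two extensions $T \subset T'$ (adding $x$) and $T \subset U$ (adding $y$) are non-isomorphic. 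Granted such a $y$, the amalgamation is precisely $U' = T \cup \{x,y\}$, so Case~2 gives $[U \subset U'] = [T \subset T']$, and the right side belongs to $R_4$ by induction since $\#T = n - 1 \geq 4$.

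The main obstacle is the combinatorial task of showing that such a leaf $y$ always exists when $\#U \geq 5$. The obstruction is the pathological possibility that every leaf $y$ of $U$ attaches to $U \setminus \{y\}$ in the same way that $x$ does, forcing the Case~1 situation for every choice of $y$. I expect to rule this out by a case analysis on the shape of the inclusion $U \subset U'$: in generic situations the automorphism group of $U'$ has multiple orbits on leaves and a suitable $y$ is immediate, but exceptional highly symmetric configurations (for instance, the symmetric Y-shape $\bT_6$ in which all six leaves are $\Aut$-equivalent) will need to be handled by combining the Case~2 reduction with the three-term Case~1 identity $[T \subset T'] = 1 + \sum_{i=2}^{4}[U \subset U'_i]$, possibly together with further auxiliary Case~2 identities, in order to isolate the individual level-$n$ classes. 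Working out these exceptional cases is where the bulk of the combinatorial effort will go.
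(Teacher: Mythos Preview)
Your strategy matches the paper's exactly: reduce to atomic classes, then for $\#U \ge 5$ find a leaf $y$ so that deleting it puts you in the Case~2 (unique amalgamation) situation, yielding $[U\subset U'] = [T\subset T']$ with $\#T = \#U - 1$.

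The part you flag as the ``main obstacle''---finding $y$ so that $T\subset T'$ and $T\subset U$ are non-isomorphic---is in fact routine, and you will not need the Case~1 identity or any exceptional-case analysis. The point is to use the geodesic picture already set up in Case~2. In $U'$, the leaf $x$ sprouts from a unique boron; choose leaves $a,b$ of $U$ so that the geodesic $a\text{--}b$ in $U'$ passes through that boron, and let $E_1,\dots,E_n$ be the subtrees hanging off the interior vertices of this geodesic (other than $x$). Now there are only two cases. If some $E_j$ has at least two leaves, take $y$ to be any leaf of $E_j$: then $T\subset U$ is a type-(i) extension (adding a leaf inside a subtree) while $T\subset T'$ is type-(ii) (sprouting a new leaf from the geodesic), so they are non-isomorphic over $T$. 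If every $E_j$ is a single leaf, then $\#U = n+2 \ge 5$ gives $n\ge 3$, so some $E_j$ is not adjacent to the position of $x$; take $y$ to be that leaf. After deletion, both $T\subset T'$ and $T\subset U$ sprout from the $a\text{--}b$ geodesic in $T$, but at distinct interior positions, and since an isomorphism of extensions must fix $a$ and $b$ (hence the geodesic), they are again non-isomorphic. This is exactly the paper's argument, and it handles every tree with $\#U\ge 5$, including your worried-about symmetric $\bT_6$.
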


\begin{proof}
It follows from condition Definition~\dref{defn:model-meas}{b} that $\Theta(\fA)$ is generated by classes $[U \subset U']$ with $\#U'=1+\# U$. Suppose given such an inclusion with $\# U \ge 5$. We express the class $[U \subset U']$ in terms of classes involving smaller trees.

Let $x$ be the vertex in $U'$ not contained in $U$. Draw $U'$ as we drew $T'$ in Case~2 above: $x$ sprouts directly from a geodesic joining leaves $a$ and $b$, and there are various other subtrees $E_1, \ldots, E_n$. Suppose $\# E_j \ge 2$ for some $j$. Let $y$ be a vertex in $E_j$, and let $T$ and $T'$ be the trees obtained from $U$ and $U'$ by deleting $y$. Then as in Case~2, $U'$ is the unique amalgamation of $T \subset T'$ and $U \subset U'$, and so $[U \subset U']=[T \subset T']$ in $\Theta(\fA)$. This completes this case.

Now suppose that $\# E_i=1$ for all $i$. Then the number of leaves in $U$ is $2+n$, where the~2 counts $a$ and $b$, and the $n$ counts the unique leaf in each $E_i$ for $1 \le i \le n$. Since $\# U$ is assumed to have at least five leaves, we have $n \ge 3$. It follows that we can pick $j$ such that $E_j$ is not directly next to $x$; let $y$ be the leaf in $E_j$. Then defining $T$ and $T'$ as above, we again have that $U'$ is the unique amalgamation, and so $[U \subset U']=[T \subset T']$. This completes the proof.
\end{proof}

Recall that $\bT_5$ is the unique boron tree on five vertices. We name a few of its leaves:
\begin{displaymath}
\begin{tikzpicture}
\tikzset{leaf/.style={circle,fill=black,draw,minimum size=1mm,inner sep=0pt}}
\tikzset{box/.style={rectangle,draw}}
\tikzset{boron/.style={circle,fill=white,draw,minimum size=1.3mm,inner sep=0pt}}
\node[boron] (A) at (-1,0) {};
\node[boron] (B) at (0,0) {};
\node[boron] (C) at (1,0) {};
\node[leaf,label=right:{\tiny $p$}] (A1) at (-1,1) {};
\node[leaf] (A2) at (-2,0) {};
\node[leaf,label=right:{\tiny $q$}] (B1) at (0,1) {};
\node[leaf,label=right:{\tiny $p'$}] (C1) at (1,1) {};
\node[leaf] (C2) at (2,0) {};
\path[draw] (A)--(B);
\path[draw] (A)--(A1);
\path[draw] (A)--(A2);
\path[draw] (B)--(C);
\path[draw] (B)--(B1);
\path[draw] (C)--(C1);
\path[draw] (C)--(C2);
\end{tikzpicture}
\end{displaymath}
We note that $p'$, and the two unlabeled leaves, are equivalent to $p$ under the automorphism group of the tree. Let $\bT_5^p$ denote the tree obtained from $\bT_5$ by deleting $p$, and similarly with the other leaves. Up to isomorphism, there are two inclusions $\bT_4 \subset \bT_5$, namely $\bT_5^p \subset \bT_5$ and $\bT_5^q \subset \bT_5$.

We now name some classes in $\Theta(\fA)$. For $1 \le i \le 4$, let $\alpha_i=[\bT_{i-1} \subset \bT_i]$.  Note that $\alpha_4$ is the element we have already named $c$. Also let $\alpha_5^p=[\bT_5^p \subset \bT_5]$ and $\alpha_5^q=[\bT_5^q \subset \bT_5]$. The above lemma shows that these six classes generate $\Theta(\fA)$.

\begin{lemma}
We have
\begin{align*}
\alpha_1=3c+3, \quad
\alpha_2=3c+2, \quad
\alpha_3=3c+1, \quad
\alpha_4=c, \quad
\alpha_5^p=c, \quad
\alpha_5^q=-1-c.
\end{align*}
In particular, $\Theta(\fA)$ is generated by $c$.
\end{lemma}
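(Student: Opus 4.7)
The plan is to verify the six identities by applying the amalgamation axiom, Definition~\ref{defn:model-meas}(d) (inherited by $\Theta(\fA)$ through Definition~\ref{defn:model-theta}), to carefully chosen pairs of embeddings between small boron trees. The identity $\alpha_4 = c$ is the definition. The identities for $\alpha_1,\alpha_2,\alpha_3$ are immediate from the amalgamation computations of Case~3 of the proof of Theorem~\ref{thm:boron-meas}, reinterpreted in the universal ring: those computations give $\alpha_3 = 1 + 3c$, $\alpha_2 = 1 + \alpha_3$, and $\alpha_1 = 1 + \alpha_2$, which rearrange to $\alpha_3 = 3c+1$, $\alpha_2 = 3c+2$, and $\alpha_1 = 3c+3$.

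The work is in pinning down $\alpha_5^p$ and $\alpha_5^q$, for which I will extract two linear relations by applying the amalgamation axiom to two different pairs of embeddings $\bT_3\hookrightarrow\bT_4$. Label the leaves of $T=\bT_3$ as $\{1,2,3\}$ and fix an embedding $i\colon T\hookrightarrow T'=\bT_4$ that pairs the new leaf $x$ with the leaf $1$. For the first relation, take $j\colon T\hookrightarrow U=\bT_4$ to pair its new leaf $y$ with $1$ as well: this is precisely Case~1 of the proof of Theorem~\ref{thm:boron-meas}, and its four amalgamations (one trivial with $U'=\bT_4$, three with $U'=\bT_5$) contribute $1+\alpha_5^p+\alpha_5^p+\alpha_5^q$ after checking in each $\bT_5$-amalgamation whether $x$ lands at a peripheral or middle position of $\bT_5$. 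This gives the relation $c = 1 + 2\alpha_5^p + \alpha_5^q$. For the second relation, take $j$ to instead pair $y$ with the leaf $2$: now $x=y$ would be forced to be paired with both $1$ and $2$ in $\bT_4$, killing the $|U'|=4$ amalgamation, and a case check over placements of $\{1,2,3,x,y\}$ on $\bT_5$ leaves exactly one amalgamation alive---the $\bT_5$ with end-pairs $\{1,x\}$ and $\{2,y\}$ and middle leaf $3$---in which $x$ sits at a peripheral position. The axiom then reads $c = \alpha_5^p$. Combining the two relations yields $\alpha_5^p = c$ and $\alpha_5^q = -1-c$.

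The main obstacle is the case analysis behind the second relation. Even though $i$ and $j$ individually represent the unique iso class of embeddings $\bT_3\hookrightarrow\bT_4$ (by the $S_3$-symmetry of $\bT_3$), the pair $(i,j)$ is inequivalent to the ``same-leaf'' pair used for the first relation and has far fewer amalgamations. Verifying that only one $\bT_5$-amalgamation survives amounts to observing that for any candidate placement, the induced substructure on $\{1,2,3,x\}$ determines which position in $\bT_5$ the label $1$ must occupy, while the substructure on $\{1,2,3,y\}$ determines the position of label $2$; working through the five possibilities for the position of $y$ in $\bT_5$, these two demands either collide (forcing $1=2$) or are otherwise incompatible, with the single exception being the configuration described above.
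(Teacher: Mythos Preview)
Your proof is correct and follows essentially the same route as the paper. The paper obtains $\alpha_5^p=c$ by the identical unique-amalgamation trick (phrased as deleting $p$ and $p'$ from $\bT_5$ and citing the Case~2 analysis for uniqueness, rather than your direct case check), and then derives $\alpha_5^q=-1-c$ from the very same Case~1 relation $c=1+2\alpha_5^p+\alpha_5^q$ that you use.
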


\begin{proof}
The analysis in Case~3 above gives
\begin{displaymath}
\alpha_1=1+\alpha_2, \qquad \alpha_2=1+\alpha_3, \qquad \alpha_3=1+3\alpha_4.
\end{displaymath}
As $\alpha_4=c$, this gives the stated formulas for $\alpha_1, \ldots, \alpha_4$.

We now look at $\alpha_5^p$. In fact, this is handled just as in the previous lemma: put
\begin{displaymath}
T=\bT^{p,p'}_5 \cong \bT_3, \qquad
U = \bT^{p} \cong \bT_4, \qquad
T'=\bT_5^{p'} \cong \bT_4, \qquad
U' = \bT_5
\end{displaymath}
Then $U'$ is the unique amalgamation of $T \subset T'$ and $T \subset U$, and so $[U \subset U']=[T \subset T']$, which gives $\alpha_5^p=\alpha_4$.

Finally we examine $\alpha_5^q$. Let $T'$ be the following tree
\begin{displaymath}
\begin{tikzpicture}
\tikzset{leaf/.style={circle,fill=black,draw,minimum size=1mm,inner sep=0pt}}
\tikzset{boron/.style={circle,fill=white,draw,minimum size=1.3mm,inner sep=0pt}}
\node[boron] (A) at (-.5,0) {};
\node[boron] (B) at (.5,0) {};
\node[leaf,label=above:{\tiny $a$}] (C) at (-1.5, 0) {};
\node[leaf,label=above:{\tiny $b$}] (D) at (1.5, 0) {};
\node[leaf,label=left:{\tiny $x$}] (A1) at (-.5, 1) {};
\node[leaf,label=right:{\tiny $c$}] (B1) at (.5, 1) {};
\path[draw] (A)--(B);
\path[draw] (A)--(C);
\path[draw] (A)--(A1);
\path[draw] (B)--(B1);
\path[draw] (B)--(D);
\end{tikzpicture}
\end{displaymath}
Let $T$ be the subtree obtained by deleting $x$, and let $U$ be a copy of $T'$ in which $x$ is relabeled to $y$. We have inclusions $T \subset T'$ and $T \subset U$. There are four amalgamations (this is just like Case~1 above):
\begin{displaymath}
\begin{tikzpicture}
\tikzset{leaf/.style={circle,fill=black,draw,minimum size=1mm,inner sep=0pt}}
\tikzset{boron/.style={circle,fill=white,draw,minimum size=1.3mm,inner sep=0pt}}
\node[boron] (A) at (-.5,0) {};
\node[boron] (B) at (.5,0) {};
\node[leaf,label=above:{\tiny $a$}] (C) at (-1.5, 0) {};
\node[leaf,label=above:{\tiny $b$}] (D) at (1.5, 0) {};
\node[leaf,label=above:{\tiny $x/y$}] (A1) at (-.5, 1) {};
\node[leaf,label=above:{\tiny $c$}] (B1) at (.5, 1) {};
\path[draw] (A)--(B);
\path[draw] (A)--(C);
\path[draw] (A)--(A1);
\path[draw] (B)--(B1);
\path[draw] (B)--(D);
\end{tikzpicture}
\hskip 1.5in
\begin{tikzpicture}
\tikzset{leaf/.style={circle,fill=black,draw,minimum size=1mm,inner sep=0pt}}
\tikzset{boron/.style={circle,fill=white,draw,minimum size=1.3mm,inner sep=0pt}}
\node[boron] (A) at (-1,0) {};
\node[boron] (B) at (1,0) {};
\node[leaf,label=above:{\tiny $a$}] (C) at (-2, 0) {};
\node[leaf,label=above:{\tiny $b$}] (D) at (2, 0) {};
\node[boron] (A1) at (-1, 1) {};
\node[leaf,label=above:{\tiny $c$}] (B1) at (1, 1) {};
\node[leaf,label=left:{\tiny $x$}] (E) at (-1.866, 1.5) {};
\node[leaf,label=right:{\tiny $y$}] (F) at (-.233, 1.5) {};
\path[draw] (A)--(B);
\path[draw] (A)--(C);
\path[draw] (A)--(A1);
\path[draw] (B)--(B1);
\path[draw] (B)--(D);
\path[draw] (A1)--(E);
\path[draw] (A1)--(F);
\end{tikzpicture}
\end{displaymath}
\vskip 5pt
\begin{displaymath}
\begin{tikzpicture}
\tikzset{leaf/.style={circle,fill=black,draw,minimum size=1mm,inner sep=0pt}}
\tikzset{boron/.style={circle,fill=white,draw,minimum size=1.3mm,inner sep=0pt}}
\node[boron] (A) at (-1,0) {};
\node[boron] (B) at (0,0) {};
\node[boron] (C) at (1,0) {};
\node[leaf,label=above:{\tiny $a$}] (D) at (-2, 0) {};
\node[leaf,label=above:{\tiny $b$}] (E) at (2, 0) {};
\node[leaf,label=above:{\tiny $x$}] (A1) at (-1, 1) {};
\node[leaf,label=above:{\tiny $y$}] (B1) at (0, 1) {};
\node[leaf,label=above:{\tiny $c$}] (C1) at (1, 1) {};
\path[draw] (A)--(D);
\path[draw] (A)--(A1);
\path[draw] (A)--(B);
\path[draw] (B)--(B1);
\path[draw] (B)--(C);
\path[draw] (C)--(C1);
\path[draw] (C)--(E);
\end{tikzpicture}
\hskip 1.5in
\begin{tikzpicture}
\tikzset{leaf/.style={circle,fill=black,draw,minimum size=1mm,inner sep=0pt}}
\tikzset{boron/.style={circle,fill=white,draw,minimum size=1.3mm,inner sep=0pt}}
\node[boron] (A) at (-1,0) {};
\node[boron] (B) at (0,0) {};
\node[boron] (C) at (1,0) {};
\node[leaf,label=above:{\tiny $a$}] (D) at (-2, 0) {};
\node[leaf,label=above:{\tiny $b$}] (E) at (2, 0) {};
\node[leaf,label=above:{\tiny $y$}] (A1) at (-1, 1) {};
\node[leaf,label=above:{\tiny $x$}] (B1) at (0, 1) {};
\node[leaf,label=above:{\tiny $c$}] (C1) at (1, 1) {};
\path[draw] (A)--(D);
\path[draw] (A)--(A1);
\path[draw] (A)--(B);
\path[draw] (B)--(B1);
\path[draw] (B)--(C);
\path[draw] (C)--(C1);
\path[draw] (C)--(E);
\end{tikzpicture}
\end{displaymath}
Call the first two $U_1'$ and $U_2'$ and the second two $U_3'$ and $U_4'$. We thus have the equation
\begin{displaymath}
[T \subset T'] = \sum_{i=1}^4 [U \subset U_i']
\end{displaymath}
in $\Theta(\fA)$. We have
\begin{displaymath}
[T \subset T'] = \alpha_4, \quad [U \subset U'_1]=1, \quad [U \subset U_2']=\alpha_5^p, \quad
[U \subset U_3'] = \alpha_5^p, \quad [U \subset U_4'] = \alpha_5^q.
\end{displaymath}
We thus obtain
\begin{displaymath}
\alpha_4 = 1 + 2 \alpha_5^p + \alpha_5^q,
\end{displaymath}
and so $\alpha_5^q=-1-c$ as stated.
\end{proof}

\begin{lemma}
We have $c \cdot (2c+1)=0$.
\end{lemma}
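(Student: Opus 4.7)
The plan is to exploit the multiplicativity relation $[i \circ j] = [i] \cdot [j]$ in $\Theta(\fA)$ by computing the class of a single specific embedding $\bT_3 \hookrightarrow \bT_5$ in two different ways and equating the results.

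Label the leaves of $\bT_5$ so that $\{1,2\}$ and $\{4,5\}$ are the two paired pairs (attached to the outer borons) and $3$ is the unpaired middle leaf, and let $e \colon \bT_3 \hookrightarrow \bT_5$ be the embedding whose image is the three-element subset $\{2,4,5\}$. The key observation is that $\{2,4,5\}$ contains neither the paired leaf $1$ nor the middle leaf $3$, so $e$ factors through both $\bT_5^p$ (with $p = 1$) and $\bT_5^q$ (with $q = 3$).

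In the first factorization $e = (\bT_5^p \hookrightarrow \bT_5) \circ (\bT_3 \hookrightarrow \bT_5^p)$, the outer inclusion has class $\alpha_5^p = c$ by the previous lemma, while the inner embedding—being an inclusion of $\bT_3$ into $\bT_5^p \cong \bT_4$—also has class $c$ (recalling that all inclusions $\bT_3 \subset \bT_4$ are isomorphic). Hence $[e] = c \cdot c = c^2$. In the second factorization $e = (\bT_5^q \hookrightarrow \bT_5) \circ (\bT_3 \hookrightarrow \bT_5^q)$, the outer inclusion has class $\alpha_5^q = -1-c$, while the inner embedding again has class $c$, giving $[e] = (-1-c) \cdot c = -c - c^2$. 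Equating the two expressions yields $c^2 = -c - c^2$, i.e., $c(2c+1) = 0$.

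There is no substantive obstacle here; the only bit of care required is the initial choice of $e$: its image must lie in both $\bT_5^p$ and $\bT_5^q$, which is precisely what forces the choice of a subset like $\{2,4,5\}$ containing one full paired pair together with one leaf from the other paired pair (and avoiding the middle leaf). Once that observation is made, both factorizations are automatic from the multiplicativity of the universal measure.
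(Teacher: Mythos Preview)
Your proof is correct and follows essentially the same route as the paper: both compute the class of the single embedding $\bT_3 \hookrightarrow \bT_5$ whose image omits $p$ and $q$, factor it through $\bT_5^p$ and through $\bT_5^q$ respectively, and equate the two resulting products $c\cdot c$ and $c\cdot(-1-c)$. Your explicit labeling of the leaves just makes concrete the paper's phrase ``regard $\bT_3$ as the subtree of $\bT_5$ obtained by deleting $p$ and $q$.''
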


\begin{proof}
Regard $\bT_3$ as the subtree of $\bT_5$ obtained by deleting $p$ and $q$. We thus have
\begin{displaymath}
[\bT_3 \subset \bT_5] = [\bT_3 \subset \bT_5^p] \cdot [\bT_5^p \subset \bT_5]
= [\bT_3 \subset \bT_5^q] \cdot [\bT_5^q \subset \bT_5].
\end{displaymath}
Note that $[\bT_3 \subset \bT_5^p]=[\bT_3 \subset \bT_5^q]=\alpha_4$ since $\bT_5^p$ and $\bT_5^q$ are just copies of $\bT_4$. Thus the above equation yields
\begin{displaymath}
\alpha_4 \cdot \alpha_5^p = \alpha_4 \cdot \alpha_5^q.
\end{displaymath}
Using the formulas for these quantities in terms of $c$ from the previous lemma yields the result.
\end{proof}

\begin{proof}[Proof of Theorem~\ref{thm:boron-theta}]
Let $R=\bZ[x]/(x(2x+1))$. The above lemmas show that we have a surjective ring homomorphism $i \colon R \to \Theta(\fA)$ given by $i(x)=c$. Now, $R$ is $\bZ$-torsion free and $R \otimes \bQ$ is isomorphic (as a ring) to $\bQ \times \bQ$. The existence of the measures $\mu,\nu \colon \Theta(\fA) \to \bQ$ thus shows that $i$ is injective. (Note that $\mu(c)=-\tfrac{1}{2}$ and $\nu(c)=0$.) Thus $i$ is an isomorphism.

We now turn to $\Theta^*(\fA)$. By the above, we have an isomorphism $R[1/x] \to \Theta(\fA)[1/c]$, and clearly $R[1/x] \cong \bZ[1/2]$.  If $i$ is an embedding in $\fA$ then $[i]=\pm 3/2^n$ in $\Theta(\fA)[1/c] \cong \bZ[1/2]$. Inverting these classes, we find $\Theta^*(\fA)=\bZ[1/6]$, as claimed.
\end{proof}

\subsection{Representation categories}

Recall that $\Omega$ is the Fra\"iss\'e limit of the class $\fA$, and $G$ is its automorphism group. For a finite subset $A$ of $\Omega$ recall that $G(A)$ is the subgroup of $G$ fixing each element of $A$. We let $\sE$ be the collection of all subgroups of the form $G(A)$. This is the stabilizer class generated by $\Omega$. By Theorem~\ref{thm:compare}, we have $\Theta^*(\fA)=\Theta^*(G;\sE)$. We let $\mu_1$ be the $\bZ[1/6]$-valued measure for $(G,\sE)$ corresponding to $\mu$ under this isomorphism. Fix a field $k$ of characteristic $\ne 2,3$; we also regard $\mu_1$ as $k$-valued in what follows. The following is our main result on tensor categories coming from boron trees:

\begin{theorem} \label{thm:boron}
The Karoubi envelope of $\uPerm_k(G,\sE;\mu_1)$ is a semi-simple pre-Tannakian category.
\end{theorem}

We require some lemmas before proving the theorem.

\begin{lemma} \label{lem:boron-1}
Suppose that $A$ and $B$ are finite subsets of $\Omega$ such that $G(B)$ is contained in $G(A)$ with finite index. Then $A=B$.
\end{lemma}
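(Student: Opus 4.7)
The plan is to reduce the statement to a no-algebraicity claim for $\Omega$: for every finite $A \subset \Omega$ and every $c \in \Omega \setminus A$, the $G(A)$-orbit of $c$ is infinite. Granting this, the conclusion follows in two steps. Set $C = A \cup B$; since $G(B) \subseteq G(A)$, we get $G(C) = G(A) \cap G(B) = G(B)$, so the hypothesis reads $[G(A):G(C)] < \infty$ with $C \supseteq A$. If some $c \in C \setminus A$ existed, then $G(A \cup \{c\}) \supseteq G(C)$ would have finite index in $G(A)$, forcing the $G(A)$-orbit of $c$ to be finite, contradicting the no-algebraicity claim. Hence $C = A$, so $B \subseteq A$, and then $G(A) \subseteq G(B) \subseteq G(A)$ gives $G(A) = G(B)$. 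A symmetric application of the claim with $B$ in place of $A$ finishes the argument: any $a \in A \setminus B$ would lie in $\Omega \setminus B$ and be fixed by $G(B) = G(A)$, contradicting infinitude of its $G(B)$-orbit. Hence $A = B$.

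To prove the no-algebraicity claim, I would view $T = A \cup \{c\}$ as a finite substructure of $\Omega$ (a subtree of the boron-tree structure induced on $A \cup \{c\}$), and construct inductively on $n$ a boron tree $U_n \in \fA$ containing $n$ pairwise disjoint copies of $T$ over $A$. Since $\Omega$ is the Fra\"iss\'e limit of $\fA$, the inclusion $T \subset \Omega$ extends to an embedding $U_n \hookrightarrow \Omega$, producing $n$ distinct elements of $\Omega$ each realizing the isomorphism type of $(A,c)$ in $\fA$. Homogeneity of $\Omega$ then exhibits each of them as a $G(A)$-translate of $c$, so $|G(A) \cdot c| \geq n$ for every $n$.

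The main obstacle is the strengthening of amalgamation needed for the inductive step: given two boron trees $T_1, T_2 \in \fA$ sharing a common substructure $A$, one requires an amalgam $U \in \fA$ with $T_1 \cap T_2 = A$ in $U$ (rather than a degenerate amalgam that identifies further vertices). Working with the quaternary relation $R$, I would construct $U$ by locating an edge or boron atom of $T_1$ adjacent to the minimal subtree spanned by $A$ and grafting $T_2 \setminus A$ onto $T_1$ at that position through newly inserted boron atoms, handling the $A = \emptyset$ case by joining $T_1$ and $T_2$ with a fresh edge subdivided by two new boron atoms. Verifying that the resulting tree remains trivalent at internal vertices and that its quaternary relation restricts to $R_{T_1}$ on $T_1$ and $R_{T_2}$ on $T_2$ is a finite case analysis on the geodesics passing through the grafting site, which I expect to go through cleanly because the inserted pendant separates the two factors.
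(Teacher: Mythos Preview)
Your argument is correct and follows the same strategy as the paper: reduce to a containment $A \subseteq C$ and show it cannot be proper by exhibiting arbitrarily many extensions of $A$ inside finite boron trees, then invoking universality and homogeneity of $\Omega$. The paper simply asserts that such boron trees are ``easy to write down,'' whereas you unpack this as strong amalgamation for one added leaf; you also make the reverse inclusion $A \setminus B$ explicit, which the paper leaves implicit after its replacement of $B$ by $A \cup B$.
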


\begin{proof}
Replacing $B$ with $A \cup B$, we can assume that $A \subset B$. Suppose that this containment is proper and that $G(B)$ has finite index in $G(A)$; we will produce a contradiction. Recall that $\Omega^{[A]}$ denotes the set of embeddings $A \to \Omega$, and this is isomorphic to $G/G(A)$. Since $G(B)$ has finite index in $G(A)$, the fibers of $\Omega^{[B]} \to \Omega^{[A]}$ have finite cardinality, say $n$. This means that the standard embedding $A \to \Omega$ can be extended in only $n$ ways to $B$. However, it is easy to write down a finite boron tree $C$ and an embedding $A \to C$ that extends to $B$ in $>n$ ways. Since $C$ embeds into $\Omega$, this gives a contradiction.
\end{proof}

For a finite subset $A$ of $\Omega$, let $G[A]$ be the subgroup fixing $A$ as a set. By the above lemma, $G[A]$ is exactly the normalizer in $G$ of $G(A)$. We let $\sE^+$ be the set of all subgroups $U$ of $G$ such that $G(A) \subset U \subset G[A]$ for some $A$; it is easily seen to be a stabilizer class.

\begin{lemma} \label{lem:boron-2}
We have the following:
\begin{enumerate}
\item Suppose $V \subset U$ is a finite index containment with $V \in \sE^+$. Then $U \in \sE^+$.
\item If $X$ is an $\sE^+$-smooth $G$-set and $H$ is a subgroup of $\Aut(X)$ then $X/H$ is $\sE^+$-smooth.
\item If $X$ is an $\sE^+$-smooth set then so is $X^{(n)}$ for any $n \ge 0$.
\end{enumerate}
\end{lemma}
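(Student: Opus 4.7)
The plan is to prove the three parts in order, with (b) being the heart of the argument.

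For (a), let $G(A) \subset V \subset G[A]$ witness $V \in \sE^+$. Since $[U:V] < \infty$ and $V$ stabilizes $A$ setwise, the set $\{uA : u \in U\}$ of $U$-translates of $A$ is finite, say $\{A_1, \ldots, A_k\}$ with $A_1 = A$. Take $B = A_1 \cup \cdots \cup A_k$: then $U \subset G[B]$ by construction, while $G(B) \subset G(A_1) \subset V \subset U$, so $U \in \sE^+$. The same style of argument shows that $\sE^+$ is closed under finite intersections (given $V_i \in \sE^+$ with $G(A_i) \subset V_i \subset G[A_i]$, take $B = A_1 \cup A_2$ and use $G[A_1] \cap G[A_2] \subset G[B]$), a fact we will need below.

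For (b), first reduce to the transitive case $X = G/V$, so that $X/H = G/\tilde{H}$ where $\tilde{H} \subset N_G(V)$ is the preimage of $H \subset \Aut(X) = N_G(V)/V$. Fix $A$ with $G(A) \subset V \subset G[A]$. Clearly $G(A) \subset \tilde{H}$, so it remains to show $\tilde{H} \subset G[A]$; in fact I will prove the stronger statement $N_G(V) \subset G[A]$. The key observation is that $A$ coincides with the union of the finite $V$-orbits on $\Omega$: every $x \in A$ has finite $V$-orbit because $V \subset G[A]$, while conversely any $x \in \Omega$ with finite $V$-orbit has finite $G(A)$-orbit (since $G(A) \subset V$), hence lies in $\mathrm{acl}(A)$, which equals $A$ for the Fra\"iss\'e limit of $\fA$. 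Since $N_G(V)$ permutes the $V$-orbits on $\Omega$ preserving their cardinality, it must permute $A$ setwise.

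Part (c) then follows by reduction to (b): write $X^{(n)} = X^n/\fS_n$, observe that stabilizers in $X^n$ are finite intersections of stabilizers in $X$ and so lie in $\sE^+$, and apply (b) with $H = \fS_n$. The main obstacle is the model-theoretic input $\mathrm{acl}(A) = A$ used in (b); this relies on strong amalgamation for the class $\fA$ of boron trees, which guarantees that every one-point extension type over a finite $A$ is realized infinitely often in $\Omega$, so every $G(A)$-orbit on $\Omega \setminus A$ is infinite.
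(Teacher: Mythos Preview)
Your proof is correct but differs from the paper's in two places. For (a), the paper takes the normal core of $G(A)$ in $U$: this is $G(B)$ for some finite $B \supset A$ with $[G(A):G(B)] < \infty$, and Lemma~\ref{lem:boron-1} forces $B = A$, so $G(A) \trianglelefteq U$ and $U \subset G[A]$ with the \emph{same} $A$. Your union-of-translates argument is more elementary (it does not invoke Lemma~\ref{lem:boron-1}) but gives a potentially larger witnessing set. For (b), the paper simply invokes the general oligomorphic fact that $\Aut(X)$ is finite for transitive $X$ (Proposition~\ref{prop:smooth}(d)), so the preimage $\tilde{H}$ contains $V$ with finite index, and then applies (a). Your direct argument via the characterization of $A$ as the union of finite $V$-orbits is sharper---it yields $N_G(V) \subset G[A]$ outright---and the model-theoretic input $\mathrm{acl}(A) = A$ that you cite is precisely what Lemma~\ref{lem:boron-1} encodes, so both routes ultimately rest on the same fact about boron trees, just deployed at different stages. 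One small correction in (c): you should write $X^{(n)} = X^{[n]}/\fS_n$, not $X^n/\fS_n$; with that fix the argument goes through as written.
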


\begin{proof}
(a) Let $A$ be such that $G(A) \subset V \subset G[A]$. Thus $G(A)$ has finite index in $U$. The intersection of all conjugates of $G(A)$ in $U$ has the form $G(B)$, and is contained in $G(A)$ with finite index, and is thus equal to $G(A)$ by Lemma~\ref{lem:boron-1}. Thus $G(A)$ is normal in $U$, and so $U$ is contained in the normalizer of $G(A)$, which is $G[A]$. Thus $U \in \sE^+$, as required.

(b) It suffices to consider the case where $X$ is transitive. Write $X=G/U$ where $U \in \sE^+$. Then $\Aut(X)=\rN(U)/U$, and so $H$ corresponds to a subgroup $V$ between $U$ and $\rN(U)$. Since $H$ is finite (Proposition~\ref{prop:smooth}(d)), $V$ contains $U$ with finite index, and so $V \in \sE^+$ by (a). We have $X/H=G/V$, and so $X/H$ is $\sE^+$-smooth.

(c) This follows from (b) since $X^{(n)}=X^{[n]}/\fS_n$.
\end{proof}

\begin{lemma} \label{lem:boron-3}
Let $T$ be a boron tree. Then $\Aut(T)$ has order $2^r 3^s$ with $s \in \{0,1\}$.
\end{lemma}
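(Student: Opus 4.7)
The plan is to analyze $\Aut(T)$ using the standard fact that any finite tree has a well-defined center, which is either a single vertex or a single edge; in either case the center is preserved by every automorphism. For a boron tree with at least two vertices, a central vertex (if it exists) must be internal, hence a boron atom of degree three.

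First I will reduce to a sub-lemma about rooted pieces. Namely, consider the following class of rooted trees: pairs $(S,r)$ where $r$ is a distinguished vertex of degree $\le 2$, and every other vertex has degree $1$ or $3$. I claim that for any such $(S,r)$, the group $\Aut(S,r)$ of automorphisms fixing $r$ has order a power of $2$. The proof is by induction on $|S|$. If $|S|=1$ this is trivial; otherwise, any $\sigma \in \Aut(S,r)$ permutes the at most two neighbors $u_1,u_2$ of $r$ and restricts to isomorphisms between the components $S_{u_i}$ of $S-r$ (each rooted at $u_i$). In each such component, $u_i$ has degree $\deg_S(u_i)-1 \le 2$, so the inductive hypothesis applies and $|\Aut(S_{u_i},u_i)|$ is a $2$-power. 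Thus $\Aut(S,r)$ embeds into $\fS_2 \ltimes (\Aut(S_{u_1},u_1) \times \Aut(S_{u_2},u_2))$, which is itself a $2$-group. The conceptual point is that rooting breaks the potential $\fS_3$-symmetry at each boron, since one of its three edges (the one pointing toward the root) is distinguished.

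With the sub-lemma in hand, I will split into two cases according to the center of $T$. If the center is an edge $e$ with endpoints $v_1,v_2$, let $T_i$ be the component of $T-e$ containing $v_i$. Each $v_i$ has degree $2$ in $T_i$, so $(T_i,v_i)$ is a rooted tree of the type covered by the sub-lemma, and $\Aut(T) \hookrightarrow \fS_2 \ltimes (\Aut(T_1,v_1) \times \Aut(T_2,v_2))$ is a $2$-group; here $s=0$. If the center is a vertex $v$, it is a boron with neighbors $u_1,u_2,u_3$; let $T_i$ be the component of $T-v$ containing $u_i$, rooted at $u_i$. Now $u_i$ has degree $\deg_T(u_i)-1 \in \{0,2\}$ in $T_i$, so again the sub-lemma applies and each $|\Aut(T_i,u_i)|$ is a $2$-power. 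Since $\Aut(T) \hookrightarrow \fS_3 \ltimes (\Aut(T_1,u_1) \times \Aut(T_2,u_2) \times \Aut(T_3,u_3))$ and $|\fS_3|=2\cdot 3$, it follows that $|\Aut(T)| = 2^r 3^s$ with $s \in \{0,1\}$.

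The main obstacle is making the rooted sub-lemma watertight, in particular ensuring the right class of ``rooted pieces'' is preserved under the recursion (the degree of $r$ in $S_{u_i}$ must remain $\le 2$), and accounting for degenerate base cases such as $u_i$ being an isolated hydrogen. Once that bookkeeping is done, the global argument is a clean case analysis on the center of $T$, and the factor of $3$ enters at most once, exactly when $T$ has a central boron whose three pendant subtrees are pairwise isomorphic.
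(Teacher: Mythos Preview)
Your argument is correct and takes a genuinely different route from the paper's. The paper proceeds by induction on the number of leaves: for $\#T \ge 4$ it deletes all paired hydrogen atoms (pairs of leaves sharing a boron neighbor) and converts the exposed borons to hydrogens, obtaining a strictly smaller boron tree $T'$; the kernel of the restriction map $\Aut(T) \to \Aut(T')$ is $(\bZ/2\bZ)^r$ where $r$ counts the pairs, and the base case $\#T \le 3$ is where the single possible factor of~$3$ appears (from $\Aut(\bT_3)=\fS_3$). Your approach instead roots the tree at its center. Your rooted sub-lemma is the structural heart: once an edge at each boron is distinguished (the one pointing toward the root), only $\fS_2$-symmetries remain at that vertex, forcing the stabilizer to be a $2$-group. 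The single factor of~$3$ then visibly arises only from the $\fS_3$ action on the three branches at a central boron. The paper's proof is shorter and dovetails with the ``paired vertex'' language already used in defining the measure $\nu$; yours is more structural, pinpoints exactly where the $3$ can enter, and would generalize immediately to trees with other degree constraints (replacing $3$ by whatever the maximal internal degree is). One small bookkeeping point: your claim that ``each $v_i$ has degree $2$ in $T_i$'' in the central-edge case fails for the two-leaf tree $\bT_2$, but since your sub-lemma only needs degree $\le 2$ this does no harm.
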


\begin{proof}
If $\# T \le 3$ this is clear. Suppose now that $\# T \ge 4$. Let $T'$ be the boron tree obtained from $T$ by deleting all paired vertices, and converting the neighboring boron atoms to hydrogen. Any automorphism of $T$ restricts to an automorphism of $T'$, and we have an exact sequence
\begin{displaymath}
1 \to (\bZ/2\bZ)^r \to \Aut(T) \to \Aut(T')
\end{displaymath}
where $r$ is the number of pairs of paired vertices in $T$. (The $\bZ/2\bZ$ factors here are involutions that swap paired vertices.) The result is true for $T'$ by induction, and thus for $T$ by the above sequence.
\end{proof}

Every member of $\sE^+$ contains a member of $\sE$ with finite index, and so $\sE$ is ``large'' in $\sE^+$ in (a relative analog of) the sense of \S \ref{ss:Theta-prop}(d). Thus $\Theta^*(G;\sE^+) \otimes \bQ=\Theta^*(G;\sE) \otimes \bQ$, and so $\mu_1$ extends uniquely to a homomorphism $\mu_2 \colon \Theta^*(G;\sE^+) \to \bQ$.

\begin{lemma} \label{lem:boron-4}
$\mu_2$ takes values in $\bZ[1/6]$, and is regular if regarded as a $\bZ[1/6]$-valued measure for $(G,\sE^+)$.
\end{lemma}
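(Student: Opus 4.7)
The plan is to reduce both claims to a single verification: for every $U \in \sE^+$, the value $\mu_2([G/U])$ is a unit of $\bZ[1/6]$. This suffices because the classes $[G/U]$ and $[G/U]^{-1}$ (as $U$ varies over transitive stabilizers in $\sE^+$) generate $\Theta^*(G; \sE^+)$ as a ring, so landing in units of $\bZ[1/6]$ forces the whole image of $\mu_2$ into $\bZ[1/6]$; and regularity (Definition~\ref{defn:regular}) is, by definition, the assertion that each $\mu_2(G/U)$ is a unit.

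First I would use the definition of $\sE^+$ to write $G(A) \subset U \subset G[A]$ for some finite subset $A \subset \Omega$ (with induced boron tree structure $T$, say with $n = |A|$). Since $G/G(A) \to G/U$ is a surjection of transitive $G$-sets with fibers of cardinality $[U : G(A)]$, axiom \dref{defn:measure}{e} gives
$$\mu_2([G/U]) \;=\; \mu_1([G/G(A)]) \;/\; [U : G(A)].$$
The numerator is computable via the isomorphism $\Theta(\fA) \cong \Theta(G; \sE)$ of Theorem~\ref{thm:compare}: it equals $\mu(\emptyset \subset T)$, which by the formula for the R-measure associated to $\mu$ (see \S\ref{ss:boron-meas}) equals $1$ if $n = 0$, equals $3/2$ if $n = 1$, and equals $3 \cdot (-1/2)^n$ if $n \ge 2$. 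In every case the numerator is of the form $\pm 3^{\epsilon} \cdot 2^{-m}$ with $\epsilon \in \{0,1\}$ and $m \ge 0$, hence a unit of $\bZ[1/6]$.

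Next I would bound the denominator using Lemma~\ref{lem:boron-3}. Since $\Omega$ is homogeneous, the natural map $G[A]/G(A) \to \Aut(T)$ is an isomorphism, so $[U : G(A)]$ divides $|\Aut(T)| = 2^r \cdot 3^s$ with $s \in \{0,1\}$. Write $[U:G(A)] = 2^a 3^b$ with $0 \le a \le r$ and $0 \le b \le s$. Combining the two computations yields
$$\mu_2([G/U]) \;=\; \pm \, 2^i \, 3^j$$
for some integers $i, j$ (with $j \in \{-1, 0, 1\}$), which is manifestly a unit of $\bZ[1/6]$. This establishes the verification, and hence both assertions of the lemma.

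I expect the main point to watch is the identification $G[A]/G(A) \cong \Aut(T)$ together with the invocation of Lemma~\ref{lem:boron-3}: the restriction $s \in \{0,1\}$ there is exactly what prevents denominators with $3^2$ or higher from appearing, which is why $\bZ[1/6]$ (rather than a larger localization) is the correct target. The remaining steps are essentially bookkeeping.
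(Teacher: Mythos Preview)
Your proof is correct and follows essentially the same approach as the paper's: write $G(A)\subset U\subset G[A]$, use $\mu_2(G/U)=\mu_1(G/G(A))/[U:G(A)]$, compute the numerator from the explicit R-measure formula, and bound the denominator via Lemma~\ref{lem:boron-3} together with the identification $G[A]/G(A)\cong\Aut(T)$. The paper's proof is terser and leaves implicit the reduction step you spell out (that the classes $[G/U]^{\pm 1}$ generate $\Theta^*(G;\sE^+)$ as a ring, so it suffices to check these); your explicit justification here is a welcome clarification.
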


\begin{proof}
Let $U \in \sE^+$, and let $A$ be a finite subset of $\Omega$ such that $G(A) \subset U \subset G[A]$. Then
\begin{displaymath}
\mu_2(G/U) = \frac{\mu_1(G/G(A))}{[U:G(A)]}.
\end{displaymath}
We know that the numerator has the form $\pm 2^{-r}3^s$. The denominator divides $[G[A]:G(A)]$, which has the form $2^r3^s$ by Lemma~\ref{lem:boron-3}. (We note that $G[A]/G(A)$ is the automorphism group of the boron tree structure on $A$ induced from that on $\Omega$.) Thus the result follows.
\end{proof}

\begin{proof}[Proof of Theorem~\ref{thm:boron}]
Regard $\mu_2$ as a $k$-valued measure. Since it takes values in the (image of) $\bZ[1/6]$, it satisfies condition~(P) from Definition~\ref{defn:P}. By Lemma~\ref{lem:boron-2}(c), $\sE^+$ satisfies condition $(\ast)$ from Remark~\ref{rmk:rel-binom}. By Theorem~\ref{thm:regss} in the relative case (see \S \ref{ss:abrel}), the category $\uRep_k(G,\sE^+;\mu_2)$ is semi-simple and locally pre-Tannakian. The functor
\begin{displaymath}
\uPerm_k(G,\sE;\mu_1) \to \uRep_k(G,\sE^+;\mu_2)
\end{displaymath}
is fully faithful, and every object in the target is a quotient of something in the image. Since the target category is semi-simple, it is therefore equivalent to the Karoubi envelope of the source. This completes the proof.
\end{proof}

\begin{remark}
Theorem~\ref{thm:boron} implies that $\mu_1$ extends to a regular measure for $G$. This follows from the main results of \cite{discrete}. See \cite[\S 18]{arxiv} for a proof of the weaker result that $\mu_1$ extends to a normal measure for $G$.
\end{remark}

\begin{remark}
Arguing as in \S \ref{ss:not-interp}, one can show that $\uRep_k^{\rf}(G;\mu')$ cannot be obtained by interpolating finite groups: again, $G$ acts transitively on $\Omega^{(5)}$ and not $\Omega^{[5]}$. (In this case, one could also use the fact that $G$ is 3-transitive and appeal to the classification of 3-transitive finite groups \cite[Theorem~5.2]{Cameron8}.)
\end{remark}

\begin{remark}
With Nekrasov, we investigate tree categories further in \cite{arboreal}.  For every $d \ge 3$, there are analogs of $\mu$ and $\nu$ for trees of valence at most $d$, as well as two one-parameter families of measures on unbounded valence trees which ``interpolate" these.
\end{remark}

\addtocontents{toc}{\bigskip}

\end{document}